\newtheorem{theorem}{Theorem}[section]
\newtheorem{lemma}[theorem]{Lemma}
\newtheorem{prop}[theorem]{Proposition}
\newtheorem{corollary}[theorem]{Corollary}
\newtheorem{exAux}[theorem]{Example}
\newtheorem{Def}[theorem]{Definition}
\newenvironment{defi}{\begin{Def} \rm}{\end{Def}}
\newtheorem{Note}[theorem]{Note}
\newenvironment{note}{\begin{Note} \rm}{\end{Note}}
\newtheorem{Problem}[theorem]{Problem}
\newtheorem{Rem}[theorem]{Remark}
\newtheorem{Not}[theorem]{Notation}
\newtheorem{Conj}[theorem]{Conjecture}
\newtheorem{Ass}[theorem]{Assumption}
\newenvironment{proof}{\medskip\noindent{\bf Proof.\ }}{\qed\medskip}
\newenvironment{proofof}[1]{\medskip\noindent{\bf Proof  of {#1}.\ 
}}{\qed\medskip}
\newcommand{\qed}{\hfill\mbox{$\Box$\qquad\qquad}}
\newcommand{\F}{\mathbb{F}}
\newcommand{\Mat}{\text{\rm Mat}}
\newcommand{\Matd}{\text{\rm Mat}_{d+1}(\F)}
\newcommand{\End}{\text{\rm End}(V)}
\newcommand{\vphi}{\varphi}
\renewcommand{\th}{\theta}
\renewcommand{\b}[1]{\langle #1 \rangle }
\newif\ifDRAFT
\begin{document}

\title{Near-bipartite Leonard pairs}

\author{Kazumasa Nomura and Paul Terwilliger}

\maketitle

\begin{center}
\bf \small Abstract.
\end{center}
\begin{quote}  \small
Let $\F$ denote a field, and let $V$ denote a vector
space over $\F$ with finite positive dimension.
A Leonard pair on $V$ is an ordered pair of diagonalizable $\F$-linear maps
$A: V \to V$ and $A^* : V \to V$
that each act on an eigenbasis for the other in an irreducible tridiagonal fashion.
Let $A,A^*$ denote a Leonard pair on $V$.
Let $\{v_i\}_{i=0}^d$ denote an eigenbasis for $A^*$
on which $A$ acts in an irreducible tridiagonal fashion.
For $0 \leq i \leq d$ define an $\F$-linear map $E^*_i : V \to V$ such that
$E^*_i v_i = v_i$ and $E^*_i v_j = 0$ if $j \neq i$ $(0 \leq j \leq d)$.
The map $F = \sum_{i=0}^d E^*_i A E^*_i$ is called the flat part of $A$.
The Leonard pair $A,A^*$ is bipartite whenever $F=0$.
The Leonard pair $A,A^*$ is said to be near-bipartite whenever the pair $A-F, A^*$
is a Leonard pair on $V$.
In this case, the Leonard pair $A-F, A^*$ is bipartite, and called the bipartite contraction
of $A,A^*$.
Let $B,B^*$ denote a bipartite Leonard pair on $V$.
By a near-bipartite expansion of $B,B^*$
we mean a near-bipartite Leonard pair on $V$ with bipartite contraction $B,B^*$.
In the present paper we have three goals.
Assuming $\F$ is algebraically closed,
(i)
we classify up to isomorphism the near-bipartite Leonard pairs over $\F$;
(ii)
for each near-bipartite Leonard pair over $\F$ we describe its bipartite contraction;
(iii)
for each bipartite Leonard pair over $\F$ we describe its near-bipartite expansions.
Our classification (i) is summarized as follows.
We identify two families of Leonard pairs, said to have Krawtchouk type and dual
$q$-Krawtchouk type.
A Leonard pair of dual $q$-Krawtchouk type is said to be reinforced whenever
 $q^{2i} \neq -1$ for $1 \leq i \leq d-1$.
A Leonard pair $A,A^*$ is said to be essentially bipartite whenever the flat part of $A$
is a scalar multiple of the identity.
Assuming  $\F$ is algebraically closed,
we show that a Leonard pair $A,A^*$ over $\F$ with $d \geq 3$ is 
near-bipartite if and only if at least one of the following holds:
(i) $A,A^*$ is essentially bipartite;
(ii) $A,A^*$ has reinforced dual $q$-Krawtchouk type;
(iii) $A,A^*$ has Krawtchouk type.
\end{quote}

\medskip
{\small
 {\bf Key words.} 
bipartite, Leonard pair, Leonard system, near-bipartite, tridiagonal matrix 
}

\medskip
{\small
{\bf AMS subject classifications.}
05E30,
15A21,
15B10
}

\section{Introduction}
\label{sec:intro}
\ifDRAFT {\rm sec:intro}. \fi

The notion of a Leonard pair was introduced by the second author \cite{T:Leonard}.
We recall the definition of a Leonard pair.
A square matrix is said to be tridiagonal whenever each nonzero entry lies on
the diagonal, the subdiagonal, or the superdiagonal.
A tridiagonal matrix is said to be irreducible whenever each entry on the
subdiagonal is nonzero and each entry on the superdiagonal is nonzero.
Let $\F$ denote a field, and let $V$ denote a vector space over $\F$
with finite positive dimension.
A Leonard pair on $V$ is an ordered pair of $\F$-linear maps $A : V \to V$ and
$A^* : V \to V$ 
that satisfy (i) and (ii) below:
\begin{itemize}
\item[\rm (i)]
there exists a basis of $V$ with respect to which the matrix representing $A$
is irreducible tridiagonal and the matrix representing $A^*$ is diagonal;
\item[\rm (ii)]
there exists a basis of $V$ with respect to which the matrix representing $A^*$
is irreducible tridiagonal and the matrix representing $A$ is diagonal.
\end{itemize}
In this case, we say that $A,A^*$ is over $\F$.
We call $d = \dim V - 1$ the diameter of $A,A^*$.

In the literature, there are two well-known families of Leonard pairs,
said to be bipartite and almost-bipartite (see \cite[Section 1]{Brown}).
In the present paper,
we introduce a family of Leonard pairs called near-bipartite.
We will describe this family shortly.
In order to motivate things, we give some history and background.

The Leonard pairs arose from the study of $Q$-polynomial distance-regular graphs
and orthogonal polynomials.
Delsarte showed in \cite{Del} that a $Q$-polynomial distance-regular graph
yields two orthogonal polynomial sequences that are related by what is now called Askey-Wilson 
duality.
Motivated by Delsarte's result,
Leonard  showed in \cite{L} that the $q$-Racah polynomials give the
most general orthogonal polynomial systems that satisfy Askey-Wilson duality.
Leonard's theorem was improved by Bannai and Ito \cite[Theorem 5.1]{BI} by
treating all the limiting cases.
This version gives a complete classification of the orthogonal polynomial
systems that satisfy Askey-Wilson duality.
It shows that the orthogonal polynomial systems that satisfy Askey-Wilson duality
all come from the terminating branch of the Askey scheme,
consisting of 
the $q$-Racah, $q$-Hahn, dual $q$-Hahn, $q$-Krawtchouk,
dual $q$-Krawtchouk, quantum $q$-Krawtchouk, affine $q$-Krawtchouk,
Racah, Hahn, Krawtchouk, Bannai/Ito, and orphan polynomials.
The Leonard theorem \cite[Theorem 5.1]{BI} is rather complicated. 
The notion of a Leonard pair was introduced in \cite{T:Leonard} to simplify and clarify
Leonard's theorem.
A Leonard system \cite[Definition 1.4]{T:Leonard} is a Leonard pair $A,A^*$ together 
with appropriate orderings of the eigenvalues of $A$ and $A^*$.
The Leonard systems are classified up to isomorphism in \cite[Theorem 1.9]{T:Leonard}.
This result gives a linear algebraic version of Leonard's theorem.

We just mentioned how Leonard pairs are related to orthogonal polynomials.
Leonard pairs have applications to many other areas of mathematics and physics,
such as 
Lie theory \cite{IT, NT:Krawt, BM, Hart, Hart2, IT2b},
quantum groups \cite{Al, BT, IT2, IT3, Bockting, AC, BockT, IRT},
spin models  \cite{Cur:spinLP, NT:spin, CN:spin, CauW},
double affine Hecke algebras \cite{NT:DAHA, H:DAHA, H:DAHA2,Lee, LeeT},
partially ordered sets \cite{Liu, T:poset, Wor, MT},
and exactly solvable models in statistical mechanics 
\cite{Bas1, Bas2, Bas3, Bas4, Bas5, Bas6, Bas7}.
For more information about Leonard pairs and related topics,
see  \cite{NT:K, T:LSnote, NT:idemp,
T:Introduction, T:Kyoto, NT:Krawt, NT:TB, Tanaka, BBIT}.

We now describe the near-bipartite Leonard pairs.
Let $A,A^*$ denote a Leonard pair on $V$.
Let $\{v_i\}_{i=0}^d$ denote a basis of $V$ with respect to which the matrix representing
$A$ is irreducible tridiagonal and the matrix representing $A^*$ is diagonal.
For $0 \leq i \leq d$ define an $\F$-linear map $E^*_i : V  \to V$
such that $E^*_i v_i = v_i$ and $E^*_i v_j = 0$ for $j \neq i$ $(0 \leq j \leq d)$.
Define $F = \sum_{i=0}^d E^*_i A E^*_i$.
We call $F$ the flat part of $A$.
The Leonard pair $A,A^*$ is bipartite if and only if $F=0$.
The Leonard pair $A,A^*$ is said to be near-bipartite whenever the pair $A-F, A^*$ is
a Leonard pair on $V$.
In this case, the Leonard pair $A-F, A^*$ is bipartite, and called the bipartite contraction
of $A,A^*$.
Let $B,B^*$ denote a bipartite Leonard pair on $V$.
By a near-bipartite expansion of $B,B^*$ we mean a near-bipartite Leonard pair
on $V$ with bipartite contraction $B,B^*$.

In the present paper we have three main goals.
Assuming $\F$ is algebraically closed,
\begin{itemize}
\item[\rm (i)]
we classify up to isomorphism the near-bipartite Leonard pairs over $\F$;
\item[\rm (ii)]
for each near-bipartite Leonard pair over $\F$ we describe its bipartite contraction;
\item[\rm (iii)]
for each bipartite Leonard pair over $\F$ we describe its near-bipartite expansions.
\end{itemize}

We now summarize our classification for item (i).
In this summary we assume $d \geq 3$;
the case $d \leq 2$ is a  bit different and given in the main body of the paper.
We will identify two families of Leonard pairs, said to have Krawtchouk type and dual $q$-Krawtchouk type.
These are attached to the Krawtchouk polynomials and the dual $q$-Krawtchouk polynomials.
A Leonard pair of dual $q$-Krawtchouk type is said to be reinforced whenever $q^{2i} \neq -1$ for
$1 \leq i \leq d-1$.
A Leonard pair $A,A^*$ is said to be essentially bipartite whenever the flat part $F$ of $A$
is a scalar multiple of the identity.
Assuming $\F$ is algebraically closed,
we will show that 
a Leonard pair $A,A^*$ over $\F$ is near-bipartite if and only if at least one of the following (i)--(iii) holds:
\begin{itemize}
\item[\rm (i)]
$A,A^*$ is essentially bipartite;
\item[\rm (ii)]
$A,A^*$ has reinforced dual $q$-Krawtchouk type;
\item[\rm (iii)]
$A,A^*$ has Krawtchouk type.
\end{itemize}

The paper is organized as follows.
In Section \ref{sec:preliminary} we fix our notation and recall some basic materials 
from linear algebra.
In Section \ref{sec:LP} we recall some materials concerning Leonard pairs.
In Section \ref{sec:TDDseq} we introduce the concept of a TD/D sequence for a Leonard pair.
In Section \ref{sec:TDDform} we introduce the normalized TD/D form of a Leonard pair.
In Section \ref{sec:bip} we recall the bipartite Leonard pairs and essentially bipartite Leonard pairs.
In Section \ref{sec:F} we introduce the flat part $F$ of a Leonard pair.
In Section \ref{sec:nbip} we introduce the near-bipartite property for  Leonard pairs.
In Section \ref{sec:d1} we classify the near-bipartite Leonard pairs with diameter $d=1$.
In Sections \ref{sec:LPd2} and \ref{sec:d2} we classify the near-bipartite Leonard pairs with diameter $d=2$.
Starting in Section \ref{sec:type} we assume $d \geq 3$.
In Section \ref{sec:type}  we recall the type of a Leonard pair.
In Sections \ref{sec:type1}--\ref{sec:vphiphi} we recall the primary data of a Leonard pair.
In Section \ref{sec:bipessbip} we characterize the essentially bipartite Leonard pairs
in terms of the primary data.
In Section \ref{sec:bipbip} we characterize the bipartite Leonard pairs in terms
of the primary data.
In Section \ref{sec:dualqKrawt} we describe the dual $q$-Krawtchouk Leonard pairs.
In Section \ref{sec:Krawt} we describe the Krawtchouk Leonard pairs.
In Section \ref{sec:bipdualqKrawt} we describe the Leonard pairs that  are bipartite and have
dual $q$-Krawtchouk type.
In Section \ref{sec:bipKrawt} we describe the Leonard pairs that are bipartite
and have Krawtchouk type.
In Section \ref{sec:dualqKrawt2} we determine the near-bipartite Leonard pairs of dual 
$q$-Krawtchouk type, and describe their bipartite contraction.
In Section \ref{sec:Krawt2} we show that a Leonard pair of Krawtchouk type is near-bipartite,
and we describe its bipartite contraction.
In Section \ref{sec:classify} we classify the near-bipartite Leonard pairs.
In Sections \ref{sec:expansiondualqKrawt} and \ref{sec:expansionKrawt}
we describe the near-bipartite expansions of a bipartite Leonard pair.

\section{Preliminaries}
\label{sec:preliminary}
\ifDRAFT {\rm sec:preliminary}. \fi

We now begin our formal argument.
Throughout the paper, the following notation is in effect.
Let  $\F$ denote a field.
Every vector space and algebra discussed in this paper is over $\F$.
Fix an integer $d \geq 0$.
The notation $\{x_i\}_{i=0}^d$ refers to the sequence $x_0,x_1,\ldots,x_d$.
Let $\Matd$ denote the algebra consisting of the $d+1$ by $d+1$
matrices that have all entries in $\F$.
We index the rows and columns by $0,1,\ldots, d$.
The identity element of $\Mat_{d+1}(\F)$ is denoted by $I$.
Let $\F^{d+1}$ denote the vector space consisting of the column
vectors with $d+1$ rows and all entries in $\F$.
We index the rows by $0,1,\ldots,d$.
The algebra $\Matd$ acts on $\F^{d+1}$ by left multiplication.
Throughout the paper,  $V$ denotes a vector space with dimension $d+1$.
Let $\End$ denote the algebra consisting of the
$\F$-linear maps $V \to V$.
The identity element of $\End$ is denoted by $I$.
We recall how each basis $\{v_i\}_{i=0}^d$ of $V$ gives an algebra
isomorphism $\text{\rm End}(V) \to \Mat_{d+1}(\F)$.
For $A \in \text{\rm End}(V)$ and $M \in \Mat_{d+1}(\F)$,
we say that {\em $M$ represents $A$ with respect to $\{v_i\}_{i=0}^d$}
whenever $A v_j = \sum_{i=0}^d M_{i,j} v_i$ for $0 \leq j \leq d$.
The isomorphism sends $A$ to the unique matrix in $\Mat_{d+1}(\F)$ that represents
$A$ with respect to $\{v_i\}_{i=0}^d$.
Let $A \in \End$.
By an {\em eigenspace of $A$}, we mean a subspace $W \subseteq V$ 
such that $W \neq 0$ and there exists $\theta \in \F$ such that $W = \{ v \in V \, |\, A v = \theta v\}$;
in this case $\theta$ is the {\em eigenvalue} of $A$ associated with $W$.
We say that $A$ is {\em diagonalizable} whenever $V$ is spanned by the eigenspaces of $A$.
We say that $A$ is {\em multiplicity-free} whenever $A$ is diagonalizable and its eigenspaces
all have dimension one.
Assume that $A$ is multiplicity-free.
Let $\{\th_i\}_{i=0}^d$ denote an ordering of the eigenvalues of $A$.
For $0 \leq i \leq d$ let $V_i$ denote the eigenspace of $A$ associated with $\th_i$,
and define $E_i \in \End$ such that $(E_i - I) V_i =0$ and $E_i V_j=0$ for $j \neq i$ $(0 \leq j \leq d)$.
We call $E_i$ the {\em primitive idempotent of $A$ associated with $\th_i$}.
We have
(i) $E_i E_j = \delta_{i,j} E_i$ $(0 \leq i,j \leq d)$;
(ii) $I = \sum_{i=0}^d E_i$;
(iii) $A E_i = \th_i E_i = E_i A$ $(0 \leq i \leq d)$;
(iv) $A = \sum_{i=0}^d \th_i E_i$;
(v) $V_i = E_i V$ $(0 \leq i \leq d)$;
(vi) $\text{\rm rank}(E_i) = 1$;
(vii) $\text{\rm tr}(E_i) = 1$ $(0 \leq i \leq d)$,
where tr means trace.
Moreover
\begin{align*}
E_i &= 
 \prod_{\scriptsize \begin{matrix} 0 \leq j \leq d \\ j \neq i \end{matrix} } 
  \frac{A - \th_j I} { \th_i -\th_j }
 && (0 \leq i \leq d).
\end{align*}
Let $\b{A}$ denote the subalgebra of $\text{\rm End}(V)$ generated by $A$.
The algebra $\b{A} $ is commutative.
The elements $\{A^i\}_{i=0}^d$ (resp. $\{E_i\}_{i=0}^d$) form a basis of $\b{ A }$.

\begin{lemma}    \label{lem:As2}    \samepage
\ifDRAFT {\rm lem:As2}. \fi
Assume that $A \in \End$ is multiplicity-free with primitive idempotents $\{E_i\}_{i=0}^d$.
Then for $H \in \End$ the following {\rm (i)--(iii)} are equivalent:
\begin{itemize}
\item[\rm (i)]
$H \in \b{ A }$;
\item[\rm (ii)]
$H$ commutes with $A$;
\item[\rm (iii)]
$H$ commutes with $E_i$ for $0 \leq i \leq d$.
\end{itemize}
\end{lemma}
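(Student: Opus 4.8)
The plan is to establish the cycle of implications (i) $\Rightarrow$ (ii) $\Rightarrow$ (iii) $\Rightarrow$ (i), using only the facts recorded just above the lemma statement.

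For (i) $\Rightarrow$ (ii): the algebra $\b{A}$ is commutative and contains $A$, so any $H \in \b{A}$ commutes with $A$. For (ii) $\Rightarrow$ (iii): each primitive idempotent $E_i$ is a polynomial in $A$, by the displayed Lagrange-type formula $E_i = \prod_{j \neq i}(A - \th_j I)/(\th_i - \th_j)$; since $H$ commutes with $A$ it commutes with every element of $\b{A}$, in particular with each $E_i$.

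For (iii) $\Rightarrow$ (i), which is the one step with any content: since $A$ is multiplicity-free, $\mathrm{rank}(E_i) = 1$, so choose a nonzero vector $v_i \in E_i V$ for $0 \leq i \leq d$. From $E_i E_j = \delta_{i,j} E_i$ and $I = \sum_{i=0}^d E_i$ it follows that $\{v_i\}_{i=0}^d$ is a basis of $V$. Assume $H$ commutes with every $E_i$. Then $E_i V = E_i V$ is $H$-invariant (because $H E_i v = E_i H v$), and being one-dimensional this forces $H v_i = \lambda_i v_i$ for some $\lambda_i \in \F$. Hence $H$ and $\sum_{i=0}^d \lambda_i E_i$ agree on the basis $\{v_i\}_{i=0}^d$, so $H = \sum_{i=0}^d \lambda_i E_i$, and since each $E_i \in \b{A}$ we conclude $H \in \b{A}$. (Equivalently, one may interpolate: as the $\th_i$ are mutually distinct, pick a polynomial $p$ with $p(\th_i) = \lambda_i$ for all $i$, so that $H = p(A)$.)

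I do not expect a genuine obstacle here; the only point requiring a moment's care is the step in (iii) $\Rightarrow$ (i) where the rank-one property of the primitive idempotents is used to conclude that $H$ acts as a scalar on each $E_i V$. Everything else is bookkeeping with the standard idempotent identities already listed.
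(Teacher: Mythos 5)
Your proof is correct and is essentially the argument the paper compresses into one line: working with the rank-one idempotents $E_i$ (equivalently, in an eigenbasis of $A$), commuting with each $E_i$ forces $H$ to act as a scalar on each eigenspace, so $H=\sum_i \lambda_i E_i \in \b{A}$, while the other implications follow from commutativity of $\b{A}$ and the Lagrange formula expressing $E_i$ as a polynomial in $A$. The only difference is presentational: you run the cycle of implications basis-free, whereas the paper simply cites the matrix fact that an element of $\Matd$ commutes with all diagonal matrices if and only if it is diagonal.
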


\begin{proof}
This is a reformulation of the fact that 
a matrix $M \in \Matd$ commutes with each diagonal matrix in $\Matd$ 
if and only if $M$ diagonal.
\end{proof}

A square matrix is said to be {\em tridiagonal} whenever
each nonzero entry lies on the diagonal, subdiagonal, or the superdiagonal.
A tridiagonal matrix is said to be {\em irreducible} whenever
each entry on the subdiagonal is nonzero and each entry on the superdiagonal
is nonzero.

\begin{lemma}    \label{lem:tridpre}    \samepage
\ifDRAFT {\rm lem:tridpre}. \fi
Let $T \in \Matd$ be irreducible tridiagonal.
Then the following are equivalent:
\begin{itemize}
\item[\rm (i)]
$T$ is diagonalizable;
\item[\rm (ii)]
$T$ is multiplicity-free.
\end{itemize}
\end{lemma}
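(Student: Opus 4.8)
The implication (ii) $\Rightarrow$ (i) is immediate, since by definition a multiplicity-free map is diagonalizable; so the content is the implication (i) $\Rightarrow$ (ii).

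The plan is to first isolate the key structural fact: every eigenvalue of an irreducible tridiagonal matrix has geometric multiplicity one. To see this, let $\theta \in \F$ be an eigenvalue of $T$ and note that $T - \theta I$ is again irreducible tridiagonal. Delete row $0$ and column $d$ from $T - \theta I$; the resulting $d \times d$ submatrix is lower triangular, and its diagonal entries are the subdiagonal entries $T_{1,0}, T_{2,1}, \ldots, T_{d,d-1}$ of $T$, which are nonzero by irreducibility. Hence this submatrix is invertible, so $\text{\rm rank}(T - \theta I) \geq d$, and therefore $\dim \ker(T - \theta I) \leq 1$; thus the $\theta$-eigenspace of $T$ is one-dimensional.

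Now assume (i). Then $\F^{d+1}$ is the sum of the eigenspaces of $T$, and this sum is direct because eigenspaces for distinct eigenvalues are linearly independent. By the previous paragraph each summand has dimension one, so there are exactly $d+1$ of them; hence $T$ has $d+1$ distinct eigenvalues, each with a one-dimensional eigenspace, and $T$ is multiplicity-free. This establishes (i) $\Rightarrow$ (ii).

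The only substantive step is the geometric-multiplicity-one claim, i.e. the rank computation for $T - \theta I$; I expect this to be the main (indeed the sole) obstacle, and once it is in place the equivalence follows by a short dimension count, with no need for $\F$ to be algebraically closed.
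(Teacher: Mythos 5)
Your proof is correct, but it takes a different route from the paper. The paper argues via powers: for an irreducible tridiagonal $T$, the matrices $I, T, T^2, \ldots, T^d$ are linearly independent, so the minimal polynomial of $T$ has degree $d+1$; combined with diagonalizability (which forces the minimal polynomial to split into distinct linear factors) this yields $d+1$ mutually distinct eigenvalues. You instead establish the stronger intermediate fact that \emph{every} eigenvalue of an irreducible tridiagonal matrix has geometric multiplicity one, via the rank estimate: deleting row $0$ and column $d$ of $T-\theta I$ leaves a triangular $d\times d$ submatrix whose diagonal entries are the nonzero subdiagonal entries of $T$, hence $\mathrm{rank}(T-\theta I)\geq d$. (With your indexing that submatrix is in fact upper triangular, not lower triangular, but this is immaterial: it is triangular with nonzero diagonal, hence invertible.) Then a dimension count over the direct sum of eigenspaces finishes the argument. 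Both proofs are short and neither needs $\F$ algebraically closed; your version has the mild advantage of isolating the multiplicity-one statement, which holds without assuming diagonalizability, while the paper's version gets the count of eigenvalues directly from the degree of the minimal polynomial.
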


\begin{proof}
(i) $\Rightarrow$ (ii)
Observe that $I,T,T^2,\ldots,T^d$ are linearly independent.
Thus the minimal polynomial of $T$ has degree $d+1$.
So $T$ has $d+1$ mutually distinct eigenvalues.

(ii) $\Rightarrow$ (i)
Clear.
\end{proof}

For elements $r$, $s$ in an algebra, 
the notation $[r,s]$ means the commutator $r s - s r$.

\section{Leonard pairs}
\label{sec:LP} 
\ifDRAFT {\rm sec:LP}. \fi

In this section, we recall the notion of a Leonard pair.
Recall the vector space $V$ with dimension $d+1$.

\begin{defi}  {\rm (See \cite[Definition 1.1]{T:Leonard}.) }
  \label{def:LP}    \samepage
\ifDRAFT {\rm def:LP}. \fi
By a {\em Leonard pair on $V$} we mean an ordered pair $A,A^*$
of elements in $\End$ that satisfy {\rm (i)} and {\rm (ii)} below:
\begin{itemize}
\item[\rm (i)]
there exists a basis of $V$ with respect to which the matrix representing
$A$ is irreducible tridiagonal and the matrix representing $A^*$ is diagonal;
\item[\rm (ii)]
there exists a basis of $V$ with respect to which the matrix representing
$A^*$ is irreducible tridiagonal and the matrix representing $A$ is diagonal.
\end{itemize}
We call $d$ the {\em diameter} of $A,A^*$.
We say that $A,A^*$ is {\em over $\F$}.
We call $V$ the {\em underlying vector space}.
\end{defi}

\begin{note}
By a common notational convention, $A^*$ denotes
the conjugate-transpose of $A$.
We are not using this convention.
In a Leonard pair $A,A^*$ the elements $A$ and $A^*$ are arbitrary subject
to (i) and (ii) above.
\end{note}

\medskip
We recall the notion of an isomorphism of Leonard pairs.
Consider a Leonard pair $A,A^*$ on $V$ and a Leonard pair $B,B^*$ on
a vector space $\mathcal V$.
By an {\em isomorphism of Leonard pairs from $A,A^*$ to $B,B^*$} we mean
an $\F$-linear bijection $S : V \to {\mathcal V}$ such that
$S A = B S$ and $S A^* = B^* S$.
We say that $A,A^*$ and $B,B^*$ are {\em isomorphic} whenever
there exists an isomorphism of Leonard pairs from $A,A^*$ to $B,B^*$.

\begin{lemma}   {\rm (See \cite[Lemma 5.1]{NT:affine}.) }
\label{lem:affineLP}    \samepage
\ifDRAFT {\rm lem:affineLP}. \fi
For a Leonard pair $A,A^*$ on $V$ and scalars 
$\xi$, $\xi^*$, $\zeta$, $\zeta^*$ in $\F$ with $\xi \xi^* \neq 0$,
the pair  $\xi A + \zeta I, \, \xi^* A^* + \zeta^* I$ is a Leonard pair on $V$.
\end{lemma}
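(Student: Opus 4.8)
The plan is to verify the two defining conditions of a Leonard pair directly for the pair $B = \xi A + \zeta I$ and $B^* = \xi^* A^* + \zeta^* I$, using the corresponding conditions for $A,A^*$. The point is that affine transformations of a single operator interact nicely with both tridiagonal and diagonal matrix shapes. First I would fix a basis $\{v_i\}_{i=0}^d$ of $V$ with respect to which $A$ is represented by an irreducible tridiagonal matrix $T$ and $A^*$ by a diagonal matrix $D$; such a basis exists by Definition \ref{def:LP}(i). With respect to this same basis, $B$ is represented by $\xi T + \zeta I$ and $B^*$ by $\xi^* D + \zeta^* I$.

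The key observations are: (a) adding a scalar multiple of $I$ to a tridiagonal matrix only alters the diagonal entries, so $\xi T + \zeta I$ is still tridiagonal; (b) its subdiagonal and superdiagonal entries are exactly $\xi$ times those of $T$, hence nonzero because $\xi \neq 0$ and $T$ is irreducible, so $\xi T + \zeta I$ is irreducible tridiagonal; and (c) $\xi^* D + \zeta^* I$ is diagonal since $D$ is. This gives condition (i) of Definition \ref{def:LP} for $B,B^*$. Then I would run the symmetric argument: choose a basis with respect to which $A^*$ is irreducible tridiagonal and $A$ is diagonal (Definition \ref{def:LP}(ii)), and apply the same three observations with the roles of $A,A^*$ interchanged and using $\xi^* \neq 0$. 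This yields condition (ii), completing the proof.

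There is essentially no obstacle here; the only thing to be careful about is that one genuinely needs both $\xi \neq 0$ and $\xi^* \neq 0$ (equivalently $\xi\xi^* \neq 0$ as in the hypothesis), since it is the nonvanishing of $\xi$ that preserves irreducibility of the tridiagonal representation of $A$, and the nonvanishing of $\xi^*$ that does the same for $A^*$. The scalars $\zeta,\zeta^*$ are unconstrained because shifting by a multiple of $I$ never destroys either the tridiagonal or the diagonal shape. One could alternatively phrase the whole argument representation-free by noting that if $S$ realizes the basis change exhibiting condition (i) for $A,A^*$ then the same $S$ works for $B,B^*$, but the matrix-level statement above is the cleanest.
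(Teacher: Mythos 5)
Your proof is correct; the paper itself gives no argument for this lemma (it is quoted from \cite[Lemma 5.1]{NT:affine}), and your direct verification --- scaling the off-diagonal entries by $\xi$ (resp.\ $\xi^*$) preserves irreducibility while the shift by $\zeta I$ (resp.\ $\zeta^* I$) only alters the diagonal --- is exactly the standard argument behind that reference.
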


\begin{lemma}    {\rm (See \cite[Lemma 3.1]{T:Leonard}.) }
\label{lem:LPmultfree}    \samepage
\ifDRAFT {\rm lem:LPmultfree}. \fi
For a Leonard pair $A,A^*$ on $V$,
each of $A$ and $A^*$ is multiplicity-free.
\end{lemma}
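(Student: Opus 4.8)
The plan is to deduce the statement directly from Lemma~\ref{lem:tridpre} together with the two defining conditions of a Leonard pair, plus the observation that ``diagonalizable'' and ``multiplicity-free'' are properties of a linear map that are inherited by every matrix representing it. Since Definition~\ref{def:LP} is symmetric in $A$ and $A^*$ (interchanging them swaps conditions (i) and (ii)), it suffices to prove that $A$ is multiplicity-free; the conclusion for $A^*$ then follows by the same argument with the roles of $A$ and $A^*$ interchanged.

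For $A$, I would first invoke Definition~\ref{def:LP}(ii): there is a basis of $V$ with respect to which $A$ is represented by a diagonal matrix, so $A$ is diagonalizable. Because diagonalizability is basis-independent, \emph{every} matrix representing $A$ is diagonalizable. Next I would invoke Definition~\ref{def:LP}(i): there is a basis of $V$ with respect to which $A$ is represented by an irreducible tridiagonal matrix $T \in \Matd$. By the previous step, $T$ is diagonalizable, so Lemma~\ref{lem:tridpre} gives that $T$ is multiplicity-free; equivalently, $A$ is multiplicity-free. Applying this reasoning to $A^*$ — using Definition~\ref{def:LP}(i) for the diagonal representation and Definition~\ref{def:LP}(ii) for the irreducible tridiagonal representation — shows that $A^*$ is multiplicity-free as well.

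I do not expect any genuine obstacle here; the argument is a two-line consequence of Lemma~\ref{lem:tridpre}. The one point that deserves a moment's care is that the two bases supplied by Definition~\ref{def:LP}(i) and~(ii) are in general different: diagonalizability of $A$ is read off from the basis of part~(ii), while the irreducible tridiagonal shape of $A$ is read off from the basis of part~(i), so one must note explicitly that the tridiagonal matrix $T$ still enjoys diagonalizability before Lemma~\ref{lem:tridpre} can be applied. Once that transfer is made, the lemma does all the work.
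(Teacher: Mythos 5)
Your argument is correct: reading diagonalizability of $A$ from the basis in Definition~\ref{def:LP}(ii), transferring it to the irreducible tridiagonal matrix furnished by Definition~\ref{def:LP}(i), and then applying Lemma~\ref{lem:tridpre} (and symmetrically for $A^*$) is a complete proof. The paper itself gives no proof of this lemma, quoting it from \cite[Lemma 3.1]{T:Leonard}, and your route is essentially the standard argument behind that citation, with the one genuinely delicate point — that the two bases in Definition~\ref{def:LP} differ, so diagonalizability must be carried over before Lemma~\ref{lem:tridpre} applies — handled explicitly.
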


\begin{lemma}   {\rm (See \cite[Lemma 3.3]{T:Leonard}.)}
\label{lem:irred}    \samepage
\ifDRAFT {\rm lem:irred}. \fi
Let $A,A^*$ denote a Leonard pair on $V$.
Then there does not exist a subspace $W$ of $V$ such that
$A W \subseteq W$,
$A^* W \subseteq W$,
$W \neq 0$,
$W \neq V$.
\end{lemma}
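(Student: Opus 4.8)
The plan is to argue by contradiction. Suppose there exists a subspace $W$ of $V$ with $AW \subseteq W$, $A^*W \subseteq W$, $W \neq 0$, and $W \neq V$. Fix a basis $\{v_i\}_{i=0}^d$ of $V$ as in Definition \ref{def:LP}(i), so that the matrix $T$ representing $A$ with respect to $\{v_i\}_{i=0}^d$ is irreducible tridiagonal while the matrix representing $A^*$ is diagonal. Then each $v_i$ spans an eigenspace $\F v_i$ of $A^*$, and by Lemma \ref{lem:LPmultfree} the map $A^*$ is multiplicity-free, so these are all of its eigenspaces.

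First I would show that $W$ is spanned by a subset of $\{v_i\}_{i=0}^d$. Let $E^*_i$ denote the primitive idempotent of $A^*$ with $E^*_i V = \F v_i$. Each $E^*_i$ is a polynomial in $A^*$, so $E^*_i W \subseteq W$; since also $E^*_i W \subseteq E^*_i V = \F v_i$, we get $E^*_i W \subseteq W \cap \F v_i$, and summing over $i$ gives $W = \bigoplus_{i=0}^d (W \cap \F v_i)$. Because $\dim \F v_i = 1$, each summand is $0$ or $\F v_i$, so there is a set $S \subseteq \{0,1,\ldots,d\}$ with $W = \mathrm{span}\{v_i : i \in S\}$; moreover $S$ is nonempty and proper, since $W \neq 0$ and $W \neq V$.

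Next I would use the irreducible tridiagonal shape of $T$ to show that $S$ is closed under $i \mapsto i-1$ and $i \mapsto i+1$ within $\{0,1,\ldots,d\}$. For $j \in S$ we have $Av_j = T_{j-1,j}v_{j-1} + T_{j,j}v_j + T_{j+1,j}v_{j+1} \in W$, with the usual convention that the boundary terms are absent. Comparing coordinates in the basis $\{v_i\}_{i=0}^d$, and using that $T_{j-1,j} \neq 0$ for $1 \leq j \leq d$ and $T_{j+1,j} \neq 0$ for $0 \leq j \leq d-1$ (irreducibility), we find $j-1 \in S$ whenever $j \geq 1$ and $j+1 \in S$ whenever $j \leq d-1$. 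Starting from any element of the nonempty set $S$ and propagating downward and upward then forces $S = \{0,1,\ldots,d\}$, so $W = V$, contradicting $W \neq V$.

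The proof is short, and the only step requiring real input is the reduction to a coordinate subspace of the eigenbasis of $A^*$, which is exactly where the multiplicity-freeness from Lemma \ref{lem:LPmultfree} enters; the remainder is routine bookkeeping with the nonzero sub- and super-diagonal entries of $T$.
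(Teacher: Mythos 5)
Your proof is correct, and it is essentially the argument behind the cited result: the paper itself gives no proof of this lemma, quoting it from \cite{T:Leonard} (Lemma 3.3), where the same strategy is used. Decomposing an $A^*$-invariant subspace along the one-dimensional eigenspaces of $A^*$ (valid by Lemma \ref{lem:LPmultfree}, which is established independently, so there is no circularity) and then propagating along the nonzero sub- and superdiagonal entries of the irreducible tridiagonal matrix for $A$ is exactly the standard route.
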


Let $A,A^*$ denote a Leonard pair on $V$.
An ordering $\{\th_i\}_{i=0}^d$ of the eigenvalues of $A$ is said to be {\em standard}
whenever there exists a basis of $V$ with respect to which the matrix
representing $A$ is $\text{\rm diag}(\th_0, \th_1, \ldots, \th_d)$
and the matrix representing $A^*$ is irreducible tridiagonal.
For a standard ordering $\{\th_i\}_{i=0}^d$ of the eigenvalues of $A$,
the ordering $\{\th_{d-i}\}_{i=0}^d$ is standard and no further ordering is standard.
A standard ordering of the eigenvalues of $A^*$ is similarly defined.

We recall the notion of a parameter array.

\begin{lemma}   {\rm (See \cite[Theorem 3.2]{T:Leonard}.) }
 \label{lem:parray1}    \samepage
\ifDRAFT {\rm lem:parray1}. \fi
Let $A,A^*$ denote a Leonard pair on $V$.
Let $\{\th_i\}_{i=0}^d$ (resp.\ $\{\th^*_i\}_{i=0}^d$) denote a standard ordering
of the eigenvalues of $A$ (resp.\ $A^*$). 
Then there exists a unique sequence $\{\vphi_i\}_{i=1}^d$ of scalars in $\F$
with the following property:
there exists a basis of $V$ with respect to which the matrices
representing $A$ and $A^*$ are
\begin{align*}
A &: 
\begin{pmatrix}
 \th_0 & & & & & \text{\bf 0}  \\
 1 & \th_1 \\
    & 1 & \th_2 \\
    &    &  \cdot & \cdot \\
    &     &          & \cdot & \cdot \\
\text{\bf 0} & & & & 1 & \th_d
\end{pmatrix},
&
A^* &:
\begin{pmatrix}
\th^*_0 & \vphi_1 & & & & {\bf 0}  \\
          & \th^*_1 & \vphi_2 \\
         &  &  \th^*_1 &  \cdot    \\
         &  &  &  \cdot & \cdot  \\
         &  &  &  &  \cdot & \vphi_d  \\
\text{\bf 0} & & & & & \th^*_d
\end{pmatrix}.
\end{align*}
\end{lemma}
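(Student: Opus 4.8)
The plan is to construct an explicit \emph{split basis} $\{u_i\}_{i=0}^d$ adapted to the two given standard orderings, read the matrices off it, and obtain uniqueness by showing such a basis is unique up to a common scalar. By Lemma~\ref{lem:LPmultfree} both $A$ and $A^*$ are multiplicity-free; let $\{E_i\}_{i=0}^d$, $\{E^*_i\}_{i=0}^d$ be their primitive idempotents, indexed so that $E_i$ (resp.\ $E^*_i$) has eigenvalue $\th_i$ (resp.\ $\th^*_i$). For $0\le k\le d$ put $\mathcal F_k=\sum_{j=0}^kE^*_jV$ (so $A^*\mathcal F_k\subseteq\mathcal F_k$) and $\mathcal G_k=\sum_{j=k}^dE_jV=\ker\prod_{j=k}^d(A-\th_jI)$. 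Fix a nonzero $u_0\in E^*_0V$ (this space is one-dimensional) and set $u_k=\prod_{j=0}^{k-1}(A-\th_jI)\,u_0$ for $1\le k\le d$, so that $u_{k+1}=(A-\th_kI)u_k$.

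First I would check that $\{u_i\}$ is a basis and puts $A$ in the stated form. Choose a basis $\{w_j\}_{j=0}^d$ with $A^*=\operatorname{diag}(\th^*_0,\dots,\th^*_d)$ and $A$ irreducible tridiagonal (it exists since $\{\th^*_i\}$ is standard); then $\mathcal F_k=\operatorname{span}(w_0,\dots,w_k)$, and an induction on $k$ using the nonvanishing subdiagonal entry $A_{k,k-1}$ shows that the $w_k$-coordinate of $u_k$ is nonzero while its $w_j$-coordinate vanishes for $j>k$. Hence the transition matrix $\{w_j\}\to\{u_j\}$ is invertible (upper triangular with nonzero diagonal), so $\{u_j\}$ is a basis and $\operatorname{span}(u_0,\dots,u_k)=\mathcal F_k$. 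Since $\prod_{j=0}^d(A-\th_jI)=0$ (it is the minimal polynomial of $A$) we get $(A-\th_dI)u_d=0$, and $Au_k=u_{k+1}+\th_ku_k$ for $0\le k\le d-1$ by construction; this is exactly the lower bidiagonal matrix for $A$ in the statement. Moreover $\prod_{j=k}^d(A-\th_jI)u_k=\prod_{j=0}^d(A-\th_jI)u_0=0$, so $u_k\in\mathcal G_k$; combined with $u_k\in\mathcal F_k$ and the fact that $\{u_j\}$ is a basis, this gives $\mathcal F_k=\bigoplus_{j\le k}\operatorname{span}(u_j)$ and $\mathcal G_k=\bigoplus_{j\ge k}\operatorname{span}(u_j)$, whence $\mathcal F_k\cap\mathcal G_{k-1}=\operatorname{span}(u_{k-1},u_k)$.

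Next I would pin down the form of $A^*$. Working in a basis $\{x_j\}$ with $A=\operatorname{diag}(\th_0,\dots,\th_d)$ and $A^*$ tridiagonal (it exists since $\{\th_i\}$ is standard), one has $\mathcal G_k=\operatorname{span}(x_k,\dots,x_d)$ and $A^*x_j\in\operatorname{span}(x_{j-1},x_j,x_{j+1})\subseteq\mathcal G_{k-1}$ for $j\ge k$, so $A^*\mathcal G_k\subseteq\mathcal G_{k-1}$. Therefore for $k\ge 1$ we get $A^*u_k\in A^*\mathcal F_k\cap A^*\mathcal G_k\subseteq\mathcal F_k\cap\mathcal G_{k-1}=\operatorname{span}(u_{k-1},u_k)$, while $A^*u_0=\th^*_0u_0$ since $u_0\in E^*_0V$. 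Thus $A^*$ is upper bidiagonal with respect to $\{u_j\}$, and its $(k,k)$ entry is the eigenvalue of $A^*$ on $\mathcal F_k/\mathcal F_{k-1}\cong E^*_kV$, namely $\th^*_k$. Naming the $(k-1,k)$ entries $\vphi_1,\dots,\vphi_d$ produces the matrices in the statement, so existence holds.

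For uniqueness: if $\{u'_j\}$ is any basis realizing the two displayed matrices, then $A^*u'_0=\th^*_0u'_0$ forces $u'_0\in E^*_0V$, hence $u'_0=cu_0$ with $c\neq 0$, and $u'_{k+1}=(A-\th_kI)u'_k$ forces $u'_k=cu_k$ for all $k$. A uniform rescaling of a basis leaves the representing matrices unchanged, so the scalars $\vphi_i$ are forced; hence $\{\vphi_i\}_{i=1}^d$ is unique. The step I expect to be the main obstacle is showing that $\{u_j\}$ is a basis — equivalently, that the flags $\{\mathcal F_k\}$ and $\{\mathcal G_k\}$ are in general enough position that each $\mathcal F_k\cap\mathcal G_k$ is one-dimensional. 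This is precisely what forces the representing matrix of $A^*$ to be \emph{bi}diagonal rather than merely upper triangular, and it is the only point where the irreducibility built into the notion of a standard ordering (via the nonzero entry $A_{k,k-1}$) is genuinely used; the rest is bookkeeping with the two flags.
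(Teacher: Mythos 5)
Your argument is correct, and it is essentially the split-decomposition proof: the paper itself offers no proof of Lemma \ref{lem:parray1} (it simply cites \cite[Theorem 3.2]{T:Leonard}), and your construction of the basis $u_k=\prod_{j=0}^{k-1}(A-\th_jI)u_0$ together with the two flags $\mathcal F_k$, $\mathcal G_k$ is the same mechanism used in that reference. The key steps — invertibility of the transition matrix via the nonzero subdiagonal entries, the inclusion $A^*\mathcal G_k\subseteq\mathcal G_{k-1}$ from the second standard basis, identification of the diagonal entries $\th^*_k$ on successive quotients, and uniqueness from $u'_0\in E^*_0V$ plus $u'_{k+1}=(A-\th_kI)u'_k$ — are all sound.
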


\begin{defi}     \label{def:vphi}    \samepage
\ifDRAFT {\rm def:vphi}. \fi
Referring to Lemma \ref{lem:parray1}, the sequence $\{\vphi_i\}_{i=1}^d$
is called the {\em first split sequence} of $A,A^*$ with respect to the
standard orderings $\{\th_i\}_{i=0}^d$ and $\{\th^*_i\}_{i=0}^d$.
\end{defi}

\begin{lemma}    \label{lem:parray2}    \samepage
\ifDRAFT {\rm lem:parray2}. \fi
Let $A,A^*$ denote a Leonard pair on $V$.
Let $\{\th_i\}_{i=0}^d$ (resp.\ $\{\th^*_i\}_{i=0}^d$) denote a standard ordering
of the eigenvalues of $A$ (resp.\ $A^*$). 
Then there exists a unique sequence $\{\phi_i\}_{i=1}^d$ of scalars in $\F$
with the following property:
there exists a basis of $V$ with respect to which the matrices
representing $A$ and $A^*$ are
\begin{align*}
A &: 
\begin{pmatrix}
 \th_d & & & & & \text{\bf 0}  \\
 1 & \th_{d-1} \\
    & 1 & \th_{d-2} \\
    &    &  \cdot & \cdot \\
    &     &          & \cdot & \cdot \\
\text{\bf 0} & & & & 1 & \th_0
\end{pmatrix},
&
A^* &:
\begin{pmatrix}
\th^*_0 & \phi_1 & & & & {\bf 0}  \\
          & \th^*_1 & \phi_2 \\
         &  &  \th^*_1 &  \cdot    \\
         &  &  &  \cdot & \cdot  \\
         &  &  &  &  \cdot & \phi_d  \\
\text{\bf 0} & & & & & \th^*_d
\end{pmatrix}.
\end{align*}
\end{lemma}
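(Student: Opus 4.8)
The plan is to derive Lemma \ref{lem:parray2} from Lemma \ref{lem:parray1} by reversing the standard ordering of the eigenvalues of $A$. Recall from the discussion preceding Lemma \ref{lem:parray1} that if $\{\th_i\}_{i=0}^d$ is a standard ordering of the eigenvalues of $A$, then $\{\th_{d-i}\}_{i=0}^d$ is also a standard ordering of the eigenvalues of $A$. So I would set $\eta_i = \th_{d-i}$ for $0 \leq i \leq d$; then $\{\eta_i\}_{i=0}^d$ is a standard ordering of the eigenvalues of $A$, while $\{\th^*_i\}_{i=0}^d$ remains a standard ordering of the eigenvalues of $A^*$.

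Next I would apply Lemma \ref{lem:parray1} to the Leonard pair $A,A^*$ with the standard orderings $\{\eta_i\}_{i=0}^d$ and $\{\th^*_i\}_{i=0}^d$. This produces a unique sequence $\{\phi_i\}_{i=1}^d$ of scalars in $\F$, together with a basis of $V$ with respect to which $A$ is represented by the lower bidiagonal matrix with diagonal entries $\eta_0,\eta_1,\ldots,\eta_d$ and all subdiagonal entries equal to $1$, and $A^*$ is represented by the upper bidiagonal matrix with diagonal entries $\th^*_0,\th^*_1,\ldots,\th^*_d$ and superdiagonal entries $\phi_1,\phi_2,\ldots,\phi_d$. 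Since $\eta_i=\th_{d-i}$, the diagonal of the matrix representing $A$ reads $\th_d,\th_{d-1},\ldots,\th_0$ from top to bottom, which is exactly the matrix displayed in Lemma \ref{lem:parray2}. This gives existence.

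For uniqueness, suppose $\{\phi'_i\}_{i=1}^d$ also has the property asserted in Lemma \ref{lem:parray2}. Then, reading the displayed matrix for $A$ back in terms of the $\eta_i$, the sequence $\{\phi'_i\}_{i=1}^d$ has the property asserted in Lemma \ref{lem:parray1} with respect to the standard orderings $\{\eta_i\}_{i=0}^d$ and $\{\th^*_i\}_{i=0}^d$. Hence the uniqueness clause of Lemma \ref{lem:parray1} forces $\phi'_i=\phi_i$ for $1 \leq i \leq d$.

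I do not expect a genuine obstacle: the substantive content is already contained in Lemma \ref{lem:parray1}, and the argument is a change-of-ordering bookkeeping. The only point needing a moment's care is verifying that reversing a standard ordering of the eigenvalues of $A$ yields a standard ordering (supplied by the text preceding Lemma \ref{lem:parray1}) and then matching the reversed diagonal $\th_d,\th_{d-1},\ldots,\th_0$ against the matrix in the statement of Lemma \ref{lem:parray2}.
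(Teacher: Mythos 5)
Your proposal is correct and is essentially the paper's own proof: the paper simply applies Lemma \ref{lem:parray1} to the reversed standard ordering $\{\th_{d-i}\}_{i=0}^d$ of the eigenvalues of $A$, which is exactly your argument (you merely spell out the existence and uniqueness bookkeeping in more detail).
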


\begin{proof}
Apply Lemma \ref{lem:parray1} to the eigenvalue sequence $\{\th_{d-i}\}_{i=0}^d$
of $A,A^*$.
\end{proof}

\begin{defi}     \label{def:phi}    \samepage
\ifDRAFT {\rm def:vphi}. \fi
Referring to Lemma \ref{lem:parray2}, the sequence $\{\phi_i\}_{i=1}^d$
is called the {\em second split sequence} of $A,A^*$ with respect to the
standard orderings $\{\th_i\}_{i=0}^d$ and $\{\th^*_i\}_{i=0}^d$.
\end{defi}

\begin{defi}    {\rm (See \cite[Definition 11.1]{T:TDD}.) }
\label{def:parray}    \samepage
\ifDRAFT {\rm def:parray}. \fi
Let $A,A^*$ denote a Leonard pair on $V$.
By a {\em parameter array of $A,A^*$} we mean a sequence
\begin{equation}
   (\{\th_i\}_{i=0}^d; \{\th^*_i\}_{i=0}^d; \{\vphi_i\}_{i=1}^d; \{\phi_i\}_{i=1}^d),    \label{eq:parray00}
\end{equation}
where $\{\th_i\}_{i=0}^d$ (resp.\ $\{\th^*_i\}_{i=0}^d$) is a standard ordering of the
eigenvalues of $A$ (resp.\ $A^*$),
and $\{\vphi_i\}_{i=1}^d$ (resp.\ $\{\phi_i\}_{i=1}^d$) is the first split sequence
(resp.\ second split sequence) with respect to the standard orderings
$\{\th_i\}_{i=0}^d$ and $\{\th^*_i\}_{i=0}^d$.
\end{defi}

We comment on the uniqueness of the parameter array.

\begin{lemma}    {\rm (See \cite[Theorem 1.11]{T:Leonard}.) }
\label{lem:parrayunique}    \samepage
\ifDRAFT {\rm lem:parrayunique}. \fi
Let $A,A^*$ denote a Leonard pair on $V$, and let
\[
  (\{\th_i\}_{i=0}^d; \{\th^*_i\}_{i=0}^d; \{\vphi_i\}_{i=1}^d; \{\phi_i\}_{i=1}^d)
\]
denote a parameter array of $A,A^*$.
Then each of the following is a parameter array of $A,A^*$:
\begin{align*}
 &   (\{\th_i\}_{i=0}^d; \{\th^*_i\}_{i=0}^d; \{\vphi_i\}_{i=1}^d; \{\phi_i\}_{i=1}^d),
\\
 &   (\{\th_i\}_{i=0}^d; \{\th^*_{d-i}\}_{i=0}^d; \{\phi_{d-i+1}\}_{i=1}^d; \{\vphi_{d-i+1}\}_{i=1}^d),
\\
 &   (\{\th_{d-i}\}_{i=0}^d; \{\th^*_i\}_{i=0}^d; \{\phi_i\}_{i=1}^d; \{\vphi_i\}_{i=1}^d),
\\
 &   (\{\th_{d-i}\}_{i=0}^d; \{\th^*_{d-i}\}_{i=0}^d; \{\vphi_{d-i+1}\}_{i=1}^d; \{\phi_{d-i+1}\}_{i=1}^d).
\end{align*}
Moreover, $A,A^*$ has no further parameter array.
\end{lemma}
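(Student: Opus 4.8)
The plan is to first bound the number of parameter arrays, then to produce each of the four displayed sequences as an honest parameter array. For the bound: recall from the discussion preceding Lemma~\ref{lem:parray1} that each of $A$ and $A^*$ has exactly two standard orderings of its eigenvalues, a given one and its reversal. By Lemmas~\ref{lem:parray1} and~\ref{lem:parray2}, once a standard ordering of the eigenvalues of $A$ and one of $A^*$ are fixed, the first split sequence and the second split sequence are uniquely determined. Hence a parameter array is determined by its underlying pair of standard orderings, so there are at most $2\cdot 2=4$ parameter arrays of $A,A^*$. The four displayed sequences involve the four distinct combinations of standard orderings, so it suffices to verify each of them is a parameter array; it will then follow that there are no others.

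The first displayed sequence is the given parameter array. For the third, reverse the ordering of the eigenvalues of $A$: by the proof of Lemma~\ref{lem:parray2}, the second split sequence of $A,A^*$ relative to $\{\th_i\}_{i=0}^d,\{\th^*_i\}_{i=0}^d$ coincides with the first split sequence of $A,A^*$ relative to $\{\th_{d-i}\}_{i=0}^d,\{\th^*_i\}_{i=0}^d$, and applying this once more, the second split sequence relative to $\{\th_{d-i}\}_{i=0}^d,\{\th^*_i\}_{i=0}^d$ is the first split sequence relative to $\{\th_i\}_{i=0}^d,\{\th^*_i\}_{i=0}^d$. This yields the third displayed sequence, and shows in general that reversing the $A$-ordering interchanges the two split sequences (for either standard $A^*$-ordering); hence the fourth displayed sequence will follow once the second is established.

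It remains to handle the second displayed sequence, i.e.\ to reverse the ordering of the eigenvalues of $A^*$. Here we use the symmetry of Definition~\ref{def:LP} in $A$ and $A^*$. Fix a basis $\{v_i\}_{i=0}^d$ realizing Lemma~\ref{lem:parray1}, so $Av_j=\th_jv_j+v_{j+1}$ and $A^*v_j=\th^*_jv_j+\vphi_jv_{j-1}$ with $v_{-1}=v_{d+1}=0$; applying Lemma~\ref{lem:irred} to the subspace spanned by $v_k,\ldots,v_d$ shows $\vphi_i\neq 0$ for all $i$ (and likewise $\phi_i\neq 0$ via Lemma~\ref{lem:parray2}). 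Reversing the basis, $u_i:=v_{d-i}$, turns the lower bidiagonal matrix of $A$ into an upper bidiagonal one and the upper bidiagonal matrix of $A^*$ into a lower bidiagonal one; rescaling $u_i$ by $\vphi_d\vphi_{d-1}\cdots\vphi_{d-i+1}$ then exhibits the pair $A^*,A$ in the form of Lemma~\ref{lem:parray1} relative to the standard orderings $\{\th^*_{d-i}\}_{i=0}^d$ and $\{\th_{d-i}\}_{i=0}^d$, with first split sequence $\{\vphi_{d-i+1}\}_{i=1}^d$. Thus ``reverse the basis'' converts a parameter array of $A,A^*$ into one of the swapped pair $A^*,A$, reversing both eigenvalue orderings and reversing-and-reindexing the split sequences. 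Composing ``reverse the basis'', then ``reverse the first eigenvalue ordering'' of $A^*,A$ by the previous paragraph, then ``reverse the basis'' again, returns us to $A,A^*$ with the $A^*$-ordering reversed; tracking the orderings and split sequences through this composition yields the second displayed sequence, and the fourth then follows from the $A$-reversal rule. By the first paragraph these exhaust the parameter arrays of $A,A^*$. (Alternatively, one may invoke the standard self-duality of a Leonard pair---an algebra antiautomorphism of $\End$ fixing both $A$ and $A^*$, deducible from Lemma~\ref{lem:irred}---whereupon ``transpose, then reverse the basis'' directly produces the parameter arrays with reversed $A^*$-ordering.)

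The argument is conceptually straightforward; the main obstacle is bookkeeping. One must check carefully that reversing the basis and rescaling produces exactly the Lemma~\ref{lem:parray1} form for the swapped pair with first split sequence $\{\vphi_{d-i+1}\}_{i=1}^d$ (and the analogue for the second split sequence), and then track the composed order-reversals and role-swaps so that the split sequences land in the correct slots as $\{\phi_{d-i+1}\}_{i=1}^d$ and $\{\vphi_{d-i+1}\}_{i=1}^d$. The nonvanishing of the split-sequence entries, noted above, is precisely what makes the rescalings and the claimed irreducible-tridiagonal forms legitimate.
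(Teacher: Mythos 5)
Your counting argument (at most four parameter arrays, one per choice of the pair of standard orderings), your nonvanishing argument for the $\vphi_i$, $\phi_i$, and your treatment of the first and third displayed arrays (reversing the $A$-ordering swaps the split sequences, straight from the definition in Lemma \ref{lem:parray2}) are all correct; note for the record that the paper itself offers no proof of this lemma but cites \cite[Theorem~1.11]{T:Leonard}. The problem is the step you rely on for the second (and hence fourth) array. Your reverse-the-basis computation establishes only that the \emph{first} split sequence of the swapped pair $A^*,A$ with respect to $(\{\th^*_{d-i}\}_{i=0}^d,\{\th_{d-i}\}_{i=0}^d)$ is $\{\vphi_{d-i+1}\}_{i=1}^d$; it says nothing about the second split sequence of $A^*,A$, so the assertion that reversal ``converts a parameter array of $A,A^*$ into one of $A^*,A$, reversing-and-reindexing the split sequences'' is unproved. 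It is in fact false: combining your own reversal computation with the (true) second displayed array, the parameter array of $A^*,A$ with respect to these orderings is $(\{\th^*_{d-i}\};\{\th_{d-i}\};\{\vphi_{d-i+1}\};\{\phi_{i}\})$, with second split sequence $\{\phi_i\}$ rather than $\{\phi_{d-i+1}\}$ (duality sends $\phi_i\mapsto\phi_{d-i+1}$).

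This is not a cosmetic slip: the moves you have actually justified cannot reach the goal. Applying reverse-the-basis to the two split forms available from the given parameter array (those of Lemmas \ref{lem:parray1} and \ref{lem:parray2}) yields split bases of $A^*,A$ only for orderings whose first entry is $\{\th^*_{d-i}\}$, and applying the trick again simply returns the two split forms of $A,A^*$ you started with; ``reverse the first ordering swaps the splits'' is a relabeling and produces no new split basis. So the composition never produces a split basis of $A,A^*$ in which the diagonal of $A^*$ is $\th^*_d,\ldots,\th^*_0$, which is exactly what the second and fourth arrays require. Moreover, if one grants your asserted swap rule and tracks the composition, the output is $(\{\th_i\};\{\th^*_{d-i}\};\{\phi_i\};\{\vphi_i\})$, not the second displayed array, so the bookkeeping was not actually carried through. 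Your parenthetical alternative is the ingredient that would close the gap: an antiautomorphism of $\End$ fixing $A$ and $A^*$ exists because an irreducible tridiagonal matrix is conjugate to its transpose by an invertible diagonal matrix (take the basis in which $A$ is irreducible tridiagonal and $A^*$ diagonal), and transposing the two split forms and then reversing the basis does produce split bases of $A,A^*$ with respect to $(\{\th_i\},\{\th^*_{d-i}\})$ and $(\{\th_{d-i}\},\{\th^*_{d-i}\})$ with entries $\{\phi_{d-i+1}\}$ and $\{\vphi_{d-i+1}\}$. But you neither prove that antiautomorphism (it does not follow from Lemma \ref{lem:irred} alone) nor carry out that computation, so as written the proof of the second and fourth arrays, and with them the lemma, is incomplete.
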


We mention a significance of the parameter array.

\begin{lemma}    {\rm (See \cite[Theorem 1.9]{T:Leonard}.) }
\label{lem:LPunique}
\ifDRAFT {\rm lem:LPunique}. \fi
Two Leonard pairs over $\F$ are isomorphic if and only if
they have a parameter array in common.
\end{lemma}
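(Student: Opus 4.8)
The plan is to prove the two directions separately, with Lemmas~\ref{lem:parray1} and \ref{lem:parray2} doing the heavy lifting. The guiding idea is that a parameter array determines an explicit pair of bidiagonal matrices, so that two Leonard pairs sharing a parameter array can be put into literally the same matrix form, whereupon the change-of-basis map between the two bases is the desired isomorphism. For the forward direction, suppose $S\colon V\to\mathcal V$ satisfies $SA=BS$ and $SA^*=B^*S$. I would first record the routine change-of-basis fact: if $M\in\Matd$ represents $X\in\End$ with respect to a basis $\{v_i\}_{i=0}^d$ of $V$, then $M$ represents $SXS^{-1}$ with respect to $\{Sv_i\}_{i=0}^d$. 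Applying this to $X=A$ and $X=A^*$ shows that a basis of $V$ realizing the pair $(\mathrm{diag}(\th_0,\dots,\th_d),\ \text{irreducible tridiagonal})$ for $(A,A^*)$ is carried by $S$ to a basis of $\mathcal V$ realizing the same pair for $(B,B^*)$; hence every standard ordering of the eigenvalues of $A$ (resp.\ $A^*$) is standard for $B$ (resp.\ $B^*$). Now fix a parameter array $(\{\th_i\}_{i=0}^d;\{\th^*_i\}_{i=0}^d;\{\vphi_i\}_{i=1}^d;\{\phi_i\}_{i=1}^d)$ of $A,A^*$, take the basis of $V$ furnished by Lemma~\ref{lem:parray1}, and transport it by $S$; since the resulting basis of $\mathcal V$ puts $B,B^*$ into the normal form of that lemma, the uniqueness clause of Lemma~\ref{lem:parray1} forces the first split sequence of $B,B^*$ with respect to $\{\th_i\},\{\th^*_i\}$ to be $\{\vphi_i\}$, and the same argument with Lemma~\ref{lem:parray2} gives second split sequence $\{\phi_i\}$. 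So the chosen parameter array of $A,A^*$ is also a parameter array of $B,B^*$.

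For the converse, suppose $A,A^*$ on $V$ and $B,B^*$ on $\mathcal V$ share a parameter array $(\{\th_i\}_{i=0}^d;\{\th^*_i\}_{i=0}^d;\{\vphi_i\}_{i=1}^d;\{\phi_i\}_{i=1}^d)$. Because $\{\th_i\},\{\th^*_i\}$ are standard orderings for $A,A^*$ with first split sequence $\{\vphi_i\}$, Lemma~\ref{lem:parray1} provides a basis $\{u_i\}_{i=0}^d$ of $V$ with respect to which $A$ and $A^*$ are represented by the two bidiagonal matrices of that lemma; because the same sequence is a parameter array of $B,B^*$, Lemma~\ref{lem:parray1} provides a basis $\{u'_i\}_{i=0}^d$ of $\mathcal V$ with respect to which $B$ and $B^*$ are represented by the very same two matrices. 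Let $S\colon V\to\mathcal V$ be the linear bijection with $Su_i=u'_i$ for $0\le i\le d$. Comparing the identical matrix representations entry by entry yields $SA=BS$ and $SA^*=B^*S$, so $S$ is an isomorphism of Leonard pairs from $A,A^*$ to $B,B^*$.

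The proof is mostly bookkeeping once Lemma~\ref{lem:parray1} is available, and no deeper input (such as the characterization of which sequences arise as parameter arrays) is needed. The one place to be careful --- and what I expect a reader to scrutinize --- is the precise reading of ``have a parameter array in common'': it must mean that the same standard orderings $\{\th_i\}$ and $\{\th^*_i\}$ are used for both pairs, so that the normal forms handed back by Lemma~\ref{lem:parray1} literally coincide; with any weaker reading the argument would stall. It is also worth noting that in the converse direction the second split sequence $\{\phi_i\}$ is never actually used, since the eigenvalue orderings together with the first split sequence already determine the pair up to isomorphism; including $\{\phi_i\}$ in the parameter array is thus a harmless redundancy retained for symmetry.
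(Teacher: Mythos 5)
Your proof is correct, but note that the paper itself does not prove Lemma \ref{lem:LPunique}: it is simply quoted from \cite[Theorem 1.9]{T:Leonard}, so there is no internal proof to compare against. Your self-contained derivation from Lemmas \ref{lem:parray1} and \ref{lem:parray2} works: in the forward direction, transporting the split basis through the isomorphism $S$ and invoking the uniqueness clause of Lemma \ref{lem:parray1} (resp.\ Lemma \ref{lem:parray2}) correctly shows that any parameter array of $A,A^*$ is a parameter array of $B,B^*$; in the converse direction, a common parameter array produces, via Lemma \ref{lem:parray1}, bases of $V$ and $\mathcal V$ with literally identical matrix representations, and the basis-matching bijection then intertwines both pairs. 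Your reading of ``have a parameter array in common'' (same eigenvalue orderings, same split sequences) is the intended one, and your closing remark that $\{\phi_i\}$ is redundant in the converse is consistent with the paper, since Lemma \ref{lem:classify}(iv) shows the second split sequence is determined by the eigenvalue sequences together with $\vphi_1$. The only caveat is bibliographic rather than mathematical: this lemma is stated as imported background, and your argument essentially reconstructs the uniqueness half of the classification in \cite{T:Leonard} from the two split-form lemmas the paper also imports.
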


\begin{defi}    \label{def:parrayoverF}    \samepage
\ifDRAFT {\rm def:parrayoverF}. \fi
By a {\em parameter array over $\F$} we mean the parameter array of a Leonard pair
over $\F$.
\end{defi}

In the next result we classify the parameter arrays over $\F$.

\begin{lemma}   {\rm (See \cite[Theorem 1.9]{T:Leonard}.) }
\label{lem:classify}    \samepage
\ifDRAFT {\rm lem:classify}. \fi
Consider a sequence
\begin{equation}
  (\{\th_i\}_{i=0}^d; \{\th^*_i\}_{i=0}^d; \{\vphi_i\}_{i=1}^d; \{\phi_i\}_{i=1}^d)         \label{eq:parray}
\end{equation}    
of scalars in $\F$.
This sequence is a parameter array over $\F$ if and only if
the following conditions {\rm (i)--(v)} hold:
\begin{itemize}
\item[\rm (i)]
$\th_i \neq \th_j, \quad \th^*_i \neq \th^*_j \quad$ if $i \neq j \quad (0 \leq i,j \leq d)$;
\item[\rm (ii)]
$\vphi_i \neq 0, \quad \phi_i \neq 0 \quad (1 \leq i \leq d)$;
\item[\rm (iii)]
$\vphi_i = \phi_1 \sum_{\ell=0}^{i-1} \frac{\th_\ell - \th_{d-\ell} } { \th_0 - \th_d }
             + (\th^*_i - \th^*_0)(\th_{i-1}-\th_d)  \quad (1 \leq i \leq d)$;
\item[\rm (iv)]
$\phi_i = \vphi_1 \sum_{\ell=0}^{i-1} \frac{\th_\ell - \th_{d-\ell} } { \th_0 - \th_d }
             + (\th^*_i - \th^*_0)(\th_{d-i+1}-\th_0)  \quad (1 \leq i \leq d)$;
\item[\rm (v)]
the expressions
\begin{equation}
\frac{\th_{i-2} - \th_{i+1} } { \th_{i-1} - \th_i },
\qquad\qquad
\frac{\th^*_{i-2} - \th^*_{i+1} } { \th^*_{i-1} - \th^*_i }               \label{eq:indep}
\end{equation}
are equal and independent of $i$ for $2 \leq i \leq d-1$.
\end{itemize}
\end{lemma}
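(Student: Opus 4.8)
The statement is the fundamental classification of parameter arrays due to the second author, recorded as \cite[Theorem 1.9]{T:Leonard} (and reworked in \cite[Section 11]{T:TDD}); accordingly the cleanest route is to quote it, after noting that a parameter array of a Leonard pair, in the sense of Definition \ref{def:parray}, is obtained from a parameter array of an associated Leonard system by forgetting which of the four orderings listed in Lemma \ref{lem:parrayunique} was chosen. For completeness I would indicate the shape of the argument in both directions.

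For necessity, suppose (\ref{eq:parray}) is the parameter array of a Leonard pair $A,A^*$ with the indicated standard orderings. Condition (i) is immediate from Lemma \ref{lem:LPmultfree}. For (ii), pass to the two ``split'' bases furnished by Lemmas \ref{lem:parray1} and \ref{lem:parray2}: in the first, $A$ is lower bidiagonal with diagonal $\{\th_i\}_{i=0}^d$ and all subdiagonal entries $1$ while $A^*$ is upper bidiagonal with diagonal $\{\th^*_i\}_{i=0}^d$ and superdiagonal $\{\vphi_i\}_{i=1}^d$, and symmetrically for $\{\phi_i\}_{i=1}^d$. Since the original representations of $A$ (resp.\ $A^*$) are irreducible tridiagonal, an irreducibility/rank count forces $\vphi_i \neq 0$ and $\phi_i \neq 0$ for $1 \le i \le d$. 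The closed forms (iii) and (iv) come from expressing both split bases in terms of a fixed eigenbasis of $A^*$ and matching entries of the resulting transition matrices, which exhibits $\vphi_i$ (resp.\ $\phi_i$) as the stated telescoping sum in the eigenvalues with leading coefficient $\phi_1$ (resp.\ $\vphi_1$). Condition (v) is the assertion that $\{\th_i\}_{i=0}^d$ and $\{\th^*_i\}_{i=0}^d$ satisfy a common three-term recurrence $\th_{i-1}-\beta\th_i+\th_{i+1}=\gamma$ (with the same $\beta$ for the two sequences); this is equivalent to the Askey--Wilson (tridiagonal) relations satisfied by any Leonard pair, and reflects that both eigenvalue sequences index the recurrence data of a $q$-Racah-type orthogonal polynomial system.

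For sufficiency, given scalars satisfying (i)--(v) I would build $A,A^*$ on $\F^{d+1}$ with $A$ lower bidiagonal (diagonal $\{\th_i\}_{i=0}^d$, subdiagonal all $1$) and $A^*$ upper bidiagonal (diagonal $\{\th^*_i\}_{i=0}^d$, superdiagonal $\{\vphi_i\}_{i=1}^d$). Each is multiplicity-free with the prescribed spectrum, being triangular with distinct diagonal entries. One then takes a basis of $A^*$-eigenvectors, writes the matrix $M$ representing $A$ in that basis, and checks that $M$ is irreducible tridiagonal: the transition matrix from the bidiagonal basis to the eigenbasis is explicit (lower triangular, entries polynomial in the $\vphi_i$ and in the eigenvalue differences), and relations (iii)--(v) are exactly what make the entries of $M$ off the three central diagonals cancel while keeping the super/subdiagonal entries nonzero. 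This gives condition (i) of Definition \ref{def:LP}; condition (ii) follows by the symmetric computation using $\{\phi_i\}_{i=1}^d$, and Lemmas \ref{lem:parray1}, \ref{lem:parray2}, \ref{lem:parrayunique} then identify (\ref{eq:parray}) as a parameter array of the resulting pair.

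The main obstacle is this last verification --- that in an eigenbasis of $A^*$ the map $A$ is represented by an irreducible tridiagonal matrix --- since it is precisely here that all five conditions, and especially the recurrence (v), are used; carrying it out directly requires the explicit transition matrix together with a cancellation (Casorati-determinant) argument, which is the substantive content of \cite[Theorem 1.9]{T:Leonard}. In the present paper I would therefore simply cite that theorem rather than reproduce the computation.
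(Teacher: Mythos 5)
Your proposal does exactly what the paper does: this lemma is quoted from \cite[Theorem 1.9]{T:Leonard} and the paper supplies no proof of its own, so citing that theorem is the intended argument. Your accompanying sketch of the two directions is a reasonable outline of the cited proof but is not needed here.
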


\begin{lemma}   {\rm (See \cite[Lemma 6.1]{NT:affine}.) }
\label{lem:affineLPparam}    \samepage
\ifDRAFT {\rm lem:affineLPparam}. \fi
Let $A,A^*$ denote a Leonard pair over $\F$ with parameter array
\[
 (\{\th_i\}_{i=0}^d; \{\th^*_i\}_{i=0}^d; \{\vphi_i\}_{i=1}^d; \{\phi_i\}_{i=1}^d).
\]
Then for scalars $\xi$, $\xi^*$, $\zeta$, $\zeta^*$ in $\F$ with $\xi \xi^* \neq 0$,
the sequence
\[
 ( \{\xi \th_i + \zeta\}_{i=0}^d; \{\xi^* \th^*_i + \zeta^*\}_{i=0}^d;
 \{\xi \xi^* \vphi_i\}_{i=1}^d, \{\xi \xi^* \phi_i\}_{i=1}^d)
\]
is a parameter array of the Leonard pair $\xi A + \zeta I, \, \xi^* A^* + \zeta^* I$.
\end{lemma}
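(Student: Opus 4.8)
The plan is to verify directly, from the definition of a parameter array (Definition~\ref{def:parray}), that the displayed sequence is the parameter array of the pair $B := \xi A + \zeta I$, $B^* := \xi^* A^* + \zeta^* I$, which is a Leonard pair on $V$ by Lemma~\ref{lem:affineLP}. Three things need to be checked: that $\{\xi\th_i+\zeta\}_{i=0}^d$ is a standard ordering of the eigenvalues of $B$ and $\{\xi^*\th^*_i+\zeta^*\}_{i=0}^d$ is a standard ordering of the eigenvalues of $B^*$; that with respect to these orderings the first split sequence of $B,B^*$ is $\{\xi\xi^*\vphi_i\}_{i=1}^d$; and that the second split sequence is $\{\xi\xi^*\phi_i\}_{i=1}^d$.

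For the standardness claim, I would start from a basis of $V$ with respect to which $A$ is represented by $\mathrm{diag}(\th_0,\dots,\th_d)$ and $A^*$ by an irreducible tridiagonal matrix (such a basis exists since $\{\th_i\}_{i=0}^d$ is standard for $A$). With respect to the same basis, $B$ is represented by $\mathrm{diag}(\xi\th_0+\zeta,\dots,\xi\th_d+\zeta)$, while $B^*$ is still irreducible tridiagonal because its off-diagonal entries are those of $A^*$ scaled by $\xi^*\neq 0$. Hence $\{\xi\th_i+\zeta\}_{i=0}^d$ is standard for $B$, and the symmetric argument handles $B^*$.

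For the split sequences I would invoke Lemma~\ref{lem:parray1}: fix a basis $\{v_i\}_{i=0}^d$ of $V$ realizing the two bidiagonal matrices displayed there, with parameters $\th_i$, $\th^*_i$, $\vphi_i$, and pass to the rescaled basis $v'_i = \xi^{i} v_i$. Such a diagonal change of basis multiplies the $(i,j)$ entry of a representing matrix by $\xi^{\,j-i}$, so with respect to $\{v'_i\}_{i=0}^d$ the matrix of $B$ is lower bidiagonal with all subdiagonal entries equal to $1$ and diagonal $\xi\th_i+\zeta$, while the matrix of $B^*$ is upper bidiagonal with superdiagonal $\xi\xi^*\vphi_i$ and diagonal $\xi^*\th^*_i+\zeta^*$. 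By the uniqueness part of Lemma~\ref{lem:parray1} together with Definition~\ref{def:vphi}, the first split sequence of $B,B^*$ with respect to $\{\xi\th_i+\zeta\}_{i=0}^d$ and $\{\xi^*\th^*_i+\zeta^*\}_{i=0}^d$ is $\{\xi\xi^*\vphi_i\}_{i=1}^d$. Running the identical argument with Lemma~\ref{lem:parray2} and Definition~\ref{def:phi} in place of Lemma~\ref{lem:parray1} and Definition~\ref{def:vphi} shows the second split sequence is $\{\xi\xi^*\phi_i\}_{i=1}^d$, and then Definition~\ref{def:parray} finishes the proof.

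The argument is essentially bookkeeping; the one point requiring care is choosing the rescaling exponents so that the subdiagonal of $B$ is normalized to all $1$'s while simultaneously tracking the resulting factor on the superdiagonal of $B^*$ and confirming it is exactly $\xi\xi^*\vphi_i$ rather than some other power of $\xi$. An alternative, worth at most a remark, is to bypass the change of basis and verify conditions (i)--(v) of Lemma~\ref{lem:classify} for the transformed sequence: (i) and (ii) are immediate from $\xi,\xi^*\neq 0$, and in (iii)--(v) every difference of $\th$'s scales by $\xi$ and every difference of $\th^*$'s by $\xi^*$, so the ratios in (v) are unchanged while the right-hand sides of (iii) and (iv) acquire exactly the factor $\xi\xi^*$. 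However, this route only shows the sequence is a parameter array over $\F$, so one would still need Lemma~\ref{lem:LPunique} to attach it to the specific pair $B,B^*$; for that reason the direct basis computation is the cleaner choice.
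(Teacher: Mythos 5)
Your proof is correct. Note that the paper itself gives no argument for this lemma; it simply cites \cite[Lemma 6.1]{NT:affine}, so there is no in-paper proof to compare against. Your direct verification is sound and self-contained: the standardness of the orderings for $\xi A+\zeta I$ and $\xi^* A^*+\zeta^* I$ follows exactly as you say, since adding $\zeta^* I$ and scaling by $\xi^*\neq 0$ preserves irreducible tridiagonality; and the rescaling $v'_i=\xi^i v_i$ does multiply the $(i,j)$-entry by $\xi^{\,j-i}$, which turns the subdiagonal entries $\xi$ of $\xi A+\zeta I$ into $1$ and the superdiagonal entries $\xi^*\vphi_i$ of $\xi^* A^*+\zeta^* I$ into $\xi\xi^*\vphi_i$, so the uniqueness clause of Lemma \ref{lem:parray1} (resp.\ Lemma \ref{lem:parray2}) identifies the first (resp.\ second) split sequence as claimed. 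You are also right to flag that the alternative route through Lemma \ref{lem:classify} only certifies that the transformed sequence is \emph{some} parameter array over $\F$ and does not by itself attach it to the pair $\xi A+\zeta I,\ \xi^* A^*+\zeta^* I$, so the basis computation is the appropriate main argument.
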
 

\begin{defi}    \label{def:beta0}    \samepage
\ifDRAFT {\rm def:beta0}. \fi
Let $(\{\th_i\}_{i=0}^d; \{\th^*_i\}_{i=0}^d; \{\vphi_i\}_{i=1}^d; \{\phi_i\}_{i=1}^d)$
denote a parameter array over $\F$ with  $d \geq 3$.
Define $\beta \in \F$ such that $\beta+1$ is equal to the common value
of the two fractions in \eqref{eq:indep}.
We call $\beta$ the {\em fundamental constant} of the parameter array.
\end{defi}

\begin{defi}    \label{def:betaLP}    \samepage
\ifDRAFT {\rm def:betaLP}. \fi
Let $A,A^*$ denote a Leonard pair over $\F$ with diameter $d \geq 3$.
The parameter arrays of $A,A^*$ have the same fundamental constant $\beta$;
we call $\beta$ the {\em fundamental constant} of $A,A^*$.
\end{defi}

\section{The TD/D sequences of a Leonard pair}
\label{sec:TDDseq}
\ifDRAFT {\rm sec:TDDseq}. \fi

In this section we introduce the notion of a TD/D sequence of a Leonard pair.
To motivate this notion, we make some comments about Leonard pairs.
An irreducible tridiagonal matrix is said to be {\em normalized} whenever the subdiagonal entires
are all $1$.

\begin{lemma}    \label{lem:normalize}    \samepage
\ifDRAFT {\rm lem:normalize}. \fi
For $A \in \End$ and a basis $\{u_i\}_{i=0}^d$ of $V$,
assume that with respect to $\{u_i\}_{i=0}^d$ the matrix $B$ representing 
$A$ is irreducible tridiagonal.
Then for nonzero scalars $\{\alpha_i\}_{i=0}^d$ in $\F$, the following are
equivalent:
\begin{itemize}
\item[\rm (i)]
With respect to the basis $\{\alpha_i u_i\}_{i=0}^d$ of $V$, the matrix representing
$A$ is normalized irreducible tridiagonal;
\item[\rm (ii)]
$\alpha_{i}/\alpha_{i-1} = B_{i,i-1}$ for $1 \leq i \leq d$.
\end{itemize}
\end{lemma}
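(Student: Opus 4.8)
The plan is to compute directly the matrix representing $A$ with respect to the rescaled basis $\{\alpha_i u_i\}_{i=0}^d$, and then compare its subdiagonal entries with $1$.

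First I would record how the matrix entries transform under rescaling. Put $w_i = \alpha_i u_i$ for $0 \leq i \leq d$, so that $\{w_i\}_{i=0}^d$ is a basis of $V$ and $u_i = \alpha_i^{-1} w_i$. Starting from $A u_j = \sum_{i=0}^d B_{i,j} u_i$, we obtain
\[
 A w_j = \alpha_j\, A u_j = \sum_{i=0}^d \frac{\alpha_j}{\alpha_i}\, B_{i,j}\, w_i
 \qquad (0 \leq j \leq d),
\]
so the matrix $B'$ representing $A$ with respect to $\{w_i\}_{i=0}^d$ has entries $B'_{i,j} = (\alpha_j/\alpha_i) B_{i,j}$. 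Since every $\alpha_i$ is nonzero, $B'_{i,j}$ and $B_{i,j}$ vanish for exactly the same pairs $(i,j)$; hence $B'$ has the same zero pattern as $B$, and in particular $B'$ is again irreducible tridiagonal. Its subdiagonal entries are $B'_{i,i-1} = (\alpha_{i-1}/\alpha_i) B_{i,i-1}$ for $1 \leq i \leq d$.

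Next I would translate the normalization condition. By definition, (i) asserts that $B'$ is normalized irreducible tridiagonal; since $B'$ is automatically irreducible tridiagonal, (i) is equivalent to $B'_{i,i-1} = 1$ for $1 \leq i \leq d$, that is, $(\alpha_{i-1}/\alpha_i) B_{i,i-1} = 1$ for $1 \leq i \leq d$. Because the $\alpha_i$ are nonzero, this last condition is equivalent to $\alpha_i/\alpha_{i-1} = B_{i,i-1}$ for $1 \leq i \leq d$, which is precisely (ii). The argument carries no real obstacle; the only point deserving a moment's care is that rescaling a basis by nonzero scalars preserves the irreducible tridiagonal shape, which is transparent from the entrywise formula $B'_{i,j} = (\alpha_j/\alpha_i) B_{i,j}$.
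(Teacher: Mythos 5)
Your proof is correct and is essentially the paper's argument made explicit: the paper observes that the new matrix is $D^{-1}BD$ with $D=\text{\rm diag}(\alpha_0,\ldots,\alpha_d)$ and calls the rest routine, while you carry out that conjugation entrywise ($B'_{i,j}=(\alpha_j/\alpha_i)B_{i,j}$) and read off the subdiagonal condition. No gaps.
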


\begin{proof}
By linear algebra, the matrix representing $A$ with respect to the basis
$\{\alpha_i u_i\}_{i=0}^d$ is equal to $D^{-1} B D$,
where $D = \text{\rm diag}(\alpha_0, \alpha_1, \ldots, \alpha_d)$.
The result is a routine consequence of this.
\end{proof}

\begin{prop}    \label{prop:TDDseq}    \samepage
\ifDRAFT {\rm prop:TDDseq}. \fi
Let $A,A^*$ denote a Leonard pair on $V$.
Let $\{\th^*_i\}_{i=0}^d$ denote a standard ordering of the eigenvalues of $A^*$.
Then there exists a unique sequence $(\{a_i\}_{i=0}^d; \{x_i\}_{i=1}^d)$
of scalars in $\F$ with the following property:
there exists a basis of $V$ with respect to which the matrices
representing $A$ and $A^*$ are
\begin{align}
A &:
\begin{pmatrix}
 a_0 & x_1 & & & & \text{\bf 0}  \\
 1  & a_1  & x_2 \\
    &  1  &  \cdot & \cdot \\
    &     &   \cdot       & \cdot & \cdot \\
     & & & \cdot & \cdot & x_d  \\
\text{\bf 0} & & & & 1 & a_d
\end{pmatrix},
 & 
A^* &: \text{\rm diag}(\th^*_0, \th^*_1, \ldots, \th^*_d).    \label{eq:AAsTDD}
\end{align}
Moreover, $x_i \neq 0$ for $1 \leq i \leq d$.
\end{prop}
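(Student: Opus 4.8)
The plan is to obtain existence by normalizing a basis arising from the standard ordering, and uniqueness by analyzing which changes of basis can preserve the two prescribed shapes.

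\textbf{Existence.} Since $\{\th^*_i\}_{i=0}^d$ is a standard ordering of the eigenvalues of $A^*$, there is a basis $\{u_i\}_{i=0}^d$ of $V$ with respect to which $A^*$ is represented by $\text{\rm diag}(\th^*_0,\ldots,\th^*_d)$ and $A$ is represented by an irreducible tridiagonal matrix $B$. In particular the subdiagonal entries $B_{i,i-1}$ $(1 \le i \le d)$ are all nonzero, so the scalars $\alpha_0 = 1$ and $\alpha_i = B_{i,i-1}\,\alpha_{i-1}$ $(1 \le i \le d)$ are all nonzero and satisfy condition (ii) of Lemma \ref{lem:normalize}. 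Hence with respect to $\{\alpha_i u_i\}_{i=0}^d$ the matrix representing $A$ is normalized irreducible tridiagonal, so it has the shape displayed for $A$ in \eqref{eq:AAsTDD}, with diagonal entries $a_i = B_{i,i}$ and superdiagonal entries $x_i$ each a nonzero scalar multiple of $B_{i-1,i}$. Because $B$ is irreducible tridiagonal, $B_{i-1,i} \ne 0$, whence $x_i \ne 0$ for $1 \le i \le d$. Finally, passing from $\{u_i\}$ to $\{\alpha_i u_i\}$ only rescales the basis vectors, so it leaves the matrix representing $A^*$ equal to $\text{\rm diag}(\th^*_0,\ldots,\th^*_d)$. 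This settles existence, including the last assertion.

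\textbf{Uniqueness.} Suppose $\{v_i\}_{i=0}^d$ and $\{v'_i\}_{i=0}^d$ are bases of $V$ realizing \eqref{eq:AAsTDD}, and let $M$, $M'$ be the respective matrices representing $A$; it suffices to show $M = M'$. With respect to either basis, $A^*$ is $\text{\rm diag}(\th^*_0,\ldots,\th^*_d)$, so $A^* v_i = \th^*_i v_i$ and $A^* v'_i = \th^*_i v'_i$ for $0 \le i \le d$. By Lemma \ref{lem:LPmultfree} the scalars $\th^*_i$ are mutually distinct, so each $\th^*_i$-eigenspace of $A^*$ is one-dimensional; hence $v'_i = \gamma_i v_i$ for some nonzero $\gamma_i \in \F$. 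Therefore $M' = G^{-1} M G$ with $G = \text{\rm diag}(\gamma_0,\ldots,\gamma_d)$, and the $(i,j)$ entry of $M'$ is $(\gamma_j/\gamma_i) M_{i,j}$. Comparing the subdiagonal entries of $M$ and $M'$, which are all $1$, gives $\gamma_{i-1} = \gamma_i$ for $1 \le i \le d$; thus $G$ is a scalar matrix and $M' = G^{-1} M G = M$, so the sequences $(\{a_i\}_{i=0}^d; \{x_i\}_{i=1}^d)$ obtained from the two bases coincide.

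\textbf{Main difficulty.} There is no serious obstacle: the argument is bookkeeping built on Lemma \ref{lem:normalize} and the multiplicity-freeness of $A^*$ (Lemma \ref{lem:LPmultfree}). The only point needing a little care is the uniqueness step, where one first uses that $A^*$ is diagonal and multiplicity-free to force the change of basis between two realizations to be diagonal, and then uses the normalization of the subdiagonal to force that diagonal matrix to be scalar, after which it acts trivially by conjugation and the matrix of $A$ is unchanged.
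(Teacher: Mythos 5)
Your proof is correct and follows essentially the same route as the paper: existence by normalizing (via Lemma \ref{lem:normalize}) a basis coming from the standard ordering, and uniqueness by noting that any two such bases differ by a diagonal change of basis which the unit subdiagonal forces to be scalar. The only cosmetic difference is that you obtain $x_i \neq 0$ directly from the irreducibility of the original tridiagonal matrix under diagonal conjugation, whereas the paper cites Lemma \ref{lem:irred}; both are fine.
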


\begin{proof}
We first show that the sequence exists.
By Definition \ref{def:LP}(ii) and the definition of a standard ordering,
there exists a basis  $\{u_i\}_{i=0}^d$ of $V$ with respect to which the matrix
representing $A$ is irreducible tridiagonal and the matrix representing
$A^*$ is equal to $\text{\rm diag}(\th^*_0, \th^*_1, \ldots, \th^*_d)$.
After adjusting the basis $\{u_i\}_{i=0}^d$ using Lemma \ref{lem:normalize},
the matrix representing $A$ is normalized irreducible tridiagonal and the
matrix representing $A^*$ is  $\text{\rm diag}(\th^*_0, \th^*_1, \ldots, \th^*_d)$.
We have shown that the sequence exists.
Next we show that the sequence is unique.
Suppose we have another basis $\{v_i\}_{i=0}^d$ of $V$ with respect to which
the matrix representing $A$ is normalized irreducible tridiagonal
and the matrix representing $A^*$ is 
$\text{\rm diag}(\th^*_0, \th^*_1, \ldots, \th^*_d)$.
Observe that for $0 \leq i \leq d$ each of $u_i$ and $v_i$ is an eigenvector for
$A^*$ with eigenvalue $\th^*_i$.
Therefore there exists a nonzero scalar $\alpha_i \in \F$ such that
$v_i = \alpha_i u_i$.
By Lemma \ref{lem:normalize} and the construction,
$\alpha_i/\alpha_{i-1} = 1$ for $1 \leq i \leq d$.
Therefore $\alpha_i$ is independent of $i$ for $0 \leq i \leq d$.
Consequently $v_i = \alpha_0 u_i$ for $0 \leq i \leq d$.
This implies that the matrix representing $A$ with respect to $\{u_i\}_{i=0}^d$ 
is equal to the matrix  representing $A$ with respect to $\{v_i\}_{i=0}^d$.
It follows that the sequence is unique.
We have $x_i \neq 0$ for $1 \leq i \leq d$ by Lemma \ref{lem:irred}.
\end{proof}

\begin{defi}    \label{def:TDDseq}    \samepage
\ifDRAFT {\rm def:TDDseq}. \fi
Let $A,A^*$ denote a Leonard pair on $V$.
By a {\em TD/D sequence of $A,A^*$} we mean a sequence
\begin{equation}
   (\{a_i\}_{i=0}^d; \{x_i\}_{i=1}^d; \{\th^*_i\}_{i=0}^d),            \label{eq:TDDseq}
\end{equation}
where $\{\th^*_i\}_{i=0}^d$ is a standard ordering of the eigenvalues of $A^*$,
and $(\{a_i\}_{i=0}^d; \{x_i\}_{i=1}^d)$ is the corresponding sequence of scalars
from Proposition \ref{prop:TDDseq}.
\end{defi}

We comment on the uniqueness of the TD/D sequence.

\begin{lemma}     \label{lem:TDDrel}    \samepage
\ifDRAFT {\rm lem:TDDrel}. \fi
Let $A,A^*$ denote a Leonard pair on $V$.
Let 
\[
 (\{a_i\}_{i=0}^d; \{x_i\}_{i=1}^d; \{\th^*_i\}_{i=0}^d)
\]
denote a TD/D sequence of $A,A^*$.
Then
\begin{equation}
  (\{a_{d-i}\}_{i=0}^d; \{x_{d-i+1} \}_{i=1}^d; \{ \th^*_{d-i} \}_{i=0}^d)  \label{eq:TDDseq1}
\end{equation}
is a TD/D sequence of $A,A^*$,
and $A,A^*$ has no further TD/D sequence.
\end{lemma}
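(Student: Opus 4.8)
The plan is to mirror the uniqueness argument already established for the split sequences (Lemma \ref{lem:parrayunique}) and the standard orderings. First I would recall that by Proposition \ref{prop:TDDseq} a TD/D sequence is determined by the choice of standard ordering $\{\th^*_i\}_{i=0}^d$ of the eigenvalues of $A^*$, and that the only other standard ordering is the reversal $\{\th^*_{d-i}\}_{i=0}^d$ (this is the fact stated in the text right after the definition of standard ordering). So it suffices to show two things: (a) the sequence displayed in \eqref{eq:TDDseq1} is exactly the TD/D sequence associated with the reversed ordering $\{\th^*_{d-i}\}_{i=0}^d$; and (b) distinct standard orderings of $A^*$ give distinct TD/D sequences (so that there are exactly two TD/D sequences, and the second is \eqref{eq:TDDseq1}).

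For part (a), I would start from a basis $\{v_i\}_{i=0}^d$ realizing the given TD/D sequence, i.e. with $A$ represented by the normalized irreducible tridiagonal matrix in \eqref{eq:AAsTDD} with entries $\{a_i\}$, $\{x_i\}$ and $A^*$ represented by $\mathrm{diag}(\th^*_0,\dots,\th^*_d)$. Reversing the basis, set $w_i = v_{d-i}$ for $0 \le i \le d$. Then $A^*$ is represented with respect to $\{w_i\}_{i=0}^d$ by $\mathrm{diag}(\th^*_d,\dots,\th^*_0)$, which is the diagonal form for the reversed standard ordering. The matrix representing $A$ with respect to $\{w_i\}$ is obtained from the original by reversing the order of rows and columns; a direct check shows this sends the tridiagonal matrix with diagonal $(a_0,\dots,a_d)$ and superdiagonal $(x_1,\dots,x_d)$ to the tridiagonal matrix with diagonal $(a_d,\dots,a_0)$, subdiagonal $(x_d,\dots,x_1)$ and superdiagonal all equal to $1$. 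This is not yet normalized (the subdiagonal is $\{x_{d-i+1}\}$, not all $1$), so I would apply Lemma \ref{lem:normalize} to rescale $\{w_i\}$ to a basis in which $A$ becomes normalized irreducible tridiagonal; since the subdiagonal entries $x_{d-i+1}$ are nonzero, Lemma \ref{lem:normalize} applies, and the rescaling conjugates by a diagonal matrix, leaving $A^*$ diagonal and transforming the diagonal entries of $A$ into $\{a_{d-i}\}$ and the superdiagonal entries into $\{x_{d-i+1}\}$ (the product of the subdiagonal and superdiagonal entries in each $2\times 2$ block is preserved under this conjugation). Invoking the uniqueness clause of Proposition \ref{prop:TDDseq} for the reversed ordering identifies this with the sequence \eqref{eq:TDDseq1}.

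For part (b), I would argue that a TD/D sequence determines its standard ordering: given the data \eqref{eq:TDDseq}, the sequence $\{\th^*_i\}_{i=0}^d$ is literally a component of it, so two TD/D sequences arising from different standard orderings are visibly different as sequences — with the one caveat that one must rule out the reversed ordering accidentally producing the same sequence, i.e. that \eqref{eq:TDDseq} and \eqref{eq:TDDseq1} could coincide only if $\th^*_i = \th^*_{d-i}$ for all $i$, which is impossible since the $\th^*_i$ are mutually distinct and $d \ge 0$ forces $\th^*_0 \ne \th^*_d$ when $d\ge1$ (and for $d=0$ there is nothing to prove). Combining (a) and (b) with the fact that $A^*$ has exactly the two standard orderings $\{\th^*_i\}$ and $\{\th^*_{d-i}\}$, I conclude that $A,A^*$ has exactly two TD/D sequences and the second one is \eqref{eq:TDDseq1}. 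The main technical point — really the only place where care is needed — is the bookkeeping in part (a): tracking how reversing the basis and then renormalizing via Lemma \ref{lem:normalize} transforms the tridiagonal entries, and in particular checking that the renormalization turns the subdiagonal entries $\{x_{d-i+1}\}$ into $1$'s while moving those factors onto the superdiagonal.
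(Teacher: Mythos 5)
Your proposal is correct and follows essentially the same route as the paper: the paper's proof also reverses the basis realizing the given TD/D sequence and rescales it (it simply writes the rescaled reversed basis $v_i = x_d x_{d-1}\cdots x_{d-i+1}u_{d-i}$ explicitly rather than invoking Lemma \ref{lem:normalize} in a second step), and it obtains uniqueness exactly as you do, from the fact that $A^*$ admits only the two standard orderings $\{\th^*_i\}_{i=0}^d$ and $\{\th^*_{d-i}\}_{i=0}^d$ together with the uniqueness clause of Proposition \ref{prop:TDDseq}.
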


\begin{proof}
By Proposition \ref{prop:TDDseq} there exists a basis $\{u_i\}_{i=0}^d$ of $V$ 
with respect to which
the matrices representing $A$ and $A^*$ are as in \eqref{eq:AAsTDD}.
Define
\begin{align*}
  v_i &= x_d x_{d-1} \cdots x_{d-i+1} u_{d-i}   &&  (0 \leq i \leq d).
\end{align*}
The sequence $\{v_i\}_{i=0}^d$ is a basis of $V$ with respect to which 
the matrices representing $A$ and $A^*$ are
\begin{align*}
A &:
\begin{pmatrix}
 a_d & x_d & & & & \text{\bf 0}  \\
 1  & a_{d-1}  & x_{d-1} \\
    &  1  &  \cdot & \cdot \\
    &     &   \cdot       & \cdot & \cdot \\
     & & & \cdot & \cdot & x_1  \\
\text{\bf 0} & & & & 1 & a_0
\end{pmatrix},
&
 A^*&: \text{\rm diag}(\th^*_d, \th^*_{d-1}, \ldots, \th^*_0).
\end{align*}
By the paragraph below Lemma \ref{lem:LPmultfree}, 
the ordering $\{\th^*_{d-i}\}_{i=0}^d$ is standard.
By these comments and Definition \ref{def:TDDseq}, the sequence
\eqref{eq:TDDseq1} is a TD/D sequence of $A,A^*$.
The uniqueness follows from the paragraph below Lemma \ref{lem:LPmultfree}.
\end{proof}

We mention a significance of the TD/D sequence.

\begin{prop}   \label{prop:LPTDDunique}    \samepage
\ifDRAFT {\rm prop:LPTDDunique}. \fi
Two Leonard pairs over $\F$ are isomorphic if and only if
they have a TD/D sequence in common.
\end{prop}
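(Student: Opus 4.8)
The plan is to prove both implications directly from Proposition \ref{prop:TDDseq} and the definition of an isomorphism of Leonard pairs; no appeal to parameter arrays is needed.

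For the forward implication, I would start from an isomorphism of Leonard pairs $S : V \to \mathcal V$ from $A,A^*$ to $B,B^*$, so that $SA = BS$ and $SA^* = B^*S$. Pick any TD/D sequence $(\{a_i\}_{i=0}^d; \{x_i\}_{i=1}^d; \{\th^*_i\}_{i=0}^d)$ of $A,A^*$, and let $\{u_i\}_{i=0}^d$ be the basis of $V$ furnished by Proposition \ref{prop:TDDseq}, with respect to which $A$ and $A^*$ are represented by the matrices in \eqref{eq:AAsTDD}. First I would check that $\{S u_i\}_{i=0}^d$ is a basis of $\mathcal V$ (immediate from bijectivity of $S$) and that with respect to this basis $B$ and $B^*$ are represented by the same matrices in \eqref{eq:AAsTDD}; this holds because $S$ intertwines $A$ with $B$ and $A^*$ with $B^*$, so it carries a matrix representing one to a matrix representing the other. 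In particular $\text{\rm diag}(\th^*_0,\ldots,\th^*_d)$ represents $B^*$ with respect to a basis on which $B$ is irreducible tridiagonal, so $\{\th^*_i\}_{i=0}^d$ is a standard ordering of the eigenvalues of $B^*$. By Definition \ref{def:TDDseq} and the uniqueness clause of Proposition \ref{prop:TDDseq}, the sequence $(\{a_i\}_{i=0}^d; \{x_i\}_{i=1}^d; \{\th^*_i\}_{i=0}^d)$ is then a TD/D sequence of $B,B^*$ as well.

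For the reverse implication, suppose $A,A^*$ on $V$ and $B,B^*$ on $\mathcal V$ share a TD/D sequence $(\{a_i\}_{i=0}^d; \{x_i\}_{i=1}^d; \{\th^*_i\}_{i=0}^d)$. Applying Proposition \ref{prop:TDDseq} to each pair, I obtain a basis $\{u_i\}_{i=0}^d$ of $V$ and a basis $\{w_i\}_{i=0}^d$ of $\mathcal V$ such that with respect to $\{u_i\}_{i=0}^d$ the pair $A,A^*$ is represented by the matrices in \eqref{eq:AAsTDD}, while with respect to $\{w_i\}_{i=0}^d$ the pair $B,B^*$ is represented by the very same matrices. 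Then I would define the $\F$-linear bijection $S : V \to \mathcal V$ by $S u_i = w_i$ for $0 \leq i \leq d$. Since $A$ and $B$ have a common representing matrix with respect to these bases, and likewise $A^*$ and $B^*$, comparing the actions on basis vectors gives $SA = BS$ and $SA^* = B^*S$; hence $S$ is an isomorphism of Leonard pairs from $A,A^*$ to $B,B^*$.

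The argument is essentially bookkeeping, and I do not expect a genuine obstacle; the only point requiring a moment's care is the transfer of the standard-ordering condition in the forward direction, which amounts to observing that an isomorphism of Leonard pairs sends a basis realizing a standard ordering for $A^*$ to one realizing the same ordering for $B^*$. As a heavier alternative one could route through Lemma \ref{lem:LPunique} together with the fact that the parameter array and the TD/D sequence determine one another (via Proposition \ref{prop:TDDseq} and Lemma \ref{lem:parray1}), but the direct argument above is cleaner and avoids that detour.
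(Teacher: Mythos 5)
Your proposal is correct and follows essentially the same route as the paper: the reverse implication is exactly the paper's argument (use Proposition \ref{prop:TDDseq} to get bases for both pairs realizing the common matrices in \eqref{eq:AAsTDD}, then define the bijection $u_i \mapsto w_i$), and your forward implication simply spells out the transfer of bases along the isomorphism that the paper dismisses with ``clearly they have the same TD/D sequences.'' No gaps; the extra detail on preserving the standard ordering is a sound elaboration of the same idea.
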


\begin{proof}
Let $A,A^*$ denote a Leonard pair on $V$,
and let $B,B^*$ denote a Leonard pair on a vector space $\mathcal{V}$.
First assume that $A,A^*$ and $B,B^*$ are isomorphic.
Then clearly they have the same TD/D sequences.
Next assume that $A,A^*$ and $B,B^*$ have a common
TD/D sequence $(\{a_i\}_{i=0}^d; \{x_i\}_{i=1}^d; \{\th^*_i\}_{i=0}^d)$.
By Definition \ref{def:TDDseq}, $\{\th^*_i\}_{i=0}^d$ is a standard
ordering of the eigenvalues of $A^*$ and $B^*$.
Moreover, 
there exists a basis $\{u_i\}_{i=0}^d$ of $V$ (resp.\ basis $\{v_i\}_{i=0}^d$ of $\mathcal{V}$) with respect to which the
matrix representing $A$ (resp.\ $B$) is equal to the matrix
on the left in \eqref{eq:AAsTDD}, and the matrix representing
$A^*$ (resp.\ $B^*$) is equal to the matrix on the right in
\eqref{eq:AAsTDD}. 
By linear algebra,
there exists an $\F$-linear bijection $V \to {\mathcal V}$ that sends
$u_i \mapsto v_i$ for $0 \leq i \leq d$.
By construction this bijection is an isomorphism of Leonard pairs
from $A,A^*$ to $B,B^*$.
\end{proof}

\begin{defi}    \label{def:TDDseqoverF}    \samepage
\ifDRAFT {\rm def:TDDseqoverF}. \fi
By a {\em TD/D sequence over $\F$} we mean a TD/D sequence
of a Leonard pair over $\F$.
\end{defi}

\begin{defi}    \label{def:correspond}    \samepage
\ifDRAFT {\rm def:correspond}. \fi
Consider a parameter array 
\begin{equation}
 (\{\th_i\}_{i=0}^d; \{\th^*_i\}_{i=0}^d; \{\vphi_i\}_{i=1}^d; \{\phi_i\}_{i=1}^d)  \label{eq:parrayX}
\end{equation}
over $\F$ and a TD/D sequence
\begin{equation}
(\{a_i\}_{i=0}^d; \{x_i\}_{i=1}^d; \{\th^{* \prime}_i\}_{i=0}^d)                        \label{eq:TDDseqX}
\end{equation}
over $\F$.
Then \eqref{eq:parrayX} and \eqref{eq:TDDseqX} are said to {\em correspond}
whenever $\th^*_i = \th^{* \prime}_i$ for $0 \leq i \leq d$,
and there exists a Leonard pair $A,A^*$ over $\F$ 
that has parameter array \eqref{eq:parrayX} 
and TD/D sequence \eqref{eq:TDDseqX}.
\end{defi}

\begin{lemma}    \label{lem:TDDsumai}    \samepage
\ifDRAFT {\rm lem:TDDsumai}. \fi
Referring to Definition \ref{def:TDDseqoverF}, assume that
\eqref{eq:parrayX} and \eqref{eq:TDDseqX} correspond.
Then
$\sum_{i=0}^d \th_i = \sum_{i=0}^d a_i$.
\end{lemma}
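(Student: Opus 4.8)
The plan is to exploit the basis guaranteed by Proposition \ref{prop:TDDseq}: if the parameter array \eqref{eq:parrayX} and the TD/D sequence \eqref{eq:TDDseqX} correspond, then there is a single Leonard pair $A,A^*$ on $V$ realizing both. With respect to the TD/D basis of Proposition \ref{prop:TDDseq}, the matrix representing $A$ is the tridiagonal matrix in \eqref{eq:AAsTDD} whose diagonal entries are $a_0,a_1,\ldots,a_d$, so $\mathrm{tr}(A)=\sum_{i=0}^d a_i$. On the other hand, $\{\th_i\}_{i=0}^d$ is a standard ordering of the eigenvalues of $A$, and by Lemma \ref{lem:LPmultfree} the map $A$ is multiplicity-free, so its eigenvalues are exactly $\th_0,\th_1,\ldots,\th_d$ (each with multiplicity one). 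Hence $\mathrm{tr}(A)=\sum_{i=0}^d \th_i$. Comparing the two evaluations of $\mathrm{tr}(A)$ gives the claim.

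Concretely, the steps are: (1) invoke Definition \ref{def:correspond} to produce the Leonard pair $A,A^*$ with parameter array \eqref{eq:parrayX} and TD/D sequence \eqref{eq:TDDseqX}; (2) use Proposition \ref{prop:TDDseq} (equivalently Definition \ref{def:TDDseq}) to get a basis in which $A$ is represented by the matrix on the left of \eqref{eq:AAsTDD}, and read off $\mathrm{tr}(A)=\sum_{i=0}^d a_i$; (3) note that trace is basis-independent and equals the sum of eigenvalues with multiplicity, and that by Lemma \ref{lem:LPmultfree} together with the definition of a standard ordering the eigenvalues of $A$ are $\{\th_i\}_{i=0}^d$, so $\mathrm{tr}(A)=\sum_{i=0}^d \th_i$; (4) equate to finish.

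There is essentially no obstacle here — the statement is a bookkeeping consequence of the invariance of the trace under change of basis. The only point requiring a word of care is step (3): one must note that the $\th_i$ listed in a standard ordering are precisely the eigenvalues of $A$, each occurring once (this is where multiplicity-freeness from Lemma \ref{lem:LPmultfree} is used), so that their sum really is $\mathrm{tr}(A)$ rather than just some partial sum. Alternatively, one could avoid mentioning eigenvalues at all by using Lemma \ref{lem:parray1}: that lemma supplies a basis in which $A$ is lower-bidiagonal with diagonal entries $\th_0,\ldots,\th_d$, so $\mathrm{tr}(A)=\sum_{i=0}^d\th_i$ directly, and again compare with the TD/D basis. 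Either route is a two-line argument.
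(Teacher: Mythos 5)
Your proposal is correct and matches the paper's own argument: the paper also computes $\mathrm{tr}(A)$ in the TD/D basis of \eqref{eq:AAsTDD} to get $\sum_{i=0}^d a_i$, and equates it with the sum of the eigenvalues $\{\th_i\}_{i=0}^d$. The extra remarks about multiplicity-freeness and the alternative route via Lemma \ref{lem:parray1} are fine but not needed beyond what the paper states.
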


\begin{proof}
Let the matrices $A$, $A^*$ are as in \eqref{eq:AAsTDD}.
On one hand, we have $\text{\rm tr}(A) = \sum_{i=0}^d a_i$.
On the other hand, $\text{\rm tr}(A) = \sum_{i=0}^d \th_i$,
since the eigenvalues of $A$ are $\{\th_i\}_{i=0}^d$.
The result follows.
\end{proof}

\begin{lemma}    \label{lem:correspond}    \samepage
\ifDRAFT {\rm lem:correspond}. \fi
The following hold.
\begin{itemize}
\item[\rm (i)]
Consider a parameter array over $\F$:
\begin{equation}
(\{\th_i\}_{i=0}^d; \{\th^*_i\}_{i=0}^d; \{\vphi_i\}_{i=1}^d; \{\phi_i\}_{i=1}^d).    \label{eq:parrayaux4}
\end{equation}
Then there exists a unique TD/D sequence over $\F$ that corresponds to \eqref{eq:parrayaux4}.
\item[\rm (ii)]
Consider a TD/D sequence over $\F$:
\begin{equation}
(\{a_i\}_{i=0}^d; \{x_i\}_{i=1}^d; \{\th^*_i\}_{i=0}^d).         \label{eq:TDDseqaux2}
\end{equation}
Then there exists at least one parameter array over $\F$ that corresponds to \eqref{eq:TDDseqaux2}.
If
\begin{equation}
(\{\th_i\}_{i=0}^d; \{\th^*_i\}_{i=0}^d; \{\vphi_i\}_{i=1}^d; \{\phi_i\}_{i=1}^d)    \label{eq:parrayaux3}
\end{equation}
is a parameter array over $\F$ that corresponds to \eqref{eq:TDDseqaux2},
then so is
\[
 (\{\th_{d-i}\}_{i=0}^d; \{\th^*_i\}_{i=0}^d; \{\phi_i\}_{i=1}^d; \{\vphi_i\}_{i=1}^d),
\]
and there is no other parameter array over $\F$ that corresponds to \eqref{eq:TDDseqaux2}.
\end{itemize}
\end{lemma}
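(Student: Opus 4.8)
The plan is to deduce both parts from the structural facts already in hand: a Leonard pair over $\F$ is determined up to isomorphism by any one of its parameter arrays (Lemma \ref{lem:LPunique}) and by any one of its TD/D sequences (Proposition \ref{prop:LPTDDunique}); moreover all parameter arrays of a fixed Leonard pair are enumerated in Lemma \ref{lem:parrayunique}, and all its TD/D sequences in Lemma \ref{lem:TDDrel}. I will also use the evident fact that an isomorphism of Leonard pairs carries a basis realizing a given TD/D sequence (resp.\ parameter array) to a basis realizing the same one, so isomorphic Leonard pairs have exactly the same TD/D sequences and exactly the same parameter arrays.

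For part (i), I would first establish existence: by Lemma \ref{lem:LPunique} fix a Leonard pair $A,A^*$ over $\F$ having the parameter array \eqref{eq:parrayaux4}, so that $\{\th^*_i\}_{i=0}^d$ is a standard ordering of the eigenvalues of $A^*$; then apply Proposition \ref{prop:TDDseq} to this ordering to get scalars $(\{a_i\}_{i=0}^d;\{x_i\}_{i=1}^d)$, and observe that $(\{a_i\}_{i=0}^d;\{x_i\}_{i=1}^d;\{\th^*_i\}_{i=0}^d)$ is a TD/D sequence of $A,A^*$ whose $\th^*$-part is that of \eqref{eq:parrayaux4}, hence corresponds to it by Definition \ref{def:correspond}. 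For uniqueness, I would take any TD/D sequence over $\F$ corresponding to \eqref{eq:parrayaux4}: it is realized by some Leonard pair carrying \eqref{eq:parrayaux4}, which by Lemma \ref{lem:LPunique} is isomorphic to $A,A^*$, so the sequence is one of the (at most two) TD/D sequences of $A,A^*$ given by Lemma \ref{lem:TDDrel}. Its $\th^*$-part must be $\{\th^*_i\}_{i=0}^d$; since the $\th^*_i$ are mutually distinct (Lemma \ref{lem:LPmultfree}), for $d \geq 1$ the reversed ordering $\{\th^*_{d-i}\}_{i=0}^d$ differs from $\{\th^*_i\}_{i=0}^d$, so the only possibility is the sequence just constructed; the case $d=0$ is trivial.

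For part (ii), I would begin from a Leonard pair $A,A^*$ over $\F$ having the TD/D sequence \eqref{eq:TDDseqaux2} (it exists by Definition \ref{def:TDDseqoverF}); then $\{\th^*_i\}_{i=0}^d$ is a standard ordering of the eigenvalues of $A^*$ by Definition \ref{def:TDDseq}. Picking any standard ordering $\{\th_i\}_{i=0}^d$ of the eigenvalues of $A$ and forming the associated split sequences $\{\vphi_i\}_{i=1}^d$, $\{\phi_i\}_{i=1}^d$ produces a parameter array of $A,A^*$ with $\th^*$-part $\{\th^*_i\}_{i=0}^d$, hence one corresponding to \eqref{eq:TDDseqaux2}; this gives the existence clause. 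For the "moreover" clause, I would take any parameter array \eqref{eq:parrayaux3} over $\F$ corresponding to \eqref{eq:TDDseqaux2}: it is realized by a Leonard pair carrying \eqref{eq:TDDseqaux2}, which by Proposition \ref{prop:LPTDDunique} is isomorphic to $A,A^*$, so \eqref{eq:parrayaux3} is one of the four parameter arrays of $A,A^*$ listed in Lemma \ref{lem:parrayunique}. Since its $\th^*$-part is $\{\th^*_i\}_{i=0}^d$, using mutual distinctness of the $\th^*_i$ once more (for $d \geq 1$) it cannot be either of the two entries of that list having $\th^*$-part $\{\th^*_{d-i}\}_{i=0}^d$; this leaves exactly $(\{\th_i\}_{i=0}^d;\{\th^*_i\}_{i=0}^d;\{\vphi_i\}_{i=1}^d;\{\phi_i\}_{i=1}^d)$ and $(\{\th_{d-i}\}_{i=0}^d;\{\th^*_i\}_{i=0}^d;\{\phi_i\}_{i=1}^d;\{\vphi_i\}_{i=1}^d)$, and conversely both of these are parameter arrays of $A,A^*$ with the correct $\th^*$-part, hence both correspond to \eqref{eq:TDDseqaux2}. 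The case $d=0$ is again trivial, and this is exactly the asserted conclusion.

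The only place that demands care is the bookkeeping between the two enumerations (two TD/D sequences from Lemma \ref{lem:TDDrel}, four parameter arrays from Lemma \ref{lem:parrayunique}) and the constraint imposed by pinning the $\th^*$-ordering; the single fact that makes this go through cleanly is the mutual distinctness of the $\th^*_i$, which for $d \geq 1$ distinguishes a standard ordering from its reverse and thereby forces uniqueness in (i) and the count "exactly two" in (ii). I expect no obstacle beyond this routine case-tracking.
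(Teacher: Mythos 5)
Your proposal is correct and follows essentially the same route as the paper: pin down the Leonard pair up to isomorphism via Lemma \ref{lem:LPunique} (resp.\ Proposition \ref{prop:LPTDDunique}), then use the enumerations in Lemma \ref{lem:TDDrel} (resp.\ Lemma \ref{lem:parrayunique}) and the fixed standard ordering of the $\th^*_i$ to see which entries survive. Your explicit appeal to the distinctness of the $\th^*_i$ and the $d=0$ check simply spells out what the paper leaves implicit.
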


\begin{proof}
(i)
By Lemma \ref{lem:LPunique}, up to isomorphism there exists a unique
Leonard pair $A,A^*$ over $\F$ that has parameter array \eqref{eq:parrayaux4}.
By Lemma \ref{lem:TDDrel} and Definition \ref{def:correspond},
there exists a unique TD/D sequence over $\F$ that corresponds to $A,A^*$.
By these comments we get the result.

(ii)
By Proposition \ref{prop:LPTDDunique}, up to isomorphism  there exists a unique Leonard pair
$A,A^*$ over $\F$ that has TD/D sequence \eqref{eq:TDDseqaux2}.
In Lemma \ref{lem:parrayunique}, the four parameter arrays of $A,A^*$ are given.
Among these four parameter arrays,
only the first and third involves the eigenvalues of $A^*$ in the same
standard order as \eqref{eq:TDDseqaux2}.
By these comments we get the result.
\end{proof}

To prepare for the next results, we define some notation.
For the moment, let $\{\th^*_i\}_{i=0}^d$ denote any sequence of scalars in $\F$.
For $0 \leq i \leq d$ define the following polynomials in an indeterminate $\lambda$:
\begin{align*}
 \tau^*_i (\lambda)  &= (\lambda- \th^*_0)(\lambda - \th^*_1) \cdots (\lambda - \th^*_{i-1}),
\\
 \eta^*_i (\lambda) &= (\lambda-\th^*_d)(\lambda - \th^*_{d-1}) \cdots (\lambda-\th^*_{d-i+1}).
\end{align*}

\begin{lemma} \label{lem:aixiparam}    \samepage
\ifDRAFT {\rm lem:aixiparam}. \fi
Assume that $d \geq 1$.
Consider a sequence of scalars in $\F$:
\begin{equation}
 (\{a_i\}_{i=0}^d; \{x_i\}_{i=1}^d; \{\th^*_i\}_{i=0}^d).   \label{eq:TDDseq2}
\end{equation}
The sequence \eqref{eq:TDDseq2}
is a TD/D sequence over $\F$ if and only if
there exists a parameter array
\begin{equation}
(\{\th_i\}_{i=0}^d; \{\th^*_i\}_{i=0}^d; \{\vphi_i\}_{i=1}^d; \{\phi_i\}_{i=1}^d)
                                                  \label{eq:parray7}
\end{equation}
over $\F$ such that
\begin{align}
a_0 &= \th_0 + \frac{\vphi_1} {\th^*_0 - \th^*_1},            \label{eq:a0param}
\\
a_i &= \th_i + \frac{ \vphi_i } { \th^*_i - \th^*_{i-1} }
          + \frac{ \vphi_{i+1} } {\th^*_i - \th^*_{i+1} }
      &&  (1 \leq i \leq d-1),                                         \label{eq:aiparam}
\\
a_d &= \th_d + \frac{\vphi_d } {\th^*_d - \th^*_{d-1} },    \label{eq:adparam}
\\
x_i &= \vphi_i \phi_i
  \frac{ \tau^*_{i-1}(\th^*_{i-1} ) \eta^*_{d-i}(\th^*_i) }
         { \tau^*_i (\th^*_i) \eta^*_{d-i+1} (\th^*_{i-1} ) }
      &&                  (1 \leq i \leq d).                    \label{eq:xiparam}
\end{align}
In this case,
the TD/D sequence \eqref{eq:TDDseq2} and
the parameter array \eqref{eq:parray7} 
correspond.
\end{lemma}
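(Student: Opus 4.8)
The plan is to translate the statement into an explicit matrix computation using the standard split decomposition of a Leonard pair, and then verify the claimed formulas directly. First I would prove the ``only if'' direction: assume \eqref{eq:TDDseq2} is a TD/D sequence over $\F$, so there is a Leonard pair $A,A^*$ realizing it, with $A$ normalized irreducible tridiagonal as in \eqref{eq:AAsTDD} and $A^*=\text{\rm diag}(\th^*_0,\ldots,\th^*_d)$. Pick a parameter array $(\{\th_i\}_{i=0}^d;\{\th^*_i\}_{i=0}^d;\{\vphi_i\}_{i=1}^d;\{\phi_i\}_{i=1}^d)$ of $A,A^*$ that uses this same standard ordering of the eigenvalues of $A^*$ (one exists by Lemma \ref{lem:parrayunique}, as in the proof of Lemma \ref{lem:correspond}(ii)); by Definition \ref{def:correspond} this parameter array corresponds to \eqref{eq:TDDseq2}. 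It then remains to establish \eqref{eq:a0param}--\eqref{eq:xiparam}. For this I would use the split basis from Lemma \ref{lem:parray1}, in which $A$ is lower bidiagonal with diagonal $\{\th_i\}$ and subdiagonal all $1$, and $A^*$ is upper bidiagonal with diagonal $\{\th^*_i\}$ and superdiagonal $\{\vphi_i\}$. In that basis $A^*$ is not diagonal; I would conjugate by the change-of-basis matrix that diagonalizes $A^*$ — this is a unipotent upper-triangular matrix $P$ whose entries are built from the $\tau^*_i$ polynomials — and compute $P^{-1}AP$. The diagonal entries of the resulting tridiagonal matrix give the $a_i$ (producing \eqref{eq:a0param}--\eqref{eq:adparam}), and the product of the $(i,i-1)$ and $(i-1,i)$ entries gives $x_i$ after normalizing the subdiagonal to $1$; this product is the expression in \eqref{eq:xiparam}, where the appearance of $\phi_i$ (rather than only $\vphi_i$) comes from the known relation between the first and second split sequences recorded implicitly in the transition between Lemmas \ref{lem:parray1} and \ref{lem:parray2}. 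Alternatively, and more cleanly, I would cite the explicit formulas from \cite{T:Leonard} or \cite{T:TDD} expressing the entries of the tridiagonal/diagonal representation of a Leonard pair in terms of the parameter array, since these are exactly \eqref{eq:a0param}--\eqref{eq:xiparam}.

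For the ``if'' direction, suppose \eqref{eq:parray7} is a parameter array over $\F$ and $a_i$, $x_i$ are defined by \eqref{eq:a0param}--\eqref{eq:xiparam}. By Lemma \ref{lem:LPunique} there is a Leonard pair $A,A^*$ over $\F$ with this parameter array, and by the computation just described (applied in reverse, i.e., conjugating the split representation to diagonalize $A^*$), the pair $A,A^*$ has a TD/D sequence whose $A$-part has diagonal $\{a_i\}_{i=0}^d$ and whose off-diagonal data yields $\{x_i\}_{i=1}^d$ as in \eqref{eq:xiparam}. Hence \eqref{eq:TDDseq2} is the TD/D sequence of $A,A^*$ with respect to the standard ordering $\{\th^*_i\}_{i=0}^d$, so it is a TD/D sequence over $\F$. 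One must check that the $\th^*_i$ are distinct and the $x_i$ are nonzero so that the data genuinely forms a TD/D sequence: distinctness of the $\th^*_i$ is part of being a parameter array (Lemma \ref{lem:classify}(i)), and nonvanishing of $x_i$ follows from \eqref{eq:xiparam} together with $\vphi_i\neq 0$, $\phi_i\neq 0$ (Lemma \ref{lem:classify}(ii)) and the fact that $\tau^*_{i-1}(\th^*_{i-1})$, $\eta^*_{d-i}(\th^*_i)$, $\tau^*_i(\th^*_i)$, $\eta^*_{d-i+1}(\th^*_{i-1})$ are each a product of differences of distinct $\th^*$'s, hence nonzero. The final sentence of the lemma, that \eqref{eq:TDDseq2} and \eqref{eq:parray7} correspond, is then immediate from Definition \ref{def:correspond}, since we produced a single Leonard pair $A,A^*$ carrying both, and they share the ordering $\{\th^*_i\}_{i=0}^d$.

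The main obstacle is the explicit conjugation computation: carrying out $P^{-1}AP$ where $A$ is the split (lower bidiagonal) form and $P$ is the unipotent matrix diagonalizing the split form of $A^*$, and recognizing the resulting diagonal and off-diagonal entries as exactly the right-hand sides of \eqref{eq:a0param}--\eqref{eq:xiparam}. This is a finite but somewhat intricate bookkeeping exercise involving the polynomials $\tau^*_i$ and $\eta^*_i$ and the relation between $\vphi_i$ and $\phi_i$. In practice I would sidestep most of the grind by invoking the corresponding explicit formulas already worked out in \cite{T:Leonard} (the transition between the split basis and the ``TD/D'' basis is standard there), and only verify that the indexing conventions match. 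The remaining steps — nonvanishing of the $x_i$, distinctness of the $\th^*_i$, existence and uniqueness bookkeeping via Lemmas \ref{lem:LPunique}, \ref{lem:parrayunique}, \ref{lem:correspond} — are routine.
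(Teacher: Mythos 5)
Your proposal is correct, and in substance it does what the paper does, but the routes differ in emphasis: the paper's proof is a two-line appeal to the known formulas \cite[Theorems 17.8, 17.9]{T:qRacah}, which express the TD/D data $a_i=\text{\rm tr}(AE^*_i)$ and $x_i=\text{\rm tr}(E^*_iAE^*_{i-1}A)$ of a Leonard pair in terms of its parameter array, used once in each direction, whereas you propose to re-derive those formulas from scratch by conjugating the split (lower bidiagonal/upper bidiagonal) representation of Lemma \ref{lem:parray1} by the unipotent upper-triangular matrix diagonalizing $A^*$ and then normalizing the subdiagonal via Lemma \ref{lem:normalize}. Your computation is legitimate: the columns of that matrix form an $A^*$-eigenbasis in the standard order, so the resulting matrix for $A$ is irreducible tridiagonal, its diagonal gives \eqref{eq:a0param}--\eqref{eq:adparam}, and the products of opposite off-diagonal entries (which are invariant under the diagonal rescaling) give \eqref{eq:xiparam}; the converse and the final ``correspond'' statement then follow exactly as you say, and your checks that the $\th^*_i$ are distinct and the $x_i$ are nonzero are fine (indeed automatic once the sequence is identified with a TD/D sequence of an actual Leonard pair). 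Two small remarks: the explicit formulas you want to cite are located in \cite{T:qRacah} (Section 17), not really in \cite{T:Leonard} or \cite{T:TDD}, so if you take the citation shortcut you should point there; and in the ``only if'' direction there are two parameter arrays of $A,A^*$ sharing the standard ordering $\{\th^*_i\}_{i=0}^d$ (Lemma \ref{lem:correspond}(ii)), but this ambiguity is harmless since the formulas hold for either choice (this is exactly the content of Lemma \ref{lem:aixiparam2}). The trade-off is the usual one: your split-basis computation is self-contained but reproduces a known and somewhat lengthy bookkeeping exercise, while the paper's citation is short at the cost of deferring all content to \cite{T:qRacah}.
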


\begin{proof}
First assume that the sequence \eqref{eq:TDDseq2} is a TD/D sequence over $\F$.
By Definition \ref{def:TDDseqoverF} there exists a Leonard pair $A,A^*$ over $\F$
that has TD/D sequence \eqref{eq:TDDseq2}.
Then \eqref{eq:a0param}--\eqref{eq:xiparam} hold by
\cite[Theorems 17.8, 17.9]{T:qRacah}.
We have proved the result in one direction.
Concerning the converse,
assume that there exists a parameter array \eqref{eq:parray7} over $\F$ that
satisfies \eqref{eq:a0param}--\eqref{eq:xiparam}.
Let $A,A^*$ denote a Leonard pair over $\F$ that has parameter array
\eqref{eq:parray7}.
By \eqref{eq:a0param}--\eqref{eq:xiparam} and \cite[Theorems 17.8, 17.9]{T:qRacah}, 
we find that the sequence \eqref{eq:TDDseq2} is a TD/D sequence of $A,A^*$.
The last assertion follows from our above comments.
\end{proof}

\begin{lemma} \label{lem:aixiparam2}    \samepage
\ifDRAFT {\rm lem:aixiparam2}. \fi
Assume that $d \geq 1$.
Consider a sequence of scalars in $\F$:
\begin{equation}
 (\{a_i\}_{i=0}^d; \{x_i\}_{i=1}^d; \{\th^*_i\}_{i=0}^d).   \label{eq:TDDseq2b}
\end{equation}
The sequence \eqref{eq:TDDseq2b}
is a TD/D sequence over $\F$ if and only if
there exists a parameter array
\begin{equation}
(\{\th_i\}_{i=0}^d; \{\th^*_i\}_{i=0}^d; \{\vphi_i\}_{i=1}^d; \{\phi_i\}_{i=1}^d)
                                                  \label{eq:parray7b}
\end{equation}
over $\F$ such that
\begin{align}
a_0 &= \th_d + \frac{\phi_1} {\th^*_0 - \th^*_1},            \label{eq:a0paramb}
\\
a_i &= \th_{d-i} + \frac{ \phi_i } { \th^*_i - \th^*_{i-1} }
          + \frac{ \phi_{i+1} } {\th^*_i - \th^*_{i+1} }
      &&  (1 \leq i \leq d-1),                                         \label{eq:aiparamb}
\\
a_d &= \th_0 + \frac{\phi_d } {\th^*_d - \th^*_{d-1} },    \label{eq:adparamb}
\\
x_i &= \vphi_i \phi_i
  \frac{ \tau^*_{i-1}(\th^*_{i-1} ) \eta^*_{d-i}(\th^*_i) }
         { \tau^*_i (\th^*_i) \eta^*_{d-i+1} (\th^*_{i-1} ) }
      &&                  (1 \leq i \leq d).                    \label{eq:xiparamb}
\end{align}
In this case,
the TD/D sequence \eqref{eq:TDDseq2b} and
the parameter array \eqref{eq:parray7b} 
correspond.
\end{lemma}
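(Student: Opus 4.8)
The plan is to deduce Lemma~\ref{lem:aixiparam2} from Lemma~\ref{lem:aixiparam} by exploiting the reversal symmetry of parameter arrays recorded in Lemma~\ref{lem:parrayunique}. The key device is the following bijection on sequences. Given a sequence $P = (\{\th_i\}_{i=0}^d; \{\th^*_i\}_{i=0}^d; \{\vphi_i\}_{i=1}^d; \{\phi_i\}_{i=1}^d)$, form the sequence $P' = (\{\th_{d-i}\}_{i=0}^d; \{\th^*_i\}_{i=0}^d; \{\phi_i\}_{i=1}^d; \{\vphi_i\}_{i=1}^d)$ obtained by reversing the $\th$-sequence and swapping the two split sequences. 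By Lemma~\ref{lem:parrayunique} (third displayed array), $P$ is a parameter array over $\F$ if and only if $P'$ is, and when this holds $P$ and $P'$ are parameter arrays of the same Leonard pair (apply Lemma~\ref{lem:parrayunique} once in each direction).

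Next I would check that, for such $P$ and $P'$, the equations \eqref{eq:a0paramb}--\eqref{eq:xiparamb} written in terms of $P$ are \emph{verbatim} the equations \eqref{eq:a0param}--\eqref{eq:xiparam} of Lemma~\ref{lem:aixiparam} written in terms of $P'$. Indeed, the $\th^*$-sequence is unchanged under the reversal, so the polynomials $\tau^*_i$, $\eta^*_i$ are the same for $P$ and $P'$; the role of $\th_i$ in \eqref{eq:a0param}--\eqref{eq:adparam} is played by $\th_{d-i}$ in \eqref{eq:a0paramb}--\eqref{eq:adparamb}; the first split sequence of $P'$ is $\{\phi_i\}_{i=1}^d$, which is exactly what appears in \eqref{eq:a0paramb}--\eqref{eq:adparamb}; and in \eqref{eq:xiparam} the factor $\vphi_i\phi_i$ is the product of the two split sequences, which is symmetric under the swap, hence matches the $\vphi_i\phi_i$ in \eqref{eq:xiparamb}. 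This is the only computation in the proof, and it is routine.

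With these two observations the lemma follows quickly. For the forward direction, if \eqref{eq:TDDseq2b} is a TD/D sequence over $\F$, then by Lemma~\ref{lem:aixiparam} there is a parameter array $P'$ over $\F$ satisfying \eqref{eq:a0param}--\eqref{eq:xiparam}; the reversed array $P$ is then a parameter array over $\F$ satisfying \eqref{eq:a0paramb}--\eqref{eq:xiparamb}. Conversely, if some parameter array $P$ over $\F$ satisfies \eqref{eq:a0paramb}--\eqref{eq:xiparamb}, then $P'$ is a parameter array over $\F$ satisfying \eqref{eq:a0param}--\eqref{eq:xiparam}, so Lemma~\ref{lem:aixiparam} shows \eqref{eq:TDDseq2b} is a TD/D sequence over $\F$. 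For the final correspondence claim, Lemma~\ref{lem:aixiparam} produces a Leonard pair $A,A^*$ over $\F$ with TD/D sequence \eqref{eq:TDDseq2b} and parameter array $P'$; since $P$ is also a parameter array of $A,A^*$ and shares its $\th^*$-sequence with \eqref{eq:TDDseq2b}, Definition~\ref{def:correspond} shows \eqref{eq:TDDseq2b} and $P$ correspond (alternatively, this is immediate from Lemma~\ref{lem:correspond}(ii), which identifies $P$ as precisely the other parameter array corresponding to \eqref{eq:TDDseq2b}).

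I expect no serious obstacle: the proof is a symmetry argument plus a formula match. The one point needing care is the bookkeeping in the verbatim-matching step — in particular confirming that the $\th^*$-dependent factors $\tau^*_i$, $\eta^*_i$ and the product $\vphi_i\phi_i$ are genuinely invariant under the reversal, so that only the $a_i$-formulas change while the $x_i$-formula stays literally the same.
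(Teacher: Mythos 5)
Your proposal is correct and follows essentially the same route as the paper: the paper likewise invokes the reversal $(\{\th_{d-i}\}_{i=0}^d;\{\th^*_i\}_{i=0}^d;\{\phi_i\}_{i=1}^d;\{\vphi_i\}_{i=1}^d)$ (via Lemma \ref{lem:correspond}, equivalently the third entry of Lemma \ref{lem:parrayunique}) and then applies Lemma \ref{lem:aixiparam} to that reversed parameter array. Your write-up just makes explicit the formula-matching and the final appeal to Definition \ref{def:correspond} that the paper leaves implicit.
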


\begin{proof}
By Lemma \ref{lem:correspond}, 
$(\{\th_{d-i}\}_{i=0}^d; \{\th^*_i\}_{i=0}^d; \{\phi_i\}_{i=1}^d; \{\vphi_i\}_{i=1}^d)$
is a parameter array over $\F$ corresponding to \eqref{eq:TDDseq2b}.
Now apply Lemma \ref{lem:aixiparam} to this parameter array.
\end{proof}

\begin{lemma}    \label{lem:TDDaffine}    \samepage
\ifDRAFT {\rm lem:TDDaffine}. \fi
Let $A,A^*$ denote a Leonard pair over $\F$ with TD/D sequence
\[
 (\{a_i\}_{i=0}^d; \{x_i\}_{i=1}^d;  \{\th^*_i\}_{i=0}^d).
\]
Then for scalars $\xi$, $\zeta$, $\xi^*$, $\zeta^*$ in $\F$ with $\xi \xi^* \neq 0$,
the sequence
\[
  ( \{\xi a_i + \zeta\}_{i=0}^d; \{\xi^2 x_i\}_{i=1}^d;   \{\xi^* \th^*_i + \zeta^*\}_{i=0}^d)
\]
is a TD/D sequence of the Leonard pair $\xi A + \zeta I, \xi^* A^* + \zeta^* I$.
\end{lemma}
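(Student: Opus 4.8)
The plan is to work directly with the basis furnished by Proposition \ref{prop:TDDseq} and track how the affine transformation acts on the representing matrices. Write $B = \xi A + \zeta I$ and $B^* = \xi^* A^* + \zeta^* I$. First I would invoke Lemma \ref{lem:affineLP} to guarantee that $B,B^*$ is a Leonard pair on $V$, so that it makes sense to speak of its TD/D sequences.

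Next, by Proposition \ref{prop:TDDseq} there is a basis $\{u_i\}_{i=0}^d$ of $V$ with respect to which $A$ is the normalized irreducible tridiagonal matrix with diagonal entries $(a_0,\dots,a_d)$, superdiagonal entries $(x_1,\dots,x_d)$, and all subdiagonal entries equal to $1$, while $A^*$ is $\text{\rm diag}(\th^*_0,\dots,\th^*_d)$. With respect to this same basis, $B$ is tridiagonal with diagonal entries $(\xi a_0 + \zeta,\dots,\xi a_d + \zeta)$, superdiagonal entries $(\xi x_1,\dots,\xi x_d)$, and all subdiagonal entries equal to $\xi$; since $\xi \neq 0$ and each $x_i \neq 0$, this matrix is irreducible tridiagonal. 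Meanwhile $B^*$ is $\text{\rm diag}(\xi^* \th^*_0 + \zeta^*,\dots,\xi^*\th^*_d + \zeta^*)$. In particular, this exhibits a basis witnessing that the ordering $\{\xi^* \th^*_i + \zeta^*\}_{i=0}^d$ of the eigenvalues of $B^*$ is standard.

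To bring $B$ into normalized form I would rescale the basis using Lemma \ref{lem:normalize}: take $\alpha_i = \xi^i$, so that $\alpha_i/\alpha_{i-1} = \xi$ matches the $(i,i-1)$-entry of the matrix representing $B$, and pass to the basis $\{\xi^i u_i\}_{i=0}^d$. Equivalently, one conjugates by $D = \text{\rm diag}(1,\xi,\dots,\xi^d)$: this leaves the diagonal entries $\xi a_i + \zeta$ unchanged, turns the subdiagonal entries into $1$, and multiplies each superdiagonal entry by $\xi$, yielding $\xi \cdot \xi x_i = \xi^2 x_i$; the diagonal matrix $B^*$ is unaffected. Thus with respect to $\{\xi^i u_i\}_{i=0}^d$ the pair $B,B^*$ is displayed exactly as in \eqref{eq:AAsTDD} with data $(\{\xi a_i + \zeta\}_{i=0}^d; \{\xi^2 x_i\}_{i=1}^d; \{\xi^* \th^*_i + \zeta^*\}_{i=0}^d)$. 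By the uniqueness clause of Proposition \ref{prop:TDDseq} together with Definition \ref{def:TDDseq}, this sequence is a TD/D sequence of $B,B^*$, which is the claim.

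There is no genuine obstacle here; the only points requiring care are the direction of the conjugation (checking that the superdiagonal picks up a factor $\xi$, not $\xi^{-1}$, so that $x_i \mapsto \xi^2 x_i$ rather than something else) and the observation that scaling the subdiagonal of an irreducible tridiagonal matrix by a nonzero scalar preserves both irreducibility and the standardness of the eigenvalue ordering of $B^*$. An essentially equivalent alternative would be to combine Lemma \ref{lem:aixiparam} with Lemma \ref{lem:affineLPparam}, but the direct matrix computation above is shorter and self-contained.
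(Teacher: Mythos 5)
Your proof is correct, but it takes a different route from the paper. The paper disposes of this lemma in one line by combining Lemma \ref{lem:affineLPparam} (how a parameter array transforms under $A \mapsto \xi A + \zeta I$, $A^* \mapsto \xi^* A^* + \zeta^* I$) with Lemma \ref{lem:aixiparam} (the explicit formulas \eqref{eq:a0param}--\eqref{eq:xiparam} expressing $a_i$ and $x_i$ in terms of the parameter array) --- precisely the ``essentially equivalent alternative'' you mention at the end. You instead argue directly at the level of matrices: starting from the basis of Proposition \ref{prop:TDDseq}, you observe that $\xi A + \zeta I$ is represented by an irreducible tridiagonal matrix with constant subdiagonal $\xi$, and then rescale the basis by $\alpha_i = \xi^i$ via Lemma \ref{lem:normalize}, which correctly leaves the diagonal entries $\xi a_i + \zeta$ fixed, restores subdiagonal entries $1$, and sends the superdiagonal entries $\xi x_i$ to $\xi^2 x_i$, while the diagonal matrix representing $\xi^* A^* + \zeta^* I$ is unchanged; the same basis witnesses standardness of the ordering $\{\xi^* \th^*_i + \zeta^*\}_{i=0}^d$. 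Your computation of the conjugation direction is right, and the conclusion then follows from Definition \ref{def:TDDseq}. What your approach buys is self-containedness: it avoids the formulas of Lemma \ref{lem:aixiparam}, which rest on an external reference, and makes the scaling of the $x_i$ transparent. What the paper's approach buys is brevity, since those formulas are already in place and are reused elsewhere; in particular, the same two lemmas immediately give the corresponding statement for the parameter array as well.
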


\begin{proof}
By Lemmas \ref{lem:affineLPparam} and \ref{lem:aixiparam}.
\end{proof}

For the rest of this section, we use the following notation.
Let $A,A^*$ denote a Leonard pair on $V$ with a TD/D sequence
$(\{a_i\}_{i=0}^d; \{x_i\}_{i=1}^d; \{\th^*_i\}_{i=0}^d)$.
For $0 \leq i \leq d$ let $E^*_i$ denote the primitive idempotent of $A^*$
for $\th^*_i$.

\begin{lemma}    {\rm (See \cite[Definition 7.1, Lemma 7.5]{T:qRacah}.) }
\label{lem:aiEsi}     \samepage
\ifDRAFT {\rm lem:aiEsi}. \fi
For $0 \leq i \leq d$ the following hold:
\begin{itemize}
\item[\rm (i)]
$E^*_i A E^*_i = a_i E^*_i$;
\item[\rm (ii)]
$a_i = \text{\rm tr}(A E^*_i)$.
\end{itemize}
\end{lemma}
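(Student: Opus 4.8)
The plan is to work in the basis guaranteed by Proposition~\ref{prop:TDDseq}, where $A$ is the normalized irreducible tridiagonal matrix in \eqref{eq:AAsTDD} with diagonal entries $a_0, a_1, \ldots, a_d$ and superdiagonal entries $x_1, \ldots, x_d$, while $A^*$ is $\text{\rm diag}(\th^*_0, \th^*_1, \ldots, \th^*_d)$. Since the $\th^*_i$ are mutually distinct (Lemma~\ref{lem:parrayunique}, or just the standard-ordering hypothesis together with Lemma~\ref{lem:LPmultfree}), $A^*$ is multiplicity-free and, in this basis, $E^*_i$ is represented by the diagonal matrix with a $1$ in the $(i,i)$ entry and $0$ elsewhere; equivalently, $E^*_i$ is the projection onto the span of the $i$-th basis vector.

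For part~(i), I would simply compute the matrix product $E^*_i A E^*_i$ in this basis. Left multiplication by $E^*_i$ zeroes out every row except row $i$; right multiplication by $E^*_i$ zeroes out every column except column $i$. Hence $E^*_i A E^*_i$ is the matrix whose only possibly-nonzero entry is in position $(i,i)$, and that entry equals the $(i,i)$ entry of $A$, namely $a_i$. Therefore $E^*_i A E^*_i$ is represented by $a_i$ times the matrix of $E^*_i$, which gives $E^*_i A E^*_i = a_i E^*_i$. (One should note in passing that this conclusion is independent of which of the two TD/D sequences from Lemma~\ref{lem:TDDrel} we chose, since the statement is symmetric under the relabeling there.)

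For part~(ii), I would apply the trace to both sides of (i). Since $E^*_i$ has rank $1$, we have $\text{\rm tr}(E^*_i) = 1$ (this is listed among the basic properties of primitive idempotents in Section~\ref{sec:preliminary}, property~(vii)). Thus $\text{\rm tr}(E^*_i A E^*_i) = a_i \,\text{\rm tr}(E^*_i) = a_i$. On the other hand, by the cyclic invariance of the trace and the idempotency $E^*_i E^*_i = E^*_i$, we get $\text{\rm tr}(E^*_i A E^*_i) = \text{\rm tr}(A E^*_i E^*_i) = \text{\rm tr}(A E^*_i)$. Combining these two evaluations yields $a_i = \text{\rm tr}(A E^*_i)$, as desired.

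There is essentially no obstacle here: the lemma is a direct translation of an elementary matrix computation into basis-free language, and the only mild care needed is to make explicit that the matrix of $E^*_i$ with respect to the Proposition~\ref{prop:TDDseq} basis is the standard rank-one diagonal idempotent, which is immediate from the fact that this basis consists of eigenvectors of $A^*$ with the eigenvalues listed in the order $\th^*_0, \ldots, \th^*_d$. Alternatively, one could cite \cite[Definition 7.1, Lemma 7.5]{T:qRacah} directly, as the statement already does, and present the above only as a brief self-contained verification.
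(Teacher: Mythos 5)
Your verification is correct. Note that the paper itself offers no proof of this lemma: it is stated with a citation to \cite[Definition 7.1, Lemma 7.5]{T:qRacah}, so there is no internal argument to compare against. Your computation supplies exactly what that citation covers, and it is sound: in the basis of Proposition \ref{prop:TDDseq} the matrix of $E^*_i$ is the diagonal rank-one idempotent with $1$ in position $(i,i)$ (since that basis consists of $A^*$-eigenvectors with distinct eigenvalues $\th^*_0,\ldots,\th^*_d$), so $E^*_i A E^*_i$ extracts the $(i,i)$-entry $a_i$ of $A$, giving (i); and (ii) follows either by taking traces in (i) together with $\mathrm{tr}(E^*_i)=1$ and cyclicity with $E^*_iE^*_i=E^*_i$, or directly since $\mathrm{tr}(AE^*_i)$ is the $(i,i)$-entry of $A$. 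Your parenthetical remark that the conclusion does not depend on which of the two TD/D sequences of Lemma \ref{lem:TDDrel} is used is a nice touch but not needed, since the lemma is stated relative to a fixed TD/D sequence with $E^*_i$ attached to the corresponding $\th^*_i$.
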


\begin{lemma}      {\rm (See \cite[Definition 7.1, Lemma 7.5]{T:qRacah}.) }
\label{lem:xiEsi}    \samepage
\ifDRAFT {\rm lem:xiEsi}. \fi
For $1 \leq i \leq d$ the following hold:
\begin{itemize}
\item[\rm (i)]
$E^*_i A E^*_{i-1} A E^*_i = x_i E^*_i$;
\item[\rm (ii)]
$x_i = \text{\rm tr}(E^*_i A E^*_{i-1} A)$.
\end{itemize}
\end{lemma}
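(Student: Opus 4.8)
The plan is to work in the basis furnished by the TD/D sequence and reduce both parts to a short matrix-unit computation together with the trace identities for $A^*$.

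First I would invoke Proposition~\ref{prop:TDDseq} and Definition~\ref{def:TDDseq} to fix a basis $\{v_i\}_{i=0}^d$ of $V$ with respect to which $A$ is the normalized irreducible tridiagonal matrix with diagonal entries $a_i$, superdiagonal entries $x_i$, and subdiagonal entries $1$, while $A^*=\text{\rm diag}(\th^*_0,\dots,\th^*_d)$. Since $A^*$ is multiplicity-free with these eigenvalues and $E^*_i$ is its primitive idempotent for $\th^*_i$, the matrix representing $E^*_i$ in this basis is the diagonal matrix unit having a $1$ in position $(i,i)$ and $0$ elsewhere; equivalently $E^*_i v_j=\delta_{i,j}v_i$. (This is the same setup used for Lemma~\ref{lem:aiEsi}.)

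Next, for (i), I would compute the two elementary products. Because the $E^*_j$ are diagonal, $E^*_i A E^*_{i-1}$ is the matrix whose only possibly nonzero entry is the $(i,i-1)$ entry of $A$, namely the subdiagonal entry $1$; likewise $E^*_{i-1} A E^*_i$ is the matrix whose only possibly nonzero entry is the $(i-1,i)$ entry of $A$, namely the superdiagonal entry $x_i$. Using $E^*_{i-1}E^*_{i-1}=E^*_{i-1}$ we have $E^*_i A E^*_{i-1} A E^*_i=(E^*_i A E^*_{i-1})(E^*_{i-1} A E^*_i)$, and multiplying these two matrix units yields the matrix with sole nonzero entry $1\cdot x_i=x_i$ in position $(i,i)$, that is $x_i E^*_i$. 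This proves (i).

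Finally, for (ii), I would apply the trace to the identity in (i): using $\text{\rm tr}(E^*_i)=1$ the right-hand side has trace $x_i$, while by the cyclicity of trace together with $E^*_iE^*_i=E^*_i$ the left-hand side has trace $\text{\rm tr}(E^*_i A E^*_{i-1} A)$, giving $x_i=\text{\rm tr}(E^*_i A E^*_{i-1} A)$. Alternatively, both statements are recorded in \cite[Lemma 7.5]{T:qRacah}, so one may simply cite that reference; but the self-contained matrix argument above is equally quick. I do not anticipate a genuine obstacle: the only point requiring care is the index bookkeeping, namely that $x_i$ occupies the $(i-1,i)$ slot and the subdiagonal $1$ the $(i,i-1)$ slot, so that the two matrix units compose to land on the diagonal at position $i$.
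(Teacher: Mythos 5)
Your argument is correct. Note that the paper does not actually prove this lemma; it simply cites \cite[Definition 7.1, Lemma 7.5]{T:qRacah}, where $x_i$ is defined by exactly these identities. Your self-contained verification works: in the basis from Proposition \ref{prop:TDDseq} attached to the standard ordering $\{\th^*_i\}_{i=0}^d$, each $E^*_i$ is represented by the diagonal matrix unit $e_{ii}$ (as in Lemma \ref{lem:TDDstandard}(ii)), so $E^*_i A E^*_{i-1}$ and $E^*_{i-1} A E^*_i$ isolate the $(i,i-1)$-entry $1$ and the $(i-1,i)$-entry $x_i$ of $A$ respectively, their product is $x_i e_{ii}=x_i E^*_i$, and the trace identity in (ii) then follows from cyclicity of the trace together with $E^*_iE^*_i=E^*_i$ and $\mathrm{tr}(E^*_i)=1$. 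The insertion $E^*_{i-1}=E^*_{i-1}E^*_{i-1}$ that you use to factor the fivefold product is the only point needing care, and you handle it correctly; so your proof is a legitimate elementary substitute for the citation, matching the computation one would carry out in \cite{T:qRacah}.
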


\section{Leonard pairs in normalized TD/D form}
\label{sec:TDDform}
\ifDRAFT {\rm sec:TDDform}. \fi

Let $A,A^*$ denote a Leonard pair over $\F$.
In this section we describe a normalization for $A,A^*$,
called the normalized TD/D form.
We show that the normalized TD/D forms of $A,A^*$ are in bijection
with the TD/D sequences of  $A,A^*$.

\begin{defi}    \label{def:normalizedTDD}    \samepage
\ifDRAFT {\rm def:normalizedTDD}. \fi
A Leonard pair $A, A^*$ over $\F$ with diameter $d$ is said to have {\em normalized TD/D form}
whenever the following (i)--(iii) hold:
\begin{itemize}
\item[\rm (i)]
the underlying vector space of $A,A^*$ is $\F^{d+1}$;
\item[\rm (ii)]
$A$ is a normalized irreducible tridiagonal matrix in $\Matd$;
\item[\rm (iii)]
$A^*$ is a diagonal matrix in $\Matd$.
\end{itemize}
\end{defi}

\begin{lemma}     \label{lem:TDDstandard}    \samepage
\ifDRAFT {\rm lem:TDDstandard}. \fi
Consider a Leonard pair $A,A^*$ over $\F$ that is in normalized TD/D form:
\begin{align*}
A &=
\begin{pmatrix}
 a_0 & x_1 & & & & \text{\bf 0}  \\
 1  & a_1  & x_2 \\
    &  1  &  \cdot & \cdot \\
    &     &   \cdot       & \cdot & \cdot \\
     & & & \cdot & \cdot & x_d  \\
\text{\bf 0} & & & & 1 & a_d
\end{pmatrix},
 &
A^* &= \text{\rm diag}(\th^*_0, \th^*_1, \ldots, \th^*_d).
\end{align*}
Then the following hold.
\begin{itemize}
\item[\rm (i)]
The sequence
$(\{a_i\}_{i=0}^d; \{x_i\}_{i=1}^d; \{\th^*_i\}_{i=0}^d)$
is a TD/D sequence of $A,A^*$.
\item[\rm (ii)]
For $0 \leq i \leq d$ let $E^*_i$ denote  the primitive idempotent of $A^*$ 
for $\th^*_i$.
Then $E^*_i$ has $(i,i)$-entry $1$ and all other entries $0$.
\end{itemize}
\end{lemma}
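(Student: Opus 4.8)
The plan is to verify the two claims directly from the given matrix forms, using the results already set up in the excerpt. For part (i), I would observe that the matrices $A$ and $A^*$ displayed in the statement are literally of the form appearing in \eqref{eq:AAsTDD} of Proposition \ref{prop:TDDseq}: $A$ is normalized irreducible tridiagonal and $A^*$ is $\text{\rm diag}(\th^*_0,\ldots,\th^*_d)$. To apply Definition \ref{def:TDDseq} I must check that $\{\th^*_i\}_{i=0}^d$ is a standard ordering of the eigenvalues of $A^*$. This is where the bulk of the argument lies: I would note that $A^*$ acts diagonally on the standard basis $\{e_i\}_{i=0}^d$ of $\F^{d+1}$, while $A$ is irreducible tridiagonal with respect to the same basis, so by the definition of standard ordering (the paragraph following Lemma \ref{lem:LPmultfree}) the ordering $\th^*_0,\th^*_1,\ldots,\th^*_d$ is standard. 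Then Proposition \ref{prop:TDDseq} guarantees a unique sequence $(\{a'_i\}_{i=0}^d;\{x'_i\}_{i=1}^d)$ realizing these matrices with respect to \emph{some} basis; since the displayed matrices already realize them with respect to $\{e_i\}_{i=0}^d$, uniqueness forces $a'_i = a_i$ and $x'_i = x_i$, so $(\{a_i\}_{i=0}^d;\{x_i\}_{i=1}^d;\{\th^*_i\}_{i=0}^d)$ is precisely a TD/D sequence of $A,A^*$ by Definition \ref{def:TDDseq}.

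For part (ii), I would argue as follows. Since $A,A^*$ is a Leonard pair, $A^*$ is multiplicity-free by Lemma \ref{lem:LPmultfree}, so the $\th^*_i$ are mutually distinct and the primitive idempotents $E^*_i$ are well-defined. The eigenspace of $A^*$ for $\th^*_i$ is spanned by the standard basis vector $e_i$, because $A^* = \text{\rm diag}(\th^*_0,\ldots,\th^*_d)$ sends $e_j \mapsto \th^*_j e_j$. By the defining property of the primitive idempotent, $E^*_i$ fixes $e_i$ and kills $e_j$ for $j \neq i$. Hence the matrix of $E^*_i$ with respect to $\{e_i\}_{i=0}^d$ has a $1$ in the $(i,i)$ position and zeros elsewhere; since the underlying vector space is $\F^{d+1}$ with its standard basis, this \emph{is} the matrix $E^*_i$. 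Equivalently, one may simply invoke the Lagrange-interpolation formula $E^*_i = \prod_{j \neq i}(A^* - \th^*_j I)/(\th^*_i - \th^*_j)$ from the preliminaries, which is manifestly diagonal with the stated $(i,i)$-entry.

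The main obstacle — such as it is — is purely bookkeeping: making sure that ``standard ordering'' is correctly matched to the hypothesis. The key point is that the single basis $\{e_i\}_{i=0}^d$ of $\F^{d+1}$ simultaneously diagonalizes $A^*$ and puts $A$ in irreducible tridiagonal form, which is exactly condition (i) in Definition \ref{def:LP} and is also exactly what ``standard ordering of the eigenvalues of $A^*$'' requires. Once that is in place, both parts are immediate consequences of Proposition \ref{prop:TDDseq}, Definition \ref{def:TDDseq}, Lemma \ref{lem:LPmultfree}, and the basic facts about primitive idempotents recorded in Section \ref{sec:preliminary}. I expect the written proof to be only a few sentences.
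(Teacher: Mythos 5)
Your proposal is correct and follows the paper's own route: part (i) amounts to noting that the standard basis of $\F^{d+1}$ exhibits $\{\th^*_i\}_{i=0}^d$ as a standard ordering of the eigenvalues of $A^*$ (the paragraph below Lemma \ref{lem:LPmultfree}), after which Proposition \ref{prop:TDDseq} and Definition \ref{def:TDDseq} identify $(\{a_i\}_{i=0}^d;\{x_i\}_{i=1}^d;\{\th^*_i\}_{i=0}^d)$ as a TD/D sequence, and part (ii) is the immediate observation about primitive idempotents of a diagonal matrix that the paper dismisses as clear. Your extra uniqueness bookkeeping and the interpolation-formula remark are just more explicit versions of the same argument.
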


\begin{proof}
(i)
Note by the paragraph below Lemma \ref{lem:LPmultfree} that
$\{\th^*_i\}_{i=0}^d$ is a standard ordering of the eigenvalues of $A^*$.
Now the result follows from Definition \ref{def:TDDseq}.

(ii)
Clear.
\end{proof}

\begin{lemma}     \label{lem:TDD3}    \samepage
\ifDRAFT {\rm lem:TDD3}. \fi
Let $A,A^*$ denote a Leonard pair on $V$ with TD/D sequence
\[
(\{a_i\}_{i=0}^d; \{x_i\}_{i=1}^d; \{\th^*_i\}_{i=0}^d).
\]
Then the matrices
\begin{align*}
&
\begin{pmatrix}
 a_0 & x_1 & & & & \text{\bf 0}  \\
 1  & a_1  & x_2 \\
    &  1  &  \cdot & \cdot \\
    &     &   \cdot       & \cdot & \cdot \\
     & & & \cdot & \cdot & x_d  \\
\text{\bf 0} & & & & 1 & a_d
\end{pmatrix},
 && 
\text{\rm diag}(\th^*_0, \th^*_1, \ldots, \th^*_d) 
\end{align*}
form a Leonard pair in normalized TD/D form that is isomorphic to $A,A^*$.
\end{lemma}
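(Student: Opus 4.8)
The plan is to transport everything through the basis produced by Proposition \ref{prop:TDDseq}. Write $B$ for the displayed normalized irreducible tridiagonal matrix and $D^* = \text{\rm diag}(\th^*_0,\th^*_1,\ldots,\th^*_d)$. By Definition \ref{def:TDDseq}, the ordering $\{\th^*_i\}_{i=0}^d$ is standard for $A^*$ and $(\{a_i\}_{i=0}^d;\{x_i\}_{i=1}^d)$ is the sequence from Proposition \ref{prop:TDDseq}; hence there is a basis $\{u_i\}_{i=0}^d$ of $V$ with respect to which the matrix representing $A$ is $B$ and the matrix representing $A^*$ is $D^*$. Moreover Proposition \ref{prop:TDDseq} gives $x_i \neq 0$ for $1 \leq i \leq d$, so $B$ is indeed irreducible tridiagonal, and its subdiagonal entries are all $1$, so $B$ is normalized.

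Next I would define the $\F$-linear bijection $S : V \to \F^{d+1}$ sending $u_i \mapsto e_i$, where $\{e_i\}_{i=0}^d$ is the standard basis of $\F^{d+1}$. By the way $S$ is built, $S A S^{-1} = B$ and $S A^* S^{-1} = D^*$, i.e. $S A = B S$ and $S A^* = B S$... more precisely $S A = B S$ and $S A^* = D^* S$. Since $S$ is an $\F$-linear bijection intertwining the two pairs, it carries the basis in Definition \ref{def:LP}(i) for $A,A^*$ to a basis of $\F^{d+1}$ realizing Definition \ref{def:LP}(i) for $B,D^*$, and likewise for (ii); hence $B,D^*$ is a Leonard pair on $\F^{d+1}$ and $S$ is an isomorphism of Leonard pairs from $A,A^*$ to $B,D^*$. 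Finally, $B,D^*$ satisfies Definition \ref{def:normalizedTDD}: its underlying vector space is $\F^{d+1}$, $B$ is normalized irreducible tridiagonal in $\Matd$, and $D^*$ is diagonal in $\Matd$. This gives the assertion.

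There is no real obstacle here; the only points requiring a word of care are (a) noting that the pushforward of a Leonard pair along a vector-space isomorphism is again a Leonard pair, which is immediate by transporting the two distinguished bases, and (b) citing the nonvanishing $x_i \neq 0$ from Proposition \ref{prop:TDDseq} so that $B$ qualifies as \emph{irreducible} tridiagonal and hence as \emph{normalized} in the sense of Definition \ref{def:normalizedTDD}. Everything else is bookkeeping with Definitions \ref{def:TDDseq} and \ref{def:normalizedTDD}.
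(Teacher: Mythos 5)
Your argument is correct and is exactly the route the paper takes: its proof of this lemma is simply the citation of Proposition \ref{prop:TDDseq} and Definition \ref{def:TDDseq}, and your write-up just makes explicit the transport of $A,A^*$ along the coordinate map $u_i \mapsto e_i$ to $\F^{d+1}$, together with the observation that $x_i \neq 0$ makes the tridiagonal matrix irreducible and normalized. Nothing further is needed.
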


\begin{proof}
By Proposition \ref{prop:TDDseq} and Definition \ref{def:TDDseq}.
\end{proof}

Next we give a variation on Lemma \ref{lem:TDD3}.
Consider a sequence of scalars in $\F$:
\[
  (\{a_i\}_{i=0}^d; \{x_i\}_{i=1}^d; \{\th^*_i\}_{i=0}^d).
\]
Define  matrices $A$, $A^* \in \Matd$ by
\begin{align}
A &=
\begin{pmatrix}
 a_0 & x_1 & & & & \text{\bf 0}  \\
 1  & a_1  & x_2 \\
    &  1  &  \cdot & \cdot \\
    &     &   \cdot       & \cdot & \cdot \\
     & & & \cdot & \cdot & x_d  \\
\text{\bf 0} & & & & 1 & a_d
\end{pmatrix},
 & 
A^* &= \text{\rm diag}(\th^*_0, \th^*_1, \ldots, \th^*_d).     \label{eq:AAsTDD3}
\end{align}

\begin{prop}    \label{prop:TDD2}    \samepage
\ifDRAFT {\rm prop:TDD2}. \fi
With the above notation, the following are equivalent:
\begin{itemize}
\item[\rm (i)]
$(\{a_i\}_{i=0}^d; \{x_i\}_{i=1}^d; \{\th^*_i\}_{i=0}^d)$ is a TD/D sequence over $\F$;
\item[\rm (ii)]
the matrices $A,A^*$ form a Leonard pair over $\F$.
\end{itemize}
Suppose {\rm (i)} and {\rm (ii)} hold.
Then the Leonard pair $A,A^*$ is in normalized TD/D form,
and $(\{a_i\}_{i=0}^d; \{x_i\}_{i=1}^d; \{\th^*_i\}_{i=0}^d)$ is a TD/D sequence
of $A,A^*$.
\end{prop}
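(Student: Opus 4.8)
The plan is to establish the equivalence (i)~$\Leftrightarrow$~(ii) and then read off the supplementary assertion; in fact both directions are short once the earlier results are in hand, since the content is essentially a repackaging of Lemma~\ref{lem:TDD3}, Lemma~\ref{lem:TDDstandard}, and Lemma~\ref{lem:irred}.

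First I would prove (i)~$\Rightarrow$~(ii) together with the supplementary assertion in that case. Assume $(\{a_i\}_{i=0}^d; \{x_i\}_{i=1}^d; \{\th^*_i\}_{i=0}^d)$ is a TD/D sequence over $\F$. By Definition~\ref{def:TDDseqoverF} there is a Leonard pair $B,B^*$ over $\F$ having this TD/D sequence, and applying Lemma~\ref{lem:TDD3} to $B,B^*$ shows that the matrices $A,A^*$ of \eqref{eq:AAsTDD3} form a Leonard pair over $\F$ that is in normalized TD/D form. This gives (ii) and the first half of the supplementary assertion. Moreover, applying Lemma~\ref{lem:TDDstandard}(i) to this normalized TD/D form $A,A^*$ yields that $(\{a_i\}_{i=0}^d; \{x_i\}_{i=1}^d; \{\th^*_i\}_{i=0}^d)$ is a TD/D sequence of $A,A^*$, which is the second half.

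Next I would prove (ii)~$\Rightarrow$~(i). Assume the matrices $A,A^*$ of \eqref{eq:AAsTDD3} form a Leonard pair over $\F$. The key step is to show $x_i \neq 0$ for $1 \leq i \leq d$. Suppose on the contrary that $x_k = 0$ for some $k$ with $1 \leq k \leq d$. Let $\{e_i\}_{i=0}^d$ denote the standard basis of $\F^{d+1}$ and set $W = \mathrm{span}\{e_k, e_{k+1}, \ldots, e_d\}$. Because every subdiagonal entry of $A$ equals $1$ while the superdiagonal entry $x_k$ vanishes, one checks directly from \eqref{eq:AAsTDD3} that $A W \subseteq W$; and $A^* W \subseteq W$ since $A^*$ is diagonal. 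As $W \neq 0$ and $W \neq \F^{d+1}$, this contradicts Lemma~\ref{lem:irred}. Hence every $x_i$ is nonzero, so $A$ is normalized irreducible tridiagonal and $A^*$ is diagonal; i.e., $A,A^*$ is in normalized TD/D form in the sense of Definition~\ref{def:normalizedTDD}. Now Lemma~\ref{lem:TDDstandard}(i) gives that $(\{a_i\}_{i=0}^d; \{x_i\}_{i=1}^d; \{\th^*_i\}_{i=0}^d)$ is a TD/D sequence of $A,A^*$, hence a TD/D sequence over $\F$, which is (i); the supplementary assertion was obtained along the way.

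I expect the only step requiring care is the verification $x_i \neq 0$ in the direction (ii)~$\Rightarrow$~(i): one must pin down the correct simultaneously $A$- and $A^*$-invariant subspace and check it is both nonzero and proper, using crucially that the subdiagonal of the matrix in \eqref{eq:AAsTDD3} is identically $1$, so that the sole way irreducibility of $A$ can fail is through a vanishing superdiagonal entry. Every remaining step is a direct appeal to Lemma~\ref{lem:TDD3}, Lemma~\ref{lem:TDDstandard}(i), Lemma~\ref{lem:irred}, Definition~\ref{def:TDDseqoverF}, and Definition~\ref{def:normalizedTDD}.
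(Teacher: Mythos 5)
Your proof is correct and follows essentially the same route as the paper, which disposes of this proposition by simply unwinding Definitions \ref{def:TDDseq} and \ref{def:normalizedTDD}. Your extra work — producing a Leonard pair via Lemma \ref{lem:TDD3} for (i)$\Rightarrow$(ii), and verifying $x_i \neq 0$ through the $A$- and $A^*$-invariant subspace $\mathrm{span}\{e_k,\ldots,e_d\}$ and Lemma \ref{lem:irred} for (ii)$\Rightarrow$(i) — just makes explicit the details the paper leaves implicit (the latter being the same mechanism used at the end of the proof of Proposition \ref{prop:TDDseq}).
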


\begin{proof}
By Definitions \ref{def:TDDseq} and \ref{def:normalizedTDD}.
\end{proof}

\section{Bipartite and essentially bipartite Leonard pairs}
\label{sec:bip} 
\ifDRAFT {\rm sec:bip}. \fi

In this section, we first recall  the bipartite and essentially bipartite
conditions on a Leonard pair.
We then characterize these conditions in terms of the parameter array.

Throughout this section, let $A,A^*$ denote a Leonard pair over $\F$
with parameter array
\[
 (\{\th_i\}_{i=0}^d; \{\th^*_i\}_{i=0}^d; \{\vphi_i\}_{i=1}^d; \{\phi_i\}_{i=1}^d)
\]
and the corresponding TD/D sequence 
\[
 (\{a_i\}_{i=0}^d; \{x_i\}_{i=1}^d; \{\th^*_i\}_{i=0}^d).
\]

\begin{defi}    {\rm (See \cite[Section 1]{NT:balanced}.) }
\label{def:bip}    \samepage
\ifDRAFT {\rm def:bip}. \fi
\begin{itemize}
\item[\rm (i)]
The Leonard pair $A,A^*$ is said to be {\em bipartite} whenever
$a_i = 0$ for $0 \leq i \leq d$.
\item[\rm (ii)]
The Leonard pair $A,A^*$ is said to be {\em essentially bipartite} whenever
$a_i$ is independent of $i$ for $0 \leq i \leq d$.
\end{itemize}
\end{defi}

\begin{note}    \label{note:essbip}   \samepage
\ifDRAFT {\rm note:essbip}. \fi
A bipartite Leonard pair is essentially bipartite.
\end{note}

\begin{note}   \label{note:d0}    \samepage
\ifDRAFT {\rm note:d0}. \fi
Assume that $d=0$. 
Then any ordered pair $A,A^*$ of elements in $\End$ is an essentially bipartite
Leonard pair.
This Leonard pair is bipartite if and only if $A=0$.
\end{note}

\begin{lemma}    \label{lem:bipaffine}    \samepage
\ifDRAFT {\rm lem:bipaffine}. \fi
The following hold.
\begin{itemize}
\item[\rm (i)]
Assume that $A,A^*$ is bipartite.
Then for $\zeta \in \F$ the Leonard pair $A + \zeta I , A^*$
is essentially bipartite.
\item[\rm (ii)]
Assume that $A,A^*$ is essentially bipartite.
Then there exists a unique $\zeta \in \F$
such that $A-\zeta I, A^*$ is bipartite.
The scalar $\zeta$ is equal to the common value of $\{a_i\}_{i=0}^d$.
\end{itemize}
\end{lemma}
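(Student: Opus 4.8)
The plan is to prove Lemma \ref{lem:bipaffine} as a direct consequence of how the TD/D sequence transforms under an affine transformation, which is recorded in Lemma \ref{lem:TDDaffine}.

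For part (i), I would argue as follows. Suppose $A,A^*$ is bipartite, so by Definition \ref{def:bip}(i) the TD/D sequence of $A,A^*$ has the form $(\{0\}_{i=0}^d; \{x_i\}_{i=1}^d; \{\th^*_i\}_{i=0}^d)$. Apply Lemma \ref{lem:TDDaffine} with $\xi=1$, $\zeta$ arbitrary, $\xi^*=1$, $\zeta^*=0$; this is legitimate since $\xi\xi^*=1\neq 0$. That lemma tells us the pair $A+\zeta I, A^*$ is a Leonard pair with TD/D sequence $(\{0+\zeta\}_{i=0}^d; \{x_i\}_{i=1}^d; \{\th^*_i\}_{i=0}^d) = (\{\zeta\}_{i=0}^d; \{x_i\}_{i=1}^d; \{\th^*_i\}_{i=0}^d)$. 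Since the scalars $a_i=\zeta$ are all equal, Definition \ref{def:bip}(ii) gives that $A+\zeta I, A^*$ is essentially bipartite.

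For part (ii), suppose $A,A^*$ is essentially bipartite, so its TD/D sequence is $(\{\zeta\}_{i=0}^d; \{x_i\}_{i=1}^d; \{\th^*_i\}_{i=0}^d)$ where $\zeta$ is the common value of the $a_i$. Apply Lemma \ref{lem:TDDaffine} with $\xi=1$, the shift equal to $-\zeta$, $\xi^*=1$, $\zeta^*=0$ to get that $A-\zeta I, A^*$ is a Leonard pair whose TD/D sequence is $(\{\zeta-\zeta\}_{i=0}^d; \{x_i\}_{i=1}^d; \{\th^*_i\}_{i=0}^d) = (\{0\}_{i=0}^d; \{x_i\}_{i=1}^d; \{\th^*_i\}_{i=0}^d)$, hence bipartite by Definition \ref{def:bip}(i). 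For uniqueness, suppose $\zeta'\in\F$ is such that $A-\zeta' I, A^*$ is bipartite. Then by the same computation its TD/D sequence has diagonal part $\{\zeta-\zeta'\}_{i=0}^d$, which must be all zeros, forcing $\zeta'=\zeta$. This also confirms the last assertion that the required scalar equals the common value of $\{a_i\}_{i=0}^d$.

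I anticipate no real obstacle here: the only subtlety is making sure the hypothesis $\xi\xi^*\neq 0$ in Lemma \ref{lem:TDDaffine} is satisfied (it is, since we take $\xi=\xi^*=1$), and keeping straight that "bipartite" refers to the diagonal entries of the TD/D form being zero while "essentially bipartite" only requires them to be constant. Everything then reduces to the bookkeeping of how the shift $\zeta$ moves the $a_i$; no computation beyond that is needed. Alternatively, one could phrase the whole argument at the level of matrices in normalized TD/D form via Lemma \ref{lem:TDD3}, but invoking Lemma \ref{lem:TDDaffine} is the cleanest route.
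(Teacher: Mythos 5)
Your proposal is correct and is essentially the paper's own argument: the paper proves this lemma by citing Lemma \ref{lem:TDDaffine} together with Definition \ref{def:bip}, which is exactly the route you take (with $\xi=\xi^*=1$ and a shift by $\zeta$), only spelled out in more detail, including the uniqueness bookkeeping in part (ii).
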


\begin{proof}
By Lemma \ref{lem:TDDaffine} and Definition \ref{def:bip}.
\end{proof}

\begin{lemma}    \label{lem:bip2}    \samepage
\ifDRAFT {\rm lem:bip2}. \fi
Assume that $A,A^*$ is bipartite.
Then for scalars $\xi$, $\xi^*$, $\zeta^*$ in $\F$ with $\xi \xi^* \neq 0$,
the Leonard pair $\xi A, \xi^* A^* + \zeta^* I$ is bipartite.
\end{lemma}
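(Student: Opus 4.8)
The plan is to reduce the statement to the transformation rule for TD/D sequences under affine adjustments, namely Lemma \ref{lem:TDDaffine}. First I would invoke Lemma \ref{lem:affineLP} (taking $\zeta = 0$ there) to record that $\xi A,\, \xi^* A^* + \zeta^* I$ is indeed a Leonard pair on $V$, so that the notion of being bipartite applies to it at all.

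Next, fix a TD/D sequence $(\{a_i\}_{i=0}^d; \{x_i\}_{i=1}^d; \{\th^*_i\}_{i=0}^d)$ of $A,A^*$; by the bipartite hypothesis and Definition \ref{def:bip}(i) we have $a_i = 0$ for $0 \leq i \leq d$. Applying Lemma \ref{lem:TDDaffine} with $\zeta = 0$, the sequence $(\{\xi a_i\}_{i=0}^d; \{\xi^2 x_i\}_{i=1}^d; \{\xi^* \th^*_i + \zeta^*\}_{i=0}^d)$ is a TD/D sequence of $\xi A,\, \xi^* A^* + \zeta^* I$. Since $\xi a_i = 0$ for $0 \leq i \leq d$, Definition \ref{def:bip}(i) shows that $\xi A,\, \xi^* A^* + \zeta^* I$ is bipartite, which is the assertion.

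I do not expect a genuine obstacle here; the only point deserving a word of care is that being bipartite is a property of the Leonard pair, not of a chosen TD/D sequence. This causes no trouble: by Lemma \ref{lem:TDDrel} the only other TD/D sequence of $A,A^*$ is the reversal \eqref{eq:TDDseq1}, whose diagonal part is $\{a_{d-i}\}_{i=0}^d$, so the vanishing of all $a_i$ is independent of the choice. Hence the argument above is complete, and indeed it is short enough that one could simply state it as ``immediate from Lemma \ref{lem:TDDaffine} and Definition \ref{def:bip}.''
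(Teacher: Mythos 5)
Your argument is correct and is exactly the paper's route: the paper proves this lemma by citing Lemma \ref{lem:TDDaffine} and Definition \ref{def:bip}, which is precisely the computation you carry out (with the additional, harmless remarks about Lemma \ref{lem:affineLP} and the independence of the choice of TD/D sequence). Nothing further is needed.
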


\begin{proof}
By Lemma \ref{lem:TDDaffine} and Definition \ref{def:bip}.
\end{proof}

\begin{lemma}    \label{lem:essbip2}    \samepage
\ifDRAFT {\rm lem:essbip2}. \fi
Assume that $A,A^*$ is essentially bipartite.
Then for scalars $\xi$, $\zeta$, $\xi^*$, $\zeta^*$ in $\F$ with $\xi \xi^* \neq 0$,
the Leonard pair $\xi A + \zeta I, \xi^* A^* + \zeta^* I$ is essentially bipartite.
\end{lemma}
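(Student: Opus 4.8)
The plan is to reduce the statement to the behavior of TD/D sequences under affine transformations, exactly as in the proofs of Lemma \ref{lem:bipaffine} and Lemma \ref{lem:bip2}. First I would invoke the standing hypothesis: since $A,A^*$ is essentially bipartite, by Definition \ref{def:bip}(ii) the scalar $a_i$ is independent of $i$; write $a_i = a$ for $0 \leq i \leq d$, where $(\{a_i\}_{i=0}^d; \{x_i\}_{i=1}^d; \{\th^*_i\}_{i=0}^d)$ is the TD/D sequence fixed throughout this section.

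Next I would apply Lemma \ref{lem:TDDaffine} with the given scalars $\xi$, $\zeta$, $\xi^*$, $\zeta^*$ (valid since $\xi\xi^* \neq 0$): this tells us that the pair $\xi A + \zeta I,\ \xi^* A^* + \zeta^* I$ is again a Leonard pair and has TD/D sequence
\[
  (\{\xi a_i + \zeta\}_{i=0}^d;\ \{\xi^2 x_i\}_{i=1}^d;\ \{\xi^* \th^*_i + \zeta^*\}_{i=0}^d).
\]
Since $a_i = a$ for all $i$, the diagonal entries $\xi a_i + \zeta = \xi a + \zeta$ are independent of $i$ as well. Hence, by Definition \ref{def:bip}(ii) applied to this new TD/D sequence, the Leonard pair $\xi A + \zeta I,\ \xi^* A^* + \zeta^* I$ is essentially bipartite, which is the claim.

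There is essentially no obstacle here: the only thing to check is that Lemma \ref{lem:TDDaffine} genuinely applies, i.e.\ that the hypothesis $\xi\xi^*\neq 0$ in that lemma matches the hypothesis given here, which it does. I would also note in passing that one does not even need the second coordinate $\{\xi^2 x_i\}$ or the eigenvalue coordinate, since the essentially bipartite condition concerns only the $a_i$; the argument is insensitive to whether $\zeta = 0$, so unlike Lemma \ref{lem:bip2} there is no need to restrict $\zeta$.
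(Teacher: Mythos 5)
Your proposal is correct and is essentially the paper's own proof, which simply cites Lemma \ref{lem:TDDaffine} together with Definition \ref{def:bip}; you have just written out the short computation $\xi a_i + \zeta = \xi a + \zeta$ explicitly. No issues.
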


\begin{proof}
By Lemma \ref{lem:TDDaffine} and Definition \ref{def:bip}.
\end{proof}

\begin{lemma}    {\rm (See \cite[Theorem 1.5]{NT:balanced}.)   }
\label{lem:balanced}    \samepage
\ifDRAFT {\rm lem:balanced}. \fi
The following are equivalent:
\begin{itemize}
\item[\rm (i)]
$A,A^*$ is essentially bipartite;
\item[\rm (ii)]
$\th_i + \th_{d-i}$ is independent of $i$ for $0 \leq i \leq d$,
and $\vphi_i + \phi_i = 0$ for $1 \leq i \leq d$.
\end{itemize}
Suppose {\rm (i), (ii)} hold.
Then $\th_i + \th_{d-i} = 2 \alpha$ for $0 \leq i \leq d$,
where $\alpha$ is the common value of $\{a_i\}_{i=0}^d$.
\end{lemma}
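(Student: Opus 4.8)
The plan is to pass through the TD/D sequence and the formulas in Lemma \ref{lem:aixiparam} that express the $a_i$ in terms of the parameter array. By Definition \ref{def:bip}(ii), the pair $A,A^*$ is essentially bipartite precisely when $a_0 = a_1 = \cdots = a_d$. First I would record the $d$ equations $a_{i-1} = a_i$ for $1 \leq i \leq d$ obtained from \eqref{eq:a0param}--\eqref{eq:adparam}. For $2 \leq i \leq d-1$ these read
\[
 \th_{i-1} + \frac{\vphi_{i-1}}{\th^*_{i-1}-\th^*_{i-2}} + \frac{\vphi_i}{\th^*_{i-1}-\th^*_i}
 = \th_i + \frac{\vphi_i}{\th^*_i - \th^*_{i-1}} + \frac{\vphi_{i+1}}{\th^*_i - \th^*_{i+1}},
\]
with the obvious modifications at $i=1$ and $i=d$; note that the two middle $\vphi_i$ terms combine into $2\vphi_i/(\th^*_{i-1}-\th^*_i)$ after moving one across. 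I would then show that this system of $d$ equations is equivalent to the pair of conditions in (ii): that $\th_i + \th_{d-i}$ is constant, and that $\vphi_i + \phi_i = 0$ for all $i$.

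For the converse direction, which is the routine half, I would assume (ii) holds, use condition (iii) of Lemma \ref{lem:classify} to rewrite $\vphi_i$, and use $\phi_i = -\vphi_i$ together with condition (iv) to get a second expression for $\vphi_i$; subtracting, and using $\th_\ell - \th_{d-\ell} = (\th_0 - \th_d) - (\th_{d-\ell}-\th_\ell)$ together with $\th_\ell + \th_{d-\ell} = \th_0 + \th_d$, one finds $\vphi_i = \vphi_1 \cdot (\text{something})/(\th_0-\th_d) + (\th^*_i - \th^*_0)(\th_{i-1} - \th_d)$ and a companion formula; then substituting into \eqref{eq:aiparam} and telescoping should collapse everything to a constant. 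In fact there is a cleaner route: once (ii) holds, the map $A \mapsto A - \alpha I$ followed by applying Lemma \ref{lem:balanced}'s hypotheses to show the split sequences and eigenvalue sums match those of a bipartite Leonard pair; but since Lemma \ref{lem:balanced} \emph{is} the statement being proved, I must argue directly from Lemma \ref{lem:classify}. So I would instead verify directly that under (ii) the scalar $\th_0 + \vphi_1/(\th^*_0 - \th^*_1)$ equals each $a_i$, by induction on $i$: the inductive step $a_i - a_{i-1} = 0$ reduces, after clearing the common denominators $\th^*_{i-1}-\th^*_i$, to an identity among the $\th_j, \th^*_j, \vphi_j$ that is a consequence of (iii), (iv) of Lemma \ref{lem:classify} and $\vphi_j + \phi_j = 0$.

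For the forward direction I would run the same computation in reverse: assuming all $a_i$ equal, the $i=1$ equation gives $\th_0 - \th_1 = \vphi_1/(\th^*_1-\th^*_0)\cdot(\cdots)$-type relation, and inductively one extracts first that $\th_i + \th_{d-i}$ is independent of $i$ (by pairing the equation at index $i$ with the equation at index $d-i$, using the reversal symmetry of the TD/D sequence from Lemma \ref{lem:TDDrel} and the corresponding symmetry of the parameter array from Lemma \ref{lem:parrayunique}), and then that $\vphi_i + \phi_i = 0$ (by comparing the $a_i$-formula in Lemma \ref{lem:aixiparam} with the analogous formula in Lemma \ref{lem:aixiparam2}, which expresses the same $a_i$ via $\th_{d-i}$ and $\phi_i$; subtracting the two and using $\th_i + \th_{d-i} = 2\alpha$ isolates $\vphi_i + \phi_i$). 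This comparison of Lemmas \ref{lem:aixiparam} and \ref{lem:aixiparam2} is actually the slickest tool available, since both give formulas for the \emph{same} $a_i$: from \eqref{eq:aiparam} and \eqref{eq:aiparamb},
\[
 \th_i - \th_{d-i} = \frac{\phi_i - \vphi_i}{\th^*_i - \th^*_{i-1}} + \frac{\phi_{i+1} - \vphi_{i+1}}{\th^*_i - \th^*_{i+1}}
\]
for $1 \leq i \leq d-1$, with endpoint versions from \eqref{eq:a0param}/\eqref{eq:a0paramb} and \eqref{eq:adparam}/\eqref{eq:adparamb}. The essentially-bipartite hypothesis is not yet used here — these hold for every Leonard pair — so the real content is that constancy of $\{a_i\}$ forces $\th_i + \th_{d-i}$ constant and then, via the displayed relation, $\vphi_i = \phi_i$... which contradicts $\vphi_i + \phi_i = 0$ unless I track signs carefully; the resolution is that the correct bipartite condition pairs these with opposite sign, so I must be meticulous about which standard ordering of $A^*$ is fixed across the two lemmas.

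\textbf{Main obstacle.} The hard part will be the bookkeeping in the forward direction: extracting \emph{both} halves of (ii) from the single family of equations $\{a_i = a_{i-1}\}$, without circularity. The cleanest path, as indicated above, is to play Lemma \ref{lem:aixiparam} against Lemma \ref{lem:aixiparam2} to get, for free, the identity relating $\th_i - \th_{d-i}$ to the differences $\phi_j - \vphi_j$; then constancy of $\{a_i\}$ (equivalently, of $\{\th_i + (\text{correction})\}$) combined with the reversal symmetry of Lemma \ref{lem:parrayunique} should pin down $\th_i + \th_{d-i}$, after which the remaining identity yields $\vphi_i + \phi_i = 0$. I expect the endpoint indices $i \in \{1, d\}$ to need separate, slightly fiddly handling, and I expect to lean on condition (v) of Lemma \ref{lem:classify} (which forces the $\th^*_i$ to be in "geometric-or-arithmetic" progression) only implicitly, through the already-established formulas (iii), (iv). The final sentence of the lemma, $\th_i + \th_{d-i} = 2\alpha$ with $\alpha$ the common $a_i$-value, then falls out of the $i=0$ case of whatever normalization I set up, since $a_0 + a_d = \th_0 + \th_d + \vphi_1/(\th^*_0-\th^*_1) + \vphi_d/(\th^*_d - \th^*_{d-1})$ and the $\vphi$-terms cancel against $\phi$-terms under (ii).
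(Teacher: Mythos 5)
First, a point of reference: the paper does not prove this lemma at all --- it is quoted from \cite[Theorem 1.5]{NT:balanced} --- so you are attempting an independent proof of a cited external result. Within your attempt, the easy half, (ii) $\Rightarrow$ (i), is sound and is in fact a one-liner with exactly the tools you name: adding the two expressions for the \emph{same} $a_i$ from Lemmas \ref{lem:aixiparam} and \ref{lem:aixiparam2} gives
\begin{align*}
2a_0 &= \th_0+\th_d+\frac{\vphi_1+\phi_1}{\th^*_0-\th^*_1},
\qquad
2a_d = \th_0+\th_d+\frac{\vphi_d+\phi_d}{\th^*_d-\th^*_{d-1}},
\\
2a_i &= \th_i+\th_{d-i}+\frac{\vphi_i+\phi_i}{\th^*_i-\th^*_{i-1}}+\frac{\vphi_{i+1}+\phi_{i+1}}{\th^*_i-\th^*_{i+1}} \qquad (1\le i\le d-1),
\end{align*}
so under (ii) every $a_i$ equals half the common value of $\th_i+\th_{d-i}$; this also yields the final sentence of the lemma. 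No telescoping through Lemma \ref{lem:classify}(iii),(iv) is needed.

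The genuine gap is the forward direction, which you yourself call the ``main obstacle'' and never carry out. The identities above, and the difference identity you display, hold for \emph{every} Leonard pair, and in them the two quantities $\th_i+\th_{d-i}$ and $\vphi_i+\phi_i$ enter only in the mixed combinations shown; constancy of $\{a_i\}_{i=0}^d$ therefore does not by itself separate them (a priori a nonconstant $\th_i+\th_{d-i}$ could be compensated by nonzero $\vphi_i+\phi_i$). Your proposed mechanism for the separation --- ``pairing the equation at index $i$ with the one at index $d-i$ using the reversal symmetry of Lemmas \ref{lem:TDDrel} and \ref{lem:parrayunique}'' --- is never executed, and since the reversed TD/D sequence carries the same constant $a$-values it produces no new relation; your own discussion then stalls in a sign ``contradiction'' deferred to unspecified bookkeeping. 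As written, the hard half of the equivalence is simply missing; making your route work would require the full force of Lemma \ref{lem:classify}(iii)--(v), i.e.\ essentially redoing the computation of \cite{NT:balanced}. A cheaper way to close the gap: replace $A$ by $A-\alpha I$ (Lemmas \ref{lem:TDDaffine}, \ref{lem:affineLPparam}) to reduce to the bipartite case, pass to normalized TD/D form, and conjugate by $\mathrm{diag}(1,-1,1,\ldots)$; this fixes $A^*$ and sends $A\mapsto -A$, so $(\{-\th_i\}_{i=0}^d;\{\th^*_i\}_{i=0}^d;\{-\vphi_i\}_{i=1}^d;\{-\phi_i\}_{i=1}^d)$ is a parameter array of the shifted pair, and by Lemma \ref{lem:parrayunique} it must coincide with $(\{\th_{d-i}\}_{i=0}^d;\{\th^*_i\}_{i=0}^d;\{\phi_i\}_{i=1}^d;\{\vphi_i\}_{i=1}^d)$ (the only alternative forces all eigenvalues to vanish), giving $\th_i+\th_{d-i}=2\alpha$ and $\vphi_i+\phi_i=0$; the characteristic-two case, where this conjugation degenerates, has to be handled separately (there neither (i) nor (ii) can hold for $d\ge 1$).
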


\begin{lemma}    \label{lem:bipartite}    \samepage
\ifDRAFT {\rm lem:bipartite}. \fi
Assume that $d \geq 1$.
Then the following are equivalent:
\begin{itemize}
\item[\rm (i)]
$A,A^*$ is bipartite;
\item[\rm (ii)]
$\th_i + \th_{d-i}=0$ for $0 \leq i \leq d$,
and $\vphi_i + \phi_i = 0$ for $1 \leq i \leq d$.
\end{itemize}
\end{lemma}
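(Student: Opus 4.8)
The plan is to deduce Lemma \ref{lem:bipartite} from the already-established Lemma \ref{lem:balanced} (the essentially bipartite characterization) together with Definition \ref{def:bip} and the basic parameter-array identities in Lemma \ref{lem:classify}. The key observation is that bipartite means $a_i=0$ for all $i$, while essentially bipartite means $a_i$ is constant; so bipartite is the special case of essentially bipartite in which the common value $\alpha$ of $\{a_i\}_{i=0}^d$ is zero. Thus I would first invoke Lemma \ref{lem:balanced} to replace the bipartite condition by: $A,A^*$ is essentially bipartite \emph{and} $\alpha=0$, and then translate $\alpha=0$ into the parameter-array language via the final clause of Lemma \ref{lem:balanced}, namely $\th_i+\th_{d-i}=2\alpha$.

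In detail: for (i)$\Rightarrow$(ii), assume $A,A^*$ is bipartite. By Note \ref{note:essbip} it is essentially bipartite, so Lemma \ref{lem:balanced} gives $\vphi_i+\phi_i=0$ for $1\le i\le d$ and $\th_i+\th_{d-i}=2\alpha$ where $\alpha$ is the common value of $\{a_i\}_{i=0}^d$. But $A,A^*$ bipartite forces $a_i=0$ for all $i$, hence $\alpha=0$, hence $\th_i+\th_{d-i}=0$ for $0\le i\le d$; this is (ii). Conversely, for (ii)$\Rightarrow$(i), assume $\th_i+\th_{d-i}=0$ for $0\le i\le d$ and $\vphi_i+\phi_i=0$ for $1\le i\le d$. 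Then $\th_i+\th_{d-i}$ is (trivially) independent of $i$, so by Lemma \ref{lem:balanced}(ii)$\Rightarrow$(i), $A,A^*$ is essentially bipartite, and moreover the common value $\alpha$ of $\{a_i\}_{i=0}^d$ satisfies $2\alpha=\th_0+\th_d=0$, so $\alpha=0$ and therefore $a_i=0$ for $0\le i\le d$, i.e. $A,A^*$ is bipartite by Definition \ref{def:bip}(i).

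One point that needs a brief check is the role of the hypothesis $d\geq 1$: for $d=0$ there is no $\th_0+\th_d$ distinct from $2\th_0$, and indeed Note \ref{note:d0} shows the $d=0$ case behaves differently (every pair is essentially bipartite, but bipartite forces $A=0$), so the characterization (ii) with the constraint $\th_0+\th_0=0$ would read $\th_0=0$, which is consistent but the split-sequence half of (ii) is vacuous — I would simply note that the $d\geq 1$ hypothesis is what lets us quote Lemma \ref{lem:balanced} cleanly and matches the statement. Another subtlety worth a sentence: Lemma \ref{lem:balanced} is stated for a fixed parameter array, and one should note that the conditions in (ii) are invariant under the allowed reorderings of the parameter array (replacing $\th_i$ by $\th_{d-i}$ and swapping $\vphi,\phi$, per Lemma \ref{lem:parrayunique}), so the statement is well-posed; but since the proof only uses the given parameter array throughout, this is essentially automatic.

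I do not anticipate a genuine obstacle here — the lemma is a corollary of Lemma \ref{lem:balanced} — so the only ``hard part'' is bookkeeping: making sure the passage from $a_i=0$ for all $i$ to $\alpha=0$ and back is stated using exactly the last sentence of Lemma \ref{lem:balanced}, and confirming that no appeal to the explicit formulas \eqref{eq:a0param}--\eqref{eq:adparam} is needed (though one could alternatively give a self-contained argument: if all $a_i=0$ then summing \eqref{eq:a0param}--\eqref{eq:adparam} with the telescoping of the $\vphi$ terms, combined with Lemma \ref{lem:TDDsumai} giving $\sum_i\th_i=\sum_i a_i=0$, plus the essentially-bipartite symmetry, recovers $\th_i+\th_{d-i}=0$; but routing through Lemma \ref{lem:balanced} is cleaner and I would present that).
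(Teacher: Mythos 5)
Your overall route---deducing the lemma from Lemma \ref{lem:balanced}---is exactly the paper's, and your direction (i)$\Rightarrow$(ii) is fine. But in (ii)$\Rightarrow$(i) there is a gap at the step ``$2\alpha=\th_0+\th_d=0$, so $\alpha=0$'': this divides by $2$, which is only legitimate if $\text{\rm Char}(\F)\neq 2$. At this point of the paper no such assumption is in force; the blanket hypothesis $\text{\rm Char}(\F)\neq 2$ is imposed only at the end of Section \ref{sec:nbip}, and Lemma \ref{lem:CharF2}, which excludes characteristic $2$ for essentially bipartite pairs with $d\geq 1$, appears \emph{after} the present lemma (and itself relies on this kind of argument). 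As written, your argument yields only $2a_i=0$, not $a_i=0$.

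The missing ingredient is precisely where the hypothesis $d\geq 1$ really enters---your aside about $d\geq 1$ addresses only the $d=0$ degeneracy and overlooks this. From (ii) one has $\th_0+\th_d=0$, while Lemma \ref{lem:classify}(i) together with $d\geq 1$ gives $\th_0\neq\th_d$; hence $\th_d\neq-\th_d$, so $\text{\rm Char}(\F)\neq 2$. Once this is observed, your deduction $2\alpha=0\Rightarrow\alpha=0$ (equivalently the paper's $2a_i=0\Rightarrow a_i=0$ via the last assertion of Lemma \ref{lem:balanced}) goes through, and this is exactly how the paper's proof proceeds. Alternatively you could note that in characteristic $2$ condition (ii) can never hold when $d\geq 1$, so the implication is vacuous in that case---but some remark of this kind is needed; without it the step fails in characteristic $2$.
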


\begin{proof}
(i) $\Rightarrow$ (ii)
By Lemma \ref{lem:balanced}.

(ii) $\Rightarrow$ (i)
We first show that $\text{\rm Char}(\F) \neq 2$.
By the assumption, $\th_0 + \th_d = 0$.
By $d \geq 1$ and Lemma \ref{lem:classify}(i), $\th_0 \neq \th_d$.
By these comments $\th_d \neq - \th_d$.
Thus $\text{\rm Char}(\F) \neq 2$.
By the assumption $\th_i + \th_{d-i} = 0$ for $0 \leq i \leq d$.
By this and the last assertion of Lemma \ref{lem:balanced},
$2 a_i = 0$ for $0 \leq i \leq d$.
By this and $\text{\rm Char}(\F) \neq 2$ we get $a_i = 0$ for $0 \leq i \leq d$.
\end{proof}

\begin{lemma}     \label{lem:CharF2}    \samepage
\ifDRAFT {\rm lem:CharF2}. \fi
Assume that $d \geq 1$ and $A,A^*$ is essentially bipartite.
Then  $\text{\rm Char}(\F) \neq 2$.
\end{lemma}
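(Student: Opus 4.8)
The plan is to deduce this from Lemma \ref{lem:bipartite} together with Lemma \ref{lem:bipaffine}. Since $A,A^*$ is essentially bipartite, Lemma \ref{lem:bipaffine}(ii) provides a scalar $\zeta \in \F$ (the common value of $\{a_i\}_{i=0}^d$) such that $A - \zeta I, A^*$ is bipartite. Since $d \geq 1$, I can now invoke Lemma \ref{lem:bipartite} applied to the bipartite Leonard pair $A - \zeta I, A^*$: writing $\{\th_i\}_{i=0}^d$ for a standard ordering of the eigenvalues of $A$, the eigenvalues of $A - \zeta I$ are $\{\th_i - \zeta\}_{i=0}^d$ in the same order, and Lemma \ref{lem:bipartite}(ii) gives $(\th_i - \zeta) + (\th_{d-i} - \zeta) = 0$ for $0 \leq i \leq d$.

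Next I would specialize to $i = 0$, obtaining $\th_0 + \th_d = 2\zeta$. Now $d \geq 1$, so by Lemma \ref{lem:classify}(i) the eigenvalues of $A$ are mutually distinct, hence $\th_0 \neq \th_d$. If $\mathrm{Char}(\F) = 2$, then $2\zeta = 0$ would force $\th_0 + \th_d = 0$, i.e.\ $\th_0 = -\th_d = \th_d$ (using characteristic $2$), contradicting $\th_0 \neq \th_d$. Therefore $\mathrm{Char}(\F) \neq 2$. (Alternatively, one can bypass Lemma \ref{lem:bipartite} entirely and argue directly from the last assertion of Lemma \ref{lem:balanced}, which already gives $\th_i + \th_{d-i} = 2\alpha$ where $\alpha$ is the common value of $\{a_i\}_{i=0}^d$; taking $i=0$ yields $\th_0 + \th_d = 2\alpha$, and the same distinctness argument applies. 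This route is slightly cleaner since Lemma \ref{lem:balanced} is stated for essentially bipartite pairs directly and does not require the affine shift.)

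There is essentially no obstacle here; the only point requiring a moment of care is making sure the hypothesis $d \geq 1$ is used exactly where needed, namely to guarantee that $A$ has at least two distinct eigenvalues so that the relation $\th_0 = \th_d$ forced in characteristic $2$ is genuinely contradictory. I would present the proof using the second route: cite Lemma \ref{lem:balanced} to get $\th_0 + \th_d = 2\alpha$ with $\alpha$ the common value of $\{a_i\}_{i=0}^d$, cite Lemma \ref{lem:classify}(i) for $\th_0 \neq \th_d$, and conclude that $2 \neq 0$ in $\F$ since otherwise $\th_0 = -\th_d = \th_d$, a contradiction. The whole argument is three or four lines.

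\begin{proof}
Let $\{\th_i\}_{i=0}^d$ denote a standard ordering of the eigenvalues of $A$.
Since $A,A^*$ is essentially bipartite, Lemma \ref{lem:balanced} gives
$\th_i + \th_{d-i} = 2\alpha$ for $0 \leq i \leq d$, where $\alpha$ is the
common value of $\{a_i\}_{i=0}^d$; in particular $\th_0 + \th_d = 2\alpha$.
Suppose $\text{\rm Char}(\F) = 2$.
Then $2\alpha = 0$, so $\th_0 + \th_d = 0$, whence $\th_0 = -\th_d = \th_d$.
But $d \geq 1$, so by Lemma \ref{lem:classify}(i) we have $\th_0 \neq \th_d$,
a contradiction.
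Therefore $\text{\rm Char}(\F) \neq 2$.
\end{proof}
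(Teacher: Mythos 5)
Your proof is correct and follows essentially the same route as the paper: the paper also argues by contradiction, uses Lemma \ref{lem:balanced} to get $\th_0 + \th_d = 0$ in characteristic $2$, and then contradicts the distinctness of eigenvalues from Lemma \ref{lem:classify}(i). The final write-up you give matches the paper's argument.
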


\begin{proof}
By way of contradiction, assume that $\text{\rm Char}(\F)=2$.
By Lemma \ref{lem:balanced}, 
$\th_0 + \th_d = 0$.
So $\th_0 = \th_d$, contradicting Lemma \ref{lem:classify}(i).
\end{proof}

\begin{defi}    \label{def:bipparray}     \samepage
\ifDRAFT {\rm def:bipparray}. \fi
A parameter array over $\F$ is said to be {\em bipartite}
(resp.\ {\em essentially bipartite}) whenever
the corresponding Leonard pair is bipartite
(resp.\ essentially bipartite).
\end{defi}

\section{The element $F$ for a Leonard pair}
\label{sec:F}
\ifDRAFT {\rm sec:F}. \fi

Throughout this section, let $A,A^*$ denote a Leonard pair over $\F$
with TD/D sequence 
\begin{equation}
 (\{a_i\}_{i=0}^d; \{x_i\}_{i=1}^d; \{\th^*_i\}_{i=0}^d).           \label{eq:TDDseq3}
\end{equation}
We introduce the flat part $F$ of $A$.
We describe the bipartite condition and essentially bipartite condition
in terms of $F$.
For $0 \leq i \leq d$ let $E^*_i$ denote the primitive idempotent of $A^*$
associated with $\th^*_i$.

\begin{defi}    \label{def:F}    \samepage
\ifDRAFT {\rm def:F}. \fi
Define
\[
   F = \sum_{i=0}^d E^*_i A E^*_i.
\]
We call $F$ the {\em flat part of $A$}.
\end{defi}

\begin{note}    \label{note:defF}    \samepage
\ifDRAFT {\rm note:defF}. \fi
The element $F$ is independent of 
the choice of the TD/D sequence \eqref{eq:TDDseq3} for $A,A^*$.
\end{note}

\begin{lemma}    \label{lem:Fai}    \samepage
\ifDRAFT {\rm lem:Fai}. \fi
Referring to Definition \ref{def:F},
$F = \sum_{i=0}^d a_i E^*_i$.
\end{lemma}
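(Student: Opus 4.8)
The plan is to use the characterization of the scalars $a_i$ provided by Lemma \ref{lem:aiEsi}, specifically part (i), which states $E^*_i A E^*_i = a_i E^*_i$ for $0 \leq i \leq d$. This is exactly what is needed: summing this identity over $i$ gives
\[
  F = \sum_{i=0}^d E^*_i A E^*_i = \sum_{i=0}^d a_i E^*_i,
\]
where the first equality is the definition of $F$ from Definition \ref{def:F}. So the proof is essentially a one-line substitution once Lemma \ref{lem:aiEsi}(i) is invoked.

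First I would recall Definition \ref{def:F}, namely $F = \sum_{i=0}^d E^*_i A E^*_i$, noting that the $E^*_i$ here are the primitive idempotents of $A^*$ associated with $\th^*_i$ from the fixed TD/D sequence \eqref{eq:TDDseq3}, consistent with the notation set up just before the statement. Then I would apply Lemma \ref{lem:aiEsi}(i) termwise to rewrite each summand $E^*_i A E^*_i$ as $a_i E^*_i$, and conclude $F = \sum_{i=0}^d a_i E^*_i$.

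There is no real obstacle here — the lemma is an immediate corollary of Lemma \ref{lem:aiEsi}(i), which was already cited from \cite{T:qRacah}. The only thing worth a word of care is that the $a_i$ appearing in $\sum_{i=0}^d a_i E^*_i$ are precisely the first components of the TD/D sequence \eqref{eq:TDDseq3}, which is indeed how Lemma \ref{lem:aiEsi} is phrased (the scalars $a_i$ there come from the TD/D sequence fixed at the start of the section). So the proof reads simply:

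\begin{proof}
By Definition \ref{def:F} and Lemma \ref{lem:aiEsi}(i),
\[
  F = \sum_{i=0}^d E^*_i A E^*_i = \sum_{i=0}^d a_i E^*_i.
\]
\end{proof}
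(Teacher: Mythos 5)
Your proposal is correct and matches the paper's own proof, which likewise cites Lemma \ref{lem:aiEsi}(i) together with Definition \ref{def:F}; summing the identity $E^*_i A E^*_i = a_i E^*_i$ over $i$ is exactly the intended argument.
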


\begin{proof}
By Lemma \ref{lem:aiEsi}(i) and Definition \ref{def:F}.
\end{proof}

\begin{lemma}    \label{lem:Fai2}    \samepage
\ifDRAFT {\rm lem:Fai2}. \fi
Assume that $A,A^*$ is in normalized TD/D form:
\begin{align*}
A &=
\begin{pmatrix}
 a_0 & x_1 & & & & \text{\bf 0}  \\
 1  & a_1  & x_2 \\
    &  1  &  \cdot & \cdot \\
    &     &   \cdot       & \cdot & \cdot \\
     & & & \cdot & \cdot & x_d  \\
\text{\bf 0} & & & & 1 & a_d
\end{pmatrix},
 &
A^* &= \text{\rm diag}(\th^*_0, \th^*_1, \ldots, \th^*_d).
\end{align*}
Then
\begin{align}
F &= \text{\rm diag}(a_0,a_1, \ldots, a_d),
&
A-F &=
\begin{pmatrix}
 0 & x_1 & & & & \text{\bf 0}  \\
 1  & 0  & x_2 \\
    &  1  &  \cdot & \cdot \\
    &     &   \cdot       & \cdot & \cdot \\
     & & & \cdot & \cdot & x_d  \\
\text{\bf 0} & & & & 1 & 0
\end{pmatrix}.                              \label{eq:FA-F}
\end{align}
\end{lemma}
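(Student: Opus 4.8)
The statement is a direct computation with matrices, so the plan is simply to unravel the definitions using the normalized TD/D form and the description of the primitive idempotents $E^*_i$ given in Lemma~\ref{lem:TDDstandard}(ii). First I would record that, since $A^*$ is the diagonal matrix $\text{\rm diag}(\th^*_0,\ldots,\th^*_d)$ with mutually distinct diagonal entries (the entries are distinct by Lemma~\ref{lem:classify}(i)), the primitive idempotent $E^*_i$ is the diagonal matrix unit $E_{i,i}$: it has $(i,i)$-entry $1$ and all other entries $0$. This is exactly Lemma~\ref{lem:TDDstandard}(ii), so I would just cite it.

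\textbf{Key steps.} Next I would compute $E^*_i A E^*_i$ for a general matrix $A=(A_{r,s})$. Left multiplication by $E_{i,i}$ kills every row except row $i$, and right multiplication by $E_{i,i}$ kills every column except column $i$, so $E^*_i A E^*_i = A_{i,i}\,E_{i,i}$. For the tridiagonal matrix $A$ displayed in the hypothesis we have $A_{i,i}=a_i$, hence $E^*_i A E^*_i = a_i E_{i,i}$. Summing over $i$ and using Definition~\ref{def:F} (or directly Lemma~\ref{lem:Fai}, which already gives $F=\sum_{i=0}^d a_i E^*_i$) yields
\[
F = \sum_{i=0}^d a_i E_{i,i} = \text{\rm diag}(a_0,a_1,\ldots,a_d),
\]
which is the first asserted equality. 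For the second equality I would subtract this diagonal matrix entrywise from the displayed form of $A$: the diagonal entries $a_i$ cancel, while the subdiagonal entries (all equal to $1$) and the superdiagonal entries $x_1,\ldots,x_d$ are untouched, producing exactly the matrix $A-F$ shown in \eqref{eq:FA-F}.

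\textbf{Main obstacle.} There is essentially no obstacle here; the only thing to be slightly careful about is to invoke Lemma~\ref{lem:TDDstandard}(ii) for the precise matrix form of $E^*_i$ rather than re-deriving it, and to note that the result is consistent with Lemma~\ref{lem:Fai} (so that the two characterizations of $F$ agree). The argument is purely a bookkeeping computation on $(d+1)\times(d+1)$ matrices, so I would keep the write-up to a few lines.
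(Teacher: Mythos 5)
Your proposal is correct and follows essentially the same route as the paper, which proves the lemma by combining Lemma~\ref{lem:TDDstandard}(ii) (the matrix form of $E^*_i$) with Lemma~\ref{lem:Fai} ($F=\sum_{i=0}^d a_i E^*_i$); your extra entrywise computation just spells out what the paper leaves implicit.
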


\begin{proof}
By Lemmas \ref{lem:TDDstandard}(ii) and \ref{lem:Fai}.
\end{proof}

\begin{lemma}    \label{lem:F2}    \samepage
\ifDRAFT {\rm lem:F2}. \fi
Let $F$ denote the flat part of $A$.
Then $F$ commutes with $A^*$.
Moreover, $F \in \b{A^*}$.
\end{lemma}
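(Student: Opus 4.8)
The plan is to use Lemma~\ref{lem:Fai}, which writes $F = \sum_{i=0}^d a_i E^*_i$, together with the basic properties of the primitive idempotents $\{E^*_i\}_{i=0}^d$ of $A^*$. First I would recall that $A^* = \sum_{i=0}^d \th^*_i E^*_i$, so $A^*$ lies in the commutative subalgebra $\b{A^*}$, and that by property (i) of the primitive idempotents $E^*_i E^*_j = \delta_{i,j} E^*_i$. Since $F$ is an $\F$-linear combination of the $E^*_i$, it too lies in $\b{A^*}$; this is immediate because $\{E_i^*\}_{i=0}^d$ is a basis of $\b{A^*}$ (as recorded in the paragraph preceding Lemma~\ref{lem:As2}). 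Hence the second assertion $F \in \b{A^*}$ is essentially a restatement of Lemma~\ref{lem:Fai}.

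For the first assertion, that $F$ commutes with $A^*$, I would invoke Lemma~\ref{lem:As2} applied to the multiplicity-free element $A^*$ (multiplicity-freeness of $A^*$ comes from Lemma~\ref{lem:LPmultfree}): conditions (i) and (ii) there are equivalent, so $F \in \b{A^*}$ forces $[F,A^*]=0$. Alternatively one can compute directly: $F A^* = \bigl(\sum_i a_i E^*_i\bigr)\bigl(\sum_j \th^*_j E^*_j\bigr) = \sum_i a_i \th^*_i E^*_i = A^* F$, using orthogonality of the idempotents.

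I do not anticipate a genuine obstacle here; the only thing to be careful about is the logical order. It is cleanest to first establish $F \in \b{A^*}$ from Lemma~\ref{lem:Fai} and the fact that the $E^*_i$ span $\b{A^*}$, and then deduce the commutation from Lemma~\ref{lem:As2} (or from Lemma~\ref{lem:aiEsi} in the form $A^* E^*_i = \th^*_i E^*_i = E^*_i A^*$). Either way the proof is two or three lines.

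\begin{proof}
By Lemma \ref{lem:Fai} we have $F = \sum_{i=0}^d a_i E^*_i$.
The elements $\{E^*_i\}_{i=0}^d$ form a basis of $\b{A^*}$,
so $F \in \b{A^*}$.
By Lemma \ref{lem:LPmultfree}, $A^*$ is multiplicity-free with primitive idempotents $\{E^*_i\}_{i=0}^d$.
Now apply Lemma \ref{lem:As2} to $A^*$: since $F \in \b{A^*}$, the element $F$ commutes with $A^*$.
\end{proof}
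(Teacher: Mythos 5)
Your proof is correct and follows essentially the same route as the paper, whose proof is simply an appeal to Lemma \ref{lem:As2} together with Definition \ref{def:F}. The only cosmetic difference is that you enter Lemma \ref{lem:As2} through condition (i), using Lemma \ref{lem:Fai} and the fact that $\{E^*_i\}_{i=0}^d$ is a basis of $\b{A^*}$, whereas the paper works from Definition \ref{def:F} (in effect via condition (iii), since $F$ visibly commutes with each $E^*_i$); both are immediate and legitimate, as Lemma \ref{lem:Fai} precedes this lemma.
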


\begin{proof}
By Lemma \ref{lem:As2} and Definition \ref{def:F}.
\end{proof}

\begin{lemma}    \label{lem:Faffine}    \samepage
\ifDRAFT {\rm lem:Faffine}. \fi
Let $F$ denote the flat part of $A$.
For scalars $\xi$, $\zeta$, $\xi^*$, $\zeta^*$ in $\F$ with $\xi \xi^* \neq 0$,
consider the Leonard pair
$\xi A + \zeta I, \xi^* A^* + \zeta^* I$.
For this Leonard pair, the flat part of $\xi A + \zeta I$ is
$\xi F + \zeta I$.
\end{lemma}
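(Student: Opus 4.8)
The statement asserts that affine transformation of a Leonard pair transforms the flat part in the obvious way. The plan is to work entirely from Definition~\ref{def:F} together with the basic behavior of primitive idempotents under affine substitution. First I would recall that if $\{E^*_i\}_{i=0}^d$ are the primitive idempotents of $A^*$ associated with a standard ordering $\{\th^*_i\}_{i=0}^d$ of its eigenvalues, then $\xi^* A^* + \zeta^* I$ is multiplicity-free (indeed it is the second member of a Leonard pair by Lemma~\ref{lem:affineLP}), its eigenvalues in the order $\{\xi^* \th^*_i + \zeta^*\}_{i=0}^d$ form a standard ordering (this is built into Lemma~\ref{lem:affineLPparam} and Lemma~\ref{lem:TDDaffine}), and the primitive idempotent of $\xi^* A^* + \zeta^* I$ associated with $\xi^* \th^*_i + \zeta^*$ is again $E^*_i$. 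The last point is immediate since $E^*_i$ projects onto the $\th^*_i$-eigenspace of $A^*$, which is exactly the $(\xi^*\th^*_i+\zeta^*)$-eigenspace of $\xi^* A^* + \zeta^* I$ because $\xi^* \neq 0$.

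\textbf{Key computation.} Given this, I would apply Definition~\ref{def:F} to the Leonard pair $\xi A + \zeta I,\ \xi^* A^* + \zeta^* I$, whose underlying vector space and idempotents $\{E^*_i\}_{i=0}^d$ are as above. The flat part of $\xi A + \zeta I$ is
\[
\sum_{i=0}^d E^*_i (\xi A + \zeta I) E^*_i
 = \xi \sum_{i=0}^d E^*_i A E^*_i + \zeta \sum_{i=0}^d E^*_i E^*_i.
\]
The first sum is $\xi F$ by Definition~\ref{def:F}, and since $E^*_i E^*_j = \delta_{i,j} E^*_i$ and $\sum_{i=0}^d E^*_i = I$, the second sum equals $\zeta I$. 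This gives $\xi F + \zeta I$, as claimed. One should also check — via Note~\ref{note:defF} — that the answer does not depend on which standard ordering of the eigenvalues of $A^*$ one uses, but this is already guaranteed by that Note, so nothing further is needed.

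\textbf{Main obstacle.} There is no serious obstacle here; the only point requiring care is the bookkeeping that the primitive idempotents of $A^*$ and of $\xi^* A^* + \zeta^* I$ literally coincide, so that Definition~\ref{def:F} for the new pair is expressed through the \emph{same} $E^*_i$ that compute $F$ for the old pair. Once that identification is in place, the proof is a one-line linearity computation. Alternatively, and perhaps even more cleanly for the paper's style, one can argue via TD/D sequences: by Lemma~\ref{lem:TDDaffine} the Leonard pair $\xi A + \zeta I, \xi^* A^* + \zeta^* I$ has TD/D sequence $(\{\xi a_i + \zeta\}_{i=0}^d; \{\xi^2 x_i\}_{i=1}^d; \{\xi^* \th^*_i + \zeta^*\}_{i=0}^d)$, and then by Lemma~\ref{lem:Fai} its flat part is $\sum_{i=0}^d (\xi a_i + \zeta) E^*_i = \xi \sum_{i=0}^d a_i E^*_i + \zeta I = \xi F + \zeta I$, again using Lemma~\ref{lem:Fai} for the original pair. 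I would present whichever of these two routes is shortest in context, most likely the TD/D-sequence one since Lemmas~\ref{lem:TDDaffine} and~\ref{lem:Fai} are right at hand.
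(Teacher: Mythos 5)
Your proposal is correct and its primary route is exactly the paper's: the paper proves this lemma with the single line ``By Definition \ref{def:F}'', i.e.\ the direct computation you spell out, resting on the observation that $\xi^* A^* + \zeta^* I$ has the same primitive idempotents $E^*_i$ as $A^*$ (since $\xi^* \neq 0$), so the new flat part is $\sum_{i=0}^d E^*_i(\xi A + \zeta I)E^*_i = \xi F + \zeta I$. Your alternative via Lemmas \ref{lem:TDDaffine} and \ref{lem:Fai} is also valid but is not needed.
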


\begin{proof}
By Definition \ref{def:F}.
\end{proof}

\begin{lemma}     \label{lem:bip}    \samepage
\ifDRAFT {\rm lem:bip}. \fi
Let $F$ denote the flat part of $A$.
Then the following hold:
\begin{itemize}
\item[\rm (i)]
$A,A^*$ is bipartite if and only if $F=0$;
\item[\rm (ii)]
$A,A^*$ is essentially bipartite if and only if $F$ is a scalar multiple of $I$.
\end{itemize}
\end{lemma}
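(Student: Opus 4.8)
The plan is to deduce both equivalences directly from the description of $F$ in the normalized TD/D form together with the definitions of bipartite and essentially bipartite. First I would invoke Lemma \ref{lem:TDD3} to fix, without loss of generality, a normalized TD/D form for $A,A^*$ associated with the given TD/D sequence \eqref{eq:TDDseq3}; since the flat part $F$ is independent of the choice of TD/D sequence (Note \ref{note:defF}) and both conditions (bipartite, essentially bipartite) are isomorphism invariants, it suffices to prove the statement in this form. Then Lemma \ref{lem:Fai2} gives $F = \text{\rm diag}(a_0,a_1,\ldots,a_d)$.

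For part (i), the diagonal matrix $\text{\rm diag}(a_0,\ldots,a_d)$ is zero if and only if $a_i = 0$ for all $i$ with $0 \leq i \leq d$, which by Definition \ref{def:bip}(i) is exactly the bipartite condition. For part (ii), $\text{\rm diag}(a_0,\ldots,a_d)$ is a scalar multiple of $I$ if and only if the $a_i$ are all equal, i.e.\ $a_i$ is independent of $i$ for $0 \leq i \leq d$, which by Definition \ref{def:bip}(ii) is exactly the essentially bipartite condition.

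Alternatively, and perhaps more cleanly for the write-up, I could avoid passing to the normalized form and argue directly: by Lemma \ref{lem:Fai} we have $F = \sum_{i=0}^d a_i E^*_i$, and since $\{E^*_i\}_{i=0}^d$ are the primitive idempotents of the multiplicity-free map $A^*$ (Lemma \ref{lem:LPmultfree}), they satisfy $E^*_i E^*_j = \delta_{i,j}E^*_i$ and $\sum_i E^*_i = I$, so they are linearly independent; hence $F = 0$ iff all $a_i = 0$, and $F = cI = c\sum_i E^*_i$ for some $c \in \F$ iff $a_i = c$ for all $i$, i.e.\ iff $\{a_i\}_{i=0}^d$ is constant. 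Both conclusions then match Definition \ref{def:bip}.

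There is no real obstacle here: the content of the lemma is essentially bookkeeping, and the only thing to be careful about is making sure the argument uses a genuine TD/D sequence of $A,A^*$ (so that Lemma \ref{lem:Fai} or Lemma \ref{lem:Fai2} applies) and that the linear independence of $\{E^*_i\}_{i=0}^d$ is cited correctly. I would present the direct argument via Lemma \ref{lem:Fai} as the main line, as it is the shortest.
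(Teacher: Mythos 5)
Your preferred main line (via Lemma \ref{lem:Fai}, writing $F=\sum_{i=0}^d a_i E^*_i$ and using the linear independence of the primitive idempotents together with Definition \ref{def:bip}) is exactly the paper's proof, which is cited simply as ``By Definition \ref{def:bip} and Lemma \ref{lem:Fai}.'' The proposal is correct; the alternative route through the normalized TD/D form is fine but unnecessary.
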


\begin{proof}
By Definition \ref{def:bip} and Lemma \ref{lem:Fai}.
\end{proof}

\section{Near-bipartite Leonard pairs}
\label{sec:nbip}
\ifDRAFT {\rm sec:nbip}. \fi

In this section we introduce the near-bipartite condition
on a Leonard pair.
To motivate this condition, we make an observation. 
Let $\mathbb{O}$ denote the sequence $\{a_i\}_{i=0}^d$ of scalars in $\F$
such that $a_i = 0$ for $0 \leq i \leq d$.

\begin{lemma}    \label{lem:F3}    \samepage
\ifDRAFT {\rm lem:F3}. \fi
Let $A,A^*$ denote a Leonard pair on $V$ with
TD/D sequence
\[
 (\{a_i\}_{i=0}^d; \{x_i\}_{i=1}^d; \{\th^*_i\}_{i=0}^d).  
\]
Let $F$ denote the flat part of $A$, 
and assume that $A-F, A^*$ is a Leonard pair on $V$.
Then $A-F, A^*$ has a TD/D sequence
\begin{equation}
  (\mathbb{O}; \{x_i\}_{i=1}^d; \{\th^*_i\}_{i=0}^d).         \label{eq:TDDseq0}
\end{equation}
Moreover, the Leonard pair $A-F, A^*$ is bipartite.
\end{lemma}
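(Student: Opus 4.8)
The plan is to work in the normalized TD/D form of $A,A^*$ afforded by Lemma \ref{lem:TDD3}. Since the conclusion (that \eqref{eq:TDDseq0} is a TD/D sequence of $A-F,A^*$) is an isomorphism-invariant statement by Proposition \ref{prop:LPTDDunique}, it suffices to verify it for any Leonard pair isomorphic to $A,A^*$; so without loss of generality assume that the underlying space is $\F^{d+1}$, that $A$ is the normalized irreducible tridiagonal matrix with subdiagonal entries $1$, diagonal entries $a_0,\dots,a_d$, and superdiagonal entries $x_1,\dots,x_d$, and that $A^* = \text{\rm diag}(\th^*_0,\dots,\th^*_d)$, exactly as displayed in Lemma \ref{lem:TDD3}.

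In this coordinate realization, Lemma \ref{lem:Fai2} gives $F = \text{\rm diag}(a_0,\dots,a_d)$ and
\[
A - F =
\begin{pmatrix}
 0 & x_1 & & & & \text{\bf 0}  \\
 1  & 0  & x_2 \\
    &  1  &  \cdot & \cdot \\
    &     &   \cdot       & \cdot & \cdot \\
     & & & \cdot & \cdot & x_d  \\
\text{\bf 0} & & & & 1 & 0
\end{pmatrix},
\]
which is a normalized irreducible tridiagonal matrix (irreducibility is preserved because $x_i \neq 0$ for $1 \leq i \leq d$ by Proposition \ref{prop:TDDseq}, and the subdiagonal entries are still $1$), while $A^*$ is unchanged and still diagonal. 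By hypothesis $A-F,A^*$ is a Leonard pair, and by inspection it is now presented in normalized TD/D form in the sense of Definition \ref{def:normalizedTDD}: the underlying space is $\F^{d+1}$, the first matrix is normalized irreducible tridiagonal, and the second is diagonal. Therefore Lemma \ref{lem:TDDstandard}(i) (equivalently Proposition \ref{prop:TDD2}) applies and tells us that $(\mathbb{O}; \{x_i\}_{i=1}^d; \{\th^*_i\}_{i=0}^d)$ is a TD/D sequence of $A-F,A^*$; here the diagonal entries of $A-F$ are all $0$, which is precisely the sequence $\mathbb{O}$. Finally, since every $a_i$ in this TD/D sequence equals $0$, Definition \ref{def:bip}(i) says $A-F,A^*$ is bipartite, giving the last assertion.

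The only subtlety I anticipate is bookkeeping about which standard ordering of the eigenvalues of $A^*$ is in play: one must check that $\{\th^*_i\}_{i=0}^d$ remains a standard ordering for the Leonard pair $A-F,A^*$, not merely for $A,A^*$. But this is immediate from the displayed normalized TD/D form for $A-F,A^*$ together with the paragraph below Lemma \ref{lem:LPmultfree}, since in that form $A-F$ is irreducible tridiagonal while $A^*$ is the diagonal matrix $\text{\rm diag}(\th^*_0,\dots,\th^*_d)$ — so by definition of a standard ordering the sequence $\{\th^*_i\}_{i=0}^d$ is standard for $A-F,A^*$. Hence there is no real obstacle; the argument is just a matter of transporting to the right normal form and reading off Lemma \ref{lem:Fai2}.
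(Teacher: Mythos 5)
Your proof is correct and follows essentially the same route as the paper: reduce to normalized TD/D form via Lemma \ref{lem:TDD3}, read off $F$ and $A-F$ from Lemma \ref{lem:Fai2}, apply Proposition \ref{prop:TDD2} (or Lemma \ref{lem:TDDstandard}) to get the TD/D sequence $(\mathbb{O}; \{x_i\}_{i=1}^d; \{\th^*_i\}_{i=0}^d)$, and conclude bipartiteness from Definition \ref{def:bip}. The extra remarks on isomorphism-invariance and the standard ordering are fine but not needed beyond what the cited lemmas already provide.
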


\begin{proof}
By Lemma \ref{lem:TDD3} we may assume that 
$A,A^*$ is in normalized TD/D form,
and $A,A^*$ are as in \eqref{eq:AAsTDD3}.
By Lemma \ref{lem:Fai2}, 
$F$ and $A-F$ are as in \eqref{eq:FA-F}.
By this and Proposition \ref{prop:TDD2}, $A-F,A^*$ has a TD/D
sequence \eqref{eq:TDDseq0}.
By this and Definition \ref{def:bip}, the Leonard pair
$A-F, A^*$ is bipartite.
\end{proof}

\begin{defi}    \label{def:nearbip}    \samepage
\ifDRAFT {\rm def:nearbip}. \fi
Let $A,A^*$ denote a Leonard pair on $V$,
and let $F$ denote the flat part of $A$.
Then $A,A^*$ is said to be {\em near-bipartite} whenever
$A-F,A^*$ is a Leonard pair on $V$.
In this case, we call the Leonard pair $A-F, A^*$ 
the {\em bipartite contraction of $A,A^*$}.
\end{defi}

\begin{note}    \label{note:nearess}    \samepage
\ifDRAFT {\rm note:nearess}. \fi
An essentially bipartite Leonard pair is near-bipartite.
\end{note}

\begin{note}     \label{note:d0b}    \samepage
\ifDRAFT {\rm note:d0b}. \fi
Assume that $d=0$.
Then any ordered pair of elements in $\End$ is a
near-bipartite Leonard pair.
\end{note}

\begin{defi}    \label{def:bipexpansion}    \samepage
\ifDRAFT {\rm def:bipexpansion}. \fi
Let $B,A^*$ denote a bipartite Leonard pair on $V$.
By a {\em near-bipartite expansion of $B,A^*$} we mean
a near-bipartite Leonard pair $A,A^*$ on $V$ whose bipartite contraction is equal to
$B,A^*$.
\end{defi}

In the next result we clarify Definitions \ref{def:nearbip} and \ref{def:bipexpansion}.

\begin{lemma}     \label{lem:nbippre}    \samepage
\ifDRAFT {\rm lem:nbippre}. \fi
Let $A,A^*$ denote a Leonard pair on $V$,
and let $F$ denote the flat part of $A$.
Let $B,A^*$ denote a bipartite Leonard pair on $V$.
Then the following are equivalent:
\begin{itemize}
\item[\rm (i)]
$B,A^*$ is the bipartite contraction of $A,A^*$;
\item[\rm (ii)]
$A,A^*$ is a near-bipartite expansion of $B,A^*$;
\item[\rm (iii)]
$A-F = B$.
\end{itemize}
\end{lemma}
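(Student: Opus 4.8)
The plan is to prove Lemma~\ref{lem:nbippre} by showing the chain of implications (i) $\Rightarrow$ (iii) $\Rightarrow$ (ii) $\Rightarrow$ (i), which is essentially an unwinding of Definitions~\ref{def:nearbip} and \ref{def:bipexpansion}. First I would record the standing data: $A,A^*$ is a Leonard pair on $V$ with flat part $F$, and $B,A^*$ is a bipartite Leonard pair on $V$ (note that the second component is literally the same $A^*$). The only content to extract from Definition~\ref{def:nearbip} is that ``$A,A^*$ near-bipartite'' means ``$A-F,A^*$ is a Leonard pair on $V$,'' and in that case its bipartite contraction is by definition the pair $A-F,A^*$; similarly Definition~\ref{def:bipexpansion} says ``$A,A^*$ is a near-bipartite expansion of $B,A^*$'' means ``$A,A^*$ is near-bipartite and its bipartite contraction equals $B,A^*$.''

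For (i) $\Rightarrow$ (iii): if $B,A^*$ is the bipartite contraction of $A,A^*$, then in particular $A,A^*$ is near-bipartite (a bipartite contraction exists only in that case), so $A-F,A^*$ is a Leonard pair on $V$ and by definition the bipartite contraction is the pair $A-F,A^*$; equating this with $B,A^*$ gives $A-F=B$ (and the $A^*$-components already agree). For (iii) $\Rightarrow$ (ii): assume $A-F=B$. Since $B,A^*$ is a Leonard pair on $V$ by hypothesis, $A-F,A^*=B,A^*$ is a Leonard pair on $V$, hence $A,A^*$ is near-bipartite by Definition~\ref{def:nearbip}, and its bipartite contraction is $A-F,A^*=B,A^*$; therefore $A,A^*$ is a near-bipartite expansion of $B,A^*$ by Definition~\ref{def:bipexpansion}. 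For (ii) $\Rightarrow$ (i): immediate from Definition~\ref{def:bipexpansion}, since being a near-bipartite expansion of $B,A^*$ includes the requirement that the bipartite contraction of $A,A^*$ equals $B,A^*$.

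There is essentially no obstacle here; the statement is a bookkeeping lemma whose role is to make the equivalence between the ``contraction'' and ``expansion'' viewpoints explicit. The one small point worth a sentence is why the bipartite contraction, when it exists, is well defined as the specific pair $A-F,A^*$: this is exactly the content of Definition~\ref{def:nearbip} together with Lemma~\ref{lem:F3}, which guarantees that $A-F,A^*$ is indeed bipartite, so the terminology ``bipartite contraction'' is consistent. I would close by noting that all three conditions implicitly or explicitly require $A-F,A^*$ to be a Leonard pair, which the hypothesis on $B,A^*$ supplies in the direction (iii) $\Rightarrow$ (ii), so no circularity arises.
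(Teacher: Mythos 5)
Your proposal is correct and takes essentially the same route as the paper, which simply cites Definitions \ref{def:nearbip} and \ref{def:bipexpansion} for (i) $\Leftrightarrow$ (ii) and Definition \ref{def:nearbip} for (i) $\Leftrightarrow$ (iii); your version merely spells out the same definitional unwinding as a chain of implications. No gap.
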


\begin{proof}
(i) $\Leftrightarrow$ (ii)
By Definitions \ref{def:nearbip} and \ref{def:bipexpansion}.

(i) $\Leftrightarrow$ (iii)
By Definition \ref{def:nearbip}.
\end{proof}

In the next result we give a variation on Lemma \ref{lem:nbippre} that
does not refer to $F$.

\begin{lemma}     \label{lem:nbip}    \samepage
\ifDRAFT {\rm lem:nbip}. \fi
Let $A,A^*$ denote a Leonard pair on $V$,
and let $B,A^*$ denote a bipartite Leonard pair on $V$.
Then the following are equivalent:
\begin{itemize}
\item[\rm (i)]
$B,A^*$ is the bipartite contraction of $A,A^*$;
\item[\rm (ii)]
$A,A^*$ is a near-bipartite expansion of $B,A^*$;
\item[\rm (iii)]
$[A,A^*] = [B,A^*]$.
\end{itemize}
\end{lemma}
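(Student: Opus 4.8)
The plan is to invoke Lemma \ref{lem:nbippre}, which already establishes (i) $\Leftrightarrow$ (ii) and shows that both are equivalent to the equation $A - F = B$, where $F$ denotes the flat part of $A$. Hence it suffices to prove that $A - F = B$ holds if and only if $[A, A^*] = [B, A^*]$, and then appeal to Lemma \ref{lem:nbippre} to recover (i) and (ii).

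First I would treat the forward direction. Assuming $A - F = B$, I would compute $[B, A^*] = [A - F, A^*] = [A, A^*] - [F, A^*]$. By Lemma \ref{lem:F2} the flat part $F$ commutes with $A^*$, so $[F, A^*] = 0$ and therefore $[B, A^*] = [A, A^*]$.

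For the reverse direction, I would start from $[A, A^*] = [B, A^*]$, which gives $[A - B, A^*] = 0$; thus $A - B$ commutes with $A^*$. By Lemma \ref{lem:As2} this means $A - B \in \b{A^*}$, so $A - B$ commutes with each primitive idempotent $E^*_i$ of $A^*$ ($0 \leq i \leq d$). I would then substitute $A = B + (A - B)$ into $F = \sum_{i=0}^d E^*_i A E^*_i$. The contribution $\sum_{i=0}^d E^*_i B E^*_i$ is the flat part of the Leonard pair $B, A^*$, which vanishes because $B, A^*$ is bipartite (Lemma \ref{lem:bip}(i)); here one uses that this flat part is computed with the primitive idempotents of $A^*$, the same ones appearing in the definition of $F$, since $A^*$ is multiplicity-free (Lemma \ref{lem:LPmultfree}) and its primitive idempotents do not depend on any ordering. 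For the remaining contribution, since $A - B$ commutes with each $E^*_i$ and $E^*_i E^*_i = E^*_i$, one has $E^*_i (A - B) E^*_i = (A - B) E^*_i$, whence $\sum_{i=0}^d E^*_i (A - B) E^*_i = (A - B) \sum_{i=0}^d E^*_i = A - B$ using $\sum_{i=0}^d E^*_i = I$. Combining the two contributions gives $F = A - B$, i.e. $A - F = B$, as required.

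The only delicate point is the one flagged above: one must make sure that the flat part of $B$ and the flat part $F$ of $A$ are both expressed in terms of the primitive idempotents of the common map $A^*$, so that the cross term in the expansion is genuinely the flat part of $B$ and hence zero. Once this is observed, the argument reduces to a short, routine manipulation of primitive idempotents and commutators, and I anticipate no further obstacle.
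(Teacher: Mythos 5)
Your proposal is correct and follows essentially the same route as the paper: reduce via Lemma \ref{lem:nbippre} to showing $A-F=B \Leftrightarrow [A,A^*]=[B,A^*]$, use Lemma \ref{lem:F2} for the forward direction, and for the converse use Lemma \ref{lem:As2} to get $A-B\in\b{A^*}$, kill the cross term $\sum_i E^*_i B E^*_i$ by bipartiteness of $B,A^*$, and conclude $F=A-B$. The only cosmetic difference is that the paper expands $A-B$ in the basis $\{E^*_i\}_{i=0}^d$ of $\b{A^*}$, while you use commutation with each $E^*_i$ together with $\sum_i E^*_i=I$; your observation that both flat parts are computed with the same set of primitive idempotents of $A^*$ is also the correct justification.
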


\begin{proof}
(i) $\Leftrightarrow$ (ii)
By Lemma \ref{lem:nbippre}.

(i), (ii) $\Rightarrow$ (iii)
By Lemma \ref{lem:nbippre}, $A-F = B$.
By Lemma \ref{lem:F2}, $[F,A^*]=0$.
By these comments, $[A,A^*] = [B,A^*]$.

(iii) $\Rightarrow$ (i), (ii)
For the Leonard pair $A,A^*$
let $\{\th^*_i\}_{i=0}^d$ denote a dual eigenvalue sequence of $A,A^*$.
For $0 \leq i \leq d$ let $E^*_i$ denote the primitive idempotent of $A^*$
associated with $\th^*_i$.
By Definition \ref{def:F},
the flat part of $A$ is $F = \sum_{i=0}^d E^*_i A E^*_i$.
By Lemma \ref{lem:bip}(i) and since the Leonard pair $B,A^*$ is bipartite,
we have $E^*_i B E^*_i = 0$ for $0 \leq i \leq d$.
By assumption $[A,A^*] = [B,A^*]$ so $[A-B, A^*]=0 $.
By this and Lemma \ref{lem:As2}, we have $A-B \in \b{ A^* }$.
The elements $\{E^*_i\}_{i=0}^d$ form a basis of $\b{ A^* }$,
so there exist scalars $\{\alpha_i\}_{i=0}^d$ in $\F$ such that
$A-B = \sum_{i=0}^d \alpha_i E^*_i$.
We may now argue
\[
F= \sum_{i=0}^d E^*_i A E^*_i
  = \sum_{i=0}^d E^*_i (A-B) E^*_i
 = \sum_{i=0}^d \alpha_i E^*_i 
 = A-B.
\]
By these comments and Lemma \ref{lem:nbippre} we obtain (i), (ii).
\end{proof}

\begin{lemma}    \label{lem:nearbip1}    \samepage
\ifDRAFT {\rm lem:nearbip1}. \fi
Let $A,A^*$ denote a near-bipartite Leonard pair on $V$,
with bipartite contraction $B,A^*$.
Let $\xi$, $\zeta$, $\xi^*$, $\zeta^*$ denote scalars in $\F$ with $\xi \xi^* \neq 0$. 
Then the Leonard pair 
\begin{equation}
  \xi A + \zeta I, \, \xi^* A^* + \zeta^* I                  \label{eq:affine}
\end{equation}
is near-bipartite, with bipartite contraction
$\xi B, \xi^* A^* + \zeta^* I$.
\end{lemma}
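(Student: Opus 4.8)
The plan is to combine Lemma~\ref{lem:Faffine}, Lemma~\ref{lem:affineLP}, and Definition~\ref{def:nearbip}. First I would apply Lemma~\ref{lem:affineLP} to observe that the pair \eqref{eq:affine} is a Leonard pair on $V$, so that it makes sense to speak of its flat part and of the near-bipartite property. Write $F$ for the flat part of $A$ with respect to $A,A^*$. By Definition~\ref{def:nearbip}, the hypothesis that $A,A^*$ is near-bipartite with bipartite contraction $B,A^*$ means precisely that $A-F,A^*$ is a Leonard pair on $V$ and $B = A-F$.

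Next I would invoke Lemma~\ref{lem:Faffine}: for the Leonard pair \eqref{eq:affine}, the flat part of $\xi A + \zeta I$ equals $\xi F + \zeta I$. Subtracting gives $(\xi A + \zeta I) - (\xi F + \zeta I) = \xi(A-F) = \xi B$. Thus, by Definition~\ref{def:nearbip}, to conclude that \eqref{eq:affine} is near-bipartite with bipartite contraction $\xi B,\, \xi^* A^* + \zeta^* I$, it suffices to verify that $\xi B,\, \xi^* A^* + \zeta^* I$ is a Leonard pair on $V$. But $B,A^*$ is a Leonard pair, being the bipartite contraction of $A,A^*$, and $\xi\xi^* \neq 0$, so Lemma~\ref{lem:affineLP} applied to $B,A^*$ with the scalars $\xi$, $\xi^*$, $0$, $\zeta^*$ yields that $\xi B,\, \xi^* A^* + \zeta^* I$ is a Leonard pair.

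Finally, to justify the terminology ``bipartite contraction'' in the conclusion, I would note that $\xi B,\, \xi^* A^* + \zeta^* I$ is in fact bipartite: this follows from Lemma~\ref{lem:bip2} (alternatively Lemma~\ref{lem:F3}) since $B,A^*$ is bipartite. I expect no genuine obstacle here, as the argument is essentially bookkeeping with the affine transformation lemmas. The one point requiring care is that the flat part in Lemma~\ref{lem:Faffine} is computed with respect to the new Leonard pair \eqref{eq:affine}, i.e.\ using the primitive idempotents of $\xi^* A^* + \zeta^* I$; since these coincide with the primitive idempotents of $A^*$, the identification of the flat part of $\xi A + \zeta I$ with $\xi F + \zeta I$ is legitimate, and the rest follows.
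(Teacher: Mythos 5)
Your proposal is correct and follows essentially the same route as the paper's proof: use Lemma \ref{lem:Faffine} to identify the flat part of $\xi A + \zeta I$ as $\xi F + \zeta I$, compute $(\xi A + \zeta I) - (\xi F + \zeta I) = \xi B$, observe via Lemma \ref{lem:bip2} (the paper) or Lemma \ref{lem:affineLP} plus Lemma \ref{lem:bip2} (you) that $\xi B,\,\xi^* A^* + \zeta^* I$ is a bipartite Leonard pair, and conclude by Definition \ref{def:nearbip}. Your extra remark about the primitive idempotents of $\xi^* A^* + \zeta^* I$ coinciding with those of $A^*$ is a sound observation that the paper absorbs into Lemma \ref{lem:Faffine}.
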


\begin{proof}
Let $F$ denote the flat part of $A$.
We have $B = A-F$ by Lemma \ref{lem:nbippre}.
For the Leonard pair \eqref{eq:affine},
the flat part of $\xi A + \zeta I$ is $\xi F + \zeta I$
by Lemma \ref{lem:Faffine}.
We have
\[
   \xi A + \zeta I - (\xi F + \zeta I) = \xi (A-F) = \xi B.
\]
Note that $\xi B, \xi^* A^* + \zeta^* I$ is a bipartite Leonard pair by
Lemma \ref{lem:bip2} and since $B,A^*$ is a bipartite Leonard pair.
By these comments
\[
   \xi A + \zeta I - (\xi F + \zeta I), \; \xi^* A^* + \zeta^* I
\]
is a Leonard pair.
By these comments and Definition \ref{def:nearbip}, the pair \eqref{eq:affine} is near-bipartite,
with bipartite contraction
$\xi B, \xi^* A^* + \zeta^* I$.
\end{proof}

\begin{lemma}    \label{lem:nbipTDD1}    \samepage
\ifDRAFT {\rm lem:nbipTDD1}. \fi
Let $A,A^*$ denote a near-bipartite Leonard pair over $\F$
that is in normalized TD/D form.
Then the bipartite contraction of $A,A^*$ is in normalized TD/D form.
\end{lemma}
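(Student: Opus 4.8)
The plan is to trace through what "normalized TD/D form" means and combine it with the structural description of $F$ and $A-F$ from Lemma~\ref{lem:Fai2}. Suppose $A,A^*$ is a near-bipartite Leonard pair over $\F$ in normalized TD/D form, so by Definition~\ref{def:normalizedTDD} the underlying vector space is $\F^{d+1}$, the matrix $A$ is normalized irreducible tridiagonal in $\Matd$, and $A^*$ is a diagonal matrix in $\Matd$. Write $A$ and $A^*$ explicitly as in Lemma~\ref{lem:Fai2}, with diagonal entries $\{a_i\}_{i=0}^d$, superdiagonal entries $\{x_i\}_{i=1}^d$, subdiagonal entries all $1$, and $A^* = \mathrm{diag}(\th^*_0,\dots,\th^*_d)$; by Lemma~\ref{lem:TDDstandard}(i) the triple $(\{a_i\}_{i=0}^d;\{x_i\}_{i=1}^d;\{\th^*_i\}_{i=0}^d)$ is a TD/D sequence of $A,A^*$.

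First I would invoke Lemma~\ref{lem:Fai2} to conclude that the flat part $F$ of $A$ equals $\mathrm{diag}(a_0,\dots,a_d)$ and that $A-F$ is exactly the matrix obtained from $A$ by replacing each diagonal entry with $0$ — that is, the normalized irreducible tridiagonal matrix with subdiagonal entries $1$, superdiagonal entries $\{x_i\}_{i=1}^d$, and zero diagonal. Here I use that the near-bipartite hypothesis guarantees $A-F,A^*$ is a Leonard pair on $\F^{d+1}$, so that by Definition~\ref{def:nearbip} the bipartite contraction is precisely this pair $A-F, A^*$. Now it suffices to check the three conditions of Definition~\ref{def:normalizedTDD} for $A-F, A^*$: condition (i) holds because the underlying vector space of the bipartite contraction is the same $\F^{d+1}$; condition (iii) holds because $A^*$ is unchanged and is still diagonal; condition (ii) holds because $A-F$, as computed above, is a normalized irreducible tridiagonal matrix — its subdiagonal entries are all $1$ and, since $A,A^*$ being a Leonard pair forces $x_i \neq 0$ for $1 \leq i \leq d$ (Proposition~\ref{prop:TDDseq}, or Lemma~\ref{lem:irred} applied to $A-F,A^*$), its superdiagonal entries are all nonzero.

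There is essentially no obstacle here; the statement is a bookkeeping consequence of the already-established normal form of $F$. The one point that warrants a sentence of care is that the definition of normalized TD/D form (Definition~\ref{def:normalizedTDD}) is a property of a Leonard pair, not merely of a pair of matrices, so I must first know that $A-F,A^*$ \emph{is} a Leonard pair — which is exactly the near-bipartite hypothesis — before it makes sense to ask whether it is in normalized TD/D form. Once that is in hand, the matrix computation in \eqref{eq:FA-F} does all the work, and one can even note in passing (via Lemma~\ref{lem:F3}) that the resulting TD/D sequence of the bipartite contraction is $(\mathbb{O};\{x_i\}_{i=1}^d;\{\th^*_i\}_{i=0}^d)$, so the bipartite contraction in normalized TD/D form is obtained from that of $A,A^*$ simply by zeroing out the diagonal.
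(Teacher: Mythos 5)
Your proposal is correct and follows essentially the same route as the paper, whose proof simply cites Definition \ref{def:normalizedTDD} together with Lemmas \ref{lem:Fai2} and \ref{lem:F3}; you have merely written out the matrix bookkeeping those citations compress. The one point you flag — that the near-bipartite hypothesis is what makes $A-F,A^*$ a Leonard pair before one can speak of its normalized TD/D form — is exactly the content supplied by Lemma \ref{lem:F3} in the paper's citation.
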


\begin{proof}
By Definition \ref{def:normalizedTDD} and Lemmas \ref{lem:Fai2}, \ref{lem:F3}.
\end{proof}

\begin{lemma}    \label{lem:nbipTDD2}    \samepage
\ifDRAFT {\rm lem:nbipTDD2}. \fi
Let $B,A^*$ denote a bipartite Leonard pair over $\F$ that
is in normalized TD/D form.
Then every near-bipartite expansion of $B,A^*$ is in normalized TD/D form.
\end{lemma}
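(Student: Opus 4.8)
The plan is to reduce the statement to the structure of the bipartite contraction already worked out in the preceding lemmas. Let $B,A^*$ be a bipartite Leonard pair over $\F$ in normalized TD/D form, so the underlying vector space is $\F^{d+1}$, the matrix $A^*$ is diagonal, say $A^* = \text{\rm diag}(\th^*_0,\ldots,\th^*_d)$, and $B$ is a normalized irreducible tridiagonal matrix in $\Matd$ with all diagonal entries $0$ (by Definition~\ref{def:bip}(i) and Lemma~\ref{lem:Fai2}, applied to the bipartite pair $B,A^*$). Now let $A,A^*$ be any near-bipartite expansion of $B,A^*$; here $A,A^*$ is a Leonard pair on the same vector space $\F^{d+1}$ with the same $A^*$, and by Lemma~\ref{lem:nbippre} we have $A-F = B$, where $F$ is the flat part of $A$.

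First I would use Lemma~\ref{lem:F2}, which gives $F \in \b{A^*}$. Since $A^*$ is a diagonal matrix in $\Matd$ and is multiplicity-free (Lemma~\ref{lem:LPmultfree}), Lemma~\ref{lem:As2} tells us that $\b{A^*}$ consists precisely of the diagonal matrices in $\Matd$; hence $F$ is a diagonal matrix, say $F = \text{\rm diag}(a_0,\ldots,a_d)$. Therefore $A = B + F$ is obtained from $B$ by adding the diagonal matrix $F$, so $A$ is tridiagonal with the same off-diagonal entries as $B$. In particular the subdiagonal entries of $A$ are all $1$ and the superdiagonal entries of $A$ are all nonzero (they coincide with those of $B$, which is irreducible), so $A$ is a normalized irreducible tridiagonal matrix in $\Matd$. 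Together with the facts that the underlying vector space is $\F^{d+1}$ and $A^*$ is diagonal, this verifies the three conditions of Definition~\ref{def:normalizedTDD}, so $A,A^*$ is in normalized TD/D form.

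An alternative, essentially equivalent route is to invoke Lemma~\ref{lem:nbip}(iii): a near-bipartite expansion $A,A^*$ of $B,A^*$ is characterized by $[A,A^*] = [B,A^*]$, which forces $A - B \in \b{A^*}$ by Lemma~\ref{lem:As2}, and then one argues as above. I do not expect a genuine obstacle here; the only point requiring a little care is making explicit that $F$ (equivalently $A-B$) is diagonal with respect to the standard basis of $\F^{d+1}$ — this is exactly where the normalized-form hypothesis on $B,A^*$ enters, since it pins down $A^*$ as a diagonal matrix and hence $\b{A^*}$ as the diagonal matrices. Once that is in hand, the preservation of the normalized irreducible tridiagonal shape of the first coordinate is immediate because adding a diagonal matrix changes only the diagonal entries.
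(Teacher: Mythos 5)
Your proof is correct and follows essentially the same route as the paper's (which simply cites Definition \ref{def:normalizedTDD} and Lemmas \ref{lem:Fai2}, \ref{lem:F3}): since $A^*$ is diagonal with distinct diagonal entries, the flat part $F=A-B$ is diagonal, so adding it to the normalized irreducible tridiagonal matrix $B$ preserves the normalized TD/D form. Your spelled-out justification via Lemmas \ref{lem:nbippre}, \ref{lem:F2}, and \ref{lem:As2} is a valid expansion of that argument.
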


\begin{proof}
By Definition \ref{def:normalizedTDD} and Lemmas \ref{lem:Fai2}, \ref{lem:F3}.
\end{proof}

\begin{lemma}   \label{lem:nbipTDD3}    \samepage
\ifDRAFT {\rm lem:nbipTDD3}. \fi
Referring to Lemma \ref{lem:nbip},
assume that both $A,A^*$ and $B,A^*$ are in normalized TD/D form.
Let 
\begin{align*}
& (\{\th_i\}_{i=0}^d; \{\th^*_i\}_{i=0}^d; \{\vphi_i\}_{i=1}^d; \{\phi_i\}_{i=1}^d),
&&
 (\{\th'_i\}_{i=0}^d; \{\th^*_i\}_{i=0}^d; \{\vphi'_i\}_{i=1}^d; \{\phi'_i\}_{i=1}^d)
\end{align*}
denote a parameter array of $A,A^*$ and $B,A^*$, respectively,
and let
\begin{align*}
& (\{a_i\}_{i=0}^d; \{x_i\}_{i=1}^d; \{\th^*_i\}_{i=0}^d),
&&
 (\mathbb{O}; \{x'_i\}_{i=1}^d; \{\th^*_i\}_{i=0}^d)
\end{align*}
denote the corresponding TD/D sequence of $A,A^*$ and $B,A^*$, respectively.
Then {\rm (i)--(iii)} hold if and only if
\begin{align}
  x_i &= x'_i   &&   (1 \leq i \leq d)     \label{eq:xixdi}
\end{align}
if and only if
\begin{align}
  \vphi_i \phi_i &= \vphi'_i \phi'_i  &&  (1 \leq i \leq d).    \label{eq:vphiphivphidphid}
\end{align}
\end{lemma}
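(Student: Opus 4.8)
The plan is to establish the chain (i)--(iii) $\Leftrightarrow$ \eqref{eq:xixdi} $\Leftrightarrow$ \eqref{eq:vphiphivphidphid} in two self-contained steps, using the normalized TD/D hypothesis to make every object explicit. First I would record the matrix shapes: since $A,A^*$ and $B,A^*$ are in normalized TD/D form with the common dual eigenvalue sequence $\{\th^*_i\}_{i=0}^d$, the matrix $A^*=\mathrm{diag}(\th^*_0,\dots,\th^*_d)$ represents $A^*$ in both pairs, while $A$ is the tridiagonal matrix of \eqref{eq:AAsTDD3} with diagonal $\{a_i\}_{i=0}^d$ and superdiagonal $\{x_i\}_{i=1}^d$, and $B$ is the tridiagonal matrix with zero diagonal (the $\mathbb{O}$ in its TD/D sequence) and superdiagonal $\{x'_i\}_{i=1}^d$, all subdiagonal entries being $1$; this follows from Lemma \ref{lem:TDDstandard} together with the uniqueness of the TD/D sequence attached to a fixed standard ordering (Lemma \ref{lem:TDDrel}). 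If $d=0$ the three asserted conditions are all vacuous, so we may assume $d\ge 1$.

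For the first equivalence I would invoke Lemma \ref{lem:nbip} to replace the condition ``(i)--(iii)'' by the single equation $[A,A^*]=[B,A^*]$, and then compute both commutators in the matrix form above. Because $A^*$ is diagonal, $[A,A^*]$ is tridiagonal with zero diagonal, $(i-1,i)$-entry $x_i(\th^*_i-\th^*_{i-1})$, and $(i,i-1)$-entry $\th^*_{i-1}-\th^*_i$; likewise $[B,A^*]$ has the same diagonal and subdiagonal but $(i-1,i)$-entry $x'_i(\th^*_i-\th^*_{i-1})$. Hence $[A,A^*]=[B,A^*]$ is equivalent to $x_i(\th^*_i-\th^*_{i-1})=x'_i(\th^*_i-\th^*_{i-1})$ for $1\le i\le d$, and since $\th^*_i\neq\th^*_{i-1}$ by Lemma \ref{lem:classify}(i) this is exactly \eqref{eq:xixdi}.

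For the second equivalence I would apply formula \eqref{eq:xiparam} of Lemma \ref{lem:aixiparam} to each of the two corresponding (parameter array, TD/D sequence) pairs. Writing $c_i$ for the quantity $\tau^*_{i-1}(\th^*_{i-1})\eta^*_{d-i}(\th^*_i)\big/\big(\tau^*_i(\th^*_i)\eta^*_{d-i+1}(\th^*_{i-1})\big)$ --- which depends only on the shared sequence $\{\th^*_i\}_{i=0}^d$ --- we obtain $x_i=\vphi_i\phi_i\,c_i$ and $x'_i=\vphi'_i\phi'_i\,c_i$ for $1\le i\le d$. Each factor in $c_i$ is a (possibly empty) product of differences $\th^*_r-\th^*_s$ with $r\neq s$, hence nonzero by Lemma \ref{lem:classify}(i), so $c_i\neq 0$; cancelling it shows \eqref{eq:xixdi} holds if and only if \eqref{eq:vphiphivphidphid} holds.

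The proof is essentially bookkeeping, and I do not expect a genuine obstacle. The two points requiring care are (a) pinning down the explicit matrix entries of $A$, $B$, $A^*$ from the normalized TD/D hypothesis and the prescribed TD/D sequences, and (b) verifying that the coefficient $c_i$ in \eqref{eq:xiparam} is nonzero so that it may be cancelled. One subtlety worth flagging is that $\vphi_i\phi_i$ is invariant under the alternative corresponding parameter array of Lemma \ref{lem:correspond} (which reverses $\{\th_i\}$ and interchanges $\{\vphi_i\}$ with $\{\phi_i\}$), so the argument does not depend on which corresponding parameter array is chosen.
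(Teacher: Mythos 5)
Your proposal is correct, and its second half is exactly the paper's argument: both deduce \eqref{eq:xixdi} $\Leftrightarrow$ \eqref{eq:vphiphivphidphid} from \eqref{eq:xiparam}, the only addition on your side being the (worthwhile) explicit remark that the coefficient $c_i$ is a product of differences of distinct $\th^*$'s, hence nonzero and common to both pairs. Where you diverge is the first equivalence. The paper does not touch the commutator at all: it writes $F=\mathrm{diag}(a_0,\ldots,a_d)$ via Lemma \ref{lem:Fai2}, so that conditions (i)--(iii) become $A-F=B$ by Lemma \ref{lem:nbippre} (equivalently Definition \ref{def:nearbip}), and comparing the displayed tridiagonal matrices gives \eqref{eq:xixdi} immediately. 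You instead take condition (iii) of Lemma \ref{lem:nbip}, namely $[A,A^*]=[B,A^*]$, and compute both commutators entrywise, using $\th^*_{i-1}\neq\th^*_i$ from Lemma \ref{lem:classify}(i) to cancel. This is valid, and it has the mild virtue of using only that $A^*$ is diagonal with distinct entries; but it is slightly roundabout, since the equivalence of $[A,A^*]=[B,A^*]$ with $A-B\in\b{A^*}$ (hence $A-B=F$) was already the content of the proof of Lemma \ref{lem:nbip}, so your commutator computation re-derives in coordinates what Lemmas \ref{lem:nbippre} and \ref{lem:Fai2} hand over for free. Your closing remarks --- that the matrix entries must be pinned down from the prescribed TD/D sequences (both pairs share the same $A^*$ and the same standard ordering $\{\th^*_i\}_{i=0}^d$, so the identification is consistent for both, possibly after the simultaneous reversal of Lemma \ref{lem:TDDrel}), and that $\vphi_i\phi_i$ is unaffected by the choice among the corresponding parameter arrays of Lemma \ref{lem:correspond} --- are accurate and at the same level of rigor as the paper's own treatment.
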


\begin{proof}
By Lemma \ref{lem:Fai2},
$A^* = \text{\rm diag}(\th^*_0, \th^*_1, \ldots, \th^*_d)$ and
$F = \text{\rm diag}(a_0, a_1, \ldots, a_d)$.
Also by Lemma \ref{lem:Fai2},
\begin{align*}
A &=
\begin{pmatrix}
 a_0 & x_1 & & & & \text{\bf 0}  \\
 1  & a_1  & x_2 \\
    &  1  &  \cdot & \cdot \\
    &     &   \cdot       & \cdot & \cdot \\
     & & & \cdot & \cdot & x_d  \\
\text{\bf 0} & & & & 1 & a_d
\end{pmatrix},
 & 
B &=
\begin{pmatrix}
 0 & x'_1 & & & & \text{\bf 0}  \\
 1  & 0  & x'_2 \\
    &  1  &  \cdot & \cdot \\
    &     &   \cdot       & \cdot & \cdot \\
     & & & \cdot & \cdot & x'_d  \\
\text{\bf 0} & & & & 1 & 0
\end{pmatrix}.
\end{align*}
By this and Definition \ref{def:nearbip},
the equivalent conditions (i)--(iii) in Lemma \ref{lem:nbip} hold 
if and only if $A-F = B$
if and only if \eqref{eq:xixdi} holds.
By \eqref{eq:xiparam} the condition \eqref{eq:xixdi} is equivalent to 
the condition \eqref{eq:vphiphivphidphid}.
\end{proof}

\begin{lemma}    \label{lem:nbipTDD3b}    \samepage
\ifDRAFT {\rm lem:nbipTDD3b}. \fi
Let $A,A^*$ denote a Leonard pair over $\F$ with parameter array
\begin{equation}
 (\{\th_i\}_{i=0}^d; \{\th^*_i\}_{i=0}^d; \{\vphi_i\}_{i=1}^d; \{\phi_i\}_{i=1}^d).   \label{eq:parray000x}
\end{equation}
Let
\begin{equation}
   (\{\th'_i\}_{i=0}^d; \{\th^*_i\}_{i=0}^d; \{\vphi'_i\}_{i=1}^d; \{\phi'_i\}_{i=1}^d)   \label{eq:parray00ax}
\end{equation}
denote a bipartite parameter array over $\F$.
Then the following are equivalent:
\begin{itemize}
\item[\rm (i)]
$\vphi_i \phi_i =\vphi'_i \phi'_i$ for $1 \leq i \leq d$;
\item[\rm (ii)]
$A,A^*$ is near-bipartite, and its bipartite contraction has
parameter array \eqref{eq:parray00ax}.
\end{itemize}
\end{lemma}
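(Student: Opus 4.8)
The statement relates a numerical condition on split sequences, condition (i), to a structural claim about near-bipartiteness, condition (ii). The natural strategy is to reduce to the normalized TD/D form so that Lemma~\ref{lem:nbipTDD3} can be applied directly, and then to handle the bookkeeping about which parameter array the bipartite contraction carries.

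First I would invoke Lemma~\ref{lem:TDD3} to replace $A,A^*$ by an isomorphic Leonard pair in normalized TD/D form; this does not change the parameter array \eqref{eq:parray000x}, nor does it change the truth of either (i) or (ii), since near-bipartiteness, the bipartite contraction, and parameter arrays are all isomorphism invariants (Lemmas~\ref{lem:nbippre}, \ref{lem:LPunique}, \ref{lem:nbipTDD1}). Similarly, by Lemma~\ref{lem:correspond}(i) together with Lemma~\ref{lem:TDD3}, I would realize the bipartite parameter array \eqref{eq:parray00ax} by a bipartite Leonard pair $B,A^*$ in normalized TD/D form whose $A^*$ agrees with that of $A,A^*$ (the dual eigenvalue sequences coincide, so the diagonal matrices representing $A^*$ match after normalization). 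Write $(\{a_i\}_{i=0}^d;\{x_i\}_{i=1}^d;\{\th^*_i\}_{i=0}^d)$ for the TD/D sequence of $A,A^*$ and $(\mathbb{O};\{x'_i\}_{i=1}^d;\{\th^*_i\}_{i=0}^d)$ for that of $B,A^*$, the latter being of the stated form because $B,A^*$ is bipartite (Definition~\ref{def:bip}, Lemma~\ref{lem:F3}).

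Now apply Lemma~\ref{lem:nbipTDD3} to the pair $A,A^*$ and the \emph{candidate} contraction $B,A^*$. That lemma gives: condition~(iii) of Lemma~\ref{lem:nbip} (equivalently, $B,A^*$ is the bipartite contraction of $A,A^*$, by Lemma~\ref{lem:nbippre}) holds if and only if $\vphi_i\phi_i=\vphi'_i\phi'_i$ for $1\le i\le d$, which is exactly condition~(i) here. For the direction (i)$\Rightarrow$(ii): if (i) holds, Lemma~\ref{lem:nbipTDD3} yields that $A-F=B$, hence $A-F,A^*$ is the Leonard pair $B,A^*$, so $A,A^*$ is near-bipartite by Definition~\ref{def:nearbip} and its bipartite contraction is $B,A^*$, whose parameter array is \eqref{eq:parray00ax} by construction. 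For (ii)$\Rightarrow$(i): if $A,A^*$ is near-bipartite with bipartite contraction having parameter array \eqref{eq:parray00ax}, then that contraction and our $B,A^*$ have a common parameter array, hence are isomorphic (Lemma~\ref{lem:LPunique}), and in fact equal since both are in normalized TD/D form with the same $A^*$; so $A-F=B$, and Lemma~\ref{lem:nbipTDD3} backward gives $\vphi_i\phi_i=\vphi'_i\phi'_i$.

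The one point requiring care — and the main obstacle — is the normalization step producing $B,A^*$ with \emph{exactly} the same $A^*$-matrix and the same dual eigenvalue ordering $\{\th^*_i\}_{i=0}^d$ as $A,A^*$, so that Lemma~\ref{lem:nbipTDD3} is literally applicable. This needs the observation that a bipartite parameter array over $\F$ whose dual eigenvalue sequence is $\{\th^*_i\}_{i=0}^d$ corresponds (Definition~\ref{def:correspond}, Lemma~\ref{lem:aixiparam}) to a TD/D sequence $(\mathbb{O};\{x'_i\}_{i=1}^d;\{\th^*_i\}_{i=0}^d)$ with the \emph{same} $\{\th^*_i\}_{i=0}^d$, and then Proposition~\ref{prop:TDD2} builds the desired normalized-form Leonard pair; one also checks, via the parameter-array ambiguity in Lemma~\ref{lem:correspond}(ii), that passing between the two admissible parameter arrays for a fixed TD/D sequence only swaps $\{\vphi_i\}$ with $\{\phi_i\}$ (and reverses $\{\th_i\}$), which leaves the products $\vphi_i\phi_i$ unchanged, so condition~(i) is insensitive to this choice. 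Once this is in place the argument is just an assembly of the cited lemmas.
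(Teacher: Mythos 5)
Your proof is correct and takes essentially the same route as the paper: reduce $A,A^*$ to normalized TD/D form via Lemma \ref{lem:TDD3}, realize the bipartite parameter array \eqref{eq:parray00ax} by a pair $B,A^*$ in normalized TD/D form with the same $A^*$, and conclude by Lemma \ref{lem:nbipTDD3}. The additional care you take in identifying the bipartite contraction with $B$ in the (ii)$\Rightarrow$(i) direction is a detail the paper leaves implicit.
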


\begin{proof}
By Lemma \ref{lem:TDD3}  we may assume that $A,A^*$ is in normalized
TD/D form.
Let $B,A^*$ denote a Leonard pair in normalized TD/D form that has parameter array
\eqref{eq:parray00ax}.
Note that $B,A^*$ is bipartite.
Now the result follows by Lemma \ref{lem:nbipTDD3}.
\end{proof}

\begin{lemma}    \label{lem:nbipTDD3c}    \samepage
\ifDRAFT {\rm lem:nbipTDD3c}. \fi
Let $B,A^*$ denote a bipartite Leonard pair over $\F$ with parameter array
\begin{equation}
   (\{\th'_i\}_{i=0}^d; \{\th^*_i\}_{i=0}^d; \{\vphi'_i\}_{i=1}^d; \{\phi'_i\}_{i=1}^d).   \label{eq:parray00a}
\end{equation}
Let
\begin{equation}
 (\{\th_i\}_{i=0}^d; \{\th^*_i\}_{i=0}^d; \{\vphi_i\}_{i=1}^d; \{\phi_i\}_{i=1}^d)   \label{eq:parray000}
\end{equation}
denote a parameter array over $\F$.
Then the following are equivalent:
\begin{itemize}
\item[\rm (i)]
$\vphi_i \phi_i = \vphi'_i \phi'_i$ for $1 \leq i \leq d$;
\item[\rm (ii)]
there exists a near-bipartite expansion of $B,A^*$ that has
parameter array \eqref{eq:parray000}.
\end{itemize}
\end{lemma}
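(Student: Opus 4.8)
The plan is to follow the template of the proof of Lemma~\ref{lem:nbipTDD3b}: reduce everything to the normalized TD/D setting and then read off the answer from Lemma~\ref{lem:nbipTDD3}. First I would invoke Lemma~\ref{lem:TDD3} to replace $B,A^*$ by an isomorphic Leonard pair in normalized TD/D form; since $\{\th^*_i\}_{i=0}^d$ is the dual eigenvalue sequence of the parameter array \eqref{eq:parray00a}, this can be arranged so that $A^*=\mathrm{diag}(\th^*_0,\dots,\th^*_d)$, and then $B,A^*$ has TD/D sequence $(\mathbb{O};\{x'_i\}_{i=1}^d;\{\th^*_i\}_{i=0}^d)$ for suitable scalars $\{x'_i\}_{i=1}^d$, the first component being $\mathbb{O}$ because $B,A^*$ is bipartite. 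Next, since \eqref{eq:parray000} is a parameter array over $\F$, Lemma~\ref{lem:correspond}(i) produces a TD/D sequence $(\{a_i\}_{i=0}^d;\{x_i\}_{i=1}^d;\{\th^*_i\}_{i=0}^d)$ corresponding to it, and by Proposition~\ref{prop:TDD2} the matrices built from this TD/D sequence form a Leonard pair $A,A^*$ in normalized TD/D form that has parameter array \eqref{eq:parray000} and the same second component $A^*=\mathrm{diag}(\th^*_0,\dots,\th^*_d)$ as $B,A^*$. Applying Lemma~\ref{lem:nbipTDD3} to this $A,A^*$ and $B,A^*$ then shows that $A,A^*$ is a near-bipartite expansion of $B,A^*$ if and only if $\vphi_i\phi_i=\vphi'_i\phi'_i$ for $1\le i\le d$.

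Granting this, the implication (i)~$\Rightarrow$~(ii) is immediate: if $\vphi_i\phi_i=\vphi'_i\phi'_i$ for all $i$, then the pair $A,A^*$ just constructed is a near-bipartite expansion of $B,A^*$ and has parameter array \eqref{eq:parray000}, so (ii) holds. For (ii)~$\Rightarrow$~(i) I would take any near-bipartite expansion $\hat A,A^*$ of $B,A^*$ with parameter array \eqref{eq:parray000}. By Lemma~\ref{lem:nbipTDD2} the pair $\hat A,A^*$ is again in normalized TD/D form, with the same second component $A^*=\mathrm{diag}(\th^*_0,\dots,\th^*_d)$; by Lemma~\ref{lem:TDDstandard}(i) its diagonal-and-superdiagonal data give a TD/D sequence with third component $\{\th^*_i\}_{i=0}^d$, and this TD/D sequence corresponds to \eqref{eq:parray000} (it corresponds to some parameter array of $\hat A,A^*$ with dual eigenvalue sequence $\{\th^*_i\}_{i=0}^d$, and \eqref{eq:parray000} is one such). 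By the uniqueness in Lemma~\ref{lem:correspond}(i) this TD/D sequence coincides with $(\{a_i\}_{i=0}^d;\{x_i\}_{i=1}^d;\{\th^*_i\}_{i=0}^d)$, and since a Leonard pair in normalized TD/D form is literally determined by its TD/D sequence, $\hat A=A$; hence $A,A^*$ is a near-bipartite expansion of $B,A^*$, and the equivalence from Lemma~\ref{lem:nbipTDD3} yields $\vphi_i\phi_i=\vphi'_i\phi'_i$. Alternatively, (ii)~$\Rightarrow$~(i) follows directly from Lemma~\ref{lem:nbipTDD3b}: the bipartite contraction of $\hat A,A^*$ is $B,A^*$, which has parameter array \eqref{eq:parray00a}, so part~(ii) of Lemma~\ref{lem:nbipTDD3b} holds for $\hat A,A^*$ together with the bipartite parameter array \eqref{eq:parray00a}, whence part~(i) of that lemma, namely $\vphi_i\phi_i=\vphi'_i\phi'_i$.

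The one place that needs care is the step (ii)~$\Rightarrow$~(i): it is the only point at which the existential statement ``some near-bipartite expansion has parameter array \eqref{eq:parray000}'' must be converted into an assertion about the single concrete pair $A,A^*$ produced by the reduction, and this conversion relies on the rigidity of the normalized TD/D form, i.e.\ on Lemma~\ref{lem:nbipTDD2} combined with the uniqueness in Lemma~\ref{lem:correspond}(i) (or, as just noted, on re-routing through Lemma~\ref{lem:nbipTDD3b}). Everything else is routine bookkeeping with the established dictionary between TD/D sequences and parameter arrays.
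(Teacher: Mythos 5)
Your proof is correct and follows essentially the same route as the paper: reduce $B,A^*$ to normalized TD/D form via Lemma \ref{lem:TDD3}, build the candidate expansion $A$ from the TD/D sequence corresponding to \eqref{eq:parray000} (the paper writes this construction out via \eqref{eq:a0param}--\eqref{eq:xiparam} and Proposition \ref{prop:TDD2}), and conclude both directions from Lemma \ref{lem:nbipTDD3}. Your extra care on (ii)~$\Rightarrow$~(i), including the shortcut through Lemma \ref{lem:nbipTDD3b}, is sound but amounts to the same argument the paper compresses into a citation of Lemma \ref{lem:nbipTDD3}.
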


\begin{proof}
By Lemma \ref{lem:TDD3}  we may assume that $B,A^*$ is in normalized
TD/D form.

(i) $\Rightarrow$ (ii)
Define scalars $\{a_i\}_{i=0}^d$ by \eqref{eq:a0param}--\eqref{eq:adparam}.
Define scalars $\{x_i\}_{i=1}^d$ by \eqref{eq:xiparam}.
Define $A \in \Matd$ as in \eqref{eq:AAsTDD3}.
By Lemma \ref{lem:TDD3} the pair $A,A^*$ is a Leonard pair that has
parameter array \eqref{eq:parray000}.
By Lemma \ref{lem:nbipTDD3}, $A,A^*$ is a near-bipartite expansion of $B,A^*$.

(ii) $\Rightarrow$ (i)
By Lemma \ref{lem:nbipTDD3}.
\end{proof}

In the present paper we have three main goals.
Assuming $\F$ is algebraically closed,
\begin{itemize}
\item[\rm (i)]
we classify up to isomorphism the near-bipartite Leonard pairs over $\F$;
\item[\rm (ii)]
for each near-bipartite Leonard pair over $\F$ we describe its bipartite contraction;
\item[\rm (iii)] for each bipartite Leonard pair over $\F$ we describe its near-bipartite expansions.
\end{itemize}

\begin{lemma}     \label{lem:CharF2b}    \samepage
\ifDRAFT {\rm lem:CharF2b}. \fi
Assume that $d \geq 1$ and $\text{\rm Char}(\F) = 2$.
Then there does not exist a near-bipartite Leonard pair on $V$.
\end{lemma}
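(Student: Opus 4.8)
The plan is to derive a contradiction from the existence of a near-bipartite Leonard pair $A,A^*$ on $V$ when $d \geq 1$ and $\text{\rm Char}(\F) = 2$. The starting point is to invoke Lemma \ref{lem:F3}: if $A,A^*$ is near-bipartite with flat part $F$, then $A-F, A^*$ is a bipartite Leonard pair on $V$. In particular a bipartite Leonard pair on $V$ exists under the stated hypotheses. But a bipartite Leonard pair is essentially bipartite by Note \ref{note:essbip} (or directly by Definition \ref{def:bip}), so Lemma \ref{lem:CharF2} applies to $A-F, A^*$ and forces $\text{\rm Char}(\F) \neq 2$, contradicting our assumption.

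More explicitly, I would structure it as follows. First suppose, for contradiction, that $A,A^*$ is a near-bipartite Leonard pair on $V$, and let $F$ be the flat part of $A$. By Definition \ref{def:nearbip}, $A-F, A^*$ is a Leonard pair on $V$, and by Lemma \ref{lem:F3} it is bipartite. Since $d \geq 1$, Lemma \ref{lem:CharF2} (applied with the essentially bipartite Leonard pair $A-F, A^*$, using Note \ref{note:essbip} to see a bipartite Leonard pair is essentially bipartite) yields $\text{\rm Char}(\F) \neq 2$. This contradicts the hypothesis $\text{\rm Char}(\F) = 2$, so no near-bipartite Leonard pair on $V$ exists.

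There is essentially no obstacle here: the lemma is a short corollary of earlier results, and the only thing to be careful about is the chain of reductions (near-bipartite $\Rightarrow$ bipartite contraction exists $\Rightarrow$ essentially bipartite $\Rightarrow$ $\text{\rm Char}(\F)\neq 2$). One could alternatively bypass Lemma \ref{lem:CharF2} entirely and argue directly: the bipartite contraction has a TD/D sequence of the form $(\mathbb{O}; \{x_i\}_{i=1}^d; \{\th^*_i\}_{i=0}^d)$ by Lemma \ref{lem:F3}, hence a parameter array whose eigenvalue sequence $\{\th_i\}_{i=0}^d$ for $A-F$ satisfies $\th_i + \th_{d-i} = 0$ for all $i$ by Lemma \ref{lem:bipartite}; taking $i=0$ gives $\th_0 + \th_d = 0$, so $\th_0 = \th_d$ in characteristic $2$, contradicting Lemma \ref{lem:classify}(i) which requires distinct eigenvalues. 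Either route works; I would present the first since it is the cleanest and reuses Lemma \ref{lem:CharF2} verbatim.
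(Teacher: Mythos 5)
Your proof is correct and follows the same route as the paper, which simply cites Definition \ref{def:nearbip} and Lemma \ref{lem:CharF2}; your write-up just makes the chain (near-bipartite $\Rightarrow$ bipartite contraction $\Rightarrow$ essentially bipartite $\Rightarrow$ $\text{\rm Char}(\F)\neq 2$) explicit.
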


\begin{proof}
By Definition \ref{def:nearbip} and Lemma \ref{lem:CharF2}.
\end{proof}

In view of  Lemmas \ref{lem:CharF2},  \ref{lem:CharF2b},
for the rest of this paper we assume
\[
   \text{\rm Char}(\F) \neq 2.
\]
As a warmup we will treat  $d=1$,  and for this case we assume that $\F$ is arbitrary. 
Later, for $d\geq 2$ we will assume that $\F$ is algebraically closed.

\section{Near-bipartite Leonard pairs with diameter one}
\label{sec:d1}
\ifDRAFT {\rm sec:d1}. \fi

In this section, we classify up to isomorphism the near-bipartite Leonard pairs with diameter one.
Moreover, for each near-bipartite Leonard pair with diameter one, we describe
its bipartite contraction.
Also, for each bipartite Leonard pair with diameter one,
we describe its near-bipartite expansions.

In view of Proposition \ref{prop:TDD2}, we consider 
the following matrices in $\Mat_2(\F)$:
\begin{align*}
A &= \begin{pmatrix}
       a_0 & x_1  \\
       1   & a_1
     \end{pmatrix},
&
B &= \begin{pmatrix}
        0 & x_1 \\
        1 & 0
       \end{pmatrix},
&
A^* &= \begin{pmatrix}
           \th^*_0 & 0   \\
            0 & \th^*_1
         \end{pmatrix}.
\end{align*}
Let $f_A(\lambda)$ denote the characteristic polynomial of $A$.
By linear algebra,
\[
f_A (\lambda) = \lambda^2 - (a_0 + a_1)\lambda + a_0 a_1 - x_1.
\]
Let $\th_0$, $\th_1$ denote the roots of $f_A(\lambda)$.
We remark that $\th_0$, $\th_1$ are contained in the algebraic closure of $\F$,
and possibly not in $\F$.
We have
\begin{equation}
  (\th_0 - \th_1)^2 = (a_0 - a_1)^2 + 4 x_1.        \label{eq:d1key}
\end{equation}

\begin{lemma}    \label{lem:d1key}    \samepage
\ifDRAFT {\rm lem:d1key}. \fi
The pair $A,A^*$ is a Leonard pair over $\F$
if and  only if the following {\rm (i)} and {\rm (ii)} hold:
\begin{itemize}
\item[\rm (i)]
$\th_0$, $\th_1 \in \F$;
\item[\rm (ii)]
$x_1 \neq 0$, 
$\th^*_0 \neq \th^*_1$, and 
\begin{equation}
(a_0 - a_1)^2 + 4 x_1 \neq 0.            \label{eq:d1cond}
\end{equation}
\end{itemize}
\end{lemma}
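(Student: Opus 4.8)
The plan is to verify directly, by elementary linear algebra in dimension $2$, that the stated conditions (i) and (ii) are exactly the obstructions to $A,A^*$ being a Leonard pair. Since $\dim V = 2$, any $A^*$ with distinct diagonal entries is automatically multiplicity-free with a basis of eigenvectors, and any $2\times 2$ matrix $A$ with nonzero off-diagonal entries is already irreducible tridiagonal with respect to the standard basis and diagonal only in trivial cases; so the content of Definition \ref{def:LP} reduces to (a) $A^*$ having two distinct eigenvalues $\th^*_0,\th^*_1$ (so that the diagonal $A^*$ is a legitimate eigenvalue-separated diagonal matrix), (b) $A$ having entries making it irreducible tridiagonal, which since $d=1$ just means the sub- and super-diagonal entries are nonzero — here the subdiagonal entry is $1$ by normalization, so this forces $x_1\neq 0$; and (c) the existence of a second basis realizing condition (ii) of Definition \ref{def:LP}, which amounts to $A$ being diagonalizable over $\F$, i.e.\ $A$ having two distinct eigenvalues lying in $\F$.

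First I would establish the forward direction: assuming $A,A^*$ is a Leonard pair over $\F$, apply Lemma \ref{lem:LPmultfree} to conclude each of $A,A^*$ is multiplicity-free. Multiplicity-freeness of $A^*$ gives $\th^*_0\neq\th^*_1$, which is part of (ii). Multiplicity-freeness of $A$ means $f_A(\lambda)$ has two distinct roots; since $A$ acts on $V$ over $\F$ and by Definition \ref{def:LP}(i)/(ii) there is a basis in which $A$ is diagonal (with entries in $\F$), the roots $\th_0,\th_1$ lie in $\F$, giving (i); and distinctness of the roots together with the discriminant identity \eqref{eq:d1key}, namely $(\th_0-\th_1)^2 = (a_0-a_1)^2 + 4x_1$, yields $(a_0-a_1)^2+4x_1\neq 0$, which is \eqref{eq:d1cond}. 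Finally, irreducibility of the tridiagonal form of $A$ (Lemma \ref{lem:irred}, or directly from Definition \ref{def:LP}(i) applied to the normalized form) forces $x_1\neq 0$, completing (ii).

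Next I would handle the converse: assume (i) and (ii). From $\th^*_0\neq\th^*_1$, the matrix $A^*=\mathrm{diag}(\th^*_0,\th^*_1)$ is diagonal with a two-dimensional eigenspace decomposition, and from $x_1\neq 0$, the matrix $A$ is irreducible tridiagonal with respect to the standard basis $e_0,e_1$ of $\F^2$; this establishes Definition \ref{def:LP}(i). For Definition \ref{def:LP}(ii) I need a basis in which $A^*$ is irreducible tridiagonal and $A$ is diagonal. By \eqref{eq:d1key} and \eqref{eq:d1cond}, $(\th_0-\th_1)^2\neq 0$, so $\th_0\neq\th_1$, and by (i) both lie in $\F$; hence $A$ is diagonalizable over $\F$ with two distinct eigenvalues. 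Take an eigenbasis $u_0,u_1$ of $A$; in this basis $A$ is diagonal, and I must check that $A^*$ is irreducible tridiagonal, which for a $2\times 2$ matrix just means its off-diagonal entries are nonzero. This reduces to showing $A^*$ is not diagonal in the $u_0,u_1$ basis, equivalently that no eigenvector of $A$ is an eigenvector of $A^*$; since the eigenvectors of $A^*$ are $e_0,e_1$ and $A e_0 = a_0 e_0 + e_1$, $A e_1 = x_1 e_0 + a_1 e_1$ with $x_1\neq 0$ and the lower-left entry $1\neq 0$, neither $e_0$ nor $e_1$ is an eigenvector of $A$, so indeed $A^*$ has both off-diagonal entries nonzero in the $A$-eigenbasis. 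This gives Definition \ref{def:LP}(ii), so $A,A^*$ is a Leonard pair over $\F$.

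The only mildly delicate point — the ``main obstacle'' such as it is — is bookkeeping the role of the field: one must be careful that ``$A,A^*$ is a Leonard pair over $\F$'' requires the eigenvalues of $A$ to be in $\F$ (not merely in the algebraic closure), which is why condition (i) is separated out; condition \eqref{eq:d1cond} alone only guarantees distinctness of $\th_0,\th_1$ in the algebraic closure, not $\F$-rationality. Everything else is a direct $2\times 2$ computation using the discriminant identity \eqref{eq:d1key} and the fact that in dimension $2$ ``irreducible tridiagonal'' simply means ``both off-diagonal entries nonzero.''
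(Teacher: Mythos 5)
Your proposal is correct and follows essentially the same route as the paper: the forward direction uses multiplicity-freeness (Lemma \ref{lem:LPmultfree}), irreducibility, and the discriminant identity \eqref{eq:d1key}, and the converse diagonalizes $A$ over $\F$ and notes that $A$ and $A^*$ share no eigenvector (since $x_1\neq 0$ and the subdiagonal entry $1$ prevent $e_0,e_1$ from being $A$-eigenvectors), so $A^*$ is irreducible tridiagonal in the $A$-eigenbasis. The only cosmetic remark is that ``$A^*$ is not diagonal'' is weaker than ``both off-diagonal entries nonzero,'' but the condition you actually prove (no common eigenvector) gives the stronger statement, exactly as in the paper.
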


\begin{proof}
First assume that $A,A^*$ is a Leonard pair over $\F$.
Clearly (i) holds.
By Lemma \ref{lem:irred}, $x_1 \neq 0$.
By Lemma \ref{lem:LPmultfree},
$\th^*_0 \neq \th^*_1$.
Also by Lemma \ref{lem:LPmultfree}, $\th_0 \neq \th_1$.
By this and \eqref{eq:d1key} we get \eqref{eq:d1cond}.
Thus (ii) holds.
Next assume that (i) and (ii) hold.
We have $\th_0 \neq \th_1$ by \eqref{eq:d1key} and \eqref{eq:d1cond}, so $A$ is diagonalizable.
The matrices $A$ and $A^*$ do not have an eigenspace in common,
since $A$ is not upper or lower triangular.
Since $A$ is diagonalizable, there exists an invertible matrix $P \in \Mat_2(\F)$
such that $P A P^{-1}$ is diagonal.
The matrix $P A^* P^{-1}$ has off diagonal entries nonzero,
because $A$ and $A^*$ have no eigenspace in common.
Therefore $P A^* P^{-1}$ is irreducible tridiagonal.
By these comments,
 $A,A^*$ is a Leonard pair over $\F$.
\end{proof}

Let $f_B(\lambda)$ denote the characteristic polynomial of $B$.
We have $f_B(\lambda) = \lambda^2 - x_1$. The roots of $f_B(\lambda)$
are $\pm \sqrt{x_1}$.
We remark that $\sqrt{x_1}$ is contained in the algebraic closure of $\F$,
and possibly not in $\F$.
The following result classifies the bipartite Leonard pairs with diameter one.

\begin{prop}    \label{prop:d1pre}    \samepage
\ifDRAFT {\rm prop:d1pre}. \fi
The pair $B,A^*$ is a Leonard pair over $\F$ if and only if
the following {\rm (i)} and {\rm (ii)} hold:
\begin{itemize}
\item[\rm (i)]
$\sqrt{x_1} \in \F$;
\item[\rm (ii)]
$x_1 \neq 0$ and $\th^*_0 \neq \th^*_1$.
\end{itemize}
\end{prop}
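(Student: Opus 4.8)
The plan is to obtain Proposition~\ref{prop:d1pre} as the special case $a_0 = a_1 = 0$ of Lemma~\ref{lem:d1key}. First I would observe that the matrix $B$ in the present setup is precisely the matrix $A$ of Lemma~\ref{lem:d1key} with $a_0 = a_1 = 0$, while the dual matrix $A^*$ is unchanged. Hence $B,A^*$ is a Leonard pair over $\F$ if and only if conditions (i) and (ii) of Lemma~\ref{lem:d1key} hold with $a_0 = a_1 = 0$.

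Next I would translate those two conditions into the present language. For condition (i): with $a_0=a_1=0$ the relevant roots $\th_0,\th_1$ are the roots of $f_B(\lambda) = \lambda^2 - x_1$, namely $\pm\sqrt{x_1}$; thus $\th_0,\th_1 \in \F$ if and only if $\sqrt{x_1}\in\F$, which is exactly item (i) of the Proposition. For condition (ii): the requirements $x_1 \neq 0$ and $\th^*_0 \neq \th^*_1$ carry over verbatim, and the remaining requirement $(a_0 - a_1)^2 + 4x_1 \neq 0$ becomes $4x_1 \neq 0$; since $\text{\rm Char}(\F)\neq 2$ is in force for the rest of the paper, $4x_1 \neq 0$ is equivalent to $x_1 \neq 0$ and so is already subsumed. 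Combining, the conditions of Lemma~\ref{lem:d1key} for the pair $B,A^*$ are exactly items (i) and (ii) of the Proposition.

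I do not anticipate a genuine obstacle here: the statement is essentially a bookkeeping corollary of Lemma~\ref{lem:d1key}, the only delicate points being the identification of the roots of $f_B$ with $\pm\sqrt{x_1}$ (already recorded just above the Proposition) and the use of $\text{\rm Char}(\F)\neq 2$ to discard the $4x_1\neq 0$ clause. Should a self-contained argument be preferred, one could instead mimic the proof of Lemma~\ref{lem:d1key} directly: when $\sqrt{x_1}\in\F$ and $x_1\neq 0$ the matrix $B$ has the two distinct eigenvalues $\pm\sqrt{x_1}$ in $\F$, hence is diagonalizable over $\F$; moreover $B$ and $A^*$ share no common eigenspace because $B$ is neither upper nor lower triangular; conjugating $B$ to diagonal form therefore makes $A^*$ irreducible tridiagonal (using $\th^*_0\neq\th^*_1$), yielding a Leonard pair, while the converse directions follow from Lemmas~\ref{lem:irred} and~\ref{lem:LPmultfree} as before.
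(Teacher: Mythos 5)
Your proposal is correct and matches the paper's approach: the paper's proof is simply an appeal to Lemma \ref{lem:d1key} with $a_0=a_1=0$, and your write-up just makes explicit the bookkeeping (roots $\pm\sqrt{x_1}$, and $4x_1\neq 0 \Leftrightarrow x_1\neq 0$ since $\text{\rm Char}(\F)\neq 2$) that the paper leaves implicit.
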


\begin{proof}
By Lemma \ref{lem:d1key}.
\end{proof}

The following result classifies the near-bipartite Leonard pairs with diameter one,
and describes its bipartite contraction.

\begin{prop}    \label{prop:d1}    \samepage
\ifDRAFT {\rm prop:d1}. \fi
Assume that the pair $A,A^*$ is a Leonard pair over $\F$.
Then the following {\rm (i)} and {\rm (ii)} are equivalent.
\begin{itemize}
\item[\rm (i)]
$A,A^*$ is near-bipartite;
\item[\rm (ii)]
$B,A^*$ is a Leonard pair over $\F$.
\end{itemize}
Assume that {\rm (i)} and {\rm (ii)} hold.
Then $B,A^*$ is the bipartite contraction of $A,A^*$.
\end{prop}

\begin{proof}
Clear.
\end{proof}

The following result describes the near-bipartite expansions of a given
bipartite Leonard pair over $\F$ with diameter one.

\begin{prop}    \label{prop:d1converse}    \samepage
\ifDRAFT {\rm prop:d1converse}. \fi
Assume that the pair $B,A^*$ is a Leonard pair over $\F$.
Then the following {\rm (i)--(iii)} are equivalent:
\begin{itemize}
\item[\rm (i)]
$A,A^*$ is a near-bipartite expansion of $B,A^*$;
\item[\rm (ii)]
$A,A^*$ is a Leonard pair over $\F$;
\item[\rm (iii)]
$\th_0$, $\th_1 \in \F$ and $a_0$, $a_1$, $x_1$ satisfy
\eqref{eq:d1cond}. 
\end{itemize}
\end{prop}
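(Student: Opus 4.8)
The plan is to prove Proposition \ref{prop:d1converse} by establishing the chain of equivalences (i) $\Leftrightarrow$ (ii) $\Leftrightarrow$ (iii), treating (ii) as the hub. The implication (i) $\Rightarrow$ (ii) is immediate from Definition \ref{def:nearbip}, since a near-bipartite expansion of $B,A^*$ is by definition a Leonard pair on $V$. For (ii) $\Leftrightarrow$ (iii), I would simply invoke Lemma \ref{lem:d1key}: since $B,A^*$ is already assumed to be a Leonard pair over $\F$, we know $x_1 \neq 0$ and $\th^*_0 \neq \th^*_1$ by Proposition \ref{prop:d1pre}(ii), so conditions \ref{lem:d1key}(i) and the first two parts of \ref{lem:d1key}(ii) reduce to "$\th_0, \th_1 \in \F$," and the remaining requirement is exactly \eqref{eq:d1cond}. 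Thus (ii) holds iff (iii) holds.

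The substantive step is (iii) $\Rightarrow$ (i), equivalently (ii) $\Rightarrow$ (i): assuming $A,A^*$ is a Leonard pair over $\F$, I must show it is a near-bipartite expansion of $B,A^*$. The key computation is that the flat part of $A$ is $F = \operatorname{diag}(a_0,a_1)$ — this follows from Lemma \ref{lem:Fai2} once we observe (via Lemma \ref{lem:d1key} and Proposition \ref{prop:TDD2}, or directly) that $A,A^*$ is in normalized TD/D form with TD/D sequence $(\{a_i\}_{i=0}^1; x_1; \{\th^*_i\}_{i=0}^1)$. Then $A - F = B$, and since $B,A^*$ is given to be a Leonard pair (hence bipartite by Proposition \ref{prop:d1pre} together with Definition \ref{def:bip}), Definition \ref{def:nearbip} tells us $A,A^*$ is near-bipartite with bipartite contraction $A-F = B$, i.e. $A,A^*$ is a near-bipartite expansion of $B,A^*$ by Lemma \ref{lem:nbippre}.

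I do not expect any genuine obstacle here; the proposition is essentially a repackaging of Lemma \ref{lem:d1key}, Lemma \ref{lem:Fai2}, and the definitions. The one place requiring a moment's care is confirming that $B,A^*$ really is the bipartite contraction of $A,A^*$ (not merely that $A,A^*$ is near-bipartite): this is where the explicit identification $A - F = B$ from Lemma \ref{lem:Fai2} is needed, rather than just knowing $A-F,A^*$ is \emph{some} Leonard pair. Given how much of the machinery has already been set up in Sections \ref{sec:F} and \ref{sec:nbip}, I anticipate the authors' proof is the one-word "Clear." or a two-line citation of Lemma \ref{lem:d1key} and Definition \ref{def:nearbip}, and my proposal collapses to the same.
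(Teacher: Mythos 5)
Your proposal is correct and matches the paper's argument: the paper also proves (i) $\Leftrightarrow$ (ii) via Lemmas \ref{lem:Fai2} and \ref{lem:nbippre} (identifying $F=\operatorname{diag}(a_0,a_1)$ and $A-F=B$), and (ii) $\Leftrightarrow$ (iii) via Lemma \ref{lem:d1key} together with Proposition \ref{prop:d1pre}. Your organization of the equivalences is essentially the same, so nothing further is needed.
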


\begin{proof}
The equivalence of (i) and (ii) follows from Lemmas \ref{lem:Fai2} and \ref{lem:nbippre}.
The equivalence of (ii) and (iii) follows from Lemma \ref{lem:d1key} 
and Proposition \ref{prop:d1pre}.
\end{proof}

Next we consider what the previous results in this section become
if $\F$ is algebraically closed.

\begin{lemma}    \label{lem:d1keyb}    \samepage
\ifDRAFT {\rm lem:d1keyb}. \fi
Assume that $\F$ is algebraically closed.
Then the pair $A,A^*$ is a Leonard pair over $\F$
if and  only if 
$x_1 \neq 0$, $\th^*_0 \neq \th^*_1$, and 
\[
(a_0 - a_1)^2 + 4 x_1 \neq 0. 
\]
\end{lemma}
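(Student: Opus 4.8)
The plan is to deduce this directly from Lemma \ref{lem:d1key}, using the only additional hypothesis available, namely that $\F$ is algebraically closed.

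First I would recall that $\th_0$ and $\th_1$ are by construction the roots of the characteristic polynomial $f_A(\lambda) = \lambda^2 - (a_0+a_1)\lambda + a_0 a_1 - x_1$, which is a polynomial with coefficients in $\F$. Since $\F$ is algebraically closed, every polynomial over $\F$ splits into linear factors over $\F$, so $\th_0, \th_1 \in \F$ automatically. In other words, condition (i) of Lemma \ref{lem:d1key} is vacuous under the present hypothesis.

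Then I would invoke Lemma \ref{lem:d1key} directly: the pair $A,A^*$ is a Leonard pair over $\F$ if and only if conditions (i) and (ii) of that lemma hold. By the previous paragraph, (i) holds automatically, so $A,A^*$ is a Leonard pair over $\F$ if and only if (ii) holds, that is, if and only if $x_1 \neq 0$, $\th^*_0 \neq \th^*_1$, and $(a_0 - a_1)^2 + 4 x_1 \neq 0$. This is exactly the asserted equivalence.

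There is essentially no obstacle here; the statement is a routine specialization of Lemma \ref{lem:d1key} to an algebraically closed field, and the only point worth spelling out is the observation that algebraic closure makes the eigenvalue-rationality condition (i) automatic.
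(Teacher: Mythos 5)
Your proposal is correct and matches the paper's intent: the paper states this lemma without proof precisely because, as you observe, algebraic closure makes condition (i) of Lemma \ref{lem:d1key} automatic, so the statement is just Lemma \ref{lem:d1key} with that condition dropped. Nothing further is needed.
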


\begin{prop}    \label{prop:d1preb}    \samepage
\ifDRAFT {\rm prop:d1preb}. \fi
Assume that $\F$ is algebraically closed.
Then the pair $B,A^*$ is a Leonard pair over $\F$ if and only if
$x_1 \neq 0$ and $\th^*_0 \neq \th^*_1$.
\end{prop}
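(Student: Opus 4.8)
The plan is to derive this immediately from Proposition \ref{prop:d1pre}, which already characterizes when $B,A^*$ is a Leonard pair over $\F$: namely, precisely when (i) $\sqrt{x_1}\in\F$ and (ii) $x_1\neq 0$ and $\th^*_0\neq\th^*_1$. So the only work is to observe that, under the hypothesis that $\F$ is algebraically closed, condition (i) is automatically satisfied and hence redundant.

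First I would recall that, as noted just before Proposition \ref{prop:d1pre}, the roots of $f_B(\lambda)=\lambda^2-x_1$ are $\pm\sqrt{x_1}$. Since $\F$ is algebraically closed, the polynomial $\lambda^2-x_1$ splits over $\F$, so $\sqrt{x_1}\in\F$ regardless of the value of $x_1$. Thus condition (i) of Proposition \ref{prop:d1pre} holds unconditionally.

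Therefore, by Proposition \ref{prop:d1pre}, the pair $B,A^*$ is a Leonard pair over $\F$ if and only if condition (ii) holds, i.e.\ $x_1\neq 0$ and $\th^*_0\neq\th^*_1$, which is the assertion. I do not expect any obstacle here; the statement is a specialization of the general (non-algebraically-closed) result to the setting where square roots always exist, and the argument is a one-line application of the earlier proposition.
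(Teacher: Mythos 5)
Your argument is correct and is exactly the intended one: the paper states this proposition without proof precisely because it is the immediate specialization of Proposition \ref{prop:d1pre} to the algebraically closed setting, where condition (i) ($\sqrt{x_1}\in\F$) holds automatically. Nothing further is needed.
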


\begin{prop}    \label{prop:d1b}    \samepage
\ifDRAFT {\rm prop:d1b}. \fi
Assume that $\F$ is algebraically closed, and the pair $A,A^*$ is a Leonard pair over $\F$.
Then $A,A^*$ is near-bipartite if and only if $B,A^*$ is a Leonard pair
over $\F$.
In this case, $B,A^*$ is the bipartite contraction of $A,A^*$.
\end{prop}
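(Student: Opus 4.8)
The plan is to derive this statement as a direct consequence of Proposition~\ref{prop:d1}, which already establishes—over an arbitrary field—the equivalence of ``$A,A^*$ is near-bipartite'' and ``$B,A^*$ is a Leonard pair over $\F$'', together with the fact that in this case $B,A^*$ is the bipartite contraction of $A,A^*$. Since the hypothesis that $\F$ is algebraically closed is merely a specialization of that setting, the conclusion follows at once, and the proof reduces to a single citation of Proposition~\ref{prop:d1}. The one piece of bookkeeping to confirm is that the matrices $A$, $B$, $A^*$ occurring in the present statement are exactly those fixed at the start of this section, so that Proposition~\ref{prop:d1} applies verbatim.

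I would also include one extra remark that records the sharper picture the algebraically closed hypothesis affords. By Lemma~\ref{lem:d1keyb}, the assumption that $A,A^*$ is a Leonard pair over $\F$ already forces $x_1 \neq 0$ and $\th^*_0 \neq \th^*_1$ (and the inequality $(a_0-a_1)^2 + 4x_1 \neq 0$, which is not needed here). By Proposition~\ref{prop:d1preb}, the two conditions $x_1 \neq 0$ and $\th^*_0 \neq \th^*_1$ are precisely what is required for $B,A^*$ to be a Leonard pair over $\F$. Hence, under the standing hypotheses, $B,A^*$ is automatically a Leonard pair, the equivalence in the statement holds trivially, and in fact \emph{every} Leonard pair with diameter one over an algebraically closed field is near-bipartite.

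There is no genuine obstacle here: the substantive work was carried out in Proposition~\ref{prop:d1}, Proposition~\ref{prop:d1preb}, and Lemma~\ref{lem:d1keyb}, and the present statement simply repackages those results under the algebraically closed hypothesis. If anything, the only subtlety worth flagging is making sure the reader sees \emph{why} the ``if and only if'' is not vacuous in spirit even though both sides turn out always to hold in this case—namely, that the genuine content lies one level down, in the characterization of Leonard-pairhood of $A,A^*$ and of $B,A^*$ in terms of $x_1$, $\th^*_0$, $\th^*_1$, $a_0$, $a_1$.
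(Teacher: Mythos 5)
Your proposal is correct and matches the paper's (implicit) argument: the paper states this proposition without a separate proof precisely because it is the specialization of Proposition \ref{prop:d1} to the algebraically closed case, which is exactly your single-citation argument. Your added remark—that Lemma \ref{lem:d1keyb} and Proposition \ref{prop:d1preb} show $B,A^*$ is automatically a Leonard pair, so every diameter-one Leonard pair over an algebraically closed field (with $\text{\rm Char}(\F)\neq 2$) is near-bipartite—is also accurate and consistent with the surrounding results.
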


\begin{prop}    \label{prop:d1converseb}    \samepage
\ifDRAFT {\rm prop:d1converseb}. \fi
Assume that $\F$ is algebraically closed, and
the pair $B,A^*$ is a Leonard pair over $\F$.
Then $A,A^*$ is a near-bipartite expansion of $B,A^*$ if and only if
$A,A^*$ is a Leonard pair over $\F$
if and only if $a_0$, $a_1$, $x_1$ satisfy \eqref{eq:d1cond}. 
\end{prop}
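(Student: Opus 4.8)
The plan is to reduce this to Proposition \ref{prop:d1converse}, exploiting that $\F$ is algebraically closed. Proposition \ref{prop:d1converse} already establishes, under the standing hypothesis that $B,A^*$ is a Leonard pair over $\F$, the equivalence of the three conditions: (i) $A,A^*$ is a near-bipartite expansion of $B,A^*$; (ii) $A,A^*$ is a Leonard pair over $\F$; (iii) $\th_0,\th_1\in\F$ and $a_0,a_1,x_1$ satisfy \eqref{eq:d1cond}. So the only new observation needed is that when $\F$ is algebraically closed the requirement $\th_0,\th_1\in\F$ in (iii) is vacuous: by construction $\th_0,\th_1$ are the roots of the characteristic polynomial $f_A(\lambda)$, which splits over $\F$. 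Hence condition (iii) collapses to ``$a_0,a_1,x_1$ satisfy \eqref{eq:d1cond}'', and the three conditions listed in Proposition \ref{prop:d1converse} become exactly the three conditions in the present statement.

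Equivalently, one can argue directly, which is instructive. The equivalence of the ``near-bipartite expansion'' and ``Leonard pair'' conditions is the argument of Proposition \ref{prop:d1converse}: one direction is immediate from Definition \ref{def:bipexpansion}, and for the other direction Lemma \ref{lem:Fai2} identifies the flat part of $A$ as $\mathrm{diag}(a_0,a_1)$, so that $A-F=B$, whereupon Lemma \ref{lem:nbippre} yields that $A,A^*$ is a near-bipartite expansion of the bipartite pair $B,A^*$. For the equivalence with \eqref{eq:d1cond}, invoke Lemma \ref{lem:d1keyb}: over an algebraically closed field $A,A^*$ is a Leonard pair iff $x_1\neq 0$, $\th^*_0\neq\th^*_1$, and \eqref{eq:d1cond} holds. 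But the hypothesis that $B,A^*$ is a Leonard pair already forces $x_1\neq 0$ and $\th^*_0\neq\th^*_1$ by Proposition \ref{prop:d1preb}, so only \eqref{eq:d1cond} remains to be required.

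I do not anticipate any genuine obstacle: the statement is a routine specialization of the already-proved Proposition \ref{prop:d1converse}, together with Lemma \ref{lem:d1keyb} and Proposition \ref{prop:d1preb}. The one point calling for a moment's care is to verify that the nondegeneracy conditions $x_1\neq 0$ and $\th^*_0\neq\th^*_1$ are supplied by the hypothesis on $B,A^*$, so that they do not need to be restated in the conclusion; this is precisely what Proposition \ref{prop:d1preb} provides.
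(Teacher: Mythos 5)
Your proposal is correct and matches the paper's (implicit) argument: the paper states Proposition \ref{prop:d1converseb} without proof precisely because it is the routine specialization of Proposition \ref{prop:d1converse} (together with Lemma \ref{lem:d1keyb} and Proposition \ref{prop:d1preb}) to the algebraically closed case, where the requirement $\th_0,\th_1\in\F$ is automatic since $f_A(\lambda)$ splits. Your additional direct verification via Lemma \ref{lem:Fai2}, Lemma \ref{lem:nbippre}, and the observation that $x_1\neq 0$ and $\th^*_0\neq\th^*_1$ are supplied by the hypothesis on $B,A^*$ is consistent with how the paper proved Proposition \ref{prop:d1converse} itself.
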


For the rest of this paper we assume that
$\F$ is algebraically closed.

\section{Leonard pairs with diameter two in TD/D form}
\label{sec:LPd2}
\ifDRAFT {\rm sec:LPd2}. \fi

In the previous section we described the near-bipartite Leonard pairs
with diameter one.
In the present section and the next,
we describe the near-bipartite Leonard pairs with diameter two.
Consider the following matrices in $\Mat_3(\F)$:
\begin{align*}
A &=
 \begin{pmatrix}
  a_0 & x_1  & 0\\
  1    & a_1 & x_2 \\
  0 & 1 & a_2
 \end{pmatrix},
&
A^* &=
 \begin{pmatrix}
  \th^*_0 & 0 & 0 \\
  0 & \th^*_1 & 0 \\
  0 & 0 & \th^*_2
 \end{pmatrix}.
\end{align*}
In this section we prove the following result.

\begin{prop}    \label{prop:d2key}    \samepage
\ifDRAFT {\rm prop:d2key}. \fi
The pair $A,A^*$ is Leonard pair over $\F$ if and only if
{\small
\begin{align}
& \qquad\qquad
 \th^*_i \neq \th^*_j   \quad (0 \leq i < j \leq 2),  
\qquad\qquad\qquad
x_i \neq 0   \quad (1 \leq i \leq 2),                              \label{eq:d2distinct}
\\
& \frac{x_1}{\th^*_2 - \th^*_1} + \frac{x_2}{\th^*_0 - \th^*_1}
 =
 \frac{a_0 - a_2} { (\th^*_0 - \th^*_2)^2}
  \bigg(
   a_0 (\th^*_0 - \th^*_1) + a_1 (\th^*_2 - \th^*_0)  + a_2 (\th^*_1 -\th^*_2) 
  \bigg),                                                               \label{eq:d2equat}
\\
& 0 \neq
 \frac{x_1} { (\th^*_2 - \th^*_1)^2} + \frac{x_2} { (\th^*_0 - \th^*_1)^2 }
  +  \frac {(a_0 - a_2)^2} { (\th^*_0 - \th^*_2)^2 },                          \label{eq:d2nonzero1}
\\
& 0 \neq  
\frac{ x_1 } { \th^*_2 - \th^*_1 } - \frac{ x_2 } { \th^*_0 - \th^*_1 }
 + \frac{ (a_0 - a_2)^2 } { 2 (\th^*_2 - \th^*_0)} 
  + \frac{ \big( a_0 (\th^*_0-\th^*_1) + a_1 (\th^*_2 - \th^*_0) + a_2 (\th^*_1 - \th^*_2)\big)^2}
           { 2 (\th^*_2 - \th^*_0)^3 }.
    \label{eq:d2nonzero2}
\end{align}
}
\end{prop}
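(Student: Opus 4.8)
The plan is to deduce Proposition \ref{prop:d2key} from the classification of parameter arrays (Lemma \ref{lem:classify}) by means of the TD/D formalism of Sections \ref{sec:TDDseq}--\ref{sec:TDDform}, and then to eliminate the auxiliary unknowns $\{\th_i\}_{i=0}^2$, $\{\vphi_i\}_{i=1}^2$, $\{\phi_i\}_{i=1}^2$. By Proposition \ref{prop:TDD2}, the pair $A,A^*$ is a Leonard pair over $\F$ if and only if $(\{a_i\}_{i=0}^2;\{x_i\}_{i=1}^2;\{\th^*_i\}_{i=0}^2)$ is a TD/D sequence over $\F$. By Lemma \ref{lem:aixiparam} with $d=2$, this holds if and only if there exist scalars $\th_0,\th_1,\th_2,\vphi_1,\vphi_2,\phi_1,\phi_2$ in $\F$ for which
\[
 (\{\th_i\}_{i=0}^2;\{\th^*_i\}_{i=0}^2;\{\vphi_i\}_{i=1}^2;\{\phi_i\}_{i=1}^2)
\]
is a parameter array over $\F$ and the relations \eqref{eq:a0param}--\eqref{eq:xiparam} hold. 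Since $d=2$, condition \ref{lem:classify}(v) is vacuous, so by Lemma \ref{lem:classify} the task becomes to decide, for the given data, when there exist such scalars with $\th_0,\th_1,\th_2$ pairwise distinct, with $\vphi_1,\vphi_2,\phi_1,\phi_2$ all nonzero, and with Lemma \ref{lem:classify}(iii),(iv) together with \eqref{eq:a0param}--\eqref{eq:xiparam} satisfied.

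I would carry out the elimination as follows. Equations \eqref{eq:a0param}, \eqref{eq:aiparam} (for $i=1$), and \eqref{eq:adparam} express $\th_0,\th_1,\th_2$ affinely in terms of $\vphi_1,\vphi_2$ and the $\th^*$'s, and then the identity $\sum_i\th_i=\sum_i a_i$ of Lemma \ref{lem:TDDsumai} holds automatically. The $i=1$ instance of Lemma \ref{lem:classify}(iii), equivalently the $i=1$ instance of Lemma \ref{lem:classify}(iv), expresses $\phi_1$ affinely in $\vphi_1,\vphi_2$, and the $i=2$ instance of Lemma \ref{lem:classify}(iv) does the same for $\phi_2$; the $i=2$ instance of Lemma \ref{lem:classify}(iii) then becomes a single affine relation between $\vphi_1$ and $\vphi_2$ whose $\vphi_2$-coefficient is $(\th^*_0-\th^*_2)/(\th^*_2-\th^*_1)$, nonzero by \eqref{eq:d2distinct}, so it may be solved, leaving $\vphi_1$ as the only free unknown. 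Substituting all of this into the two equations \eqref{eq:xiparam} with $i=1$ and $i=2$ yields two polynomial equations in $\vphi_1$; after clearing denominators, these have a common solution precisely when the identity \eqref{eq:d2equat} holds, and in that case the common solution recovers $\vphi_1,\vphi_2,\phi_1,\phi_2$ and hence $\th_0,\th_1,\th_2$. Finally, requiring that the sequence so produced really be a parameter array --- that is, $\th^*_0,\th^*_1,\th^*_2$ pairwise distinct, $\vphi_i,\phi_i\neq0$ (equivalently, by \eqref{eq:xiparam}, $x_i\neq0$), and $\th_0,\th_1,\th_2$ pairwise distinct --- unwinds to \eqref{eq:d2distinct}, to the condition $\th_0\neq\th_2$ (which becomes \eqref{eq:d2nonzero1}), and to the condition $\th_1\notin\{\th_0,\th_2\}$ (which becomes \eqref{eq:d2nonzero2}). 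The converse, that a Leonard pair $A,A^*$ forces \eqref{eq:d2distinct}--\eqref{eq:d2nonzero2}, is obtained by running the same computation backwards from a parameter array of $A,A^*$, with \eqref{eq:d2distinct} supplied by Lemmas \ref{lem:LPmultfree} and \ref{lem:irred}.

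The step I expect to be the main obstacle is the elimination just described: it is a long but elementary manipulation of rational functions in $\{a_i\}$, $\{x_i\}$, $\{\th^*_i\}$, and the delicate point is to present the outcome as precisely the four conditions \eqref{eq:d2distinct}--\eqref{eq:d2nonzero2} rather than a redundant or differently normalized variant; this needs careful bookkeeping of which non-vanishing constraints are already forced by $x_i\neq0$ and by the distinctness of the $\th^*_i$, and which must be recorded separately. A convenient check throughout is invariance under the reversal $a_i\mapsto a_{2-i}$, $x_i\mapsto x_{3-i}$, $\th^*_i\mapsto\th^*_{2-i}$ supplied by Lemma \ref{lem:TDDrel}: both sides of \eqref{eq:d2equat}, and the expression appearing in \eqref{eq:d2nonzero1}, are fixed by this substitution, whereas the expression appearing in \eqref{eq:d2nonzero2} merely changes sign, so the full system of conditions is preserved.
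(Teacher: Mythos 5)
Your reduction via Proposition \ref{prop:TDD2}, Lemma \ref{lem:aixiparam} and Lemma \ref{lem:classify} is the same starting point as the paper's, and the linear bookkeeping you describe is accurate (the three $a_i$-relations make $\th_0,\th_1,\th_2$ affine in $\vphi_1,\vphi_2$; condition (iii) at $i=1$ and (iv) at $i=2$ give $\phi_1,\phi_2$; condition (iii) at $i=2$ gives an affine relation whose $\vphi_2$-coefficient is indeed $(\th^*_0-\th^*_2)/(\th^*_2-\th^*_1)$). But the argument stops exactly where the content of the proposition lies: the claims that the two remaining equations \eqref{eq:xiparam} for $i=1,2$, viewed as quadratics in $\vphi_1$, ``have a common solution precisely when \eqref{eq:d2equat} holds,'' and that the distinctness of $\th_0,\th_1,\th_2$ ``unwinds to'' \eqref{eq:d2nonzero1} and \eqref{eq:d2nonzero2}, are asserted with no computation or argument. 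These equivalences are the whole theorem; nothing in your setup rules out, for example, that the compatibility (resultant) condition of the two quadratics acquires extraneous factors, so that its vanishing is equivalent to \eqref{eq:d2equat} only after invoking the open conditions, nor does it address the degenerate cases where leading coefficients in $\vphi_1$ vanish, which affect both directions of the equivalence. You yourself flag this elimination as the main obstacle, and a telling sign that it was not carried out is that the pairing of the inequalities with the eigenvalue conditions is reversed: in fact \eqref{eq:d2nonzero1} is equivalent to $(\th_1-\th_0)(\th_1-\th_2)\neq 0$ and \eqref{eq:d2nonzero2} to $\th_0\neq\th_2$, not the other way around.

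For contrast, the paper closes this gap by eliminating toward the eigenvalues rather than toward $\vphi_1$: it derives the explicit formula \eqref{eq:d2th1} for $\th_1$, then (via the characteristic polynomial of $A$) the formulas \eqref{eq:d2th0+th2}, \eqref{eq:d2th0th2} for $\th_0+\th_2$ and $\th_0\th_2$, from which \eqref{eq:d2equat} and the two expressions whose nonvanishing is \eqref{eq:d2nonzero1}, \eqref{eq:d2nonzero2} fall out (Lemmas \ref{lem:d2equat}, \ref{lem:d2difference}). In the converse direction it uses algebraic closure to choose $\th_0,\th_2$ with the prescribed sum and product, defines $\vphi_i,\phi_i$ by the explicit formulas \eqref{eq:d2defvphi}, \eqref{eq:d2defphi}, and then verifies Lemma \ref{lem:classify}(i)--(v) and \eqref{eq:a0param}--\eqref{eq:xiparam} directly before applying Proposition \ref{prop:TDD2}. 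If you wish to keep your scheme you must actually compute the two quadratics in $\vphi_1$ and show their compatibility condition equals \eqref{eq:d2equat} up to factors nonzero under \eqref{eq:d2distinct}, and likewise compute $(\th_1-\th_0)(\th_1-\th_2)$ and $(\th_0-\th_2)^2$ in terms of the data; the paper's parametrization by the $\th_i$ is precisely a way of organizing that computation which avoids the quadratic-in-$\vphi_1$ issue altogether.
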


\begin{lemma}    \label{lem:d2th1}    \samepage
\ifDRAFT {\rm lem:d2th1}. \fi
Assume that $A,A^*$ is a Leonard pair over $\F$,
and $\{\th_i\}_{i=0}^2$ is a standard ordering of the eigenvalues of $A$.
Then
\begin{equation}
\th_1 = \frac{ a_0 (\th^*_0 - \th^*_1) + a_2 (\th^*_1 - \th^*_2) }
                  { \th^*_0 - \th^*_2 }.                    \label{eq:d2th1}
\end{equation}
\end{lemma}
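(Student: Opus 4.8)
The plan is to pass from the given TD/D sequence of $A,A^*$ to a corresponding parameter array, read off the first two split sequences in closed form (this is short for $d=2$), and then play these off against the characteristic polynomial of the $3\times 3$ tridiagonal matrix $A$.

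First I would use Lemma \ref{lem:aixiparam}: since $A,A^*$ is a Leonard pair, the sequence $(\{a_i\}_{i=0}^2;\{x_i\}_{i=1}^2;\{\th^*_i\}_{i=0}^2)$ is a TD/D sequence over $\F$, so there is a parameter array $(\{\th_i\}_{i=0}^2;\{\th^*_i\}_{i=0}^2;\{\vphi_i\}_{i=1}^2;\{\phi_i\}_{i=1}^2)$ corresponding to it. By Lemma \ref{lem:correspond}(ii) the only standard orderings of the eigenvalues of $A$ are such a sequence and its reversal, and these two share the middle entry $\th_1$; so there is no loss in taking the $\{\th_i\}_{i=0}^2$ of the statement to be the one coming from this parameter array. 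From \eqref{eq:a0param} I then get $\vphi_1=(a_0-\th_0)(\th^*_0-\th^*_1)$, and from \eqref{eq:a0paramb} of Lemma \ref{lem:aixiparam2} applied to the same parameter array I get $\phi_1=(a_0-\th_2)(\th^*_0-\th^*_1)$.

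Next I would substitute these into \eqref{eq:xiparam} for $i=1$. Evaluating the relevant polynomials ($\tau^*_0\equiv 1$, together with $\tau^*_1$, $\eta^*_1$, $\eta^*_2$ at the required points) is a short computation, and after the factors $(\th^*_0-\th^*_1)^2$ cancel it collapses to $x_1=(a_0-\th_0)(a_0-\th_2)\,\tfrac{\th^*_2-\th^*_1}{\th^*_0-\th^*_2}$. Independently, expanding $\det(a_0I-A)$ along its first column shows that the characteristic polynomial $f_A$ of $A$ satisfies $f_A(a_0)=-x_1(a_0-a_2)$; since $f_A(\lambda)=(\lambda-\th_0)(\lambda-\th_1)(\lambda-\th_2)$, this reads $(a_0-\th_0)(a_0-\th_1)(a_0-\th_2)=-x_1(a_0-a_2)$.

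Finally I would multiply the displayed formula for $x_1$ by $(a_0-\th_1)$ and use the characteristic-polynomial identity to rewrite the left-hand side, obtaining $x_1(a_0-\th_1)=-x_1(a_0-a_2)\,\tfrac{\th^*_2-\th^*_1}{\th^*_0-\th^*_2}$; cancelling the nonzero scalar $x_1$ (nonzero by Proposition \ref{prop:TDDseq}) and rearranging over the common denominator $\th^*_0-\th^*_2$ yields exactly \eqref{eq:d2th1}. I would order the steps this way on purpose, so as never to divide by $a_0-\th_1$, which a priori might vanish. I expect the only real difficulty to be bookkeeping: matching the eigenvalue ordering of the parameter array produced by Lemmas \ref{lem:aixiparam}--\ref{lem:aixiparam2} to the standard ordering named in the statement, evaluating the $\tau^*$/$\eta^*$ products correctly at $d=2$, and carrying out the $3\times 3$ determinant; once those are in place the identity is forced by pure algebra.
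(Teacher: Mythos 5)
Your proof is correct, but it takes a genuinely different route from the paper's. The paper's proof is a two-line affair: it invokes Lemma \ref{lem:classify}(iii) at $i=2$, which for $d=2$ reads $\vphi_2=\phi_1+(\th^*_2-\th^*_0)(\th_1-\th_2)$, then substitutes $\vphi_2=(a_2-\th_2)(\th^*_2-\th^*_1)$ from \eqref{eq:adparam} and $\phi_1=(a_0-\th_2)(\th^*_0-\th^*_1)$ from \eqref{eq:a0paramb}, and solves the resulting linear equation for $\th_1$. You instead compute $\vphi_1\phi_1$ from \eqref{eq:a0param}, \eqref{eq:a0paramb}, feed it into \eqref{eq:xiparam} at $i=1$ to get $x_1=(a_0-\th_0)(a_0-\th_2)\,\frac{\th^*_2-\th^*_1}{\th^*_0-\th^*_2}$, and combine this with the characteristic-polynomial identity $(a_0-\th_0)(a_0-\th_1)(a_0-\th_2)=f_A(a_0)=-x_1(a_0-a_2)$, cancelling $x_1\neq 0$. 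I checked the $\tau^*/\eta^*$ evaluations at $d=2$, $i=1$, the determinant expansion, and the final rearrangement; they all come out right, and your device of multiplying by $(a_0-\th_1)$ before dividing by $x_1$ correctly avoids any division by a quantity that might vanish. Your handling of the eigenvalue-ordering ambiguity (both standard orderings share the middle entry $\th_1$) is also fine, though the fact that there are exactly two standard orderings comes from the paragraph below Lemma \ref{lem:LPmultfree} rather than from Lemma \ref{lem:correspond}(ii) alone; and, like the paper, you use that the \emph{same} parameter array satisfies both \eqref{eq:a0param}--\eqref{eq:xiparam} and \eqref{eq:a0paramb}--\eqref{eq:xiparamb}, which is how the paper itself reads Lemmas \ref{lem:aixiparam} and \ref{lem:aixiparam2}. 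What each approach buys: the paper's is shorter and purely linear in $\th_1$, with nothing to cancel; yours is longer but derives along the way the explicit formula for $x_1$ that the paper has to rederive anyway in the proof of Lemma \ref{lem:d2equat}, so it would mesh naturally with the subsequent lemmas of that section.
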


\begin{proof}
By Lemma \ref{lem:classify}(iii),
\[
 \vphi_2 = \phi_1 + (\th^*_2 - \th^*_0)(\th_1 - \th_2).    \label{eq:d2vphi2}
\]
In this equation, eliminate $\vphi_2$ and $\phi_1$ 
using \eqref{eq:adparam} and \eqref{eq:a0paramb},
and solve the result in $\th_1$.
\end{proof}

\begin{lemma}    \label{lem:d2equat}    \samepage
\ifDRAFT {\rm lem:d2equat}. \fi
Assume that $A,A^*$ is a Leonard pair over $\F$.
Then \eqref{eq:d2equat} holds.
\end{lemma}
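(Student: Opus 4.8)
\textbf{Proof proposal for Lemma \ref{lem:d2equat}.}

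The plan is to extract equation \eqref{eq:d2equat} by comparing the two split-sequence descriptions of the TD/D sequence of $A,A^*$, much as Lemma \ref{lem:d2th1} was obtained. Since $A,A^*$ is a Leonard pair over $\F$ with $d=2$, Proposition \ref{prop:TDD2} tells us that $(\{a_i\}_{i=0}^2; \{x_i\}_{i=1}^2; \{\th^*_i\}_{i=0}^2)$ is a TD/D sequence. Fix a standard ordering $\{\th_i\}_{i=0}^2$ of the eigenvalues of $A$ and let $(\{\th_i\}_{i=0}^2; \{\th^*_i\}_{i=0}^2; \{\vphi_i\}_{i=1}^2; \{\phi_i\}_{i=1}^2)$ be the corresponding parameter array (it corresponds to the TD/D sequence by Lemma \ref{lem:aixiparam}). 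The two sets of relations \eqref{eq:a0param}--\eqref{eq:adparam} and \eqref{eq:a0paramb}--\eqref{eq:adparamb} now give, for $d=2$, a system: from Lemma \ref{lem:aixiparam} one has $a_0 = \th_0 + \vphi_1/(\th^*_0-\th^*_1)$, $a_1 = \th_1 + \vphi_1/(\th^*_1-\th^*_0) + \vphi_2/(\th^*_1-\th^*_2)$, $a_2 = \th_2 + \vphi_2/(\th^*_2-\th^*_1)$, and from Lemma \ref{lem:aixiparam2} one has $a_0 = \th_2 + \phi_1/(\th^*_0-\th^*_1)$, $a_2 = \th_0 + \phi_2/(\th^*_2-\th^*_1)$.

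Next I would solve this system for the split-sequence entries in terms of the TD/D data. From $a_0$ in its two forms one gets $\vphi_1 = (a_0-\th_0)(\th^*_0-\th^*_1)$ and $\phi_1 = (a_0-\th_2)(\th^*_0-\th^*_1)$; from $a_2$ one gets $\vphi_2 = (a_2-\th_2)(\th^*_2-\th^*_1)$ and $\phi_2 = (a_2-\th_0)(\th^*_2-\th^*_1)$. Meanwhile \eqref{eq:xiparam} (or \eqref{eq:xiparamb}) for $d=2$ expresses $x_1$ and $x_2$ in terms of $\vphi_i\phi_i$ and the $\tau^*,\eta^*$ factors; for $d=2$ these factors are explicit simple products in the $\th^*_i$, so $x_1$ and $x_2$ become explicit in terms of $\vphi_1\phi_1$, $\vphi_2\phi_2$ and the $\th^*_i$. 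Substituting the above expressions for $\vphi_i$, $\phi_i$, together with the value of $\th_1$ from Lemma \ref{lem:d2th1} and the relation $\th_0+\th_1+\th_2 = a_0+a_1+a_2$ from Lemma \ref{lem:TDDsumai} (which lets me eliminate $\th_0+\th_2$), should turn the definitions of $x_1$ and $x_2$ into two equations in the TD/D data plus the single unknown combination $\th_0\th_2$ (or equivalently $\th_0-\th_2$). Eliminating that remaining unknown between the two equations — or, more cleanly, forming the particular linear combination $x_1/(\th^*_2-\th^*_1) + x_2/(\th^*_0-\th^*_1)$ that appears on the left of \eqref{eq:d2equat} — yields the claimed identity, where the right-hand side factor $a_0(\th^*_0-\th^*_1)+a_1(\th^*_2-\th^*_0)+a_2(\th^*_1-\th^*_2)$ is recognized as $(\th^*_0-\th^*_2)\th_1$ via \eqref{eq:d2th1}.

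The main obstacle is purely computational: one must carry through the substitution and the elimination of $\th_0\th_2$ (or $\th_0-\th_2$) without sign errors, keeping track of the several denominators $\th^*_i-\th^*_j$ and their squares. A helpful organizing device is to introduce $\th_1$ via Lemma \ref{lem:d2th1} at the outset, so that the quantity in big parentheses on the right of \eqref{eq:d2equat} is handled symbolically as $(\th^*_0-\th^*_2)\th_1$ rather than expanded; and to use Lemma \ref{lem:TDDsumai} to replace $\th_0+\th_2$ by $a_0+a_1+a_2-\th_1$ everywhere. With those two reductions in place, the left side $x_1/(\th^*_2-\th^*_1)+x_2/(\th^*_0-\th^*_1)$ collapses, after clearing denominators, to $\frac{a_0-a_2}{(\th^*_0-\th^*_2)^2}$ times $(\th^*_0-\th^*_2)\th_1$, which is exactly \eqref{eq:d2equat}. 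I expect the identity to be an algebraic consequence of \eqref{eq:xiparam} for the two indices $i=1,2$ together with the determination of $\th_1$, so no further appeal to Lemma \ref{lem:classify}(v) is needed (that condition is vacuous at $d=2$).
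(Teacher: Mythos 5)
Your proposal follows essentially the same route as the paper's proof: express $\vphi_1,\phi_1,\vphi_2,\phi_2$ from \eqref{eq:a0param}, \eqref{eq:adparam}, \eqref{eq:a0paramb}, \eqref{eq:adparamb}, substitute into \eqref{eq:xiparam} for $i=1,2$, form the combination on the left of \eqref{eq:d2equat} (in which $\th_0\th_2$ indeed cancels, so no separate elimination is needed), and finish with the trace relation and \eqref{eq:d2th1}. One small correction to your bookkeeping: by \eqref{eq:d2th1} the bracketed factor on the right of \eqref{eq:d2equat} equals $(\th^*_0-\th^*_2)(\th_1-a_1)$, not $(\th^*_0-\th^*_2)\th_1$; with that adjustment the combination reduces to $\frac{(a_0-a_2)(a_0+a_2-\th_0-\th_2)}{\th^*_0-\th^*_2}=\frac{(a_0-a_2)(\th_1-a_1)}{\th^*_0-\th^*_2}$ and the identity follows exactly as in the paper.
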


\begin{proof}
By \eqref{eq:xiparam},
\begin{align*}
x_1 &= - \vphi_1 \phi_1
          \frac{ \th^*_1 - \th^*_2 } { (\th^*_0 - \th^*_1)^2 (\th^*_0 - \th^*_2) },
&
x_2 &= - \vphi_2 \phi_2
         \frac{ \th^*_0 - \th^*_1 } { (\th^*_0 - \th^*_2) (\th^*_1- \th^*_2)^2 }.
\end{align*}
In these equations, eliminate $\vphi_1$, $\vphi_2$, $\phi_1$, $\phi_2$
using \eqref{eq:a0param}, \eqref{eq:adparam}, \eqref{eq:a0paramb}, \eqref{eq:adparamb}.
This gives 
\begin{align*}
x_1 &= - \frac{(a_0 - \th_0) (a_0 - \th_2) (\th^*_1 - \th^*_2) }
                   { \th^*_0 - \th^*_2},  
&
x_2 &= - \frac{(a_2 - \th_0)(a_2 - \th_2) (\th^*_0 - \th^*_1) }
                 { \th^*_0 - \th^*_2 }. 
\end{align*}
So
\begin{equation}
\frac{x_1}{\th^*_1-\th^*_2} - \frac{x_2}{\th^*_0-\th^*_1}
= - \frac{(a_0 - \th_0)(a_0 - \th_2) - (a_2 - \th_0)(a_2 - \th_2) }
             {\th^*_0 - \th^*_2}.                             \label{eq:d2x1x2aux}
\end{equation}
Considering the trace of $A$,
\[
 \th_0 + \th_1 + \th_2 = a_0 + a_1 + a_2.
\]
Using this and \eqref{eq:d2th1}, one finds that
\begin{align*}
&(a_0 - \th_0)(a_0 - \th_2) - (a_2 - \th_0)(a_2 - \th_2)
\\
& \qquad\qquad
 = \frac{ (a_0 - a_2) \big( a_0 (\th^*_0 - \th^*_1)+a_1(\th^*_2-\th^*_0)+a_2(\th^*_1-\th^*_2) \big) }
          { \th^*_0 - \th^*_2 }.          
\end{align*}
By this and \eqref{eq:d2x1x2aux} we get \eqref{eq:d2equat}.
\end{proof}

\begin{lemma}    \label{lem:d2th0th2}    \samepage
\ifDRAFT {\rm lem:d2th0th2}. \fi
Assume that $A,A^*$ is a Leonard pair over $\F$.
Let $\{\th_i\}_{i=0}^2$ denote a standard ordering of the eigenvalues of $A$.
Then
\begin{align}
\th_0 + \th_2 
 &= \frac{ a_0 (\th^*_1 - \th^*_2) + a_1 (\th^*_0 - \th^*_2) + a_2 (\th^*_0 - \th^*_1) }
          { \th^*_0 - \th^*_2},                                 \label{eq:d2th0+th2}
\\
\th_0 \th_2 &=
\frac{a_1 \big( a_0(\th^*_1-\th^*_2) + a_2(\th^*_0 - \th^*_1) \big) } {\th^*_0 - \th^*_2}
+ \frac{ (a_0 - a_2)^2(\th^*_1 - \th^*_0)(\th^*_1-\th^*_2) } {(\th^*_0-\th^*_2)^2}
- x_1 - x_2.                                   \label{eq:d2th0th2}
\end{align}
\end{lemma}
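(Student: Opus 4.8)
The plan is to extract both formulas from the data already assembled for the diameter-two case, exactly as in the proof of Lemma~\ref{lem:d2equat}. First I would record the two standard facts about the roots $\th_0,\th_2$: by considering $\mathrm{tr}(A)$ we have $\th_0+\th_1+\th_2 = a_0+a_1+a_2$, and by Lemma~\ref{lem:d2th1} we have the closed form for $\th_1$. Subtracting the latter from the former gives
\[
\th_0 + \th_2 = a_0 + a_1 + a_2 - \frac{a_0(\th^*_0-\th^*_1)+a_2(\th^*_1-\th^*_2)}{\th^*_0-\th^*_2}.
\]
Putting the right-hand side over the common denominator $\th^*_0-\th^*_2$ and simplifying the numerator — the terms $a_0(\th^*_0-\th^*_2)-a_0(\th^*_0-\th^*_1)=a_0(\th^*_1-\th^*_2)$ and $a_2(\th^*_0-\th^*_2)-a_2(\th^*_1-\th^*_2)=a_2(\th^*_0-\th^*_1)$ combine cleanly — yields \eqref{eq:d2th0+th2}. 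This part is short and purely algebraic.

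For $\th_0\th_2$ I would work from the factored expressions for $x_1$ and $x_2$ obtained inside the proof of Lemma~\ref{lem:d2equat}, namely
\[
x_1 = -\frac{(a_0-\th_0)(a_0-\th_2)(\th^*_1-\th^*_2)}{\th^*_0-\th^*_2},
\qquad
x_2 = -\frac{(a_2-\th_0)(a_2-\th_2)(\th^*_0-\th^*_1)}{\th^*_0-\th^*_2}.
\]
Expanding $(a_0-\th_0)(a_0-\th_2) = a_0^2 - a_0(\th_0+\th_2) + \th_0\th_2$ and likewise for $a_2$, I can solve each of these for $\th_0\th_2$; more symmetrically, I would form a suitable combination of the two equations so that the $a_0^2$ and $a_2^2$ and the $a_0(\th_0+\th_2)$, $a_2(\th_0+\th_2)$ contributions are controlled, substitute the value of $\th_0+\th_2$ just found from \eqref{eq:d2th0+th2}, and isolate $\th_0\th_2$. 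Alternatively, add the two displayed equations to get $x_1+x_2$ expressed via $\th_0+\th_2$ and $\th_0\th_2$, then invoke \eqref{eq:d2th0+th2} and solve the resulting linear equation in $\th_0\th_2$; this route makes the appearance of $-x_1-x_2$ on the right-hand side of \eqref{eq:d2th0th2} transparent.

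The main obstacle is purely bookkeeping: after the substitutions one is left with a rational expression in $a_0,a_1,a_2,\th^*_0,\th^*_1,\th^*_2$ that must be shown to collapse to the stated form with denominators $\th^*_0-\th^*_2$ and $(\th^*_0-\th^*_2)^2$. The cross-term $(a_0-a_2)^2(\th^*_1-\th^*_0)(\th^*_1-\th^*_2)/(\th^*_0-\th^*_2)^2$ is the delicate piece; I expect it to emerge from the difference of the two expanded quadratics in $a_0$ and $a_2$ once $\th_0+\th_2$ is eliminated, since $a_0^2-a_2^2$ and $(a_0-a_2)(\th_0+\th_2)$ recombine into $(a_0-a_2)$ times a factor that, after using \eqref{eq:d2th0+th2}, contains exactly an $(a_0-a_2)$ multiple. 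No new ideas are needed beyond careful algebra, so the proof can reasonably be presented as: "Considering the trace of $A$ and using \eqref{eq:d2th1} we obtain \eqref{eq:d2th0+th2}. Combining the expressions for $x_1$, $x_2$ displayed in the proof of Lemma~\ref{lem:d2equat} with \eqref{eq:d2th0+th2} and solving for $\th_0\th_2$ gives \eqref{eq:d2th0th2}."
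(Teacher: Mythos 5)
Your proposal is correct, and the two displays do follow from the facts you cite; the difference with the paper lies in how the product $\th_0\th_2$ is obtained. For \eqref{eq:d2th0+th2} you and the paper do the same thing: the trace relation $\th_0+\th_1+\th_2=a_0+a_1+a_2$ (in the paper this is the $\lambda^2$-coefficient of the characteristic polynomial) combined with \eqref{eq:d2th1}. For \eqref{eq:d2th0th2} the paper stays with the characteristic polynomial: it compares the $\lambda$-coefficient to get $\th_0\th_1+\th_0\th_2+\th_1\th_2=a_0a_1+a_0a_2+a_1a_2-x_1-x_2$, writes $\th_0\th_2$ as this quantity minus $\th_1(\th_0+\th_2)$, eliminates $\th_1$ via \eqref{eq:d2th1}, and then invokes \eqref{eq:d2equat} to compress the result. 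You instead reuse the factored identities $x_1=-(a_0-\th_0)(a_0-\th_2)(\th^*_1-\th^*_2)/(\th^*_0-\th^*_2)$ and $x_2=-(a_2-\th_0)(a_2-\th_2)(\th^*_0-\th^*_1)/(\th^*_0-\th^*_2)$ from the proof of Lemma \ref{lem:d2equat}; your ``add the two equations'' variant is the one that works cleanly, since $(\th^*_1-\th^*_2)+(\th^*_0-\th^*_1)=\th^*_0-\th^*_2$ makes $\th_0\th_2$ appear with coefficient $-1$, and after substituting \eqref{eq:d2th0+th2} the remaining terms collapse exactly to $a_1\big(a_0(\th^*_1-\th^*_2)+a_2(\th^*_0-\th^*_1)\big)/(\th^*_0-\th^*_2)-(a_0-a_2)^2(\th^*_0-\th^*_1)(\th^*_1-\th^*_2)/(\th^*_0-\th^*_2)^2$, which is the stated right-hand side; notably this route does not need \eqref{eq:d2equat} at all (your other suggestion, solving a single factored equation for $\th_0\th_2$, would need it to symmetrize, so the summed version is the one to present). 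What each approach buys: the paper's argument is uniform (everything read off one characteristic polynomial, with \eqref{eq:d2equat} absorbing the bookkeeping), while yours makes the provenance of the $-x_1-x_2$ term transparent and uses one fewer auxiliary relation, at the cost of citing intermediate displays from another proof.
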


\begin{proof}
Let $f(\lambda)$ denote the characteristic polynomial of $A$.
Then
\begin{align*}
f(\lambda) = \lambda^3 - (a_0 + a_1 + a_2) \lambda^2
 &+ (a_0 a_1 + a_0 a_2 + a_1 a_2 - x_1 - x_2) \lambda
\\ &
 + a_0 x_2 + a_2 x_1 - a_0 a_1 a_2.
\end{align*}
We have
$f(\lambda) = (\lambda-\th_0)(\lambda-\th_1)(\lambda-\th_2)$.
Comparing the previous two equations, we obtain
\begin{align}
 \th_0 + \th_2 &= a_0 + a_1 + a_2 - \th_1,    \label{eq:d2beta}
\\
 \th_0 \th_2 &= a_0 a_1 + a_0 a_2 + a_1 a_2 - x_1 - x_2 - \th_1(\th_0 + \th_2).  \label{eq:d2gamma}
\end{align}
In \eqref{eq:d2beta}, eliminate $\th_1$ using \eqref{eq:d2th1} to get \eqref{eq:d2th0+th2}.
In \eqref{eq:d2gamma}, eliminate $\th_1$ using \eqref{eq:d2th1}, and simplify
the result using \eqref{eq:d2equat} to get \eqref{eq:d2th0th2}.
\end{proof}

\begin{lemma}    \label{lem:d2difference}    \samepage
\ifDRAFT {\rm lem:d2difference}. \fi
Assume that $A,A^*$ is a Leonard pair over $\F$.
Let $\{\th_i\}_{i=0}^2$ denote a standard ordering of the
eigenvalues of $A$.
Then
{\small
\begin{align*}
& \frac{ (\th_1 - \th_0)(\th_1 - \th_2)}
       { (\th^*_1 - \th^*_0)(\th^*_1 - \th^*_2) }
  = \frac{x_1} { (\th^*_2 - \th^*_1)^2} + \frac{x_2} { (\th^*_0 - \th^*_1)^2 }
  +  \frac {(a_0 - a_2)^2} { (\th^*_0 - \th^*_2)^2 },
\\
& \frac{ (\th_0 - \th_2)^2 } { 2(\th^*_2 - \th^*_0) }
  = \frac{ x_1 } { \th^*_2 - \th^*_1 } - \frac{ x_2 } { \th^*_0 - \th^*_1 }
 + \frac{ (a_0 - a_2)^2 } { 2 (\th^*_2 - \th^*_0)} 
  + \frac{ \big( a_0 (\th^*_0-\th^*_1) + a_1 (\th^*_2 - \th^*_0) + a_2 (\th^*_1 - \th^*_2)\big)^2}
           { 2 (\th^*_2 - \th^*_0)^3 }.
\end{align*}
}
\end{lemma}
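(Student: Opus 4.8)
The plan is to expand each left-hand side in terms of the elementary symmetric functions of the standard-ordered eigenvalues $\th_0,\th_1,\th_2$ of $A$, using
\[
 (\th_1-\th_0)(\th_1-\th_2) = \th_1^2 - \th_1(\th_0+\th_2) + \th_0\th_2,
 \qquad
 (\th_0-\th_2)^2 = (\th_0+\th_2)^2 - 4\,\th_0\th_2,
\]
and to rewrite each right-hand side using the two expressions for $x_1$ and $x_2$ that appear in the proof of Lemma \ref{lem:d2equat} (and which follow from \eqref{eq:xiparam} together with \eqref{eq:a0param}, \eqref{eq:adparam}, \eqref{eq:a0paramb}, \eqref{eq:adparamb}):
\[
 x_1 = -\frac{(a_0-\th_0)(a_0-\th_2)(\th^*_1-\th^*_2)}{\th^*_0-\th^*_2},
 \qquad
 x_2 = -\frac{(a_2-\th_0)(a_2-\th_2)(\th^*_0-\th^*_1)}{\th^*_0-\th^*_2}.
\]
After these substitutions both asserted identities become rational expressions in $\{a_i\}$, $\{\th^*_i\}$, $\{\th_i\}$ whose common denominator is a power of $\th^*_0-\th^*_2$, and each reduces to a polynomial identity that I would verify using only Lemma \ref{lem:d2th1} (the formula for $\th_1$) and the trace relation $\th_0+\th_1+\th_2 = a_0+a_1+a_2$. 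Note in particular that Lemma \ref{lem:d2th0th2} itself is not needed: the $\th_0\th_2$ contributions will cancel on their own.

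For the first identity, I would use $(a_j-\th_0)(a_j-\th_2) = a_j^2 - a_j(\th_0+\th_2) + \th_0\th_2$ for $j\in\{0,2\}$. Substituting the displayed expressions for $x_1,x_2$ into the right-hand side and collecting terms, the contributions involving $\th_0\th_2$ and $\th_0+\th_2$ cancel against those in the expansion of $(\th_1-\th_0)(\th_1-\th_2)$; the cancellation of the $\th_0+\th_2$ terms uses $\th_1(\th^*_0-\th^*_2) = a_0(\th^*_0-\th^*_1)+a_2(\th^*_1-\th^*_2)$ from Lemma \ref{lem:d2th1}. What remains, after clearing the common denominator, is a polynomial identity in $a_0,a_2,\th^*_0,\th^*_1,\th^*_2$ equivalent to squaring the formula for $\th_1$ in Lemma \ref{lem:d2th1}, which I would check by direct expansion.

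For the second identity, the two key observations are
\[
 \frac{x_1}{\th^*_2-\th^*_1} - \frac{x_2}{\th^*_0-\th^*_1}
  = \frac{(a_0-\th_0)(a_0-\th_2) + (a_2-\th_0)(a_2-\th_2)}{\th^*_0-\th^*_2},
\]
obtained directly from the displayed $x_i$-formulas, and
\[
 a_0(\th^*_0-\th^*_1) + a_1(\th^*_2-\th^*_0) + a_2(\th^*_1-\th^*_2) = (\th_1 - a_1)(\th^*_0-\th^*_2),
\]
which follows from Lemma \ref{lem:d2th1}; the latter shows that the final summand on the right-hand side of the identity equals $-\frac{(\th_1-a_1)^2}{2(\th^*_0-\th^*_2)}$. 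Assembling all terms over the common denominator $2(\th^*_0-\th^*_2)$ and again using $(a_j-\th_0)(a_j-\th_2) = a_j^2 - a_j(\th_0+\th_2) + \th_0\th_2$, the claim collapses to $(\th_0+\th_2-a_0-a_2)^2 = (\th_1-a_1)^2$, which is immediate from $\th_0+\th_1+\th_2 = a_0+a_1+a_2$.

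I expect the only real obstacle to be bookkeeping: there is no conceptual difficulty. The step that needs a little care is recognising the role of Lemma \ref{lem:d2th1}, both in the cancellation that collapses the first identity and in the rewriting of the quadratic term $\bigl(a_0(\th^*_0-\th^*_1)+a_1(\th^*_2-\th^*_0)+a_2(\th^*_1-\th^*_2)\bigr)^2$ appearing in the second.
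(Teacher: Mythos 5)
Your proposal is correct, and it takes a genuinely (if modestly) different route from the paper. The paper's proof starts from the same two expansions $(\th_1-\th_0)(\th_1-\th_2)=\th_1^2-(\th_0+\th_2)\th_1+\th_0\th_2$ and $(\th_0-\th_2)^2=(\th_0+\th_2)^2-4\th_0\th_2$, but then eliminates $\th_1$, $\th_0+\th_2$, $\th_0\th_2$ using \eqref{eq:d2th1}, \eqref{eq:d2th0+th2}, \eqref{eq:d2th0th2} and simplifies with \eqref{eq:d2equat}; in other words it rewrites the left-hand sides entirely in terms of $a_i$, $x_i$, $\th^*_i$, and so it needs Lemma \ref{lem:d2th0th2} as input. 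You go the other way: you substitute for $x_1$, $x_2$ the factored expressions $x_1=-(a_0-\th_0)(a_0-\th_2)(\th^*_1-\th^*_2)/(\th^*_0-\th^*_2)$ and $x_2=-(a_2-\th_0)(a_2-\th_2)(\th^*_0-\th^*_1)/(\th^*_0-\th^*_2)$ (the intermediate identities from the proof of Lemma \ref{lem:d2equat}, available via Lemmas \ref{lem:aixiparam} and \ref{lem:aixiparam2}), so that only Lemma \ref{lem:d2th1} and the trace relation $\th_0+\th_1+\th_2=a_0+a_1+a_2$ remain to be used; Lemma \ref{lem:d2th0th2} and the relation \eqref{eq:d2equat} are never invoked. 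I verified your two reductions: for the first identity the $\th_0\th_2$ coefficients on the two sides agree outright, the $\th_0+\th_2$ terms cancel via $\th_1(\th^*_0-\th^*_2)=a_0(\th^*_0-\th^*_1)+a_2(\th^*_1-\th^*_2)$, and the residue is the square of \eqref{eq:d2th1}; for the second, clearing the factor $2(\th^*_2-\th^*_0)$ leaves $(\th_0-\th_2)^2=-2\big((a_0-\th_0)(a_0-\th_2)+(a_2-\th_0)(a_2-\th_2)\big)+(a_0-a_2)^2+(\th_1-a_1)^2$, which collapses to $(\th_0+\th_2-a_0-a_2)^2=(\th_1-a_1)^2$, immediate from the trace relation. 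What your route buys is a leaner dependency list (no Lemma \ref{lem:d2th0th2}, no \eqref{eq:d2equat}) and cleaner cancellations; what the paper's buys is uniformity, since it reuses the same three facts \eqref{eq:d2th1}, \eqref{eq:d2th0+th2}--\eqref{eq:d2th0th2}, \eqref{eq:d2equat} that organize the rest of Section \ref{sec:LPd2}, including the converse direction of Proposition \ref{prop:d2key}.
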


\begin{proof}
We have
\begin{align*}
(\th_1 - \th_0)(\th_1 - \th_2)
 &= \th_1^2 - (\th_0 + \th_2) \th_1 + \th_0 \th_2,
\\
(\th_0 - \th_2)^2
 &= (\th_0 + \th_2)^2 - 4 \th_0 \th_2.
\end{align*}
In the right-hand sides of these equations, eliminate $\th_1$, $\th_0 + \th_2$, $\th_0 \th_2$
using \eqref{eq:d2th1}, \eqref{eq:d2th0+th2}, \eqref{eq:d2th0th2}, 
and simplify the result using \eqref{eq:d2equat}.
\end{proof}

\begin{proofof}{Proposition \ref{prop:d2key}}
First assume that $A,A^*$ is a Leonard pair over $\F$.
Clearly \eqref{eq:d2distinct} holds.
By Lemma \ref{lem:d2equat},  \eqref{eq:d2equat} holds.
The scalars $\{\th_i\}_{i=0}^2$ are mutually distinct.
By this and Lemma \ref{lem:d2difference} we get \eqref{eq:d2nonzero1}
and \eqref{eq:d2nonzero2}.

Next assume that \eqref{eq:d2distinct}--\eqref{eq:d2nonzero2} hold.
Define scalars $\{\th_i\}_{i=0}^2$ that satisfy \eqref{eq:d2th1},
\eqref{eq:d2th0+th2}, \eqref{eq:d2th0th2}.
Define scalars 
\begin{align}
\vphi_1 &= (a_0 - \th_0)(\th^*_0 - \th^*_1),
&
\vphi_2 &= (a_2 - \th_2)(\th^*_2 - \th^*_1),           \label{eq:d2defvphi}
\\
\phi_1 &= (a_0 - \th_2)(\th^*_0 - \th^*_1),
&
\phi_2 &= (a_2 - \th_0)(\th^*_2 - \th^*_1).            \label{eq:d2defphi}
\end{align}
Our first goal is to show that the sequence
\begin{equation}
(\{\th_i\}_{i=0}^2; \{\th^*_i\}_{i=0}^2; \{\vphi_i\}_{i=1}^2; \{\phi_i\}_{i=1}^2)
                    \label{eq:d2parray2}
\end{equation}
is a parameter array over $\F$.
We have
\[
 (a_0 - \th_0)(a_0 - \th_2) = a_0^2 - (\th_0 + \th_2) a_0 + \th_0 \th_2.
\]
In this equation, eliminate the factors $\th_0 + \th_2$ and $\th_0 \th_2$
using \eqref{eq:d2th0+th2} and \eqref{eq:d2th0th2}.
In the result, eliminate $x_2$ using \eqref{eq:d2equat} to get
\begin{equation}
(a_0 - \th_0)(a_0 - \th_2)
 = - \frac{ \th^*_0 - \th^*_2} {\th^*_1 - \th^*_2 } x_1.            \label{eq:d2a0}
\end{equation}
In a similar way we get
\begin{equation}
(a_2 - \th_0)(a_2 - \th_2)
 = - \frac{\th^*_0 - \th^*_2 } { \th^*_0 - \th^*_1} x_2.            \label{eq:d2a2}
\end{equation}
We show that
\begin{align}
x_1 &= - \vphi_1 \phi_1  
          \frac{\th^*_1 - \th^*_2}
                 { (\th^*_0 - \th^*_1)^2 (\th^*_0 - \th^*_2) },     \label{eq:d2x1}
\\
x_2 &= - \vphi_2 \phi_2
         \frac{ \th^*_0 - \th^*_1}
                { (\th^*_0 - \th^*_2) (\th^*_1 - \th^*_2)^2}.      \label{eq:d2x2}
\end{align}
In the right-hand side of \eqref{eq:d2x1},
eliminate $\vphi_1$, $\phi_1$ using \eqref{eq:d2defvphi}, \eqref{eq:d2defphi},
and simplify the result using \eqref{eq:d2a0} to get \eqref{eq:d2x1}.
The line \eqref{eq:d2x2} is similarly obtained.
We now verify conditions (i)--(v) in Lemma \ref{lem:classify}.
We are assuming $d=2$.
We first verify that condition (i) holds.
The scalars $\{\th^*_i\}_{i=0}^2$ are mutually distinct by \eqref{eq:d2distinct}.
As in the proof of Lemma \ref{lem:d2difference}, we get the
two equations in Lemma \ref{lem:d2difference}.
By this and  \eqref{eq:d2nonzero1}, \eqref{eq:d2nonzero2} we find that
the scalars $\{\th_i\}_{i=0}^2$ are mutually distinct.
By these comments, condition (i) holds.
Condition (ii) holds by \eqref{eq:d2x1} and \eqref{eq:d2x2}.
Conditions (iii) and (iv) hold by \eqref{eq:d2th1},
\eqref{eq:d2defvphi}, \eqref{eq:d2defphi}.
Condition (v) vacuously holds.
We have shown that \eqref{eq:d2parray2} is a parameter array over $\F$.
Our next goal is to show that the sequence
\begin{equation}
  (\{a_i\}_{i=0}^2; \{x_i\}_{i=1}^2; \{\th^*_i\}_{i=0}^2)    \label{eq:d2TDDseq}
\end{equation}
is a TD/D sequence.
To do this we verify \eqref{eq:a0param}--\eqref{eq:xiparam}.
Lines \eqref{eq:a0param}, \eqref{eq:adparam} hold by 
\eqref{eq:d2defvphi} and \eqref{eq:d2defphi}.
Using \eqref{eq:d2defvphi} we find that
\[
  \frac{ \vphi_1 } {\th^*_1 - \th^*_0} + \frac{ \vphi_2 } { \th^*_1 - \th^*_2}
   = \th_0 + \th_2 - a_0 - a_2.
\]
In this line, use $\th_0 + \th_1 + \th_2 = a_0 + a_1 + a_2$ to get \eqref{eq:aiparam}.
Line \eqref{eq:xiparam} holds by \eqref{eq:d2x1} and \eqref{eq:d2x2}.
We have shown that the sequence \eqref{eq:d2TDDseq} is a TD/D sequence.
Now by Proposition \ref{prop:TDD2}, $A,A^*$ is a Leonard pair over $\F$.
\end{proofof}

\section{The classification of near-bipartite Leonard pairs with diameter two}
\label{sec:d2}
\ifDRAFT {\rm sec:d2}. \fi

In this section, we classify up to isomorphism the near-bipartite Leonard pairs with diameter two.
Moreover, for each near-bipartite Leonard pair with diameter two, we describe
its bipartite contraction.
Also, for each bipartite Leonard pair with diameter two,
we describe its near-bipartite expansions.

In view of Proposition \ref{prop:TDD2}, we consider 
the following matrices in $\Mat_3(\F)$:
\begin{align*}
A &=
 \begin{pmatrix}
  a_0 & x_1  & 0\\
  1    & a_1 & x_2 \\
  0 & 1 & a_2
 \end{pmatrix},
&
B &=
 \begin{pmatrix}
  0 & x_1  & 0\\
  1    & 0 & x_2 \\
  0 & 1 & 0
 \end{pmatrix},
&
A^* &=
 \begin{pmatrix}
  \th^*_0 & 0 & 0 \\
  0 & \th^*_1 & 0 \\
  0 & 0 & \th^*_2
 \end{pmatrix}.
\end{align*}
The following result classifies the bipartite Leonard pairs with diameter two.

\begin{prop}    \label{prop:d2pre}   \samepage
\ifDRAFT {\rm prop:d2pre}. \fi
The pair $B,A^*$ is a Leonard pair over $\F$ if and only if
\begin{align}
\th^*_i &\neq \th^*_j \quad (0 \leq i < j \leq 2),
&
x_i &\neq 0 \quad (1 \leq i \leq 2),                      \label{eq:d2distinctb}
\end{align}
\begin{equation}
\frac{x_1} { \th^*_1 - \th^*_2}
= \frac{x_2} {\th^*_0 - \th^*_1}.        \label{eq:d2cond}
\end{equation}
\end{prop}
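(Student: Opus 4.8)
The plan is to specialize Proposition~\ref{prop:d2key} to the bipartite case $a_0 = a_1 = a_2 = 0$. For those values of the $a_i$ the tridiagonal matrix on the left of Proposition~\ref{prop:d2key} is exactly $B$, so $B,A^*$ is a Leonard pair over $\F$ if and only if the four conditions \eqref{eq:d2distinct}--\eqref{eq:d2nonzero2} hold with every occurrence of $a_0$, $a_1$, $a_2$ replaced by $0$. I would then show that, under this substitution, these four conditions reduce exactly to \eqref{eq:d2distinctb} together with \eqref{eq:d2cond}.

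First I would observe that \eqref{eq:d2distinct} becomes \eqref{eq:d2distinctb} verbatim. Next, setting $a_0 = a_1 = a_2 = 0$ annihilates the right-hand side of \eqref{eq:d2equat}, since it carries the factor $a_0 - a_2$; hence \eqref{eq:d2equat} reduces to $\frac{x_1}{\th^*_2 - \th^*_1} + \frac{x_2}{\th^*_0 - \th^*_1} = 0$, and multiplying through by $-1$ in the first term turns this into $\frac{x_1}{\th^*_1 - \th^*_2} = \frac{x_2}{\th^*_0 - \th^*_1}$, which is precisely \eqref{eq:d2cond}.

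The only substantive point is that, granted \eqref{eq:d2distinctb} and \eqref{eq:d2cond}, the two inequalities \eqref{eq:d2nonzero1} and \eqref{eq:d2nonzero2} hold automatically. For \eqref{eq:d2nonzero1}, I would use \eqref{eq:d2cond} to write $x_2 = x_1 (\th^*_0 - \th^*_1)/(\th^*_1 - \th^*_2)$; a short calculation then shows that with $a_i = 0$ the right-hand side of \eqref{eq:d2nonzero1} equals $x_1(\th^*_0 - \th^*_2)\big/\big((\th^*_2 - \th^*_1)^2(\th^*_0 - \th^*_1)\big)$, which is nonzero by $x_1 \neq 0$ and the distinctness of $\th^*_0,\th^*_1,\th^*_2$. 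For \eqref{eq:d2nonzero2}, \eqref{eq:d2cond} gives $\frac{x_2}{\th^*_0 - \th^*_1} = -\frac{x_1}{\th^*_2 - \th^*_1}$, so with $a_i = 0$ the right-hand side of \eqref{eq:d2nonzero2} collapses to $\frac{2 x_1}{\th^*_2 - \th^*_1}$, which is nonzero since $x_1 \neq 0$ and $\text{\rm Char}(\F) \neq 2$ (a standing hypothesis since Section~\ref{sec:nbip}). Combining these four reductions with Proposition~\ref{prop:d2key} yields the claim; the only steps requiring any work are the two simplifications just indicated, and both are routine.
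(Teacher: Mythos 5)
Your proposal is correct and takes essentially the same approach as the paper, whose proof consists precisely of the instruction to apply Proposition \ref{prop:d2key} with $a_i=0$ for $0 \leq i \leq 2$. The reductions you supply (in particular, that \eqref{eq:d2nonzero1} and \eqref{eq:d2nonzero2} follow automatically from \eqref{eq:d2distinctb}, \eqref{eq:d2cond}, and the standing hypothesis $\text{\rm Char}(\F)\neq 2$) are exactly the routine details the paper leaves implicit, and your computations check out.
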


\begin{proof}
Apply Proposition \ref{prop:d2key} with $a_i=0$ $(0 \leq i \leq 2)$.
\end{proof}

The following result classifies the near-bipartite Leonard pairs with diameter two,
and describes its bipartite contraction.

\begin{prop}   \label{prop:d2}    \samepage
\ifDRAFT {\rm prop:d2}. \fi
Assume that $A,A^*$ is a Leonard pair over $\F$.
Then the pair $A,A^*$ is near-bipartite if and only if 
\eqref{eq:d2cond} holds.
In this case, $B,A^*$ is the bipartite contraction of $A,A^*$.
\end{prop}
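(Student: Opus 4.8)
The plan is to assemble the earlier results rather than to compute anything new. First I would observe that the matrices $A$ and $A^*$ displayed before the proposition are already in normalized TD/D form (the subdiagonal of $A$ is all $1$'s and $A^*$ is diagonal), so by Lemma \ref{lem:TDDstandard}(i) (equivalently Proposition \ref{prop:TDD2}) the sequence $(\{a_i\}_{i=0}^2;\{x_i\}_{i=1}^2;\{\th^*_i\}_{i=0}^2)$ is a TD/D sequence of the Leonard pair $A,A^*$. Then by Lemma \ref{lem:Fai2} the flat part of $A$ is $F=\text{\rm diag}(a_0,a_1,a_2)$ and $A-F=B$, where $B$ is the zero-diagonal matrix from the start of the section.

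Next I would invoke the definition of near-bipartite. By Definition \ref{def:nearbip}, $A,A^*$ is near-bipartite if and only if $A-F,A^*$ is a Leonard pair on $V$; since $A-F=B$, this says precisely that $B,A^*$ is a Leonard pair over $\F$. Now I would compare two classification results. Because $A,A^*$ is a Leonard pair, Proposition \ref{prop:d2key} gives that \eqref{eq:d2distinct} holds, i.e.\ the $\th^*_i$ are mutually distinct and $x_1,x_2\neq 0$; but this is exactly condition \eqref{eq:d2distinctb}. On the other hand, Proposition \ref{prop:d2pre} states that $B,A^*$ is a Leonard pair over $\F$ if and only if \eqref{eq:d2distinctb} and \eqref{eq:d2cond} hold. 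Combining these, $B,A^*$ is a Leonard pair over $\F$ if and only if \eqref{eq:d2cond} holds, and hence $A,A^*$ is near-bipartite if and only if \eqref{eq:d2cond} holds. Finally, when this holds, $A-F=B$, so by Definition \ref{def:nearbip} (or Lemma \ref{lem:nbippre}(iii)) the bipartite contraction of $A,A^*$ is $B,A^*$, completing the proof.

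I do not expect a genuine obstacle here: the real content lies in Propositions \ref{prop:d2key} and \ref{prop:d2pre}, which have already been established. The only point requiring a little care is confirming that the several nonlinear constraints in Proposition \ref{prop:d2key} beyond \eqref{eq:d2distinct} need not be re-examined for $B$ — but this is precisely what Proposition \ref{prop:d2pre} encodes (with $a_0=a_1=a_2=0$, equation \eqref{eq:d2equat} collapses to \eqref{eq:d2cond} and the nonvanishing conditions become automatic), so citing that result suffices.
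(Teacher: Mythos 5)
Your proposal is correct and matches the paper's own argument, which simply cites Propositions \ref{prop:d2key} and \ref{prop:d2pre}; your write-up just makes explicit the intermediate steps (via Lemma \ref{lem:Fai2} and Definition \ref{def:nearbip}) that the flat part of $A$ is $\text{\rm diag}(a_0,a_1,a_2)$, so $A-F=B$ and near-bipartiteness of $A,A^*$ is equivalent to $B,A^*$ being a Leonard pair. Since \eqref{eq:d2distinct} coincides with \eqref{eq:d2distinctb} and is guaranteed by Proposition \ref{prop:d2key}, the reduction to \eqref{eq:d2cond} and the identification of $B,A^*$ as the bipartite contraction go through exactly as you say.
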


\begin{proof}
By Propositions \ref{prop:d2key} and \ref{prop:d2pre}.
\end{proof}

Our next goal is to determine all the near-bipartite expansions of
a given bipartite Leonard pair.
We will invoke Proposition \ref{prop:d2key}.
To do this, we consider
what conditions \eqref{eq:d2equat}--\eqref{eq:d2nonzero2}
become under the assumption that $B,A^*$ is a Leonard pair.
We first consider the condition \eqref{eq:d2equat}.

\begin{lemma}     \label{lem:d2equat2}    \samepage
\ifDRAFT {\rm lem:d2equat2}. \fi
Assume that $B,A^*$ is a Leonard pair over $\F$.
Then \eqref{eq:d2equat} holds if and only if
one of the following {\rm (i), (ii)} holds:
\begin{itemize}
\item[\rm (i)]
$a_0 = a_2$;
\item[\rm (ii)]
$a_0 \neq a_2$ and
\begin{equation}
a_0 (\th^*_0 - \th^*_1) + a_2 (\th^*_1 - \th^*_2) + a_1 (\th^*_2 - \th^*_0) = 0.    \label{eq:d2aux4}
\end{equation}
\end{itemize}
\end{lemma}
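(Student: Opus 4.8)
The plan is to collapse \eqref{eq:d2equat} into a single polynomial identity by exploiting the constraint forced on $x_1,x_2,\{\th^*_i\}$ by the hypothesis that $B,A^*$ is a Leonard pair. First I would invoke Proposition \ref{prop:d2pre}: since $B,A^*$ is a Leonard pair over $\F$, the scalars $\{\th^*_i\}_{i=0}^2$ are mutually distinct, $x_1,x_2$ are nonzero, and the relation
\[
\frac{x_1}{\th^*_1 - \th^*_2} = \frac{x_2}{\th^*_0 - \th^*_1}
\]
holds.

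The crucial observation is that the left-hand side of \eqref{eq:d2equat} vanishes under this relation. Indeed,
\[
\frac{x_1}{\th^*_2 - \th^*_1} + \frac{x_2}{\th^*_0 - \th^*_1}
 = -\frac{x_1}{\th^*_1 - \th^*_2} + \frac{x_2}{\th^*_0 - \th^*_1},
\]
and substituting the relation from Proposition \ref{prop:d2pre} shows this equals $-\frac{x_2}{\th^*_0-\th^*_1} + \frac{x_2}{\th^*_0-\th^*_1} = 0$. Hence, under the standing hypothesis, \eqref{eq:d2equat} is equivalent to
\[
0 = \frac{a_0 - a_2}{(\th^*_0 - \th^*_2)^2}\,\bigl(a_0(\th^*_0-\th^*_1) + a_1(\th^*_2-\th^*_0) + a_2(\th^*_1-\th^*_2)\bigr).
\]

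Next I would note that $\th^*_0 \neq \th^*_2$, so the denominator is nonzero, and therefore the displayed equation holds if and only if the product $(a_0-a_2)\bigl(a_0(\th^*_0-\th^*_1) + a_1(\th^*_2-\th^*_0) + a_2(\th^*_1-\th^*_2)\bigr)$ is zero. Since $\F$ is a field, this product vanishes precisely when $a_0 = a_2$ or $a_0(\th^*_0-\th^*_1) + a_1(\th^*_2-\th^*_0) + a_2(\th^*_1-\th^*_2) = 0$; the latter is exactly \eqref{eq:d2aux4}, the expression there being the same linear form in $a_0,a_1,a_2$ after reordering the summands. Separating the two cases $a_0 = a_2$ and $a_0 \neq a_2$ then yields the dichotomy {\rm (i)}--{\rm (ii)} in the statement. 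There is no genuine obstacle here; the only care needed is in tracking signs when rewriting the left side of \eqref{eq:d2equat} and in recognizing that the quadratic-looking cofactor in \eqref{eq:d2equat} is literally the expression appearing in \eqref{eq:d2aux4}.
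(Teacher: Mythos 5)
Your proposal is correct and follows essentially the same route as the paper: use Proposition \ref{prop:d2pre} to get \eqref{eq:d2cond}, observe that this makes the left-hand side of \eqref{eq:d2equat} vanish, and then note that \eqref{eq:d2equat} holds precisely when the right-hand side vanishes, which (since $\th^*_0 \neq \th^*_2$) gives the dichotomy (i)--(ii). Your sign bookkeeping and the identification of the cofactor with \eqref{eq:d2aux4} are accurate, so there is nothing to fix.
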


\begin{proof}
Note by Proposition \ref{prop:d2pre} that \eqref{eq:d2cond} holds, and so
in \eqref{eq:d2equat} the left-hand side is zero.
First assume that \eqref{eq:d2equat} holds.
In \eqref{eq:d2equat} the right-hand side is zero. 
Thus one of (i), (ii) holds.
Next assume that one of (i), (ii) holds.
Then in \eqref{eq:d2equat} the right-hand side is zero.
So \eqref{eq:d2equat} holds.
\end{proof}

Next we consider conditions \eqref{eq:d2nonzero1}
and \eqref{eq:d2nonzero2}.

\begin{lemma}    \label{lem:d2mfree}    \samepage
\ifDRAFT {\rm lem:d2mfree}. \fi
Assume that $B,A^*$ is a Leonard pair over $\F$.
Then \eqref{eq:d2nonzero1} and \eqref{eq:d2nonzero2} hold if and only if one of the following holds:
\begin{itemize}
\item[\rm (i)]
$a_0 = a_2$, and
\begin{equation}
(a_0 - a_1)^2 + 4 (x_1 + x_2) \neq 0 ;             \label{eq:d2condx1a}
\end{equation}
\item[\rm (ii)]
$a_0 \neq a_2$, and
\begin{align}
(a_0 - a_2)^2 + 4(x_1 + x_2) &\neq 0,                                  \label{eq:d2condx1b}                 
\\
(a_0 - a_2)^2 + \frac{  (x_1 + x_2)^3 } {x_1 x_2 } &\neq 0.                     \label{eq:d2condx1c} 
\end{align}
\end{itemize}
\end{lemma}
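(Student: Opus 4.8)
The plan is to substitute the bipartite relation for $B,A^*$ into the two inequality conditions \eqref{eq:d2nonzero1} and \eqref{eq:d2nonzero2} and bring each to a factored form from which the asserted equivalences are visible. Since $B,A^*$ is a Leonard pair, Proposition \ref{prop:d2pre} gives \eqref{eq:d2cond}, that is $(\th^*_0-\th^*_1)x_1 = (\th^*_1-\th^*_2)x_2$. From this I would first extract two facts to be used repeatedly: (a) $x_1+x_2\neq 0$, because $x_1+x_2 = x_2(\th^*_0-\th^*_2)/(\th^*_0-\th^*_1)$ with $x_2\neq 0$ and $\th^*_0\neq \th^*_2$; and (b) the identity $(x_1+x_2)^2/(x_1x_2) = (\th^*_0-\th^*_2)^2/\bigl((\th^*_0-\th^*_1)(\th^*_1-\th^*_2)\bigr)$, obtained from $x_1/x_2 = (\th^*_1-\th^*_2)/(\th^*_0-\th^*_1)$ via $(x_1+x_2)^2/(x_1x_2) = x_1/x_2 + 2 + x_2/x_1$.

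First I would handle \eqref{eq:d2nonzero1}. Using \eqref{eq:d2cond} the first two terms combine to $(x_1+x_2)/\bigl((\th^*_0-\th^*_1)(\th^*_1-\th^*_2)\bigr)$, and then (b) rewrites the entire left side of \eqref{eq:d2nonzero1} as $(\th^*_0-\th^*_2)^{-2}\bigl((a_0-a_2)^2 + (x_1+x_2)^3/(x_1x_2)\bigr)$. Hence \eqref{eq:d2nonzero1} holds exactly when \eqref{eq:d2condx1c} holds. Moreover, if $a_0=a_2$ the bracket equals $(x_1+x_2)^3/(x_1x_2)$, which is nonzero by (a), so \eqref{eq:d2nonzero1} is automatic in case (i), while in case (ii) it supplies precisely \eqref{eq:d2condx1c}.

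Next I would handle \eqref{eq:d2nonzero2}. The same substitution turns its first two terms into $-2(x_1+x_2)/(\th^*_0-\th^*_2)$, so after clearing the denominator $2(\th^*_0-\th^*_2)^3$ the condition reads $4(x_1+x_2)(\th^*_0-\th^*_2)^2 + (a_0-a_2)^2(\th^*_0-\th^*_2)^2 + N^2 \neq 0$, where $N$ denotes the numerator $a_0(\th^*_0-\th^*_1)+a_1(\th^*_2-\th^*_0)+a_2(\th^*_1-\th^*_2)$ occurring in \eqref{eq:d2nonzero2}. Now split on whether $a_0=a_2$. If $a_0=a_2$, a short computation gives $N = (a_0-a_1)(\th^*_0-\th^*_2)$, so the left side factors as $(\th^*_0-\th^*_2)^2\bigl(4(x_1+x_2)+(a_0-a_1)^2\bigr)$ and \eqref{eq:d2nonzero2} is equivalent to \eqref{eq:d2condx1a}; with the previous paragraph this is case (i). If $a_0\neq a_2$, one uses \eqref{eq:d2equat} — equivalently \eqref{eq:d2aux4}, cf.\ Lemma \ref{lem:d2equat2} — to conclude that $N$ vanishes; then the left side factors as $(\th^*_0-\th^*_2)^2\bigl((a_0-a_2)^2+4(x_1+x_2)\bigr)$ and \eqref{eq:d2nonzero2} becomes \eqref{eq:d2condx1b}, which together with \eqref{eq:d2condx1c} from \eqref{eq:d2nonzero1} is case (ii).

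The hard part will be the $a_1$-dependent term $N$ in \eqref{eq:d2nonzero2}: unlike \eqref{eq:d2nonzero1}, it does not simplify on the strength of the bipartite relation alone, and one must see that it is controlled exactly along the dichotomy $a_0=a_2$ versus $a_0\neq a_2$ of the statement — factoring off as $(a_0-a_1)(\th^*_0-\th^*_2)$ in the first case, and being forced to $0$ by \eqref{eq:d2aux4} in the second — so that the case analysis matches (i) and (ii) and the two "$\neq 0$" conditions combine correctly. As a consistency check one may note that, by Lemma \ref{lem:d2difference}, the left sides of \eqref{eq:d2nonzero1} and \eqref{eq:d2nonzero2} equal $(\th_1-\th_0)(\th_1-\th_2)/\bigl((\th^*_1-\th^*_0)(\th^*_1-\th^*_2)\bigr)$ and $(\th_0-\th_2)^2/\bigl(2(\th^*_2-\th^*_0)\bigr)$, so that the two conditions together assert that $\th_0,\th_1,\th_2$ are mutually distinct. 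Once the reductions above are in hand, both directions of the stated equivalence follow at once.
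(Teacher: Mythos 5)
Your proposal is correct and takes essentially the same route as the paper's proof: both use \eqref{eq:d2cond} to rewrite the $\th^*$-sums in \eqref{eq:d2nonzero1}, \eqref{eq:d2nonzero2} in terms of $x_1+x_2$ and $(x_1+x_2)^3/(x_1x_2)$ (the paper records these as the identities \eqref{eq:d2x1+x2}, \eqref{eq:d2x1+x2no2}), and then split into the cases $a_0=a_2$ and $a_0\neq a_2$. In particular, your appeal to \eqref{eq:d2aux4} to annihilate the $a_1$-dependent numerator when $a_0\neq a_2$ --- the step you flag as delicate, since \eqref{eq:d2aux4} is not among the stated hypotheses of the lemma --- is exactly what the paper's proof does (``for the case $a_0\neq a_2$, also use \eqref{eq:d2aux4}''), that condition being supplied in the intended application via Lemma \ref{lem:d2equat2} and Proposition \ref{prop:d2converse}.
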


\begin{proof}
Note by Proposition \ref{prop:d2pre} that \eqref{eq:d2cond} holds.
Using \eqref{eq:d2cond}, one routinely finds that
\begin{equation}
(\th^*_0 - \th^*_2)
\left( \frac{x_1} { \th^*_1 - \th^*_2 } + \frac{x_2}{\th^*_0-\th^*_1} \right)
= 2 (x_1 + x_2),                          \label{eq:d2x1+x2}
\end{equation}
\begin{equation}
(\th^*_0 - \th^*_2)^2
\left( \frac{x_1}{(\th^*_1 - \th^*_2)^2} + \frac{x_2}{(\th^*_0 - \th^*_1)^2} \right)
=\frac{(x_1 + x_2)^3}{x_1 x_2}.              \label{eq:d2x1+x2no2}
\end{equation}                    
For each of the cases $a_0 = a_2$ and $a_0 \neq a_2$,
simplify \eqref{eq:d2nonzero1} and \eqref{eq:d2nonzero2}
using \eqref{eq:d2x1+x2}, \eqref{eq:d2x1+x2no2}.
For the case $a_0 \neq a_2$, also use \eqref{eq:d2aux4}.
\end{proof}

\begin{prop}    \label{prop:d2converse}    \samepage
\ifDRAFT {\rm prop:d2converse}. \fi
Assume that $B,A^*$ is a Leonard pair over $\F$.
Then $A,A^*$ is a near-bipartite expansion of $B,A^*$ if and only if
one of the following {\rm (i), (ii)} holds:
\begin{itemize}
\item[\rm (i)]
$a_0 = a_2$ and \eqref{eq:d2condx1a} holds;
\item[\rm (ii)]
$a_0 \neq a_2$ and \eqref{eq:d2aux4}, \eqref{eq:d2condx1b}, \eqref{eq:d2condx1c} hold.
\end{itemize}
\end{prop}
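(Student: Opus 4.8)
The plan is to combine Proposition \ref{prop:d2key} with the results established so far in this section. By Proposition \ref{prop:d2} (or directly by Lemma \ref{lem:nbippre} together with the shape of $B$), the Leonard pair $A,A^*$ is a near-bipartite expansion of $B,A^*$ if and only if (a) $A,A^*$ is a Leonard pair over $\F$ and (b) its bipartite contraction is $B,A^*$, i.e.\ \eqref{eq:d2cond} holds; here we are in normalized TD/D form, so the bipartite contraction of $A,A^*$ is the matrix obtained by zeroing the diagonal of $A$, which is exactly $B$. So the task reduces to: characterize when $A,A^*$ is a Leonard pair \emph{given} that $B,A^*$ is a Leonard pair (so that \eqref{eq:d2distinctb} and \eqref{eq:d2cond} already hold).

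First I would invoke Proposition \ref{prop:d2key}: under the standing hypotheses, $A,A^*$ is a Leonard pair over $\F$ if and only if \eqref{eq:d2distinct}--\eqref{eq:d2nonzero2} hold. Since $B,A^*$ is a Leonard pair, \eqref{eq:d2distinctb} gives \eqref{eq:d2distinct} for free, and \eqref{eq:d2cond} holds. Next I would apply Lemma \ref{lem:d2equat2} to see that, in the presence of \eqref{eq:d2cond}, condition \eqref{eq:d2equat} is equivalent to: either $a_0 = a_2$, or ($a_0 \neq a_2$ and \eqref{eq:d2aux4} holds). Then I would apply Lemma \ref{lem:d2mfree}: assuming \eqref{eq:d2cond} (and, in the case $a_0\neq a_2$, also \eqref{eq:d2aux4}), the remaining nonvanishing conditions \eqref{eq:d2nonzero1} and \eqref{eq:d2nonzero2} are equivalent to \eqref{eq:d2condx1a} in the case $a_0 = a_2$, and to \eqref{eq:d2condx1b} together with \eqref{eq:d2condx1c} in the case $a_0 \neq a_2$.

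Assembling these three facts: $A,A^*$ is a Leonard pair iff [\eqref{eq:d2equat} and \eqref{eq:d2nonzero1} and \eqref{eq:d2nonzero2}] iff one of the two cases (i), (ii) in the statement holds. Combined with the reduction in the first paragraph — that being a near-bipartite expansion of $B,A^*$ is equivalent to $A,A^*$ being a Leonard pair (the condition \eqref{eq:d2cond} being automatic from the hypothesis on $B,A^*$, and the contraction being automatically $B$ once $A,A^*$ is a Leonard pair in this form) — this yields exactly the claimed equivalence. The one point that needs a little care, rather than being a genuine obstacle, is the bookkeeping of \emph{which} conditions from Proposition \ref{prop:d2key} have already been discharged by the hypothesis that $B,A^*$ is a Leonard pair and by Lemmas \ref{lem:d2equat2} and \ref{lem:d2mfree}; once that is laid out the proof is a short citation-chain. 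The argument is therefore essentially: "By Proposition \ref{prop:d2key} and Lemmas \ref{lem:d2equat2}, \ref{lem:d2mfree}," together with the observation on bipartite contraction from Proposition \ref{prop:d2}.
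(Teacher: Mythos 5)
Your proposal is correct and matches the paper's argument, which proves this proposition exactly by citing Proposition \ref{prop:d2key} together with Lemmas \ref{lem:d2equat2} and \ref{lem:d2mfree}; your extra opening paragraph merely makes explicit the reduction (via the normalized TD/D form and Lemma \ref{lem:nbippre}) that the paper leaves implicit.
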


\begin{proof}
By Proposition \ref{prop:d2key} and Lemmas \ref{lem:d2equat2}, \ref{lem:d2mfree}.
\end{proof}

\section{The type of a Leonard pair}
\label{sec:type}
\ifDRAFT {\rm sec:type}. \fi

In Sections \ref{sec:d1} and \ref{sec:d2},
we classified the near-bipartite Leonard pairs of diameter one and two.
To classify the near-bipartite Leonard pairs of diameter at least $3$, 
we will divide the arguments into some cases.
To describe these cases, we recall from \cite{NT:balanced} the type of a Leonard pair.

For the rest of this paper, we assume that $d \geq 3$.

\begin{defi}     {\rm (See \cite[Section 4]{NT:balanced}.) }
\label{def:type}    \samepage
\ifDRAFT {\rm def:type}. \fi
Let $A,A^*$ denote a Leonard pair over $\F$ with diameter $d$.
Let $\beta$ denote the fundamental constant of $A,A^*$.
We define the {\em type} of $A,A^*$ as follows.
\[
\begin{array}{c|c}
\text{\rm type} & \text{\rm description}
\\ \hline
\text{\rm I} &  \beta \neq 2, \;\; \beta \neq -2          \rule{0mm}{2.5ex}
\\
\text{\rm II} &  \beta = 2         \rule{0mm}{2.5ex}
\\
\text{\rm III}^+ &    \beta = -2, 
                             \;\; \text{\rm $d$ even}                             \rule{0mm}{2.5ex}
\\
\text{\rm III}^- &    \beta = -2, 
\;\; \text{\rm $d$ odd}                                                     \rule{0mm}{2.5ex}
\end{array}
\]
\end{defi}

\begin{note}     \label{note:type}    \samepage
\ifDRAFT {\rm note:beta}. \fi
By Definition \ref{def:beta0},
the fundamental constant of a Leonard pair $A,A^*$ is determined by $A^*$.
By this and Definition \ref{def:nearbip},
any near-bipartite Leonard pair has the same type as its bipartite contraction.
\end{note}

\begin{defi}  \label{def:typearray}    \samepage
\ifDRAFT {\rm def:typearray}. \fi
By the {\em type of a parameter array} we mean the type of the
corresponding Leonard pair.
\end{defi}

\begin{lemma}    \label{lem:typeaffine}   \samepage
\ifDRAFT {\rm lem:typeaffine}. \fi
Let $A,A^*$ denote a Leonard pair over $\F$.
For scalars $\xi$, $\zeta$, $\xi^*$, $\zeta^*$ in $\F$ with $\xi \xi^* \neq 0$,
the Leonard pair $\xi A + \zeta I, \xi^* A^* + \zeta^* I$
has the same type as $A,A^*$.
\end{lemma}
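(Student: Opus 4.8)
The plan is to reduce the claim to the definition of type (Definition~\ref{def:type}), which depends only on the fundamental constant $\beta$ of the Leonard pair, together with the parity of $d$. Since the affine transformation $\xi A + \zeta I,\ \xi^* A^* + \zeta^* I$ leaves the diameter $d$ unchanged (the underlying vector space is the same), it suffices to show that the fundamental constant is preserved by this transformation.

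First I would invoke Lemma~\ref{lem:affineLP} to note that $\xi A + \zeta I,\ \xi^* A^* + \zeta^* I$ is again a Leonard pair over $\F$, so that its fundamental constant is defined (recall $d \geq 3$ has been assumed for the rest of the paper). Next I would fix a parameter array $(\{\th_i\}_{i=0}^d;\{\th^*_i\}_{i=0}^d;\{\vphi_i\}_{i=1}^d;\{\phi_i\}_{i=1}^d)$ of $A,A^*$, and apply Lemma~\ref{lem:affineLPparam} to obtain the parameter array
\[
 (\{\xi\th_i + \zeta\}_{i=0}^d;\ \{\xi^*\th^*_i + \zeta^*\}_{i=0}^d;\ \{\xi\xi^*\vphi_i\}_{i=1}^d;\ \{\xi\xi^*\phi_i\}_{i=1}^d)
\]
of the transformed Leonard pair. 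By Definition~\ref{def:beta0}, the fundamental constant of this array is obtained from the common value of the two fractions in \eqref{eq:indep}, evaluated on the sequences $\{\xi\th_i + \zeta\}$ and $\{\xi^*\th^*_i + \zeta^*\}$. The key computation is that for any $i$ with $2 \leq i \leq d-1$,
\[
\frac{(\xi\th_{i-2}+\zeta) - (\xi\th_{i+1}+\zeta)}{(\xi\th_{i-1}+\zeta) - (\xi\th_i+\zeta)}
= \frac{\xi(\th_{i-2} - \th_{i+1})}{\xi(\th_{i-1} - \th_i)}
= \frac{\th_{i-2} - \th_{i+1}}{\th_{i-1} - \th_i},
\]
since $\xi \neq 0$; the additive shift $\zeta$ cancels and the multiplicative factor $\xi$ cancels. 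Hence the common value $\beta + 1$ in \eqref{eq:indep} is unchanged, so the transformed Leonard pair has the same fundamental constant $\beta$ as $A,A^*$.

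Finally, since the transformed pair has the same $\beta$ and the same diameter $d$ (hence the same parity), Definition~\ref{def:type} assigns it the same type as $A,A^*$, completing the proof. I do not expect any genuine obstacle here: the only point requiring care is to confirm that the affine parameters $\xi,\xi^*$ are nonzero (guaranteed by the hypothesis $\xi\xi^* \neq 0$) so that the cancellations above are legitimate, and that $d\geq 3$ so that the fundamental constant is defined for both pairs.
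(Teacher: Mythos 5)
Your proposal is correct and follows essentially the same route as the paper: the paper's proof also invokes Lemma \ref{lem:affineLPparam} together with Definition \ref{def:beta0} to conclude that the fundamental constant is preserved, whence the type is unchanged. Your version simply spells out the cancellation of $\xi$ and $\zeta$ in the ratios of \eqref{eq:indep}, which the paper leaves implicit.
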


\begin{proof}
By Lemma \ref{lem:affineLPparam} and  Definition \ref{def:beta0},
the Leonard pairs $A,A^*$ and $\xi A + \zeta I, \xi^* A^* + \zeta^* I$
have the same fundamental constant.
\end{proof}

By \cite[Theorem 9.6]{NT:balanced} a Leonard pair
of type III$^-$ is not bipartite, so not near-bipartite by Note \ref{note:type}.
In Sections \ref{sec:type1}--\ref{sec:type3+} we will describe
the parameter arrays of types  I, II, III$^+$.

\section{Parameter arrays of type I}
\label{sec:type1}
\ifDRAFT {\rm sec:type1}. \fi

In this section,
we describe the parameter arrays  of type I.
Throughout this section,
fix a nonzero $q \in \F$ such that $q^4 \neq 1$.

\begin{lemma}  {\em (See \cite[Lemma 16.1]{NT:K}.) }
\label{lem:type1param}   \samepage
\ifDRAFT {\rm lem:type1param}. \fi
For a sequence 
\begin{equation}
  (\delta, \mu, h, \delta^*, \mu^*, h^*, \tau)       \label{eq:type1pseq}
\end{equation}
of scalars in $\F$, define
\begin{align}
 \th_i &= \delta + \mu q^{2i-d} + h q^{d-2i},                  \label{eq:type1th}
\\
 \th^*_i &= \delta^* + \mu^* q^{2i-d} + h^* q^{d-2i}         \label{eq:type1ths}
\end{align}
for $0 \leq i \leq d$ and
\begin{align}
 \vphi_i &= (q^i-q^{-i})(q^{d-i+1}-q^{i-d-1})(\tau - \mu \mu^* q^{2i -d-1} - h h^* q^{d-2i+1}),
                                                                              \label{eq:type1vphi}
\\
 \phi_i &=  (q^i-q^{-i})(q^{d-i+1}-q^{i-d-1})(\tau - h \mu^* q^{2i -d-1} - \mu h^* q^{d-2i+1})  
                                               \label{eq:type1phi}
\end{align}
for $1 \leq i \leq d$.
Then the sequence
\begin{equation}
   (\{\th_i\}_{i=0}^d; \{\th^*_i\}_{i=0}^d; \{\vphi_i\}_{i=1}^d; \{\phi_i\}_{i=1}^d)        \label{eq:type1parray}
\end{equation}
is a parameter array over $\F$ that has type I and fundamental constant $\beta = q^2 + q^{-2}$,
provided that the inequalities in Lemma \ref{lem:classify}{\rm (i),(ii)} hold.
Conversely, assume that the sequence \eqref{eq:type1parray} is a parameter array over $\F$ that has type I
and fundamental constant $\beta = q^2 + q^{-2}$.
Then there exists a unique sequence \eqref{eq:type1pseq} of scalars in $\F$ that satisfies
\eqref{eq:type1th}--\eqref{eq:type1phi}.
\end{lemma}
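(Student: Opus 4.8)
The plan is to prove the two directions separately, using the classification of parameter arrays in Lemma~\ref{lem:classify} throughout. For the forward direction, I would first observe that the stated closed forms for $\th_i$, $\th^*_i$ automatically satisfy the recurrence condition Lemma~\ref{lem:classify}(v): one computes directly that
\[
 \frac{\th_{i-2}-\th_{i+1}}{\th_{i-1}-\th_i}
 = \frac{\mu(q^{2i-d-4}-q^{2i-d+2}) + h(q^{d-2i+4}-q^{d-2i-2})}
        {\mu(q^{2i-d-2}-q^{2i-d}) + h(q^{d-2i+2}-q^{d-2i})}
 = q^2+q^{-2},
\]
after factoring $q^{2i-d-2}(q^{-2}-q^{2})$ (resp.\ $q^{d-2i}(q^2-q^{-2})$) out of numerator and denominator of each term; the identical computation works for $\th^*_i$, so the two fractions in \eqref{eq:indep} are equal and independent of $i$, with common value $q^2+q^{-2}$. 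Since $q^4\neq 1$ gives $q^2+q^{-2}\neq\pm 2$ (as $q^2+q^{-2}=2$ forces $q^2=1$ and $q^2+q^{-2}=-2$ forces $q^2=-1$, i.e.\ $q^4=1$ either way), the fundamental constant is $\beta=q^2+q^{-2}$ and the type is I. The inequalities Lemma~\ref{lem:classify}(i),(ii) are exactly what we are assuming to hold, so it remains only to verify conditions (iii) and (iv), the split-sequence recurrences. This is the routine but substantial algebraic core: substitute the closed forms into
\[
 \vphi_i = \phi_1\sum_{\ell=0}^{i-1}\frac{\th_\ell-\th_{d-\ell}}{\th_0-\th_d}
          + (\th^*_i-\th^*_0)(\th_{i-1}-\th_d)
\]
and simplify. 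The telescoping sum evaluates using $\th_\ell-\th_{d-\ell} = (\mu q^{2\ell-d}+hq^{d-2\ell})-(\mu q^{d-2\ell}+hq^{2\ell-d}) = (\mu-h)(q^{2\ell-d}-q^{d-2\ell})$, a geometric-type series; after combining everything one recovers \eqref{eq:type1vphi}, and (iv) follows symmetrically (or by the $\th_i\mapsto\th_{d-i}$ symmetry, which swaps the roles of $\mu$ and $h$ and of $\vphi,\phi$).

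For the converse, assume \eqref{eq:type1parray} is a parameter array of type I with $\beta=q^2+q^{-2}$. Since the type is I, $q^2\neq\pm 1$ so $q^4\neq 1$, consistent with our standing hypothesis on $q$. From Lemma~\ref{lem:classify}(v) the sequence $\{\th_i\}$ satisfies a linear three-term recurrence whose characteristic equation has roots $q^2, q^{-2}$ (distinct since $q^4\neq 1$), so $\th_i = \delta + \mu q^{2i-d} + h q^{d-2i}$ for uniquely determined scalars $\delta,\mu,h\in\F$ (solving the $3\times 3$ linear system given by $\th_0,\th_1,\th_d$, say; the Vandermonde-type determinant is nonzero because $q^4\neq 1$). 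Likewise $\th^*_i = \delta^* + \mu^* q^{2i-d} + h^* q^{d-2i}$ for unique $\delta^*,\mu^*,h^*$. This pins down six of the seven parameters uniquely. For the seventh, $\tau$, I would use condition (iii) at $i=1$: with the closed forms for $\th_i$, $\th^*_i$ substituted, Lemma~\ref{lem:classify}(iii) at $i=1$ reads $\vphi_1 = (\th^*_1-\th^*_0)(\th_0-\th_d)$, which after the substitution has the form $(q-q^{-1})(q^d-q^{-d})(\tau - \mu\mu^* q^{-d+1} - hh^* q^{d-1})$ set equal to an explicit scalar; since $(q-q^{-1})(q^d-q^{-d})\neq 0$ (the latter because $\vphi_1\neq 0$ by Lemma~\ref{lem:classify}(ii)), this determines $\tau$ uniquely.

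Finally I would verify that this uniquely determined septuple actually reproduces \eqref{eq:type1th}--\eqref{eq:type1phi}: the $\th,\th^*$ identities hold by construction, and for the $\vphi_i,\phi_i$ identities I would argue that both the given parameter array and the array produced from $(\delta,\mu,h,\delta^*,\mu^*,h^*,\tau)$ via the forward direction are parameter arrays over $\F$ with the same $\{\th_i\}$, $\{\th^*_i\}$ and the same $\vphi_1$; by Lemma~\ref{lem:classify}(iii),(iv), the sequences $\{\vphi_i\}$ and $\{\phi_i\}$ are determined by $\{\th_i\},\{\th^*_i\}$ together with $\phi_1$ (for $\vphi$) and $\vphi_1$ (for $\phi$) — so I also need $\phi_1$ to agree, which follows from Lemma~\ref{lem:classify}(iv) at $i=1$ once $\vphi_1$ agrees, or alternatively by noting $\phi_1$ is likewise an explicit function of the six exponential parameters and $\tau$. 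The two arrays therefore coincide entirely, giving the closed forms for $\vphi_i,\phi_i$. I expect the main obstacle to be the bookkeeping in the forward direction's verification of (iii) and (iv): the telescoping sum and the regrouping of $q$-powers into the factored form $(q^i-q^{-i})(q^{d-i+1}-q^{i-d-1})$ requires care, though it is mechanical. A clean way to organize it is to verify (iii) and (iv) only at the endpoints and one interior index and then invoke uniqueness of split sequences, but a direct check is also feasible and is presumably what \cite[Lemma 16.1]{NT:K} records.
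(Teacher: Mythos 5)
Your proposal is being measured against a proof that the paper itself does not contain: Lemma \ref{lem:type1param} is simply quoted from \cite{NT:K} (Lemma 16.1 there). Your overall plan — verify Lemma \ref{lem:classify}(iii)--(v) directly for the forward direction, and in the converse recover the seven parameters from the eigenvalue recurrences and then pin down $\tau$ — is the natural and standard route, but several steps are wrong or circular as written. The displayed ratio $(\th_{i-2}-\th_{i+1})/(\th_{i-1}-\th_i)$ equals $q^2+1+q^{-2}$, not $q^2+q^{-2}$; by Definition \ref{def:beta0} the common value of the fractions in \eqref{eq:indep} is $\beta+1$, so your conclusion $\beta=q^2+q^{-2}$ is correct but your display contradicts it. In the converse, the $3\times 3$ system built from $\th_0,\th_1,\th_d$ need not be invertible under $q^4\neq 1$ alone (it degenerates if $q^{2d}=1$ or $q^{2d-2}=1$, which you have not excluded at that point); use $\th_0,\th_1,\th_2$ (available since $d\geq 3$) or the difference recurrence $\Delta_{i+1}+\Delta_{i-1}=\beta\Delta_i$, and note that $\{\th_i\}$ itself satisfies a third-order recurrence with characteristic roots $1,q^2,q^{-2}$, not a recurrence with roots $q^2,q^{-2}$ only (otherwise there would be no $\delta$ term). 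Also, Lemma \ref{lem:classify}(iii) at $i=1$ reads $\vphi_1=\phi_1+(\th^*_1-\th^*_0)(\th_0-\th_d)$ and involves no $\tau$; what you actually do is define $\tau$ by the $i=1$ instance of \eqref{eq:type1vphi}, and the required nonvanishing of $(q-q^{-1})(q^d-q^{-d})$ cannot be justified by ``$\vphi_1\neq 0$'' (that is circular: the formula for $\vphi_1$ is what you are constructing). It follows instead from the already-established form of the eigenvalues: $\th_i-\th_j=(q^{i-j}-q^{j-i})(\mu q^{i+j-d}-hq^{d-i-j})$, so distinctness of the $\th_i$ forces $q^{2k}\neq 1$ for $1\leq k\leq d$.

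The more serious gap is the last step of the converse together with the unverified core. You propose to apply the forward direction to the constructed septuple and then compare parameter arrays, but the forward direction yields a parameter array only when the inequalities of Lemma \ref{lem:classify}(ii) hold for the constructed $\vphi_i,\phi_i$ of \eqref{eq:type1vphi}, \eqref{eq:type1phi}; at that stage you have only matched the $i=1$ value, so those nonvanishing conditions are not yet known and the comparison is circular as stated. The repair is to bypass the forward lemma: since the given array is a parameter array, Lemma \ref{lem:classify}(iii),(iv) express all of its $\vphi_i,\phi_i$ in terms of $\{\th_i\}$, $\{\th^*_i\}$ and $\vphi_1$ (with $\phi_1$ recovered from (iii) at $i=1$), and the same telescoping identity you set up in the forward direction shows that these expressions coincide with \eqref{eq:type1vphi}, \eqref{eq:type1phi}; that is an unconditional algebraic identity, so no nonvanishing hypotheses are needed. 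Finally, that identity — the substitution of the closed forms into \ref{lem:classify}(iii),(iv) — is exactly the substantive content of \cite{NT:K}, and you leave it entirely unchecked (``routine but substantial''); in a self-contained proof it must actually be carried out, since both directions of your argument hang on it.
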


\begin{lemma}    {\rm (See \cite[Lemma 16.4]{NT:K}.) }
\label{lem:type1cond}   \samepage
\ifDRAFT {\rm lem::type1cond}. \fi
Referring to Lemma  \ref{lem:type1param},
the inequalities in Lemma \ref{lem:classify}{\rm (i),(ii)} hold
if and only if
\begin{align}
 & q^{2i} \neq 1 &&  (1 \leq i \leq d),                       \label{eq:type1paramcond1}
\\
 & \mu \neq h q^{2i}  &&  (1-d \leq i \leq d-1),          \label{eq:type1paramcond2}
\\
 & \mu^* \neq h^* q^{2i}  && (1-d \leq i \leq d-1),      \label{eq:type1paramcond3}
\\
 & \tau \neq \mu \mu^* q^{2i-d-1} + h h^* q^{d-2i+1}  &&  (1 \leq i \leq d),  \label{eq:type1paramcond4}
\\
 & \tau \neq h \mu^* q^{2i-d-1} + \mu h^* q^{d-2i+1}  &&  (1 \leq i \leq d).   \label{eq:type1paramcond5}
\end{align}
\end{lemma}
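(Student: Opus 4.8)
The plan is to prove Lemma~\ref{lem:type1cond} by translating each of the inequalities in Lemma~\ref{lem:classify}(i),(ii) directly into conditions on the seven parameters in \eqref{eq:type1pseq}, using the explicit formulas \eqref{eq:type1th}--\eqref{eq:type1phi}. The inequality $\vphi_i \neq 0$ (resp.\ $\phi_i \neq 0$) factors as a product of three terms, so it is equivalent to the assertion that none of the three factors vanishes; similarly, $\th_i \neq \th_j$ and $\th^*_i \neq \th^*_j$ will reduce to a single family of inequalities after a short computation. I would organize the proof as four blocks, one for each family \eqref{eq:type1paramcond1}--\eqref{eq:type1paramcond5} (grouping the $\th$ and $\th^*$ distinctness conditions together, and the $\vphi_i \neq 0$ and $\phi_i \neq 0$ conditions separately).

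First I would handle the condition $\th_i \neq \th_j$ for $i \neq j$. Using \eqref{eq:type1th}, for $0 \le i < j \le d$ we have
\[
\th_i - \th_j = (\mu q^{-d} - h q^{d} q^{-2i-2j})(q^{2i} - q^{2j}),
\]
after factoring; more precisely $\th_i-\th_j = \mu q^{-d}(q^{2i}-q^{2j}) - h q^{d}(q^{-2i}-q^{-2j})$, and pulling out $q^{2i}-q^{2j}$ (note $q^{-2i}-q^{-2j} = -q^{-2i-2j}(q^{2i}-q^{2j})$) gives a product of two factors. The first factor $q^{2i}-q^{2j}$ is nonzero for all $0\le i<j\le d$ exactly when $q^{2k}\neq 1$ for $1\le k\le d$, which is \eqref{eq:type1paramcond1}. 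The second factor $\mu q^{-d} + h q^{d}q^{-2i-2j}$ is nonzero for all such $i,j$ exactly when $\mu \neq -h q^{2d-2i-2j}$; as $i+j$ ranges over the relevant set, $2d-2i-2j$ ranges over an interval of even integers, and one checks this is \eqref{eq:type1paramcond2} up to the sign convention built into the exponent range $1-d\le i\le d-1$. (I would double-check the sign and the exact index range here, since this is the kind of place an off-by-one or a sign slip hides.) The identical computation with stars gives \eqref{eq:type1paramcond3} from $\th^*_i\neq\th^*_j$.

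Next I would treat $\vphi_i \neq 0$ and $\phi_i \neq 0$ for $1\le i\le d$. From \eqref{eq:type1vphi}, $\vphi_i$ is the product of $q^i-q^{-i}$, $q^{d-i+1}-q^{i-d-1}$, and $\tau - \mu\mu^* q^{2i-d-1} - hh^* q^{d-2i+1}$. The first factor is nonzero iff $q^{2i}\neq 1$, and the second iff $q^{2(d-i+1)}\neq 1$; as $i$ runs from $1$ to $d$, the union of these conditions is again exactly \eqref{eq:type1paramcond1} (since $\{1,\dots,d\}\cup\{1,\dots,d\}=\{1,\dots,d\}$ for the relevant exponents). The third factor being nonzero for $1\le i\le d$ is precisely \eqref{eq:type1paramcond4}. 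The same analysis of \eqref{eq:type1phi} yields \eqref{eq:type1paramcond5}, with $h$ and $\mu$ swapped in the $\mu^*$-term as in the formula. Assembling: the conjunction of all the Lemma~\ref{lem:classify}(i),(ii) inequalities is equivalent to the conjunction \eqref{eq:type1paramcond1}--\eqref{eq:type1paramcond5}, which is the claim.

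The main obstacle I anticipate is purely bookkeeping rather than conceptual: getting the index ranges and signs in \eqref{eq:type1paramcond2}--\eqref{eq:type1paramcond5} to match exactly what the factored forms produce, and making sure no condition is over- or under-counted when the same factor (like $q^{2i}-q^{2j}$) shows up in several of the original inequalities. I would resolve this by first establishing, as a preliminary remark, that $q^{2k}\neq 1$ for $1\le k\le d$ is equivalent to $q^{2k}\neq 1$ for all $k$ in any fixed range of $d$ consecutive positive integers starting at $1$, and then checking each factorization against that normal form. Since $q\neq 0$ and $q^4\neq 1$ are standing hypotheses in this section, the cases $k=1,2$ are automatic, which slightly simplifies the edge cases. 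No deep input is needed beyond the explicit parametrization already recorded in Lemma~\ref{lem:type1param} and the classification conditions in Lemma~\ref{lem:classify}.
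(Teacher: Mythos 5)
Your proposal is correct, and it is the same routine verification that underlies this lemma: the paper itself gives no proof (it quotes \cite[Lemma 16.4]{NT:K}), and the argument there is exactly this factor-by-factor check — a product is nonzero iff each factor is, plus bookkeeping of the index ranges $j-i\in\{1,\dots,d\}$ and $d-i-j\in\{1-d,\dots,d-1\}$. The only thing to repair is the sign slip you flagged: from $q^{-2i}-q^{-2j}=-q^{-2i-2j}(q^{2i}-q^{2j})$ the second factor of $\th_i-\th_j$ is $\mu q^{-d}-h q^{d-2i-2j}$, so its nonvanishing for all $0\le i<j\le d$ reads $\mu\neq h q^{2(d-i-j)}$ with $d-i-j$ ranging over $1-d,\dots,d-1$, which is \eqref{eq:type1paramcond2} on the nose, with no extra minus sign.
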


\begin{defi}    \label{def:type1basic}    \samepage
\ifDRAFT {\rm def:type1basic}. \fi
A {\em primary $q$-data} is a sequence \eqref{eq:type1pseq} of scalars in $\F$ 
that satisfy \eqref{eq:type1paramcond1}--\eqref{eq:type1paramcond5}.
The primary $q$-data \eqref{eq:type1pseq} and the parameter array  \eqref{eq:type1parray}
are said to {\em correspond}.
\end{defi}

\begin{defi}    \label{def:type1basic2}    \samepage
\ifDRAFT {\rm def:type1basic2}. \fi
Let $A,A^*$ denote a Leonard pair of type I.
By {\em a primary $q$-data of $A,A^*$}
we mean the primary $q$-data that corresponds to a parameter array
of $A,A^*$.
\end{defi}

\begin{lemma}    \label{lem:type1paramDown}    \samepage
\ifDRAFT {\rm lem:type1paramDown}. \fi
For the parameter arrays in Lemma \ref{lem:parrayunique},
consider the corresponding primary $q$-data.
These primary $q$-data are related as follows:
\[
\begin{array}{lcl}
\qquad\qquad \text{\rm parameter array}  & &  \qquad \text{\rm primary $q$-data}
\\ \hline
(\{\th_i\}_{i=0}^d; \{\th^*_i\}_{i=0}^d; \{\vphi_i\}_{i=1}^d; \{\phi_i\}_{i=1}^d)  
& & \quad
(\delta, \mu, h, \delta^*, \mu^*, h^*, \tau)                             \rule{0mm}{3ex}
\\
 (\{\th_i\}_{i=0}^d; \{\th^*_{d-i}\}_{i=0}^d; \{\phi_{d-i+1}\}_{i=1}^d; \{\vphi_{d-i+1} \}_{i=1}^d)  
 & &  \quad
(\delta, \mu, h, \delta^*, h^*, \mu^*, \tau)                            \rule{0mm}{2.7ex}
\\
(\{\th_{d-i}\}_{i=0}^d; \{\th^*_i\}_{i=0}^d; \{\phi_i\}_{i=1}^d; \{\vphi_i\}_{i=1}^d)  
& & \quad
(\delta, h, \mu, \delta^*, \mu^*, h^*, \tau)                             \rule{0mm}{2.7ex}
\\
(\{\th_{d-i}\}_{i=0}^d; \{\th^*_{d-i}\}_{i=0}^d; \{\vphi_{d-i+1}\}_{i=1}^d; \{\phi_{d-i+1}\}_{i=1}^d)  
& &  \quad
(\delta, h, \mu, \delta^*, h^*, \mu^*, \tau)                             \rule{0mm}{2.7ex}   
\end{array}
\]
\end{lemma}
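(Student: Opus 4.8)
The plan is to verify the three nontrivial rows of the table by direct substitution into the defining formulas \eqref{eq:type1th}--\eqref{eq:type1phi}, then invoke the uniqueness clause of Lemma \ref{lem:type1param}. The first row holds by Definition \ref{def:type1basic2}. For each of the other three parameter arrays listed in Lemma \ref{lem:parrayunique}, that array is a parameter array of the same Leonard pair $A,A^*$, so it is a parameter array of type I with fundamental constant $\beta = q^2+q^{-2}$; hence by the converse part of Lemma \ref{lem:type1param} it corresponds to a unique primary $q$-data. It therefore suffices to check that the sequence displayed in the right-hand column satisfies \eqref{eq:type1th}--\eqref{eq:type1phi} for the array in question, and the conclusion follows by uniqueness.

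The computation is driven by a few elementary identities for the substitution $i \mapsto d-i$. We have $q^{2(d-i)-d} = q^{d-2i}$ and $q^{d-2(d-i)} = q^{2i-d}$, so replacing $\th_i$ by $\th_{d-i}$ in \eqref{eq:type1th} interchanges the roles of $\mu$ and $h$ while leaving $\delta$ fixed, and similarly for $\th^*$. For the split sequences, the two factors $q^i - q^{-i}$ and $q^{d-i+1} - q^{i-d-1}$ in \eqref{eq:type1vphi} and \eqref{eq:type1phi} are interchanged under $i \mapsto d-i+1$, so their product is preserved, while in the third factor the monomials $q^{2i-d-1}$ and $q^{d-2i+1}$ are interchanged. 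Granting these, the second array (replace $\th^*_i \mapsto \th^*_{d-i}$, $\vphi_i \mapsto \phi_{d-i+1}$, $\phi_i \mapsto \vphi_{d-i+1}$) is read off to have $\mu^*$ and $h^*$ interchanged and all of $\delta,\mu,h,\delta^*,\tau$ unchanged; the third array (replace $\th_i \mapsto \th_{d-i}$, $\vphi_i \mapsto \phi_i$, $\phi_i \mapsto \vphi_i$) has $\mu$ and $h$ interchanged; and the fourth array is obtained by applying both index reversals, giving $\mu,h$ interchanged together with $\mu^*,h^*$ interchanged.

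The one point requiring care is that in the second and third rows the roles of $\vphi$ and $\phi$ themselves are exchanged, so after the index change the transformed $\vphi$-sequence of the new array must be matched against the $\vphi$-formula \eqref{eq:type1vphi} written in the permuted data; one checks that the third factor $\tau - \mu\mu^* q^{2i-d-1} - hh^* q^{d-2i+1}$ passes to $\tau - h\mu^* q^{2i-d-1} - \mu h^* q^{d-2i+1}$, which is exactly the third factor of $\phi$, and symmetrically for the $\phi$-sequence. I expect this bookkeeping --- tracking which of $\vphi,\phi$ maps to which and following the interchange of monomials in the third factor --- to be the only real obstacle; there is no conceptual difficulty and no appeal beyond Lemmas \ref{lem:parrayunique} and \ref{lem:type1param}. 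Once the three rows are checked, the proof is complete.
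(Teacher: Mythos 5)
Your proposal is correct and follows essentially the same route as the paper, which proves the lemma by the same substitution into \eqref{eq:type1th}--\eqref{eq:type1phi} together with the uniqueness clause of Lemma \ref{lem:type1param} (via Definition \ref{def:type1basic}); your index computations for $i \mapsto d-i$ and $i \mapsto d-i+1$ and the resulting swaps of $\mu \leftrightarrow h$ and $\mu^* \leftrightarrow h^*$ check out.
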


\begin{proof}
Use Lemma \ref{lem:type1param} and Definition \ref{def:type1basic}.
\end{proof}

\section{Parameter arrays of type II}
\label{sec:type2}
\ifDRAFT {\rm sec:type2}. \fi

In this section we describe the parameter arrays of type II.

\begin{lemma}   {\rm (See \cite[Lemma 19.1]{NT:K}.)}
\label{lem:type2param}    \samepage
\ifDRAFT {\rm lem:type2param}. \fi
For a sequence
\begin{equation}
   (\delta, \mu, h, \delta^*, \mu^*, h^*, \tau)                \label{eq:type2pseq}
\end{equation}
of scalars in $\F$,  define
\begin{align}
\th_i &= \delta + \mu (i-d/2)+ h i (d-i),                \label{eq:type2th}
\\
\th^*_i &= \delta^* + \mu^* (i-d/2) + h^* i (d-i)     \label{eq:type2ths}
\end{align}
for $0 \leq i \leq d$ and
\begin{align}
\vphi_i &= i (d-i+1) \big(\tau - \mu \mu^*/2 + 
  (h \mu^* + \mu h^*)(i-(d+1)/2) + h h^* (i-1)(d-i) \big),             \label{eq:type2vphi}
\\
\phi_i &= i (d-i+1) \big(\tau + \mu \mu^*/2+
         (h \mu^* - \mu h^*)(i-(d+1)/2)+h h^*(i-1)(d-i) \big)         \label{eq:type2phi}
\end{align}
for $1 \leq i \leq d$.
Then the sequence
\begin{equation}
   (\{\th_i\}_{i=0}^d; \{\th^*_i\}_{i=0}^d; \{\vphi_i\}_{i=1}^d; \{\phi_i\}_{i=1}^d)        \label{eq:type2parray}
\end{equation}
is a parameter array over $\F$ that has type II,
provided that the inequalities in Lemma \ref{lem:classify}{\rm (i),(ii)} hold.
Conversely, assume that the sequence \eqref{eq:type2parray} is a parameter 
array over $\F$ that has type II.
Then there exists a unique sequence \eqref{eq:type2pseq} of scalars in $\F$ that satisfies
\eqref{eq:type2th}--\eqref{eq:type2phi}.
\end{lemma}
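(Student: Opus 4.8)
The plan is to reduce this statement, which is a type~II analogue of Lemma~\ref{lem:type1param}, to the classification of parameter arrays in Lemma~\ref{lem:classify}, exactly as one does for type~I. Concretely, I would verify the two directions separately. For the forward direction, assuming the parametrized scalars \eqref{eq:type2th}--\eqref{eq:type2phi} satisfy the inequalities of Lemma~\ref{lem:classify}(i),(ii), I must check conditions (iii)--(v) of Lemma~\ref{lem:classify}. Condition (v) is the quickest: compute the common value of the two fractions in \eqref{eq:indep} using the quadratic-in-$i$ forms $\th_i = \delta + \mu(i-d/2)+hi(d-i)$ and $\th^*_i = \delta^* + \mu^*(i-d/2)+h^*i(d-i)$; the second differences of a quadratic are constant, so $\th_{i-2}-\th_{i+1}$ and $\th_{i-1}-\th_i$ are both linear (indeed affine) in $i$ in a way that their ratio is identically $3$, i.e. $\beta+1 = 3$, $\beta = 2$, which is the type~II condition and also pins down the fundamental constant. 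The substantive work is conditions (iii) and (iv): one substitutes the given closed forms and verifies the two identities
\[
\vphi_i = \phi_1 \sum_{\ell=0}^{i-1}\frac{\th_\ell-\th_{d-\ell}}{\th_0-\th_d} + (\th^*_i-\th^*_0)(\th_{i-1}-\th_d), \qquad
\phi_i = \vphi_1 \sum_{\ell=0}^{i-1}\frac{\th_\ell-\th_{d-\ell}}{\th_0-\th_d}+(\th^*_i-\th^*_0)(\th_{d-i+1}-\th_0),
\]
as polynomial identities in $i$. Here $\th_\ell-\th_{d-\ell} = (\mu - \text{(linear correction)})(2\ell-d)$ simplifies nicely because the $hi(d-i)$ part is symmetric under $\ell\mapsto d-\ell$, so only the linear term $\mu(\ell-d/2)$ survives the subtraction; hence the telescoping sum $\sum_{\ell=0}^{i-1}(2\ell-d)/(-d) = i(d-i)/d$ (up to sign), and $\phi_1/(\th_0-\th_d)$, $\vphi_1/(\th_0-\th_d)$ become explicit scalars. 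After this, both sides of (iii) and (iv) are polynomials of degree at most $3$ in $i$, and one checks equality of coefficients — this is the routine but unavoidable computational core.

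For the converse direction, I would argue as follows. Suppose \eqref{eq:type2parray} is a parameter array over $\F$ of type~II, so its fundamental constant $\beta$ satisfies $\beta=2$. First recover $\delta^*,\mu^*,h^*$: since $\th^*_{i-2}-\th^*_{i+1}$ over $\th^*_{i-1}-\th^*_i$ is the constant $3$, the sequence $\{\th^*_i\}$ has constant second difference, so there are unique $\delta^*,\mu^*,h^*$ with $\th^*_i = \delta^*+\mu^*(i-d/2)+h^*i(d-i)$ — this is elementary finite-difference bookkeeping (solve the linear system from $\th^*_0,\th^*_1$, and the top coefficient from the second difference). Similarly recover $\delta,\mu,h$ from $\{\th_i\}$. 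It remains to produce $\tau$ and to show that the existing $\vphi_i,\phi_i$ match \eqref{eq:type2vphi}--\eqref{eq:type2phi} for that $\tau$: define $\tau$ by reading off the constant term of $\vphi_1/(1\cdot d)$ (i.e. $\tau = \vphi_1/d + \mu\mu^*/2 - (h\mu^*+\mu h^*)(1-(d+1)/2)$, using the $i=1$ case where the $(i-1)(d-i)$ term vanishes), then invoke conditions (iii),(iv) of Lemma~\ref{lem:classify} — which the given array satisfies — together with the forward computation already done, to conclude that $\vphi_i,\phi_i$ are forced to equal the parametrized expressions for all $i$. Uniqueness of the seven scalars then follows because each was recovered by an explicit formula. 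The cited reference \cite[Lemma~19.1]{NT:K} presumably already contains all of this, so in the paper the cleanest move is to cite it; but the self-contained argument is as above.

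The main obstacle is purely the bookkeeping in verifying (iii) and (iv): the expressions \eqref{eq:type2vphi}, \eqref{eq:type2phi} are genuinely cubic in $i$ with several bilinear cross-terms in $\mu,h,\mu^*,h^*$, and one must be careful that the telescoping sum is handled correctly (it contributes an $i(d-i)$ factor, matching the $i(d-i+1)$ prefactor only after combining with the $(\th^*_i-\th^*_0)(\th_{i-1}-\th_d)$ term). There is no conceptual difficulty — type~II is the $q\to 1$ degeneration of type~I, and one could in principle derive \eqref{eq:type2th}--\eqref{eq:type2phi} by taking that limit in Lemma~\ref{lem:type1param} — but checking the limit rigorously is itself a computation, so a direct verification against Lemma~\ref{lem:classify} is the more transparent route. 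Since this is a known result, I would keep the write-up short: cite \cite[Lemma~19.1]{NT:K} and, if desired, remark that the identities reduce to equality of polynomials of bounded degree in $i$, which is checked by comparing coefficients.
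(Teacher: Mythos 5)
The paper never proves this lemma internally: it simply imports it from \cite[Lemma 19.1]{NT:K}, so there is no in-paper argument to compare against, and your decision to either cite or verify directly against Lemma \ref{lem:classify} is exactly right. Your self-contained sketch is essentially sound: the second-difference computation showing both ratios in \eqref{eq:indep} equal $3$ (hence $\beta=2$, type II) is correct; reducing conditions (iii), (iv) to identities between polynomials of degree at most $3$ in $i$, checked by comparing coefficients, is the genuine computational core; and the converse scheme (recover $\delta^*,\mu^*,h^*$ and $\delta,\mu,h$ by finite differences, legitimate since $\mathrm{Char}(\F)\neq 2$ is in force, define $\tau$ from $\vphi_1$, then use (iii), (iv) together with the already-verified forward identities to force all $\vphi_i,\phi_i$) is valid and yields the uniqueness claim. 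What the paper's citation buys is brevity; what your verification buys is self-containedness and an explicit confirmation that the fundamental constant is $2$. Two details to fix if you write it out: the telescoping sum is $\sum_{\ell=0}^{i-1}(\th_\ell-\th_{d-\ell})/(\th_0-\th_d)=i(d-i+1)/d$, not $i(d-i)/d$ ``up to sign'' --- it already carries the prefactor $i(d-i+1)$ exactly, with no further combination needed; and solving $\vphi_1=d\bigl(\tau-\mu\mu^*/2+(h\mu^*+\mu h^*)(1-(d+1)/2)\bigr)$ for $\tau$ divides by $d$, so you should note that a type II parameter array forces $\mathrm{Char}(\F)=0$ or $>d$ (if $\mathrm{Char}(\F)=p\le d$, the quadratic form of $\th_i$ would give $\th_p=\th_0$, contradicting Lemma \ref{lem:classify}(i)), which makes $d$ invertible in $\F$.
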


\begin{lemma}    {\rm (See \cite[Lemma 19.4]{NT:K}.) }
\label{lem:type2cond}   \samepage
\ifDRAFT {\rm lem::type2cond}. \fi
Referring to Lemma \ref{lem:type2param}, 
the inequalities in Lemma \ref{lem:classify}{\rm (i),(ii)} hold
if and only if
\begin{align}
 & \text{\rm $\text{\rm Char}(\F)$ is equal to $0$ or greater than $d$},          \label{eq:type2paramcond1}
\\
 & \mu \neq h i  \qquad\qquad \;\;\,  (1-d \leq i \leq d-1),          \label{eq:type2paramcond2}
\\
 & \mu^* \neq h^* i \qquad\qquad (1-d \leq i \leq d-1),     \label{eq:type2paramcond3}
\\
 & \tau \neq \mu \mu^*/2 - 
       (h \mu^* + \mu h^*)(i-(d+1)/2) -  h h^* (i-1)(d-i)   & (1 \leq i \leq d),  \label{eq:type2paramcond4}
\\
& \tau \neq  - \mu \mu^*/2 - 
       (h \mu^* - \mu h^*)(i-(d+1)/2) -  h h^* (i-1)(d-i)  & (1 \leq i \leq d). \label{eq:type2paramcond5}
\end{align}
\end{lemma}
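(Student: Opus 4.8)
The plan is to substitute the formulas \eqref{eq:type2th}--\eqref{eq:type2phi} directly into the inequalities of Lemma \ref{lem:classify}{\rm (i),(ii)} and read off the five conditions \eqref{eq:type2paramcond1}--\eqref{eq:type2paramcond5} by matching ranges of indices. The first step is to factor the eigenvalue differences: from $i(d-i)-j(d-j)=(i-j)(d-i-j)$ one gets, for $0\le i,j\le d$,
\[
 \th_i-\th_j=(i-j)\bigl(\mu+h(d-i-j)\bigr),
 \qquad
 \th^*_i-\th^*_j=(i-j)\bigl(\mu^*+h^*(d-i-j)\bigr).
\]
As $(i,j)$ runs over the pairs with $0\le i<j\le d$, the factor $j-i$ runs over $\{1,2,\ldots,d\}$ while $d-i-j$ runs over $\{1-d,2-d,\ldots,d-1\}$ (every value being attained), and this latter range is symmetric about $0$.

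Next I would analyze Lemma \ref{lem:classify}{\rm (i)}. By the displayed factorization, $\th_i\neq\th_j$ for all $i\neq j$ holds if and only if (a) none of $1,2,\ldots,d$ vanishes in $\F$, and (b) $\mu+hk\neq 0$ for $1-d\le k\le d-1$. Statement (a) is precisely \eqref{eq:type2paramcond1}, and, using the sign-symmetry of the index range, (b) is precisely \eqref{eq:type2paramcond2}. The same argument applied to $\th^*$ yields \eqref{eq:type2paramcond1} together with \eqref{eq:type2paramcond3}. Then I would treat Lemma \ref{lem:classify}{\rm (ii)}: by \eqref{eq:type2vphi} and \eqref{eq:type2phi} we have $\vphi_i=i(d-i+1)P_i$ and $\phi_i=i(d-i+1)Q_i$, where $P_i$ and $Q_i$ are the bracketed factors appearing there. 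For $1\le i\le d$ both $i$ and $d-i+1$ run over $\{1,\ldots,d\}$, so $i(d-i+1)\neq 0$ for all such $i$ is once more exactly \eqref{eq:type2paramcond1}. Granting \eqref{eq:type2paramcond1}, in a field $\vphi_i\neq 0$ and $\phi_i\neq 0$ for $1\le i\le d$ reduce to $P_i\neq 0$ and $Q_i\neq 0$, which are, after transposing terms, exactly \eqref{eq:type2paramcond4} and \eqref{eq:type2paramcond5}. Assembling these equivalences gives the statement.

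I do not expect a genuine obstacle here; the computation is bookkeeping. The two points that deserve care are verifying that $\{\,d-i-j:0\le i<j\le d\,\}$ equals $\{1-d,\ldots,d-1\}$ exactly (so that the sign-symmetric form of \eqref{eq:type2paramcond2} and \eqref{eq:type2paramcond3} is legitimate), and observing that the single hypothesis \eqref{eq:type2paramcond1} is what is simultaneously forced by the nonvanishing of the eigenvalue differences and by the nonvanishing of the prefactors $i(d-i+1)$, so it is not being counted twice. Conditions {\rm (iii)--(v)} of Lemma \ref{lem:classify} need not be revisited, since Lemma \ref{lem:type2param} already records that for sequences of this shape they follow once {\rm (i)} and {\rm (ii)} hold.
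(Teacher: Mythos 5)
Your verification is correct, and in fact the paper does not prove this lemma at all: it is quoted from \cite[Lemma 19.4]{NT:K}, so your direct substitute-and-factor argument supplies exactly the standard proof behind the citation. The two points you flag are the right ones, and both check out: $\th_i-\th_j=(i-j)\bigl(\mu+h(d-i-j)\bigr)$ with $j-i$ covering $\{1,\ldots,d\}$ and $d-i-j$ covering all of the sign-symmetric range $\{1-d,\ldots,d-1\}$, so nonvanishing of the eigenvalue differences is equivalent to \eqref{eq:type2paramcond1} together with \eqref{eq:type2paramcond2} (and likewise \eqref{eq:type2paramcond3}), while the prefactors $i(d-i+1)$ in \eqref{eq:type2vphi}, \eqref{eq:type2phi} force only \eqref{eq:type2paramcond1} again, leaving \eqref{eq:type2paramcond4}, \eqref{eq:type2paramcond5} as the bracketed factors; you are also right that conditions (iii)--(v) of Lemma \ref{lem:classify} play no role here, since the lemma concerns only the inequalities in (i),(ii). (Implicitly one needs $2$ invertible for the expressions involving $d/2$ to make sense, but this is guaranteed by the paper's standing assumption $\text{\rm Char}(\F)\neq 2$.)
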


\begin{defi}    \label{def:type2basic}    \samepage
\ifDRAFT {\rm def:type2basic}. \fi
A {\em primary data of type II} is a sequence \eqref{eq:type2pseq} of scalars in $\F$ 
that satisfy \eqref{eq:type2paramcond1}--\eqref{eq:type2paramcond5}.
The primary data \eqref{eq:type2pseq} of type II and the parameter array \eqref{eq:type2parray}
are said to {\em correspond}.
\end{defi}

\begin{defi}   \label{def:type2basic2}    \samepage
\ifDRAFT {\rm def:type2basic2}. \fi
Let $A,A^*$ denote a Leonard pair of type II.
By a {\em primary data of $A,A^*$} we mean the primary data of type II
that corresponds to a parameter array of $A,A^*$.
\end{defi}

\begin{lemma}    \label{lem:type2paramDown}    \samepage
\ifDRAFT {\rm lem:type2paramDown}. \fi
For the parameter arrays in Lemma \ref{lem:parrayunique},
consider the corresponding primary data of type II.
These primary data are related as follows:
\[
\begin{array}{lcl}
\qquad\qquad \text{\rm parameter array}  & &  \qquad \text{\rm primary data}
\\ \hline
(\{\th_i\}_{i=0}^d; \{\th^*_i\}_{i=0}^d; \{\vphi_i\}_{i=1}^d; \{\phi_i\}_{i=1}^d)  
& & \quad
(\delta, \mu, h, \delta^*, \mu^*, h^*, \tau)                             \rule{0mm}{3ex}
\\
 (\{\th_i\}_{i=0}^d; \{\th^*_{d-i}\}_{i=0}^d; \{\phi_{d-i+1}\}_{i=1}^d; \{\vphi_{d-i+1} \}_{i=1}^d)  
 & &  \quad
  (\delta, \mu, h, \delta^*, - \mu^*, h^*, \tau)                            \rule{0mm}{2.7ex}
\\
(\{\th_{d-i}\}_{i=0}^d; \{\th^*_i\}_{i=0}^d; \{\phi_i\}_{i=1}^d; \{\vphi_i\}_{i=1}^d)  
& & \quad
(\delta, - \mu, h, \delta^*, \mu^*, h^*, \tau)                                \rule{0mm}{2.7ex}
\\
(\{\th_{d-i}\}_{i=0}^d; \{\th^*_{d-i}\}_{i=0}^d; \{\vphi_{d-i+1}\}_{i=1}^d; \{\phi_{d-i+1}\}_{i=1}^d)  
& &  \quad
 (\delta, - \mu, h, \delta^*, - \mu^*, h^*, \tau)                            \rule{0mm}{2.7ex}   
\end{array}
\]
\end{lemma}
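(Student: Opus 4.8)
The plan is to reduce everything to the explicit parametrization of Lemma \ref{lem:type2param}. By Lemma \ref{lem:parrayunique} the four displayed sequences are exactly the parameter arrays of $A,A^*$, so they all have type II (Definition \ref{def:typearray}); hence by Lemma \ref{lem:type2param} each of them corresponds to a unique primary data of type II. Consequently it suffices to verify, for each of the last three rows of the table, that the $7$-tuple written on the right, when substituted into \eqref{eq:type2th}--\eqref{eq:type2phi}, reproduces the parameter array written on the left. The uniqueness clause of Lemma \ref{lem:type2param} then immediately yields the asserted correspondence, exactly as in the proof of the type~I analogue, Lemma \ref{lem:type1paramDown}.

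First I would treat the row $(\{\th_{d-i}\}_{i=0}^d; \{\th^*_i\}_{i=0}^d; \{\phi_i\}_{i=1}^d; \{\vphi_i\}_{i=1}^d)$. From \eqref{eq:type2th} and the identity $(d-i)-d/2 = -(i-d/2)$, the sequence $\{\th_{d-i}\}_{i=0}^d$ is obtained from the formula for $\{\th_i\}_{i=0}^d$ by the single replacement $\mu \mapsto -\mu$, while $\{\th^*_i\}_{i=0}^d$ is untouched, so $\delta,h,\delta^*,\mu^*,h^*,\tau$ are unchanged. Substituting $\mu \mapsto -\mu$ into \eqref{eq:type2vphi} and \eqref{eq:type2phi} and comparing with \eqref{eq:type2phi} and \eqref{eq:type2vphi} respectively, one checks that the new $\vphi_i$ is the old $\phi_i$ and the new $\phi_i$ is the old $\vphi_i$; this matches the claimed tuple $(\delta,-\mu,h,\delta^*,\mu^*,h^*,\tau)$. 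The row involving $\{\th^*_{d-i}\}_{i=0}^d$ is handled in the mirror-image way, using the identity $(d-i+1)-(d+1)/2 = -(i-(d+1)/2)$ together with the fact that the prefactors $i(d-i+1)$ and the term $(i-1)(d-i)$ in \eqref{eq:type2vphi} and \eqref{eq:type2phi} are invariant under $i \mapsto d-i+1$: here $\th^*_{d-i}$ comes from $\th^*_i$ by $\mu^* \mapsto -\mu^*$ (with $\th_i$ unchanged), and the combined effect of $\mu^* \mapsto -\mu^*$ and the index shift $i \mapsto d-i+1$ in $\vphi$ and $\phi$ is again to interchange the two split sequences, which is the tuple $(\delta,\mu,h,\delta^*,-\mu^*,h^*,\tau)$. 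The last row then follows by composing the substitutions $\mu \mapsto -\mu$ and $\mu^* \mapsto -\mu^*$, giving $(\delta,-\mu,h,\delta^*,-\mu^*,h^*,\tau)$.

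I do not expect any conceptual obstacle; the whole argument is a sign-and-index bookkeeping exercise. The one point requiring care is the behaviour of the mixed quadratic terms $(h\mu^* \pm \mu h^*)(i-(d+1)/2)$ under the reversals $i \mapsto d-i$ and $i \mapsto d-i+1$: it is precisely the interaction between the sign flip of the linear factor $i-(d+1)/2$ and the sign flip of $\mu$ (resp.\ $\mu^*$) that converts the $\vphi$ formula into the $\phi$ formula and vice versa. Once this is verified in one case the remaining cases are immediate, and since each of the four sequences is already known to be a parameter array of type II, no extra work is needed to confirm that the conditions \eqref{eq:type2paramcond1}--\eqref{eq:type2paramcond5} are preserved.
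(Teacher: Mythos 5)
Your proposal is correct and follows essentially the same route as the paper, whose proof is simply to invoke Lemma \ref{lem:type2param} and Definition \ref{def:type2basic}; your substitution checks ($\mu\mapsto-\mu$, $\mu^*\mapsto-\mu^*$, and the index reversals) are exactly the bookkeeping that proof leaves implicit, and they are carried out correctly.
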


\begin{proof}
Use Lemma \ref{lem:type2param} and Definition \ref{def:type2basic}.
\end{proof}

\section{Parameter arrays of type III$^+$}
\label{sec:type3+}
\ifDRAFT {\rm sec:type3+}.  \fi

In this section we describe the parameter arrays of type III$^+$.

\begin{lemma}    {\rm (See \cite[Lemma 22.1]{NT:K}.) }
\label{lem:type3+param}    \samepage
\ifDRAFT {\rm lem:type3+param}. \fi
Assume that $d$ is even.
For a sequence
\begin{equation}
   (\delta, s, h, \delta^*,  s^*, h^*, \tau)                \label{eq:type3+pseq}
\end{equation}
of scalars in $\F$,  define
\begin{align}
\theta_i &=
  \begin{cases}
     \delta+s+h(i-d/2)  & \text{\rm if $i$ is even}, \\
     \delta-s -h(i-d/2) & \text{\rm if $i$ is odd},
  \end{cases}        
            \label{eq:type3+th}
\\
\theta^*_i &=
   \begin{cases}
     \delta^* +s^* +h^*(i-d/2)  &   \text{\rm if $i$ is even}, \\
     \delta^* - s^* -h^*(i-d/2) &  \text{\rm if $i$ is odd}
   \end{cases}  
      \label{eq:type3+ths}
\end{align}
for $0 \leq i \leq d$ and
\begin{align}
\varphi_i &=
   \begin{cases}
      i \big(\tau-sh^*-s^*h-hh^*(i-(d+1)/2) \big) &   \text{\rm if $i$ is even}, \\
      (d-i+1) \big( \tau+sh^*+s^*h+hh^*(i-(d+1)/2) \big)  & \text{\rm if $i$ is odd},    
   \end{cases}                                                                 \label{eq:type3+vphi}
\\
\phi_i &=
   \begin{cases}
      i \big( \tau-sh^*+s^*h+hh^*(i-(d+1)/2) \big) &   \text{\rm if $i$ is even}, \\
      (d-i+1) \big( \tau+sh^*-s^*h - hh^*(i-(d+1)/2) \big)  & \text{\rm if $i$ is odd}
   \end{cases}                                                                \label{eq:type3+phi}
\end{align}
for $1 \leq i \leq d$.
Then the sequence
\begin{equation}
   (\{\th_i\}_{i=0}^d; \{\th^*_i\}_{i=0}^d; \{\vphi_i\}_{i=1}^d; \{\phi_i\}_{i=1}^d)        \label{eq:type3+parray}
\end{equation}
is a parameter array over $\F$ that has type III$^+$,
provided that the inequalities in Lemma \ref{lem:classify}{\rm (i),(ii)} hold.
Conversely, assume that the sequence \eqref{eq:type3+parray} is a parameter 
array over $\F$ that has type III$^+$.
Then there exists a unique sequence \eqref{eq:type3+pseq} of scalars in $\F$ that satisfies
\eqref{eq:type3+th}--\eqref{eq:type3+phi}.
\end{lemma}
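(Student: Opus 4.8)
The plan is to prove both assertions by reducing to the classification of parameter arrays in Lemma~\ref{lem:classify}. For the first (direct) statement, I would fix the sequence \eqref{eq:type3+pseq}, define $\{\th_i\}$, $\{\th^*_i\}$, $\{\vphi_i\}$, $\{\phi_i\}$ by \eqref{eq:type3+th}--\eqref{eq:type3+phi}, and verify conditions (i)--(v) of Lemma~\ref{lem:classify}; conditions (i) and (ii) are exactly what is being assumed, so the content is in (iii)--(v). For (v): since $d$ is even, the indices $i$ and $i-2$ have the same parity and $i-1$, $i+1$ have the opposite parity, so substituting \eqref{eq:type3+th} and splitting into the cases ``$i$ even'' and ``$i$ odd'' shows in both cases that $\th_{i-2}-\th_{i+1}$ and $\th_{i-1}-\th_i$ differ by a sign; hence the first fraction in \eqref{eq:indep} equals $-1$ for $2\le i\le d-1$, and the identical computation with \eqref{eq:type3+ths} shows the dual fraction equals $-1$ as well. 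Thus the common value of the two fractions is $-1$, so the fundamental constant is $\beta=-2$; since $d$ is even, the type is III$^+$ by Definitions~\ref{def:beta0} and~\ref{def:type}.

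For (iii),(iv) I would substitute \eqref{eq:type3+th}, \eqref{eq:type3+ths} into the right-hand sides of those identities, again split by the parity of $i$, and carry out the (routine but lengthy) algebra; the key simplification is that the telescoping sum $\sum_{\ell=0}^{i-1}(\th_\ell-\th_{d-\ell})/(\th_0-\th_d)$ collapses to a simple polynomial in $i$ because $\th_\ell-\th_{d-\ell}$ is, up to parity-dependent sign, linear in $\ell$. The outcome should be precisely \eqref{eq:type3+vphi}, \eqref{eq:type3+phi}. For the converse, suppose \eqref{eq:type3+parray} is a parameter array over $\F$ of type III$^+$; then its fundamental constant is $\beta=-2$ by Definitions~\ref{def:beta0},~\ref{def:type}, so condition (v) of Lemma~\ref{lem:classify} says that the first differences $u_i=\th_i-\th_{i-1}$ satisfy $u_{i-1}+2u_i+u_{i+1}=0$ for $2\le i\le d-1$. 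The characteristic polynomial $(t+1)^2$ has $-1$ as a double root, so $u_i=(a+bi)(-1)^i$ for unique $a,b\in\F$; summing and using $\mathrm{Char}(\F)\neq2$ shows $\th_i$ has exactly the alternating linear shape \eqref{eq:type3+th}, and matching $\th_0,\th_1,\th_2$ determines $\delta,s,h$ uniquely (e.g.\ $h=(\th_2-\th_0)/2$). The same argument applied to $\{\th^*_i\}_{i=0}^d$ produces unique $\delta^*,s^*,h^*$ satisfying \eqref{eq:type3+ths}. Finally, the $i=1$ case of condition (iii) of Lemma~\ref{lem:classify} writes $\vphi_1$ as $\tau$ minus an expression in the already-fixed scalars (the coefficient of $\tau$ being $1$), so $\tau$ is uniquely recovered; that these seven scalars reproduce all of $\{\vphi_i\}_{i=1}^d$, $\{\phi_i\}_{i=1}^d$ then follows from conditions (iii),(iv) of Lemma~\ref{lem:classify} together with the direct computation already performed.

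I expect the main obstacle to be the bookkeeping in verifying (iii),(iv): the even/odd case split interacts with the $\tau^*_i$-type products and with the sum over $\ell$, so the computation must be organized carefully (for instance by first reducing everything to the first split sequence via Lemma~\ref{lem:classify}(iii), and treating $i$ even and $i$ odd as genuinely separate identities). An alternative to this direct verification would be to obtain \eqref{eq:type3+th}--\eqref{eq:type3+phi} as a confluent limit $q\to-1$ of the type~I formulas \eqref{eq:type1th}--\eqref{eq:type1phi} of Lemma~\ref{lem:type1param}; the parity split in the III$^+$ formulas is then the visible trace of the degeneration forced by $q^2\to1$, but making that limit rigorous is itself delicate, so I would prefer the direct route.
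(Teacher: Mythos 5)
The paper gives no proof of this lemma at all: it is imported verbatim from \cite[Lemma 22.1]{NT:K}. Your argument is therefore a genuinely different, self-contained route, reconstructing the statement from the classification of parameter arrays (Lemma \ref{lem:classify}). The plan is sound. In the forward direction the only content is Lemma \ref{lem:classify}(iii)--(v); your parity computation showing that both fractions in \eqref{eq:indep} equal $-1$ (so $\beta=-2$, hence type III$^+$ since $d$ is even) is correct, and the identities (iii),(iv) do check out under substitution (I verified the $i=1$ instance of (iii) against \eqref{eq:type3+vphi}, \eqref{eq:type3+phi}). In the converse direction your key observation is right: condition (v) with $\beta=-2$ gives $u_{i-1}+2u_i+u_{i+1}=0$ for the first differences $u_i=\th_i-\th_{i-1}$, the solutions $(-1)^i$ and $i(-1)^i$ are independent in any characteristic, and $\text{\rm Char}(\F)\neq 2$ (the paper's standing assumption) then yields unique $\delta,s,h$ and, dually, $\delta^*,s^*,h^*$ matching \eqref{eq:type3+th}, \eqref{eq:type3+ths}. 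What the citation buys the paper is brevity; what your route buys is transparency about the inputs: only Lemma \ref{lem:classify}, $\text{\rm Char}(\F)\neq2$, and elementary recurrence manipulation.

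Two points need repair or expansion. First, you recover $\tau$ from the $i=1$ instance of \eqref{eq:type3+vphi} and assert ``the coefficient of $\tau$ being $1$''; in fact the coefficient of $\tau$ in $\vphi_i$ is $i$ for $i$ even and $d-i+1$ for $i$ odd, never $1$, so for $\vphi_1$ it is $d$ and you must know $d\neq 0$ in $\F$. This is fixable: either solve for $\tau$ from $\vphi_2$, whose coefficient $2$ is invertible since $\text{\rm Char}(\F)\neq 2$, or note that distinctness of the even-indexed $\th^*_i$ (which by your first step have the form $\delta^*+s^*+h^*(i-d/2)$ with $h^*\neq0$) forces $\text{\rm Char}(\F)=0$ or $\text{\rm Char}(\F)>d/2$, and since $d$ is even while the characteristic is odd, this gives $d\neq 0$ in $\F$. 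Second, the final step ``these seven scalars reproduce all of $\{\vphi_i\}$, $\{\phi_i\}$'' deserves a sentence: letting $\vphi_i''$, $\phi_i''$ denote the quantities defined from your seven scalars by \eqref{eq:type3+vphi}, \eqref{eq:type3+phi}, the forward identities (iii),(iv) (which are polynomial identities, needing none of the inequalities) give $\vphi_1''=\phi_1''+(\th^*_1-\th^*_0)(\th_0-\th_d)$; since $\vphi_1''=\vphi_1$ by the choice of $\tau$ and the given array satisfies the same relation, $\phi_1''=\phi_1$, and then Lemma \ref{lem:classify}(iii),(iv) force $\vphi_i''=\vphi_i$ and $\phi_i''=\phi_i$ for all $i$. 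With these adjustments, and granting the admittedly lengthy but routine verification of (iii),(iv), your proof is complete.
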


\begin{lemma}    {\rm (See \cite[Lemma 22.4]{NT:K}.) }
\label{lem:type3+cond}   \samepage
\ifDRAFT {\rm lem::type3+cond}. \fi
Referring to Lemma \ref{lem:type3+param}, 
the inequalities in Lemma \ref{lem:classify}{\rm (i),(ii)} hold
if and only if
\begin{align}
 & \text{\rm $\text{\rm Char}(\F)$ is equal to $0$ or greater than $d/2$},          \label{eq:type3+paramcond1}
\\
 & h \neq 0,   \qquad h^* \neq 0,                                      \label{eq:type3+paramcond1b}
\\
 & 2 s \neq i h  \qquad\;\;\, \text{\rm if $i$ is odd} \qquad  (1-d \leq i \leq d-1),          \label{eq:type3+paramcond2}
\\
 & 2 s^* \neq i h^*    \qquad \text{\rm if $i$ is odd} \qquad  (1-d \leq i \leq d-1),     \label{eq:type3+paramcond3}
\\
 & \tau \neq
  \begin{cases}
   s h^* + s^* h + h h^* \big( i - (d+1)/2 \big)   &  \text{\rm if $i$ is even},
  \\
  - s h^* - s^* h - h h^* \big( i - (d+1)/2 \big)  &  \text{\rm if $i$ is odd} 
 \end{cases}
 \qquad \, (1 \leq i \leq d),       \label{eq:type3+paramcond4a}
\\
 & \tau \neq 
  \begin{cases}
    s h^* - s^* h - h h^* \big( i - (d+1)/2 \big)    & \text{\rm if $i$ is even}, 
  \\
  - s h^* + s^* h + h h^* \big( i - (d+1)/2 \big)   &  \text{\rm if $i$ is odd}
  \end{cases}
    \qquad \, (1 \leq i \leq d).       \label{eq:type3+paramcond4b}
\end{align}
\end{lemma}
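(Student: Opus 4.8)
The plan is to unwind Lemma \ref{lem:classify}{\rm (i),(ii)}: these inequalities assert precisely that the scalars $\{\th_i\}_{i=0}^d$ are mutually distinct, the scalars $\{\th^*_i\}_{i=0}^d$ are mutually distinct, and $\vphi_i \neq 0$, $\phi_i \neq 0$ for $1 \leq i \leq d$. So I would handle these three requirements separately, substituting the explicit parity-dependent formulas \eqref{eq:type3+th}--\eqref{eq:type3+phi} and keeping in mind that $\text{\rm Char}(\F) \neq 2$ (so that $d/2$ and $(d+1)/2$ make sense) and that here $d \geq 4$ is even.

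First I would treat the distinctness of $\{\th_i\}_{i=0}^d$ by splitting into the case that two indices $i \neq j$ have the same parity, where \eqref{eq:type3+th} gives $\th_i - \th_j = \pm h(i-j)$ with $i-j$ a nonzero even integer of absolute value at most $d$, and the case of opposite parity, where $\th_i - \th_j = \pm\bigl(2s + h(i+j-d)\bigr)$. A short computation shows that as $i$ ranges over the even indices and $j$ over the odd indices in $\{0,1,\dots,d\}$, the quantity $i+j-d$ takes exactly the odd integer values $k$ with $1-d \leq k \leq d-1$. Hence distinctness of $\{\th_i\}_{i=0}^d$ is equivalent to $h \neq 0$ together with the characteristic condition \eqref{eq:type3+paramcond1} (which is what makes $h(i-j) \neq 0$) and the inequalities of \eqref{eq:type3+paramcond2}. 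Applying the identical analysis to \eqref{eq:type3+ths} yields $h^* \neq 0$, again \eqref{eq:type3+paramcond1}, and \eqref{eq:type3+paramcond3}; together the two nonvanishing conditions give \eqref{eq:type3+paramcond1b}.

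Next I would examine $\vphi_i$ and $\phi_i$ for $1 \leq i \leq d$. From \eqref{eq:type3+vphi}, each $\vphi_i$ factors as an integer --- equal to $i$ when $i$ is even and to $d-i+1$ when $i$ is odd, in either case an even integer with absolute value between $2$ and $d$ --- times a factor affine in $\tau$. The integer factor is nonzero in $\F$ for every relevant $i$ exactly when \eqref{eq:type3+paramcond1} holds, and I would check that, with the parity branch selected accordingly, the values of $\tau$ making the affine factor vanish are precisely those forbidden in \eqref{eq:type3+paramcond4a}; the same argument applied to \eqref{eq:type3+phi} produces \eqref{eq:type3+paramcond4b}. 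Collecting the three cases, the inequalities in Lemma \ref{lem:classify}{\rm (i),(ii)} hold if and only if \eqref{eq:type3+paramcond1}--\eqref{eq:type3+paramcond4b} hold.

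The main obstacle I anticipate is not any deep idea but the bookkeeping of index ranges: I must verify that the integers arising as $(i-j)/2$ (equal parity), as $i+j-d$ (opposite parity), and as the integer prefactors of $\vphi_i$ and $\phi_i$ match, with no gaps and no spurious values, the ranges of $i$ quantified in \eqref{eq:type3+paramcond1}--\eqref{eq:type3+paramcond4b}. Once this is pinned down --- using that $d$ is even and at least $4$ --- each equivalence is immediate. Alternatively, this lemma is exactly \cite[Lemma 22.4]{NT:K}, and one may simply cite it.
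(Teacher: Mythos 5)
Your argument is correct. The paper itself gives no proof of this lemma: it is quoted verbatim from \cite[Lemma 22.4]{NT:K}, so the ``official'' route is exactly the citation you mention in your last sentence. What you supply instead is a self-contained verification, and the bookkeeping in it checks out: for same-parity indices $\th_i-\th_j=\pm h(i-j)$ with $i-j$ a nonzero even integer of absolute value at most $d$, which forces $h\neq 0$ and the characteristic condition \eqref{eq:type3+paramcond1} (e.g.\ $\th_0=\th_{2p}$ when $\text{\rm Char}(\F)=p\le d/2$); for opposite-parity indices $\th_i-\th_j=\pm\bigl(2s+h(i+j-d)\bigr)$, and $i+j-d$ does run over exactly the odd integers in $[1-d,d-1]$, giving \eqref{eq:type3+paramcond2}; the dual eigenvalues give \eqref{eq:type3+paramcond1b} and \eqref{eq:type3+paramcond3}; and each $\vphi_i$, $\phi_i$ is an even integer in $\{2,\dots,d\}$ times an affine factor in $\tau$ whose vanishing is precisely the forbidden value in \eqref{eq:type3+paramcond4a}, \eqref{eq:type3+paramcond4b} (note the parity of the prefactor: $i$ for $i$ even, $d-i+1$ for $i$ odd with $d$ even). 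Since every step is an equivalence, the ``if and only if'' follows; you correctly use that $d\ge 4$ is even and $\text{\rm Char}(\F)\neq 2$ (the latter is both a standing assumption at this point of the paper and forced by \eqref{eq:type3+paramcond1}). So your proof is a legitimate, more elementary alternative to the paper's citation: it costs a page of index bookkeeping but makes the section independent of \cite{NT:K} for this particular fact, whereas the paper buys brevity by outsourcing exactly this computation.
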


\begin{defi}    \label{def:type3+basic}    \samepage
\ifDRAFT {\rm def:type3+basic}. \fi
A {\em primary data of type III$^+$} is
a sequence \eqref{eq:type3+pseq} of scalars in $\F$  that satisfy
\eqref{eq:type3+paramcond1}--\eqref{eq:type3+paramcond4b}.
The primary data \eqref{eq:type3+pseq} of type III$^+$ and the parameter array
\eqref{eq:type3+parray} are said to {\em correspond}.
\end{defi}

\begin{defi}   \label{def:type3+basic2}    \samepage
\ifDRAFT {\rm def:type3+basic2}. \fi
Let $A,A^*$ denote a Leonard pair of type III$^+$.
By a {\em primary data of $A,A^*$} we mean the primary data of type III$^+$
that corresponds to a parameter array of $A,A^*$.
\end{defi}

\begin{lemma}    \label{lem:type3+paramDown}    \samepage
\ifDRAFT {\rm lem:type3+paramDown}. \fi
For the parameter arrays in Lemma \ref{lem:parrayunique},
consider the corresponding primary data of type III$^+$.
These primary data are related as follows:
\[
\begin{array}{lcl}
\qquad\qquad \text{\rm parameter array}  & &  \qquad \text{\rm primary data}
\\ \hline
(\{\th_i\}_{i=0}^d; \{\th^*_i\}_{i=0}^d; \{\vphi_i\}_{i=1}^d; \{\phi_i\}_{i=1}^d)  
& & \quad
   (\delta, s, h,  \delta^*, s^*,  h^*, \tau)                         \rule{0mm}{3ex}
\\
 (\{\th_i\}_{i=0}^d; \{\th^*_{d-i}\}_{i=0}^d; \{\phi_{d-i+1}\}_{i=1}^d; \{\vphi_{d-i+1} \}_{i=1}^d)  
 & &  \quad
   (\delta, s, h,  \delta^*, s^*,  - h^*, \tau)                            \rule{0mm}{2.7ex}
\\
(\{\th_{d-i}\}_{i=0}^d; \{\th^*_i\}_{i=0}^d; \{\phi_i\}_{i=1}^d; \{\vphi_i\}_{i=1}^d)  
& & \quad
  (\delta, s, - h,  \delta^*, s^*,  h^*, \tau)                                \rule{0mm}{2.7ex}
\\
(\{\th_{d-i}\}_{i=0}^d; \{\th^*_{d-i}\}_{i=0}^d; \{\vphi_{d-i+1}\}_{i=1}^d; \{\phi_{d-i+1}\}_{i=1}^d)  
& &  \quad
  (\delta, s, - h,  \delta^*, s^*, - h^*, \tau)                               \rule{0mm}{2.7ex}   
\end{array}
\]
\end{lemma}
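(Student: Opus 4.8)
The plan is to use the uniqueness assertion in Lemma~\ref{lem:type3+param}. Fix a parameter array of type III$^+$ as in the first row of the table, and let $(\delta,s,h,\delta^*,s^*,h^*,\tau)$ be its primary data, so that $\th_i,\th^*_i,\vphi_i,\phi_i$ are given by \eqref{eq:type3+th}--\eqref{eq:type3+phi}. All four parameter arrays in Lemma~\ref{lem:parrayunique} belong to the same Leonard pair, hence share the same fundamental constant and type, so each is of type III$^+$ and has a unique primary data via Lemma~\ref{lem:type3+param}. Thus for each of the last three rows it suffices to substitute the proposed primary data into the right-hand sides of \eqref{eq:type3+th}--\eqref{eq:type3+phi} and check that one recovers exactly the corresponding parameter array from Lemma~\ref{lem:parrayunique}. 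The first row is immediate; I would write out the second row in detail, note that the third is the mirror computation, and obtain the fourth by composing the second and third.

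For the second row I would set $\hat h^*=-h^*$ and show that $(\delta,s,h,\delta^*,s^*,\hat h^*,\tau)$ produces $(\{\th_i\}_{i=0}^d;\{\th^*_{d-i}\}_{i=0}^d;\{\phi_{d-i+1}\}_{i=1}^d;\{\vphi_{d-i+1}\}_{i=1}^d)$. Since $d$ is even, $d-i$ has the same parity as $i$ while $d-i+1$ has the opposite parity, and the computation is driven by the elementary identities
\[
  (d-i)-d/2 = -\bigl(i-d/2\bigr),
  \qquad
  (d-i+1)-(d+1)/2 = -\bigl(i-(d+1)/2\bigr),
  \qquad
  d-(d-i+1)+1 = i.
\]
The first three coordinates are unchanged, so \eqref{eq:type3+th} still returns $\{\th_i\}$. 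Putting $\hat h^*$ into \eqref{eq:type3+ths} and using the first identity shows that the new $i$th dual eigenvalue equals $\th^*_{d-i}$ in each parity case. Putting $\hat h^*$ into \eqref{eq:type3+vphi} and \eqref{eq:type3+phi} and using the remaining two identities, the even branch of the new $\vphi_i$ matches the odd branch of $\phi_{d-i+1}$ and the odd branch matches the even branch, and likewise the new $\phi_i$ matches $\vphi_{d-i+1}$; so the new split sequences are $\{\phi_{d-i+1}\}_{i=1}^d$ and $\{\vphi_{d-i+1}\}_{i=1}^d$, as required.

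For the third row one instead puts $\hat h=-h$: then \eqref{eq:type3+th} returns $\{\th_{d-i}\}$ by the first identity above, the dual eigenvalues are untouched since $\delta^*,s^*,h^*$ are fixed, and in \eqref{eq:type3+vphi}--\eqref{eq:type3+phi} flipping the sign of $h$ changes the signs of the $s^*h$ and $hh^*(i-(d+1)/2)$ terms while fixing $sh^*$, which is precisely the transposition interchanging the defining expressions for $\vphi_i$ and $\phi_i$; hence the new split sequences are $\{\phi_i\}_{i=1}^d$ and $\{\vphi_i\}_{i=1}^d$. The fourth row then follows by composing: applying the $h^*\mapsto -h^*$ flip to the output of the third row, the two sign flips act on disjoint coordinates and the two reindexings compose to $i\mapsto d-i$ on both the eigenvalues and the dual eigenvalues, giving primary data $(\delta,s,-h,\delta^*,s^*,-h^*,\tau)$ for $(\{\th_{d-i}\};\{\th^*_{d-i}\};\{\vphi_{d-i+1}\};\{\phi_{d-i+1}\})$. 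The whole argument is formal, using only $\text{\rm Char}(\F)\neq 2$ through Lemma~\ref{lem:type3+param}; the single delicate point is the parity bookkeeping under the reindexing $i\leftrightarrow d-i+1$, which is where I would be most careful about a stray sign.
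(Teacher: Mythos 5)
Your proposal is correct and is essentially the paper's own argument: the paper proves this lemma by invoking the uniqueness in Lemma \ref{lem:type3+param} together with Definition \ref{def:type3+basic}, and your substitution of the sign-flipped data into \eqref{eq:type3+th}--\eqref{eq:type3+phi} (with the parity bookkeeping under $i\mapsto d-i$, $i\mapsto d-i+1$) is just that verification written out in detail.
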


\begin{proof}
Use Lemma \ref{lem:type3+param} and Definition \ref{def:type3+basic}.
\end{proof}

\section{About the condition $\vphi_i\phi_i = \vphi'_i \phi'_i$}
\label{sec:vphiphi} 
\ifDRAFT {\rm sec:vphiphi}. \fi

In the proof of our main results,
Lemma \ref{lem:nbipTDD3} will play an essential role.
In that lemma we see the condition \eqref{eq:vphiphivphidphid}.
In this section we recall from \cite{NT:K} the necessary and sufficient conditions for
\eqref{eq:vphiphivphidphid} in terms of the primary data.

Let $A,A^*$ and $B,A^*$ denote Leonard pairs over $\F$.
Let
\begin{align}
& (\{ \th_i\}_{i=0}^d; \{\th^*_i\}_{i=0}^d; \{\vphi_i\}_{i=1}^d; \{\phi_i\}_{i=1}^d), 
&&  (\{ \th'_i\}_{i=0}^d; \{\th^*_i\}_{i=0}^d; \{\vphi'_i\}_{i=1}^d; \{\phi'_i\}_{i=1}^d) 
                                      \label{eq:parrays4b}
\end{align}
denote a parameter array of $A,A^*$ and $B,A^*$, respectively.

First consider the case of type I.

\begin{lemma}     {\rm (See \cite[Theorem 17.1]{NT:K}.) }
\label{lem:comptype1}    \samepage
\ifDRAFT {\rm lem:comptype1}. \fi
Assume that $A,A^*$ has type I.
For the parameter arrays in \eqref{eq:parrays4b},
let
\begin{align*}
&  (\delta, \mu, h, \delta^*, \mu^*, h^*, \tau),
&&  (\delta', \mu', h', \delta^*, \mu^*, h^*, \tau')
\end{align*}
denote the corresponding primary $q$-data.
Then $\vphi_i \phi_i = \vphi'_i \phi'_i$ $(1 \leq i \leq d)$
if and only if
\begin{align}
 \mu h &= \mu' h',               \label{eq:type1cond1}
\\
 \tau (\mu+h) &= \tau' (\mu' + h'),   \label{eq:type1cond2}
\\
\tau^2 + (\mu+h)^2 \mu^* h^* 
 &= \tau^{\prime 2} + (\mu' + h')^2 \mu^* h^*.     \label{eq:type1cond3}
\end{align}
\end{lemma}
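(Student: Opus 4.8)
The plan is to prove this by a direct computation starting from the explicit formulas \eqref{eq:type1vphi} and \eqref{eq:type1phi} for $\vphi_i$ and $\phi_i$ in terms of the primary $q$-data. First I would form the product
\[
\vphi_i \phi_i = (q^i-q^{-i})^2(q^{d-i+1}-q^{i-d-1})^2 \, P_i,
\]
where $P_i = (\tau - \mu\mu^* q^{2i-d-1} - hh^* q^{d-2i+1})(\tau - h\mu^* q^{2i-d-1} - \mu h^* q^{d-2i+1})$, and likewise $\vphi'_i\phi'_i = (q^i-q^{-i})^2(q^{d-i+1}-q^{i-d-1})^2 \, P'_i$ with $P'_i$ the analogous expression in the primed data. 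Since the scalar prefactor $(q^i-q^{-i})^2(q^{d-i+1}-q^{i-d-1})^2$ is common to both sides and is nonzero for $1\le i\le d$ by \eqref{eq:type1paramcond1} (together with $q^4\neq 1$ for the second factor), the condition $\vphi_i\phi_i=\vphi'_i\phi'_i$ for $1\le i\le d$ is equivalent to $P_i = P'_i$ for $1\le i\le d$.

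Next I would expand $P_i$ as a polynomial in the variable $u = q^{2i-d-1}$ (note $q^{d-2i+1} = u^{-1}$). Multiplying out,
\[
P_i = \tau^2 - \tau(\mu+h)\mu^* u - \tau(\mu+h)h^* u^{-1} + (\mu\mu^* u + hh^* u^{-1})(h\mu^* u + \mu h^* u^{-1}),
\]
and the last term equals $\mu h (\mu^*)^2 u^2 + (\mu^2+h^2)\mu^* h^* + \mu h (h^*)^2 u^{-2}$. So $P_i$ is a Laurent polynomial in $u$ whose coefficients, in the basis $\{u^2, u, 1, u^{-1}, u^{-2}\}$, are
\[
\mu h(\mu^*)^2,\qquad -\tau(\mu+h)\mu^*,\qquad \tau^2 + (\mu^2+h^2)\mu^* h^*,\qquad -\tau(\mu+h)h^*,\qquad \mu h(h^*)^2,
\]
respectively. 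As $i$ ranges over $1,\dots,d$, the quantity $u = q^{2i-d-1}$ takes $d$ distinct values (again by $q^{2i}\neq 1$), so for $d\ge 5$ the five monomials $u^2,u,1,u^{-1},u^{-2}$ are linearly independent as functions of $i$, and $P_i = P'_i$ for all $i$ forces equality of all five coefficient pairs. I would record that the cases $d=3,4$ need to be checked separately (with $\beta$ fixed, $d\ge 3$; the relevant count of independent monomials may drop), though I expect the stated equivalence still holds there because the three displayed conditions \eqref{eq:type1cond1}--\eqref{eq:type1cond3} are exactly what one can recover.

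Finally I would match coefficients. Since the primed and unprimed data share $\mu^*$, $h^*$, and since $\mu^* h^* \neq 0$ (from \eqref{eq:type1paramcond3} with $i=0$, which gives $\mu^*\neq h^*\cdot q^0$... actually more directly $\mu^*\neq 0\neq h^*$ follows from \eqref{eq:type1paramcond3} applied suitably, or from the requirement that the $\th^*_i$ be distinct), the $u^2$ and $u^{-2}$ coefficients give $\mu h(\mu^*)^2 = \mu' h'(\mu^*)^2$ and $\mu h(h^*)^2 = \mu' h'(h^*)^2$, each equivalent to \eqref{eq:type1cond1}; the $u$ and $u^{-1}$ coefficients give $\tau(\mu+h)\mu^* = \tau'(\mu'+h')\mu^*$ and the $h^*$-analogue, each equivalent to \eqref{eq:type1cond2}; and the constant coefficient gives $\tau^2 + (\mu^2+h^2)\mu^* h^* = \tau'^2 + (\mu'^2+h'^2)\mu^* h^*$, which combined with \eqref{eq:type1cond1} (so $\mu^2+h^2 = (\mu+h)^2 - 2\mu h$ and $\mu h = \mu' h'$) is equivalent to \eqref{eq:type1cond3}. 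The converse direction is immediate: if \eqref{eq:type1cond1}--\eqref{eq:type1cond3} hold then all five coefficient pairs agree, hence $P_i = P'_i$ for all $i$, hence $\vphi_i\phi_i = \vphi'_i\phi'_i$. The main obstacle I anticipate is purely bookkeeping: tracking which nonvanishing hypotheses ($\mu^* h^*\neq 0$, the distinctness of the $u$-values) are legitimately available from Lemma \ref{lem:type1cond}, and cleanly handling the low-diameter cases $d=3,4$ where the linear-independence argument is tightest; a cleaner route for those is to simply verify that \eqref{eq:type1cond1}--\eqref{eq:type1cond3} follow from $P_i=P'_i$ at the specific available values of $i$ by elementary elimination, which is how I would ultimately write it.
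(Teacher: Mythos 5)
Your overall strategy is reasonable: the paper itself does not prove this lemma (it cites \cite[Theorem 17.1]{NT:K}), so a direct computation from \eqref{eq:type1vphi}, \eqref{eq:type1phi} is a legitimate route, and for $d\ge 5$ your coefficient-matching argument is essentially complete. However, one justification you rely on is false: $\mu^* h^* \neq 0$ does \emph{not} hold in general. From \eqref{eq:type1paramcond3} (take $i=0$) one only gets that $\mu^*$ and $h^*$ are not both zero; indeed the dual $q$-Krawtchouk Leonard pairs at the heart of this paper satisfy $\mu^* h^* = 0$ (Lemma \ref{lem:defLPdualqKrawt}), and the $\th^*_i$ remain distinct when exactly one of $\mu^*,h^*$ vanishes. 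The step can be repaired: if, say, $h^*=0$, the $u^{-2}$ and $u^{-1}$ coefficient equations are vacuous, but the $u^{2}$ and $u^{1}$ equations (with $\mu^*\neq 0$) still give \eqref{eq:type1cond1} and \eqref{eq:type1cond2}, and the constant coefficient, combined with \eqref{eq:type1cond1}, gives \eqref{eq:type1cond3} even when $\mu^*h^*=0$. But as written, "each equivalent to \eqref{eq:type1cond1}" is justified by a claim that contradicts the setting in which the lemma is actually applied.

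The more serious gap is the forward implication for $d=3,4$, which you explicitly defer. These diameters are within the scope of the lemma (the paper assumes only $d\ge 3$), and they are exactly where the real work lies: with only $d$ evaluation points $u_i=q^{2i-d-1}$, the polynomial $u^2\bigl(P(u)-P'(u)\bigr)$ has degree at most $4$ and is not forced to vanish identically, so one must exclude the possibility that it is a nonzero quartic vanishing at all the $u_i$. Ruling this out genuinely uses the inequalities of Lemma \ref{lem:type1cond}: for $d=3$ (with $\mu^*\neq 0$) a nonzero difference vanishing at $q^{-2},1,q^{2}$ forces $h^*/\mu^*\in\{1,q^{2},q^{-2}\}$, which \eqref{eq:type1paramcond3} forbids; for $d=4$ one is driven to $(h^*)^2=(\mu^*)^2$ and then (using $\mathrm{Char}(\F)\neq 2$) to $q^{2}+q^{-2}=0$ or $q+q^{-1}=0$, excluded by \eqref{eq:type1paramcond1} and $q^4\neq 1$; the degenerate subcases $\mu^*=0$ or $h^*=0$ reduce to a quadratic in $u^{\pm1}$ with $d\ge 3$ roots and are fine. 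None of this is a one-clause "elementary elimination"; until it is carried out (or the low-diameter cases are otherwise handled, e.g.\ by citing \cite{NT:K} as the paper does), the proof is incomplete precisely where the statement is tightest.
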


Next consider the case of type II.

\begin{lemma}    {\rm (See \cite[Theorem 20.1]{NT:K}.)}
\label{lem:comptype2}    \samepage
\ifDRAFT {\rm lem:comptype2}. \fi
Assume that $A,A^*$ has type II.
For the parameter arrays in \eqref{eq:parrays4b},
let
\begin{align*}
& (\delta, \mu, h, \delta^*, \mu^*, h^*, \tau),  
&& 
 (\delta', \mu', h', \delta^*, \mu^*, h^*, \tau')
\end{align*}
denote the corresponding primary data.
Then $\vphi_i \phi_i = \vphi'_i \phi'_i$ $(1 \leq i \leq d)$ if and only if
\begin{align}
  h^2 &= h^{\prime 2},                          \label{eq:type2cond1}
\\
 2 h \tau + \mu^2 h^* &= 2 h' \tau' + \mu^{\prime 2} h^*,    \label{eq:type2cond2}
\\ 
 4 \tau^2 - \mu^2 \big(  \mu^{* 2} + (d-1)^2 h^{* 2} \big)
&=
  4 \tau^{\prime 2} - \mu^{\prime 2} \big(  \mu^{* 2} + (d-1)^2 h^{* 2} \big).   \label{eq:type2cond3}
\end{align}
\end{lemma}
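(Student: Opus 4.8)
The plan is to compute both sides of the condition $\vphi_i\phi_i = \vphi'_i\phi'_i$ explicitly using the closed forms \eqref{eq:type2vphi}, \eqref{eq:type2phi} for a type II parameter array, and then extract the equivalent algebraic conditions on the primary data. First I would fix the common factor $i^2(d-i+1)^2$ that appears in $\vphi_i\phi_i$ and work with the reduced product
\[
 P_i = \bigl(\tau - \mu\mu^*/2 + (h\mu^*+\mu h^*)(i-(d+1)/2) + hh^*(i-1)(d-i)\bigr)
        \bigl(\tau + \mu\mu^*/2 + (h\mu^*-\mu h^*)(i-(d+1)/2) + hh^*(i-1)(d-i)\bigr).
\]
The key observation is that $P_i$ is a polynomial in the variable $j = i-(d+1)/2$ of degree at most $4$ (the term $(i-1)(d-i)$ is a quadratic in $j$ with leading coefficient $-1$, specifically $(i-1)(d-i) = -j^2 + \text{const}$), so $\vphi_i\phi_i = \vphi'_i\phi'_i$ for $1 \le i \le d$ is, since $d \ge 3$ gives enough distinct values of $i$, equivalent to equality of $P_i$ and $P'_i$ as polynomials in $j$. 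Here I would use that $\th^*_i$ and hence $\mu^*,h^*$ are shared (same $A^*$), so only $\delta,\mu,h,\tau$ change to $\delta',\mu',h',\tau'$; note $\delta$ and $\delta'$ do not enter $\vphi_i,\phi_i$ at all.

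Next I would expand $P_i$ by grouping the two factors symmetrically: writing $u = \tau + hh^*(i-1)(d-i) + h\mu^*(i-(d+1)/2)$ and $v = \mu\mu^*/2 + \mu h^*(i-(d+1)/2) = (\mu/2)(\mu^* + 2h^*(i-(d+1)/2))$, we have $P_i = (u-v)(u+v) = u^2 - v^2$. Then $v^2 = (\mu^2/4)(\mu^* + 2h^*(i-(d+1)/2))^2$; evaluated over the relevant range and matched against $v'^2$, and since $\mu^*,h^*$ are fixed, one reads off that the $\mu^2$-dependence must match. Meanwhile $u$ is a quadratic in $j = i-(d+1)/2$ with leading coefficient $-hh^*$ (from $-hh^* j^2$), linear coefficient $h\mu^*$, and constant term $\tau + hh^*\cdot c$ for the appropriate constant $c$ coming from $(i-1)(d-i)$. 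Matching $u^2 = u'^2$ coefficient by coefficient as polynomials in $j$ then forces (up to the sign ambiguity in replacing $u$ by $-u$, which is harmless since $P_i$ only sees $u^2$): the leading-coefficient condition gives $h^2h^{*2} = h'^2 h^{*2}$, i.e. \eqref{eq:type2cond1} (using $h^* \ne 0$, which holds by \eqref{eq:type2paramcond3} with $\mu^* \ne 0$... actually more carefully, $h^*$ could vanish, so I should instead match the full polynomial $P_i = P'_i$ directly rather than dividing by $h^*$). The cross term between the $j^2$ and $j^0$ parts of $u^2$, together with the $v^2$ contribution, yields \eqref{eq:type2cond2} after using \eqref{eq:type2cond1}; and the $j^0$ coefficient of $P_i = P'_i$, again after substituting the previous two relations and simplifying the constant $c$ in terms of $d$, yields \eqref{eq:type2cond3}. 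Throughout I would invoke Lemma \ref{lem:type2param} to guarantee the primary data exist and are uniquely determined by the parameter arrays.

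The main obstacle I expect is the bookkeeping in matching polynomial coefficients cleanly: one must be careful that $h^*$ (and $\mu^*$) may be zero, so the argument should not divide by them but rather compare $P_i$ and $P'_i$ as genuine polynomials in $i$, using that $d \ge 3$ supplies at least four values $i=1,\dots,d$ — wait, $d \ge 3$ gives only $i \in \{1,2,3\}$ at minimum, which is three points, whereas $P_i$ has degree up to $4$ in $i$. This is a genuine subtlety: I would handle it by noting that $\vphi_i\phi_i$ as given already contains the factor $i^2(d-i+1)^2$, so actually the full expression $\vphi_i\phi_i$ is a polynomial in $i$ of degree $8$, and I have $d$ equations; for $d \ge 3$ one still needs to check whether three or four equations suffice to pin down the four free scalars subject to the relations — the honest route is to verify directly that \eqref{eq:type2cond1}--\eqref{eq:type2cond3} are \emph{equivalent} to $P_i = P'_i$ for all $i \in \mathbb{Z}$ (not just $1 \le i \le d$), proving the forward direction by the degree/interpolation argument only after checking that the number of available points genuinely exceeds the degree — and if it does not for small $d$, supplement with the separate small-$d$ analysis or cite the precise count from \cite{NT:K}. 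Since the lemma is quoted from \cite[Theorem 20.1]{NT:K}, the cleanest writeup is: reduce to the polynomial identity $P_i = P'_i$, expand both sides in powers of $j = i - (d+1)/2$, and match coefficients, with the degree-counting handled by the explicit forms; I would then present the matched coefficients and check they rearrange into \eqref{eq:type2cond1}--\eqref{eq:type2cond3}.
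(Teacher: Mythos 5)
The paper itself gives no proof of this lemma: it is quoted verbatim from \cite[Theorem 20.1]{NT:K}, so there is no internal argument to compare against. Your strategy is the natural from-scratch one: divide out the factor $i^2(d-i+1)^2$ (legitimate because \eqref{eq:type2paramcond1} makes it nonzero in $\F$ for $1\leq i\leq d$), write the reduced product as $u^2-v^2$ with $u=\tau+h\mu^* j+hh^*(i-1)(d-i)$, and expand in $j=i-(d+1)/2$, using $(i-1)(d-i)=(d-1)^2/4-j^2$. One small correction: the correct $v$ is $\mu\mu^*/2-\mu h^* j$, not $\mu\mu^*/2+\mu h^* j$; with your sign, $(u-v)(u+v)$ is not $\vphi_i\phi_i$, and the cross term $\mp\mu^2\mu^* h^* j$ in $v^2$ matters for the linear coefficient. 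With the sign fixed, the ``if'' direction is complete: a direct expansion shows that \eqref{eq:type2cond1}--\eqref{eq:type2cond3} force equality of all five coefficients of the quartic in $j$, hence $\vphi_i\phi_i=\vphi'_i\phi'_i$ for all $i$, with no interpolation needed.

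The genuine gap is the converse for $d\in\{3,4\}$, which you correctly flag but then defer (``supplement with the separate small-$d$ analysis or cite the precise count''). Since the difference of the two reduced products is a polynomial of degree $4$ in $j$ and you only know it vanishes at the $d$ points $j=i-(d+1)/2$, $1\leq i\leq d$, interpolation alone works only for $d\geq 5$; for $d=3,4$ an actual argument is required, and your proposal does not supply it. It does go through, but only by exploiting two further facts: (a) the evaluation points are symmetric about $0$, so the even and odd parts of the difference polynomial vanish separately at the available points; and (b) the nondegeneracy constraints \eqref{eq:type2paramcond3} (namely $\mu^*\neq h^* i$ for $1-d\leq i\leq d-1$, so in particular $\mu^*\neq 0$ and $\mu^{*2}\neq h^{*2}$). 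Setting $X=h^2-h^{\prime 2}$, $Y=2h\tau+\mu^2h^*-2h'\tau'-\mu^{\prime 2}h^*$, $Z$ the difference of the two sides of \eqref{eq:type2cond3}, the coefficients of the difference polynomial are $c_4=Xh^{*2}$, $c_3=-2Xh^*\mu^*$, $c_2=X\bigl(\mu^{*2}-h^{*2}(d-1)^2/2\bigr)-h^*Y$, $c_1=Xh^*\mu^*(d-1)^2/2+\mu^*Y$, $c_0=Xh^{*2}(d-1)^4/16+Yh^*(d-1)^2/4+Z/4$; for $d=3$ the vanishing at $j=0,\pm1$ gives $\mu^*Y=0$, $X(\mu^{*2}-h^{*2})=h^*Y$, $c_0=0$, whence $Y=0$, then $X=0$ (using $\mu^{*2}\neq h^{*2}$), then $Z=0$, and $d=4$ is handled similarly from the odd/even split at $j=\pm1/2,\pm3/2$. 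Without this supplementary analysis (or an explicit appeal to \cite[Theorem 20.1]{NT:K}, which is what the paper itself does), your proof of the ``only if'' direction is incomplete precisely in the smallest diameters the lemma is used for.
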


Next consider the case of type III$^+$.

\begin{lemma}       {\rm (See \cite[Theorem 23.1]{NT:K}.)}
\label{lem:comptype3+}    \samepage
\ifDRAFT {\rm lem:comptype3+}. \fi
Assume that $A,A^*$ has type III$^+$.
For the parameter arrays in \eqref{eq:parrays4b},
let
\begin{align*}
& (\delta, s, h, \delta^*, s^*, h^*, \tau),  
&& 
 (\delta', s', h', \delta^*, s^*, h^*, \tau')
\end{align*}
denote the corresponding primary data.
Then $\vphi_i \phi_i = \vphi'_i \phi'_i$ $(1 \leq i \leq d)$ if and only if
\begin{align}
  h^2 &= h^{\prime 2},                 \label{eq:type3+cond1}
\\
 (\tau + s h^*)^2 &= (\tau' + s' h^*)^2,   \label{eq:type3+cond2}
\\ 
 (\tau- s h^*)^2 &= (\tau' - s' h^*)^2.    \label{eq:type3+cond3}
\end{align}
\end{lemma}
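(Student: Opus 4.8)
The plan is to prove the lemma by direct computation from the closed forms of $\vphi_i$ and $\phi_i$ recorded in Lemma~\ref{lem:type3+param}. The first step is to rewrite those forms using the abbreviation $\rho_i = s^* + h^*\big(i-(d+1)/2\big)$, under which the even/odd case split for $\vphi_i$ and $\phi_i$ becomes a difference of two squares: for even $i$ one gets $\vphi_i\phi_i = i^2\big((\tau - sh^*)^2 - h^2\rho_i^2\big)$, and for odd $i$ one gets $\vphi_i\phi_i = (d-i+1)^2\big((\tau + sh^*)^2 - h^2\rho_i^2\big)$; the analogous identities hold for $\vphi'_i\phi'_i$ with $(\delta,s,h,\tau)$ replaced by $(\delta',s',h',\tau')$ and with the very same $\rho_i$, since $s^*,h^*$ are common to the two primary data. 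At this point I would also record that for even $i$ the scalar $i$ is a unit in $\F$ and for odd $i$ the scalar $d-i+1$ is a unit in $\F$: otherwise $\vphi_i$ would vanish, contradicting Lemma~\ref{lem:classify}(ii). Note that $\text{\rm Char}(\F)\neq 2$ and that type III$^+$ forces $d$ even, hence $d\geq 4$; these facts are used below.

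Granting the displayed identities, the implication ($\Leftarrow$) is immediate: substituting $h^2=h^{\prime 2}$ and $(\tau-sh^*)^2=(\tau'-s'h^*)^2$ into the even-$i$ formula, and $h^2=h^{\prime 2}$ and $(\tau+sh^*)^2=(\tau'+s'h^*)^2$ into the odd-$i$ formula, yields $\vphi_i\phi_i=\vphi'_i\phi'_i$ for all $i$. For ($\Rightarrow$), cancel the unit factors to reduce the hypothesis to: $(\tau-sh^*)^2-(\tau'-s'h^*)^2=(h^2-h^{\prime 2})\rho_i^2$ for all even $i\in[1,d]$, and $(\tau+sh^*)^2-(\tau'+s'h^*)^2=(h^2-h^{\prime 2})\rho_i^2$ for all odd $i\in[1,d]$. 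Each left-hand side is independent of $i$, so if $h^2\neq h^{\prime 2}$ then $\rho_i^2$ is constant over even $i$ and constant over odd $i$; in particular $\rho_2^2=\rho_4^2$ and $\rho_1^2=\rho_3^2$, all four indices lying in $[1,d]$ since $d$ is even and $d\geq 4$. As $\rho_i-\rho_j=h^*(i-j)$ with $h^*\neq 0$ (by \eqref{eq:type3+paramcond1b}) and $\text{\rm Char}(\F)\neq 2$, this forces $\rho_2+\rho_4=0$ and $\rho_1+\rho_3=0$; writing these two relations out in $s^*,h^*$ and subtracting gives $2h^*=0$, a contradiction. Hence $h^2=h^{\prime 2}$, which is \eqref{eq:type3+cond1}. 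Then evaluating the even-$i$ equation at $i=2$ gives $(\tau-sh^*)^2=(\tau'-s'h^*)^2$, which is \eqref{eq:type3+cond3}, and the odd-$i$ equation at $i=1$ gives $(\tau+sh^*)^2=(\tau'+s'h^*)^2$, which is \eqref{eq:type3+cond2}.

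I expect the bulk of the work to be bookkeeping in the first step: confirming that the even/odd formulas for $\vphi_i,\phi_i$ really do factor as claimed, and checking that the half-integer $i-(d+1)/2$ causes no trouble because it occurs only multiplied by $h$ or $h^*$. The one genuinely delicate point is the argument that $h^2=h^{\prime 2}$: it relies on having the four consecutive indices $1,2,3,4$ all available, which is exactly why $d$ even with $d\geq 3$ (hence $d\geq 4$) is needed, together with $h^*\neq 0$ and $\text{\rm Char}(\F)\neq 2$. Of course, the statement may instead be quoted verbatim from \cite[Theorem~23.1]{NT:K}; the self-contained argument sketched here is an alternative.
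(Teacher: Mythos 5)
Your proposal is correct. Note that the paper itself offers no argument for this lemma: it is quoted verbatim from the companion paper \cite[Theorem 23.1]{NT:K}, so your self-contained verification is a genuine alternative rather than a rederivation of the paper's proof. Your computation checks out: with $\rho_i = s^*+h^*\bigl(i-(d+1)/2\bigr)$ the formulas \eqref{eq:type3+vphi}, \eqref{eq:type3+phi} do give $\vphi_i\phi_i = i^2\bigl((\tau-sh^*)^2-h^2\rho_i^2\bigr)$ for even $i$ and $\vphi_i\phi_i=(d-i+1)^2\bigl((\tau+sh^*)^2-h^2\rho_i^2\bigr)$ for odd $i$, with the same $\rho_i$ for the primed data since $s^*,h^*$ are shared; the factors $i$ and $d-i+1$ are nonzero in $\F$ because $\vphi_i\neq 0$ by Lemma \ref{lem:classify}(ii); and in the forward direction the dichotomy is handled correctly: if $h^2\neq h^{\prime 2}$ then $\rho_i^2$ would be constant on the even indices $2,4$ and on the odd indices $1,3$ (all available since type III$^+$ makes $d$ even, so $d\geq 4$), forcing $\rho_2+\rho_4=0$ and $\rho_1+\rho_3=0$ via $h^*\neq 0$ (from \eqref{eq:type3+paramcond1b}) and $\mathrm{Char}(\F)\neq 2$, whence $2h^*=0$, a contradiction; then specializing at $i=2$ and $i=1$ gives \eqref{eq:type3+cond3} and \eqref{eq:type3+cond2}. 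What your route buys is independence from \cite{NT:K} for this one ingredient, at the cost of the even/odd bookkeeping; what the citation buys is brevity and consistency with how the type I and type II analogues (Lemmas \ref{lem:comptype1}, \ref{lem:comptype2}) are handled.
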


\section{Some characterizations of the essentially bipartite Leonard pairs}
\label{sec:bipessbip}
\ifDRAFT {\rm sec:bipessbip}. \fi

In this section, we characterize the essentially bipartite Leonard pairs
in terms of the primary data,
the parameter array,
and the TD/D sequence.

Throughout this section, let $A,A^*$ denote a Leonard pair over $\F$,
with parameter array
\begin{equation}
  (\{\th_i\}_{i=0}^d; \{\th^*_i\}_{i=0}^d; \{\vphi_i\}_{i=1}^d; \{\phi_i\}_{i=1}^d)  \label{eq:parray23b}
\end{equation}
and corresponding TD/D sequence
\[
 (\{a_i\}_{i=0}^d; \{x_i\}_{i=1}^d; \{\th^*_i\}_{i=0}^d).
\]

We first consider the case of type I.

\begin{lemma}    {\rm (See \cite[Theorem 6.11]{NT:balanced}.)}
 \label{lem:essbiptype1}    \samepage
\ifDRAFT {\rm lem:essbiptype1}. \fi
Fix a nonzero $q \in \F$,
and assume that $A,A^*$ has type I with fundamental constant $\beta = q^2 + q^{-2}$.
Let
\[
 (\delta, \mu, h, \delta^*, \mu^*, h^*, \tau)
\]
denote the primary $q$-data corresponding to the parameter array
\eqref{eq:parray23b}.
Then the following are equivalent:
\begin{itemize}
\item[\rm (i)]
$A,A^*$ is essentially bipartite;
\item[\rm (ii)]
$\mu + h = 0$ and $\tau =0$.
\end{itemize}
\end{lemma}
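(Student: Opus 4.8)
The plan is to reduce everything to the essentially bipartite characterization of Lemma~\ref{lem:balanced}, which says that $A,A^*$ is essentially bipartite if and only if $\th_i+\th_{d-i}$ is independent of $i$ for $0\le i\le d$ and $\vphi_i+\phi_i=0$ for $1\le i\le d$. The whole proof then amounts to transcribing these two conditions through the type~I parametrization of Lemma~\ref{lem:type1param}, using the inequalities \eqref{eq:type1paramcond1}, the standing hypothesis $\mathrm{Char}(\F)\ne 2$, and $d\ge 3$.

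First I would dispose of the eigenvalue condition. Substituting \eqref{eq:type1th} gives
\[
 \th_i+\th_{d-i}=2\delta+(\mu+h)\bigl(q^{2i-d}+q^{d-2i}\bigr)\qquad(0\le i\le d).
\]
Comparing $i=0$ with $i=1$, the difference $\th_0+\th_d-(\th_1+\th_{d-1})$ equals $(\mu+h)(1-q^2)q^{-d}(1-q^{2d-2})$. Since $q^4\neq 1$ we have $q^2\ne 1$, and $q^{2(d-1)}\ne 1$ by \eqref{eq:type1paramcond1} (here $1\le d-1\le d$ because $d\ge 3$), so the factor $(1-q^2)q^{-d}(1-q^{2d-2})$ is nonzero. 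Hence $\th_i+\th_{d-i}$ is independent of $i$ if and only if $\mu+h=0$, in which case its common value is $2\delta$.

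Next I would treat the split-sequence condition under the hypothesis $\mu+h=0$. From \eqref{eq:type1vphi} and \eqref{eq:type1phi},
\[
 \vphi_i+\phi_i=(q^i-q^{-i})(q^{d-i+1}-q^{i-d-1})\bigl(2\tau-(\mu+h)(\mu^*q^{2i-d-1}+h^*q^{d-2i+1})\bigr),
\]
and for $1\le i\le d$ the two geometric prefactors are nonzero: $q^i-q^{-i}\ne 0$ since $q^{2i}\ne 1$, and $q^{d-i+1}-q^{i-d-1}\ne 0$ since $q^{2(d-i+1)}\ne 1$ (as $1\le d-i+1\le d$), both by \eqref{eq:type1paramcond1}. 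Under $\mu+h=0$ this collapses to $\vphi_i+\phi_i=2\tau\,(q^i-q^{-i})(q^{d-i+1}-q^{i-d-1})$, which vanishes for all $i$ if and only if $\tau=0$, using $\mathrm{Char}(\F)\ne 2$.

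Combining these two steps yields both implications. If $A,A^*$ is essentially bipartite, Lemma~\ref{lem:balanced} forces $\th_i+\th_{d-i}$ to be constant, hence $\mu+h=0$; then $\vphi_i+\phi_i=0$ forces $\tau=0$. Conversely, if $\mu+h=0$ and $\tau=0$, then $\th_i+\th_{d-i}=2\delta$ is constant and $\vphi_i+\phi_i=0$, so Lemma~\ref{lem:balanced} gives that $A,A^*$ is essentially bipartite. The argument is essentially bookkeeping; the only point requiring any care is verifying, from the type~I inequalities, that $q^{2i-d}+q^{d-2i}$ is genuinely non-constant in $i$ and that the prefactor $(q^i-q^{-i})(q^{d-i+1}-q^{i-d-1})$ never vanishes for $1\le i\le d$, which is precisely where \eqref{eq:type1paramcond1}, $d\ge 3$, and $\mathrm{Char}(\F)\ne 2$ enter.
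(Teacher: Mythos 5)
Your proof is correct. Note that the paper itself gives no argument for this lemma; it is quoted from \cite[Theorem 6.11]{NT:balanced}, so there is no internal proof to compare against. Your derivation reconstructs the cited result from ingredients already available in the paper: Lemma \ref{lem:balanced} reduces essential bipartiteness to the constancy of $\th_i+\th_{d-i}$ together with $\vphi_i+\phi_i=0$, and you translate both conditions through the type~I formulas \eqref{eq:type1th}--\eqref{eq:type1phi}. The two non-degeneracy points are handled properly: the difference $\th_0+\th_d-(\th_1+\th_{d-1})=(\mu+h)(1-q^2)q^{-d}(1-q^{2d-2})$ has nonzero second factor by $q^4\neq1$ and \eqref{eq:type1paramcond1} (using $d\ge 3$), so constancy forces $\mu+h=0$; and the prefactor $(q^i-q^{-i})(q^{d-i+1}-q^{i-d-1})$ is nonzero for $1\le i\le d$ by \eqref{eq:type1paramcond1}, so under $\mu+h=0$ the vanishing of $\vphi_i+\phi_i=2\tau(q^i-q^{-i})(q^{d-i+1}-q^{i-d-1})$ forces $\tau=0$, using $\text{\rm Char}(\F)\neq2$ (which is the paper's standing assumption at this point, and in the forward direction also follows from Lemma \ref{lem:CharF2}). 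The converse direction is immediate from the same identities. This is almost certainly the same computation underlying the cited theorem, so your write-up can stand as a self-contained substitute for the citation.
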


\begin{lemma}     \label{lem:essbiptype1b}     \samepage
\ifDRAFT {\rm lem:essbiptype1b}. \fi
Referring to Lemma \ref{lem:essbiptype1},
$A,A^*$ is essentially bipartite if and only if
\begin{align*}
\th_i &= \delta + \mu (q^{2i-d} - q^{d-2i})  &&  (0 \leq i \leq d),
\\
\th^*_i &= \delta^* + \mu^* q^{2i-d} + h^* q^{d-2i}   &&  (0 \leq i \leq d),
\\
\vphi_i &= - (q^i - q^{-i})(q^{d-i+1} - q^{i-d-1})\mu  (\mu^* q^{2i-d-1} - h^* q^{d-2i+1})  && (1 \leq i \leq d),
\\
\phi_i &= (q^i - q^{-i})(q^{d-i+1} - q^{i-d-1}) \mu (\mu^* q^{2i-d-1} - h^* q^{d-2i+1})  && (1 \leq i \leq d).
\end{align*}
\end{lemma}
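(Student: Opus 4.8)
The plan is to reduce everything to Lemma \ref{lem:essbiptype1}, which already asserts that $A,A^*$ is essentially bipartite if and only if $\mu + h = 0$ and $\tau = 0$. Granting this, the statement to be proved is merely the explicit form taken by the parameter array under the specialization $h = -\mu$, $\tau = 0$.

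For the forward direction I would assume $A,A^*$ is essentially bipartite, apply Lemma \ref{lem:essbiptype1} to get $h = -\mu$ and $\tau = 0$, and substitute these relations into the general formulas \eqref{eq:type1th}--\eqref{eq:type1phi} of Lemma \ref{lem:type1param}. Then $\th_i = \delta + \mu q^{2i-d} - \mu q^{d-2i} = \delta + \mu(q^{2i-d} - q^{d-2i})$; the expression for $\th^*_i$ is unchanged; and in $\vphi_i$ and $\phi_i$ the term $\tau$ vanishes, so after factoring out $\mu$ one obtains $\mp(q^i-q^{-i})(q^{d-i+1}-q^{i-d-1})\mu(\mu^* q^{2i-d-1} - h^* q^{d-2i+1})$ with the sign $-$ for $\vphi_i$ and $+$ for $\phi_i$. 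This is a short routine simplification.

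For the converse I would use the uniqueness of the primary $q$-data built into Lemma \ref{lem:type1param}. If the displayed formulas hold, then the parameter array \eqref{eq:parray23b} of $A,A^*$ is produced by \eqref{eq:type1th}--\eqref{eq:type1phi} from the sequence $(\delta, \mu, -\mu, \delta^*, \mu^*, h^*, 0)$; since this parameter array is already known to be genuine (it is the one of $A,A^*$), the uniqueness assertion forces this sequence to coincide with the primary $q$-data $(\delta, \mu, h, \delta^*, \mu^*, h^*, \tau)$ of $A,A^*$. Comparing entries gives $h = -\mu$ and $\tau = 0$, whence $A,A^*$ is essentially bipartite by Lemma \ref{lem:essbiptype1}. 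I do not expect a genuine obstacle here: the forward direction is pure substitution and the converse is a direct appeal to uniqueness; the only point deserving care is to compare the specialized sequence against the actual primary $q$-data rather than re-verifying the admissibility conditions \eqref{eq:type1paramcond1}--\eqref{eq:type1paramcond5}, which is unnecessary because the relevant parameter array is already known to be a parameter array over $\F$.
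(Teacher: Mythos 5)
Your proposal is correct and matches the paper's argument: the paper's proof is just a citation of Lemmas \ref{lem:type1param} and \ref{lem:essbiptype1}, i.e., substitute $h=-\mu$, $\tau=0$ into \eqref{eq:type1th}--\eqref{eq:type1phi} for the forward direction and invoke the uniqueness of the primary $q$-data for the converse, exactly as you do. Your remark that the admissibility inequalities need not be re-verified is also consistent with how the uniqueness assertion in Lemma \ref{lem:type1param} is applied.
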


\begin{proof}
By Lemmas \ref{lem:type1param} and \ref{lem:essbiptype1}.
\end{proof}

\begin{lemma}    \label{lem:essbiptype1bb}     \samepage
\ifDRAFT {\rm lem:essbiptype1bb}. \fi
Referring to Lemma \ref{lem:essbiptype1},
assume that $A,A^*$ is essentially bipartite.
Then
\begin{align*}
 & q^{2i} \neq 1 \qquad\qquad (1 \leq i \leq d),
\\
 & q^{2i} \neq -1 \qquad\qquad (1 \leq i \leq d-1),
\\
& \mu \neq 0,
\\
& \mu^* q^i \neq h^* q^{-i}   \qquad\qquad( 1-d \leq i \leq d-1).
\end{align*}
\end{lemma}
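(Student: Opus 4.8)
The plan is to derive each of the four inequalities directly from the nondegeneracy conditions that any primary $q$-data must satisfy, after simplifying those conditions using the constraints forced by essential bipartiteness. First I would apply Lemma \ref{lem:essbiptype1}: since $A,A^*$ is essentially bipartite, the primary $q$-data obeys $\mu + h = 0$ and $\tau = 0$, so $h = -\mu$. Then, because $(\delta, \mu, h, \delta^*, \mu^*, h^*, \tau)$ is a primary $q$-data of $A,A^*$, by Definition \ref{def:type1basic} (equivalently by Lemma \ref{lem:type1cond}) it satisfies the inequalities \eqref{eq:type1paramcond1}--\eqref{eq:type1paramcond5}.

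The first claimed inequality, $q^{2i} \neq 1$ for $1 \leq i \leq d$, is literally \eqref{eq:type1paramcond1}. For $\mu \neq 0$: specialize \eqref{eq:type1paramcond2} to $i = 0$, which lies in the range $1-d \leq i \leq d-1$ since $d \geq 3$, and use $h = -\mu$ to get $\mu \neq -\mu$, i.e.\ $2\mu \neq 0$; as $\mathrm{Char}(\F) \neq 2$ (Lemma \ref{lem:CharF2}), this gives $\mu \neq 0$. For $q^{2i} \neq -1$ with $1 \leq i \leq d-1$: substitute $h = -\mu$ into \eqref{eq:type1paramcond2} to get $\mu(1 + q^{2i}) \neq 0$ for all $i$ in the range $1-d \leq i \leq d-1$, and divide by the now-nonzero $\mu$. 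Finally, for $\mu^* q^i \neq h^* q^{-i}$ with $1-d \leq i \leq d-1$: since $q \neq 0$, this is equivalent to $\mu^* \neq h^* q^{-2i}$, and as $i$ runs over $1-d \leq i \leq d-1$ so does $-i$, so this is exactly \eqref{eq:type1paramcond3}.

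I do not expect a genuine obstacle here: the whole argument is a substitution of $h = -\mu$ and $\tau = 0$ into the standard nondegeneracy conditions for primary $q$-data. The only points requiring a moment of care are checking that the index $i = 0$ lies in the relevant range (which it does because $d \geq 3$) and invoking $\mathrm{Char}(\F) \neq 2$, which is in force throughout the paper and also follows from Lemma \ref{lem:CharF2}.
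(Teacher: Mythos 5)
Your proposal is correct and follows the same route as the paper, which simply evaluates the primary $q$-data inequalities of Lemma \ref{lem:type1cond} under the conditions $\mu+h=0$, $\tau=0$ from Lemma \ref{lem:essbiptype1}(ii); you have just written out the substitutions explicitly. (The appeal to $\mathrm{Char}(\F)\neq 2$ is harmless but not even needed, since $2\mu\neq 0$ already forces $\mu\neq 0$.)
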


\begin{proof}
Evaluate the inequalities in Lemma \ref{lem:type1cond} using
Lemma \ref{lem:essbiptype1}(ii).
\end{proof}

\begin{lemma}     \label{lem:essbiptype1c}     \samepage
\ifDRAFT {\rm lem:essbiptype1c}. \fi
Referring to Lemma \ref{lem:essbiptype1},
$A,A^*$ is essentially bipartite if and only if
\begin{align*}
a_i &= \delta  &&  (0 \leq i \leq d),
\\
x_i &=
 \frac{ (q^i-q^{-i}) (q^{d-i+1}-q^{i-d-1}) \mu^2 (\mu^* q^i - h^* q^{-i})  (\mu^* q^{i-d-1} - h^* q^{d-i+1})}
        { ( \mu^* q^{2i-d} - h^* q^{d-2i} ) (\mu^* q^{2i-d-2} - h^* q^{d-2i+2}) }
             && (1 \leq i \leq d-1),
\\
x_d &=\frac{ (q^d-q^{-d})(q - q^{-1})\mu^2 (\mu^* q^{-1} - h^* q) }
                { \mu^* q^{d-2} - h^* q^{2-d} }.
\end{align*}
\end{lemma}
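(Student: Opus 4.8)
The plan is to combine the closed forms for $\{\th_i\}_{i=0}^d$ and $\{\vphi_i\}_{i=1}^d$, $\{\phi_i\}_{i=1}^d$ obtained in Lemma~\ref{lem:essbiptype1b} with the general formulas \eqref{eq:a0param}--\eqref{eq:xiparam} of Lemma~\ref{lem:aixiparam}, which express the TD/D sequence in terms of the parameter array. Since Lemma~\ref{lem:essbiptype1b} is an ``if and only if'' statement, it suffices to show that, under the assumption that $A,A^*$ is essentially bipartite (so the parameter array has the stated form), the TD/D sequence is exactly as displayed; the converse direction then follows because the TD/D sequence determines the parameter array up to the ambiguity described in Lemma~\ref{lem:correspond}(ii), or alternatively one simply reads the equivalence off Lemma~\ref{lem:essbiptype1b} together with Lemma~\ref{lem:aixiparam}.

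First I would compute $a_i$. By \eqref{eq:aiparam}, for $1 \le i \le d-1$ we have $a_i = \th_i + \vphi_i/(\th^*_i - \th^*_{i-1}) + \vphi_{i+1}/(\th^*_i - \th^*_{i+1})$, and similarly for $a_0$ and $a_d$ via \eqref{eq:a0param}, \eqref{eq:adparam}. Substituting $\th_i = \delta + \mu(q^{2i-d} - q^{d-2i})$ and the product form of $\vphi_i$ from Lemma~\ref{lem:essbiptype1b}, together with $\th^*_i - \th^*_{i-1} = (q^{2i-d} - q^{2i-d-2})(\mu^* + h^* q^{2d-4i+2})$ type factorizations, the $\vphi$-terms should telescope against the nonconstant part $\mu(q^{2i-d}-q^{d-2i})$ of $\th_i$, leaving $a_i = \delta$ for all $i$. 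This is the expected mechanism; the algebra is routine $q$-shift bookkeeping but must be done carefully at the endpoints $i=0,d$ where only one $\vphi$-term is present.

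Next I would compute $x_i$ from \eqref{eq:xiparam}, namely
\[
 x_i = \vphi_i \phi_i \,
 \frac{\tau^*_{i-1}(\th^*_{i-1})\,\eta^*_{d-i}(\th^*_i)}{\tau^*_i(\th^*_i)\,\eta^*_{d-i+1}(\th^*_{i-1})}.
\]
Using $\vphi_i\phi_i = -\big((q^i-q^{-i})(q^{d-i+1}-q^{i-d-1})\mu\big)^2 (\mu^* q^{2i-d-1} - h^* q^{d-2i+1})^2$ from Lemma~\ref{lem:essbiptype1b}, and evaluating the ratio of the $\tau^*$ and $\eta^*$ polynomials on the $q$-geometric-type eigenvalues $\th^*_i = \delta^* + \mu^* q^{2i-d} + h^* q^{d-2i}$, the differences $\th^*_j - \th^*_k = (\mu^* q^{-d} - h^* q^d\,q^{-2j-2k})(q^{2j}-q^{2k})$ factor into a $q$-power piece and a $(\mu^* q^{\cdot} - h^* q^{\cdot})$ piece. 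Most of these factors cancel in the ratio, and the surviving terms should reorganize into the stated expression with denominator $(\mu^* q^{2i-d} - h^* q^{d-2i})(\mu^* q^{2i-d-2} - h^* q^{d-2i+2})$ for $1 \le i \le d-1$, and the corresponding degenerate form for $x_d$. The main obstacle is precisely this evaluation of the polynomial ratio in \eqref{eq:xiparam}: one must track which linear factors of the $\th^*$-differences survive, and reconcile the sign coming from $\vphi_i\phi_i < 0$-type terms with the sign in the claimed formula. I expect that after writing $\th^*_j - \th^*_i = q^{i+j-d}(\mu^* q^{j-i} - h^* q^{i-j})(q^{j-i} - q^{i-j})/\text{(normalization)}$ and cancelling telescoping products, the identity emerges; closing with an appeal to Lemma~\ref{lem:aixiparam} (the TD/D sequence and parameter array correspond) finishes both directions.
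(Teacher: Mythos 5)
Your proposal is correct and follows essentially the same route as the paper, whose entire proof is the citation ``By Lemmas \ref{lem:aixiparam} and \ref{lem:essbiptype1b}'': substitute the explicit parameter array from Lemma \ref{lem:essbiptype1b} into \eqref{eq:a0param}--\eqref{eq:xiparam} and simplify, with the converse immediate since $a_i=\delta$ for all $i$ already forces the essentially bipartite condition of Definition \ref{def:bip}(ii).
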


\begin{proof}
By Lemmas \ref{lem:aixiparam} and \ref{lem:essbiptype1b}.
\end{proof}

Next consider the case of type II.

\begin{lemma}    {\rm (See  \cite[Theorem 7.11]{NT:balanced}.)}
\label{lem:essbiptype2}    \samepage
\ifDRAFT {\rm lem:essbiptype2}. \fi
Assume that $A,A^*$ has type II.
Let
\[
(\delta, \mu, h, \delta^*, \mu^*, h^*, \tau)
\]
denote the primary data corresponding to the parameter array \eqref{eq:parray23b}.
Then the following are equivalent:
\begin{itemize}
\item[\rm (i)]
$A,A^*$ is essentially bipartite;
\item[\rm (ii)]
$h=0$ and $\tau=0$.
\end{itemize}
\end{lemma}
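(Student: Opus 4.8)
The plan is to derive this from Lemma~\ref{lem:balanced}, which characterizes the essentially bipartite property by the two conditions that $\th_i + \th_{d-i}$ be independent of $i$ for $0 \le i \le d$ and that $\vphi_i + \phi_i = 0$ for $1 \le i \le d$. Since $A,A^*$ has type~II, I would substitute the explicit formulas for $\{\th_i\}_{i=0}^d$, $\{\vphi_i\}_{i=1}^d$, $\{\phi_i\}_{i=1}^d$ from Lemma~\ref{lem:type2param} in terms of the primary data $(\delta,\mu,h,\delta^*,\mu^*,h^*,\tau)$ and reduce each of the two conditions to a polynomial identity in $i$.

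First, using \eqref{eq:type2th}, the $\mu$-contributions cancel and one obtains
\[
\th_i + \th_{d-i} = 2\delta + 2h\,i(d-i) \qquad (0 \le i \le d).
\]
Evaluating at $i=0$ and $i=1$ shows this can be independent of $i$ only if $2h(d-1)=0$; since $\text{\rm Char}(\F)\neq 2$ and $\text{\rm Char}(\F)$ is $0$ or greater than $d$ by \eqref{eq:type2paramcond1}, the factors $2$ and $d-1$ are nonzero in $\F$, forcing $h=0$. Conversely, if $h=0$ then $\th_i+\th_{d-i}=2\delta$ is constant. So the first condition of Lemma~\ref{lem:balanced} is equivalent to $h=0$.

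Next, using \eqref{eq:type2vphi} and \eqref{eq:type2phi}, the $\mu\mu^*/2$- and $\mu h^*$-contributions cancel in the sum and one obtains
\[
\vphi_i + \phi_i = 2\,i(d-i+1)\bigl(\tau + h\mu^*(i - (d+1)/2) + h h^*(i-1)(d-i)\bigr) \qquad (1 \le i \le d).
\]
For $1 \le i \le d$ the integers $i$ and $d-i+1$ lie in $\{1,\dots,d\}$, hence are nonzero in $\F$ by \eqref{eq:type2paramcond1}, and $2\neq 0$; therefore $\vphi_i+\phi_i=0$ for all such $i$ if and only if $\tau + h\mu^*(i-(d+1)/2) + h h^*(i-1)(d-i)=0$ for $1 \le i \le d$. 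Combining this with the already-derived requirement $h=0$, the latter collapses to $\tau=0$. Hence, by Lemma~\ref{lem:balanced}, $A,A^*$ is essentially bipartite if and only if $h=0$ and $\tau=0$, as claimed.

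The argument is essentially bookkeeping once the type~II formulas are plugged in; the only place where anything must be checked is the nonvanishing in $\F$ of the integer coefficients $2$, $d-1$, $i$, and $d-i+1$, which is exactly what the standing hypothesis $\text{\rm Char}(\F)\neq 2$ and the type~II condition \eqref{eq:type2paramcond1} guarantee. I do not anticipate a genuine obstacle.
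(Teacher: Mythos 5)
Your argument is correct. Both computations check out: with the type~II formulas, $\th_i+\th_{d-i}=2\delta+2h\,i(d-i)$ and $\vphi_i+\phi_i=2\,i(d-i+1)\bigl(\tau+h\mu^*(i-(d+1)/2)+hh^*(i-1)(d-i)\bigr)$, and the nonvanishing of $2$, $d-1$, $i$, $d-i+1$ in $\F$ does follow from \eqref{eq:type2paramcond1}, which is built into the definition of a primary data of type II (Definition \ref{def:type2basic}), together with the standing assumption $\text{\rm Char}(\F)\neq 2$ and $d\geq 3$. The logical assembly is also sound: the first condition of Lemma \ref{lem:balanced} is equivalent to $h=0$, and given $h=0$ the second condition collapses to $\tau=0$, so the conjunction is equivalent to $h=0$ and $\tau=0$.

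Note, though, that the paper does not prove this lemma at all; it is quoted from \cite[Theorem 7.11]{NT:balanced}. So your route differs from the paper's in that you give a self-contained derivation inside the framework of this paper, using only Lemma \ref{lem:balanced} (itself quoted from \cite{NT:balanced}) and the type~II parametrization of Lemma \ref{lem:type2param}. What this buys is transparency: the reader sees exactly which inequalities among the primary data are used and where $\text{\rm Char}(\F)$ enters; what the citation buys is brevity and consistency with how the companion characterizations (Lemmas \ref{lem:essbiptype1}, \ref{lem:essbiptype3+}) are handled. Your proof would serve equally well as a verification of the cited result in the type~II case.
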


\begin{lemma}    \label{lem:essbiptype2b}    \samepage
\ifDRAFT {\rm lem:essbiptype2b}. \fi
Referring to Lemma \ref{lem:essbiptype2},
$A,A^*$ is essentially bipartite if and only if
\begin{align*}
\th_i &= \delta + \mu (i - d/2)   &&  (0 \leq i \leq d),
\\
\th^*_i &= \delta^* + \mu^* (i - d/2) + h^* i (d-i)  &&  (0 \leq i \leq d),
\\
\vphi_i &= - i (d-i+1) \mu \big( \mu^*/2 - h^* ( i - (d+1)/2 ) \big)        && (1 \leq i \leq d),
\\
\phi_i &=   i (d-i+1) \mu \big( \mu^*/2 - h^* ( i - (d+1)/2 ) \big)       && (1 \leq i \leq d).
\end{align*}
\end{lemma}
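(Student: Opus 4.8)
The plan is to deduce Lemma~\ref{lem:essbiptype2b} directly from the combination of Lemma~\ref{lem:essbiptype2} and Lemma~\ref{lem:type2param}, in complete parallel with the way Lemma~\ref{lem:essbiptype1b} follows from Lemmas~\ref{lem:type1param} and \ref{lem:essbiptype1}. For the forward direction, I would assume $A,A^*$ is essentially bipartite. By Lemma~\ref{lem:essbiptype2} the primary data $(\delta,\mu,h,\delta^*,\mu^*,h^*,\tau)$ corresponding to \eqref{eq:parray23b} satisfies $h=0$ and $\tau=0$. Substituting $h=0$, $\tau=0$ into the recipe \eqref{eq:type2th}--\eqref{eq:type2phi} of Lemma~\ref{lem:type2param} then yields the four displayed equations: the expressions for $\th_i$ and $\th^*_i$ are immediate, and those for $\vphi_i$ and $\phi_i$ come from pulling the common factor $\mu$ out of $i(d-i+1)\bigl(-\mu\mu^*/2+\mu h^*(i-(d+1)/2)\bigr)$.

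For the converse, I would suppose the four displayed equations hold. Then the parameter array \eqref{eq:parray23b} is exactly the one produced by the construction of Lemma~\ref{lem:type2param} applied to the sequence $(\delta,\mu,0,\delta^*,\mu^*,h^*,0)$. By the uniqueness assertion at the end of Lemma~\ref{lem:type2param}, this sequence must be the primary data of $A,A^*$; comparing it with the fixed primary data $(\delta,\mu,h,\delta^*,\mu^*,h^*,\tau)$ forces $h=0$ and $\tau=0$. Lemma~\ref{lem:essbiptype2} then gives that $A,A^*$ is essentially bipartite, completing the equivalence.

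I do not anticipate any genuine obstacle here; the argument is a routine substitution together with an appeal to the uniqueness of the primary data. The only point deserving a moment's attention is the converse step: one should invoke the uniqueness clause of Lemma~\ref{lem:type2param} rather than attempting to read off the values of $h$ and $\tau$ directly from the closed forms for $\vphi_i$ and $\phi_i$, since a naive comparison of the polynomial expressions in $i$ would require extra bookkeeping that the uniqueness statement renders unnecessary.
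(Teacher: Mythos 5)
Your proposal is correct and matches the paper's own proof, which simply cites Lemmas \ref{lem:type2param} and \ref{lem:essbiptype2}: the forward direction is the substitution $h=0$, $\tau=0$ into \eqref{eq:type2th}--\eqref{eq:type2phi}, and the converse uses the uniqueness clause of Lemma \ref{lem:type2param} to force $h=0$, $\tau=0$, exactly as you describe.
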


\begin{proof}
By Lemmas \ref{lem:type2param} and \ref{lem:essbiptype2}.
\end{proof}

\begin{lemma}    \label{lem:essbiptype2bb}     \samepage
\ifDRAFT {\rm lem:essbiptype2bb}. \fi
Referring to Lemma \ref{lem:essbiptype2},
assume that $A,A^*$ is essentially bipartite.
Then
\begin{align*}
& \text{\rm $\text{\rm Char}(\F)$ is equal to $0$ or greater than $d$},
\\ 
& \mu \neq 0,
\\
& \mu^* \neq h^*  i \qquad \qquad  (1-d \leq i \leq d-1).
\end{align*}
\end{lemma}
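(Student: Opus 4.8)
The plan is to follow the same strategy as in the proof of Lemma~\ref{lem:essbiptype1bb}: invoke the defining inequalities of a type~II primary data from Lemma~\ref{lem:type2cond} and specialize them using the essentially bipartite condition supplied by Lemma~\ref{lem:essbiptype2}.

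First I would note that, since the sequence \eqref{eq:parray23b} is a parameter array over $\F$, the inequalities in Lemma~\ref{lem:classify}(i),(ii) hold for it; hence by Lemma~\ref{lem:type2cond} the corresponding primary data $(\delta,\mu,h,\delta^*,\mu^*,h^*,\tau)$ satisfies \eqref{eq:type2paramcond1}--\eqref{eq:type2paramcond5}. Next, because $A,A^*$ is assumed essentially bipartite, Lemma~\ref{lem:essbiptype2}(ii) gives $h=0$ and $\tau=0$.

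Then I would substitute $h=0$ and $\tau=0$ into \eqref{eq:type2paramcond1}--\eqref{eq:type2paramcond5}. Condition \eqref{eq:type2paramcond1} is unaffected and yields the assertion that $\text{\rm Char}(\F)$ is $0$ or greater than $d$. Condition \eqref{eq:type2paramcond2} collapses to $\mu\neq 0$, since once $h=0$ the right‑hand side $hi$ vanishes for every index $i$. Condition \eqref{eq:type2paramcond3} is likewise unchanged and gives $\mu^*\neq h^* i$ for $1-d\le i\le d-1$. Finally \eqref{eq:type2paramcond4} and \eqref{eq:type2paramcond5} both reduce, after using $h=0$ and $\tau=0$, to $\mu\bigl(\mu^*/2-h^*(i-(d+1)/2)\bigr)\neq 0$ for $1\le i\le d$; since $2i-d-1$ runs through the odd integers in the range $[1-d,\,d-1]$ as $i$ runs over $1,\dots,d$, this inequality is already implied by $\mu\neq 0$ together with \eqref{eq:type2paramcond3}, so it produces no new constraint and need not be recorded.

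There is essentially no obstacle here; the lemma is a routine specialization. The only point requiring a small verification is the last one, namely checking that \eqref{eq:type2paramcond4} and \eqref{eq:type2paramcond5} add nothing beyond the three displayed inequalities, which is precisely the observation about the parity and range of $2i-d-1$ just described.
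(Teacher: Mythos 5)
Your proposal is correct and takes essentially the same route as the paper, whose proof is exactly to evaluate the inequalities of Lemma \ref{lem:type2cond} using Lemma \ref{lem:essbiptype2}(ii). (One cosmetic remark: the values $2i-d-1$ are odd only when $d$ is even, and even when $d$ is odd; but all that matters is that they are integers lying in the range $1-d \leq i \leq d-1$, so your reduction of \eqref{eq:type2paramcond4}, \eqref{eq:type2paramcond5} to $\mu \neq 0$ and \eqref{eq:type2paramcond3} is valid.)
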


\begin{proof}
Evaluate the inequalities in Lemma \ref{lem:type2cond} using
Lemma \ref{lem:essbiptype2}(ii).
\end{proof}

\begin{lemma}    \label{lem:essbiptype2c}    \samepage
\ifDRAFT {\rm lem:essbiptype2c}. \fi
Referring to Lemma \ref{lem:essbiptype2},
$A,A^*$ is essentially bipartite if and only if
\begin{align*}
a_i &= \delta  &&  (0 \leq i \leq d),
\\
x_i &=  \frac{ i (d-i+1) \mu^2 (\mu^* - i h^*) \big( \mu^* + (d-i+1) h^* \big) }
          { 4 \big( \mu^* + (d-2i) h^* \big) \big( \mu^* + (d-2i+2) h^* \big) }
&& (1 \leq i \leq d-1),
\\
x_d &= \frac{ d \mu^2 (\mu^* + h^*) }
                {4 (\mu^* + (2-d) h^*) }.
\end{align*}
\end{lemma}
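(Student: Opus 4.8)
The plan is to combine Lemma \ref{lem:aixiparam}, which expresses a TD/D sequence in terms of a parameter array, with the explicit essentially bipartite parameter array supplied in Lemma \ref{lem:essbiptype2b}. Since both directions of the claimed equivalence go through the same computation, I would phrase the proof as a single appeal: by Lemma \ref{lem:aixiparam}, the sequence $(\{a_i\}_{i=0}^d;\{x_i\}_{i=1}^d;\{\th^*_i\}_{i=0}^d)$ is the TD/D sequence corresponding to the parameter array \eqref{eq:parray23b}, and by Lemma \ref{lem:essbiptype2b} the parameter array \eqref{eq:parray23b} has the stated essentially bipartite form if and only if $A,A^*$ is essentially bipartite. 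So it suffices to substitute the formulas for $\th_i$, $\vphi_i$ from Lemma \ref{lem:essbiptype2b} into \eqref{eq:a0param}--\eqref{eq:xiparam} and check that the resulting $a_i$ and $x_i$ agree with the ones claimed here.

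First I would handle $a_i$. From Lemma \ref{lem:essbiptype2b}, $\th_i = \delta + \mu(i-d/2)$ and $\vphi_i = -i(d-i+1)\mu\big(\mu^*/2 - h^*(i-(d+1)/2)\big)$, and $\th^*_i - \th^*_{i-1} = \mu^* + h^*(d-2i+1)$. Plugging into \eqref{eq:a0param}: $a_0 = \th_0 + \vphi_1/(\th^*_0-\th^*_1)$; a short computation gives $\vphi_1/(\th^*_1-\th^*_0) = -\mu \cdot d/2 \cdot (\text{something})/(\text{something})$ that should collapse to $\mu d/2$, so $a_0 = \delta - \mu d/2 + \mu d/2 = \delta$. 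Similarly for $1 \le i \le d-1$, \eqref{eq:aiparam} gives $a_i = \th_i + \vphi_i/(\th^*_i-\th^*_{i-1}) + \vphi_{i+1}/(\th^*_i-\th^*_{i+1})$; the two telescoping-like fractions should combine to $-\mu(i-d/2)$, leaving $a_i = \delta$. The case $a_d$ via \eqref{eq:adparam} is symmetric. This mirrors the already-proved relation in Lemma \ref{lem:essbiptype2b} that $a_i = \delta$ — indeed the displayed parameter array was derived precisely from the condition $a_i$ constant $= \delta$ — so strictly I only need to recompute $x_i$.

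For $x_i$ I would use \eqref{eq:xiparam}: $x_i = \vphi_i\phi_i \cdot \tau^*_{i-1}(\th^*_{i-1})\eta^*_{d-i}(\th^*_i)\big/\big(\tau^*_i(\th^*_i)\eta^*_{d-i+1}(\th^*_{i-1})\big)$. From Lemma \ref{lem:essbiptype2b}, $\vphi_i\phi_i = -i^2(d-i+1)^2\mu^2\big(\mu^*/2 - h^*(i-(d+1)/2)\big)^2$. One can rewrite $\mu^*/2 - h^*(i-(d+1)/2) = \tfrac12(\mu^* - (2i-d-1)h^*) = \tfrac12(\mu^* + (d-2i+1)h^*)$, and note $\mu^* + (d-2i+1)h^* = \th^*_{i-1} - \th^*_i$ up to sign using the type II formula $\th^*_i = \delta^* + \mu^*(i-d/2) + h^*i(d-i)$; so $\th^*_{i-1}-\th^*_i = -\mu^* - h^*(d-2i+1)$, i.e. $\mu^*/2 - h^*(i-(d+1)/2) = -\tfrac12(\th^*_{i-1}-\th^*_i)$. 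The ratio of $\tau^*$ and $\eta^*$ products is essentially the standard Leonard-pair relation between the split sequence product and the second-superdiagonal product of the tridiagonal form; carrying out the cancellation should leave exactly
\[
x_i = \frac{i(d-i+1)\mu^2(\mu^* - ih^*)\big(\mu^* + (d-i+1)h^*\big)}{4\big(\mu^* + (d-2i)h^*\big)\big(\mu^* + (d-2i+2)h^*\big)}
\]
for $1 \le i \le d-1$, and the analogous collapse at $i=d$ (where $\eta^*_0 = 1$) gives $x_d = d\mu^2(\mu^*+h^*)/\big(4(\mu^*+(2-d)h^*)\big)$. The conclusion then follows since all the denominators are nonzero by Lemma \ref{lem:essbiptype2bb}.

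The main obstacle will be the bookkeeping in the $x_i$ computation: expanding $\tau^*_{i-1}(\th^*_{i-1})$, $\eta^*_{d-i}(\th^*_i)$, $\tau^*_i(\th^*_i)$, $\eta^*_{d-i+1}(\th^*_{i-1})$ for the type II eigenvalues $\th^*_j = \delta^* + \mu^*(j-d/2) + h^*j(d-j)$ and simplifying the quotient down to the two-factor denominator shown. This is a finite but somewhat intricate rational-function identity; I expect it is most cleanly done by first noting that for $j \ne k$ one has $\th^*_j - \th^*_k = (j-k)\big(\mu^* + h^*(d-j-k)\big)$, which factors every difference that appears and makes the massive cancellation transparent. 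Once that factorization is in hand the remaining work is routine telescoping, and I would simply cite Lemmas \ref{lem:aixiparam} and \ref{lem:essbiptype2b} in the write-up, leaving the (mechanical) substitution to the reader as is done for the parallel Lemma \ref{lem:essbiptype1c}.
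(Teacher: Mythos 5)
Your proposal is correct and follows exactly the paper's route: the paper's proof of this lemma is the one-line citation of Lemmas \ref{lem:aixiparam} and \ref{lem:essbiptype2b}, i.e.\ substitute the essentially bipartite type II parameter array into \eqref{eq:a0param}--\eqref{eq:xiparam}, which is what you carry out (your factorization $\th^*_j-\th^*_k=(j-k)(\mu^*+h^*(d-j-k))$ is a sound way to organize the cancellation, and the nonvanishing of the denominators is indeed Lemma \ref{lem:essbiptype2bb}). The only slip is cosmetic: the relation $a_i=\delta$ is not ``already proved in Lemma \ref{lem:essbiptype2b}'' (that lemma concerns only the parameter array); rather, the converse direction is immediate because $a_i$ constant is the very definition of essentially bipartite.
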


\begin{proof}
By Lemmas \ref{lem:aixiparam} and \ref{lem:essbiptype2b}.
\end{proof}

Next consider the case of type III$^+$.

\begin{lemma}    {\rm (See \cite[Theorem 8.11]{NT:balanced}.) }
\label{lem:essbiptype3+}    \samepage
\ifDRAFT {\rm lem:essbiptype3+}. \fi
Assume that $A,A^*$ has type III$^+$,
and let
\[
 (\delta, s, h, \delta^*, s^*, h^*, \tau)
\]
denote the primary data corresponding to the parameter array \eqref{eq:parray23b}.
Then the following are equivalent:
\begin{itemize}
\item[\rm (i)]
$A,A^*$ is essentially bipartite;
\item[\rm (ii)]
$s=0$ and $\tau = 0$.
\end{itemize}
\end{lemma}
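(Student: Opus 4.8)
The plan is to reduce to the balanced-type criterion of Lemma~\ref{lem:balanced}: $A,A^*$ is essentially bipartite if and only if $\th_i+\th_{d-i}$ is independent of $i$ for $0\le i\le d$ and $\vphi_i+\phi_i=0$ for $1\le i\le d$. I would then evaluate both conditions on the explicit type~III$^+$ parameter array of Lemma~\ref{lem:type3+param}, written in terms of the primary data $(\delta,s,h,\delta^*,s^*,h^*,\tau)$.

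First I would compute $\th_i+\th_{d-i}$. Because $d$ is even, $i$ and $d-i$ have the same parity, so \eqref{eq:type3+th} gives $\th_i+\th_{d-i}=2\delta+2s$ when $i$ is even and $\th_i+\th_{d-i}=2\delta-2s$ when $i$ is odd; since $d\ge 3$, both parities occur in $\{0,1,\dots,d\}$, so this sum is independent of $i$ if and only if $4s=0$, equivalently $s=0$ because $\text{\rm Char}(\F)\ne 2$. Next I would compute $\vphi_i+\phi_i$ from \eqref{eq:type3+vphi}--\eqref{eq:type3+phi}; the terms in $s^*h$ and in $hh^*\bigl(i-(d+1)/2\bigr)$ cancel, leaving $\vphi_i+\phi_i=2i(\tau-sh^*)$ for $i$ even and $\vphi_i+\phi_i=2(d-i+1)(\tau+sh^*)$ for $i$ odd.

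With these formulas the lemma follows. For (i)$\Rightarrow$(ii): if $A,A^*$ is essentially bipartite then Lemma~\ref{lem:balanced} together with the first computation forces $s=0$, and then the second computation at $i=2$ (which lies in $\{1,\dots,d\}$ as $d\ge 3$) gives $4\tau=0$, hence $\tau=0$ since $\text{\rm Char}(\F)\ne 2$. For (ii)$\Rightarrow$(i): if $s=0$ and $\tau=0$, the two computations give $\th_i+\th_{d-i}=2\delta$ for all $i$ and $\vphi_i+\phi_i=0$ for all $i$, so Lemma~\ref{lem:balanced} yields that $A,A^*$ is essentially bipartite. The computation is routine; the only points needing care are the parity bookkeeping, which relies on $d$ being even, and the repeated use of $\text{\rm Char}(\F)\ne 2$ (with $h^*\ne 0$ from \eqref{eq:type3+paramcond1b} available as an alternative route to $s=0$ directly from $\vphi_i+\phi_i=0$). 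I do not anticipate a genuine obstacle; this mirrors the type~I and type~II arguments behind Lemmas~\ref{lem:essbiptype1} and \ref{lem:essbiptype2}.
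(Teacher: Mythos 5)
Your argument is correct. Note, however, that the paper itself gives no internal proof of this lemma: it is imported wholesale by citing \cite[Theorem 8.11]{NT:balanced}. What you have done is supply a self-contained derivation from ingredients already in the paper, namely the balanced-type criterion of Lemma \ref{lem:balanced} together with the explicit type III$^+$ formulas \eqref{eq:type3+th}--\eqref{eq:type3+phi}. Your computations check out: since $d$ is even, $i$ and $d-i$ share parity, so $\th_i+\th_{d-i}$ equals $2\delta+2s$ for even $i$ and $2\delta-2s$ for odd $i$, whence independence of $i$ is equivalent to $s=0$ (using $\text{\rm Char}(\F)\neq 2$); and the $s^*h$ and $hh^*\bigl(i-(d+1)/2\bigr)$ terms cancel in $\vphi_i+\phi_i$, giving $2i(\tau-sh^*)$ for even $i$ and $2(d-i+1)(\tau+sh^*)$ for odd $i$, so with $s=0$ the value at $i=2$ forces $\tau=0$, and conversely $s=\tau=0$ makes all these expressions vanish so Lemma \ref{lem:balanced} applies. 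The parenthetical alternative route via $h^*\neq 0$ is also sound. Compared with the paper's treatment, your version buys a proof within the present framework (modulo Lemma \ref{lem:balanced}, which is itself cited from \cite{NT:balanced}), at the cost of the parity bookkeeping; it is presumably the same computation that underlies the cited theorem, and it parallels the type I and type II analogues exactly as you say.
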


\begin{lemma}    \label{lem:essbiptype3+b}    \samepage
\ifDRAFT {\rm lem:essbiptype3+b}. \fi
Referring to Lemma \ref{lem:essbiptype3+},
$A,A^*$ is essentially bipartite if and only if
\begin{align*}
\theta_i &=
  \begin{cases}
     \delta + h (i-d/2)  & \text{\rm if $i$ is even}, \\
     \delta - h (i-d/2) & \text{\rm if $i$ is odd}
  \end{cases}        
  &&  (0 \leq i \leq d),              
\\
\theta^*_i &=
   \begin{cases}
     \delta^* +s^* +h^*(i-d/2)  &   \text{\rm if $i$ is even}, \\
     \delta^* - s^* -h^*(i-d/2) &  \text{\rm if $i$ is odd}
   \end{cases}  
   &&  (0 \leq i \leq d),    
\\
\varphi_i &=
   \begin{cases}
      - i h \big( s^* +  h^* ( i- (d+1)/2 ) \big)        &   \text{\rm if $i$ is even}, \\
      (d-i+1) h \big( s^* +  h^* ( i- (d+1)/2 ) \big)   & \text{\rm if $i$ is odd} 
   \end{cases}      
     &&  (1 \leq i \leq d),
\\
\phi_i &=
    \begin{cases}
       i h \big( s^* +  h^* ( i- (d+1)/2 ) \big)        &   \text{\rm if $i$ is even}, \\
      - (d-i+1) h \big( s^* +  h^* ( i- (d+1)/2 ) \big)   & \text{\rm if $i$ is odd}
   \end{cases}      
        &&  (1 \leq i \leq d).
\end{align*}
\end{lemma}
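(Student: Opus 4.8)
The plan is to follow the same routine as in the proofs of Lemmas~\ref{lem:essbiptype1b} and \ref{lem:essbiptype2b}: combine the characterization of essential bipartiteness from Lemma~\ref{lem:essbiptype3+} (namely that the primary data has $s=0$ and $\tau=0$) with the explicit parametrization of type~III$^+$ parameter arrays from Lemma~\ref{lem:type3+param}.

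First I would prove the forward implication. Suppose $A,A^*$ is essentially bipartite. Then by Lemma~\ref{lem:essbiptype3+} the primary data $(\delta,s,h,\delta^*,s^*,h^*,\tau)$ has $s=0$ and $\tau=0$. Substituting $s=0$ and $\tau=0$ into \eqref{eq:type3+th}--\eqref{eq:type3+phi} and performing the elementary simplifications yields precisely the displayed formulas: the expression for $\th_i$ loses the term $\pm s$; the expression for $\th^*_i$ is unchanged, since it involves neither $s$ nor $\tau$; and for $\vphi_i$ and $\phi_i$, for instance when $i$ is even the factor $\tau - s h^* - s^* h - h h^*(i-(d+1)/2)$ collapses to $-h\bigl(s^* + h^*(i-(d+1)/2)\bigr)$, with the analogous collapses in the remaining three cases.

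For the converse, I would observe that the four displayed formulas are exactly what \eqref{eq:type3+th}--\eqref{eq:type3+phi} produce when applied to the sequence $(\delta, 0, h, \delta^*, s^*, h^*, 0)$. Hence, if the parameter array of $A,A^*$ equals the displayed one, then $(\delta,0,h,\delta^*,s^*,h^*,0)$ satisfies \eqref{eq:type3+th}--\eqref{eq:type3+phi} for that parameter array; since $A,A^*$ has type~III$^+$, the uniqueness assertion in Lemma~\ref{lem:type3+param} forces this sequence to coincide with the primary data, so $s=0$ and $\tau=0$, and then Lemma~\ref{lem:essbiptype3+} gives that $A,A^*$ is essentially bipartite. (Alternatively, one reads off $s=0$ by comparing the two formulas for $\th_0$, and then $\tau=0$ by comparing the two formulas for $\vphi_2$, using $d\geq 3$ and $\text{\rm Char}(\F)\neq 2$.)

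The computation is entirely routine, so I do not expect a genuine obstacle; the only points needing a moment's care are the observation that $\th^*_i$ does not involve $s$ or $\tau$, and the use of the type~III$^+$ hypothesis to invoke uniqueness of the primary data in the converse direction.
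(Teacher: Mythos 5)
Your argument is correct and matches the paper's proof, which simply cites Lemmas \ref{lem:type3+param} and \ref{lem:essbiptype3+}: substitute $s=0$, $\tau=0$ into the type III$^+$ parametrization for the forward direction, and use the uniqueness of the primary data for the converse. You have merely spelled out the routine details that the paper leaves implicit.
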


\begin{proof}
By Lemmas \ref{lem:type3+param} and \ref{lem:essbiptype3+}.
\end{proof}

\begin{lemma}    \label{lem:essbiptype3+bb}     \samepage
\ifDRAFT {\rm lem:essbiptype3+bb}. \fi
Referring to Lemma \ref{lem:essbiptype3+},
assume that $A,A^*$ is essentially bipartite.
Then
\begin{align*}
& \text{\rm $\text{\rm Char}(\F)$ is equal to $0$ or greater than $d$},
\\ 
& h \neq 0, \qquad h^* \neq 0,
\\
& 2 s^* \neq h^* i \qquad\text{\rm if $i$ is odd} \qquad \qquad  (1-d \leq i \leq d-1).
\end{align*}
\end{lemma}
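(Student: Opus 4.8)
The plan is to follow the template of the proofs of Lemmas \ref{lem:essbiptype1bb} and \ref{lem:essbiptype2bb}: since \eqref{eq:type3+parray} is a genuine parameter array over $\F$, the inequalities of Lemma \ref{lem:type3+cond} all hold, and it remains to specialize them to the essentially bipartite case. By Lemma \ref{lem:essbiptype3+}(ii), the hypothesis that $A,A^*$ is essentially bipartite is exactly $s=0$ and $\tau=0$, so I substitute $s=0$ and $\tau=0$ into \eqref{eq:type3+paramcond1}--\eqref{eq:type3+paramcond4b} and read off the consequences.

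Two of the three claimed inequalities are then immediate. Condition \eqref{eq:type3+paramcond1b} gives $h\neq 0$ and $h^*\neq 0$ verbatim, since it involves neither $s$ nor $\tau$; this is the second assertion. Condition \eqref{eq:type3+paramcond3} is literally the third assertion, namely $2s^*\neq ih^*$ for odd $i$ with $1-d\le i\le d-1$, and it too is unaffected by the substitution.

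For the first assertion I must upgrade the bound supplied by \eqref{eq:type3+paramcond1}, which only says $\mathrm{Char}(\F)=0$ or $\mathrm{Char}(\F)>d/2$, to $\mathrm{Char}(\F)=0$ or $\mathrm{Char}(\F)>d$. Here I use two extra facts. First, type $\mathrm{III}^+$ forces $d$ even (Lemma \ref{lem:type3+param}), hence $d\ge 4$ since $d\ge 3$. Second, putting $s=0$ into \eqref{eq:type3+paramcond2} turns it into $0\neq ih$ for every odd $i$ with $1-d\le i\le d-1$; as $h\neq 0$, this forces $i\neq 0$ in $\F$, i.e.\ $\mathrm{Char}(\F)$ divides no odd integer in $\{1,3,\dots,d-1\}$. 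Now suppose $\mathrm{Char}(\F)=p>0$. Then $p\neq 2$ (e.g.\ by \eqref{eq:type3+paramcond1}, as $d/2\ge 2$), so $p$ is odd, whence $p\neq d$; and if $p\le d$ then in fact $p\le d-1$, so $p$ would be one of the odd integers in $\{1,\dots,d-1\}$ that $\mathrm{Char}(\F)$ cannot divide, contradicting $p\mid p$. Therefore $\mathrm{Char}(\F)=0$ or $\mathrm{Char}(\F)\ge d+1>d$, as desired.

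The only genuine work is this last parity-and-divisibility bookkeeping, and I expect it to be the main (albeit minor) obstacle. I would also note in passing that substituting $s=\tau=0$ into the remaining live parts of \eqref{eq:type3+paramcond2}, \eqref{eq:type3+paramcond4a}, and \eqref{eq:type3+paramcond4b} produces nothing beyond \eqref{eq:type3+paramcond1b} and \eqref{eq:type3+paramcond3} (the $\tau$-conditions collapse, via the parity of $d+1-2i$, to the single family $2s^*\neq h^*j$ for odd $j$ in range), so the three displayed inequalities indeed exhaust the content of Lemma \ref{lem:type3+cond} under the essentially bipartite assumption.
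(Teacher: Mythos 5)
Your proposal is correct and follows the paper's proof exactly: the paper simply says to evaluate the inequalities of Lemma \ref{lem:type3+cond} using Lemma \ref{lem:essbiptype3+}(ii), which is what you do. Your parity/divisibility bookkeeping (using $s=0$ in \eqref{eq:type3+paramcond2} together with $h\neq 0$ and the evenness of $d$ to upgrade $\mathrm{Char}(\F)>d/2$ to $\mathrm{Char}(\F)>d$) just makes explicit the step the paper leaves implicit, and it is accurate.
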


\begin{proof}
Evaluate the inequalities in Lemma \ref{lem:type3+cond} using
Lemma \ref{lem:essbiptype3+}(ii).
\end{proof}

\begin{lemma}    \label{lem:essbiptype3+c}    \samepage
\ifDRAFT {\rm lem:essbiptype3+c}. \fi
Referring to Lemma \ref{lem:essbiptype3+},
$A,A^*$ is essentially bipartite if and only if
\begin{align*}
a_i &= \delta  &&  (0 \leq i \leq d),
\\
x_i &= 
\begin{cases}
   i h^2 \big( (d-i+1) h^* - 2 s^* \big)  / ( 4 h^* )  &  \text{\rm if $i$ is even},   \\
   (d-i+1) h^2 ( i h^* + 2 s^*)  / ( 4 h^* )        &  \text{\rm if $i$ is odd}
\end{cases}
&& (1 \leq i \leq d).
\end{align*}
\end{lemma}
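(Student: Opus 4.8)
The plan is to derive the asserted TD/D sequence from the parameter array of Lemma~\ref{lem:essbiptype3+b} by feeding it through Lemma~\ref{lem:aixiparam}, exactly as in the proofs of Lemmas~\ref{lem:essbiptype1c} and~\ref{lem:essbiptype2c}. First I would invoke Lemma~\ref{lem:essbiptype3+b}: the pair $A,A^*$ is essentially bipartite if and only if its parameter array \eqref{eq:parray23b} has the explicit form displayed there (namely with $s=0$ and $\tau=0$ already substituted). So it suffices to show that this special parameter array corresponds, via \eqref{eq:a0param}--\eqref{eq:xiparam} in Lemma~\ref{lem:aixiparam}, to the TD/D sequence in the statement, and conversely.

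For the forward direction, I would substitute the formulas for $\th_i$, $\th^*_i$, $\vphi_i$ from Lemma~\ref{lem:essbiptype3+b} into \eqref{eq:a0param}--\eqref{eq:adparam}. A quick way to get $a_i=\delta$ is to recall from Lemma~\ref{lem:balanced} that the common value of $\{a_i\}_{i=0}^d$ equals the scalar $\alpha$ with $\th_i+\th_{d-i}=2\alpha$; since $d$ is even in type~III$^+$, the index $d-i$ has the same parity as $i$, and the formula for $\th_i$ gives $\th_i+\th_{d-i}=2\delta$ in both parities, whence $a_i=\delta$ (alternatively one checks \eqref{eq:aiparam} directly, splitting into $i$ even and $i$ odd). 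For $x_i$ I would plug the formulas for $\vphi_i$, $\phi_i$, $\th^*_i$ into \eqref{eq:xiparam}. Here $\vphi_i\phi_i=-h^2 c_i^2\bigl(s^*+h^*(i-(d+1)/2)\bigr)^2$ with $c_i=i$ or $c_i=d-i+1$ according to the parity of $i$, and the ratio $\tau^*_{i-1}(\th^*_{i-1})\eta^*_{d-i}(\th^*_i)/\bigl(\tau^*_i(\th^*_i)\eta^*_{d-i+1}(\th^*_{i-1})\bigr)$ telescopes to a short product of differences $\th^*_j-\th^*_k$; after expanding these differences using the piecewise formula for $\th^*_i$, collecting the $\pm s^*$ terms, and dividing by $4h^*$ (permissible since $h^*\neq 0$ and $\text{\rm Char}(\F)$ is large by Lemma~\ref{lem:essbiptype3+bb}), one arrives at the stated closed form, once more treating $i$ even and $i$ odd separately. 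The converse direction is immediate: if $a_i=\delta$ for $0\leq i\leq d$ then $\{a_i\}_{i=0}^d$ is independent of $i$, so $A,A^*$ is essentially bipartite by Definition~\ref{def:bip}(ii).

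I expect the main obstacle to be the evaluation of the telescoping product $\tau^*_{i-1}(\th^*_{i-1})\eta^*_{d-i}(\th^*_i)/\bigl(\tau^*_i(\th^*_i)\eta^*_{d-i+1}(\th^*_{i-1})\bigr)$ at the piecewise-defined $\th^*_i$ of Lemma~\ref{lem:essbiptype3+b}, together with the sign bookkeeping across the even/odd case split. The endpoint value $x_d$ will be recorded separately, since the general expression collapses at $i=d$ (the factor $\eta^*_0$ is an empty product), just as happens in Lemmas~\ref{lem:essbiptype1c} and~\ref{lem:essbiptype2c}; I would verify it by a direct substitution rather than as a special case of the interior formula.
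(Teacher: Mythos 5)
Your proposal is correct and follows the same route as the paper, whose proof of this lemma is exactly the citation of Lemmas \ref{lem:aixiparam} and \ref{lem:essbiptype3+b}: substitute the essentially bipartite parameter array of Lemma \ref{lem:essbiptype3+b} into \eqref{eq:a0param}--\eqref{eq:xiparam}, with the converse immediate from $a_i=\delta$ being independent of $i$. Your extra details (the shortcut via Lemma \ref{lem:balanced} for $a_i=\delta$, the parity bookkeeping, and the separate check at $i=d$) are all consistent with that computation.
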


\begin{proof}
By Lemmas \ref{lem:aixiparam} and \ref{lem:essbiptype3+b}.
\end{proof}

\begin{lemma}   {\rm (See \cite[Theorem 9.6]{NT:balanced}.) }
 \label{lem:essbiptype3-}    \samepage
\ifDRAFT {\rm lem:essbiptype3-}. \fi
A Leonard pair of type III$^-$ is not essentially bipartite.
\end{lemma}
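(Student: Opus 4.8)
\medskip
\noindent\textbf{Proof plan.}
The plan is to deduce this from the fact already recorded in Section~\ref{sec:type} that a Leonard pair of type III$^-$ is not bipartite (namely \cite[Theorem 9.6]{NT:balanced}), by pushing that fact up to the essentially bipartite case through the affine-shift machinery developed above. I would argue by contradiction: suppose $A,A^*$ is a Leonard pair of type III$^-$ that is essentially bipartite. By Lemma~\ref{lem:bipaffine}(ii) there is a scalar $\zeta\in\F$ (the common value of $\{a_i\}_{i=0}^d$) such that $A-\zeta I, A^*$ is bipartite. By Lemma~\ref{lem:typeaffine} the Leonard pair $A-\zeta I, A^*$ again has type III$^-$. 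This contradicts the non-bipartiteness of type III$^-$ Leonard pairs, and the lemma follows.

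If one prefers a self-contained argument not invoking \cite{NT:balanced}, here is how I would proceed. Let $\{\th_i\}_{i=0}^d$ be a standard ordering of the eigenvalues of $A$. By Lemma~\ref{lem:balanced}, essential bipartiteness would force $\th_i+\th_{d-i}$ to be independent of $i$. Since $A,A^*$ has type III, its fundamental constant is $\beta=-2$, so by Lemma~\ref{lem:classify}(v) and Definition~\ref{def:beta0} one has $(\th_{i-2}-\th_{i+1})/(\th_{i-1}-\th_i)=\beta+1=-1$ for $2\le i\le d-1$, equivalently $\th_{i-2}+\th_{i-1}=\th_i+\th_{i+1}$. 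Writing $\sigma_j=\th_j+\th_{j+1}$, this says $\sigma_j$ is $2$-periodic in $j$ for $0\le j\le d-1$. As $d$ is odd, $d-1$ is even and $d-2$ is odd, so $\sigma_{d-1}=\sigma_0$ and $\sigma_{d-2}=\sigma_1$, whence
\[
(\th_0+\th_d)-(\th_2+\th_{d-2})=(\sigma_0-\sigma_1)+(\sigma_{d-1}-\sigma_{d-2})=2(\sigma_0-\sigma_1).
\]
Essential bipartiteness would make the left-hand side zero, so $\sigma_0=\sigma_1$ since $\text{\rm Char}(\F)\neq2$; but then $\th_0=\th_2$, contradicting Lemma~\ref{lem:classify}(i). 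Here $d\ge3$ is what makes the three-term relation nonvacuous and makes $\th_0\neq\th_2$ meaningful.

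I do not expect a genuine obstacle here. The first route is essentially a one-line reduction, and its only content is that the affine shift in Lemma~\ref{lem:bipaffine}(ii) preserves the type, which is precisely Lemma~\ref{lem:typeaffine}. For the second route the only care needed is bookkeeping of parities and of the index range $2\le i\le d-1$ on which the three-term recurrence is available; the algebra itself is immediate once $\sigma_j$ is recognized as $2$-periodic.
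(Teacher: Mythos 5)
Your proposal is correct, and it differs from the paper in that the paper offers no internal argument at all: Lemma \ref{lem:essbiptype3-} is stated with a bare citation to \cite[Theorem 9.6]{NT:balanced}. Your second route is the real added value: I checked the computation, and it works. From $\beta=-2$ and Lemma \ref{lem:classify}(v) you get $\sigma_{i-2}=\sigma_i$ for $2\le i\le d-1$ with $\sigma_j=\th_j+\th_{j+1}$, so $\sigma$ is constant on even and on odd indices of $\{0,\dots,d-1\}$; since $d$ is odd, $\sigma_{d-1}=\sigma_0$ and $\sigma_{d-2}=\sigma_1$, the identity $(\th_0+\th_d)-(\th_2+\th_{d-2})=2(\sigma_0-\sigma_1)$ is right, and Lemma \ref{lem:balanced} plus $\text{\rm Char}(\F)\neq 2$ (Lemma \ref{lem:CharF2}) forces $\th_0=\th_2$, contradicting Lemma \ref{lem:classify}(i); the parity is exactly where III$^+$ escapes, consistent with Lemma \ref{lem:essbiptype3+}. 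This gives a short self-contained proof the paper does not contain. Your first route is weaker than it looks: inside this paper the statement that a type III$^-$ Leonard pair is not bipartite is Lemma \ref{lem:biptype3-}, whose proof is ``By Lemma \ref{lem:essbiptype3-}'', so quoting that lemma would be circular; you correctly lean instead on the external citation in Section \ref{sec:type}, but that citation is the very source the paper uses for the lemma you are proving, so route 1 amounts to the paper's citation plus the (correct) observation that Lemmas \ref{lem:bipaffine}(ii) and \ref{lem:typeaffine} reduce the essentially bipartite case to the bipartite case. In short: route 2 is the proof worth keeping; route 1 should be presented only as a reduction, with the dependence on the external theorem made explicit.
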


\section{Some characterizations of the bipartite Leonard pairs}
\label{sec:bipbip}
\ifDRAFT {\rm sec:bipbip}. \fi

In this section, we characterize the bipartite Leonard pairs
in terms of the primary data,
the parameter array,
and the TD/D sequence.

Throughout this section, let $A,A^*$ denote a Leonard pair over $\F$,
with parameter array
\begin{equation}
  (\{\th_i\}_{i=0}^d; \{\th^*_i\}_{i=0}^d; \{\vphi_i\}_{i=1}^d; \{\phi_i\}_{i=1}^d)  \label{eq:parray23c}
\end{equation}
and corresponding TD/D sequence
\[
 (\{a_i\}_{i=0}^d; \{x_i\}_{i=1}^d; \{\th^*_i\}_{i=0}^d).
\]

We first consider the case of type I.

\begin{lemma}       \label{lem:biptype1}    \samepage
\ifDRAFT {\rm lem:biptype1}. \fi
Fix a nonzero $q \in \F$, and assume that $A,A^*$ has type I 
with fundamental constant $\beta = q^2 + q^{-2}$.
Let
\[
 (\delta, \mu, h, \delta^*, \mu^*, h^*, \tau)
\]
denote the primary $q$-data corresponding to the parameter array
\eqref{eq:parray23c}.
Then the following are equivalent:
\begin{itemize}
\item[\rm (i)]
$A,A^*$ is bipartite;
\item[\rm (ii)]
$\delta = 0$ and $\mu + h = 0$ and $\tau=0$.
\end{itemize}
\end{lemma}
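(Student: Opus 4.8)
The plan is to reduce this to results already in hand: the essentially bipartite characterization of type I Leonard pairs (Lemma \ref{lem:essbiptype1}) together with the explicit formula for the diagonal entries $a_i$ of the TD/D form in the essentially bipartite case (Lemma \ref{lem:essbiptype1c}). The underlying observation is that a bipartite Leonard pair is exactly an essentially bipartite one whose common value $\{a_i\}_{i=0}^d$ is zero, and under the essentially bipartite hypothesis that common value is precisely $\delta$.

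For the direction (ii) $\Rightarrow$ (i), I would assume $\delta = 0$, $\mu + h = 0$, and $\tau = 0$. By Lemma \ref{lem:essbiptype1} the pair $A,A^*$ is essentially bipartite, so Lemma \ref{lem:essbiptype1c} applies and gives $a_i = \delta = 0$ for $0 \leq i \leq d$. By Definition \ref{def:bip}(i) the pair $A,A^*$ is bipartite. For the direction (i) $\Rightarrow$ (ii), I would assume $A,A^*$ is bipartite, so $a_i = 0$ for $0 \leq i \leq d$ by Definition \ref{def:bip}(i); in particular $a_i$ is independent of $i$, so $A,A^*$ is essentially bipartite (Note \ref{note:essbip}). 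Then Lemma \ref{lem:essbiptype1} yields $\mu + h = 0$ and $\tau = 0$, and with essential bipartiteness now available, Lemma \ref{lem:essbiptype1c} gives $a_i = \delta$; comparing with $a_i = 0$ forces $\delta = 0$. This completes the equivalence.

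An equivalent route, should one prefer to stay on the parameter-array side, is to combine Lemma \ref{lem:bipartite} (bipartite $\iff$ $\th_i + \th_{d-i} = 0$ and $\vphi_i + \phi_i = 0$) with the essentially bipartite description of the $\th_i,\vphi_i,\phi_i$ in Lemma \ref{lem:essbiptype1b}: under essential bipartiteness one computes $\th_i + \th_{d-i} = 2\delta$ and $\vphi_i + \phi_i = 0$ automatically, so (using $\mathrm{Char}(\F) \neq 2$, in force since Section \ref{sec:nbip}) bipartiteness is equivalent to essential bipartiteness plus $\delta = 0$, and Lemma \ref{lem:essbiptype1} finishes it. Yet another phrasing uses Lemma \ref{lem:bipaffine}(ii): an essentially bipartite pair admits a unique $\zeta$, equal to the common value of $\{a_i\}$, with $A - \zeta I, A^*$ bipartite, so $A,A^*$ is bipartite exactly when that $\zeta$ vanishes, and Lemma \ref{lem:essbiptype1c} identifies $\zeta = \delta$.

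There is essentially no serious obstacle here; the substantive work was done in Sections \ref{sec:type1} and \ref{sec:bipessbip}. The only point that needs care is the logical order in the forward direction: Lemma \ref{lem:essbiptype1c} is stated under the standing hypothesis that $A,A^*$ is essentially bipartite, so one must first deduce essential bipartiteness (from bipartiteness, or from Lemma \ref{lem:essbiptype1}) before invoking the formula $a_i = \delta$. No new computation is required.
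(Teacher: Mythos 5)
Your proposal is correct and follows essentially the same route as the paper, whose proof simply cites Lemmas \ref{lem:essbiptype1} and \ref{lem:essbiptype1c}; you have just spelled out the logic (bipartite $=$ essentially bipartite with common value $a_i=\delta$ equal to zero) that those citations leave implicit. The care you take about invoking Lemma \ref{lem:essbiptype1c} only after essential bipartiteness is established is exactly the right reading of that lemma's hypothesis.
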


\begin{proof}
By Lemmas \ref{lem:essbiptype1} and \ref{lem:essbiptype1c}.
\end{proof}

\begin{lemma}    \label{lem:biptype1b}    \samepage
\ifDRAFT {\rm lem:biptype1b}. \fi
Referring to Lemma \ref{lem:biptype1},
$A,A^*$ is bipartite if and only if
\begin{align*}
\th_i &= \mu (q^{2i-d} - q^{d-2i})  &&  (0 \leq i \leq d),
\\
\th^*_i &= \delta^* + \mu^* q^{2i-d} + h^* q^{d-2i}   &&  (0 \leq i \leq d),
\\
\vphi_i &= - (q^i - q^{-i})(q^{d-i+1} - q^{i-d-1}) \mu (\mu^* q^{2i-d-1} - h^* q^{d-2i+1})  && (1 \leq i \leq d),
\\
\phi_i &= (q^i - q^{-i})(q^{d-i+1} - q^{i-d-1}) \mu (\mu^* q^{2i-d-1} - h^* q^{d-2i+1})  && (1 \leq i \leq d).
\end{align*}
\end{lemma}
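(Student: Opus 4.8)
The plan is to deduce this from Lemma~\ref{lem:biptype1} together with the explicit description of type~I parameter arrays in Lemma~\ref{lem:type1param}. By Lemma~\ref{lem:biptype1}, the Leonard pair $A,A^*$ is bipartite precisely when $\delta = 0$, $\mu + h = 0$, and $\tau = 0$, so the task reduces to showing that these three conditions on the primary $q$-data are equivalent to the four displayed families of equations.

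For the forward implication I would substitute $\delta = 0$, $h = -\mu$, and $\tau = 0$ directly into formulas \eqref{eq:type1th}--\eqref{eq:type1phi}. The expression for $\th_i$ collapses to $\mu q^{2i-d} - \mu q^{d-2i}$; the expression for $\th^*_i$ is untouched; and in the expressions for $\vphi_i$ and $\phi_i$ the term $\tau$ disappears, while the substitution $h = -\mu$ lets us pull out a factor of $-\mu$ (for $\vphi_i$) and $\mu$ (for $\phi_i$), yielding exactly the two displayed product formulas. This is a short direct computation.

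For the converse, suppose the four displayed families hold. They exhibit the parameter array \eqref{eq:parray23c} as the image under \eqref{eq:type1th}--\eqref{eq:type1phi} of the sequence $(0,\mu,-\mu,\delta^*,\mu^*,h^*,0)$; since \eqref{eq:parray23c} has type~I with fundamental constant $\beta = q^2+q^{-2}$, the uniqueness clause of Lemma~\ref{lem:type1param} forces $(\delta,\mu,h,\delta^*,\mu^*,h^*,\tau) = (0,\mu,-\mu,\delta^*,\mu^*,h^*,0)$, that is $\delta = 0$, $\mu+h = 0$, $\tau = 0$, and Lemma~\ref{lem:biptype1} then gives that $A,A^*$ is bipartite. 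If one prefers to avoid the uniqueness statement, one can instead argue by hand: equating the two expressions for $\th_i$ gives $\delta + (\mu+h)q^{d-2i} = 0$ for $0 \le i \le d$, and since $q^2 \neq 1$ and $d \geq 3$ the scalars $q^{d-2i}$ are not all equal, whence $\delta = 0$ and $\mu + h = 0$; then equating the two expressions for $\vphi_i$ and cancelling the nonzero factor $(q^i - q^{-i})(q^{d-i+1} - q^{i-d-1})$ --- nonzero because $q^{2i} \neq 1$ for $1 \le i \le d$ by \eqref{eq:type1paramcond1}, or simply because $\vphi_i \neq 0$ --- yields $\tau = 0$. No step here poses a genuine obstacle; the only point meriting care is this final cancellation.
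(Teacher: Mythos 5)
Your proposal is correct and follows essentially the same route as the paper: the paper proves this by combining Lemma \ref{lem:biptype1} with Lemma \ref{lem:essbiptype1b}, and the latter is itself just the substitution of the primary $q$-data conditions into the type I formulas of Lemma \ref{lem:type1param}, which is exactly the computation you carry out directly (with the converse handled, as you note, either by the uniqueness clause of Lemma \ref{lem:type1param} or by the elementary cancellation argument using $q^{2i}\neq 1$). The only difference is packaging: you bypass the intermediate essentially bipartite statement, which changes nothing of substance.
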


\begin{proof}
By Lemmas \ref{lem:essbiptype1b} and \ref{lem:biptype1}.
\end{proof}

\begin{lemma}    \label{lem:biptype1bb}     \samepage
\ifDRAFT {\rm lem:biptype1bb}. \fi
Referring to Lemma \ref{lem:biptype1},
assume that $A,A^*$ is bipartite.
Then
\begin{align*}
 & q^{2i} \neq 1 \qquad\qquad (1 \leq i \leq d),
\\
 & q^{2i} \neq -1 \qquad\qquad (1 \leq i \leq d-1),
\\
& \mu \neq 0,
\\
& \mu^* q^i \neq h^* q^{-i}   \qquad\qquad( 1-d \leq i \leq d-1).
\end{align*}
\end{lemma}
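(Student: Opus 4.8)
The statement to prove is Lemma \ref{lem:biptype1bb}: assuming $A,A^*$ has type I with fundamental constant $\beta=q^2+q^{-2}$ and is bipartite, the four inequalities $q^{2i}\neq 1$ $(1\le i\le d)$, $q^{2i}\neq -1$ $(1\le i\le d-1)$, $\mu\neq 0$, and $\mu^*q^i\neq h^*q^{-i}$ $(1-d\le i\le d-1)$ all hold.

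\textbf{Plan.} The cleanest approach is to reduce the bipartite case to the essentially bipartite case, which has already been handled. By Note \ref{note:essbip} (or Lemma \ref{lem:bip}), a bipartite Leonard pair is essentially bipartite. So $A,A^*$ is essentially bipartite of type I, and Lemma \ref{lem:biptype1} gives that the primary $q$-data of $A,A^*$ satisfies $\delta=0$, $\mu+h=0$, $\tau=0$; in particular the primary $q$-data still satisfies the essentially bipartite characterization $\mu+h=0$, $\tau=0$ of Lemma \ref{lem:essbiptype1}(ii). Therefore the hypotheses of Lemma \ref{lem:essbiptype1bb} are met, and that lemma delivers exactly the four displayed inequalities. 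So the proof is simply: ``By Lemmas \ref{lem:biptype1} and \ref{lem:essbiptype1bb}'' (or ``By Note \ref{note:essbip} and Lemma \ref{lem:essbiptype1bb}'').

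I would write it out as follows, noting that the first three lines of the parameter-array/primary-data conditions for the bipartite case coincide with those of the essentially bipartite case, so Lemma \ref{lem:essbiptype1bb} applies verbatim. One should double-check that Lemma \ref{lem:essbiptype1bb} is stated ``referring to Lemma \ref{lem:essbiptype1}'', i.e. under the essentially bipartite hypothesis, which is implied here since bipartite $\Rightarrow$ essentially bipartite. No fresh computation is needed.

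\textbf{Anticipated obstacle.} There is essentially no obstacle: the only point requiring a moment's care is to make sure the logical chain ``$A,A^*$ bipartite $\Rightarrow$ $A,A^*$ essentially bipartite $\Rightarrow$ the inequalities of Lemma \ref{lem:essbiptype1bb}'' is invoked with the correct lemma labels, since the excerpt contains many closely-named lemmas (\ref{lem:essbiptype1}, \ref{lem:essbiptype1b}, \ref{lem:essbiptype1bb}, \ref{lem:essbiptype1c}, \ref{lem:biptype1}, \ref{lem:biptype1b}). Concretely, the proof reads:

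\begin{proof}
A bipartite Leonard pair is essentially bipartite (Note \ref{note:essbip}).
Hence $A,A^*$ is essentially bipartite of type I.
Now apply Lemma \ref{lem:essbiptype1bb}.
\end{proof}
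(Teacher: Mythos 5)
Your proposal is correct. The only thing to check is that the reduction is legitimate, and it is: Note \ref{note:essbip} gives that a bipartite Leonard pair is essentially bipartite, the setup of Lemma \ref{lem:biptype1} (type I, fixed $q$ with $\beta=q^2+q^{-2}$, primary $q$-data attached to a fixed parameter array) matches that of Lemma \ref{lem:essbiptype1}, and the conclusion of Lemma \ref{lem:essbiptype1bb} is verbatim the list of inequalities to be proved, in the same symbols $q,\mu,\mu^*,h^*$. The paper instead proves the lemma directly, by evaluating the type I inequalities of Lemma \ref{lem:type1cond} under the bipartite conditions of Lemma \ref{lem:biptype1}(ii) ($\delta=0$, $\mu+h=0$, $\tau=0$) --- which is exactly the computation already carried out in the proof of Lemma \ref{lem:essbiptype1bb}, since $\delta$ does not appear in those inequalities. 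So the two routes have the same computational content; yours simply borrows the earlier evaluation rather than repeating it, and in fact matches the paper's own style for the neighboring results (Lemmas \ref{lem:biptype1b} and \ref{lem:biptype1c} are proved by citing Lemmas \ref{lem:essbiptype1b} and \ref{lem:essbiptype1c}). One cosmetic remark: in your reduction the facts $\delta=0$, $\mu+h=0$, $\tau=0$ from Lemma \ref{lem:biptype1} are not actually needed; the single implication ``bipartite $\Rightarrow$ essentially bipartite'' already suffices to invoke Lemma \ref{lem:essbiptype1bb}.
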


\begin{proof}
Evaluate the inequalities in Lemma \ref{lem:type1cond} using
Lemma \ref{lem:biptype1}(ii).
\end{proof}

\begin{lemma}    \label{lem:biptype1c}    \samepage
\ifDRAFT {\rm lem:biptype1c}. \fi
Referring to Lemma \ref{lem:biptype1},
$A,A^*$ is bipartite if and only if
\begin{align*}
a_i &= 0  &&  (0 \leq i \leq d),
\\
x_i &=
 \frac{ (q^i-q^{-i}) (q^{d-i+1}-q^{i-d-1})\mu^2  (\mu^* q^i - h^* q^{-i})  (\mu^* q^{i-d-1} - h^* q^{d-i+1})}
        { ( \mu^* q^{2i-d} - h^* q^{d-2i} ) (\mu^* q^{2i-d-2} - h^* q^{d-2i+2}) }
             && (1 \leq i \leq d-1),
\\
x_d &=\frac{ (q^d-q^{-d})(q - q^{-1})\mu^2 (\mu^* q^{-1} - h^* q) }
                { \mu^* q^{d-2} - h^* q^{2-d} }.
\end{align*}
\end{lemma}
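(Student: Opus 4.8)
The plan is to deduce this from the corresponding statement for essentially bipartite Leonard pairs, Lemma~\ref{lem:essbiptype1c}, together with the primary $q$-data criterion for bipartiteness in Lemma~\ref{lem:biptype1}. Recall that Lemma~\ref{lem:biptype1} asserts that $A,A^*$ is bipartite precisely when $\delta=0$, $\mu+h=0$, and $\tau=0$; the last two conditions are exactly the essential-bipartiteness criterion of Lemma~\ref{lem:essbiptype1}, so being bipartite amounts to being essentially bipartite with the extra constraint $\delta=0$.

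For the ``only if'' direction I would assume $A,A^*$ is bipartite. Then in particular $A,A^*$ is essentially bipartite, so Lemma~\ref{lem:essbiptype1c} applies and yields $a_i=\delta$ for $0\le i\le d$ together with the claimed closed forms for $x_i$ $(1\le i\le d)$; note that those $x_i$ expressions do not involve $\delta$ and already coincide with the ones in the present statement. Since $\delta=0$ by Lemma~\ref{lem:biptype1}, we get $a_i=0$, which finishes this direction. An equivalent route, should one prefer to avoid routing through Lemma~\ref{lem:essbiptype1c}, is to substitute the bipartite parameter array of Lemma~\ref{lem:biptype1b} directly into the formulas \eqref{eq:a0param}--\eqref{eq:xiparam} of Lemma~\ref{lem:aixiparam} and simplify; this is the same $q$-series bookkeeping performed in the proof of Lemma~\ref{lem:essbiptype1c}, carried out with $\delta$ set to $0$.

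For the ``if'' direction, suppose the displayed equations hold. Then $a_i=0$ for $0\le i\le d$, and hence $A,A^*$ is bipartite by Definition~\ref{def:bip}(i); the values of the $x_i$ are irrelevant here, since the TD/D sequence of $A,A^*$ is already determined by the pair. Consequently the entire content lies in the ``only if'' direction, and there the sole (mild) obstacle is the routine simplification of the $q$-expressions coming out of Lemma~\ref{lem:aixiparam}. Because Lemma~\ref{lem:essbiptype1c} has effectively already discharged that computation, the final proof should collapse to a short citation of Lemmas~\ref{lem:aixiparam} and~\ref{lem:biptype1b} (equivalently, Lemmas~\ref{lem:essbiptype1c} and~\ref{lem:biptype1}).
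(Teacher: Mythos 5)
Your proposal is correct and matches the paper's argument: the paper proves this lemma simply by citing Lemma \ref{lem:essbiptype1c}, i.e.\ by the same reduction you describe (bipartite gives the essentially bipartite formulas with $a_i=\delta=0$, and conversely $a_i=0$ for all $i$ is bipartite by Definition \ref{def:bip}). No gaps.
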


\begin{proof}
By Lemma \ref{lem:essbiptype1c}.
\end{proof}

Next consider the case of type II.

\begin{lemma}    \label{lem:biptype2}    \samepage
\ifDRAFT {\rm lem:biptype2}. \fi
Assume that $A,A^*$ has type II,
and let
\[
(\delta, \mu, h, \delta^*, \mu^*, h^*, \tau)
\]
denote the primary data corresponding to the parameter array \eqref{eq:parray23c}.
Then the following are equivalent:
\begin{itemize}
\item[\rm (i)]
$A,A^*$ is bipartite;
\item[\rm (ii)]
$\delta = 0$ and $h = 0$  and $\tau=0$.
\end{itemize}
\end{lemma}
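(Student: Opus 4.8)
The plan is to reduce Lemma \ref{lem:biptype2} to the already-established essentially bipartite characterization, exactly as was done for type I in Lemma \ref{lem:biptype1}. First I would invoke Note \ref{note:essbip}, which says a bipartite Leonard pair is essentially bipartite, together with Lemma \ref{lem:bip}: $A,A^*$ is bipartite if and only if the flat part $F$ is zero, and $A,A^*$ is essentially bipartite if and only if $F$ is a scalar multiple of $I$, i.e. $a_i$ is independent of $i$ for $0 \leq i \leq d$. So $A,A^*$ is bipartite if and only if it is essentially bipartite \emph{and} the common value of $\{a_i\}_{i=0}^d$ is $0$.

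Next I would apply Lemma \ref{lem:essbiptype2}: since $A,A^*$ has type II, it is essentially bipartite if and only if $h=0$ and $\tau=0$. Then I would apply Lemma \ref{lem:essbiptype2c}, which (under the essentially bipartite hypothesis) gives $a_i = \delta$ for $0 \leq i \leq d$. Thus the common value of $\{a_i\}_{i=0}^d$ equals $\delta$. Combining these: $A,A^*$ is bipartite $\iff$ ($A,A^*$ essentially bipartite and $\delta = 0$) $\iff$ ($h = 0$ and $\tau = 0$ and $\delta = 0$), which is precisely condition (ii). This is a short chain of citations with essentially no computation, so the proof can simply read: ``By Lemmas \ref{lem:essbiptype2} and \ref{lem:essbiptype2c}.'', mirroring the proof of Lemma \ref{lem:biptype1}.

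\begin{proof}
By Lemmas \ref{lem:essbiptype2} and \ref{lem:essbiptype2c}.
\end{proof}

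There is no real obstacle here; the only thing to be careful about is the logical bookkeeping, namely that ``bipartite'' is equivalent to ``essentially bipartite with vanishing common diagonal value,'' and that Lemma \ref{lem:essbiptype2c} identifies that common value as $\delta$. Everything else has been set up in the preceding sections, so the argument is a formality. If one wished to be more explicit, one could instead cite Lemma \ref{lem:bip}(i), Lemma \ref{lem:essbiptype2}, and Lemma \ref{lem:essbiptype2c} in sequence, but the two-lemma citation suffices and matches the style established for the type I case.
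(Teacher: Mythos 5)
Your proposal is correct and coincides with the paper's own proof, which likewise reads ``By Lemmas \ref{lem:essbiptype2} and \ref{lem:essbiptype2c}.'' Your surrounding explanation (bipartite $\iff$ essentially bipartite with the common value of the $a_i$ equal to $0$, and that common value is $\delta$ by Lemma \ref{lem:essbiptype2c}) is exactly the intended bookkeeping.
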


\begin{proof}
By Lemmas \ref{lem:essbiptype2} and \ref{lem:essbiptype2c}.
\end{proof}

\begin{lemma}    \label{lem:biptype2b}    \samepage
\ifDRAFT {\rm lem:biptype2b}. \fi
Referring to Lemma \ref{lem:biptype2},
$A,A^*$ is bipartite if and only if
\begin{align*}
\th_i &= \mu (i - d/2)   &&  (0 \leq i \leq d),
\\
\th^*_i &= \delta^* + \mu^* ( i - d/2) + h^* i (d-i)   &&  (0 \leq i \leq d),
\\
\vphi_i &= - i (d-i+1) \mu \big(  \mu^* /2 - h^* (i - (d+1)/2) \big)   && ( 1 \leq i \leq d),
\\
\phi_i &= i (d-i+1)  \mu \big(  \mu^* /2 - h^* (i - (d+1)/2) \big)   && ( 1 \leq i \leq d).
\end{align*}
\end{lemma}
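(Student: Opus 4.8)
The plan is to deduce the claimed equivalence by combining the explicit parametrization of type II parameter arrays (Lemma \ref{lem:type2param}) with the primary-data characterization of bipartiteness just established in Lemma \ref{lem:biptype2}; alternatively, and more cleanly, by specializing the essentially bipartite formulas of Lemma \ref{lem:essbiptype2b}.

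For the forward direction, I would assume $A,A^*$ is bipartite. Then Lemma \ref{lem:biptype2} gives $\delta = 0$, $h = 0$, and $\tau = 0$ for the primary data corresponding to \eqref{eq:parray23c}. Substituting these three values into \eqref{eq:type2th}--\eqref{eq:type2phi} is entirely routine: $\delta = h = 0$ collapses \eqref{eq:type2th} to $\th_i = \mu(i - d/2)$; the expression \eqref{eq:type2ths} for $\th^*_i$ involves none of $\delta$, $h$, $\tau$ and so is unchanged; and setting $\tau = h = 0$ in \eqref{eq:type2vphi} and \eqref{eq:type2phi} and then factoring out the common $\mu$ yields $\vphi_i = -i(d-i+1)\mu\big(\mu^*/2 - h^*(i - (d+1)/2)\big)$ and $\phi_i = i(d-i+1)\mu\big(\mu^*/2 - h^*(i - (d+1)/2)\big)$, as claimed.

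For the converse, I would observe that the four displayed formulas are precisely the specialization of \eqref{eq:type2th}--\eqref{eq:type2phi} obtained by taking $\delta = 0$, $h = 0$, $\tau = 0$; since by Lemma \ref{lem:type2param} the primary data is uniquely determined by the parameter array, the primary data of $A,A^*$ must then have $\delta = h = \tau = 0$, so $A,A^*$ is bipartite by Lemma \ref{lem:biptype2}. An even shorter route avoiding the uniqueness step: Lemma \ref{lem:essbiptype2} already records that essential bipartiteness forces $h = 0$ and $\tau = 0$, so by Lemma \ref{lem:biptype2} bipartiteness amounts, within the essentially bipartite class, to the single extra condition $\delta = 0$; hence it is enough to set $\delta = 0$ in the formulas of Lemma \ref{lem:essbiptype2b}, which alters only the line for $\th_i$ (replacing $\delta + \mu(i - d/2)$ by $\mu(i - d/2)$) and leaves the lines for $\th^*_i$, $\vphi_i$, $\phi_i$ untouched, giving the equivalence directly in both directions.

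I do not expect any genuine obstacle; the statement is a direct specialization of earlier results. The only point requiring a little care is bookkeeping of signs when the common factor $\mu$ is pulled out of \eqref{eq:type2vphi} and \eqref{eq:type2phi}, so that the minus sign attaches to $\vphi_i$ and not to $\phi_i$; this is a matter of matching $\tau + \mu\mu^*/2$ against $\tau - \mu\mu^*/2$ correctly after setting $\tau = 0$.
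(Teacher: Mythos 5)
Your proposal is correct and matches the paper's argument: the paper proves this lemma exactly by combining Lemma \ref{lem:essbiptype2b} with Lemma \ref{lem:biptype2}, which is your ``shorter route,'' and your primary route via Lemma \ref{lem:type2param} (with its uniqueness of the primary data) is just that same argument with one level unfolded. No gaps; the substitution and sign bookkeeping you describe are all that is needed.
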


\begin{proof}
By Lemmas \ref{lem:essbiptype2b} and \ref{lem:biptype2}.
\end{proof}

\begin{lemma}    \label{lem:biptype2bb}     \samepage
\ifDRAFT {\rm lem:biptype2bb}. \fi
Referring to Lemma \ref{lem:biptype2},
assume that $A,A^*$ is bipartite.
Then
\begin{align*}
& \text{\rm $\text{\rm Char}(\F)$ is equal to $0$ or greater than $d$},
\\ 
& \mu \neq 0,
\\
& \mu^* \neq h^*  i \qquad \qquad  (1-d \leq i \leq d-1).
\end{align*}
\end{lemma}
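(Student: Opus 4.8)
The plan is to follow the same route as for Lemma~\ref{lem:essbiptype2bb}: feed the bipartite normalization of the primary data into the parameter-array conditions of Lemma~\ref{lem:type2cond} and read off what survives. First I would apply Lemma~\ref{lem:biptype2} to replace the hypothesis that $A,A^*$ is bipartite by the equivalent statement that its primary data satisfies $\delta=0$, $h=0$, and $\tau=0$. Since the parameter array \eqref{eq:parray23c} of $A,A^*$ is a genuine parameter array over $\F$ of type~II, Lemma~\ref{lem:type2cond} guarantees that all five families of inequalities \eqref{eq:type2paramcond1}--\eqref{eq:type2paramcond5} hold for its primary data; the idea is simply to substitute $h=0$ and $\tau=0$ into these.

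Carrying this out: inequality \eqref{eq:type2paramcond1} is exactly the assertion that $\mathrm{Char}(\F)$ is $0$ or greater than $d$, so it transfers verbatim. Inequality \eqref{eq:type2paramcond2}, namely $\mu\neq hi$ for $1-d\le i\le d-1$, becomes $\mu\neq 0$ after setting $h=0$ (take $i=0$, which lies in the stated range since $d\ge 3$). Inequality \eqref{eq:type2paramcond3}, $\mu^*\neq h^*i$ for $1-d\le i\le d-1$, involves neither $h$ nor $\tau$, so it passes through unchanged and yields the third conclusion. It then remains only to check that \eqref{eq:type2paramcond4} and \eqref{eq:type2paramcond5} contribute nothing new: with $h=0$ and $\tau=0$ both collapse to $\mu\bigl(\mu^*/2-h^*(i-(d+1)/2)\bigr)\neq 0$ for $1\le i\le d$, and since $\mu\neq 0$ this reads $\mu^*\neq h^*(2i-d-1)$; as $i$ runs over $1,\dots,d$ the integer $2i-d-1$ stays in the interval $[1-d,\,d-1]$, so this is already implied by \eqref{eq:type2paramcond3}.

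The only real work is that final bookkeeping step, verifying that the two $\tau$-inequalities degenerate after the substitution to a subfamily of \eqref{eq:type2paramcond3} rather than to fresh constraints; this is short once the (already established) nonvanishing of $\mu$ is factored out. Thus no genuine obstacle arises — the argument is a direct evaluation, exactly parallel to Lemmas~\ref{lem:essbiptype2bb} and~\ref{lem:biptype1bb}.
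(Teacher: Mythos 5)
Your proposal is correct and follows exactly the paper's route: the paper's proof of this lemma is precisely to evaluate the inequalities of Lemma \ref{lem:type2cond} using the condition $\delta=0$, $h=0$, $\tau=0$ from Lemma \ref{lem:biptype2}(ii). The extra check that \eqref{eq:type2paramcond4} and \eqref{eq:type2paramcond5} reduce to a subfamily of \eqref{eq:type2paramcond3} is harmless but not needed, since the lemma only asserts the three stated conclusions.
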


\begin{proof}
Evaluate the inequalities in Lemma \ref{lem:type2cond} using
Lemma \ref{lem:biptype2}(ii).
\end{proof}

\begin{lemma}    \label{lem:biptype2c}    \samepage
\ifDRAFT {\rm lem:biptype2c}. \fi
Referring to Lemma \ref{lem:biptype2},
$A,A^*$ is bipartite if and only if
\begin{align*}
a_i &= 0  &&  (0 \leq i \leq d),
\\
x_i &=  \frac{ i (d-i+1) \mu^2 (\mu^* - i h^*) \big( \mu^* + (d-i+1) h^* \big) }
          { 4 \big( \mu^* + (d-2i) h^* \big) \big( \mu^* + (d-2i+2) h^* \big) }
&& (1 \leq i \leq d-1),
\\
x_d &= \frac{ d \mu^2 (\mu^* + h^*) }
                {4 (\mu^* + (2-d) h^*) }.
\end{align*}
\end{lemma}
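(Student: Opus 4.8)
The plan is to deduce this from the already-proved essentially bipartite analogue, Lemma \ref{lem:essbiptype2c}, exactly in the way Lemma \ref{lem:biptype1c} was deduced from Lemma \ref{lem:essbiptype1c} in the type I case. The only extra ingredient is the remark that a bipartite Leonard pair is a special case of an essentially bipartite one in which the common value of the $a_i$ is zero; equivalently, by Lemma \ref{lem:biptype2} the primary data of a bipartite pair of type II has $\delta = 0$ on top of $h = 0$, $\tau = 0$, whereas by Lemma \ref{lem:essbiptype2} the essentially bipartite condition is just $h = 0$, $\tau = 0$. In particular the $x_i$ formulas in Lemma \ref{lem:essbiptype2c} involve only $\mu$, $\mu^*$, $h^*$, so they are unaffected by the further specialization $\delta = 0$.

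For the forward direction I would argue as follows. Suppose $A,A^*$ is bipartite. By Definition \ref{def:bip}(i) we have $a_i = 0$ for $0 \leq i \leq d$, so in particular $\delta = a_0 = 0$. By Note \ref{note:essbip} the pair $A,A^*$ is essentially bipartite, so Lemma \ref{lem:essbiptype2c} applies and gives precisely the stated formulas for $x_i$ $(1 \leq i \leq d-1)$ and for $x_d$. Together with $a_i = 0$ this is the asserted TD/D sequence.

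For the converse, suppose the TD/D sequence of $A,A^*$ has $a_i = 0$ for $0 \leq i \leq d$ and $x_i$ as displayed. Then $A,A^*$ is bipartite immediately from Definition \ref{def:bip}(i); in fact the $x_i$ conditions are not used in this direction. I do not anticipate any genuine obstacle: all of the computational content — the explicit shape of the $x_i$ in terms of the primary data, which ultimately traces back to Lemma \ref{lem:aixiparam} applied to the parameter array of Lemma \ref{lem:essbiptype2b} — has already been carried out in the essentially bipartite case, and the passage to the bipartite case is the trivial observation that bipartite forces $\delta = 0$. The mildest point to be careful about is simply to record that the $x_i$ formulas do not depend on $\delta$, $h$, or $\tau$, so nothing in them changes between Lemma \ref{lem:essbiptype2c} and the present statement.
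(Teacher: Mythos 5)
Your proposal is correct and takes essentially the same route as the paper, whose entire proof of this lemma is an appeal to Lemma \ref{lem:essbiptype2c}. The details you spell out (bipartite forces $\delta=0$ while the $x_i$ formulas do not involve $\delta$, and the converse needs only $a_i=0$ via Definition \ref{def:bip}(i)) are exactly what the paper's terse citation leaves implicit.
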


\begin{proof}
By Lemma \ref{lem:essbiptype2c}.
\end{proof}

Next consider the case of type III$^+$.

\begin{lemma}    \label{lem:biptype3+}    \samepage
\ifDRAFT {\rm lem:biptype3+}. \fi
Assume that $A,A^*$ has type III$^+$,
and let
\[
 (\delta, s, h, \delta^*, s^*, h^*, \tau)
\]
denote the primary data corresponding to the parameter array
\eqref{eq:parray23c}.
Then the following are equivalent:
\begin{itemize}
\item[\rm (i)]
$A,A^*$ is bipartite;
\item[\rm (ii)]
$\delta = 0$ and $s = 0$  and $\tau=0$.
\end{itemize}
\end{lemma}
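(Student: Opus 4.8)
The plan is to reduce this to the essentially bipartite case, which has already been analyzed in Lemma \ref{lem:essbiptype3+}. Recall from Definition \ref{def:bip} that $A,A^*$ is bipartite whenever $a_i = 0$ for $0 \le i \le d$, and essentially bipartite whenever $a_i$ is independent of $i$; by Note \ref{note:essbip}, bipartite implies essentially bipartite. Hence $A,A^*$ is bipartite if and only if it is essentially bipartite and the common value of $\{a_i\}_{i=0}^d$ equals $0$. This is the same two-step pattern used to prove Lemmas \ref{lem:biptype1} and \ref{lem:biptype2} for types I and II.

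First I would apply Lemma \ref{lem:essbiptype3+} to the primary data $(\delta, s, h, \delta^*, s^*, h^*, \tau)$ corresponding to the parameter array \eqref{eq:parray23c}: this states that $A,A^*$ is essentially bipartite if and only if $s = 0$ and $\tau = 0$. Next, under the assumption that $A,A^*$ is essentially bipartite, Lemma \ref{lem:essbiptype3+c} gives $a_i = \delta$ for $0 \le i \le d$, so the common value of the $a_i$ is $\delta$. Combining these two facts, $A,A^*$ is bipartite if and only if $s = 0$, $\tau = 0$, and $\delta = 0$, which is exactly condition (ii). So the proof is literally "By Lemmas \ref{lem:essbiptype3+} and \ref{lem:essbiptype3+c}."

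Since the machinery is already in place, there is no real obstacle here; the only point requiring a little care is bookkeeping of which $\delta$ is meant: it is the first entry of the primary data attached to the parameter array \eqref{eq:parray23c} in the given standard ordering, and this is precisely the $\delta$ whose vanishing Lemma \ref{lem:essbiptype3+c} detects through the condition $a_i = \delta$. One could alternatively argue via Lemma \ref{lem:bipaffine}(ii), subtracting the common value off and invoking Lemma \ref{lem:essbiptype3+b}, but routing through the explicit TD/D description of Lemma \ref{lem:essbiptype3+c} is the cleanest and matches the treatment of the other types.
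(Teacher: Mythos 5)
Your proposal is correct and follows the paper's own proof exactly: the paper also establishes Lemma \ref{lem:biptype3+} by citing Lemmas \ref{lem:essbiptype3+} and \ref{lem:essbiptype3+c}, i.e.\ essentially bipartite forces $s=0$, $\tau=0$, and the TD/D description $a_i=\delta$ then shows bipartiteness is equivalent to the additional condition $\delta=0$. No gaps; your bookkeeping remark about which $\delta$ is meant is consistent with the paper's setup.
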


\begin{proof}
By Lemmas \ref{lem:essbiptype3+} and \ref{lem:essbiptype3+c}.
\end{proof}

\begin{lemma}    \label{lem:biptype3+b}    \samepage
\ifDRAFT {\rm lem:biptype3+b}. \fi
Referring to Lemma \ref{lem:biptype3+},
$A,A^*$ is bipartite if and only if
\begin{align*}
\th_i &=
 \begin{cases}
  h (i - d/2)   &  \text{\rm if $i$ is even},  \\
  - h (i-d/2)  &   \text{\rm if $i$ is odd}
 \end{cases}
  &&  (0 \leq i \leq d),
\\
\th^*_i &=
 \begin{cases}
  \delta^* + s^* + h^* (i-d/2)  &  \text{\rm if $i$ is even},  \\
  \delta^* - s^* - h^* (i-d/2)  &  \text{\rm if $i$ is odd}
 \end{cases} 
   &&   (0 \leq i \leq d),
\\
\vphi_i &=
 \begin{cases}
  - i h \big( s^* + h^* (i - (d+1)/2) \big)   &  \text{\rm if $i$ is even},  \\
 (d-i+1) h \big( s^* + h^* (i-(d+1)/2) \big)  &  \text{\rm if $i$ is odd}
 \end{cases}
   &&  ( 1 \leq i \leq d),
\\
 \phi_i &=
 \begin{cases}
   i h \big( s^* + h^* (i - (d+1)/2) \big)   &  \text{\rm if $i$ is even},  \\
   - (d-i+1) h \big( s^* + h^* (i-(d+1)/2) \big)  &  \text{\rm if $i$ is odd}
 \end{cases}
   &&  ( 1 \leq i \leq d).
\end{align*}
\end{lemma}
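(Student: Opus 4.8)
The plan is to reduce the statement to two results already available: the characterization of the bipartite property for type III$^+$ (Lemma \ref{lem:biptype3+}) and the explicit form of the parameter array in the essentially bipartite case (Lemma \ref{lem:essbiptype3+b}). This follows the same template as the proofs of Lemmas \ref{lem:biptype1b} and \ref{lem:biptype2b}, where one combines the ``$\delta,\ldots,\tau$ vanishing'' description of (essential) bipartiteness with the specialization of the generic parameter-array formulas.

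First I would record that by Lemma \ref{lem:biptype3+}, $A,A^*$ is bipartite if and only if $\delta=0$, $s=0$, and $\tau=0$, while by Lemma \ref{lem:essbiptype3+b} the conditions $s=0$ and $\tau=0$ are equivalent to $A,A^*$ being essentially bipartite, and in that case the parameter array is given by the four displayed formulas of Lemma \ref{lem:essbiptype3+b}. Inspecting those formulas, $\delta$ occurs only as the additive constant $\pm\delta$ in $\th_i$, and does not appear at all in the expressions for $\th^*_i$, $\vphi_i$, $\phi_i$. Hence, granting that $A,A^*$ is essentially bipartite, the extra condition $\delta=0$ holds precisely when the parameter array is the one obtained from the Lemma \ref{lem:essbiptype3+b} formulas by dropping the summand $\delta$; but that is exactly the parameter array displayed in the present statement. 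Combining this observation with the first sentence yields both implications.

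Concretely, for the forward direction one substitutes $\delta=s=\tau=0$ into \eqref{eq:type3+th}--\eqref{eq:type3+phi}; the $\vphi_i,\phi_i$ expressions simplify routinely, e.g.\ $\tau - s h^* - s^* h - h h^*(i-(d+1)/2)$ collapses to $-h\big(s^* + h^*(i-(d+1)/2)\big)$, and similarly in the remaining cases. For the converse, if the parameter array has the displayed form then it coincides with the parameter array attached to the primary data $(0,0,h,\delta^*,s^*,h^*,0)$ via Lemma \ref{lem:type3+param}; by the uniqueness part of that lemma the primary data of $A,A^*$ must equal $(0,0,h,\delta^*,s^*,h^*,0)$, so $\delta=s=\tau=0$ and $A,A^*$ is bipartite by Lemma \ref{lem:biptype3+}. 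The argument is pure bookkeeping, so there is no genuine obstacle; the only point needing a moment's care is the bidirectional passage between ``$\delta=0$ adjoined to the essentially bipartite formulas'' and ``the bipartite formulas,'' which rests on the uniqueness of the primary data recalled in Lemma \ref{lem:type3+param}.
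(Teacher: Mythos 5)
Your proposal is correct and follows essentially the same route as the paper, whose proof simply combines Lemma \ref{lem:essbiptype3+b} with Lemma \ref{lem:biptype3+}. The extra care you take with the converse (via the uniqueness of the primary data in Lemma \ref{lem:type3+param}) is consistent with, and implicit in, the paper's argument.
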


\begin{proof}
By Lemmas \ref{lem:essbiptype3+b} and \ref{lem:biptype3+}.
\end{proof}

\begin{lemma}    \label{lem:biptype3+bb}     \samepage
\ifDRAFT {\rm lem:biptype3+bb}. \fi
Referring to Lemma \ref{lem:biptype3+},
assume that $A,A^*$ is bipartite.
Then
\begin{align*}
& \text{\rm $\text{\rm Char}(\F)$ is equal to $0$ or greater than $d$},
\\ 
& h \neq 0, \qquad h^* \neq 0,
\\
& 2 s^* \neq h^* i \qquad\text{\rm if $i$ is odd} \qquad \qquad  (1-d \leq i \leq d-1).
\end{align*}
\end{lemma}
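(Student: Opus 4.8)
The plan is to follow the pattern of Lemmas~\ref{lem:essbiptype3+bb}, \ref{lem:biptype1bb}, and \ref{lem:biptype2bb}: use the bipartite hypothesis to pin down the shape of the primary data, and then read the asserted inequalities off Lemma~\ref{lem:type3+cond}.

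First I would invoke Lemma~\ref{lem:biptype3+}(ii), which says that since $A,A^*$ is bipartite the primary data $(\delta, s, h, \delta^*, s^*, h^*, \tau)$ attached to the parameter array \eqref{eq:parray23c} satisfies $\delta = 0$, $s = 0$, and $\tau = 0$. Because \eqref{eq:parray23c} is a genuine parameter array over $\F$ of type III$^+$, Lemma~\ref{lem:type3+cond} guarantees that all the inequalities \eqref{eq:type3+paramcond1}--\eqref{eq:type3+paramcond4b} hold; I would then substitute $s = \tau = 0$ into each of them.

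The lines \eqref{eq:type3+paramcond1b} give $h \neq 0$ and $h^* \neq 0$ at once, which is the second asserted line. Line \eqref{eq:type3+paramcond3} contains neither $s$ nor $\tau$, so it survives unchanged as $2 s^* \neq h^* i$ for odd $i$ with $1-d \leq i \leq d-1$, which is the third asserted line. (Lines \eqref{eq:type3+paramcond4a} and \eqref{eq:type3+paramcond4b}, after setting $s = \tau = 0$ and cancelling $h \neq 0$, both reduce to $s^* + h^*\bigl(i-(d+1)/2\bigr)\neq 0$ for $1 \leq i \leq d$; doubling and reindexing via $j = 2i-d-1$, which runs over the odd integers in $[1-d,d-1]$ since $d$ is even, turns this back into \eqref{eq:type3+paramcond3}, so nothing new is produced.)

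The only step requiring a genuine argument is the first asserted line: \eqref{eq:type3+paramcond1} alone only gives that $\text{\rm Char}(\F)$ is $0$ or greater than $d/2$, and I must improve this to ``greater than $d$''. For this I would use \eqref{eq:type3+paramcond2}, which after $s = 0$ reads $0 \neq i h$ for every odd $i$ with $1-d \leq i \leq d-1$; since $h \neq 0$, this forces every such odd integer to be nonzero in $\F$. Now $d$ is even and $\text{\rm Char}(\F) \neq 2$, so if $\text{\rm Char}(\F) = p > 0$ then $p$ is an odd prime; were $p \leq d-1$ the choice $i = p$ would give an odd integer in the required range vanishing in $\F$, a contradiction, so $p > d-1$, i.e.\ $p \geq d$, and since $p$ is odd while $d$ is even we conclude $p \geq d+1$, that is $p > d$. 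Collecting the three lines finishes the proof; I expect this Char-strengthening to be the only non-mechanical point, the rest being direct substitution into Lemma~\ref{lem:type3+cond}.
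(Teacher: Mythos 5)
Your proposal is correct and follows the paper's proof, which likewise just evaluates the inequalities of Lemma \ref{lem:type3+cond} after substituting $s=0$, $\tau=0$ from Lemma \ref{lem:biptype3+}(ii). Your explicit upgrade of the characteristic condition from ``greater than $d/2$'' in \eqref{eq:type3+paramcond1} to ``greater than $d$'' via \eqref{eq:type3+paramcond2} (using $h\neq 0$, $d$ even, and $\text{\rm Char}(\F)\neq 2$) is exactly the detail the paper leaves implicit, and it is carried out correctly.
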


\begin{proof}
Evaluate the inequalities in Lemma \ref{lem:type3+cond} using
Lemma \ref{lem:biptype3+}(ii).
\end{proof}

\begin{lemma}    \label{lem:biptype3+c}    \samepage
\ifDRAFT {\rm lem:biptype3+c}. \fi
Referring to Lemma \ref{lem:biptype3+},
$A,A^*$ is bipartite if and only if
\begin{align*}
a_i &= 0  &&  (0 \leq i \leq d),
\\
x_i &= 
\begin{cases}
   i h^2 \big( (d-i+1) h^* - 2 s^* \big)  / ( 4 h^* )  &  \text{\rm if $i$ is even},   \\
   (d-i+1) h^2 ( i h^* + 2 s^*)  / ( 4 h^* )        &  \text{\rm if $i$ is odd}
\end{cases}
&& (1 \leq i \leq d).
\end{align*}
\end{lemma}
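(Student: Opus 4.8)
The plan is to obtain this lemma as an immediate consequence of the essentially bipartite characterization in Lemma \ref{lem:essbiptype3+c}, combined with the extra normalization $\delta = 0$ recorded in Lemma \ref{lem:biptype3+}(ii). No new computation should be needed; the work has already been done in the earlier lemmas, and what remains is only the bookkeeping of specializing $\delta$.

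For the forward direction, I would first observe that a bipartite Leonard pair is essentially bipartite (Note \ref{note:essbip}), so Lemma \ref{lem:essbiptype3+c} applies to $A,A^*$ and yields $a_i = \delta$ for $0 \leq i \leq d$ together with the displayed closed form for $x_i$. Here I would point out that the $x_i$ expression depends only on $h$, $h^*$, $s^*$, $d$ (not on $\delta$), so it coincides verbatim with the one appearing in the present statement. Then I would invoke Lemma \ref{lem:biptype3+}(ii), according to which bipartiteness forces $\delta = 0$; substituting $\delta = 0$ into $a_i = \delta$ gives $a_i = 0$ for $0 \leq i \leq d$, completing this direction.

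For the converse, I would simply note that if $a_i = 0$ for $0 \leq i \leq d$, then $A,A^*$ is bipartite by the definition of bipartite (Definition \ref{def:bip}(i)); the hypothesis on the scalars $x_i$ is not even required here.

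I do not anticipate any real obstacle: all of the substance is packaged in Lemmas \ref{lem:essbiptype3+c} and \ref{lem:biptype3+}, and the only thing to verify is the trivial point that setting $\delta = 0$ in the essentially bipartite formulas produces exactly the formulas stated in this lemma. The single item worth a careful glance is that the displayed $x_i$ expression in this lemma agrees symbol-for-symbol with the one in Lemma \ref{lem:essbiptype3+c}, which it does, since neither involves $\delta$.
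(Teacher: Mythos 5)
Your proposal is correct and follows essentially the same route as the paper, whose entire proof is to cite Lemma \ref{lem:essbiptype3+c}: bipartiteness gives the $x_i$ formula via the essentially bipartite case, the $a_i$ become $0$ (whether via $\delta=0$ from Lemma \ref{lem:biptype3+} as you do, or directly from Definition \ref{def:bip}(i)), and the converse is immediate from Definition \ref{def:bip}(i). Your observation that the $x_i$ expression is $\delta$-free and matches Lemma \ref{lem:essbiptype3+c} verbatim is exactly the point that makes the specialization work.
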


\begin{proof}
By Lemma \ref{lem:essbiptype3+c}.
\end{proof}

\begin{lemma}  \label{lem:biptype3-}    \samepage
\ifDRAFT {\rm lem:biptype3-}. \fi
A Leonard pair of type III$^-$ is not bipartite.
\end{lemma}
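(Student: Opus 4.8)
The plan is to obtain this statement as an immediate consequence of two facts already available in the excerpt: Note \ref{note:essbip}, which says that every bipartite Leonard pair is essentially bipartite, and Lemma \ref{lem:essbiptype3-}, which says that a Leonard pair of type III$^-$ is not essentially bipartite.

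First I would argue by contradiction. Suppose $A,A^*$ is a Leonard pair of type III$^-$ that is bipartite. By Note \ref{note:essbip}, $A,A^*$ is essentially bipartite. This contradicts Lemma \ref{lem:essbiptype3-}. Hence no Leonard pair of type III$^-$ is bipartite. Note that this is consistent with the remark in Section \ref{sec:type}, where the same conclusion was used (via \cite[Theorem 9.6]{NT:balanced}) to rule out the near-bipartite property for type III$^-$.

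I do not expect any obstacle here: the entire substance of the claim is carried by Lemma \ref{lem:essbiptype3-}, and the present lemma merely records the weaker conclusion about the bipartite property, which follows formally. An alternative, more self-contained route would be to use Lemma \ref{lem:bipartite} and check directly that the conditions $\th_i+\th_{d-i}=0$ and $\vphi_i+\phi_i=0$ cannot both hold for a type III$^-$ parameter array; but this would require first setting up an explicit parametrization of the type III$^-$ parameter arrays, which is considerably heavier and is unnecessary given Lemma \ref{lem:essbiptype3-}.
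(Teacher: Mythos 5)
Your argument is exactly the paper's: the paper proves this lemma by citing Lemma \ref{lem:essbiptype3-}, with the implicit use of Note \ref{note:essbip} that you make explicit. The proposal is correct and matches the paper's proof.
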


\begin{proof}
By Lemma \ref{lem:essbiptype3-}.
\end{proof}

\section{Leonard pairs of dual $q$-Krawtchouk type}
\label{sec:dualqKrawt}
\ifDRAFT {\rm sec:dualqKrawt}. \fi

In this section, we describe a family of Leonard pairs, 
said to have dual $q$-Krawtchouk type.

Throughout this section,
let $A,A^*$ denote a Leonard pair over $\F$ with parameter array 
\begin{equation}
  (\{\th_i\}_{i=0}^d; \{\th^*_i\}_{i=0}^d; \{\vphi_i\}_{i=1}^d; \{\phi_i\}_{i=1}^d).  \label{eq:parray11}
\end{equation}

\begin{defi}        \label{def:LPdualqKrawt}    \samepage
\ifDRAFT {\rm def:LPdualqKrawt}. \fi
The Leonard pair $A,A^*$ is said to have {\em dual $q$-Krawtchouk type}
whenever the following {\rm (i)--(iii)} hold:
\begin{itemize}
\item[\rm (i)]
$A,A^*$ has type I;
\item[\rm (ii)]
the expression $(\th^*_{i-1} - \th^*_{i})/(\th^*_i - \th^*_{i+1})$ is
independent of $i$ for $1 \leq i \leq d-1$;
\item[\rm (iii)]
the expression $\vphi_i / \phi_i$ is independent of $i$ for $1 \leq i \leq d$.
\end{itemize}
\end{defi}

\begin{note}     \label{note:dualqKrawt}    \samepage
\ifDRAFT {\rm note:dualqKrawt}. \fi
The Leonard pairs of dual $q$-Krawtchouk type are attached to the
dual $q$-Krawtchouk polynomials \cite[Example 35.8]{T:survey}.
\end{note}

For the rest of this section, assume that $A,A^*$ has type I,
and let
\begin{equation}
 (\delta, \mu, h, \delta^*, \mu^*, h^*, \tau)                 \label{eq:basicseq}
\end{equation}
denote the primary $q$-data corresponding to the parameter array
\eqref{eq:parray11}.

\begin{lemma}   \label{lem:qKrawtpre}    \samepage
\ifDRAFT {\rm lem:qKrawtpre}. \fi
We have
\begin{align}
& \frac{\th^*_{i-1} - \th^*_i } {\th^*_i - \th^*_{i+1}}
 = \frac{\mu^* q^{4i} - h^* q^{2d+2} } { \mu^* q^{4i+2} - h^* q^{2d} }
 &&  ( 1 \leq i \leq d-1),                                 \label{eq:qKrawtratio}
\\
& \frac{\vphi_i}{\phi_i} =
 \frac{\tau  - \mu \mu^* q^{2i-d-1} - h h^* q^{d-2i+1} }
        { \tau - h \mu^* q^{2i-d-1} - \mu h^* q^{d-2i+1}}
  &&  ( 1 \leq i \leq d).                                    \label{eq:qKrawtratio2}
\end{align}
\end{lemma}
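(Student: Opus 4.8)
The plan is to derive both identities by direct substitution from the closed forms in Lemma \ref{lem:type1param}, which express $\th^*_i$, $\vphi_i$, and $\phi_i$ in terms of the primary $q$-data $(\delta,\mu,h,\delta^*,\mu^*,h^*,\tau)$.

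Identity \eqref{eq:qKrawtratio2} is the easier one. By Lemma \ref{lem:type1param}, the factor $(q^i-q^{-i})(q^{d-i+1}-q^{i-d-1})$ occurs in both $\vphi_i$ and $\phi_i$. This factor is nonzero for $1\leq i\leq d$, since $q^i-q^{-i}=q^{-i}(q^{2i}-1)\neq0$ and $q^{d-i+1}-q^{i-d-1}=q^{i-d-1}(q^{2(d-i+1)}-1)\neq0$ by \eqref{eq:type1paramcond1}. As $\phi_i\neq0$ by Lemma \ref{lem:classify}(ii), dividing $\vphi_i$ by $\phi_i$ and cancelling this common factor yields exactly the right-hand side of \eqref{eq:qKrawtratio2}.

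For \eqref{eq:qKrawtratio} I would substitute $\th^*_j=\delta^*+\mu^* q^{2j-d}+h^* q^{d-2j}$ for $j=i-1,i,i+1$. The $\delta^*$ terms cancel, and using $q^{-2}-1=-q^{-2}(q^2-1)$ and $1-q^{-2}=q^{-2}(q^2-1)$ one gets
\[
\th^*_{i-1}-\th^*_i=(q^2-1)\bigl(h^* q^{d-2i}-\mu^* q^{2i-d-2}\bigr),\qquad \th^*_i-\th^*_{i+1}=(q^2-1)\bigl(h^* q^{d-2i-2}-\mu^* q^{2i-d}\bigr).
\]
Since $q^2\neq1$ by \eqref{eq:type1paramcond1} (take $i=1$), forming the quotient of these two expressions cancels the factor $q^2-1$; multiplying the resulting numerator and denominator by $q^{2i+d+2}$ and tidying up the signs produces $\dfrac{\mu^* q^{4i}-h^* q^{2d+2}}{\mu^* q^{4i+2}-h^* q^{2d}}$, which is \eqref{eq:qKrawtratio}. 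The left-hand side of \eqref{eq:qKrawtratio} is well-defined because $\th^*_i\neq\th^*_{i+1}$ by Lemma \ref{lem:classify}(i), so the displayed identity simultaneously certifies that the right-hand side is well-defined.

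There is no genuine obstacle in this lemma: its content is a substitution followed by elementary manipulation of powers of $q$. The only points that require care are the bookkeeping of the exponents of $q$ and the sign that appears when pulling the factor $q^2-1$ out of the numerator and denominator of \eqref{eq:qKrawtratio}, together with recording that the relevant powers of $q$ and the quantity $q^2-1$ are nonzero, so that all cancellations are legitimate.
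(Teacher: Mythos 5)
Your proposal is correct and matches the paper's proof, which simply says to use the closed forms \eqref{eq:type1ths}--\eqref{eq:type1phi} and compute; your substitution, cancellation of the common nonzero factor $(q^i-q^{-i})(q^{d-i+1}-q^{i-d-1})$, and the exponent bookkeeping for \eqref{eq:qKrawtratio} are exactly that routine verification, spelled out.
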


\begin{proof}
Use \eqref{eq:type1ths}--\eqref{eq:type1phi}.
\end{proof}

Note by \eqref{eq:type1paramcond3} that $\mu^*$ and $h^*$ are not both zero.

\begin{lemma}   \label{lem:defLPdualqKrawt}    \samepage
\ifDRAFT {\rm lem:defLPdualqKrawt}. \fi
The following are equivalent:
\begin{itemize}
\item[\rm (i)]
$A,A^*$ has dual $q$-Krawtchouk type;
\item[\rm (ii)]
$\mu^* h^* = 0$ and $\tau = 0$.
\end{itemize}
Suppose the equivalent conditions {\rm (i)} and {\rm (ii)} hold.
Then
\begin{align}
(\th^*_{i-1} - \th^*_{i})/(\th^*_i - \th^*_{i+1}) &=
\begin{cases}
    q^{2} & \text{\rm if $\mu^*=0$},
 \\
    q^{-2} & \text{\rm if $h^*=0$}
\end{cases}
  &&  (1 \leq i \leq d-1),                      \label{eq:qKrawtcond1}
\\
\vphi_i / \phi_i &= 
 \begin{cases}
  h / \mu  & \text{\rm if $\mu^* = 0$},
 \\
 \mu / h
     & \text{\rm if $h^*=0$}
 \end{cases}
&&    (1 \leq i \leq d).                        \label{eq:qKrawtcond2}
\end{align}
\end{lemma}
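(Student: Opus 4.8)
The plan is to read everything off the two explicit formulas in Lemma~\ref{lem:qKrawtpre}, using the type~I inequalities recorded in Lemma~\ref{lem:type1cond} together with the standing facts that $q\neq 0$, $q^4\neq 1$, that $\mu^*$ and $h^*$ are not both zero, and that $\vphi_i\neq 0$, $\phi_i\neq 0$, $\th^*_{i-1}\neq\th^*_i$, $\th^*_i\neq\th^*_{i+1}$ for all relevant $i$. We will use in particular that $q^{2i}\neq 1$ for $1\le i\le d$ by \eqref{eq:type1paramcond1}, and that $\mu\neq h$ by \eqref{eq:type1paramcond2} with $i=0$; consequently the scalar $(q^i-q^{-i})(q^{d-i+1}-q^{i-d-1})$ occurring as a common factor of $\vphi_i$ and $\phi_i$ is nonzero.

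First I would treat condition~(ii) of Definition~\ref{def:LPdualqKrawt}. By \eqref{eq:qKrawtratio} the expression $(\th^*_{i-1}-\th^*_i)/(\th^*_i-\th^*_{i+1})$ equals $g(q^{4i})$, where $g(t)=(\mu^* t-h^* q^{2d+2})/(\mu^* q^2 t-h^* q^{2d})$. If $\mu^*=0$ then $h^*\neq 0$ and $g\equiv q^2$; if $h^*=0$ then $\mu^*\neq 0$ and $g\equiv q^{-2}$; in either case condition~(ii) holds with the value displayed in \eqref{eq:qKrawtcond1}. Conversely, if $\mu^* h^*\neq 0$ then $g$ is a nondegenerate M\"obius transformation, since its determinant equals $\mu^* h^* q^{2d}(q^4-1)\neq 0$; moreover its pole does not occur among the arguments $q^{4i}$ ($1\le i\le d-1$), for the corresponding denominators $\mu^* q^{4i+2}-h^* q^{2d}$ are nonzero, being nonzero scalar multiples of $\th^*_i-\th^*_{i+1}$. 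Hence $g$ is injective on the set of these arguments; since $d\ge 3$ the indices $i=1$ and $i=2$ both occur, and $q^4\neq q^8$ because $q^4\neq 1$, so $g(q^4)\neq g(q^8)$ and condition~(ii) fails. Thus condition~(ii) of Definition~\ref{def:LPdualqKrawt} holds if and only if $\mu^* h^*=0$.

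Next, assuming $\mu^* h^*=0$, I would analyze condition~(iii). Consider first the case $\mu^*=0$ (so $h^*\neq 0$); the case $h^*=0$ is handled in exactly the same way, interchanging the roles of $\mu$ and $h$, and yields the second alternatives of \eqref{eq:qKrawtcond1} and \eqref{eq:qKrawtcond2}. With $\mu^*=0$, formula \eqref{eq:qKrawtratio2} becomes $\vphi_i/\phi_i=\psi(h^* q^{d-2i+1})$ with $\psi(w)=(\tau-hw)/(\tau-\mu w)$. If $\tau=0$ then $\vphi_i/\phi_i=h/\mu$ for all $i$ --- here $\mu\neq 0$, since otherwise $\phi_i$ would be the product of the nonzero factor $(q^i-q^{-i})(q^{d-i+1}-q^{i-d-1})$ with $-\mu h^* q^{d-2i+1}=0$, contradicting $\phi_i\neq 0$ --- so condition~(iii) holds with the value in \eqref{eq:qKrawtcond2}. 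If $\tau\neq 0$ then $\psi$ is a nondegenerate M\"obius transformation, its determinant being $\tau(\mu-h)\neq 0$ by $\mu\neq h$; its arguments $h^* q^{d-2i+1}$ avoid the pole because $\tau-\mu h^* q^{d-2i+1}$ is a nonzero scalar multiple of $\phi_i$; and $h^* q^{d-1}\neq h^* q^{d-3}$ since $q^2\neq 1$, so $\vphi_1/\phi_1\neq\vphi_2/\phi_2$ and condition~(iii) fails. Hence, under $\mu^* h^*=0$, condition~(iii) of Definition~\ref{def:LPdualqKrawt} holds if and only if $\tau=0$.

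Finally I would assemble the equivalence. Since we already assume $A,A^*$ has type~I, the pair has dual $q$-Krawtchouk type if and only if conditions~(ii) and (iii) of Definition~\ref{def:LPdualqKrawt} both hold, which by the two preceding steps is equivalent to $\mu^* h^*=0$ together with $\tau=0$; this is precisely the second condition of the present lemma, and the displayed formulas \eqref{eq:qKrawtcond1} and \eqref{eq:qKrawtcond2} have been obtained along the way. The only delicate point is the bookkeeping: at each M\"obius step one must check nondegeneracy (via $q^4\neq 1$ and $\mu\neq h$) and that the relevant arguments avoid the pole (via $\th^*_i\neq\th^*_{i+1}$ and $\phi_i\neq 0$), and one must keep straight the auxiliary hypothesis $\mu\neq 0$ (resp.\ $h\neq 0$) needed in the $\tau=0$ sub-case; none of this is deep, but it is where the care lies.
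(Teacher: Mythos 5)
Your proof is correct and follows essentially the same route as the paper: both read everything off the two formulas of Lemma \ref{lem:qKrawtpre}, force $\mu^* h^*=0$ and then $\tau=0$ by comparing the expressions at $i=1$ and $i=2$ (your M\"obius-injectivity phrasing is just a repackaging of the paper's explicit vanishing products $\mu^* h^* q^{2d+4}(q^4-1)^2=0$ and $\tau h^*(\mu-h)(q^2-1)=0$), and obtain \eqref{eq:qKrawtcond1}, \eqref{eq:qKrawtcond2} by direct substitution using the inequalities of Lemma \ref{lem:type1cond}.
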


\begin{proof}
(i) $\Rightarrow$ (ii)
Using Definition \ref{def:LPdualqKrawt}(ii), 
compare the values of \eqref{eq:qKrawtratio} for $i=1$ and $i=2$
to find that
\[
  \mu^* h^* q^{2d+4}(q^4-1)^2 = 0.
\]
By this and \eqref{eq:type1paramcond1} we get  $\mu^* h^* = 0$.
First assume that $\mu^* = 0$.
By \eqref{eq:qKrawtratio2},
\begin{align}
 \frac{ \vphi_i } { \phi_i } &=
  \frac{ \tau - h h^* q^{d-2i+1} } { \tau - \mu h^* q^{d-2i+1} }  &&  (1 \leq i \leq d).
                         \label{eq:qKrawtratio3}
\end{align}
Using Definition \ref{def:LPdualqKrawt}(iii), compare the values of
\eqref{eq:qKrawtratio3} for $i=1$ and $i=2$ to find that
\[
   \tau h^* (\mu - h) (q^2 -1) = 0.
\]
By this and \eqref{eq:type1paramcond1}, \eqref{eq:type1paramcond2}, 
we get  $\tau=0$.
Next assume that $h^*=0$.
We get $\tau=0$ in a similar way.

(ii) $\Rightarrow$ (i)
By \eqref{eq:qKrawtratio} and \eqref{eq:qKrawtratio2} we find that 
\eqref{eq:qKrawtcond1} and \eqref{eq:qKrawtcond2} hold.
Thus in Definition \ref{def:LPdualqKrawt} the conditions (ii) and (iii) hold.
\end{proof}

\begin{lemma}    \label{lem:qKrawtparam}    \samepage
\ifDRAFT {\rm lem:qKrawtparam}. \fi
The following hold:
\begin{itemize}
\item[\rm (i)]
$A,A^*$ has dual $q$-Krawtchouk type with $\mu^* = 0$ if and only if
\begin{align*}
\th_i &= \delta + \mu q^{2i-d} + h q^{d-2i}    &&  (0 \leq i \leq d),
\\
\th^*_i &= \delta^* + h^* q^{d-2i}                &&  ( 0 \leq i \leq d),
\\
\vphi_i &= - h h^* q^{d-2i+1} (q^i - q^{-i})(q^{d-i+1} - q^{i-d-1})   &&  (1 \leq i \leq d),
\\
\phi_i &= - \mu h^*  q^{d-2i+1} (q^i - q^{-i})(q^{d-i+1} - q^{i-d-1})   &&  (1 \leq i \leq d);
\end{align*}
\item[\rm (ii)]
$A,A^*$ has dual $q$-Krawtchouk type with $h^* = 0$ if and only if
\begin{align*}
\th_i &= \delta + \mu q^{2i-d} + h q^{d-2i}    &&  (0 \leq i \leq d),
\\
\th^*_i &= \delta^* + \mu^* q^{2i-d}                &&  ( 0 \leq i \leq d),
\\
\vphi_i &= - \mu \mu^* q^{2i-d-1} (q^i - q^{-i})(q^{d-i+1} - q^{i-d-1})   &&  (1 \leq i \leq d),
\\
\phi_i &= - h \mu^*  q^{2i-d-1} (q^i - q^{-i})(q^{d-i+1} - q^{i-d-1})   &&  (1 \leq i \leq d).
\end{align*}
\end{itemize}
\end{lemma}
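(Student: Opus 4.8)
The plan is to combine Lemma \ref{lem:type1param}, which gives the explicit formulas \eqref{eq:type1th}--\eqref{eq:type1phi} for the parameter array \eqref{eq:parray11} in terms of the primary $q$-data \eqref{eq:basicseq}, with Lemma \ref{lem:defLPdualqKrawt}, which identifies dual $q$-Krawtchouk type with the conditions $\mu^* h^* = 0$ and $\tau = 0$ on the primary $q$-data. Since $\mu^* = 0$ already forces $\mu^* h^* = 0$, the hypothesis of part (i) --- that $A,A^*$ has dual $q$-Krawtchouk type with $\mu^* = 0$ --- is by Lemma \ref{lem:defLPdualqKrawt} equivalent to the pair of scalar conditions $\mu^* = 0$ and $\tau = 0$; likewise the hypothesis of part (ii) is equivalent to $h^* = 0$ and $\tau = 0$.

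For the forward direction of part (i), I would simply set $\mu^* = 0$ and $\tau = 0$ in \eqref{eq:type1th}--\eqref{eq:type1phi}: the formula for $\th_i$ is unaffected, the $\mu^* q^{2i-d}$ term drops out of $\th^*_i$, and in $\vphi_i$ and $\phi_i$ the bracketed factors collapse to $-hh^* q^{d-2i+1}$ and $-\mu h^* q^{d-2i+1}$ respectively, yielding exactly the four displayed equations. Part (ii) is the mirror image, setting $h^* = 0$ and $\tau = 0$ instead, so that the bracketed factors in $\vphi_i$, $\phi_i$ collapse to $-\mu\mu^* q^{2i-d-1}$ and $-h\mu^* q^{2i-d-1}$.

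For the converse, I would argue that the displayed equations, compared term by term with \eqref{eq:type1th}--\eqref{eq:type1phi}, force the primary $q$-data to satisfy $\mu^* = 0$ (resp.\ $h^* = 0$) and $\tau = 0$, after which Lemma \ref{lem:defLPdualqKrawt} gives the conclusion. Concretely, equating the two expressions for $\th^*_i$ and using $q \neq 0$ yields $\mu^* = 0$ (resp.\ $h^* = 0$); then equating the two expressions for $\vphi_i$ and cancelling the factors $q^i - q^{-i}$ and $q^{d-i+1} - q^{i-d-1}$ --- both nonzero by \eqref{eq:type1paramcond1} --- yields $\tau = 0$.

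There is no real obstacle here; the argument is pure bookkeeping once Lemmas \ref{lem:type1param} and \ref{lem:defLPdualqKrawt} are in hand. The only points requiring a moment's care are the cancellations in the converse, which rely on the non-vanishing conditions of Lemma \ref{lem:type1cond}.
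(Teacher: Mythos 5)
Your proposal is correct and is essentially the paper's own proof, which simply cites Lemma \ref{lem:type1param} together with Lemma \ref{lem:defLPdualqKrawt}; you have merely spelled out the substitution for the forward direction and the term-by-term comparison (using $q\neq 0$ and \eqref{eq:type1paramcond1}) for the converse.
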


\begin{proof}
By Lemmas \ref{lem:type1param} and \ref{lem:defLPdualqKrawt}.
\end{proof}

\begin{lemma}    \label{lem:qKrawtparamb}    \samepage
\ifDRAFT {\rm lem:qKrawtparamb}. \fi
Assume that $A,A^*$ has dual $q$-Krawtchouk type.
Then
\begin{align*}
 &  q^{2i} \neq 1 \qquad (1 \leq i \leq d),
\\
 & \mu \neq h q^{2i}  \qquad (1-d \leq i \leq d-1),
\\
 &\mu \neq 0, \qquad h\neq 0,
\\
 & \text{\rm $\mu^*$, $h^*$ not both zero}.
\end{align*}
\end{lemma}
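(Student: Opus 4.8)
The plan is to deduce everything from the type~I nondegeneracy conditions \eqref{eq:type1paramcond1}--\eqref{eq:type1paramcond5} of Lemma \ref{lem:type1cond}, specialized using the characterization of dual $q$-Krawtchouk type. Since $A,A^*$ has dual $q$-Krawtchouk type, it has type~I, and the primary $q$-data \eqref{eq:basicseq} corresponds to the parameter array \eqref{eq:parray11} over $\F$; so by Lemma \ref{lem:type1param} the inequalities of Lemma \ref{lem:classify}(i),(ii) hold, and hence by Lemma \ref{lem:type1cond} the conditions \eqref{eq:type1paramcond1}--\eqref{eq:type1paramcond5} hold. By Lemma \ref{lem:defLPdualqKrawt}, the dual $q$-Krawtchouk hypothesis is equivalent to $\mu^* h^* = 0$ together with $\tau = 0$, and I will feed these two facts into \eqref{eq:type1paramcond1}--\eqref{eq:type1paramcond5}.

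First, $q^{2i} \neq 1$ for $1 \leq i \leq d$ is nothing but \eqref{eq:type1paramcond1}, and $\mu \neq h q^{2i}$ for $1-d \leq i \leq d-1$ is nothing but \eqref{eq:type1paramcond2}; these require no substitution. That $\mu^*$ and $h^*$ are not both zero has already been observed (from \eqref{eq:type1paramcond3}: if both vanished then $\mu^* = h^* q^{2i}$, a contradiction) in the remark preceding Lemma \ref{lem:defLPdualqKrawt}. It remains to establish $\mu \neq 0$ and $h \neq 0$, and here I split on the two cases permitted by $\mu^* h^* = 0$ (recall $\F$ is a field).

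Suppose $\mu^* = 0$. Substituting $\tau = 0$ and $\mu^* = 0$ into \eqref{eq:type1paramcond4} gives $0 \neq h h^* q^{d-2i+1}$ for $1 \leq i \leq d$, and since $q \neq 0$ this forces $h \neq 0$ (and incidentally $h^* \neq 0$). Likewise, substituting into \eqref{eq:type1paramcond5} gives $0 \neq \mu h^* q^{d-2i+1}$, forcing $\mu \neq 0$. The case $h^* = 0$ is entirely symmetric: \eqref{eq:type1paramcond4} collapses to $0 \neq \mu \mu^* q^{2i-d-1}$ and \eqref{eq:type1paramcond5} to $0 \neq h \mu^* q^{2i-d-1}$, giving $\mu \neq 0$ and $h \neq 0$. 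This proves all four assertions. Since each step is a direct reading of an already-proven inequality under the substitution $\mu^* h^* = 0$, $\tau = 0$, there is no genuine obstacle; the only point requiring attention is the routine bookkeeping of which of \eqref{eq:type1paramcond4}, \eqref{eq:type1paramcond5} yields the statement about $\mu$ and which yields the statement about $h$ in each case, and the harmless use of $q \neq 0$ to cancel powers of $q$.
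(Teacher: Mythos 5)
Your proposal is correct and follows the paper's own argument: the paper's proof is precisely ``evaluate the inequalities in Lemma \ref{lem:type1cond} using Lemma \ref{lem:defLPdualqKrawt}(ii),'' and your case analysis on $\mu^*=0$ versus $h^*=0$ in \eqref{eq:type1paramcond4}, \eqref{eq:type1paramcond5} is exactly the intended evaluation, with the ``not both zero'' assertion coming from \eqref{eq:type1paramcond3} as noted just before Lemma \ref{lem:defLPdualqKrawt}. No gaps.
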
   

\begin{proof}
Evaluate the inequalities in Lemma \ref{lem:type1cond} using
Lemma \ref{lem:defLPdualqKrawt}(ii).
\end{proof}

Let 
$(\{a_i\}_{i=0}^d; \{x_i\}_{i=1}^d; \{\th^*_i\}_{i=0}^d)$
denote the TD/D sequence of $A,A^*$ corresponding to
the parameter array \eqref{eq:parray11}.

\begin{lemma}   \label{lem:qKrawtaixi}    \samepage
\ifDRAFT {\rm lem:qKrawtaixi}. \fi
The following hold:
\begin{itemize}
\item[\rm (i)]
$A,A^*$ has dual $q$-Krawtchouk type with $\mu^* = 0$ if and only if
\begin{align}
a_i &= \delta + (\mu+h) q^{2i-d}  &&   (0 \leq i \leq d),     \label{eq:qKrawtai}
\\
x_i &= - \mu h q^{2i-d-1}  (q^i - q^{-i})(q^{d-i+1} - q^{i-d-1})   &&  (1 \leq i \leq d),
                                                                              \label{eq:qKrawtxi}
\\
\th^*_i &= \delta^* + h^* q^{d-2i}                &&  ( 0 \leq i \leq d);   \label{eq:qKrawtthsi}
\end{align}
\item[\rm (ii)]
$A,A^*$ has dual $q$-Krawtchouk type with $h^* = 0$ if and only if
\begin{align*}
a_i &= \delta + (\mu+h) q^{d-2i}  &&   (0 \leq i \leq d),
\\
x_i &= - \mu h q^{d-2i+1}  (q^i - q^{-i})(q^{d-i+1} - q^{i-d-1})   &&  (1 \leq i \leq d),
\\
\th^*_i &= \delta^* + \mu^* q^{2i-d}                &&  ( 0 \leq i \leq d).
\end{align*}
\end{itemize}
\end{lemma}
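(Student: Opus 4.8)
The plan is to run everything through two earlier results: Lemma \ref{lem:qKrawtparam}, which re-expresses ``$A,A^*$ has dual $q$-Krawtchouk type with $\mu^*=0$'' as the assertion that the parameter array \eqref{eq:parray11} has the explicit form displayed there, and Lemma \ref{lem:aixiparam}, which says that this parameter array and the TD/D sequence $(\{a_i\}_{i=0}^d;\{x_i\}_{i=1}^d;\{\th^*_i\}_{i=0}^d)$ are linked by \eqref{eq:a0param}--\eqref{eq:xiparam}. So for part (i) it suffices to check the two directions: that the parameter array of Lemma \ref{lem:qKrawtparam}(i) is carried by \eqref{eq:a0param}--\eqref{eq:xiparam} onto the TD/D sequence \eqref{eq:qKrawtai}--\eqref{eq:qKrawtthsi}, and conversely that a Leonard pair whose TD/D sequence is \eqref{eq:qKrawtai}--\eqref{eq:qKrawtthsi} satisfies $\mu^*=0$ and $\tau=0$. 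Part (ii) will then follow from part (i) by applying the transformation that reverses the standard order of the eigenvalues of $A^*$: by Lemma \ref{lem:TDDrel} this sends $(\{a_i\};\{x_i\};\{\th^*_i\})$ to $(\{a_{d-i}\};\{x_{d-i+1}\};\{\th^*_{d-i}\})$, while by Lemma \ref{lem:type1paramDown} it interchanges $\mu^*$ and $h^*$ in the primary $q$-data, and replacing $i$ by $d-i$ in \eqref{eq:qKrawtai}--\eqref{eq:qKrawtthsi} produces exactly the formulas of (ii).

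For the forward implication in (i), assume \eqref{eq:parray11} has the form of Lemma \ref{lem:qKrawtparam}(i). Then \eqref{eq:qKrawtthsi} is immediate from \eqref{eq:type1ths} with $\mu^*=0$. Using $\th^*_i-\th^*_{i-1}=h^*q^{d-2i}(1-q^2)$ and $\th^*_i-\th^*_{i+1}=h^*q^{d-2i-2}(q^2-1)$ together with the $q$-binomial expansions in $\vphi_i$, one finds after cancelling $h^*$ that
\[
\frac{\vphi_i}{\th^*_i-\th^*_{i-1}}+\frac{\vphi_{i+1}}{\th^*_i-\th^*_{i+1}}
= h\bigl(q^{2i-d}-q^{d-2i}\bigr)\qquad(1\le i\le d-1),
\]
and similarly $\vphi_1/(\th^*_0-\th^*_1)=-h(q^d-q^{-d})$ and $\vphi_d/(\th^*_d-\th^*_{d-1})=h(q^d-q^{-d})$; adding $\th_i=\delta+\mu q^{2i-d}+hq^{d-2i}$ from \eqref{eq:type1th} and invoking \eqref{eq:a0param}--\eqref{eq:adparam}, the terms $hq^{d-2i}$ cancel and \eqref{eq:qKrawtai} drops out. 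For \eqref{eq:qKrawtxi}, substitute $\th^*_j=\delta^*+h^*q^{d-2j}$ into the four products $\tau^*_{i-1}(\th^*_{i-1})$, $\eta^*_{d-i}(\th^*_i)$, $\tau^*_i(\th^*_i)$, $\eta^*_{d-i+1}(\th^*_{i-1})$ occurring in \eqref{eq:xiparam}; each is a power of $h^*$ times a power of $q$ times a product of factors $1-q^{2k}$, and in the quotient every power of $h^*$ and every such product cancels, so that \eqref{eq:xiparam} together with the values of $\vphi_i$, $\phi_i$ from Lemma \ref{lem:qKrawtparam}(i) collapses to $x_i=-\mu h q^{2i-d-1}(q^i-q^{-i})(q^{d-i+1}-q^{i-d-1})$.

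For the converse in (i), suppose the TD/D sequence of $A,A^*$ is \eqref{eq:qKrawtai}--\eqref{eq:qKrawtthsi}. Since $A,A^*$ has type I (a standing hypothesis of this section), the parameter array \eqref{eq:parray11} has a primary $q$-data, and it is unique by Lemma \ref{lem:type1param}; comparing its $\th^*$-part, given by \eqref{eq:type1ths}, with $\th^*_i=\delta^*+h^*q^{d-2i}$ and using the linear independence of $1,q^{2i-d},q^{d-2i}$ in $i$ (valid because $q^{2k}\neq1$ for the relevant $k$, as the $\th^*_i$ are distinct) forces $\mu^*=0$. Now substitute this primary $q$-data into the relation \eqref{eq:a0param}; a short computation gives $a_0=\delta+(\mu+h)q^{-d}+\bigl(q^{1-d}(q^d-q^{-d})/h^*\bigr)\tau$, and equating this with the assumed value $a_0=\delta+(\mu+h)q^{-d}$ forces $\tau=0$. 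Hence $\mu^*h^*=0$ and $\tau=0$, so by Lemma \ref{lem:defLPdualqKrawt} (equivalently Lemma \ref{lem:qKrawtparam}(i)) the Leonard pair $A,A^*$ has dual $q$-Krawtchouk type with $\mu^*=0$. (If one instead prefers an isomorphism argument here, one may note that the parameters read off from \eqref{eq:qKrawtai}--\eqref{eq:qKrawtthsi} satisfy the inequalities of Lemma \ref{lem:qKrawtparamb}, invoke the forward direction to build a dual $q$-Krawtchouk pair with $\mu^*=0$ having the same TD/D sequence, and apply Proposition \ref{prop:LPTDDunique}.)

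The main obstacle is the verification of \eqref{eq:qKrawtxi}: carrying the exponents of $q$ and the products of $q$-binomials through the four products in \eqref{eq:xiparam} and confirming that everything cancels down to the compact expression is routine but delicate $q$-algebra, and it is the one place where care is genuinely needed. The remaining pieces — the simplification leading to \eqref{eq:qKrawtai}, the reduction of (ii) to (i) via reversal of the $A^*$-ordering, and the bookkeeping in the converse that pins down $\mu^*=0$ and $\tau=0$ — are short.
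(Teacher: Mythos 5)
Your proposal is correct and follows essentially the same route as the paper: the forward direction of (i) by combining Lemma \ref{lem:qKrawtparam} with the relations \eqref{eq:a0param}--\eqref{eq:xiparam} of Lemma \ref{lem:aixiparam}, and the converse by comparing \eqref{eq:qKrawtthsi} with \eqref{eq:type1ths} to force $\mu^*=0$ and then extracting $\tau=0$ from the $a_0$ relation before invoking Lemma \ref{lem:defLPdualqKrawt}. The only difference is cosmetic: where the paper treats (ii) with ``Similar,'' you obtain it from (i) via the reversal symmetry of Lemmas \ref{lem:TDDrel} and \ref{lem:type1paramDown}, which is a valid shortcut since the dual $q$-Krawtchouk condition ($\mu^*h^*=0$, $\tau=0$) is invariant under interchanging $\mu^*$ and $h^*$.
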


\begin{proof}
(i)
First assume that $A,A^*$ has dual $q$-Krawtchouk type with $\mu^* = 0$.
Using Lemmas \ref{lem:aixiparam} and \ref{lem:qKrawtparam} we get
\eqref{eq:qKrawtai}--\eqref{eq:qKrawtthsi}.
Next assume that \eqref{eq:qKrawtai}--\eqref{eq:qKrawtthsi} hold.
Comparing \eqref{eq:qKrawtthsi} with \eqref{eq:type1ths} we find that $\mu^* = 0$.
Now by \eqref{eq:a0param} and \eqref{eq:type1th}--\eqref{eq:type1vphi},
\[
  a_0 = \delta + (\mu+h) q^{-d} + \frac{\tau}{h^*} q(1-q^{-2d}).
\]
Comparing this with \eqref{eq:qKrawtai}, we find that $\tau q (1-q^{-2d}) =0$.
By this and \eqref{eq:type1paramcond1}, we get $\tau=0$.
So by Lemma \ref{lem:defLPdualqKrawt} we find that $A,A^*$ has dual $q$-Krawtchouk type.

(ii)
Similar.
\end{proof}

\begin{defi}    \label{def:parraydualqKrawt}     \samepage
\ifDRAFT {\rm def:parraydualqKrawt}. \fi
A parameter array over $\F$ is said to have
{\em dual $q$-Krawtchouk type}
whenever the corresponding Leonard pair has dual $q$-Krawtchouk type.
\end{defi}

\section{Leonard pairs of Krawtchouk type}
\label{sec:Krawt}
\ifDRAFT {\rm sec:Krawt}. \fi

In this section, we describe a family of Leonard pairs, 
said to have Krawtchouk type.

Throughout this section,
let $A,A^*$ denote a Leonard pair over $\F$ with parameter array 
\begin{equation}
  (\{\th_i\}_{i=0}^d; \{\th^*_i\}_{i=0}^d; \{\vphi_i\}_{i=1}^d; \{\phi_i\}_{i=1}^d).   \label{eq:parray12}
\end{equation}

\begin{defi}        \label{def:Krawt}    \samepage
\ifDRAFT {\rm def:Krawt}. \fi
The Leonard pair $A,A^*$ is said to have {\em Krawtchouk type}
whenever the following {\rm (i)--(iii)} hold:
\begin{itemize}
\item[\rm (i)]
$A,A^*$ has type II;
\item[\rm (ii)]
$\th_i - \th_{i-1}$ is independent of $i$
for $1 \leq i \leq d$;
\item[\rm (iii)]
$\th^*_{i} - \th^*_{i-1}$ is independent of $i$
for $1 \leq i \leq d$.
\end{itemize}
\end{defi}

\begin{note}     \label{note:Krawt}    \samepage
\ifDRAFT {\rm note:Krawt}. \fi
The Leonard pairs of Krawtchouk type are attached to the
Krawtchouk polynomials \cite[Example 35.12]{T:survey}.
\end{note}

For the rest of this section,
assume that $A,A^*$ has type II, and let
\begin{equation}
 (\delta, \mu, h, \delta^*, \mu^*, h^*, \tau)      \label{eq:basicseq2}
\end{equation}
denote the primary data corresponding to the parameter array  \eqref{eq:parray12}.

\begin{lemma}    \label{lem:Krawtpre}    \samepage
\ifDRAFT {\rm lem:Krawtpre}. \fi
For $1 \leq i \leq d$ we have
\begin{align}
\th_{i}-\th_{i-1} &= \mu + (d+1)h - 2 h i,          \label{eq:Krawtdif}
\\
\th^*_{i} - \th^*_{i-1} &=  \mu^* + (d+1) h^* - 2 h^* i.    \label{eq:Krawtdif2}
\end{align}
\end{lemma}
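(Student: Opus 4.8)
The plan is to read the eigenvalue sequences off the primary data and then subtract consecutive terms. Since $A,A^*$ has type II, by Definition~\ref{def:type2basic2} the sequence $(\delta, \mu, h, \delta^*, \mu^*, h^*, \tau)$ is the primary data corresponding to a parameter array of $A,A^*$, so by Lemma~\ref{lem:type2param} the eigenvalue sequences satisfy \eqref{eq:type2th} and \eqref{eq:type2ths}; that is,
\[
\th_i = \delta + \mu(i - d/2) + h\,i(d-i), \qquad
\th^*_i = \delta^* + \mu^*(i - d/2) + h^*\,i(d-i)
\]
for $0 \le i \le d$.

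Next I would compute the difference $\th_i - \th_{i-1}$ for $1 \le i \le d$ directly. The constant $\delta$ cancels; the $\mu$-term contributes $\mu\big((i-d/2)-((i-1)-d/2)\big) = \mu$; and the $h$-term contributes $h\big(i(d-i) - (i-1)(d-i+1)\big)$. A short expansion gives $i(d-i) - (i-1)(d-i+1) = (d+1) - 2i$, whence $\th_i - \th_{i-1} = \mu + (d+1)h - 2hi$, which is \eqref{eq:Krawtdif}. Replacing $\mu, h$ by $\mu^*, h^*$ throughout yields \eqref{eq:Krawtdif2} by the identical computation.

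I do not anticipate any real obstacle here: the statement is purely a substitution into the closed-form expressions for the type~II eigenvalue sequences. The only point to keep straight is that those expressions are attached to the same standard ordering of the eigenvalues of $A^*$ (and of $A$) that underlies the primary data, but this is exactly what Definition~\ref{def:type2basic2} together with Lemma~\ref{lem:type2param} provide.
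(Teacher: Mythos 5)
Your proposal is correct and follows exactly the paper's own argument: the paper's proof is simply ``Use \eqref{eq:type2th} and \eqref{eq:type2ths},'' i.e.\ substitute the type~II closed forms and subtract consecutive terms, and your computation $i(d-i)-(i-1)(d-i+1)=(d+1)-2i$ is the right (and only) calculation needed. Nothing further is required.
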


\begin{proof}
Use \eqref{eq:type2th} and \eqref{eq:type2ths}.
\end{proof}

\begin{lemma}    \label{lem:defKrawt}    \samepage
\ifDRAFT {\rm lem:defKrawt}. \fi
The following are equivalent:
\begin{itemize}
\item[\rm (i)]
$A,A^*$ has Krawtchouk type;
\item[\rm (ii)]
$h=0$ and $h^* = 0$.
\end{itemize}
Suppose the equivalent conditions {\rm (i)} and {\rm (ii)} hold.
Then
\begin{align*}
\th_{i} - \th_{i-1} &=  \mu,
&
\th^*_{i} - \th^*_{i-1} &=  \mu^*   &&   (1 \leq i \leq d).
\end{align*}
\end{lemma}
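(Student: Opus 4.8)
The plan is to reduce everything to the explicit difference formulas recorded in Lemma~\ref{lem:Krawtpre}. Since we are assuming throughout this portion of the section that $A,A^*$ has type II, condition~(i) in Definition~\ref{def:Krawt} is automatic; hence $A,A^*$ has Krawtchouk type if and only if conditions~(ii) and~(iii) of Definition~\ref{def:Krawt} hold, namely that $\th_i - \th_{i-1}$ and $\th^*_i - \th^*_{i-1}$ are each independent of $i$ for $1 \leq i \leq d$. So the whole statement is a matter of translating these two "independence" requirements into conditions on the primary data.

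First I would treat condition~(ii). By Lemma~\ref{lem:Krawtpre} we have $\th_i - \th_{i-1} = \mu + (d+1)h - 2hi$ for $1 \leq i \leq d$. As a function of $i$ this is affine with leading coefficient $-2h$, so, using $d \geq 3$ (which provides at least two admissible values of $i$), it is independent of $i$ if and only if $2h = 0$. Since $\text{\rm Char}(\F) \neq 2$ — by the standing assumption, or alternatively by Lemma~\ref{lem:type2cond} since $d \geq 3$ — this is equivalent to $h = 0$. The same argument applied to $\th^*_i - \th^*_{i-1} = \mu^* + (d+1)h^* - 2h^*i$ shows that condition~(iii) holds if and only if $h^* = 0$. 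Combining the two gives that $A,A^*$ has Krawtchouk type if and only if $h = 0$ and $h^* = 0$, which is the asserted equivalence of~(i) and~(ii).

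For the final assertion, suppose $h = 0$ and $h^* = 0$; substituting these into the formulas of Lemma~\ref{lem:Krawtpre} immediately yields $\th_i - \th_{i-1} = \mu$ and $\th^*_i - \th^*_{i-1} = \mu^*$ for $1 \leq i \leq d$. I expect no real obstacle here: the only points requiring any care are invoking $\text{\rm Char}(\F) \neq 2$ to pass from $2h = 0$ to $h = 0$ (and likewise for $h^*$), and noting that $d \geq 3$ guarantees the "independent of $i$" conditions are not vacuous.
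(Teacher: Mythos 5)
Your proposal is correct and follows essentially the same route as the paper, whose proof is simply to use the difference formulas \eqref{eq:Krawtdif} and \eqref{eq:Krawtdif2} of Lemma \ref{lem:Krawtpre}; you merely spell out the routine details (type II being assumed, the coefficient $-2h$ of $i$, and $\text{\rm Char}(\F)\neq 2$ to pass from $2h=0$ to $h=0$).
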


\begin{proof}
Use \eqref{eq:Krawtdif} and \eqref{eq:Krawtdif2}.
\end{proof}

\begin{note}    \label{note:defdualqKrawt}    \samepage
\ifDRAFT {\rm note:defdualqKrawt}. \fi
Definition \ref{def:Krawt} is not the type II version of Definition \ref{def:LPdualqKrawt}.
For the case of type II, the conditions (ii) and (iii) in Definition \ref{def:LPdualqKrawt}
hold if and only if  $\mu h = 0$ and $h^*=0$.
\end{note}

\begin{lemma}    \label{lem:Krawtparam}    \samepage
\ifDRAFT {\rm lem:Krawtparam}. \fi
The Leonard pair $A,A^*$ has Krawtchouk type
if and only if 
\begin{align*}
\th_i &= \delta + \mu (i - d/2)  &&  (0 \leq i \leq d),
\\
\th^*_i &= \delta^* + \mu^* (i - d/2)         &&  ( 0 \leq i \leq d),
\\
\vphi_i &=  i (d-i+1) (\tau - \mu \mu^* /2)     &&  (1 \leq i \leq d),
\\
\phi_i &=   i (d-i+1) (\tau + \mu \mu^* /2)  &&  (1 \leq i \leq d).
\end{align*}
\end{lemma}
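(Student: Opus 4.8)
The plan is to derive the formulas directly from the type II parametrization in Lemma \ref{lem:type2param}, specializing via the characterization of Krawtchouk type given in Lemma \ref{lem:defKrawt}. First I would invoke Lemma \ref{lem:defKrawt} to reduce the claim to the equivalence: $A,A^*$ has Krawtchouk type if and only if $h=0$ and $h^*=0$, so it suffices to show that, under the standing assumption that $A,A^*$ has type II, the displayed formulas for $\th_i$, $\th^*_i$, $\vphi_i$, $\phi_i$ hold if and only if $h=0$ and $h^*=0$.

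For the forward direction I would assume $h=0$ and $h^*=0$ and substitute into equations \eqref{eq:type2th}--\eqref{eq:type2phi} from Lemma \ref{lem:type2param}. Setting $h=0$ in \eqref{eq:type2th} gives $\th_i = \delta + \mu(i-d/2)$, and likewise $h^*=0$ in \eqref{eq:type2ths} gives $\th^*_i = \delta^* + \mu^*(i-d/2)$. For $\vphi_i$, the terms in \eqref{eq:type2vphi} carrying a factor of $h$ or $h^*$ drop out, leaving $\vphi_i = i(d-i+1)(\tau - \mu\mu^*/2)$; similarly \eqref{eq:type2phi} collapses to $\phi_i = i(d-i+1)(\tau + \mu\mu^*/2)$. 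These are exactly the claimed formulas.

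For the converse I would assume the four displayed formulas hold and recover $h=0$, $h^*=0$. Comparing the given expression for $\th_i$ with the general form \eqref{eq:type2th}: the difference is $h\,i(d-i)$, which must vanish for all $0\le i\le d$; since $d\ge 3$, picking $i=1$ (so $i(d-i)=d-1\neq 0$, using that $\mathrm{Char}(\F)$ is $0$ or greater than $d$, as in \eqref{eq:type2paramcond1}) forces $h=0$. The same argument applied to $\th^*_i$ versus \eqref{eq:type2ths} gives $h^*=0$. Then Lemma \ref{lem:defKrawt} yields that $A,A^*$ has Krawtchouk type, completing the equivalence.

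The only mild subtlety is ensuring the coefficient $i(d-i)$ is genuinely nonzero in $\F$ for a suitable choice of $i$, which is where the characteristic restriction \eqref{eq:type2paramcond1} (inherited because $A,A^*$ is a genuine type II Leonard pair) is used; everything else is routine substitution. I do not expect any real obstacle here — the argument is a direct specialization of the type II formulas, parallel in structure to Lemmas \ref{lem:biptype2b} and \ref{lem:essbiptype2b}, and could be stated simply as ``By Lemmas \ref{lem:type2param} and \ref{lem:defKrawt}.''
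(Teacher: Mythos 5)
Your proposal is correct and matches the paper's argument, which simply cites Lemmas \ref{lem:type2param} and \ref{lem:defKrawt}; your explicit substitution and the converse comparison (using \eqref{eq:type2paramcond1} to ensure $i(d-i)\neq 0$) just spell out the details the paper leaves implicit.
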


\begin{proof}
By Lemmas \ref{lem:type2param} and \ref{lem:defKrawt}.
\end{proof}

\begin{lemma}    \label{lem:Krawtparamb}    \samepage
\ifDRAFT {\rm lem:Krawtparamb}. \fi
Assume that the Leonard pair $A,A^*$ has Krawtchouk type.
Then
\begin{align*}
& \text{\rm $\text{\rm Char}(\F)$ is equal to $0$ or greater than $d$},
\\ 
& \mu \neq 0,  \qquad\qquad \mu^* \neq 0,
\\
& 2 \tau \neq \mu \mu^*,  \qquad 2 \tau \neq - \mu \mu^*.
\end{align*}
\end{lemma}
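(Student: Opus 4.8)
The plan is to follow the same recipe used for Lemma \ref{lem:essbiptype2bb} and Lemma \ref{lem:qKrawtparamb}: take the parameter inequalities of Lemma \ref{lem:type2cond} and specialize them using the description of Krawtchouk type from Lemma \ref{lem:defKrawt}. First I would observe that since $A,A^*$ is a Leonard pair over $\F$, the sequence \eqref{eq:parray12} is a parameter array over $\F$, so by Lemma \ref{lem:classify} its entries satisfy the inequalities in Lemma \ref{lem:classify}(i),(ii). Because $A,A^*$ has type II, Lemma \ref{lem:type2param} provides the primary data \eqref{eq:basicseq2} satisfying \eqref{eq:type2th}--\eqref{eq:type2phi}, and then Lemma \ref{lem:type2cond} tells us that \eqref{eq:type2paramcond1}--\eqref{eq:type2paramcond5} hold for this primary data.

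The next step is the substitution. By Lemma \ref{lem:defKrawt}(ii) the Krawtchouk type hypothesis is equivalent to $h=0$ and $h^*=0$, so I would plug these into \eqref{eq:type2paramcond1}--\eqref{eq:type2paramcond5}. Condition \eqref{eq:type2paramcond1} is unaffected and gives that $\mathrm{Char}(\F)$ is $0$ or greater than $d$. Condition \eqref{eq:type2paramcond2} collapses to $\mu \neq 0$ and \eqref{eq:type2paramcond3} to $\mu^* \neq 0$. In \eqref{eq:type2paramcond4} the terms $(h\mu^*+\mu h^*)(i-(d+1)/2)$ and $hh^*(i-1)(d-i)$ vanish, leaving $\tau \neq \mu\mu^*/2$ for all $i$, i.e.\ $2\tau \neq \mu\mu^*$; likewise \eqref{eq:type2paramcond5} reduces to $\tau \neq -\mu\mu^*/2$, i.e.\ $2\tau \neq -\mu\mu^*$. (Passing between $\tau \neq \pm\mu\mu^*/2$ and $2\tau \neq \pm\mu\mu^*$ is legitimate since $\mathrm{Char}(\F)\neq 2$, which is our standing assumption and is in any case forced by the first conclusion together with $d\geq 3$.) This is exactly the list of inequalities in the statement.

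I do not expect any genuine obstacle here; the only thing requiring care is the bookkeeping that setting $h=h^*=0$ annihilates precisely the $i$-dependent contributions in \eqref{eq:type2paramcond4} and \eqref{eq:type2paramcond5}, so that the five families of conditions in Lemma \ref{lem:type2cond} collapse to the four displayed inequalities, with no residual $i$-indexed constraints left over. This is immediate from the explicit formulas, so the whole argument is a short specialization.

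\begin{proof}
Since $A,A^*$ is a Leonard pair over $\F$, the parameter array \eqref{eq:parray12} satisfies the inequalities in Lemma \ref{lem:classify}(i),(ii). As $A,A^*$ has type II, by Lemma \ref{lem:type2param} the primary data \eqref{eq:basicseq2} satisfies \eqref{eq:type2th}--\eqref{eq:type2phi}, and hence by Lemma \ref{lem:type2cond} the conditions \eqref{eq:type2paramcond1}--\eqref{eq:type2paramcond5} hold for \eqref{eq:basicseq2}. By Lemma \ref{lem:defKrawt}(ii) we have $h=0$ and $h^*=0$. Substituting $h=0$ and $h^*=0$ into \eqref{eq:type2paramcond1}--\eqref{eq:type2paramcond5}:
condition \eqref{eq:type2paramcond1} gives that $\mathrm{Char}(\F)$ is $0$ or greater than $d$;
condition \eqref{eq:type2paramcond2} gives $\mu \neq 0$;
condition \eqref{eq:type2paramcond3} gives $\mu^* \neq 0$;
condition \eqref{eq:type2paramcond4} gives $\tau \neq \mu\mu^*/2$, that is $2\tau \neq \mu\mu^*$;
condition \eqref{eq:type2paramcond5} gives $\tau \neq -\mu\mu^*/2$, that is $2\tau \neq -\mu\mu^*$.
\end{proof}
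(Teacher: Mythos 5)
Your proposal is correct and follows exactly the paper's argument, which also proceeds by evaluating the inequalities in Lemma \ref{lem:type2cond} under the specialization $h=0$, $h^*=0$ supplied by Lemma \ref{lem:defKrawt}(ii). The extra bookkeeping you supply (the vanishing of the $i$-dependent terms and the harmless passage between $\tau\neq\pm\mu\mu^*/2$ and $2\tau\neq\pm\mu\mu^*$) is just an expanded version of the same computation.
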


\begin{proof}
Evaluate the inequalities in Lemma \ref{lem:type2cond}
using Lemma \ref{lem:defKrawt}(ii).
\end{proof}

Let 
$(\{a_i\}_{i=0}^d; \{x_i\}_{i=1}^d; \{\th^*_i\}_{i=0}^d)$
denote the TD/D sequence of $A,A^*$ corresponding to the parameter array
\eqref{eq:parray12}.

\begin{lemma}   \label{lem:Krawtaixi}    \samepage
\ifDRAFT {\rm lem:Krawtaixi}. \fi
The Leonard pair  $A,A^*$ has Krawtchouk type if and only if
\begin{align}
a_i &= \delta + (2i-d) \tau / \mu^*       &&  (0 \leq i \leq d),    \label{eq:Krawtai}
\\
x_i &= i (d-i+1)
          \left( \frac{ \mu^2} { 4 } - \frac{\tau^2}{\mu^{*2} }   \right)
                    && (1 \leq i \leq d),                                              \label{eq:Krawtxi}
\\
\th^*_i &= \delta^* + \mu^* (i - d/2)         &&  ( 0 \leq i \leq d).   \label{eq:Krawtthsi}
\end{align}
\end{lemma}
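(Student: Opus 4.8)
The plan is to prove both directions by passing through the parameter-array description of Krawtchouk type given in Lemma \ref{lem:Krawtparam}, together with the dictionary between parameter arrays and TD/D sequences in Lemma \ref{lem:aixiparam}.

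For the forward direction, I would assume $A,A^*$ has Krawtchouk type and invoke Lemma \ref{lem:Krawtparam} to write the parameter array \eqref{eq:parray12} explicitly: $\th_i=\delta+\mu(i-d/2)$, $\th^*_i=\delta^*+\mu^*(i-d/2)$, $\vphi_i=i(d-i+1)(\tau-\mu\mu^*/2)$, $\phi_i=i(d-i+1)(\tau+\mu\mu^*/2)$, the associated primary data having $h=h^*=0$. Note $\mu^*\neq 0$ by Lemma \ref{lem:Krawtparamb}, so the right-hand sides of \eqref{eq:Krawtai}--\eqref{eq:Krawtxi} are meaningful. Then \eqref{eq:Krawtthsi} is immediate. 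For \eqref{eq:Krawtai} I would substitute into \eqref{eq:a0param}, \eqref{eq:aiparam}, \eqref{eq:adparam}, using $\th^*_i-\th^*_{i-1}=\mu^*$ and $\th^*_i-\th^*_{i+1}=-\mu^*$; the $\vphi$-terms combine through the elementary identity $i(d-i+1)-(i+1)(d-i)=2i-d$, and the term $\mu(i-d/2)$ cancels against $-\mu(2i-d)/2$, leaving $a_i=\delta+(2i-d)\tau/\mu^*$ (the endpoint cases $i=0,d$ agreeing with this as well).

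For \eqref{eq:Krawtxi} I would substitute into \eqref{eq:xiparam}. Since $\th^*_j-\th^*_k=\mu^*(j-k)$, each of $\tau^*_{i-1}(\th^*_{i-1})$, $\tau^*_i(\th^*_i)$, $\eta^*_{d-i}(\th^*_i)$, $\eta^*_{d-i+1}(\th^*_{i-1})$ evaluates to a power of $\mu^*$ times a signed factorial, and the quotient in \eqref{eq:xiparam} collapses to $-1/\bigl(\mu^{*2}\,i(d-i+1)\bigr)$. Combining this with $\vphi_i\phi_i=i^2(d-i+1)^2(\tau^2-\mu^2\mu^{*2}/4)$ yields exactly $x_i=i(d-i+1)\bigl(\mu^2/4-\tau^2/\mu^{*2}\bigr)$. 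This bookkeeping with the $\tau^*$, $\eta^*$ products is the most error-prone step, but it is routine and essentially the same computation already carried out in Lemmas \ref{lem:essbiptype2c} and \ref{lem:qKrawtaixi}.

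For the converse, I would assume \eqref{eq:Krawtai}--\eqref{eq:Krawtthsi} hold. Comparing \eqref{eq:Krawtthsi} with the type II formula \eqref{eq:type2ths} and using uniqueness of the primary data in Lemma \ref{lem:type2param} forces $h^*i(d-i)=0$ for all $i$; taking $i=1$ and using that $\mathrm{Char}(\F)$ is $0$ or greater than $d$ with $d\geq 3$ (by \eqref{eq:type2paramcond1}) gives $h^*=0$. I would then compute $a_0$ from \eqref{eq:a0param} using \eqref{eq:type2th} and \eqref{eq:type2vphi} with $h^*=0$, obtaining $a_0=\delta-d\tau/\mu^*+hd(d-1)/2$; matching this against the hypothesis $a_0=\delta-d\tau/\mu^*$ from \eqref{eq:Krawtai} forces $hd(d-1)/2=0$, hence $h=0$. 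With $h=h^*=0$, Lemma \ref{lem:defKrawt} yields that $A,A^*$ has Krawtchouk type, completing the proof. The main obstacle is purely the product/factorial simplification in \eqref{eq:xiparam}; once that identity is in hand, everything else reduces to substitution and the cancellation $\mu(i-d/2)=\mu(2i-d)/2$.
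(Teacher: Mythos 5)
Your proposal is correct and follows essentially the same route as the paper: the forward direction combines Lemma \ref{lem:Krawtparam} with the substitution formulas of Lemma \ref{lem:aixiparam}, and the converse extracts $h^*=0$ by comparing \eqref{eq:Krawtthsi} with \eqref{eq:type2ths} and then $h=0$ by comparing the two expressions for $a_0$, finishing with Lemma \ref{lem:defKrawt}. The extra details you supply (the $\mu^*\neq 0$ remark and the characteristic condition justifying $h^*=0$ and $h=0$) are consistent with what the paper leaves implicit.
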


\begin{proof}
First assume that $A,A^*$ has Krawtchouk type.
By Lemma \ref{lem:defKrawt} we have $h=0$ and $h^*=0$.
Now using Lemmas \ref{lem:aixiparam} and \ref{lem:Krawtparam} we get
\eqref{eq:Krawtai}--\eqref{eq:Krawtthsi}.
Next assume that \eqref{eq:Krawtai}--\eqref{eq:Krawtthsi} hold.
Comparing \eqref{eq:type2ths} with \eqref{eq:Krawtthsi}  we get $h^* = 0$.
By \eqref{eq:aiparam} and \eqref{eq:type2th}--\eqref{eq:type2vphi} with $h^*=0$,
\[
  a_0 = \delta - \frac{d \tau}{\mu^*} + \frac{d(d-1)h } { 2 }.
\]
By \eqref{eq:Krawtai},
\[
  a_0 = \delta - \frac{ d \tau } { \mu^*}.
\]
Comparing the above two equations, we find that $h=0$.
Now $A,A^*$ has Krawtchouk type by Lemma \ref{lem:defKrawt}.
\end{proof}

\begin{defi}    \label{def:parrayKrawt}     \samepage
\ifDRAFT {\rm def:parrayKrawt}. \fi
A parameter array over $\F$ is said to have
{\em Krawtchouk type}
whenever the corresponding Leonard pair has Krawtchouk type.
\end{defi}

\section{Leonard pairs that are bipartite and have  dual $q$-Krawtchouk type}
\label{sec:bipdualqKrawt}
\ifDRAFT {\rm sec:bipdualqKrawt}. \fi

In Section \ref{sec:bipbip} we described the bipartite Leonard pairs.
In Section \ref{sec:dualqKrawt} we described the Leonard pairs of dual $q$-Krawtchouk type.
In this section we describe the Leonard pairs that are bipartite and have
dual $q$-Krawtchouk type.

Throughout this section, let $A,A^*$ denote a Leonard pair over $\F$
that has type I with parameter array 
\begin{equation}
  (\{\th_i\}_{i=0}^d; \{\th^*_i\}_{i=0}^d; \{\vphi_i\}_{i=1}^d; \{\phi_i\}_{i=1}^d).   \label{eq:parray11b}
\end{equation}
Denote the corresponding primary $q$-data by
\begin{equation}
 (\delta, \mu, h, \delta^*, \mu^*, h^*, \tau).                 \label{eq:basicseqb}
\end{equation}

\begin{lemma}    \label{lem:qKrawtparambip0}     \samepage
\ifDRAFT {\rm lem:qKrawtparambip0}. \fi
The Leonard pair $A,A^*$ is bipartite and has dual $q$-Krawtchouk type 
if and only if
\begin{align*}
\delta &= 0, &
\mu+h &= 0,  &
\mu^* h^* &= 0,  &
\tau &=0.
\end{align*}
\end{lemma}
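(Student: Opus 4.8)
The statement is a conjunction of two already-characterized properties, so the plan is simply to intersect the two characterizations. The Leonard pair $A,A^*$ is assumed to have type I, which is exactly the hypothesis under which both Lemma \ref{lem:biptype1} and Lemma \ref{lem:defLPdualqKrawt} apply.

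\textbf{Key steps.} First I would invoke Lemma \ref{lem:biptype1}: since $A,A^*$ has type I with primary $q$-data \eqref{eq:basicseqb}, the pair $A,A^*$ is bipartite if and only if $\delta = 0$, $\mu + h = 0$, and $\tau = 0$. Next I would invoke Lemma \ref{lem:defLPdualqKrawt}: again since $A,A^*$ has type I, it has dual $q$-Krawtchouk type if and only if $\mu^* h^* = 0$ and $\tau = 0$. Combining, $A,A^*$ is bipartite and has dual $q$-Krawtchouk type if and only if all five conditions $\delta = 0$, $\mu + h = 0$, $\tau = 0$, $\mu^* h^* = 0$, $\tau = 0$ hold; the redundant $\tau = 0$ collapses, leaving precisely $\delta = 0$, $\mu + h = 0$, $\mu^* h^* = 0$, $\tau = 0$, which is the asserted list.

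\textbf{Main obstacle.} There is essentially no obstacle: the lemma is a bookkeeping corollary of Lemmas \ref{lem:biptype1} and \ref{lem:defLPdualqKrawt}, and the only thing to notice is that the two characterizations share the condition $\tau = 0$, so no contradiction or extra compatibility check is needed. If one wanted an explicit form of the resulting Leonard pair one could further feed these constraints into Lemma \ref{lem:qKrawtparam} or Lemma \ref{lem:biptype1b}, but that is not required for the stated equivalence.
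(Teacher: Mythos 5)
Your proposal is correct and matches the paper's proof, which likewise deduces the lemma by combining Lemma \ref{lem:biptype1} (bipartite $\Leftrightarrow$ $\delta=0$, $\mu+h=0$, $\tau=0$) with Lemma \ref{lem:defLPdualqKrawt} (dual $q$-Krawtchouk $\Leftrightarrow$ $\mu^* h^*=0$, $\tau=0$). The observation that the shared condition $\tau=0$ simply collapses is exactly the bookkeeping the paper leaves implicit.
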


\begin{proof}
By Lemmas \ref{lem:biptype1} and \ref{lem:defLPdualqKrawt}.
\end{proof}

\begin{lemma}    \label{lem:qKrawtparambip}    \samepage
\ifDRAFT {\rm lem:qKrawtparambip}. \fi
The following hold:
\begin{itemize}
\item[\rm (i)]
$A,A^*$ is bipartite and has dual $q$-Krawtchouk type with
$\mu^* = 0$ if and only if
\begin{align*}
\th_i &= \mu (q^{2i-d} - q^{d-2i})    &&  (0 \leq i \leq d),
\\
\th^*_i &= \delta^* + h^* q^{d-2i}                &&  ( 0 \leq i \leq d),
\\
\vphi_i &= \mu h^* q^{d-2i+1} (q^i - q^{-i})(q^{d-i+1} - q^{i-d-1})   &&  (1 \leq i \leq d),
\\
\phi_i &= - \mu h^*  q^{d-2i+1} (q^i - q^{-i})(q^{d-i+1} - q^{i-d-1})   &&  (1 \leq i \leq d);
\end{align*}
\item[\rm (ii)]
$A,A^*$ is bipartite and has dual $q$-Krawtchouk type with
$h^* = 0$ if and only if
\begin{align*}
\th_i &=  \mu (q^{2i-d} - q^{d-2i})    &&  (0 \leq i \leq d),
\\
\th^*_i &= \delta^* + \mu^* q^{2i-d}                &&  ( 0 \leq i \leq d),
\\
\vphi_i &= - \mu \mu^* q^{2i-d-1} (q^i - q^{-i})(q^{d-i+1} - q^{i-d-1})   &&  (1 \leq i \leq d),
\\
\phi_i &=  \mu \mu^*  q^{2i-d-1} (q^i - q^{-i})(q^{d-i+1} - q^{i-d-1})   &&  (1 \leq i \leq d).
\end{align*}
\end{itemize}
\end{lemma}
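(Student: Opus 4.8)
The plan is to combine two results already established in the excerpt: the characterization of the bipartite property in type I (Lemma \ref{lem:biptype1}) and the characterization of dual $q$-Krawtchouk type in type I (Lemma \ref{lem:defLPdualqKrawt}), which were fused in Lemma \ref{lem:qKrawtparambip0}. Thus $A,A^*$ is bipartite and of dual $q$-Krawtchouk type if and only if $\delta = 0$, $\mu + h = 0$, $\mu^* h^* = 0$, and $\tau = 0$. From here the two cases $\mu^* = 0$ and $h^* = 0$ separate exactly as in Lemma \ref{lem:qKrawtparam}, so the strategy is to substitute these constraints into the type I formulas \eqref{eq:type1th}--\eqref{eq:type1phi} and simplify.

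First I would treat case (i), where $\mu^* = 0$. Substituting $\delta = 0$, $h = -\mu$, $h^* = h^*$ (with $\mu^* = 0$), and $\tau = 0$ into \eqref{eq:type1th} gives $\th_i = \mu q^{2i-d} + h q^{d-2i} = \mu(q^{2i-d} - q^{d-2i})$; into \eqref{eq:type1ths} gives $\th^*_i = \delta^* + h^* q^{d-2i}$; into \eqref{eq:type1vphi} with $\tau = 0$ and $\mu^* = 0$ gives $\vphi_i = (q^i - q^{-i})(q^{d-i+1} - q^{i-d-1})(-h h^* q^{d-2i+1})$, and since $h = -\mu$ this is $\mu h^* q^{d-2i+1}(q^i-q^{-i})(q^{d-i+1}-q^{i-d-1})$; and \eqref{eq:type1phi} similarly yields $\phi_i = -\mu h^* q^{d-2i+1}(q^i-q^{-i})(q^{d-i+1}-q^{i-d-1})$. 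Case (ii), where $h^* = 0$, is handled the same way: now the surviving dual-eigenvalue term in \eqref{eq:type1ths} is $\mu^* q^{2i-d}$, and in \eqref{eq:type1vphi}, \eqref{eq:type1phi} the surviving term involves $\mu^* q^{2i-d-1}$, giving $\vphi_i = -\mu\mu^* q^{2i-d-1}(\cdots)$ and $\phi_i = \mu\mu^* q^{2i-d-1}(\cdots)$ after using $h = -\mu$. For the converse direction in each case, one reads off from the displayed formulas that $\delta = 0$ (constant term of $\th_i$), that $\mu + h = 0$ (the $\th_i$ formula has no $q^0$ cross term and the coefficients of $q^{2i-d}$ and $q^{d-2i}$ are negatives), that $\mu^* = 0$ resp. $h^* = 0$ (comparing $\th^*_i$ with \eqref{eq:type1ths}), and that $\tau = 0$ (comparing $\vphi_i$ or $\phi_i$ with \eqref{eq:type1vphi}, \eqref{eq:type1phi}); then Lemma \ref{lem:qKrawtparambip0} applies.

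In fact the cleanest route is to invoke Lemma \ref{lem:qKrawtparam} directly for the dual $q$-Krawtchouk type part and then impose bipartiteness via Lemma \ref{lem:biptype1}. Lemma \ref{lem:qKrawtparam}(i) already gives the $\th_i$, $\th^*_i$, $\vphi_i$, $\phi_i$ formulas for dual $q$-Krawtchouk type with $\mu^* = 0$; the extra bipartite conditions from Lemma \ref{lem:biptype1}(ii) are exactly $\delta = 0$ and $\mu + h = 0$, and setting $h = -\mu$ in those formulas turns $\th_i = \delta + \mu q^{2i-d} + h q^{d-2i}$ into $\mu(q^{2i-d}-q^{d-2i})$, turns $\vphi_i = -hh^* q^{d-2i+1}(\cdots)$ into $\mu h^* q^{d-2i+1}(\cdots)$, and leaves $\phi_i = -\mu h^* q^{d-2i+1}(\cdots)$ unchanged; the $\th^*_i$ formula is unaffected. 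This is precisely the claimed list for (i), and (ii) follows identically from Lemma \ref{lem:qKrawtparam}(ii) together with Lemma \ref{lem:biptype1}(ii). So the proof reduces to the single line: ``By Lemmas \ref{lem:biptype1} and \ref{lem:qKrawtparam}.''

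There is no real obstacle here — everything is substitution and sign-bookkeeping, with the only mild care needed being the consistent use of $h = -\mu$ to rewrite $-hh^*$ as $\mu h^*$ and $-h\mu^*$ as $\mu\mu^*$ in the split sequences. The main thing to double-check is that the converse direction genuinely recovers all four scalar conditions of Lemma \ref{lem:qKrawtparambip0} from the displayed formulas, i.e. that the parametrization in Lemma \ref{lem:type1param} is injective enough (which it is, by the uniqueness clause of Lemma \ref{lem:type1param}) so that matching the formulas forces $\delta = 0$, $\mu + h = 0$, $\mu^* h^* = 0$, $\tau = 0$.
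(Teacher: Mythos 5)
Your proposal is correct and is essentially the paper's own argument: the paper proves this lemma by exactly the one-line citation of Lemmas \ref{lem:biptype1} and \ref{lem:qKrawtparam} that you identify as the cleanest route, with the same substitution $\delta=0$, $h=-\mu$, $\tau=0$ doing the sign-bookkeeping. Your remarks on the converse via the uniqueness clause of Lemma \ref{lem:type1param} (equivalently Lemma \ref{lem:qKrawtparambip0}) only make explicit what the paper leaves implicit.
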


\begin{proof}
By Lemmas \ref{lem:biptype1} and \ref{lem:qKrawtparam}.
\end{proof}

\begin{lemma}    \label{lem:qKrawtbipparamb}    \samepage
\ifDRAFT {\rm lem:qKrawtbipparamb}. \fi
Assume that $A,A^*$ is bipartite and has dual $q$-Krawtchouk type.
Then
\begin{align*}
 &  q^{2i} \neq 1 \qquad (1 \leq i \leq d),
\\
 & q^{2i} \neq -1  \qquad (1 \leq i \leq d-1),
\\
 &\mu \neq 0,
\\
 & \text{\rm $\mu^*$, $h^*$ not both zero}.
\end{align*}
\end{lemma}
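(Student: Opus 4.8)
The plan is to derive the inequalities directly from the established characterization of the "bipartite + dual $q$-Krawtchouk" situation, namely Lemma \ref{lem:qKrawtparambip0}, by feeding the associated vanishing of primary data into the known nondegeneracy conditions for type I parameter arrays. Concretely, since $A,A^*$ is assumed bipartite and of dual $q$-Krawtchouk type, Lemma \ref{lem:qKrawtparambip0} gives $\delta = 0$, $\mu + h = 0$, $\mu^* h^* = 0$, and $\tau = 0$. The only facts I really need beyond this are: first, the inequalities in Lemma \ref{lem:classify}(i),(ii) automatically hold for any genuine parameter array; and second, Lemma \ref{lem:type1cond} translates those inequalities into the explicit conditions \eqref{eq:type1paramcond1}--\eqref{eq:type1paramcond5} on the primary $q$-data. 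So the proof is essentially an evaluation of \eqref{eq:type1paramcond1}--\eqref{eq:type1paramcond5} under the substitution $h = -\mu$, $\tau = 0$, $\mu^* h^* = 0$.

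Carrying this out step by step: condition \eqref{eq:type1paramcond1} is $q^{2i}\neq 1$ for $1\le i\le d$, which is exactly the first claimed inequality and needs no substitution. For $q^{2i}\neq -1$ $(1\le i\le d-1)$, I would use \eqref{eq:type1paramcond2}, which says $\mu \neq h q^{2i}$ for $1-d\le i\le d-1$; substituting $h = -\mu$ gives $\mu \neq -\mu q^{2i}$, i.e. $\mu(1 + q^{2i})\neq 0$. This forces both $\mu \neq 0$ (taking $i=0$, since $1+q^0 = 2 \neq 0$ as $\mathrm{Char}(\F)\neq 2$) and $1 + q^{2i}\neq 0$, i.e. $q^{2i}\neq -1$, for the relevant range of $i$; note the range $1\le i\le d-1$ in the claim is contained in $1-d\le i\le d-1$. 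This handles the second and third claimed inequalities simultaneously. Finally, "$\mu^*$, $h^*$ not both zero" is immediate: by \eqref{eq:type1paramcond3} we have $\mu^*\neq h^* q^{2i}$ for $1-d\le i\le d-1$; if $\mu^* = h^* = 0$ this would read $0\neq 0$, a contradiction. (Alternatively, one may simply quote the remark in the paragraph after Lemma \ref{lem:qKrawtpre} that $\mu^*,h^*$ are not both zero, which holds for any type I Leonard pair.)

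I do not anticipate a genuine obstacle here; the lemma is a routine corollary, and the only points requiring the tiniest care are (a) keeping track of the index ranges so that $1\le i\le d-1$ for the $q^{2i}\neq -1$ claim is covered by the wider range $1-d\le i\le d-1$ available from \eqref{eq:type1paramcond2}, and (b) invoking $\mathrm{Char}(\F)\neq 2$ (in force since Section \ref{sec:nbip}) to extract $\mu\neq 0$ from $2\mu \neq 0$. The write-up will simply say: "Evaluate the inequalities in Lemma \ref{lem:type1cond} using Lemma \ref{lem:qKrawtparambip0}," mirroring the proofs of Lemmas \ref{lem:biptype1bb} and \ref{lem:qKrawtparamb}, and then spell out the three displayed conclusions as above.

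\begin{proof}
Since $A,A^*$ is bipartite and has dual $q$-Krawtchouk type, Lemma \ref{lem:qKrawtparambip0} gives $\delta = 0$, $\mu + h = 0$, $\mu^* h^* = 0$, and $\tau = 0$. We now evaluate the inequalities in Lemma \ref{lem:type1cond}.

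By \eqref{eq:type1paramcond1} we have $q^{2i}\neq 1$ for $1\le i\le d$, which is the first assertion. By \eqref{eq:type1paramcond2} we have $\mu\neq h q^{2i}$ for $1-d\le i\le d-1$; using $h = -\mu$ this becomes $\mu(1 + q^{2i})\neq 0$ for $1-d\le i\le d-1$. Taking $i=0$ and recalling that $\mathrm{Char}(\F)\neq 2$, we get $2\mu\neq 0$, so $\mu\neq 0$. Then for $1\le i\le d-1$ we obtain $1 + q^{2i}\neq 0$, that is, $q^{2i}\neq -1$. This proves the second and third assertions. Finally, by \eqref{eq:type1paramcond3} we have $\mu^*\neq h^* q^{2i}$ for $1-d\le i\le d-1$; if $\mu^*$ and $h^*$ were both zero this would read $0\neq 0$, a contradiction. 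Hence $\mu^*$ and $h^*$ are not both zero.
\end{proof}
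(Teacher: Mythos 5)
Your proof is correct and follows essentially the paper's route: the paper simply cites Lemma \ref{lem:biptype1bb} (together with Lemma \ref{lem:qKrawtparambip0}), and the proof of that lemma is exactly the evaluation of the conditions in Lemma \ref{lem:type1cond} under $\mu+h=0$, $\tau=0$ that you carry out inline. Your index bookkeeping and the use of $\mathrm{Char}(\F)\neq 2$ are fine, so the argument stands as written.
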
   

\begin{proof}
By Lemmas \ref{lem:biptype1bb} and \ref{lem:qKrawtparambip0}.
\end{proof}

Let 
$(\{a_i\}_{i=0}^d; \{x_i\}_{i=1}^d; \{\th^*_i\}_{i=0}^d)$
denote the TD/D sequence of $A,A^*$ corresponding to
the parameter array \eqref{eq:parray11b}.

\begin{lemma}   \label{lem:qKrawtaixibip}    \samepage
\ifDRAFT {\rm lem:qKrawtaixibip}. \fi
The following hold:
\begin{itemize}
\item[\rm (i)]
$A,A^*$ is bipartite and has dual $q$-Krawtchouk type with $\mu^*=0$
if and only if
\begin{align*}
a_i &= 0  &&   (0 \leq i \leq d),
\\
x_i &=  \mu^2 q^{2i-d-1}  (q^i - q^{-i})(q^{d-i+1} - q^{i-d-1})   &&  (1 \leq i \leq d),
\\
\th^*_i &= \delta^* + h^* q^{d-2i}                &&  ( 0 \leq i \leq d);
\end{align*}
\item[\rm (ii)]
$A,A^*$ is bipartite and has dual $q$-Krawtchouk type with $h^*=0$
if and only if
\begin{align*}
a_i &= 0  &&   (0 \leq i \leq d),
\\
x_i &= \mu^2 q^{d-2i+1}  (q^i - q^{-i})(q^{d-i+1} - q^{i-d-1})   &&  (1 \leq i \leq d),
\\
\th^*_i &= \delta^* + \mu^* q^{2i-d}                &&  ( 0 \leq i \leq d).
\end{align*}
\end{itemize}
\end{lemma}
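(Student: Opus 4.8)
The plan is to reduce part (i) to Lemmas \ref{lem:qKrawtparambip0} and \ref{lem:qKrawtaixi}, and to obtain part (ii) by the same argument with the roles of $(\mu^*,q^{2i-d})$ and $(h^*,q^{d-2i})$ interchanged; below I only discuss (i).

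For the forward implication, suppose $A,A^*$ is bipartite and has dual $q$-Krawtchouk type with $\mu^*=0$. Lemma \ref{lem:qKrawtparambip0} then gives $\delta=0$, $\mu+h=0$ (hence $h=-\mu$), $\mu^*h^*=0$, and $\tau=0$. Since $A,A^*$ has dual $q$-Krawtchouk type with $\mu^*=0$, Lemma \ref{lem:qKrawtaixi}(i) applies and describes the TD/D sequence as $a_i=\delta+(\mu+h)q^{2i-d}$, $x_i=-\mu h\,q^{2i-d-1}(q^i-q^{-i})(q^{d-i+1}-q^{i-d-1})$ and $\th^*_i=\delta^*+h^*q^{d-2i}$. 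Substituting $\delta=0$ and $h=-\mu$ (so that $-\mu h=\mu^2$) collapses these to exactly the displayed formulas. This is the only computational step, and it is immediate.

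For the converse, suppose the displayed formulas hold. Because $a_i=0$ for $0\le i\le d$ and $(\{a_i\}_{i=0}^d;\{x_i\}_{i=1}^d;\{\th^*_i\}_{i=0}^d)$ is a TD/D sequence of $A,A^*$, Definition \ref{def:bip}(i) shows that $A,A^*$ is bipartite. Comparing the displayed $\th^*_i=\delta^*+h^*q^{d-2i}$ with the general expression $\th^*_i=\delta^*+\mu^*q^{2i-d}+h^*q^{d-2i}$ from \eqref{eq:type1ths} and using $q\neq 0$ forces $\mu^*=0$. Since $A,A^*$ has type I and is bipartite, Lemma \ref{lem:biptype1} yields $\delta=0$, $\mu+h=0$, $\tau=0$; in particular $\mu^*h^*=0$ and $\tau=0$, so Lemma \ref{lem:defLPdualqKrawt} shows $A,A^*$ has dual $q$-Krawtchouk type, and it does so with $\mu^*=0$.

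Thus there is no serious obstacle: the forward direction is a substitution into Lemma \ref{lem:qKrawtaixi}(i) guided by Lemma \ref{lem:qKrawtparambip0}, and the converse only uses that vanishing $a_i$ means bipartite (Definition \ref{def:bip}) together with the type I and bipartite characterizations (Lemmas \ref{lem:biptype1} and \ref{lem:defLPdualqKrawt}). The one place requiring a touch of care is noting that the hypothesis $\th^*_i=\delta^*+h^*q^{d-2i}$ genuinely pins down $\mu^*=0$ rather than being merely consistent with it; this is also what makes the cases $\mu^*=0$ and $h^*=0$ in the statement the two distinct possibilities, and it is precisely the point mirrored in part (ii).
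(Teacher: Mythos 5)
Your proof is correct and follows essentially the same route as the paper, whose proof simply cites Lemmas \ref{lem:qKrawtaixi} and \ref{lem:qKrawtparambip0}: your forward direction is exactly that substitution, and your converse merely unpacks the same ingredients (Definition \ref{def:bip}, the comparison with \eqref{eq:type1ths} to force $\mu^*=0$, and Lemmas \ref{lem:biptype1}, \ref{lem:defLPdualqKrawt}, which underlie Lemma \ref{lem:qKrawtparambip0}).
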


\begin{proof}
By Lemmas \ref{lem:qKrawtaixi} and \ref{lem:qKrawtparambip0}.
\end{proof}

\section{Leonard pairs that are bipartite and have Krawtchouk type}
\label{sec:bipKrawt}
\ifDRAFT {\rm sec:bipKrawt}. \fi

In Section \ref{sec:bipbip} we described the bipartite Leonard pairs.
In Section \ref{sec:Krawt} we described the Leonard pairs of Krawtchouk type.
In this section, we describe the Leonard pairs that are bipartite and have
Krawtchouk type.

Throughout this section, let $A,A^*$ denote a Leonard pair over $\F$ that has type II
with parameter array 
\begin{equation}
  (\{\th_i\}_{i=0}^d; \{\th^*_i\}_{i=0}^d; \{\vphi_i\}_{i=1}^d; \{\phi_i\}_{i=1}^d).   \label{eq:parray12d}
\end{equation}
Denote the corresponding primary data by
\begin{equation}
 (\delta, \mu, h, \delta^*, \mu^*, h^*, \tau).     \label{eq:basicseq2d}
\end{equation}

\begin{lemma}    \label{lem:Krawtparambip0}     \samepage
\ifDRAFT {\rm lem:Krawtparambip0}. \fi
The Leonard pair $A,A^*$ is bipartite and has Krawtchouk type 
if and only if
\begin{align*}
\delta &= 0, &
h &= 0,  &
h^* &= 0,  &
\tau &=0.
\end{align*}
\end{lemma}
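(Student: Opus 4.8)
The statement to prove is Lemma \ref{lem:Krawtparambip0}, which characterizes when a type II Leonard pair $A,A^*$ is both bipartite and of Krawtchouk type, in terms of its primary data $(\delta,\mu,h,\delta^*,\mu^*,h^*,\tau)$. The plan is to combine two characterizations already available in the excerpt: Lemma \ref{lem:biptype2}, which says that under type II, $A,A^*$ is bipartite if and only if $\delta=0$, $h=0$, and $\tau=0$; and Lemma \ref{lem:defKrawt}, which says that under type II, $A,A^*$ has Krawtchouk type if and only if $h=0$ and $h^*=0$. So the proof should simply intersect these two sets of conditions.

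First I would assume $A,A^*$ is bipartite and has Krawtchouk type. By Lemma \ref{lem:biptype2} we get $\delta=0$, $h=0$, $\tau=0$. By Lemma \ref{lem:defKrawt} we get $h=0$ and $h^*=0$; in particular $h^*=0$. Putting these together yields $\delta=0$, $h=0$, $h^*=0$, $\tau=0$, which is the claimed list of conditions. Conversely, assume $\delta=0$, $h=0$, $h^*=0$, $\tau=0$. Then $\delta=0$, $h=0$, $\tau=0$ gives bipartiteness by Lemma \ref{lem:biptype2}, and $h=0$, $h^*=0$ gives Krawtchouk type by Lemma \ref{lem:defKrawt}. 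Hence $A,A^*$ is bipartite and has Krawtchouk type. This completes the equivalence.

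There is no real obstacle here: the lemma is a bookkeeping corollary of the two cited characterizations, and the only thing to be careful about is that both cited lemmas are stated \emph{under the standing assumption of type II}, which is exactly the standing assumption of this section (stated just before \eqref{eq:parray12d}), so the two characterizations apply simultaneously to the same primary data. One should also note that the primary data is well-defined in the first place by Lemma \ref{lem:type2param} (uniqueness of the sequence \eqref{eq:type2pseq}). The write-up is therefore two short implications, each a one-line appeal to Lemmas \ref{lem:biptype2} and \ref{lem:defKrawt}.

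\begin{proof}
First assume that $A,A^*$ is bipartite and has Krawtchouk type.
By Lemma \ref{lem:biptype2}, $\delta=0$, $h=0$, and $\tau=0$.
By Lemma \ref{lem:defKrawt}, $h=0$ and $h^*=0$.
Combining these we obtain $\delta=0$, $h=0$, $h^*=0$, and $\tau=0$.
Conversely, assume that $\delta=0$, $h=0$, $h^*=0$, and $\tau=0$.
By Lemma \ref{lem:biptype2}, $A,A^*$ is bipartite.
By Lemma \ref{lem:defKrawt}, $A,A^*$ has Krawtchouk type.
\end{proof}
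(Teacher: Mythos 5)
Your proof is correct and is exactly the paper's argument: the paper also proves this lemma by simply combining Lemma \ref{lem:biptype2} (bipartite iff $\delta=0$, $h=0$, $\tau=0$) with Lemma \ref{lem:defKrawt} (Krawtchouk type iff $h=0$, $h^*=0$), under the standing type II assumption. No gaps.
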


\begin{proof}
By Lemmas \ref{lem:biptype2} and \ref{lem:defKrawt}.
\end{proof}

\begin{lemma}    \label{lem:Krawtparambip}    \samepage
\ifDRAFT {\rm lem:Krawtparambip}. \fi
The Leonard pair $A,A^*$ is bipartite and has Krawtchouk type
if and only if 
\begin{align*}
\th_i &= \mu (i - d/2)  &&  (0 \leq i \leq d),
\\
\th^*_i &= \delta^* + \mu^* (i - d/2)         &&  ( 0 \leq i \leq d),
\\
\vphi_i &=   - \mu \mu^*  i (d-i+1)/2      &&  (1 \leq i \leq d),
\\
\phi_i &=   \mu \mu^* i (d-i+1) /2  &&  (1 \leq i \leq d).
\end{align*}
\end{lemma}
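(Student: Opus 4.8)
The plan is to reduce this statement to the two characterizations already obtained in this section and in Section~\ref{sec:bipbip}, together with the explicit parametrization of type~II parameter arrays from Section~\ref{sec:type2}. Recall that throughout this section $A,A^*$ is assumed to have type~II, with primary data \eqref{eq:basicseq2d} corresponding to the parameter array \eqref{eq:parray12d}.

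First I would invoke Lemma~\ref{lem:Krawtparambip0}, which asserts that $A,A^*$ is bipartite and has Krawtchouk type if and only if $\delta=0$, $h=0$, $h^*=0$, and $\tau=0$. So it suffices to check that these four conditions on the primary data are equivalent to the asserted form of \eqref{eq:parray12d}. For the forward implication I would substitute $\delta=h=h^*=\tau=0$ into the type~II formulas \eqref{eq:type2th}--\eqref{eq:type2phi} of Lemma~\ref{lem:type2param}; a one-line simplification then collapses $\th_i$ and $\th^*_i$ to the displayed linear expressions and collapses $\vphi_i = i(d-i+1)\bigl(\tau-\mu\mu^*/2+\cdots\bigr)$ and the analogous $\phi_i$ to $\mp \mu\mu^* i(d-i+1)/2$. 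For the converse I would use the uniqueness clause of Lemma~\ref{lem:type2param}: the primary data is the unique sequence satisfying \eqref{eq:type2th}--\eqref{eq:type2phi} for the given parameter array, so matching \eqref{eq:type2th}--\eqref{eq:type2phi} against the four displayed lines forces $\delta=h=h^*=\tau=0$ (the equality $\th_i=\mu(i-d/2)$ kills the constant term and the $i(d-i)$ term, giving $\delta=0$, $h=0$; then $\th^*_i=\delta^*+\mu^*(i-d/2)$ gives $h^*=0$; and $\vphi_i=-\mu\mu^* i(d-i+1)/2$ then gives $\tau=0$), after which Lemma~\ref{lem:Krawtparambip0} applies.

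I expect no real obstacle here; the only point requiring care is to keep the roles of the named scalars $\mu,\mu^*,\delta^*$ straight---they are the corresponding entries of the primary data of $A,A^*$, identified via uniqueness---and to verify the elementary algebraic identities in the simplification step. As an even shorter alternative, one could bypass \eqref{eq:type2th}--\eqref{eq:type2phi} entirely and combine Lemma~\ref{lem:biptype2b} with Lemma~\ref{lem:defKrawt}: starting from the bipartite type~II parameter array displayed in Lemma~\ref{lem:biptype2b} and imposing the single extra condition $h^*=0$ (which by Lemma~\ref{lem:defKrawt} is exactly the Krawtchouk condition, since $h=0$ already holds for any bipartite type~II pair) yields the four displayed lines directly, and conversely comparing the two arrays forces $h^*=0$. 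I would present whichever of these two routes is more concise in the final write-up.
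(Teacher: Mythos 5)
Your proposal is correct and follows essentially the same route as the paper, which proves the lemma simply by combining Lemma \ref{lem:biptype2} (bipartite $\Leftrightarrow$ $\delta=0$, $h=0$, $\tau=0$) with Lemma \ref{lem:Krawtparam} (the Krawtchouk form of the parameter array); your reduction via Lemma \ref{lem:Krawtparambip0} and the uniqueness clause of Lemma \ref{lem:type2param}, as well as your alternative via Lemmas \ref{lem:biptype2b} and \ref{lem:defKrawt}, amounts to the same substitution-and-comparison argument with the same supporting lemmas.
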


\begin{proof}
By Lemmas \ref{lem:biptype2} and  \ref{lem:Krawtparam}.
\end{proof}

\begin{lemma}    \label{lem:Krawtbipparamb}    \samepage
\ifDRAFT {\rm lem:Krawtbipparamb}. \fi
Assume that the Leonard pair $A,A^*$ is bipartite and has Krawtchouk type.
Then
\begin{align*}
& \text{\rm $\text{\rm Char}(\F)$ is equal to $0$ or greater than $d$},
\\ 
& \mu \neq 0,  \qquad\qquad \mu^* \neq 0.
\end{align*}
\end{lemma}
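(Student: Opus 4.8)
The plan is to reduce the assertion to inequalities that have already been recorded. Since $A,A^*$ is bipartite and has Krawtchouk type, Lemma \ref{lem:Krawtparambip0} gives $\delta = 0$, $h = 0$, $h^* = 0$, $\tau = 0$ for the primary data \eqref{eq:basicseq2d}. Because $A,A^*$ is in particular a Leonard pair of type II, Lemma \ref{lem:type2param} applies and the inequalities in Lemma \ref{lem:classify}(i),(ii) hold; by Lemma \ref{lem:type2cond} these are equivalent to the conditions \eqref{eq:type2paramcond1}--\eqref{eq:type2paramcond5}.

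First I would substitute $h = h^* = \tau = 0$ into \eqref{eq:type2paramcond1}--\eqref{eq:type2paramcond5}. Condition \eqref{eq:type2paramcond1} is unaffected and gives that $\text{\rm Char}(\F)$ is $0$ or greater than $d$. With $h = 0$, condition \eqref{eq:type2paramcond2} reads $\mu \neq 0$, since the index $i = 0$ lies in the range $1 - d \leq i \leq d - 1$ (recall $d \geq 3$); likewise \eqref{eq:type2paramcond3} reads $\mu^* \neq 0$. Finally \eqref{eq:type2paramcond4} and \eqref{eq:type2paramcond5} both collapse to $\mu \mu^* \neq 0$, which is already implied by the previous two conditions together with the standing hypothesis $\text{\rm Char}(\F) \neq 2$. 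Reading off the surviving conditions \eqref{eq:type2paramcond1}, \eqref{eq:type2paramcond2}, \eqref{eq:type2paramcond3} yields exactly the three claims of the lemma.

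Alternatively, the same conclusion follows even more quickly by combining Lemma \ref{lem:biptype2bb}, which already asserts that $\text{\rm Char}(\F)$ is $0$ or greater than $d$, that $\mu \neq 0$, and that $\mu^* \neq h^* i$ for $1 - d \leq i \leq d - 1$, with the equality $h^* = 0$ supplied by Lemma \ref{lem:Krawtparambip0}: then $\mu^* \neq h^* i = 0$. I do not anticipate any real difficulty; the only small point to verify is that the index ranges in \eqref{eq:type2paramcond2} and \eqref{eq:type2paramcond3} genuinely force $\mu$ and $\mu^*$ to be nonzero rather than merely to avoid certain nonzero values, which holds because $0$ belongs to the range $1 - d \leq i \leq d - 1$.
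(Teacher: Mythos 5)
Your proof is correct and takes essentially the same route as the paper: the paper's one-line proof cites Lemmas \ref{lem:Krawtparamb} and \ref{lem:Krawtparambip0}, and Lemma \ref{lem:Krawtparamb} is itself proved by evaluating the inequalities of Lemma \ref{lem:type2cond} under $h=h^*=0$, which is exactly the substitution you perform (your alternative via Lemma \ref{lem:biptype2bb} is equally fine). The only remark worth making is that the bipartite hypothesis is not actually needed, since Lemma \ref{lem:Krawtparamb} already yields all three conclusions from the Krawtchouk assumption alone, so your extra use of $\tau=0$ and the aside about $\text{\rm Char}(\F)\neq 2$ are harmless but superfluous.
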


\begin{proof}
By Lemmas \ref{lem:Krawtparamb}
and \ref{lem:Krawtparambip0} 
\end{proof}

Let 
$(\{a_i\}_{i=0}^d; \{x_i\}_{i=1}^d; \{\th^*_i\}_{i=0}^d)$
denote the TD/D sequence of $A,A^*$ corresponding to the parameter array
\eqref{eq:parray12d}.

\begin{lemma}   \label{lem:Krawtaixibip}    \samepage
\ifDRAFT {\rm lem:Krawtaixibip}. \fi
The Leonard pair  $A,A^*$ is bipartite and has Krawtchouk type if and only if
\begin{align*}
a_i &= 0       &&  (0 \leq i \leq d),
\\
x_i &=  \mu^2 i (d-i+1) / 4       && (1 \leq i \leq d),
\\
\th^*_i &= \delta^* + \mu^* (i - d/2)         &&  ( 0 \leq i \leq d).
\end{align*}
\end{lemma}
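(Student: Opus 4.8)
The plan is to reduce the claim to two facts that have already been assembled in this section: the characterization of the bipartite Krawtchouk-type Leonard pairs via their primary data (Lemma \ref{lem:Krawtparambip0}), and the formula for the TD/D sequence in terms of the primary data. First I would invoke Lemma \ref{lem:Krawtaixi}, which gives the TD/D sequence of a Leonard pair of Krawtchouk type in terms of the primary data $(\delta,\mu,h,\delta^*,\mu^*,h^*,\tau)$ as
\[
a_i = \delta + (2i-d)\tau/\mu^*, \qquad
x_i = i(d-i+1)\left(\frac{\mu^2}{4} - \frac{\tau^2}{\mu^{*2}}\right), \qquad
\th^*_i = \delta^* + \mu^*(i-d/2).
\]
This already accounts for one direction once we know the extra bipartite constraints.

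The main step is then to impose the bipartite condition. By Lemma \ref{lem:Krawtparambip0}, a Leonard pair $A,A^*$ of type II is bipartite and of Krawtchouk type if and only if $\delta=0$, $h=0$, $h^*=0$, and $\tau=0$. Substituting $\delta=0$ and $\tau=0$ into the formulas of Lemma \ref{lem:Krawtaixi} immediately yields $a_i = 0$, $x_i = \mu^2\, i(d-i+1)/4$, and $\th^*_i = \delta^* + \mu^*(i-d/2)$, which is exactly the displayed conclusion. So one direction ("bipartite and Krawtchouk type $\Rightarrow$ these formulas") follows by this substitution, using also that $h=0$, $h^*=0$ are part of the Krawtchouk-type hypothesis already handled by Lemma \ref{lem:Krawtaixi}.

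For the converse, I would assume the three displayed equations for $a_i$, $x_i$, $\th^*_i$ hold and argue that $A,A^*$ must be bipartite of Krawtchouk type. The cleanest route is: the formula for $\th^*_i$ shows (comparing with \eqref{eq:type2ths}) that $h^*=0$, so $A,A^*$ has type II primary data with $h^*=0$; then $a_i=0$ for all $i$ forces $A,A^*$ to be essentially bipartite, indeed bipartite since the common value of $a_i$ is $0$ (Definition \ref{def:bip}). By Lemma \ref{lem:biptype2}, bipartiteness of a type II Leonard pair gives $\delta=0$, $h=0$, $\tau=0$; combined with $h^*=0$ this is precisely the condition in Lemma \ref{lem:Krawtparambip0}, so $A,A^*$ is bipartite of Krawtchouk type. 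Alternatively one can cite Lemma \ref{lem:Krawtaixi} in the reverse direction together with Lemma \ref{lem:Krawtparambip0} directly, which is even shorter: the equations for $a_i$ (with $\tau=0$ forced, since otherwise $a_i$ would vary) and $x_i$ match the Krawtchouk-type TD/D template with $\tau=0$ and $\delta=0$, hence by Lemma \ref{lem:Krawtaixi} $A,A^*$ has Krawtchouk type, and $a_i=0$ then gives bipartiteness.

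I do not expect a genuine obstacle here; the statement is essentially a corollary obtained by specializing Lemma \ref{lem:Krawtaixi} using Lemma \ref{lem:Krawtparambip0}. The only point requiring a little care is the converse bookkeeping — making sure that the vanishing of $a_i$ together with the shape of $\th^*_i$ really does pin down all four of $\delta=h=h^*=\tau=0$ rather than just some of them — but this is handled by feeding the hypotheses into Lemma \ref{lem:biptype2} (or equivalently Lemmas \ref{lem:essbiptype2}, \ref{lem:essbiptype2c}) and Lemma \ref{lem:Krawtparambip0}. Accordingly the proof will be a two- or three-line citation: "By Lemmas \ref{lem:Krawtaixi} and \ref{lem:Krawtparambip0}."

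\begin{proof}
By Lemmas \ref{lem:Krawtaixi} and \ref{lem:Krawtparambip0}.
\end{proof}
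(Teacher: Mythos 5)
Your argument is correct and matches the paper's proof in substance: the paper simply cites Lemmas \ref{lem:biptype2} and \ref{lem:Krawtaixi}, while you package the same facts through Lemma \ref{lem:Krawtparambip0} (which is itself just Lemmas \ref{lem:biptype2} and \ref{lem:defKrawt}) together with Lemma \ref{lem:Krawtaixi}. Your converse bookkeeping (reading $h^*=0$ off \eqref{eq:type2ths}, getting bipartiteness from $a_i=0$, then $\delta=h=\tau=0$ from Lemma \ref{lem:biptype2}) is exactly the intended reduction, so there is no gap.
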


\begin{proof}
Use Lemmas \ref{lem:biptype2} and  \ref{lem:Krawtaixi}.
\end{proof}

\section{Near-bipartite Leonard pairs of dual $q$-Krawtchouk type}
\label{sec:dualqKrawt2}  
\ifDRAFT {\rm sec:dualqKrawt2}. \fi

In this section, we determine the near-bipartite Leonard pairs of dual $q$-Krawtchouk type,
and describe their bipartite contraction.

Throughout this section, let $A,A^*$ denote a Leonard pair over $\F$
that has dual $q$-Krawtchouk type.
Let 
\begin{equation}
  (\{\th_i\}_{i=0}^d; \{\th^*_i\}_{i=0}^d; \{\vphi_i\}_{i=1}^d; \{\phi_i\}_{i=1}^d)   \label{eq:parray11cx}
\end{equation}
denote a parameter array of $A,A^*$ and let
\begin{equation}
(\{a_i\}_{i=0}^d; \{x_i\}_{i=1}^d; \{\th^*_i\}_{i=0}^d)         \label{eq:tdseqauxx}
\end{equation}
denote the corresponding TD/D sequence.
Let
\begin{equation}
 (\delta, \mu, h, \delta^*, \mu^*, h^*, \tau)                 \label{eq:basicseqcx}
\end{equation}
denote the primary $q$-data corresponding to the parameter array
\eqref{eq:parray11cx}.
Note by Lemma \ref{lem:defLPdualqKrawt} that $\mu^* h^* = 0$ and $\tau=0$.
In view of Lemma \ref{lem:biptype1} and \eqref{eq:type1cond1}, we make a definition.

\begin{defi}     \label{def:pseq2x}    \samepage
\ifDRAFT {\rm def:pseq2x}. \fi
Define scalars $\delta'$, $\mu'$, $h'$, $\tau'$ in $\F$ such that
\begin{align}
 \delta' &=0,  &
 \mu' + h' &= 0,  &
 \mu' h' &= \mu h,  &
 \tau' &= 0.                \label{eq:mu'h'x}
\end{align}
\end{defi}

\begin{defi}     \label{def:thdvphidphidx}     \samepage
\ifDRAFT {\rm def:thdvphidphidx}. \fi
Define  scalars $\{\th'_i\}_{i=0}^d$, $\{\vphi'_i\}_{i=1}^d$, $\{\phi'_i\}_{i=1}^d$ in $\F$ as follows.
\begin{itemize}
\item[\rm (i)]
If $\mu^* = 0$, then
\begin{align*}
\th'_i &= \mu' q^{2i-d} + h'  q^{d-2i}  &&  (0 \leq i \leq d),
\\
\vphi'_i &= \mu' h^* q^{d-2i+1} (q^i - q^{-i})(q^{d-i+1}-q^{i-d-1})   &&  (1 \leq i \leq d),
\\
\phi'_i &= h' h^* q^{d-2i+1} (q^i - q^{-i})(q^{d-i+1}-q^{i-d-1})   &&  (1 \leq i \leq d).
\end{align*}
\item[\rm (ii)]
If $h^*=0$, then
\begin{align*}
\th'_i &= \mu' q^{2i-d} + h' q^{d-2i}  &&  (0 \leq i \leq d),
\\
\vphi'_i &= h' \mu^* q^{2i-d-1} (q^i - q^{-i})(q^{d-i+1}-q^{i-d-1})   &&  (1 \leq i \leq d),
\\
\phi'_i &= \mu' \mu^* q^{2i-d-1} (q^i - q^{-i})(q^{d-i+1}-q^{i-d-1})   &&  (1 \leq i \leq d).
\end{align*}
\end{itemize}
\end{defi}

\begin{lemma}    \label{lem:A-Feigen}    \samepage
\ifDRAFT {\rm lem:A-Feigen}. \fi
Let $F$ denote the flat part of $A$.
Then the eigenvalues of $A-F$ are $\{\th'_i\}_{i=0}^d$. 
\end{lemma}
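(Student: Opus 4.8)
The plan is to compute the eigenvalues of $A-F$ directly from the explicit TD/D data of $A-F$, which is available from Lemma \ref{lem:nbipTDD3} and the formulas already worked out for the dual $q$-Krawtchouk case. First I would recall from Lemma \ref{lem:qKrawtaixi} that, up to normalized TD/D form, the TD/D sequence of $A,A^*$ is $(\{a_i\}_{i=0}^d;\{x_i\}_{i=1}^d;\{\th^*_i\}_{i=0}^d)$ with the $x_i$ given explicitly (the two cases $\mu^*=0$ and $h^*=0$). By Lemma \ref{lem:Fai2} the flat part $F$ is diagonal with entries $a_i$, so $A-F$ is the normalized irreducible tridiagonal matrix with zero diagonal and the same $x_i$ as $A$. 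By Lemma \ref{lem:F3} the pair $A-F,A^*$ has TD/D sequence $(\mathbb{O};\{x_i\}_{i=1}^d;\{\th^*_i\}_{i=0}^d)$ — but we do not yet know this is a Leonard pair; that is fine, since the eigenvalues of $A-F$ are just the roots of the characteristic polynomial of that tridiagonal matrix regardless.

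The key step is to exhibit a bipartite Leonard pair with TD/D sequence exactly $(\mathbb{O};\{x_i\}_{i=1}^d;\{\th^*_i\}_{i=0}^d)$ and read off its eigenvalues. Concretely, I would use Lemma \ref{lem:qKrawtaixibip}: with $\mu'$, $h'$ as in Definition \ref{def:pseq2x} (so $\mu'+h'=0$, $\mu'h'=\mu h$), the bipartite dual $q$-Krawtchouk Leonard pair with primary data $(\delta',\mu',h',\delta^*,\mu^*,h^*,\tau')=(0,\mu',h',\delta^*,\mu^*,h^*,0)$ has TD/D sequence with $a_i=0$ and $x'_i=\mu'^2 q^{\pm(2i-d-1)}(q^i-q^{-i})(q^{d-i+1}-q^{i-d-1})$. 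Since $\mu'^2 = \mu' h' \cdot (\mu'/h') = -\mu h$ when $\mu'+h'=0$, one checks $x'_i$ matches $x_i$ from Lemma \ref{lem:qKrawtaixi} on the nose — this is the only genuine computation, and it is short. (Alternatively, invoke Lemma \ref{lem:nbipTDD3} via the identity $\vphi_i\phi_i=\vphi'_i\phi'_i$, which follows from Lemma \ref{lem:comptype1} with conditions \eqref{eq:type1cond1}--\eqref{eq:type1cond3}: $\mu h=\mu'h'$ is \eqref{eq:mu'h'x}, and $\tau=\tau'=0$, $\mu+h$ vs $\mu'+h'$ enter only through products with $\tau$ or $\tau^2$, all of which vanish.) Then the uniqueness in Proposition \ref{prop:LPTDDunique} forces $A-F,A^*$ to be isomorphic to that bipartite Leonard pair, whose eigenvalues are by Lemma \ref{lem:qKrawtparambip}(i)--(ii) precisely $\{\th'_i\}_{i=0}^d$ as written in Definition \ref{def:thdvphidphidx}.

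The main obstacle — really the only delicate point — is that at this stage of the paper it has not yet been shown that $A-F,A^*$ is a Leonard pair (that is the content of near-bipartiteness, established later in the section). So I must be careful to phrase the argument so that it does not presuppose this. The clean way is: the matrix $A-F$ and the bipartite Leonard pair's $A$-matrix are \emph{literally the same matrix} (same zero diagonal, same $\{x_i\}$), so they have the same characteristic polynomial and hence the same multiset of eigenvalues; the eigenvalues of the bipartite Leonard pair's $A$-component are $\{\th'_i\}_{i=0}^d$ by Lemma \ref{lem:qKrawtparambip}. Thus the proof reduces to the bookkeeping that the $x_i$ of Lemma \ref{lem:qKrawtaixi} equal the $x'_i$ implicit in Definition \ref{def:thdvphidphidx}, together with verifying that the $\th'_i$ there are exactly the eigenvalues of the standard bipartite dual $q$-Krawtchouk matrix — both of which are immediate from the formulas \eqref{eq:type1th} and \eqref{eq:mu'h'x}. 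I would present it in that order: (1) normalize so $A,A^*$ is in TD/D form; (2) write down $A-F$ using Lemma \ref{lem:Fai2} and Lemma \ref{lem:qKrawtaixi}; (3) observe this coincides with the $A$-matrix of the bipartite Leonard pair of Lemma \ref{lem:qKrawtparambip}/\ref{lem:qKrawtaixibip} after the substitution \eqref{eq:mu'h'x}; (4) conclude the eigenvalues agree with $\{\th'_i\}_{i=0}^d$ of Definition \ref{def:thdvphidphidx}.
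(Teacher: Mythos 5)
Your argument is correct only in the reinforced case, and that is a genuine gap. The comparison object you rely on --- ``the bipartite dual $q$-Krawtchouk Leonard pair with primary $q$-data $(0,\mu',h',\delta^*,\mu^*,h^*,0)$'' --- exists only when this sequence really is a primary $q$-data, and condition \eqref{eq:type1paramcond2} with $h'=-\mu'$ (and $\mu'\neq 0$) says precisely $q^{2i}\neq -1$ for $1\leq i\leq d-1$, i.e.\ the reinforced condition; equivalently, by Lemma \ref{lem:dualqKrawtaux0x}, the mutual distinctness of the $\th'_i$. When some $q^{2i}=-1$ the scalars $\{\th'_i\}_{i=0}^d$ repeat, and no Leonard pair can have them as eigenvalues (Lemma \ref{lem:LPmultfree}), so Lemmas \ref{lem:qKrawtparambip} and \ref{lem:qKrawtaixibip} cannot be invoked. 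Worse, once $A,A^*$ is in normalized TD/D form, asserting that a bipartite Leonard pair exists whose matrices are literally $A-F$ and $A^*$ is, by Proposition \ref{prop:TDD2}, the same as asserting that $A-F,A^*$ is a Leonard pair --- exactly the statement you correctly identified as off-limits at this point. So your workaround does not remove the circularity outside the reinforced case, and Lemma \ref{lem:A-Feigen} is needed in full generality: it is used in Proposition \ref{prop:dualqKrawtx} to prove the implication (iii)\;$\Rightarrow$\;(iv), where the eigenvalues of $A-F$ must be known \emph{before} one knows $q^{2i}\neq -1$. (Your bookkeeping that the $x_i$ of Lemma \ref{lem:qKrawtaixi} match the $x_i$ of Lemma \ref{lem:qKrawtaixibip} under $\mu'h'=\mu h$, $\mu'+h'=0$ is correct, but it only settles the reinforced case.)

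The paper's proof avoids Leonard-pair existence altogether: it exhibits an explicit invertible upper triangular matrix $P$ (unit diagonal, entries built from the $\vphi'_j$ and the polynomials $\eta^*$) with $(A-F)P=PB$, where $B$ is lower bidiagonal with diagonal entries $\th'_0,\ldots,\th'_d$. This uses only $\vphi'_i\neq 0$ and the distinctness of the $\th^*_i$, makes no assumption on the $\th'_i$, and gives the characteristic polynomial $\prod_{i=0}^d(\lambda-\th'_i)$ in all cases. To repair your proof you would need an argument of this kind (a direct similarity or a direct computation of the characteristic polynomial of the zero-diagonal tridiagonal matrix $A-F$), rather than an appeal to the bipartite dual $q$-Krawtchouk Leonard pairs.
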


\begin{proof}
We may assume that $A,A^*$ is in normalized TD/D form.
The matrix $A-F$ is shown in \eqref{eq:FA-F}.
Define a matrix $B \in \Matd$ that has $(i,i)$-entry $\th'_i$ for $0 \leq i \leq d$
and $(i,i-1)$-entry $1$ for $1 \leq i \leq d$.
All other entries of $B$ are zero.
Define the upper triangular matrix $P \in \Matd$ that  has $(i,j)$-entry
\begin{align*}
P_{i,j} &= 
     \frac{\vphi'_1 \vphi'_2 \cdots \vphi'_j} { \vphi'_1 \vphi'_2 \cdots \vphi'_i }
     \frac{\eta^*_{d-j} (\th^*_i)  } { \eta^*_{d-i} (\th^*_i) }
\end{align*}
for $0 \leq i \leq j \leq d$.
Observe that $P_{i,i} = 1$ for $0 \leq i \leq d$.
Therefore $P$ is invertible.
By matrix multiplication, we routinely find
\[
  (A-F)P = P B.
\]
The matrices $A-F$ and $B$ are similar, so they have the same eigenvalues.
The eigenvalues of $B$ are $\{\th'_i\}_{i=0}^d$.
The result follows.
\end{proof}

\begin{lemma}    \label{lem:dualqKrawtaux0x}    \samepage
\ifDRAFT {\rm lem:dualqKrawtaux0x}. \fi
With reference to Definition \ref{def:thdvphidphidx},
the following are equivalent:
\begin{itemize}
\item[\rm (i)]
$\{\th'_i\}_{i=0}^d$ are mutually distinct;
\item[\rm (ii)]
$q^{2i} \neq -1$ for $1 \leq i \leq d-1$.
\end{itemize}
\end{lemma}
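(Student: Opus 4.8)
The plan is to use the explicit description of $\{\th'_i\}_{i=0}^d$ furnished by Definition \ref{def:thdvphidphidx} together with Definition \ref{def:pseq2x}, and thereby reduce the whole statement to an elementary computation with powers of $q$. First I would observe that in both cases of Definition \ref{def:thdvphidphidx} one has $\th'_i = \mu' q^{2i-d} + h' q^{d-2i}$ for $0 \le i \le d$, and that by Definition \ref{def:pseq2x} we have $h' = -\mu'$ and hence $-\mu'^2 = \mu' h' = \mu h$. Since $\mu \neq 0$ and $h \neq 0$ by Lemma \ref{lem:qKrawtparamb}, it follows that $\mu' \neq 0$, so
\[
\th'_i = \mu'\bigl(q^{2i-d} - q^{d-2i}\bigr) \qquad (0 \le i \le d).
\]
This is the only input from the two definitions that is needed.

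Next I would compute the difference $\th'_i - \th'_j$ for $0 \le i,j \le d$. A direct manipulation, using $q^{-2i} - q^{-2j} = -q^{-2i-2j}(q^{2i} - q^{2j})$, gives
\[
\th'_i - \th'_j = \mu'\, q^{-d}\,(q^{2i} - q^{2j})\,\bigl(1 + q^{2(d-i-j)}\bigr).
\]
Since $\mu' \neq 0$ and $q \neq 0$, the scalars $\th'_i$ and $\th'_j$ are equal precisely when $q^{2i} = q^{2j}$ or $q^{2(i+j-d)} = -1$ (here using that $-1$ is its own multiplicative inverse, so $1 + q^{2(d-i-j)} = 0$ iff $q^{2(i+j-d)} = -1$). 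For $i \neq j$ the first alternative is impossible: it would force $q^{2|i-j|} = 1$ with $1 \le |i-j| \le d$, contradicting the primary $q$-data condition $q^{2i} \neq 1$ for $1 \le i \le d$ recorded in Lemma \ref{lem:qKrawtparamb}. Hence, for $i \neq j$ in $\{0,\ldots,d\}$, we have $\th'_i = \th'_j$ if and only if $q^{2(i+j-d)} = -1$.

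It then remains to match this with condition (ii), which is a short index argument in both directions. For the direction that failure of (ii) implies failure of (i): if $q^{2k} = -1$ for some $1 \le k \le d-1$, take $i = k$ and $j = d$, so that $0 \le i < j \le d$ and $i+j-d = k$, whence $\th'_k = \th'_d$. For the direction that failure of (i) implies failure of (ii): if $\th'_i = \th'_j$ with $i \neq j$ in $\{0,\ldots,d\}$, then $q^{2(i+j-d)} = -1$; since $i+j$ ranges over $\{1,\ldots,2d-1\}$ we get $|i+j-d| \le d-1$, while $|i+j-d| \neq 0$ because $\mathrm{Char}(\F) \neq 2$, and $q^{2|i+j-d|} = -1$ again since $-1$ is self-inverse, so $k := |i+j-d|$ witnesses the failure of (ii). The only step requiring a little care is this bookkeeping of the admissible range of $i+j-d$ together with the choice of the witness pair $(k,d)$; I do not anticipate a genuine obstacle, since once the invertibility facts $\mu' \neq 0$ and $q^{2i} \neq q^{2j}$ (from Lemma \ref{lem:qKrawtparamb}) are in hand, everything is elementary arithmetic with powers of $q$.
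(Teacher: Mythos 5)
Your proof is correct and follows essentially the same route as the paper: factor the difference $\th'_i-\th'_j$ as $\mu'$ times $(q^{2i}-q^{2j})$ times $(1+q^{2(d-i-j)})$ (the paper writes this as $\mu'(q^{i-j}-q^{j-i})(q^{d-i-j}+q^{i+j-d})$), discard the first factor via $q^{2i}\neq 1$ for $1\le i\le d$, and match the remaining condition with (ii). You merely make explicit a few steps the paper leaves implicit, namely $\mu'\neq 0$, the use of $\mathrm{Char}(\F)\neq 2$ when $i+j=d$, and the index bookkeeping in the final equivalence.
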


\begin{proof}
For $0 \leq i, j \leq d$,
\[
 \th'_i - \th'_j  = \mu' (q^{i-j} - q^{j-i})(q^{d-i-j} + q^{i+j-d}).
\]
Thus for $0 \leq i < j \leq d$, $\th'_i \neq \th'_j$ if and only if
$q^{2(j-i)} \neq 1$ and $q^{2(d-i-j)} \neq -1$.
By \eqref{eq:type1paramcond1}, $q^{2(i-j)} \neq 1$ for $0 \leq i < j \leq d$.
So $\th'_i \neq \th'_j$ for $0 \leq i < j \leq d$  
if and only if $q^{2(d-i-j)} \neq -1$ for $0 \leq i < j \leq d$
if and only if $q^{2i} \neq -1$ for $1 \leq i \leq d-1$.
\end{proof}

\begin{lemma}   \label{lem:dualqKrawtaux0x2}    \samepage
\ifDRAFT {\rm lem:dualqKrawtaux0x2}. \fi
With reference to Lemma \ref{lem:dualqKrawtaux0x}, assume that the equivalent
conditions {\rm (i)} and {\rm (ii)} hold.
Then the following hold:
\begin{itemize}
\item[\rm (i)]
the sequence
\begin{equation}
 (\{\th'_i\}_{i=0}^d; \{\th^*_i\}_{i=0}^d; \{\vphi'_i\}_{i=1}^d; \{\phi'_i\}_{i=1}^d)   \label{eq:parrayaux2x}
\end{equation}
is a parameter array over $\F$;
\item[\rm (ii)]
the sequence
\begin{equation}
(\delta', \mu', h', \delta^*, \mu^*, h^*, \tau')                   \label{eq:basicseqaux2x}
\end{equation}
is the primary $q$-data corresponding to the parameter array \eqref{eq:parrayaux2x};
\item[\rm (iii)]
the parameter array \eqref{eq:parrayaux2x} is bipartite and has dual $q$-Krawtchouk type.
\end{itemize}
\end{lemma}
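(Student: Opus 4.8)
The plan is to recognize that Definition \ref{def:thdvphidphidx} is nothing but the type~I parametrization of Lemma \ref{lem:type1param} applied to the sequence $(\delta',\mu',h',\delta^*,\mu^*,h^*,\tau')=(0,\mu',h',\delta^*,\mu^*,h^*,0)$ of Definition \ref{def:pseq2x}. Indeed, substituting this sequence into \eqref{eq:type1th}--\eqref{eq:type1phi} and using $h'=-\mu'$ together with $\mu^*h^*=0$ and $\tau'=0$ reproduces exactly the scalars $\{\th'_i\}_{i=0}^d$, $\{\vphi'_i\}_{i=1}^d$, $\{\phi'_i\}_{i=1}^d$ of Definition \ref{def:thdvphidphidx} in each of the two cases $\mu^*=0$ and $h^*=0$. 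This is a direct substitution; the only point is that the sign identities $-h'h^*=\mu'h^*$, $-\mu'h^*=h'h^*$, $-\mu'\mu^*=h'\mu^*$, $-h'\mu^*=\mu'\mu^*$ all follow from $h'=-\mu'$.

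Granting this, part~(i) follows from Lemma \ref{lem:type1param} once we check that $(0,\mu',h',\delta^*,\mu^*,h^*,0)$ satisfies the inequalities of Lemma \ref{lem:classify}{\rm (i),(ii)}, equivalently, by Lemma \ref{lem:type1cond}, the conditions \eqref{eq:type1paramcond1}--\eqref{eq:type1paramcond5}. Condition \eqref{eq:type1paramcond1} involves neither the primed parameters nor the dual parameters and holds since $A,A^*$ is a parameter array. For \eqref{eq:type1paramcond2}: since $\mu h\neq 0$ (Lemma \ref{lem:qKrawtparamb}) we get $\mu'h'=\mu h\neq 0$, hence $\mu'\neq 0$ and $h'\neq 0$; then $\mu'\neq h'q^{2i}$ for $1-d\leq i\leq d-1$ reduces, via $h'=-\mu'$, to $q^{2i}\neq -1$ for $1-d\leq i\leq d-1$, and this is precisely condition (ii) of Lemma \ref{lem:dualqKrawtaux0x} (for $i=0$ we use $\text{\rm Char}(\F)\neq 2$). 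Condition \eqref{eq:type1paramcond3} involves only $\mu^*,h^*$, which are unchanged, so it holds. Finally \eqref{eq:type1paramcond4} and \eqref{eq:type1paramcond5}, with $\tau'=0$ and $\mu^*h^*=0$, reduce in each of the cases $\mu^*=0$, $h^*=0$ to the nonvanishing of the one surviving monomial, which follows from $\mu'\neq 0$, $h'\neq 0$, $q\neq 0$, and the fact that $\mu^*,h^*$ are not both zero (Lemma \ref{lem:qKrawtparamb}). Thus \eqref{eq:parrayaux2x} is a parameter array over $\F$, of type~I with fundamental constant $q^2+q^{-2}$.

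Part~(ii) is then immediate: by the converse direction of Lemma \ref{lem:type1param} there is a unique sequence of scalars satisfying \eqref{eq:type1th}--\eqref{eq:type1phi} for the parameter array \eqref{eq:parrayaux2x}, and the first paragraph exhibits $(0,\mu',h',\delta^*,\mu^*,h^*,0)$ as such a sequence; since it satisfies \eqref{eq:type1paramcond1}--\eqref{eq:type1paramcond5}, it is by Definition \ref{def:type1basic} the primary $q$-data corresponding to \eqref{eq:parrayaux2x}, i.e., it equals \eqref{eq:basicseqaux2x}. For part~(iii) we invoke Lemma \ref{lem:qKrawtparambip0}: the primary $q$-data \eqref{eq:basicseqaux2x} satisfies $\delta'=0$, $\mu'+h'=0$ (Definition \ref{def:pseq2x}), $\mu^*h^*=0$ (since $A,A^*$ has dual $q$-Krawtchouk type, by Lemma \ref{lem:defLPdualqKrawt}), and $\tau'=0$, so the Leonard pair with parameter array \eqref{eq:parrayaux2x} is bipartite and of dual $q$-Krawtchouk type; hence \eqref{eq:parrayaux2x} is a bipartite parameter array of dual $q$-Krawtchouk type. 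The bulk of the work is the routine substitution identifying Definition \ref{def:thdvphidphidx} with the type~I formulas for the primed primary $q$-data; the only step that is not purely formal is the verification of \eqref{eq:type1paramcond2}, which is exactly where the hypothesis $q^{2i}\neq -1$ for $1\leq i\leq d-1$ (equivalently, the mutual distinctness of $\{\th'_i\}_{i=0}^d$) enters.
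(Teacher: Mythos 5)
Your proposal is correct and follows essentially the same route as the paper's (very terse) proof: parts (i) and (ii) by recognizing Definition \ref{def:thdvphidphidx} as the type~I formulas of Lemma \ref{lem:type1param} for the primed data and then verifying the inequalities of Lemma \ref{lem:type1cond} (where \eqref{eq:type1paramcond2} is exactly where the hypothesis $q^{2i}\neq -1$ enters), and part (iii) by the bipartite dual $q$-Krawtchouk characterization. The only cosmetic difference is that for (iii) you cite the primary-$q$-data criterion (Lemma \ref{lem:qKrawtparambip0}) while the paper cites the equivalent parameter-array version (Lemma \ref{lem:qKrawtparambip}); both are valid.
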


\begin{proof}
(i), (ii)
Use Lemmas \ref{lem:type1param} and \ref{lem:type1cond}.

(iii)
By Lemma \ref{lem:qKrawtparambip}.
\end{proof}

\begin{lemma}    \label{lem:xixdix}    \samepage
\ifDRAFT {\rm lem:xixdix}. \fi
We have $\vphi_i \phi_i = \vphi'_i \phi'_i$ for $1 \leq i \leq d$.
\end{lemma}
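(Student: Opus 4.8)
The plan is a direct computation, organized by the two cases $\mu^* = 0$ and $h^* = 0$; by Lemma \ref{lem:defLPdualqKrawt} one of these holds, so they are exhaustive. In each case I would write down $\vphi_i$ and $\phi_i$ using the explicit dual $q$-Krawtchouk formulas of Lemma \ref{lem:qKrawtparam}, and write down $\vphi'_i$ and $\phi'_i$ using Definition \ref{def:thdvphidphidx}, then multiply and compare.

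Concretely, first assume $\mu^* = 0$. By Lemma \ref{lem:qKrawtparam}(i),
\[
\vphi_i \phi_i = \mu h\, (h^*)^2 q^{2(d-2i+1)} (q^i - q^{-i})^2 (q^{d-i+1} - q^{i-d-1})^2
\qquad (1 \leq i \leq d),
\]
while by Definition \ref{def:thdvphidphidx}(i),
\[
\vphi'_i \phi'_i = \mu' h'\, (h^*)^2 q^{2(d-2i+1)} (q^i - q^{-i})^2 (q^{d-i+1} - q^{i-d-1})^2
\qquad (1 \leq i \leq d).
\]
Since $\mu' h' = \mu h$ by \eqref{eq:mu'h'x}, these agree. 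Next assume $h^* = 0$. By Lemma \ref{lem:qKrawtparam}(ii),
\[
\vphi_i \phi_i = \mu h\, (\mu^*)^2 q^{2(2i-d-1)} (q^i - q^{-i})^2 (q^{d-i+1} - q^{i-d-1})^2
\qquad (1 \leq i \leq d),
\]
and by Definition \ref{def:thdvphidphidx}(ii),
\[
\vphi'_i \phi'_i = \mu' h'\, (\mu^*)^2 q^{2(2i-d-1)} (q^i - q^{-i})^2 (q^{d-i+1} - q^{i-d-1})^2
\qquad (1 \leq i \leq d),
\]
and again $\mu' h' = \mu h$ by \eqref{eq:mu'h'x}, so these agree.

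There is essentially no obstacle here: once the case $\mu^* = 0$ versus $h^* = 0$ is fixed, the products $\vphi_i \phi_i$ and $\vphi'_i \phi'_i$ are manifestly equal up to the scalar factor $\mu h$ resp.\ $\mu' h'$, and these two scalars coincide by the defining relation $\mu' h' = \mu h$ in \eqref{eq:mu'h'x}. The only thing to be mildly careful about is pairing the correct branch of Definition \ref{def:thdvphidphidx} with the correct part of Lemma \ref{lem:qKrawtparam} in each case, and noting that the bookkeeping of the powers of $q$ matches verbatim. (One could alternatively deduce the result from Lemma \ref{lem:comptype1} applied to the primary $q$-data \eqref{eq:basicseqcx} and \eqref{eq:basicseqaux2x}, using $\tau = \tau' = 0$, $\mu^* h^* = 0$, $\mu'+h'=0$, and $\mu' h' = \mu h$; but the direct computation above avoids invoking that \eqref{eq:parrayaux2x} is a bona fide parameter array, which is not assumed in this lemma.)
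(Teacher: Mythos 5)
Your proposal is correct. The computations check out: with $\tau=0$ and $\mu^*h^*=0$ (Lemma \ref{lem:defLPdualqKrawt}), the formulas of Lemma \ref{lem:qKrawtparam} and Definition \ref{def:thdvphidphidx} give, in the case $\mu^*=0$, $\vphi_i\phi_i=\mu h\,(h^*)^2q^{2(d-2i+1)}(q^i-q^{-i})^2(q^{d-i+1}-q^{i-d-1})^2$ and $\vphi'_i\phi'_i=\mu'h'\,(h^*)^2q^{2(d-2i+1)}(q^i-q^{-i})^2(q^{d-i+1}-q^{i-d-1})^2$, and similarly with $(\mu^*)^2q^{2(2i-d-1)}$ in the case $h^*=0$; the identity then follows from $\mu'h'=\mu h$ in \eqref{eq:mu'h'x}.

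Your route differs from the paper's, whose entire proof is to invoke Lemma \ref{lem:comptype1}: there one checks the three conditions \eqref{eq:type1cond1}--\eqref{eq:type1cond3}, which hold trivially since $\mu h=\mu'h'$, $\tau=\tau'=0$, $\mu^*h^*=0$, and $\mu'+h'=0$. The paper's argument is shorter and reuses existing machinery, but, as you note in your parenthetical, Lemma \ref{lem:comptype1} is stated for a pair of Leonard pairs, whereas in Lemma \ref{lem:xixdix} the primed data of Definition \ref{def:thdvphidphidx} is only known to be a parameter array under the reinforced hypothesis (Lemmas \ref{lem:dualqKrawtaux0x}, \ref{lem:dualqKrawtaux0x2}), which is not assumed here; strictly speaking the paper is leaning on the fact that the equivalence underlying Lemma \ref{lem:comptype1} is an identity at the level of the type I formulas. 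Your direct computation sidesteps this issue entirely and is self-contained, at the cost of a case analysis and some bookkeeping of powers of $q$, which you have carried out correctly.
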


\begin{proof}
Use Lemma \ref{lem:comptype1}.
\end{proof}

\begin{prop}     \label{prop:dualqKrawtx}    \samepage
\ifDRAFT {\rm prop:dualqKrawtx}. \fi
Let $F$ denote the flat part of $A$.
Then the following are equivalent:
\begin{itemize}
\item[\rm (i)]
the Leonard pair $A,A^*$ is near-bipartite;
\item[\rm (ii)]
$A-F$ is diagonalizable;
\item[\rm (iii)]
$A-F$ is multiplicity-free;
\item[\rm (iv)]
$q^{2i} \neq -1$ for $1 \leq i \leq d-1$.
\end{itemize}
Suppose {\rm (i)--(iv)} hold.
Then the bipartite contraction of $A,A^*$ has parameter array \eqref{eq:parrayaux2x}.
\end{prop}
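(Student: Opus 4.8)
The plan is to establish the chain of equivalences (i)$\Leftrightarrow$(ii)$\Leftrightarrow$(iii)$\Leftrightarrow$(iv) and then identify the bipartite contraction. I would first reduce to normalized TD/D form using Lemma \ref{lem:TDD3}, so that $A,A^*$ are as in \eqref{eq:AAsTDD3}; then by Lemma \ref{lem:Fai2} the matrix $A-F$ is the pure tridiagonal matrix in \eqref{eq:FA-F} with subdiagonal entries $1$ and superdiagonal entries $\{x_i\}_{i=1}^d$, where all $x_i\neq 0$. Because $A-F$ is irreducible tridiagonal, Lemma \ref{lem:tridpre} gives (ii)$\Leftrightarrow$(iii) immediately.

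Next, for the equivalence (iii)$\Leftrightarrow$(iv): by Lemma \ref{lem:A-Feigen} the eigenvalues of $A-F$ are exactly $\{\th'_i\}_{i=0}^d$ from Definition \ref{def:thdvphidphidx}. The matrix $A-F$ is irreducible tridiagonal, hence diagonalizable iff its eigenvalues are mutually distinct (again Lemma \ref{lem:tridpre}, combined with the observation in the proof of Lemma \ref{lem:tridpre} that $I,A-F,\dots,(A-F)^d$ are linearly independent). So $A-F$ is multiplicity-free iff $\{\th'_i\}_{i=0}^d$ are mutually distinct, which by Lemma \ref{lem:dualqKrawtaux0x} is equivalent to $q^{2i}\neq -1$ for $1\leq i\leq d-1$. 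This gives (iii)$\Leftrightarrow$(iv).

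For (i)$\Leftrightarrow$(ii): by Definition \ref{def:nearbip}, $A,A^*$ is near-bipartite iff $A-F,A^*$ is a Leonard pair on $V$. One direction is trivial: if $A-F,A^*$ is a Leonard pair then $A-F$ is multiplicity-free by Lemma \ref{lem:LPmultfree}, in particular diagonalizable. Conversely, suppose $A-F$ is diagonalizable, hence multiplicity-free by the previous paragraph; I want to conclude that $A-F,A^*$ is a Leonard pair. Here I would invoke Lemma \ref{lem:dualqKrawtaux0x2}: under condition (iv), the sequence \eqref{eq:parrayaux2x} is a parameter array over $\F$, so by Lemma \ref{lem:classify}/\ref{lem:LPunique} there is a Leonard pair with that parameter array; its corresponding TD/D sequence has the form $(\mathbb{O};\{x'_i\}_{i=1}^d;\{\th^*_i\}_{i=0}^d)$ since the parameter array is bipartite. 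By Lemma \ref{lem:xixdix} we have $\vphi_i\phi_i=\vphi'_i\phi'_i$ for $1\leq i\leq d$, and then by Lemma \ref{lem:nbipTDD3b} (applied with the bipartite parameter array \eqref{eq:parrayaux2x}) the Leonard pair $A,A^*$ is near-bipartite with bipartite contraction having parameter array \eqref{eq:parrayaux2x}. This simultaneously closes the equivalence and delivers the final assertion about the bipartite contraction.

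The step I expect to be the main obstacle is the conceptual heart of (ii)$\Rightarrow$(i): asserting that diagonalizability of $A-F$ alone is enough to make $A-F,A^*$ a Leonard pair. The point is that the tridiagonal structure of $A-F$ relative to the eigenbasis of $A^*$ is automatic from the construction, so condition (i) of Definition \ref{def:LP} holds as soon as $A-F$ is diagonalizable; the subtlety is verifying condition (ii)—that there is a basis diagonalizing $A-F$ with $A^*$ irreducible tridiagonal. The cleanest route, rather than checking irreducibility of off-diagonal entries directly, is the one above: use Lemma \ref{lem:dualqKrawtaux0x2} to exhibit an honest parameter array and appeal to the classification, so that the existence of the Leonard pair is handed to us. Thus the work is really in assembling the already-proved lemmas in the right order; the only genuinely new content is the equivalence of the Leonard-pair condition with the purely linear-algebraic condition of diagonalizability, which is forced by the special dual $q$-Krawtchouk structure.
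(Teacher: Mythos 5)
Your proposal is correct and follows essentially the same route as the paper: reduce to normalized TD/D form, get (ii)$\Leftrightarrow$(iii) from Lemma \ref{lem:tridpre}, link (iii) to (iv) via Lemma \ref{lem:A-Feigen} and Lemma \ref{lem:dualqKrawtaux0x}, and close the loop (iv)$\Rightarrow$(i) using Lemmas \ref{lem:dualqKrawtaux0x2}, \ref{lem:xixdix}, and \ref{lem:nbipTDD3b}, which also yields the statement about the bipartite contraction's parameter array. The paper organizes this as the cycle (i)$\Rightarrow$(ii)$\Rightarrow$(iii)$\Rightarrow$(iv)$\Rightarrow$(i), but the content and the key lemmas are the same as yours.
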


\begin{proof}
(i) $\Rightarrow$ (ii)
By Definition \ref{def:nearbip}, $A-F, A^*$ is a Leonard pair.
By this and Definition \ref{def:LP},  $A-F$ is diagonalizable.

(ii) $\Rightarrow$ (iii)
We may assume that $A-F$ is an irreducible tridiagonal matrix.
Now use Lemma \ref{lem:tridpre}.

(iii) $\Rightarrow$ (iv)
By Lemma \ref{lem:A-Feigen}, $\{\th'_i\}_{i=0}^d$ are the eigenvalues of $A-F$.
These eigenvalues are mutually distinct since $A-F$ is multiplicity-free.
By this and Lemma \ref{lem:dualqKrawtaux0x} we get (iv)

(iv) $\Rightarrow$ (i)
Note by Lemma \ref{lem:dualqKrawtaux0x2}(i)  that \eqref{eq:parrayaux2x} is a
parameter array over $\F$.
Now use Lemmas \ref{lem:nbipTDD3b} and \ref{lem:xixdix}.
\end{proof}

\begin{defi}     \label{def:reinforced}     \samepage
\ifDRAFT {\rm def:reinforced}. \fi
The Leonard pair $A,A^*$ is said to be {\em reinforced} whenever
$q^{2i} \neq -1$ for $1 \leq i \leq d-1$.
\end{defi}

\begin{corollary}    \label{cor:qKrawt1}    \samepage
\ifDRAFT {\rm cor:qKrawt1}. \fi
The Leonard pair $A,A^*$ is near-bipartite if and only if
$A,A^*$ is reinforced.
In this case, the bipartite contraction of $A,A^*$ has
reinforced dual $q$-Krawtchouk type with parameter array
\eqref{eq:parrayaux2x}.
\end{corollary}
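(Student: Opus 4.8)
The plan is to obtain Corollary \ref{cor:qKrawt1} directly from Proposition \ref{prop:dualqKrawtx}, which has already carried out all the substantive work. First I would invoke the equivalence of conditions (i) and (iv) in Proposition \ref{prop:dualqKrawtx}: it asserts that the Leonard pair $A,A^*$ is near-bipartite if and only if $q^{2i}\neq -1$ for $1\le i\le d-1$. By Definition \ref{def:reinforced} this last inequality is precisely the statement that $A,A^*$ is reinforced, so the first sentence of the corollary follows immediately.

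For the second sentence, suppose $A,A^*$ is near-bipartite, equivalently reinforced. Proposition \ref{prop:dualqKrawtx} states that the bipartite contraction of $A,A^*$ has parameter array \eqref{eq:parrayaux2x}; so it remains only to check that \eqref{eq:parrayaux2x} has reinforced dual $q$-Krawtchouk type. That \eqref{eq:parrayaux2x} is bipartite and has dual $q$-Krawtchouk type is exactly Lemma \ref{lem:dualqKrawtaux0x2}(iii). That it is reinforced I would argue as follows: by Lemma \ref{lem:dualqKrawtaux0x2}(ii) the primary $q$-data corresponding to \eqref{eq:parrayaux2x} is $(\delta',\mu',h',\delta^*,\mu^*,h^*,\tau')$, which is built from the \emph{same} scalar $q$ as the primary $q$-data of $A,A^*$. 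This is consistent with Note \ref{note:type}: the dual eigenvalue sequence $\{\th^*_i\}_{i=0}^d$ is unchanged in passing to the bipartite contraction, and the fundamental constant $\beta=q^2+q^{-2}$ is determined by $A^*$, so the contraction again has type I with the same $q$. Consequently the reinforced condition for the contraction is once more the inequality $q^{2i}\neq -1$ for $1\le i\le d-1$, which holds by the standing hypothesis.

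I do not expect a genuine obstacle here; the argument is a straightforward assembly of Proposition \ref{prop:dualqKrawtx}, Lemma \ref{lem:dualqKrawtaux0x2}, and Definition \ref{def:reinforced}. The one point that deserves explicit mention is the preservation of the parameter $q$ under contraction, which guarantees that the adjective ``reinforced'' denotes the same set of inequalities for $A,A^*$ and for its bipartite contraction.
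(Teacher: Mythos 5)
Your proposal is correct and follows essentially the same route as the paper, which also deduces the corollary directly from Proposition \ref{prop:dualqKrawtx} together with Lemma \ref{lem:dualqKrawtaux0x2}(iii) and Definition \ref{def:reinforced}. Your extra remark that the contraction inherits the same scalar $q$ (so ``reinforced'' means the same inequalities for it) is a detail the paper leaves implicit, and it is correctly justified.
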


\begin{proof}
By Lemma \ref{lem:dualqKrawtaux0x2}(iii) and Proposition \ref{prop:dualqKrawtx}.
\end{proof}

We have a comment.

\begin{lemma}    \label{lem:qKrawt}    \samepage
\ifDRAFT {\rm lem:qKrawt}. \fi
Assume that $q$ is not a root of unity.
Then $A,A^*$ is reinforced.
\end{lemma}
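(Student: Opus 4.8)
The claim to prove is Lemma~\ref{lem:qKrawt}: if $q$ is not a root of unity, then $A,A^*$ is reinforced, i.e. $q^{2i}\neq-1$ for $1\le i\le d-1$. This is essentially immediate from the definition of "root of unity" once we unwind the statement.

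\medskip\noindent\textbf{Plan.} The proof is a one-line argument by contradiction. Suppose $A,A^*$ is not reinforced. By Definition~\ref{def:reinforced}, there exists an integer $i$ with $1\le i\le d-1$ such that $q^{2i}=-1$. Then $q^{4i}=1$, so $q$ is a root of unity (indeed $q$ is a root of the polynomial $\lambda^{4i}-1$, with $4i\ge 4>0$), contradicting the hypothesis that $q$ is not a root of unity. Hence $A,A^*$ is reinforced. Then invoke Definition~\ref{def:reinforced} to conclude.

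\medskip\noindent\textbf{Remarks on structure and obstacles.} There is no real obstacle here; the only thing to be careful about is the bookkeeping of exactly what "$q$ is a root of unity" means — namely that $q^n=1$ for some positive integer $n$ — and to confirm that $q^{2i}=-1$ indeed forces such an $n$ (take $n=4i$). One should also note in passing that this lemma lives in the context of Section~\ref{sec:dualqKrawt2}, where $A,A^*$ has dual $q$-Krawtchouk type and hence type~I, so by the standing assumption at the start of Section~\ref{sec:type1} we have $q\neq 0$ and $q^4\neq 1$; none of this is needed for the proof, but it confirms the hypothesis "$q$ not a root of unity" is consistent with the setup and is in fact a strengthening of $q^4\neq1$. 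If one wanted a direct (rather than contrapositive) phrasing: for each $i$ with $1\le i\le d-1$, if $q^{2i}=-1$ then $q^{4i}=1$; since $q$ is not a root of unity, $q^{4i}\neq1$, so $q^{2i}\neq-1$; as $i$ was arbitrary in the range, $A,A^*$ is reinforced. Either phrasing is a two-sentence proof.

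\begin{proof}
Suppose toward a contradiction that $A,A^*$ is not reinforced. By Definition \ref{def:reinforced}, there exists an integer $i$ with $1 \leq i \leq d-1$ such that $q^{2i} = -1$. Then $q^{4i} = 1$ with $4i$ a positive integer, so $q$ is a root of unity, contradicting our assumption. Hence $A,A^*$ is reinforced.
\end{proof}
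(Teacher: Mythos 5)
Your proof is correct and is exactly the immediate argument intended: if $q^{2i}=-1$ for some $1\le i\le d-1$ then $q^{4i}=1$, making $q$ a root of unity. The paper states Lemma \ref{lem:qKrawt} without proof, evidently regarding this observation as immediate, so your write-up simply makes the omitted one-line argument explicit.
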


\section{Leonard pairs of Krawtchouk type are near-bipartite}
\label{sec:Krawt2}   
\ifDRAFT {\rm sec:Krawt2}. \fi

In this section,
we show that a Leonard pair of Krawtchouk type is near-bipartite,
and we describe its bipartite contraction.

Throughout this section, let $A,A^*$ denote a Leonard pair over $\F$
that has Krawtchouk type, with parameter array
\begin{equation}
  (\{\th_i\}_{i=0}^d; \{\th^*_i\}_{i=0}^d; \{\vphi_i\}_{i=1}^d; \{\phi_i\}_{i=1}^d)   \label{eq:parray11d}
\end{equation}
and corresponding TD/D sequence 
\begin{equation}
(\{a_i\}_{i=0}^d; \{x_i\}_{i=1}^d; \{\th^*_i\}_{i=0}^d).          \label{eq:tdseqauxd}
\end{equation}
Let
\begin{equation}
 (\delta, \mu, h, \delta^*, \mu^*, h^*, \tau)                 \label{eq:basicseqd}
\end{equation}
denote the primary data corresponding to the parameter array
\eqref{eq:parray11d}.
In view of Lemma \ref{lem:biptype2} and \eqref{eq:type2cond3}, we make a definition.

\begin{defi}     \label{def:pseq2d}    \samepage
\ifDRAFT {\rm def:pseq2d}. \fi
Define scalars $\delta'$, $\mu'$, $h'$, $\tau'$ in $\F$ such that
\begin{align}
 \delta' &=0,  &
  h' &= 0,  &
 \tau' &= 0,  &
 \mu^{\prime 2} &= \mu^2 - \frac{4 \tau^2}{\mu^{* 2} }.                 \label{eq:mu'd}
\end{align}
\end{defi}

\begin{defi}     \label{def:thdvphidphidd}     \samepage
\ifDRAFT {\rm def:thdvphidphidd}. \fi
Define  scalars
\begin{align*}
\th'_i &= \mu' (i - d/2)  &&  (0 \leq i \leq d),
\\
\vphi'_i &=   - \mu' \mu^*  i (d-i+1)/2      &&  (1 \leq i \leq d),
\\
\phi'_i &=   \mu' \mu^* i (d-i+1) /2  &&  (1 \leq i \leq d).
\end{align*}
\end{defi}

\begin{lemma}    \label{lem:Krawtaux1d}     \samepage
\ifDRAFT {\rm lem:Krawtaux1d}. \fi
The following hold:
\begin{itemize}
\item[\rm (i)]
the sequence
\begin{equation}
 (\{\th'_i\}_{i=0}^d; \{\th^*_i\}_{i=0}^d; \{\vphi'_i\}_{i=1}^d; \{\phi'_i\}_{i=1}^d)   \label{eq:parrayaux2d}
\end{equation}
is a parameter array over $\F$;
\item[\rm (ii)]
the sequence
\begin{equation}
(\delta', \mu', h', \delta^*, \mu^*, h^*, \tau')                   \label{eq:basicseqaux2d}
\end{equation}
is the primary data corresponding to the parameter array \eqref{eq:parrayaux2d};
\item[\rm (iii)]
the parameter array \eqref{eq:parrayaux2d} is bipartite and has Krawtchouk type.
\end{itemize}
\end{lemma}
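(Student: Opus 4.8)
The strategy is to recognize the sequence \eqref{eq:parrayaux2d} as an instance of the explicit Krawtchouk‑type parameter array from Section \ref{sec:bipKrawt}, and then to invoke the already‑established results of that section. First I would observe, using \eqref{eq:mu'd}, that the seven scalars in \eqref{eq:basicseqaux2d} have the form $\delta'=0$, $h'=0$, $\tau'=0$ with $\mu'$ a fixed square root of $\mu^2 - 4\tau^2/\mu^{*2}$, and $\delta^*$, $\mu^*$, $h^*$ inherited unchanged from \eqref{eq:basicseqd}. By Lemma \ref{lem:defKrawt} (specifically the ``$h=0$ and $h^*=0$'' characterization combined with the type II setup), a candidate primary data of type II with $h=h^*=0$ automatically satisfies the Krawtchouk‑type conditions, once we know it is a legitimate primary data of type II. So the bulk of the work is to verify that \eqref{eq:basicseqaux2d} satisfies the conditions \eqref{eq:type2paramcond1}--\eqref{eq:type2paramcond5} of Lemma \ref{lem:type2cond}, which (by Lemma \ref{lem:type2param}) is exactly what makes \eqref{eq:parrayaux2d} a parameter array over $\F$.

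For that verification I would use Lemma \ref{lem:Krawtbipparamb}, applied to the original Leonard pair $A,A^*$: since $A,A^*$ has Krawtchouk type, we already know $\mathrm{Char}(\F)$ is $0$ or greater than $d$, and $\mu\neq 0$, $\mu^*\neq 0$. Also Lemma \ref{lem:Krawtparamb} gives $2\tau\neq\pm\mu\mu^*$. The key numerical point is that these inequalities force $\mu'\neq 0$: indeed $\mu^{\prime 2}\mu^{*2} = \mu^2\mu^{*2}-4\tau^2 = (\mu\mu^*-2\tau)(\mu\mu^*+2\tau)$, and both factors are nonzero by Lemma \ref{lem:Krawtparamb}, while $\mu^*\neq 0$; since $\F$ is algebraically closed we may choose $\mu'$ with $\mu'\neq 0$. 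With $h'=h^*=0$, conditions \eqref{eq:type2paramcond2}, \eqref{eq:type2paramcond3} reduce to $\mu'\neq 0$, $\mu^*\neq 0$ respectively; condition \eqref{eq:type2paramcond1} is inherited; and conditions \eqref{eq:type2paramcond4}, \eqref{eq:type2paramcond5} collapse (using $h'=h^*=\tau'=0$) to $0\neq\mp\mu'\mu^*/2$, i.e. $\mu'\mu^*\neq 0$, which holds. This proves (i), and then (ii) follows from the uniqueness clause of Lemma \ref{lem:type2param} once one checks that \eqref{eq:type2th}--\eqref{eq:type2phi} applied to \eqref{eq:basicseqaux2d} reproduce \eqref{eq:parrayaux2d} — a direct substitution of $\delta'=h'=\tau'=0$ into those formulas, matching Definition \ref{def:thdvphidphidd} and Lemma \ref{lem:Krawtparambip}. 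Finally (iii) follows from Lemma \ref{lem:Krawtparambip0} (or directly from Lemma \ref{lem:Krawtparambip}): the array \eqref{eq:parrayaux2d} has exactly the shape listed there with primary data satisfying $\delta'=h'=h^*=\tau'=0$, so it is bipartite and of Krawtchouk type.

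The only mild obstacle is the nonvanishing of $\mu'$: one must be careful to extract it from the combination of Lemmas \ref{lem:Krawtparamb} and \ref{lem:Krawtbipparamb} rather than from $A,A^*$ being bipartite (it need not be), and to use that $\F$ is algebraically closed so that a square root $\mu'$ of $\mu^2-4\tau^2/\mu^{*2}$ exists in $\F$ at all. Everything else is routine substitution into the type II templates of Section \ref{sec:type2} and bookkeeping against the Krawtchouk formulas of Section \ref{sec:bipKrawt}.
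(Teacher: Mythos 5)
Your proposal is correct and takes essentially the same route as the paper: establish $\mu' \neq 0$ from Lemma \ref{lem:Krawtparamb} (via $\mu'^2 \mu^{*2} = (\mu\mu^* - 2\tau)(\mu\mu^* + 2\tau)$), deduce (i), (ii) from Lemmas \ref{lem:type2param} and \ref{lem:type2cond}, and get (iii) from Lemma \ref{lem:Krawtparambip}. The only slip is citing Lemma \ref{lem:Krawtbipparamb} for $\text{\rm Char}(\F)$, $\mu \neq 0$, $\mu^* \neq 0$ — that lemma presupposes that $A,A^*$ is bipartite, which it need not be — but Lemma \ref{lem:Krawtparamb} (Krawtchouk type alone) already supplies those facts, so the argument stands.
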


\begin{proof}
(i), (ii)
Note by Lemma \ref{lem:Krawtparamb} that $\mu' \neq 0$.
Now use Lemmas \ref{lem:type2param} and \ref{lem:type2cond}.

(iii)
By Lemma \ref{lem:Krawtparambip}.
\end{proof}

\begin{lemma}    \label{lem:xixdid}    \samepage
\ifDRAFT {\rm lem:xixdid}. \fi
We have $\vphi_i \phi_i = \vphi'_i \phi'_i$ for $1 \leq i \leq d$.
\end{lemma}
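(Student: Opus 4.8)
The plan is to verify the identity $\vphi_i \phi_i = \vphi'_i \phi'_i$ for $1 \leq i \leq d$ by direct computation, comparing the formulas for the original (Krawtchouk type) parameter array given in Lemma \ref{lem:Krawtparam} with the formulas for the bipartite companion array introduced in Definition \ref{def:thdvphidphidd}. From Lemma \ref{lem:Krawtparam}, using that $A,A^*$ has Krawtchouk type with primary data $(\delta,\mu,h,\delta^*,\mu^*,h^*,\tau)$ (so $h=h^*=0$ by Lemma \ref{lem:defKrawt}), we have
\[
\vphi_i = i(d-i+1)(\tau - \mu\mu^*/2), \qquad \phi_i = i(d-i+1)(\tau + \mu\mu^*/2),
\]
hence
\[
\vphi_i \phi_i = \big(i(d-i+1)\big)^2 \Big( \tau^2 - \frac{\mu^2 \mu^{*2}}{4} \Big).
\]
On the other side, from Definition \ref{def:thdvphidphidd},
\[
\vphi'_i = -\mu' \mu^* i(d-i+1)/2, \qquad \phi'_i = \mu' \mu^* i(d-i+1)/2,
\]
so
\[
\vphi'_i \phi'_i = -\big(i(d-i+1)\big)^2 \frac{\mu^{\prime 2} \mu^{*2}}{4}.
\]

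The key step is then to invoke the defining relation for $\mu'$ in Definition \ref{def:pseq2d}, namely $\mu^{\prime 2} = \mu^2 - 4\tau^2/\mu^{*2}$. Substituting this into the expression for $\vphi'_i \phi'_i$ gives
\[
\vphi'_i \phi'_i = -\big(i(d-i+1)\big)^2 \frac{\mu^{*2}}{4}\Big( \mu^2 - \frac{4\tau^2}{\mu^{*2}} \Big)
 = \big(i(d-i+1)\big)^2 \Big( \tau^2 - \frac{\mu^2 \mu^{*2}}{4} \Big),
\]
which is exactly $\vphi_i \phi_i$. This completes the verification. Note that this uses $\mu^* \neq 0$ (so that $4\tau^2/\mu^{*2}$ makes sense), which is guaranteed by Lemma \ref{lem:Krawtparamb} since $A,A^*$ has Krawtchouk type; the cancellation of $\mu^{*2}$ against $1/\mu^{*2}$ is the only delicate point, and it is purely formal once that nonvanishing is recorded.

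Alternatively, and perhaps more in the spirit of the surrounding material, one could avoid any reference to the explicit formulas and instead apply Lemma \ref{lem:comptype2}: since $A,A^*$ has type II and the companion array \eqref{eq:parrayaux2d} is a type II parameter array with the same $\{\th^*_i\}_{i=0}^d$ (established in Lemma \ref{lem:Krawtaux1d}(i),(ii)), it suffices to check conditions \eqref{eq:type2cond1}--\eqref{eq:type2cond3} for the two primary data $(\delta,\mu,h,\delta^*,\mu^*,h^*,\tau)$ and $(\delta',\mu',h',\delta^*,\mu^*,h^*,\tau')$. By Definition \ref{def:pseq2d} we have $h'=0=h$, $\tau'=0$, so \eqref{eq:type2cond1} reads $0=0$, \eqref{eq:type2cond2} reads $\mu^{*}\cdot 0 = \mu^{*}\cdot 0$ after using $h=h'=0$ (both sides equal $\mu^2 h^* = 0$ since $h^*=0$), wait --- more precisely with $h=h'=0$ and $h^*=0$ both sides of \eqref{eq:type2cond2} vanish; and \eqref{eq:type2cond3} becomes $4\tau^2 - \mu^2\mu^{*2} = 4\cdot 0 - \mu^{\prime 2}\mu^{*2}$, i.e. $\mu^{\prime 2}\mu^{*2} = \mu^2\mu^{*2} - 4\tau^2$, which is precisely $\mu^{*2}$ times the defining relation $\mu^{\prime 2} = \mu^2 - 4\tau^2/\mu^{*2}$. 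I expect the main (only) obstacle to be bookkeeping: making sure the characteristic hypotheses and the nonvanishing $\mu^*\neq 0$ from Lemma \ref{lem:Krawtparamb} are in force so that the division is legitimate and Lemma \ref{lem:comptype2} applies; there is no genuine mathematical difficulty. I would present the first (direct substitution) approach as the cleanest, since it is self-contained modulo Lemma \ref{lem:Krawtparam} and Definition \ref{def:pseq2d}.
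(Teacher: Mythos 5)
Your proposal is correct, and in fact it contains the paper's proof verbatim as your ``alternative'': the paper's entire argument for this lemma is ``Use Lemma \ref{lem:comptype2},'' i.e.\ one checks \eqref{eq:type2cond1}--\eqref{eq:type2cond3} for the two primary data, which collapse (using $h=h^*=0$ from Lemma \ref{lem:defKrawt} and $h'=\tau'=0$ from Definition \ref{def:pseq2d}) to exactly the defining relation $\mu^{\prime 2}\mu^{*2}=\mu^2\mu^{*2}-4\tau^2$. Your primary route -- substituting the explicit formulas of Lemma \ref{lem:Krawtparam} and Definition \ref{def:thdvphidphidd} and cancelling -- is a genuinely more elementary verification that bypasses the machinery of \cite{NT:K}; it buys self-containedness at the cost of not paralleling the structure of the dual $q$-Krawtchouk case (Lemma \ref{lem:xixdix}, which likewise just cites Lemma \ref{lem:comptype1}). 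Both routes are legitimate, and you correctly record the needed hypotheses: $\mu^*\neq 0$ from Lemma \ref{lem:Krawtparamb} so that $\mu^{\prime 2}=\mu^2-4\tau^2/\mu^{*2}$ makes sense, $\mathrm{Char}(\F)\neq 2$ for the divisions by $2$, and Lemma \ref{lem:Krawtaux1d} to ensure \eqref{eq:parrayaux2d} is a parameter array so that Lemma \ref{lem:comptype2} applies in the alternative argument. (The only blemish is the momentary slip ``$\mu^*\cdot 0=\mu^*\cdot 0$'' when checking \eqref{eq:type2cond2}, which you immediately correct; both sides indeed vanish since $h=h'=0$ and $h^*=0$.)
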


\begin{proof}
Use Lemma \ref{lem:comptype2}.
\end{proof}

\begin{prop}     \label{prop:Krawt}    \samepage
\ifDRAFT {\rm prop:Krawt}. \fi
The Leonard pair $A,A^*$ is near-bipartite.
The bipartite contraction of $A,A^*$ has Krawtchouk type 
with parameter array \eqref{eq:parrayaux2d}.
\end{prop}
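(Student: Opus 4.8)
The plan is to obtain Proposition \ref{prop:Krawt} as a direct consequence of the general correspondence in Lemma \ref{lem:nbipTDD3b}, fed by the two preparatory results Lemma \ref{lem:Krawtaux1d} and Lemma \ref{lem:xixdid} that have already been established. First I would record that, by Lemma \ref{lem:Krawtaux1d}(i), the sequence \eqref{eq:parrayaux2d} built from Definitions \ref{def:pseq2d} and \ref{def:thdvphidphidd} is a parameter array over $\F$, and that by Lemma \ref{lem:Krawtaux1d}(iii) this parameter array is bipartite and has Krawtchouk type. Thus \eqref{eq:parrayaux2d} qualifies as the ``bipartite parameter array over $\F$'' required in the hypothesis of Lemma \ref{lem:nbipTDD3b}.

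Next, Lemma \ref{lem:xixdid} supplies the equality $\vphi_i \phi_i = \vphi'_i \phi'_i$ for $1 \leq i \leq d$, which is precisely condition (i) of Lemma \ref{lem:nbipTDD3b} for the parameter array \eqref{eq:parray11d} of $A,A^*$ and the bipartite parameter array \eqref{eq:parrayaux2d}. Invoking the equivalence (i) $\Leftrightarrow$ (ii) of that lemma then yields condition (ii): the Leonard pair $A,A^*$ is near-bipartite, and its bipartite contraction has parameter array \eqref{eq:parrayaux2d}. Combining this with the Krawtchouk-type assertion in Lemma \ref{lem:Krawtaux1d}(iii) gives the full statement.

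So the proposition itself is essentially an assembly step; the real content sits in the cited lemmas (Lemma \ref{lem:Krawtaux1d} checking the inequalities of Lemma \ref{lem:classify}(i),(ii) for \eqref{eq:parrayaux2d}, and Lemma \ref{lem:xixdid} running the type II comparison of Lemma \ref{lem:comptype2}). The only point worth flagging is \emph{why no extra hypothesis appears}, in contrast with the reinforced condition needed in Proposition \ref{prop:dualqKrawtx}: the contraction's eigenvalues $\th'_i = \mu'(i-d/2)$ are automatically mutually distinct because $\mu' \neq 0$, which holds since $\mu^{\prime 2} = \mu^2 - 4\tau^2/\mu^{*2}$ together with $2\tau \neq \mu\mu^*$ and $2\tau \neq -\mu\mu^*$ from Lemma \ref{lem:Krawtparamb}. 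This is exactly what makes Lemma \ref{lem:Krawtaux1d}(i) available unconditionally, and hence no genuine obstacle remains once Lemmas \ref{lem:Krawtaux1d} and \ref{lem:xixdid} are in hand.
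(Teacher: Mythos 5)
Your proposal is correct and follows essentially the same route as the paper: the paper's proof also cites Lemma \ref{lem:Krawtaux1d} to establish that \eqref{eq:parrayaux2d} is a bipartite parameter array of Krawtchouk type, and then concludes via Lemmas \ref{lem:nbipTDD3b} and \ref{lem:xixdid}. Your closing remark on $\mu' \neq 0$ matches the observation made inside the paper's proof of Lemma \ref{lem:Krawtaux1d}, so nothing is missing.
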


\begin{proof}
By Lemma \ref{lem:Krawtaux1d}, the sequence \eqref{eq:parrayaux2d} is a
parameter array over $\F$ that is bipartite and has Krawtchouk type.
Now use Lemmas \ref{lem:nbipTDD3b} and \ref{lem:xixdid}.
\end{proof}

\section{The classification of near-bipartite Leonard pairs}
\label{sec:classify}  
\ifDRAFT {\rm sec:classify}. \fi

In this section, we classify the near-bipartite Leonard pairs with diameter at least $3$.
In Section \ref{sec:nbip} we saw that an essentially bipartite Leonard pair is
near-bipartite.
In Section \ref{sec:dualqKrawt2} we showed that a Leonard pair $A,A^*$ of dual $q$-Krawtchouk type
is near-bipartite if and only if $A,A^*$ is reinforced,
and in that case the bipartite contraction of $A,A^*$ has reinforced dual $q$-Krawtchouk type.
In Section \ref{sec:Krawt2} we showed that a Leonard pair of Krawtchouk type
is near-bipartite, and its bipartite contraction has Krawtchouk type.
We now state our classification result.

\begin{theorem}    \label{thm:main}    \samepage
\ifDRAFT {\rm thm:main}. \fi
Let $A,A^*$ denote a Leonard pair over $\F$ with diameter $d \geq 3$.
Then $A,A^*$ is near-bipartite if and only if at least one of the following
{\rm (i)--(iii)} holds:
\begin{itemize}
\item[\rm (i)]
$A,A^*$ is essentially bipartite;
\item[\rm (ii)]
$A,A^*$ has reinforced dual $q$-Krawtchouk type;
\item[\rm (iii)]
$A,A^*$ has Krawtchouk type.
\end{itemize}
\end{theorem}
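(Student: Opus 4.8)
The plan is to prove the theorem by combining the results already assembled in Sections~\ref{sec:bipessbip}--\ref{sec:Krawt2}, reducing everything to a case analysis by type. The ``if'' direction is essentially done: if $A,A^*$ is essentially bipartite then it is near-bipartite by Note~\ref{note:nearess}; if $A,A^*$ has reinforced dual $q$-Krawtchouk type then it is near-bipartite by Corollary~\ref{cor:qKrawt1} (or Proposition~\ref{prop:dualqKrawtx}); and if $A,A^*$ has Krawtchouk type then it is near-bipartite by Proposition~\ref{prop:Krawt}. So the entire content lies in the ``only if'' direction, and the strategy there is: assume $A,A^*$ is near-bipartite and not essentially bipartite, and show it must have reinforced dual $q$-Krawtchouk type or Krawtchouk type.

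First I would invoke Lemma~\ref{lem:CharF2b} to note $\text{\rm Char}(\F)\neq 2$ (already a standing assumption), and then split into the four types of Definition~\ref{def:type}. Type~III$^-$ is immediately excluded: by Lemma~\ref{lem:biptype3-} a type~III$^-$ Leonard pair is not bipartite, hence by Note~\ref{note:type} (the bipartite contraction has the same type) there is no near-bipartite Leonard pair of type~III$^-$. For the remaining types I, II, III$^+$, the key tool is Lemma~\ref{lem:nbipTDD3} together with Lemmas~\ref{lem:nbipTDD3b}--\ref{lem:nbipTDD3c}: being near-bipartite forces the existence of a bipartite parameter array (over $\F$, with the same $\{\th^*_i\}$) whose $\vphi'_i\phi'_i$ agree with $\vphi_i\phi_i$. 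Then the necessary-and-sufficient conditions in Lemmas~\ref{lem:comptype1}, \ref{lem:comptype2}, \ref{lem:comptype3+} for $\vphi_i\phi_i=\vphi'_i\phi'_i$ convert this into explicit polynomial constraints on the primary data $(\delta,\mu,h,\dots,\tau)$ of $A,A^*$, because the bipartite data is pinned down by Lemmas~\ref{lem:biptype1}, \ref{lem:biptype2}, \ref{lem:biptype3+} to satisfy $\delta'=0$, $\tau'=0$, and (respectively) $\mu'+h'=0$, $h'=0$, $s'=0$.

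The heart of the argument is solving these constraint systems. For type~I: the comparison equations \eqref{eq:type1cond1}--\eqref{eq:type1cond3} with $\mu'+h'=0$ (so $\mu'h'=-\mu'^2$) and $\tau'=0$ give $\mu h=-\mu'^2$, $\tau(\mu+h)=0$, and $\tau^2+(\mu+h)^2\mu^*h^*=(\mu'+h')^2\mu^*h^*=0$, i.e.\ $\tau^2=-(\mu+h)^2\mu^*h^*$. If $\tau\neq 0$ then $\mu+h=0$ and then $\tau^2=0$, a contradiction; so $\tau=0$. Then $\tau^2+(\mu+h)^2\mu^*h^*=0$ forces $(\mu+h)^2\mu^*h^*=0$; if $\mu+h=0$ and $\tau=0$ then $A,A^*$ is essentially bipartite by Lemma~\ref{lem:essbiptype1}, contrary to assumption, so $\mu^*h^*=0$, which together with $\tau=0$ says $A,A^*$ has dual $q$-Krawtchouk type by Lemma~\ref{lem:defLPdualqKrawt}; reinforcement then follows from Proposition~\ref{prop:dualqKrawtx}. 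For type~II: \eqref{eq:type2cond1}--\eqref{eq:type2cond3} with $h'=0,\tau'=0$ give $h^2=0$, hence $h=0$, then $\mu^2h^*=\mu'^2h^*$ and $4\tau^2-\mu^2(\mu^{*2}+(d-1)^2h^{*2})=-\mu'^2(\mu^{*2}+(d-1)^2h^{*2})$. From $h=0$ and Lemma~\ref{lem:essbiptype2} the pair is essentially bipartite iff $\tau=0$; since we assume it is not, $\tau\neq 0$, and $h=0$ already forces Krawtchouk type via Lemma~\ref{lem:defKrawt} once we also check $h^*=0$. The remaining equations should force $h^*=0$: with $h=0$, the first gives nothing, but combining $h=0$ with the system and the parameter-array constraints (in particular comparing $\th^*_i$) yields $h^*=0$; alternatively one argues directly that a non-essentially-bipartite near-bipartite type~II pair with $h=0$ must be of Krawtchouk type. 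Type~III$^+$ is handled the same way with \eqref{eq:type3+cond1}--\eqref{eq:type3+cond3}: $h^2=0$ forces $h=0$, contradicting $h\neq 0$ from Lemma~\ref{lem:type3+cond} (equivalently \eqref{eq:type3+paramcond1b}); hence there is no non-essentially-bipartite near-bipartite pair of type~III$^+$, and in fact a type~III$^+$ near-bipartite pair is essentially bipartite.

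I expect the main obstacle to be the type~II case: one must rule out $h^*\neq 0$ cleanly, i.e.\ show that a near-bipartite type~II Leonard pair that is not essentially bipartite necessarily has $h^*=0$ (so that $h=0$ and $h^*=0$ together give Krawtchouk type by Lemma~\ref{lem:defKrawt}). The delicate point is that $h=0$ is forced immediately by $h^2=h'^2=0$, but $h^*$ sits in $\{\th^*_i\}$, which is shared between $A,A^*$ and its contraction, so it is not directly constrained by the comparison relations; one has to feed $h=0$ back into the remaining relations \eqref{eq:type2cond2}--\eqref{eq:type2cond3} and into the bipartite-data normalization, and track that the only solutions with $\tau\neq 0$ have $h^*=0$. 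A careful bookkeeping of which primary-data entries are fixed by the shared $\{\th^*_i\}$ versus free should resolve this; I would organize the type~II subcase around the dichotomy $\tau=0$ (essentially bipartite, excluded) versus $\tau\neq 0$ (must be Krawtchouk type), using Lemma~\ref{lem:essbiptype2} as the pivot. With these three subcases completed the theorem follows, since every near-bipartite pair with $d\geq 3$ falls under type I, II, or III$^+$, and in each case we land in (i), (ii), or (iii).
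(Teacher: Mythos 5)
Your overall skeleton is the paper's: reduce to the type split (type III$^-$ excluded via Note \ref{note:type} and Lemma \ref{lem:essbiptype3-}), then combine Lemma \ref{lem:nbipTDD3} (through Lemmas \ref{lem:nbipTDD3b}, \ref{lem:nbipTDD3c}) with the comparison Lemmas \ref{lem:comptype1}--\ref{lem:comptype3+} and the bipartite normalizations of Lemmas \ref{lem:biptype1}, \ref{lem:biptype2}, \ref{lem:biptype3+}, and your type I case is sound. However, your type III$^+$ case contains a step that fails as written: you claim \eqref{eq:type3+cond1} gives $h^2=0$, contradicting $h\neq 0$. For type III$^+$ the bipartite contraction is normalized by $\delta'=0$, $s'=0$, $\tau'=0$ (Lemma \ref{lem:biptype3+}); the entry $h'$ is \emph{not} zero --- indeed \eqref{eq:type3+paramcond1b}, applied to the contraction's primary data, forces $h'\neq 0$ --- so \eqref{eq:type3+cond1} only says $h^2=h^{\prime 2}$ and yields no contradiction. (You even recorded the correct normalization $s'=0$ in your setup paragraph, but then ran the type II computation.) The correct route, which is the paper's, uses \eqref{eq:type3+cond2} and \eqref{eq:type3+cond3}: with $s'=0$, $\tau'=0$ they give $(\tau+s h^*)^2=0$ and $(\tau-s h^*)^2=0$, hence $\tau+s h^*=0=\tau-s h^*$; since $\mathrm{Char}(\F)\neq 2$ and $h^*\neq 0$ by \eqref{eq:type3+paramcond1b}, this forces $\tau=0$ and $s=0$, so $A,A^*$ is essentially bipartite by Lemma \ref{lem:essbiptype3+}, which is the contradiction (or conclusion) you wanted.

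On the type II step you flagged as the main obstacle: it is immediate from the two relations you already wrote, and no appeal to ``comparing $\th^*_i$'' is needed. With $h=h'=0$ and $\tau'=0$, \eqref{eq:type2cond2} reads $(\mu^2-\mu^{\prime 2})h^*=0$ and \eqref{eq:type2cond3} reads $4\tau^2=(\mu^2-\mu^{\prime 2})\bigl(\mu^{* 2}+(d-1)^2h^{* 2}\bigr)$. If $A,A^*$ is not essentially bipartite then $\tau\neq 0$ (Lemma \ref{lem:essbiptype2}), so the second relation gives $\mu^2\neq\mu^{\prime 2}$, and then the first gives $h^*=0$; together with $h=0$ this is Krawtchouk type by Lemma \ref{lem:defKrawt}. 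The paper organizes the same computation the other way, splitting on $h^*$: if $h^*\neq 0$ then $\mu^2=\mu^{\prime 2}$, whence $\tau=0$ and the pair is essentially bipartite. With the type III$^+$ case repaired as above and this one-line completion of type II, your proof matches the paper's.
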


\begin{proof}
First assume that at least one of (i)--(iii) holds.
If (i) holds, then $A,A^*$ is near-bipartite by Note \ref{note:nearess}.
If (ii) holds, then $A,A^*$ is near-bipartite by Corollary \ref{cor:qKrawt1}.
If (iii) holds, then $A,A^*$ is near-bipartite by Proposition \ref{prop:Krawt}.
We are done in one logical direction.
Next assume that $A,A^*$ is near-bipartite, and let $B,A^*$
denote the bipartite contraction of $A,A^*$.
By Lemma \ref{lem:TDD3} we may assume that $A,A^*$ is in normalized TD/D form.
Let
\begin{align}
& (\{ \th_i\}_{i=0}^d; \{\th^*_i\}_{i=0}^d; \{\vphi_i\}_{i=1}^d; \{\phi_i\}_{i=1}^d), 
&&  (\{ \th'_i\}_{i=0}^d; \{\th^*_i\}_{i=0}^d; \{\vphi'_i\}_{i=1}^d; \{\phi'_i\}_{i=1}^d) 
                                      \label{eq:parrays4}
\end{align}
denote a parameter array of $A,A^*$ and $B,A^*$, respectively.
By Lemma \ref{lem:essbiptype3-},
$B,A^*$ does not have type III$^-$.
Recall that $A,A^*$ has the same type as $B,A^*$.
So the type of $A,A^*$ is one of  I, II, III$^+$.

First assume that $A,A^*$ has type I.
For the parameter arrays in \eqref{eq:parrays4},
let
\begin{align*}
&  (\delta, \mu, h, \delta^*, \mu^*, h^*, \tau),
&&  (\delta', \mu', h', \delta^*, \mu^*, h^*, \tau')
\end{align*}
denote the corresponding primary $q$-data.
By Lemmas \ref{lem:nbipTDD3} and \ref{lem:comptype1},
we have \eqref{eq:type1cond1}--\eqref{eq:type1cond3}.
Since $B,A^*$ is bipartite,
we have by Lemma \ref{lem:biptype1} that
\begin{align*}
   \tau' &=0,  & \mu' + h' &= 0.  
\end{align*}
By these comments,
\begin{align}
   \tau (\mu+h) &= 0,                  \label{eq:type1c2}
\\
\tau^2 + (\mu+h)^2 \mu^* h^*  &= 0.   \label{eq:type1c3}
\end{align}
First assume that $\mu+h$ = 0.
Then $\tau=0$ by \eqref{eq:type1c3}.
Now by Lemma \ref{lem:essbiptype1}, $A,A^*$ is essentially bipartite.
Next assume that $\mu + h \neq 0$.
By \eqref{eq:type1c2}, $\tau=0$.
By this and \eqref{eq:type1c3}, $\mu^* h^* = 0$.
Thus $A,A^*$ has dual $q$-Krawtchouk type by Lemma \ref{lem:defLPdualqKrawt}.
By Corollary \ref{cor:qKrawt1}, $A,A^*$ is reinforced.
We have shown that 
$A,A^*$ is either essentially bipartite or of reinforced dual $q$-Krawtchouk type.

Next assume that $A,A^*$ has type II.
For the parameter arrays in \eqref{eq:parrays4},
let
\begin{align*}
& (\delta, \mu, h, \delta^*, \mu^*, h^*, \tau),  
&& 
 (\delta', \mu', h', \delta^*, \mu^*, h^*, \tau')
\end{align*}
denote the corresponding primary data.
By Lemmas \ref{lem:nbipTDD3} and \ref{lem:comptype2},
we have \eqref{eq:type2cond1}--\eqref{eq:type2cond3}.
Since $B,A^*$ is bipartite,
we have by Lemma \ref{lem:biptype2} that
\begin{align*}
   \tau' &=0,  &  h' &= 0.  
\end{align*}
By these comments,
\begin{align}
   h &= 0,                  \label{eq:type2c1}
\\
(\mu^2 - \mu^{\prime 2}) h^* &= 0,            \label{eq:type2c2}
\\
4 \tau^2 &= (\mu^2 - \mu^{\prime 2}) \big( \mu^{* 2} + (d-1)^2 h^{* 2} \big).   \label{eq:type2c3}
\end{align}
First assume that $h^*$ = 0.
Then $A,A^*$ has Krawtchouk type by Lemma \ref{lem:defKrawt}.
Next assume that $h^* \neq 0$.
Then by \eqref{eq:type2c2}, $\mu^2 - \mu^{\prime 2} = 0$.
By this and \eqref{eq:type2c3} we get $\tau = 0$.
By these comments and Lemma \ref{lem:essbiptype2}, 
$A,A$ is essentially bipartite.
We have shown that $A,A^*$ is either essentially bipartite or 
of Krawtchouk type.

Next assume that $A,A^*$ has type III$^+$.
For the parameter arrays in \eqref{eq:parrays4},
let
\begin{align*}
& (\delta, s, h, \delta^*, s^*, h^*, \tau),  
&& 
 (\delta', s', h', \delta^*, s^*, h^*, \tau')
\end{align*}
denote the corresponding primary data.
By Lemmas \ref{lem:nbipTDD3} and \ref{lem:comptype3+},
we have \eqref{eq:type3+cond1}--\eqref{eq:type3+cond3}.
Since $B,A^*$ is bipartite,
we have by Lemma \ref{lem:biptype3+} that
\begin{align*}
   \tau' &=0,  &  s' &= 0.  
\end{align*}
By these comments,
\begin{align*}
\tau + s h^* &= 0, 
&
\tau - s h^* &= 0.
\end{align*}
We have $h^* \neq 0$ by \eqref{eq:type3+paramcond1b},
so  $\tau = 0$ and $s = 0$.
By this and Lemma \ref{lem:essbiptype3+},
$A,A^*$ is essentially bipartite.
\end{proof}

\section{Near-bipartite expansions of a bipartite Leonard pair that has
reinforced dual $q$-Krawtchouk type}
\label{sec:expansiondualqKrawt}
\ifDRAFT {\rm sec:expansiondualqKrawt}. \fi

In Theorem \ref{thm:main} we classified the near-bipartite Leonard pairs with diameter $d \geq 3$.
Our next goal is to describe the near-bipartite expansions $A,A^*$ of a given bipartite Leonard pair $B,A^*$
with diameter $d \geq 3$.
In view of Theorem \ref{thm:main} and the discussion above it,
we may assume that $B,A^*$ has either reinforced dual $q$-Krawtchouk type
or Krawtchouk type.
In the present section we assume that $B,A^*$ has reinforced dual $q$-Krawtchouk type.
In Section \ref{sec:expansionKrawt} we assume that $B,A^*$ has Krawtchouk type.

For the rest of this section,
we assume that $B,A^*$ has reinforced dual $q$-Krawtchouk type.
Let 
\begin{equation}
(\{ \th'_i\}_{i=0}^d; \{\th^*_i\}_{i=0}^d; \{\vphi'_i\}_{i=1}^d; \{\phi'_i\}_{i=1}^d) 
                                      \label{eq:parray31}
\end{equation}
denote a parameter array of $B,A^*$.
Let
\begin{equation}
 (\delta', \mu', h', \delta^*, \mu^*, h^*, \tau')    \label{eq:primaryBBs}
\end{equation}
denote the primary $q$-data corresponding to the parameter array
\eqref{eq:parray31}.
By Lemma \ref{lem:qKrawtparambip0},
\begin{align}
\delta' &=0, &
\mu'+h'&=0, &
\mu^* h^* &= 0,  &
\tau' &= 0.    \label{eq:taudmudhd}
\end{align}
Note by Lemma \ref{lem:qKrawtparamb} that $\mu ' h' \neq 0$.

\begin{theorem}    \label{thm:qKrawtexpansion}    \samepage
\ifDRAFT {\rm thm:qKrawtexpansion}. \fi
For scalars  $\delta$, $\mu$, $h$, $\tau$ in $\F$
the following are equivalent:
\begin{itemize}
\item[\rm (i)]
the sequence
\begin{equation}
   (\delta, \mu, h, \delta^*, \mu^*, h^*, \tau)           \label{eq:basicseq0}
\end{equation}
is a primary $q$-data of a near-bipartite expansion $A,A^*$ of $B,A^*$;
\item[\rm (ii)]
the following conditions hold:
\begin{align}
 \mu &\neq 0,                               \label{eq:muneq0}
\\
 \mu &\neq \pm \sqrt{-1} \mu' q^i  \qquad (1-d \leq i \leq d-1),   \label{eq:mucond}
\\
\tau &= 0,                                  \label{eq:tau0}
\\
h &= \mu' h' / \mu.                     \label{eq:h}                         
\end{align}
\end{itemize}
Assume that {\rm (i)} and {\rm (ii)} hold.
Then $A,A^*$ has reinforced dual $q$-Krawtchouk type.
\end{theorem}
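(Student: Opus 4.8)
The plan is to reduce everything to the machinery already built in Sections~\ref{sec:vphiphi}, \ref{sec:bipdualqKrawt}, and \ref{sec:dualqKrawt2}. First I would observe that by Lemma~\ref{lem:TDD3} we may put $B,A^*$ in normalized TD/D form, so any near-bipartite expansion $A,A^*$ is automatically in normalized TD/D form by Lemma~\ref{lem:nbipTDD2}; moreover $A,A^*$ has the same type as $B,A^*$ (Note~\ref{note:type}), namely type~I, and the same $q$ up to the choices in Lemma~\ref{lem:type1paramDown}. By Lemma~\ref{lem:nbipTDD3c} (with the roles of the two parameter arrays as in the statement), a sequence~\eqref{eq:basicseq0} is a primary $q$-data of a near-bipartite expansion of $B,A^*$ if and only if it is a primary $q$-data over $\F$ of type~I whose associated parameter array~\eqref{eq:type1parray} satisfies $\vphi_i\phi_i = \vphi'_i\phi'_i$ for $1\le i\le d$. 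So the whole theorem becomes: translate $\vphi_i\phi_i=\vphi'_i\phi'_i$ via Lemma~\ref{lem:comptype1}, impose that~\eqref{eq:basicseq0} is a genuine primary $q$-data (i.e.\ satisfies \eqref{eq:type1paramcond1}--\eqref{eq:type1paramcond5}), and use the constraints~\eqref{eq:taudmudhd} on the primed data.

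The key algebraic step is as follows. Lemma~\ref{lem:comptype1} gives that $\vphi_i\phi_i=\vphi'_i\phi'_i$ is equivalent to the three equations $\mu h = \mu' h'$, $\tau(\mu+h)=\tau'(\mu'+h')$, and $\tau^2+(\mu+h)^2\mu^*h^* = \tau'^2+(\mu'+h')^2\mu^*h^*$. Now substitute $\tau'=0$ and $\mu'+h'=0$ from~\eqref{eq:taudmudhd}. The second equation becomes $\tau(\mu+h)=0$ and the third becomes $\tau^2 + (\mu+h)^2\mu^*h^* = 0$. Since the primed data is dual $q$-Krawtchouk, $\mu^*h^*=0$ by Lemma~\ref{lem:qKrawtparambip0}, so the third equation collapses to $\tau^2=0$, i.e.\ $\tau=0$, which is~\eqref{eq:tau0}; the second equation is then automatic. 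The first equation $\mu h = \mu'h'$ together with $\mu'h'\neq 0$ (Lemma~\ref{lem:qKrawtparamb}) forces $\mu\neq 0$, which is~\eqref{eq:muneq0}, and then $h = \mu'h'/\mu$, which is~\eqref{eq:h}. Conversely, given~\eqref{eq:muneq0}--\eqref{eq:h}, these three Lemma~\ref{lem:comptype1} equations all hold, so $\vphi_i\phi_i=\vphi'_i\phi'_i$. This is the cleanest part.

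The main obstacle — and the step requiring care — is matching the inequality conditions: I must show that the primary $q$-data conditions \eqref{eq:type1paramcond1}--\eqref{eq:type1paramcond5} for~\eqref{eq:basicseq0}, under the standing assumptions $\tau=0$, $h=\mu'h'/\mu$, $\mu\neq 0$, and the known facts about the primed data (including $q^{2i}\neq -1$ for $1\le i\le d-1$ from reinforcedness, plus \eqref{eq:type1paramcond1}--\eqref{eq:type1paramcond5} for the primed data), are equivalent to the single new inequality~\eqref{eq:mucond}. Condition~\eqref{eq:type1paramcond1} holds because $q$ is fixed with $q^4\neq 1$ (it is the same $q$). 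Conditions~\eqref{eq:type1paramcond4}, \eqref{eq:type1paramcond5} with $\tau=0$ say $\mu\mu^*q^{2i-d-1}+hh^*q^{d-2i+1}\neq 0$ and $h\mu^*q^{2i-d-1}+\mu h^*q^{d-2i+1}\neq 0$; using $\mu^*h^*=0$ and $h=\mu'h'/\mu = -\mu'^2/\mu$ (since $h'=-\mu'$), in the case $\mu^*=0$ these reduce to $hh^*q^{\cdots}\neq 0$ and $\mu h^*q^{\cdots}\neq 0$, i.e.\ to $h\neq 0$ and $\mu\neq 0$ (using $h^*\neq 0$, which holds since $\mu^*,h^*$ are not both zero) — and $h\neq 0$ follows from $\mu'\neq 0$ and $\mu\neq 0$; the case $h^*=0$ is symmetric. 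Condition~\eqref{eq:type1paramcond3} on $\mu^*,h^*$ is inherited verbatim from the primed data. The crucial condition is~\eqref{eq:type1paramcond2}: $\mu\neq h q^{2i}$ for $1-d\le i\le d-1$. Substituting $h=-\mu'^2/\mu$ gives $\mu^2\neq -\mu'^2q^{2i}$, i.e.\ $\mu\neq\pm\sqrt{-1}\,\mu'q^i$ (here $\sqrt{-1}$ exists since $\F$ is algebraically closed), which is exactly~\eqref{eq:mucond} after reindexing $2i\mapsto i$ and noting the range $1-d\le i\le d-1$ is symmetric. I would double-check that the primed-data inequality~\eqref{eq:type1paramcond2} for $\mu',h'$ (namely $\mu'\neq h'q^{2i}$, i.e.\ $\mu'\neq -\mu'q^{2i}$, i.e.\ $q^{2i}\neq -1$) is automatically subsumed, which is precisely the reinforced hypothesis and is what guarantees the bipartite contraction exists in the first place. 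Finally, the last assertion — that $A,A^*$ has reinforced dual $q$-Krawtchouk type — follows since $\mu^*h^*=0$ and $\tau=0$ give dual $q$-Krawtchouk type by Lemma~\ref{lem:defLPdualqKrawt}, and reinforcedness ($q^{2i}\neq -1$, $1\le i\le d-1$) is a property of $q$ alone (Definition~\ref{def:reinforced}), inherited from $B,A^*$.
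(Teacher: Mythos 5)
Your proposal is correct and follows essentially the same route as the paper: reduce via Lemma \ref{lem:nbipTDD3c} and Lemma \ref{lem:comptype1} to the conditions \eqref{eq:type1cond1}--\eqref{eq:type1cond3}, use \eqref{eq:taudmudhd} and $\mu'h'\neq 0$ to extract \eqref{eq:muneq0}, \eqref{eq:tau0}, \eqref{eq:h}, and then match the primary $q$-data inequalities of Lemma \ref{lem:type1cond}, with \eqref{eq:type1paramcond2} turning into \eqref{eq:mucond}. Your only deviations are cosmetic (using $\mu^*h^*=0$ to get $\tau=0$ directly instead of the paper's case split on $\mu+h$, and a slightly muddled but harmless remark about ``reindexing'' $q^{2i}$ versus $q^i$).
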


\begin{proof}
(i) $\Rightarrow$ (ii)
Let
\begin{equation}
   (\{\th_i\}_{i=0}^d; \{\th^*_i\}_{i=0}^d; \{\vphi_i\}_{i=1}^d; \{\phi_i\}_{i=1}^d)
                                                   \label{eq:ABsparray}
\end{equation}
denote the parameter array of $A,A^*$ corresponding to the primary $q$-data \eqref{eq:basicseq0}.
By Lemma \ref{lem:nbipTDD3c}, $\vphi_i \phi_i = \vphi'_i \phi'_i$ for $1 \leq i \leq d$.
By this and Lemma \ref{lem:comptype1}, \eqref{eq:type1cond1}--\eqref{eq:type1cond3} hold.
By \eqref{eq:type1cond1} and since $\mu' h' \neq 0$,
we get \eqref{eq:muneq0}.
By this and \eqref{eq:type1cond1} we get \eqref{eq:h}.
By \eqref{eq:type1cond2} and \eqref{eq:taudmudhd},
\begin{equation}
   \tau (\mu+h)=0.                            \label{eq:taumuh}
\end{equation}
If $\mu+h \neq 0$ then $\tau=0$ by \eqref{eq:taumuh}.
If $\mu+h = 0$ then $\tau=0$ by \eqref{eq:type1cond3} and \eqref{eq:taudmudhd}.
Thus we have \eqref{eq:tau0}.
By Lemma \ref{lem:type1cond}, 
$\mu \neq h q^{2i}$ for $1-d \leq i \leq d-1$.
By this and \eqref{eq:taudmudhd},  \eqref{eq:h} we get \eqref{eq:mucond}.
We have shown that \eqref{eq:muneq0}--\eqref{eq:h} hold.

(ii) $\Rightarrow$ (i)
We first show that \eqref{eq:type1paramcond1}--\eqref{eq:type1paramcond5} hold.
We have \eqref{eq:type1paramcond1} and \eqref{eq:type1paramcond3} since \eqref{eq:primaryBBs}
is a primary $q$-data.
By \eqref{eq:taudmudhd} we have $h' = - \mu'$.
By this and \eqref{eq:mucond}, \eqref{eq:h}  we get \eqref{eq:type1paramcond2}.
By \eqref{eq:taudmudhd} we have $\mu^* h^* = 0$.
By Lemma \ref{lem:qKrawtbipparamb}, $\mu^*$ and $h^*$ are not both zero.
By \eqref{eq:muneq0}, $\mu \neq 0$.
We have $h \neq 0$ by \eqref{eq:h}.
By \eqref{eq:tau0}, $\tau=0$.
By these comments,  we get 
\eqref{eq:type1paramcond4} and \eqref{eq:type1paramcond5}.
We have shown that \eqref{eq:type1paramcond1}--\eqref{eq:type1paramcond5} hold.
Therefore \eqref{eq:basicseq0} is a primary $q$-data.
Define scalars $\{\th_i\}_{i=0}^d$, $\{\vphi_i\}_{i=1}^d$, $\{\phi_i\}_{i=1}^d$
by \eqref{eq:type1th}, \eqref{eq:type1vphi}, \eqref{eq:type1phi}.
By Lemmas \ref{lem:type1param} and \ref{lem:type1cond} the sequence
\begin{equation}
   (\{\th_i\}_{i=0}^d; \{\th^*_i\}_{i=0}^d; \{\vphi_i\}_{i=1}^d; \{\phi_i\}_{i=1}^d)
                                                   \label{eq:ABsparray2}
\end{equation}
is a parameter array over $\F$ corresponding to \eqref{eq:basicseq0}.
By \eqref{eq:taudmudhd} and \eqref{eq:muneq0}--\eqref{eq:h},
we find that \eqref{eq:type1cond1}--\eqref{eq:type1cond3} hold.
By this and Lemma \ref{lem:comptype1},  $\vphi_i \phi_i = \vphi'_i \phi'_i$ for
$1 \leq i \leq d$.
By this and Lemma \ref{lem:nbipTDD3c}, there exists a near-bipartite expansion $A,A^*$
of $B,A^*$ that has parameter array \eqref{eq:ABsparray2}.
By construction, the sequence \eqref{eq:basicseq0} is a primary $q$-data of $A,A^*$.

Assume (i) and (ii) hold.
By Lemma \ref{lem:defLPdualqKrawt} and \eqref{eq:taudmudhd}, \eqref{eq:tau0}
we find that $A,A^*$ has dual $q$-Krawtchouk type.
Moreover $A,A^*$ is reinforced since $B,A^*$ is reinforced.
\end{proof}

Our next goal is to describe the near-bipartite expansions of $B,A^*$ in terms of matrices.
In this description we will use the following matrix.

\begin{defi}    \label{def:K}    \samepage
\ifDRAFT {\rm def:K}. \fi
Let $K \in \Matd$ denote the diagonal matrix
that has $(i,i)$-entry $q^{2i-d}$ for $0 \leq i \leq d$.
\end{defi}

By Lemma \ref{lem:TDD3} we may assume that $B,A^*$ is in normalized TD/D form:
\begin{align*}
B &=
 \begin{pmatrix}
  0 & x'_1 & & & & \text{\bf 0}  \\
  1 & 0 & x'_2 \\
       & 1 & \cdot & \cdot  \\
       &       &  \cdot & \cdot & \cdot \\
       &        &         & \cdot & \cdot & x'_d   \\
 \text{\bf 0} & & & & 1 & 0
 \end{pmatrix},
&
A^* &= \text{\rm diag} ( \th^*_0, \th^*_1, \ldots, \th^*_d).
\end{align*}

\begin{theorem}    \label{thm:conv2no2}    \samepage
\ifDRAFT {\rm thm:conv2no2}. \fi
Given scalars $\delta$, $\mu$, $h$, $\tau$ in $\F$ that satisfy the equivalent conditions
{\rm (i)} and {\rm (ii)} in Theorem \ref{thm:qKrawtexpansion}.
Let $A,A^*$ denote the near-bipartite expansion of $B,A^*$ that has
primary $q$-data
$(\delta, \mu, h, \delta^*, \mu^*, h^*, \tau)$.
\begin{itemize}
\item[\rm (i)]
Assume that $\mu^* = 0$.
Then 
\[
 A = B + (\mu+h) K + \delta I.
\]
\item[\rm (ii)]
Assume that $h^* = 0$.
Then 
\[
 A = B + (\mu + h) K^{-1} + \delta I.
\]
\end{itemize}
\end{theorem}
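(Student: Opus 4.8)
The plan is to reduce the matrix identity to a statement about TD/D sequences, and then to compute the TD/D sequence of $A,A^*$ directly from its primary $q$-data using the results of Section~\ref{sec:dualqKrawt}. First I would note that since $B,A^*$ is in normalized TD/D form, Lemma~\ref{lem:nbipTDD2} tells us that the near-bipartite expansion $A,A^*$ is also in normalized TD/D form, so $A$ is the normalized irreducible tridiagonal matrix and $A^*=\mathrm{diag}(\th^*_0,\dots,\th^*_d)$. By Lemma~\ref{lem:nbipTDD1} (or directly Lemma~\ref{lem:F3}), the bipartite contraction $A-F$ has the same off-diagonal entries $\{x_i\}_{i=1}^d$ as $A$, and by Lemma~\ref{lem:nbipTDD3} (condition \eqref{eq:xixdi}) these must equal the $\{x'_i\}_{i=1}^d$ of $B$. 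Thus $A$ and $B$ agree strictly below and strictly above the diagonal, and $A-B$ is a diagonal matrix whose $(i,i)$-entry is $a_i$, the $i$-th entry of the TD/D sequence of $A,A^*$. So the whole theorem comes down to: if $\mu^*=0$ then $a_i=(\mu+h)q^{2i-d}+\delta$, and if $h^*=0$ then $a_i=(\mu+h)q^{d-2i}+\delta$.

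The key step is then to invoke Lemma~\ref{lem:qKrawtaixi}. Since $A,A^*$ has reinforced dual $q$-Krawtchouk type (this is the last assertion of Theorem~\ref{thm:qKrawtexpansion}), and since by Theorem~\ref{thm:qKrawtexpansion}(ii) we have $\tau=0$ with $\mu^* h^*=0$ inherited from $B$, Lemma~\ref{lem:qKrawtaixi} gives the TD/D sequence of $A,A^*$ explicitly: in case (i), where $\mu^*=0$, we get $a_i=\delta+(\mu+h)q^{2i-d}$ for $0\le i\le d$; in case (ii), where $h^*=0$, we get $a_i=\delta+(\mu+h)q^{d-2i}$ for $0\le i\le d$. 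Here I should be a little careful: Lemma~\ref{lem:qKrawtaixi} is stated for a Leonard pair that \emph{has} dual $q$-Krawtchouk type, and its hypotheses require $\mu^*=0$ (resp. $h^*=0$), which is exactly the situation we are in; also the primary $q$-data of $A,A^*$ really is $(\delta,\mu,h,\delta^*,\mu^*,h^*,\tau)$ by the construction in the proof of Theorem~\ref{thm:qKrawtexpansion}. Recalling Definition~\ref{def:K}, the diagonal matrix $K$ has $(i,i)$-entry $q^{2i-d}$, so $(\mu+h)K+\delta I$ is precisely the diagonal matrix with $(i,i)$-entry $(\mu+h)q^{2i-d}+\delta$, and $K^{-1}$ has $(i,i)$-entry $q^{d-2i}$, giving the matrix with $(i,i)$-entry $(\mu+h)q^{d-2i}+\delta$.

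Assembling these: in case (i), $A-B$ is diagonal with $(i,i)$-entry $a_i-0=\delta+(\mu+h)q^{2i-d}$, which is exactly the $(i,i)$-entry of $(\mu+h)K+\delta I$; since the off-diagonal parts of $A$ and $B$ coincide, $A=B+(\mu+h)K+\delta I$. Case (ii) is identical with $K$ replaced by $K^{-1}$. The one point that needs a sentence of justification is that the off-diagonal entries of $A$ and $B$ truly coincide; for this I would cite Lemma~\ref{lem:nbipTDD3} (the equivalence of (i)--(iii) there with \eqref{eq:xixdi}), applied to the fact — established in the proof of Theorem~\ref{thm:qKrawtexpansion} — that $A,A^*$ is a near-bipartite expansion of $B,A^*$, which by Lemma~\ref{lem:nbippre} means $A-F=B$, and $F=\mathrm{diag}(a_0,\dots,a_d)$ by Lemma~\ref{lem:Fai2}. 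There is no real obstacle here; the argument is essentially bookkeeping, and the main thing to get right is matching the exponent convention in Definition~\ref{def:K} with the two cases $\mu^*=0$ versus $h^*=0$ in Lemma~\ref{lem:qKrawtaixi}.
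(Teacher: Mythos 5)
Your proposal is correct and follows essentially the same route as the paper: pass to normalized TD/D form via Lemma \ref{lem:nbipTDD2}, read off the diagonal entries $a_i$ from Lemma \ref{lem:qKrawtaixi} using the primary $q$-data of $A,A^*$, and match them with $(\mu+h)K+\delta I$ (resp. $(\mu+h)K^{-1}+\delta I$) via Definition \ref{def:K}. The extra bookkeeping you supply (that $A-F=B$ forces the off-diagonal entries of $A$ and $B$ to coincide, via Lemmas \ref{lem:nbippre} and \ref{lem:Fai2}) is exactly what the paper leaves implicit, so your write-up is just a more detailed version of the same argument.
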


\begin{proof}
By Lemma \ref{lem:nbipTDD2} the Leonard pair $A,A^*$ is in normalized TD/D form.
Let $(\{a_i\}_{i=0}^d; \{x_i\}_{i=1}^d; \{\th^*_i\}_{i=0}^d)$
denote the TD/D sequence of $A,A^*$.
By Lemma \ref{lem:qKrawtaixi} we see that
\begin{align*}
  a_i &=
    \begin{cases}
       \delta + (\mu+h) q^{2i-d}  & \text{ if $\mu^* = 0$},
    \\
      \delta + (\mu + h) q^{d-2i}  & \text{ if $h^* = 0$}
   \end{cases}
  &&  (0 \leq i \leq d). 
\end{align*}
By these comments and Definition \ref{def:K} we get the result.
\end{proof}

\section{Near-bipartite expansions of a bipartite Leonard pair that has
Krawtchouk type}
\label{sec:expansionKrawt}
\ifDRAFT {\rm sec:expansionKrawt}. \fi

In this section, we describe the near-bipartite expansions $A,A^*$ of 
a given bipartite Leonard pair $B,A^*$ that has Krawtchouk type.
Let 
\begin{equation}
(\{ \th'_i\}_{i=0}^d; \{\th^*_i\}_{i=0}^d; \{\vphi'_i\}_{i=1}^d; \{\phi'_i\}_{i=1}^d) 
                                      \label{eq:parray32}
\end{equation}
denote a parameter array of $B,A^*$.
Let
\begin{equation}
 (\delta', \mu', h', \delta^*, \mu^*, h^*, \tau')              \label{eq:primaryBBs2}
\end{equation}
denote the primary data corresponding to the parameter array
\eqref{eq:parray32}.
By Lemma \ref{lem:Krawtparambip0},
\begin{align}
\delta'&=0, &
h' &= 0, &
h^* &=0,  &
\tau' &= 0.                                    \label{eq:Krawtbasic}
\end{align}

\begin{theorem}   \label{thm:Krawtexpansion}    \samepage
\ifDRAFT {\rm thm:Krawtexpansion}. \fi
For scalars $\delta$, $\mu$, $h$, $\tau$ in $\F$ the following are equivalent:
\begin{itemize}
\item[\rm (i)]
the sequence
\begin{equation}
 (\delta, \mu, h, \delta^*, \mu^*, h^*, \tau)     \label{eq:Krawtbasic1}
\end{equation}
is a primary data of a near-bipartite expansion $A,A^*$ of $B,A^*$;
\item[\rm (ii)]
the following conditions hold:
\begin{align}
\mu &\neq 0,                                                              \label{eq:Krawtcond0}
\\
h &= 0,                                                                       \label{eq:Krawtcond1}
\\
4 \tau^2 &= (\mu^2 - \mu^{\prime 2}) \mu^{* 2}.               \label{eq:Krawtcond2}
\end{align}
\end{itemize}
Assume that {\rm (i)} and {\rm (ii)} hold.
Then $A,A^*$ has Krawtchouk type.
\end{theorem}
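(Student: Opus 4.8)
The plan is to follow the proof of Theorem \ref{thm:qKrawtexpansion} verbatim, with the type I results replaced by their type II counterparts: use Lemma \ref{lem:comptype2} in place of Lemma \ref{lem:comptype1}, Lemmas \ref{lem:type2param} and \ref{lem:type2cond} in place of Lemmas \ref{lem:type1param} and \ref{lem:type1cond}, and Lemma \ref{lem:defKrawt} in place of Lemma \ref{lem:defLPdualqKrawt}. The key vehicle in both directions is Lemma \ref{lem:nbipTDD3c}, which converts the existence of a near-bipartite expansion into the identity $\vphi_i\phi_i=\vphi'_i\phi'_i$. Throughout I would use that the primary data \eqref{eq:primaryBBs2} of $B,A^*$ satisfies $\delta'=h'=h^*=\tau'=0$ by \eqref{eq:Krawtbasic}, and that applying Lemma \ref{lem:Krawtbipparamb} to $B,A^*$ gives $\mu'\neq 0$ and $\mu^*\neq 0$.

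For (i)$\Rightarrow$(ii): assume \eqref{eq:Krawtbasic1} is a primary data of a near-bipartite expansion $A,A^*$ of $B,A^*$, and let $(\{\th_i\}_{i=0}^d;\{\th^*_i\}_{i=0}^d;\{\vphi_i\}_{i=1}^d;\{\phi_i\}_{i=1}^d)$ be the corresponding parameter array. By Lemma \ref{lem:nbipTDD3c} we get $\vphi_i\phi_i=\vphi'_i\phi'_i$ for $1\leq i\leq d$, so by Lemma \ref{lem:comptype2} the relations \eqref{eq:type2cond1}--\eqref{eq:type2cond3} hold. Substituting $h'=h^*=\tau'=0$: relation \eqref{eq:type2cond1} gives $h^2=0$, hence $h=0$, which is \eqref{eq:Krawtcond1}; relation \eqref{eq:type2cond2} becomes the trivial identity $0=0$; and relation \eqref{eq:type2cond3} collapses to $4\tau^2-\mu^2\mu^{*2}=-\mu'^2\mu^{*2}$, i.e. $4\tau^2=(\mu^2-\mu'^2)\mu^{*2}$, which is \eqref{eq:Krawtcond2}. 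Finally, since \eqref{eq:Krawtbasic1} is a primary data it satisfies \eqref{eq:type2paramcond2}, and taking $i=0$ there together with $h=0$ gives $\mu\neq 0$, which is \eqref{eq:Krawtcond0}.

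For (ii)$\Rightarrow$(i): assume \eqref{eq:Krawtcond0}--\eqref{eq:Krawtcond2}, so \eqref{eq:Krawtbasic1} has the form $(\delta,\mu,0,\delta^*,\mu^*,0,\tau)$. I would first check that \eqref{eq:Krawtbasic1} is a primary data, i.e. verify \eqref{eq:type2paramcond1}--\eqref{eq:type2paramcond5}. Conditions \eqref{eq:type2paramcond1} and \eqref{eq:type2paramcond3} are inherited from \eqref{eq:primaryBBs2} since they depend only on $\F$, $d$, $\mu^*$, $h^*$; condition \eqref{eq:type2paramcond2} reduces to $\mu\neq 0$, which is \eqref{eq:Krawtcond0}; and with $h=h^*=0$, conditions \eqref{eq:type2paramcond4} and \eqref{eq:type2paramcond5} both reduce to $\tau\neq\pm\mu\mu^*/2$, which follows because $\tau=\pm\mu\mu^*/2$ would give $4\tau^2=\mu^2\mu^{*2}$, whence \eqref{eq:Krawtcond2} forces $\mu'^2\mu^{*2}=0$, contradicting $\mu'\neq 0$ and $\mu^*\neq 0$. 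Thus \eqref{eq:Krawtbasic1} is a primary data; defining $\{\th_i\}_{i=0}^d$, $\{\vphi_i\}_{i=1}^d$, $\{\phi_i\}_{i=1}^d$ by \eqref{eq:type2th}, \eqref{eq:type2vphi}, \eqref{eq:type2phi}, Lemmas \ref{lem:type2param} and \ref{lem:type2cond} show that $(\{\th_i\}_{i=0}^d;\{\th^*_i\}_{i=0}^d;\{\vphi_i\}_{i=1}^d;\{\phi_i\}_{i=1}^d)$ is a parameter array over $\F$ corresponding to \eqref{eq:Krawtbasic1}. Using $h=h'=h^*=\tau'=0$ and \eqref{eq:Krawtcond2}, the relations \eqref{eq:type2cond1}--\eqref{eq:type2cond3} hold, so Lemma \ref{lem:comptype2} gives $\vphi_i\phi_i=\vphi'_i\phi'_i$, and then Lemma \ref{lem:nbipTDD3c} yields a near-bipartite expansion $A,A^*$ of $B,A^*$ with this parameter array, for which \eqref{eq:Krawtbasic1} is by construction a primary data. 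For the last assertion: $A,A^*$ has the same type as $B,A^*$ by Note \ref{note:type}, hence type II, and since $h=0$ by \eqref{eq:Krawtcond1} and $h^*=0$ by \eqref{eq:Krawtbasic}, Lemma \ref{lem:defKrawt} shows $A,A^*$ has Krawtchouk type. The only step requiring genuine thought is verifying \eqref{eq:type2paramcond4}, \eqref{eq:type2paramcond5} in the direction (ii)$\Rightarrow$(i) — that is, extracting $\tau\neq\pm\mu\mu^*/2$ from \eqref{eq:Krawtcond2} and the nonvanishing of $\mu'$, $\mu^*$ — together with the bookkeeping of which primary-data inequalities are inherited from $B,A^*$ and which must be re-established; everything else is direct substitution into the cited lemmas.
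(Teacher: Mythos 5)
Your proposal is correct and follows essentially the same route as the paper's proof: Lemma \ref{lem:nbipTDD3c} plus Lemma \ref{lem:comptype2} in both directions, with the primary-data inequalities \eqref{eq:type2paramcond1}--\eqref{eq:type2paramcond5} checked exactly as the paper does (your explicit derivation of \eqref{eq:type2paramcond4}, \eqref{eq:type2paramcond5} from \eqref{eq:Krawtcond2} and $\mu'\mu^*\neq 0$ just spells out a step the paper leaves terse). No gaps.
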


\begin{proof}
(i) $\Rightarrow$ (ii)
Let
\begin{equation}
  (\{\th_i\}_{i=0}^d; \{\th^*_i\}_{i=0}^d; \{\vphi_i\}_{i=1}^d; \{\phi_i\}_{i=1}^d)
                                            \label{eq:Krawtparray}
\end{equation}
denote the parameter array of $A,A^*$ corresponding to \eqref{eq:Krawtbasic1}.
By Lemmas \ref{lem:nbipTDD3c}, $\vphi_i \phi_i = \vphi'_i \phi'_i$ for $1 \leq i \leq d$.
By this and Lemma \ref{lem:comptype2}, \eqref{eq:type2cond1}--\eqref{eq:type2cond3} hold.
By \eqref{eq:type2cond1} and \eqref{eq:Krawtbasic} we get \eqref{eq:Krawtcond1}.
By this and \eqref{eq:type2paramcond2} we get \eqref{eq:Krawtcond0}.
By \eqref{eq:type2cond3}, \eqref{eq:Krawtbasic}, \eqref{eq:Krawtcond1}
we get \eqref{eq:Krawtcond2}.

(ii) $\Rightarrow$ (i)
We first show that \eqref{eq:type2paramcond1}--\eqref{eq:type2paramcond5} hold.
We have \eqref{eq:type2paramcond1} and \eqref{eq:type2paramcond3} since
\eqref{eq:primaryBBs2} is a primary data of type II.
By \eqref{eq:Krawtcond0} and \eqref{eq:Krawtcond1} we get \eqref{eq:type2paramcond2}.
By \eqref{eq:Krawtbasic}, \eqref{eq:Krawtcond1}, \eqref{eq:Krawtcond2} we get
\eqref{eq:type2paramcond4} and \eqref{eq:type2paramcond5}.
We have shown that \eqref{eq:type2paramcond1}--\eqref{eq:type2paramcond5} hold.
Therefore \eqref{eq:Krawtbasic1} is a primary data of type II.
Define scalars $\{\th_i\}_{i=0}^d$, $\{\vphi_i\}_{i=1}^d$, $\{\phi_i\}_{i=1}^d$
by \eqref{eq:type2th}, \eqref{eq:type2vphi}, \eqref{eq:type2phi}.
By Lemmas \ref{lem:type2param} and \ref{lem:type2cond}, the sequence
\begin{equation}
  (\{\th_i\}_{i=0}^d; \{\th^*_i\}_{i=0}^d; \{\vphi_i\}_{i=1}^d; \{\phi_i\}_{i=1}^d)
                                            \label{eq:Krawtparray2}
\end{equation}
is the parameter array  corresponding to the primary data \eqref{eq:Krawtbasic1} of type II.
By \eqref{eq:Krawtbasic} and \eqref{eq:Krawtcond0}--\eqref{eq:Krawtcond2},
we get \eqref{eq:type2cond1}--\eqref{eq:type2cond3}.
By this and Lemma \ref{lem:comptype2}, $\vphi_i \phi_i = \vphi'_i \phi'_i$ for $1 \leq i \leq d$.
By this and Lemma \ref{lem:nbipTDD3c}, there exists a near-bipartite expansion $A,A^*$ of $B,A^*$
that has parameter array \eqref{eq:Krawtparray2}.
By construction \eqref{eq:Krawtbasic1} is a primary data of $A,A^*$.

Assume that (i) and (ii) hold.
By \eqref{eq:Krawtbasic} and \eqref{eq:Krawtcond1} we have $h=0$ and $h^*=0$.
By this and Lemma \ref{lem:defKrawt} the Leonard pair $A,A^*$ has Krawtchouk type.
\end{proof}

Our next goal is to describe the near-bipartite expansions of $B,A^*$ in terms of matrices.
In this description we will use the following matrix.

\begin{defi}    \label{def:KrawtK}   \samepage
\ifDRAFT {\rm def:KrawtK}. \fi
Define a diagonal matrix $H \in \Matd$ that has $(i,i)$-entry $2i-d$ for $0 \leq i \leq d$.
\end{defi}

By Lemma \ref{lem:TDD3} we may assume that $B,A^*$ is in normalized TD/D form:
\begin{align*}
B &=
 \begin{pmatrix}
  0 & x'_1 & & & & \text{\bf 0}  \\
  1 & 0 & x'_2 \\
       & 1 & \cdot & \cdot  \\
       &       &  \cdot & \cdot & \cdot \\
       &        &         & \cdot & \cdot & x'_d   \\
 \text{\bf 0} & & & & 1 & 0
 \end{pmatrix},
&
A^* &= \text{\rm diag} ( \th^*_0, \th^*_1, \ldots, \th^*_d).
\end{align*}

\begin{theorem}     \label{thm:converse3}    \samepage
\ifDRAFT {\rm thm:converse3}. \fi
Given scalars $\delta$, $\mu$, $h$, $\tau$ in $\F$ that satisfy the
equivalent conditions {\rm (i)} and {\rm (ii)} in Theorem \ref{thm:Krawtexpansion}.
Let $A,A^*$ denote the near-bipartite expansion of $B,A^*$ that has primary data
$(\delta, \mu, h, \delta^*, \mu^*, h^*, \tau)$.
Then
\[
    A = B + (\tau/\mu^*) H + \delta I.
\]
\end{theorem}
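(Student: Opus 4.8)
The plan is to reduce the statement to a computation with the TD/D sequences, using the normalized TD/D form as set up just above the theorem. First I would note that by Lemma~\ref{lem:nbipTDD2} the near-bipartite expansion $A,A^*$ is itself in normalized TD/D form, so $A$ is a normalized irreducible tridiagonal matrix and $A^*=\text{\rm diag}(\th^*_0,\ldots,\th^*_d)$. Write the TD/D sequence of $A,A^*$ as $(\{a_i\}_{i=0}^d;\{x_i\}_{i=1}^d;\{\th^*_i\}_{i=0}^d)$. By Lemma~\ref{lem:nbippre} and Lemma~\ref{lem:Fai2}, the flat part $F$ of $A$ equals $\text{\rm diag}(a_0,\ldots,a_d)$ and $A-F=B$; hence $A=B+\text{\rm diag}(a_0,\ldots,a_d)$. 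So the whole theorem amounts to showing $a_i=(2i-d)\tau/\mu^*+\delta$ for $0\le i\le d$, i.e. that $\text{\rm diag}(a_0,\ldots,a_d)=(\tau/\mu^*)H+\delta I$ where $H$ is as in Definition~\ref{def:KrawtK}.

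Next I would invoke the formula for the $a_i$ in terms of the primary data. By Theorem~\ref{thm:Krawtexpansion}, the conditions (ii) there hold, in particular $h=0$, and $A,A^*$ has Krawtchouk type with primary data $(\delta,\mu,h,\delta^*,\mu^*,h^*,\tau)=(\delta,\mu,0,\delta^*,\mu^*,0,\tau)$. Now Lemma~\ref{lem:Krawtaixi} gives precisely that for a Leonard pair of Krawtchouk type one has
\[
a_i=\delta+(2i-d)\tau/\mu^*\qquad(0\le i\le d),
\]
together with the companion formulas for $x_i$ and $\th^*_i$ that we do not need here. (Strictly, to apply Lemma~\ref{lem:Krawtaixi} I should record that $\mu^*\neq 0$; this is part of Lemma~\ref{lem:Krawtparamb}, since $A,A^*$ has Krawtchouk type, so the division is legitimate.) Combining this with $A=B+\text{\rm diag}(a_0,\ldots,a_d)$ and Definition~\ref{def:KrawtK} yields $A=B+(\tau/\mu^*)H+\delta I$, as desired.

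There is essentially no obstacle: the substantive work — that a near-bipartite expansion of a bipartite Krawtchouk-type pair again has Krawtchouk type, and the explicit $a_i$ formula — has already been done in Theorem~\ref{thm:Krawtexpansion} and Lemma~\ref{lem:Krawtaixi}. The only thing to be careful about is bookkeeping: one must make sure the primary data of $A,A^*$ referenced in the theorem statement is exactly the one to which Lemma~\ref{lem:Krawtaixi} applies (it is, by construction in the proof of Theorem~\ref{thm:Krawtexpansion}), and that the $B$ appearing in the matrix identity is the same normalized TD/D matrix fixed before the theorem (it is, by Lemma~\ref{lem:nbippre}(iii), $A-F=B$). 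So the proof is a two-line assembly: identify $A-F=B$ and read off $\{a_i\}$ from Lemma~\ref{lem:Krawtaixi}.

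\begin{proof}
By Lemma \ref{lem:nbipTDD2}, the Leonard pair $A,A^*$ is in normalized TD/D form.
Let $(\{a_i\}_{i=0}^d; \{x_i\}_{i=1}^d; \{\th^*_i\}_{i=0}^d)$ denote the TD/D sequence
of $A,A^*$.
By Theorem \ref{thm:Krawtexpansion}, $A,A^*$ has Krawtchouk type, so $\mu^* \neq 0$
by Lemma \ref{lem:Krawtparamb}, and by Lemma \ref{lem:Krawtaixi},
\begin{align*}
 a_i &= \delta + (2i-d)\tau/\mu^*    &&  (0 \leq i \leq d).
\end{align*}
Let $F$ denote the flat part of $A$.
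By Lemma \ref{lem:Fai2}, $F = \text{\rm diag}(a_0, a_1, \ldots, a_d)$.
Since $A,A^*$ is a near-bipartite expansion of $B,A^*$,
we have $A - F = B$ by Lemma \ref{lem:nbippre}.
Therefore $A = B + \text{\rm diag}(a_0, a_1, \ldots, a_d)$.
By the formula for $a_i$ and Definition \ref{def:KrawtK},
$\text{\rm diag}(a_0, a_1, \ldots, a_d) = (\tau/\mu^*) H + \delta I$.
The result follows.
\end{proof}
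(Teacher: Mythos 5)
Your proof is correct and follows essentially the same route as the paper: normalize via Lemma \ref{lem:nbipTDD2}, read off $a_i=\delta+(2i-d)\tau/\mu^*$ from Lemma \ref{lem:Krawtaixi}, and identify the diagonal correction with $(\tau/\mu^*)H+\delta I$. Your extra bookkeeping (Lemma \ref{lem:nbippre} and Lemma \ref{lem:Fai2} to justify $A-F=B$, and Lemma \ref{lem:Krawtparamb} for $\mu^*\neq 0$) only makes explicit steps the paper leaves implicit.
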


\begin{proof}
By Lemma \ref{lem:nbipTDD2} the Leonard pair $A,A^*$ is in normalized TD/D form.
Let $(\{a_i\}_{i=0}^d; \{x_i\}_{i=1}^d; \{\th^*_i\}_{i=0}^d)$ denote the TD/D sequence
of $A,A^*$.
By Lemma \ref{lem:Krawtaixi},
\begin{align*}
 a_i &= \delta + (2i-d) \tau/\mu^*  &&  (0  \leq i \leq d).
\end{align*}
By these comments and Definition \ref{def:KrawtK}, we get the result.
\end{proof}

{
\small

\medskip

\begin{itemize}
\item[]
Kazumasa Nomura:
\item[]
Tokyo Medical and Dental University,
Kohnodai Ichikawa 272-0827 Japan.
\item[]
Email: knomura@pop11.odn.ne.jp
\end{itemize}

\medskip
\begin{itemize}
\item[]
Paul Terwilliger:
\item[]
Department of Mathematics,
University of Wisconsin, 
480 Lincoln Drive,
Madison,
Wisconsin,
53706 USA
\item[]
Email: terwilli@math.wisc.edu
\end{itemize}

\end{document}